\documentclass[12pt]{book}

\usepackage{amsmath,amsthm,verbatim,amssymb,amsfonts,amscd, graphicx}
\usepackage{graphics}
\usepackage{tablefootnote}
\usepackage[export]{adjustbox}
\usepackage{tikz-cd}
\usepackage{multirow}
\usepackage{tikz}
\usepackage{environ}
\usepackage{adjustbox}
\usepackage{xfrac}
\usepackage{fancyhdr}
\usetikzlibrary{calc}
\usepackage{euscript}
\usepackage{setspace}
\usepackage{comment}
\usepackage[a4paper, width=160mm, top=30mm, bottom=30mm, left=25mm, right=25mm]{geometry}
\usepackage{mathtools}
\usepackage{tensor}
\usepackage[all]{xy}
\usepackage{enumitem}
\usepackage{caption}
\usepackage{subcaption}
\usepackage[many]{tcolorbox}
\usepackage{mdframed}
\usepackage{graphicx}
\usepackage{titlesec}
\usepackage{lipsum}
\usepackage{xcolor}
\usepackage{soul}
\usepackage{wasysym}
\usepackage{url}
\usepackage{hyperref}
\usepackage{pdfpages}
\usepackage[style=alphabetic,backend=biber]{biblatex}
\addbibresource{bib.bib}
\graphicspath{ {./images/} }
\theoremstyle{plain}
\newtheorem{theorem}{Theorem}[section]
\newtheorem*{theorem*}{Theorem}
\newtheorem{corollary}[theorem]{Corollary}
\newtheorem{lemma}[theorem]{Lemma}
\newtheorem{proposition}[theorem]{Proposition}
\newtheorem{definition}[theorem]{Definition}

\newtheorem{conjecture}{Conjecture}

\theoremstyle{definition}
\newtheorem{remark}[theorem]{Remark}
\AtBeginEnvironment{remark}{%
  \pushQED{\qed}%
}
\AtEndEnvironment{remark}{\popQED\endexample}

\newtheorem{example}[theorem]{Example}
\AtBeginEnvironment{example}{%
  \pushQED{\qed}%
}
\AtEndEnvironment{example}{\popQED\endexample}
\newtheorem{construction}[theorem]{Construction}


\newcommand{\adj}[1][]{\def\ArgI{#1}\adjRelayI}
\newcommand{\adjRelayI}[1][]{\def\ArgII{#1}\adjRelayII}
\newcommand{\adjRelayII}[3][2.2em]{\ensuremath{\SelectTips{cm}{10}\xymatrix@C=#1@1{{#2} \ar@<1ex>[r]^-{\ArgI}^-{}="1" & {#3} \ar@<1ex>[l]^-{\ArgII}^-{}="2" \ar@{}"1";"2"|(.3){\hbox{}}="7" \ar@{}"1";"2"|(.7){\hbox{}}="8" \ar@{|-} "8" ;"7"}}}

\newcommand{\radjRelayII}[3][2.2em]{\ensuremath{\SelectTips{cm}{10}\xymatrix@C=#1@1{{#2} \ar@<-1ex>[r]_-{\ArgI}^-{}="1" & {#3} \ar@<-1ex>[l]_-{\ArgII}^-{}="2" \ar@{}"1";"2"|(.3){\hbox{}}="7" \ar@{}"1";"2"|(.7){\hbox{}}="8" \ar@{|-} "7" ;"8"}}}



\newcommand{\cat}[1]{\EuScript{#1}}
\newcommand{\nl}{
~ \newline}


\usepackage[T1]{fontenc}
\DeclareUnicodeCharacter{0229}{\k{e}}

\titleformat{\chapter}[hang]
    {\normalfont\huge\bfseries}
    {\thechapter}{20pt}{\huge}

\begin{document}
\frontmatter
        \pagestyle{plain}

        \newgeometry{width=170mm, top=20mm, bottom=20mm}
        \begin{titlepage}
    \begin{center}

        \begin{figure}[t]
            \includegraphics[height=2.5cm, left]{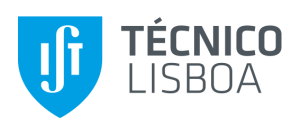}
        \end{figure}

        \vspace*{-1cm}

        \begin{center}
            
            \textbf{UNIVERSIDADE DE LISBOA} 

            \vspace{0.5cm}

            \textbf{INSTITUTO SUPERIOR TÉCNICO}
        \end{center}

        \vspace{2cm}

        \huge
        \textbf{Differential Cohomology as Diffeological Homotopy Theory}

        \vspace{1.5cm}
        \large
        \text{Nino Tiziano Scalbi}

        \vspace{0.5cm}
        \large
        \begin{align*}
            \textbf{Supervisor: } & \text{Doctor Roger Francis Picken} \\
            \textbf{Co-Supervisor: } & \text{Doctor John Gmerek Huerta}
        \end{align*}

        \vspace{1cm}

        \normalsize
        \textbf{Thesis approved in public session to obtain the PhD Degree in}

        \vspace{0.25cm}

        \large
        \textit{Mathematics}

        \vspace{1.5cm}

        \normalsize
        \textbf{Jury final classification:} \text{Pass with Distinction}

        \vfill
        \normalsize
        \textbf{2024}
    \end{center}

    \newpage 
    \thispagestyle{empty}
\end{titlepage}

\begin{titlepage}
    \begin{center}

        \begin{figure}[t]
            \includegraphics[height=2.5cm, left]{IST-logo.png}
        \end{figure}

        \vspace*{-1cm}

        \begin{center}
            
            \normalsize
            \textbf{UNIVERSIDADE DE LISBOA} 

            \vspace{0.5cm}

            \textbf{INSTITUTO SUPERIOR TÉCNICO}
        \end{center}

        \vspace{1cm}

        \Large
        \textbf{Differential Cohomology as Diffeological Homotopy Theory}

        \vspace{0.75cm}
        \large
        \text{Nino Tiziano Scalbi}

        \vspace{-0.25cm}
        \large
        \begin{align*}
            \textbf{Supervisor: } & \text{Doctor Roger Francis Picken} \\
            \textbf{Co-Supervisor: } & \text{Doctor John Gmerek Huerta}
        \end{align*}

        \vspace{0.5cm}

        \normalsize
        \textbf{Thesis approved in public session to obtain the PhD Degree in}

        \vspace{0.25cm}

        \large
        \textit{Mathematics}

        \vspace{0.5cm}

        \normalsize
        \textbf{Jury final classification:} \text{Pass with Distinction}

    \vspace{0.5cm}
    \normalsize \textbf{Jury}
    \vspace{0.25cm}
    \end{center}

    \noindent\makebox[\textwidth][c]{\begin{minipage}[t]{.8\textwidth}
            \raggedright
            \normalsize
            \textbf{Chairperson:} \\
            \vspace{2mm} 
            Doctor Pedro Manuel Agostinho Resende, Instituto Superior Técnico, Universidade de Lisboa \\
            \vspace{3mm}
            \textbf{Members of the Committee:} \\
            \vspace{2mm}
            Doctor John Daniel Christensen, University of Western Ontario, Canad\'a \\
            \vspace{1.5mm}
            Doctor Daniel Grady, Wichita State University, EUA \\
            \vspace{1.5mm}
            Doctor Pedro Boavida de Brito, Instituto Superior Técnico, Universidade de Lisboa \\
            \vspace{1.5mm}
            Doctor John Gmerek Huerta, Instituto Superior Técnico, Universidade de Lisboa \\
        \end{minipage}}

\begin{center}
    \vspace{0.25cm}
    \normalsize \textbf{Funding Institution} \\
    Fundação para a Ciência e a Tecnologia (FCT)
    \vspace{0.25cm}
\end{center}

\begin{center}
    \vspace{0.25cm}
    \normalsize
    \textbf{2024}
\end{center}
\newpage 
\thispagestyle{empty}
\end{titlepage}

        \restoregeometry


        \chapter{Abstract}

\begin{spacing}{1.5}
Thin homotopies have been introduced by Caetano--Picken \cite{Caetano-Picken} to axiomatize the holonomy of connections on principal bundles. This approach has been generalized to higher non-abelian bundles with connection through transport functors and higher holonomies, at least in dimension two and partially in dimension three. In the abelian case, that is for higher circle bundles with connection, there are classical models, such as smooth Deligne cohomology and Cheeger--Simons differential characters, which do not rely on thin homotopies. However, they only work in the abelian case. \\

We introduce a new variant of thin homotopy, based on the definition of skeletal diffeologies introduced recently by Kihara \cite{KiharaSk}. We show that, in the abelian setting, ordinary differential cohomology can be completely recovered in terms of the homotopy theory of skeletal diffeological spaces. Specifically, to any smooth manifold $M$ and non-negative integer $k$, we associate a simplicial presheaf $\mathbb{M}_k$ such that its $k$th cohomology with values in the circle group $U(1)$ is isomorphic to the abelian group of Cheeger--Simons differential characters
\[
H^k_{\infty}(\mathbb{M}_k,U(1)) \cong \hat{H}^{k+1}(M;\mathbb{Z}).
\]
\newpage
To prove this, we show that differential characters are in one-to-one correspondence with smooth higher holonomy morphisms 
\[
h \colon H_{k}(\mathbb{M}_k) \rightarrow U(1).
\]
Here $H_k(\mathbb{M}_k)$ denotes the homology sheaf of $\mathbb{M}_k$ introduced by Jardine \cite{JardineBook}. This proof relies on a recent variant of the Riemann integral based on simplicial partitions.

Further, we compare the higher holonomies $h$ to the only other existing model in the literature, which was developed by Gajer \cite{GajerHigher} based on the geometric loop group $G(M,x_0)$. This is achieved by constructing a surjective homomorphism $G(M,x_0) \rightarrow \pi^D_1(M_1,x_0)$ from the geometric loop group into the $1$-skeletal diffeological fundamental group of the manifold $M$. \\

The main motivation for us is not to merely formulate differential cohomology in terms of the homotopy theory of simplicial presheaves, but to provide a model where the differential refinement is established by refining the space $M$, rather than the coefficient object. This has the advantage of possible generalizations to non-abelian differential cohomology, which is of particular interest since the existing approaches to non-abelian differential cohomology via transport functors are challenging to generalize to higher dimensions.
\end{spacing}
        \chapter{Resumo}

\begin{spacing}{1.5}
Homotopias finas foram introduzidas por Caetano--Picken \cite{Caetano-Picken} para axiomatizar a holonomia de conexões em fibrados principais. Esta abordagem foi generalizada para fibrados superiores não abelianos com conexões por meio de funtores de transporte e holonomias superiores, pelo menos em dimensão dois e parcialmente em dimensão três. No caso abeliano, ou seja, para fibrados circulares superiores com conexão, existem modelos clássicos, como a cohomologia suave de Deligne e os caracteres diferenciais de Cheeger--Simons, que não dependem de homotopias finas. No entanto, eles funcionam apenas no caso abeliano. \\

Apresentamos uma nova variante de homotopia fina, baseada na definição de difeologias esqueléticas introduzida recentemente por Kihara \cite{KiharaSk}. Mostramos que no caso abeliano, a cohomologia diferencial ordinária pode ser completamente recuperada em termos da teoria de homotopia de espaços difeológicos esqueléticos. Especificamente, para qualquer variedade suave $M$ e número inteiro não negativo $k$, associamos um pré-feixe simplicial $\mathbb{M}_k$, de modo que a sua cohomologia $k$-ésima com valores no grupo de círculo $U(1)$ é isomórfa ao grupo abeliano de caracteres diferenciais de Cheeger--Simons
\[
H^k_{\infty}(\mathbb{M}_k,U(1)) \cong \hat{H}^{k+1}(M;\mathbb{Z}).
\]
\newpage 
Para fazer a prova, mostramos que os caracteres diferenciais estão em correspondência biunívoca com morfismos suaves de holonomia superior
\[
h \colon H_{k}(\mathbb{M}_k) \rightarrow U(1).
\]
Aqui, $H_k(\mathbb{M}_k)$ denota o feixe de homologia de $\mathbb{M}_k$ introduzido por Jardine \cite{JardineBook}. Esta prova baseia-se numa variante recente da integral de Riemann que utiliza partições simpliciais.

Além disso, comparamos as holonomias superiores $h$ com o único outro modelo existente na literatura, desenvolvido por Gajer \cite{GajerHigher}, com base no grupo geométrico de laço $G(M,x_0)$. Isso é alcançado construindo um homomorfismo sobrejectivo $G(M,x_0) \rightarrow \pi^D_1(M_1,x_0)$ do grupo geométrico de laço no grupo fundamental difeológico $1$-esquelético da variedade $M$. \\

A nossa principal motivação não é apenas formular a cohomologia diferencial em termos da teoria de homotopia de pré-feixes simpliciais, mas fornecer um modelo onde o refinamento diferencial é estabelecido refinando o espaço $M$, em vez do coeficiente. Esta abordagem tem a vantagem de poder ser generalizada para a cohomologia diferencial não abeliana. Esta generalização é de particular interesse, uma vez que as abordagens existentes para a cohomologia diferencial não abeliana por meio de funtores de transporte são difíceis de generalizar para dimensões superiores.
\end{spacing}
        \chapter{Keywords}

\begin{itemize}
    \item differential cohomology
    \item local homotopy theory
    \item diffeological spaces
    \item higher gauge theory
    \item differential characters
\end{itemize}

\vspace{10mm}

{\let\clearpage\relax\chapter*{Palavras-Chave}}

\begin{itemize}
    \item cohomologia diferencial
    \item teoria de homotopia local
    \item espaços difeológicos
    \item teoria de gauge superior 
    \item caracteres diferenciais
\end{itemize}
        \chapter{Acknowledgments}

\begin{spacing}{1.5}
To the following people and organizations, whose support and guidance have been crucial in the completion of my Ph.D.: \\

First, I would like to express my gratitude to my thesis supervisors, Doctor John Huerta and Doctor Roger Picken, for their mentorship, constant availability, and insights throughout my PhD. It was great luck to have the opportunity to learn from both of you. \\

I would like to thank Doctor Miguel Abreu, the Program Director of the Lisbon Mathematics Ph.D. program (LisMath), and Doctor Gabriel Lopes Cardoso, the Coordinator of the Doctoral Program in Mathematics at Instituto Superior Técnico, for always being available and coordinating various academic activities. Also, I acknowledge the Lisbon Mathematics Ph.D. program for sponsoring my research visit at the Department of Mathematics and Statistics at Texas Tech University. Further, I acknowledge the Fundação para a Ciência e a Tecnologia (FCT) for their financial support during my PhD studies. I also thank the Centre for Mathematical Analysis, Geometry, and Dynamical Systems (CAMGSD) for sponsoring my attendance at the Barcelona Conference on Higher Structures, and at the Leeds Research School on Bicategories, Categorification and Quantum Theory, providing me with opportunities to engage with the international academic community. \\

To Doctor Dmitri Pavlov at Texas Tech University, Lubbock, for hosting me and engaging in many insightful discussions, which had a big impact on my thesis, as well as to his students for their warm welcome. My time in Texas has been one of the highlights of my Ph.D. journey. My deep gratitude to Doctor Pedro Boavida de Brito, Doctor Manuel Araújo, and Doctor Giorgio Trentinaglia for the stimulating discussions and exchange. A big thank you also to the ``old guard'' – Vicente, Marti, Carlos, Salvatore, Augusto, Max, Paolo, Roberto, and Miguel – without you my first years would not have been the same. \\

I am particularly grateful for the many friends I have made throughout these years at Tecnico – my academic brother Arber for the many pointless debates, for always listening to my countless questions, and for introducing me to the politics of Kosovo, \textit{faleminderit}; my academic mother Rodrigo for all the philosophical, scientific and mathematical discussions and especially for believing in me, even when I did not; Fred for the many unnecessary Portuguese words and for trying to teach me to talk like a real lisboeta; Ana for the shared adventure in the housing market in Lisbon; Javi, Robbie, and Diogo, whose shared passion for mathematics has been always inspiring. \\

To all my friends in Lisbon and back home in Switzerland for the shared moments of joy and to my family for connecting me with home even when far away.

Last but not least, to Nadia, my constant source of joy and strength throughout the highs and lows of this journey.
\end{spacing}

        \tableofcontents

\mainmatter

\pagestyle{fancy}
\fancyhead[RO, LE]{\thepage}
\fancyhead[LO]{\slshape\nouppercase{\rightmark}}
\fancyhead[RE]{\slshape\nouppercase{\leftmark}}
\fancyfoot{} 

\chapter{Introduction}

When asked to draw a smooth homotopy between two paths $\gamma_1$ and $\gamma_2$
in the plane, both sharing the same start and endpoints, most of us would
probably come up with a picture like this:
\begin{center}
\begin{tikzpicture}[x=0.75pt,y=0.75pt,yscale=-1,xscale=1]

\draw [fill={rgb, 255:red, 208; green, 2; blue, 27 }  ,fill opacity=0.37 ]   (66,132) .. controls (106,102) and (171.33,81.67) .. (242,132) ;
\draw [shift={(242,132)}, rotate = 35.46] [color={rgb, 255:red, 0; green, 0; blue, 0 }  ][fill={rgb, 255:red, 0; green, 0; blue, 0 }  ][line width=0.75]      (0, 0) circle [x radius= 2.01, y radius= 2.01]   ;
\draw [shift={(158.21,101.49)}, rotate = 180.57] [color={rgb, 255:red, 0; green, 0; blue, 0 }  ][line width=0.75]    (6.56,-2.94) .. controls (4.17,-1.38) and (1.99,-0.4) .. (0,0) .. controls (1.99,0.4) and (4.17,1.38) .. (6.56,2.94)   ;
\draw [shift={(66,132)}, rotate = 323.13] [color={rgb, 255:red, 0; green, 0; blue, 0 }  ][fill={rgb, 255:red, 0; green, 0; blue, 0 }  ][line width=0.75]      (0, 0) circle [x radius= 2.01, y radius= 2.01]   ;
\draw [fill={rgb, 255:red, 208; green, 2; blue, 27 }  ,fill opacity=0.37 ]   (66,132) .. controls (118,194.33) and (186.67,192.33) .. (242,132) ;
\draw [shift={(242,132)}, rotate = 312.52] [color={rgb, 255:red, 0; green, 0; blue, 0 }  ][fill={rgb, 255:red, 0; green, 0; blue, 0 }  ][line width=0.75]      (0, 0) circle [x radius= 2.01, y radius= 2.01]   ;
\draw [shift={(157.34,177.85)}, rotate = 178.38] [color={rgb, 255:red, 0; green, 0; blue, 0 }  ][line width=0.75]    (6.56,-2.94) .. controls (4.17,-1.38) and (1.99,-0.4) .. (0,0) .. controls (1.99,0.4) and (4.17,1.38) .. (6.56,2.94)   ;
\draw [shift={(66,132)}, rotate = 50.16] [color={rgb, 255:red, 0; green, 0; blue, 0 }  ][fill={rgb, 255:red, 0; green, 0; blue, 0 }  ][line width=0.75]      (0, 0) circle [x radius= 2.01, y radius= 2.01]   ;

\draw (144,68.4) node [anchor=north west][inner sep=0.75pt]    {$\gamma _{1}$};
\draw (145,189.4) node [anchor=north west][inner sep=0.75pt]    {$\gamma _{2}$};
\draw (144,125.4) node [anchor=north west][inner sep=0.75pt]    {$H$};

\end{tikzpicture}
\end{center}
Less familiar is the idea of a \textit{thin homotopy}, which, informally, is a
homotopy not allowed to sweep out any area. Consider the case of a path
$\gamma_1$ from $x$ to $y$. Along this path lies the point $z$. The second path
$\gamma_2$ also starts at $x$ but at point $z$ we decide to take a detour to
point $z'$ and back, before continuing along the usual journey to $y$.
\begin{center}
\begin{tikzpicture}[x=0.75pt,y=0.75pt,yscale=-1,xscale=1]

\draw    (77,143) .. controls (132,117.8) and (167.2,127.8) .. (209,143) .. controls (250.8,158.2) and (294,124) .. (319,99) ;
\draw [shift={(319,99)}, rotate = 315] [color={rgb, 255:red, 0; green, 0; blue, 0 }  ][fill={rgb, 255:red, 0; green, 0; blue, 0 }  ][line width=0.75]      (0, 0) circle [x radius= 2.01, y radius= 2.01]   ;
\draw [shift={(77,143)}, rotate = 335.38] [color={rgb, 255:red, 0; green, 0; blue, 0 }  ][fill={rgb, 255:red, 0; green, 0; blue, 0 }  ][line width=0.75]      (0, 0) circle [x radius= 2.01, y radius= 2.01]   ;
\draw    (209,143) .. controls (249,113) and (180,63) .. (220,33) ;
\draw [shift={(220,33)}, rotate = 323.13] [color={rgb, 255:red, 0; green, 0; blue, 0 }  ][fill={rgb, 255:red, 0; green, 0; blue, 0 }  ][line width=0.75]      (0, 0) circle [x radius= 2.01, y radius= 2.01]   ;
\draw [shift={(209,143)}, rotate = 323.13] [color={rgb, 255:red, 0; green, 0; blue, 0 }  ][fill={rgb, 255:red, 0; green, 0; blue, 0 }  ][line width=0.75]      (0, 0) circle [x radius= 2.01, y radius= 2.01]   ;

\draw (82.8,145.4) node [anchor=north west][inner sep=0.75pt]    {$x$};
\draw (327.2,81.4) node [anchor=north west][inner sep=0.75pt]    {$y$};
\draw (228.6,16.4) node [anchor=north west][inner sep=0.75pt]    {$z'$};
\draw (205.8,147.8) node [anchor=north west][inner sep=0.75pt]    {$z$};
\end{tikzpicture}
\end{center}
These two paths are now thin homotopic, where the thin homotopy $H$ is smoothly
contracting the detour segment $[z,z']$ down to the point $z$. \newpage

Various formal definitions of thin homotopies have been proposed over time. For
instance a homotopy $H \colon [0,1]^2 \rightarrow \mathbb{R}^2$ is called
\text{thin},
\begin{enumerate}[label={(\arabic*)}]
    \item if the rank of the derivative of $H$ is strictly smaller than 2, considered as the standard definition of thin homotopy in the literature,
    \item if there is a smooth map $h \colon M \rightarrow \mathbb{R}^2$ such that $H([0,1]^2) \subset h(M)$ for $M$ some smooth manifold of $\mathrm{dim} \, M < 2$,
    \item if the homotopy $H \colon [0,1]^2 \rightarrow \mathbb{R}^2$ factors through a finite tree $T$,
    \item \label{Item: Skeletal Diffeology Thinness} if the homotopy $H$ factors locally through some open subset of $\mathbb{R}^1$.
\end{enumerate}
Most of these definitions of thinness have been introduced to study bundles with
connections, where they are essential to formalize the associated holonomy or
parallel transport. That is, the holonomy, or more generally the parallel
transport along a path $\gamma$ in some manifold $M$ is known to satisfy a list
of axioms. Most prominently, invariance under reparametrizations and that the
parallel transport along the reversed path $\gamma^{-1}$ agrees with the inverse
of the transport along $\gamma$. Further, parallel transport is also functorial
with respect to concatenation of paths. All these axioms suggest that the
holonomy of a principal $G$-bundle with connection is inherently captured by a
group homomorphism
\[
\mathrm{hol} \colon \Omega(M,x_0) \rightarrow G,
\]
from the ``group of based loops" in $M$ into the structure group $G$. Here, thin
homotopies come into play, transforming the space of loops $\Omega(M,x_0)$ into
an honest group by considering the thin homotopy classes of loops
$\pi_1^1(M,x_0)$, turning holonomy into a smooth homomorphism
\[
\mathrm{hol} \colon \pi_1^1(M,x_0) \rightarrow G.
\]

In this thesis, we study a new notion of thin homotopy based on the homotopy
theory of \textit{skeletal diffeological spaces}, which corresponds to example
\ref{Item: Skeletal Diffeology Thinness} from the previous list. This new take
on thin homotopies emerged from the current research on \textit{geometric field
theories} initiated by Grady--Pavlov \cite{Grady-Pavlov-Local,
Grady-Pavlov-GCH}. To every diffeological space $X$ we associate the
$d$-skeletal diffeological space $X_d$ where $d$ is a non-negative integer. The
diffeology on $X_d$ is the one generated from the plots of $X$ of dimension
$\leq d$. That means, a function $p \colon U \rightarrow X_d$ for $U \subset
\mathbb{R}^n$ some open subset defines a plot for $X_d$ if and only if it
locally factors trough some plot $ q \colon W \rightarrow X$ of $X$ where $W$ is
of dimension $\leq d$. For $d = 1$, the 1-skeletal diffeology introduced by
Souriau is known as the wire diffeology or spaghetti diffeology. Their
generalization to arbitrary $d$ is attributed to Kihara \cite{KiharaSk}. \\

Baez--Hoffnung \cite{BaezHoffnung} showed that diffeological spaces are
equivalent to \textit{concrete sheaves on the site of Cartesian spaces}, which
then embed into the much more general category of \textit{simplicial presheaves
on Cartesian spaces}. Along this route, given any smooth manifold $M$, the
associated $k$-skeletal diffeological space $M_k$ generalizes and defines a
simplicial presheaf on Cartesian spaces denoted $\mathbb{M}_k$. This simplicial presheaf assigns to some Cartesian space
$U$ the simplicial set $S_e(D(U,M)_k)$ given by the extended smooth singular complex of the diffeological function space
$D(U,M)_k$ endowed with the $k$-skeletal diffeology. The main result
of this thesis relates the cohomology of $\mathbb{M}_k$ with the differential
cohomology of $M$.

\begin{theorem*}[Theorem \ref{Theorem: The Main Theorem}]
Let $M$ be a smooth manifold without boundary. There is an isomorphism of abelian groups
\begin{equation} \label{Equation: The Main isomorphism}
\hat{H}^{k+1}(M;\mathbb{Z}) \xrightarrow{\cong} H^k_{\infty}(\mathbb{M}_k,U(1))
\end{equation}
identifying the Cheeger--Simons differential characters with the $k$th stack
cohomology of the simplicial presheaf $\mathbb{M}_k$ with coefficients in
$U(1)$.
\end{theorem*}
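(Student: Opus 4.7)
The overall plan is to factor the desired isomorphism through the intermediate abelian group of smooth higher holonomy homomorphisms $h \colon H_k(\mathbb{M}_k) \to U(1)$, exactly along the lines announced in the abstract. Concretely, I would produce two isomorphisms
\[
\hat{H}^{k+1}(M;\mathbb{Z}) \xrightarrow{\;\cong\;} \Hom{Sh}(H_k(\mathbb{M}_k), U(1)) \xleftarrow{\;\cong\;} H^k_\infty(\mathbb{M}_k, U(1)),
\]
where $\Hom{Sh}(-,-)$ denotes morphisms of sheaves of abelian groups on the site of Cartesian spaces, and then compose them.

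The right-hand isomorphism should be essentially formal. Since $U(1)$ is divisible, it is injective as a $\mathbb{Z}$-module, and the universal-coefficient spectral sequence relating the local-injective cohomology of a simplicial presheaf to Jardine's homology sheaves collapses on the $\mathrm{Hom}$-line. After verifying that the $\mathrm{Ext}$-contributions from the lower Jardine homology sheaves vanish for the injective coefficient $U(1)$, this yields the desired identification of the $k$-th stack cohomology with a sheaf $\mathrm{Hom}$.

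For the left-hand isomorphism I would unfold the definition of a Cheeger--Simons character as a homomorphism $\chi \colon Z_k(M;\mathbb{Z}) \to U(1)$ satisfying $\chi(\partial c) = \exp\bigl(2\pi i \int_c \omega_\chi\bigr)$ on smooth $(k+1)$-chains $c$. Every smooth $k$-plot $\sigma \colon \Delta^k \to M$ is tautologically a $k$-simplex of $\mathbb{M}_k$, and smoothly $U$-parametrized families of such cycles realize sections of $H_k(\mathbb{M}_k)$ over a Cartesian space $U$. The curvature relation is precisely what ensures smooth dependence on such parameters, so $\chi$ promotes to a sheaf morphism $\Psi(\chi)$.

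The inverse direction is where the main difficulty lies and where the specific machinery of this thesis enters. Given a smooth morphism $h \colon H_k(\mathbb{M}_k) \to U(1)$, I would reconstruct the curvature form $\omega_h$ by evaluating $h$ on shrinking smooth families of small $k$-simplices around each point of $M$, using the simplicial-partition variant of the Riemann integral alluded to in the abstract. The $k$-skeletal condition on $\mathbb{M}_k$ is exactly the regularity needed for this infinitesimal evaluation to produce a well-defined smooth $(k+1)$-form, and the simplicial Riemann integral then identifies $h(\partial c)$ with $\exp\bigl(2\pi i \int_c \omega_h\bigr)$ for smooth $(k+1)$-chains $c$. The integer cohomology class of the resulting differential character is read off from the values of $h$ on representatives of integral homology classes of $M$. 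The hard part will be the smoothness analysis: showing that $\Psi(\chi)$ is a genuine morphism of sheaves on Cartesian spaces, and conversely that an abstract smooth $h$ differentiates to an honest smooth $(k+1)$-form rather than merely a continuous or distributional object. This is precisely the regularity bottleneck for which skeletal diffeologies and the simplicial Riemann integral are designed.
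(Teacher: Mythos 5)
Your proposal follows essentially the same route as the paper: factoring through $\mathrm{Hom}_{\mathrm{Sh}(\mathbf{Cart},\mathbf{Ab})}(\tilde{H}_k(\mathbb{M}_k),U(1))$, collapsing the universal-coefficient spectral sequence via divisibility of $U(1)$ and constancy of the low-degree homology sheaves, sending a differential character to a smooth holonomy morphism using the curvature relation for smoothness, and recovering the curvature form from a holonomy morphism by infinitesimal evaluation together with the simplicial (Van Est) Riemann integral. The one small omission is the $k=0$ base case, which the paper treats separately in Proposition \ref{Proposition: The case k = 0} since the spectral-sequence argument and the $k$-good-neighborhood smoothness argument require $k>0$.
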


The proof of this theorem splits into two parts. The cohomology of the
simplicial presheaf $\mathbb{M}_k$ formalized by the local homotopy theory of
simplicial presheaves on Cartesian spaces can be computed in this particular
case using a generalized version of the universal coefficient spectral sequence.
This involves computing higher extensions in the category of sheaves of abelian
groups on Cartesian spaces. In the case of the circle group, this leads to the
recognition of $H^k_{\infty}(\mathbb{M}_k,U(1))$ as the abelian group of
morphisms of presheaves $\mathrm{Hom}_{\mathrm{PSh}(\mathbf{Ab})} \left( H_k
\left( \mathbb{M}_k \right), U(1) \right)$.

In the second part, we focus on constructing an isomorphism
\begin{equation} \label{Equation: The higher holonomy isomorphism}
\hat{H}^{k+1}(M;\mathbb{Z}) \cong \mathrm{Hom}_{\mathrm{PSh}(\mathbf{Ab})} \left( H_k \left( \mathbb{M}_k \right), U(1) \right)
\end{equation}
which maps a differential character to the corresponding smooth higher holonomy
homomorphism. The essential challenge here lies in assigning a curvature form
$A_h$ to any morphism of presheaves $h$. To do so, we adapt the approach of
Schreiber--Waldorf \cite{Schreiber-Waldorf-Smooth} to the simplicial setting and
use a new take on the Riemann integral, based on triangulations instead of the
classical cubic partitions.

A similar isomorphism as \eqref{Equation: The higher holonomy isomorphism} has
been constructed by Gajer \cite{GajerHigher} using a geometric version of Kan's
simplicial loop group $G(M,x_0)$. This \textit{geometric loop group} is
identified as the fiber of a bundle $E(M,x_0) \rightarrow M$ called the
geometric cobar construction. While reformulating Gajer's geometric cobar
construction in diffeological terms, we confirm the assessment previously made
by Ghazel--Kallel \cite{Ghazel-Kallel} that the proof of the contractibility of
$E(M,x_0)$ is incomplete. We then show that, in the realm of \textit{thin loop
groups}, the geometric loop group carries a universal property in the sense that
there is a commutative diagram
\begin{center}
\begin{tikzcd}
\Omega(M,x_0) \arrow[d, "q" description] \arrow[rd, "q_1" description, shift left] \arrow[rrrd, "q" description, shift left=3] \arrow[rrd, "q_{\leq 1}" description, shift left=2] &                                             &                                 &          \\
G(M,x_0) \arrow[r, "\theta_1"'] & \pi^{D}_1(M_1,x_0) \arrow[r, "\theta_{\leq1}"'] & \pi_1^1(M,x_0) \arrow[r, "\theta"'] & \pi_1(M,x_0)
\end{tikzcd}
\end{center}
sending a \textit{thin class} $[\gamma]_G$ of a loop $\gamma \in \Omega(M,x_0)$ to the next stronger notion of thinness $[\gamma]_1$ and $[\gamma]_{\leq 1}$ until we arrive at the ordinary homotopy class $[\gamma]$. This insight is then used to construct a connecting homomorphism
\begin{equation}\label{Equation: Connecting Homomorphism}
\mathrm{Hom}_{\mathrm{PSh}(\mathbf{Cart}, \mathbf{Ab})} \left( H_{k}(\mathbb{M}_k), U(1)   \right) \rightarrow \mathrm{Hom}_{\mathbf{DiffGrp}} \left(  G^{k}(M) , U(1) \right)
\end{equation}
comparing the higher holonomy morphisms constructed in this thesis on the left-hand side with the existing ones presented in \cite{GajerHigher} in the case the manifold $M$ is $(k-1)$-connected. \\

\section{Motivation}

The main motivation behind \eqref{Equation: The Main isomorphism} is not to
merely formulate differential cohomology in terms of the homotopy theory of
simplicial presheaves, but to provide a model where the differential refinement
is established by refining the space $M$, rather than the coefficient object.
This has the advantage of possible generalizations to non-abelian differential
cohomology and is of particular interest since the Cheeger--Simons differential
characters, while being astonishingly simple to define, can not be generalized
to characterize higher \textit{non-abelian bundles with connection}. Non-abelian
cohomology, on the other side, in its most general form is defined as the
cohomology of a simplicial presheaf taking values in a non-abelian coefficient
object such as 2-groups. This is exactly the framework used to define
$H^k_{\infty}(\mathbb{M}_k,U(1))$. Further, the existing approaches to
non-abelian differential cohomology via transport functors are challenging to
generalize to higher dimensions, as they do not rely on simplicial
constructions.    \\

Another motive is the recent development in geometric field theory by
Grady--Pavlov that uses a similar notion of thinness and is phrased in terms of
simplicial presheaves on Cartesian spaces. This allows for the speculation that
a higher circle bundle with connection viewed as an element of
$H^k_{\infty}(\mathbb{M}_k,U(1))$ might define a geometric field theory. This
field theory assigns to a bordism endowed with a smooth map into the manifold
$M$ the higher parallel transport along the worldvolume parametrized by the
bordism in $M$. This view on (higher) bundles with connection has been
considered by Segal \cite{Segal}, Stolz--Teichner \cite{Stolz-Teichner} and more
recently by Berwick-Evans--Pavlov \cite{Berwick-Pavlov}.  \\

The importance of understanding higher bundles with connection as geometric
field theories is related to the study of $\sigma$-models whose action is not
globally defined\footnote{Such an action is usually called a topological action,
see for example \cite{Gawedski}.}. To fully understand the path-integral
quantization of such a $\sigma$-model, it is necessary to search for a geometry
where this action finds a global interpretation. This geometry is called a
pre-quantum geometry. The easiest example of this kind is given by the
\textit{Dirac charge quantization}, that is the geometric quantization of
ordinary $U(1)$ gauge theory. This is the classical field theory of an electron
propagating in an electromagnetic field on a 4-dimensional spacetime manifold
$(M,g)$. In this setting, the electric field $E$ and the magnetic field $B$ can
be joined locally to give a differential $2$-form known as the \textit{Faraday
tensor} $F = B + E \wedge dt$. Then Maxwell's equations take the form of $dF  =
0$ and $d *F = J$ where here $*$ denotes the Hodge star operator determined by
the pseudo-Riemannian metric $g$ and $J$ denotes the electric current. \\
\newpage

The propagation of a particle of mass $m$ and charge $q$ on a 4-dimensional
pseudo-Riemannian manifold $(M,g)$ is described via a classical $\sigma$-model.
That is, we fix a \textit{target space} $(M,g)$ considered to be spacetime and a
\textit{parameter space} $\Sigma = \mathbb{R}$, also called worldline. The
parameter space $\Sigma$ is thought of as the abstract trajectory of the point
particle. In the presence of an electromagnetic field, the kinetic action
functional
\[
\gamma \in C^{\infty}(\Sigma,M) \longmapsto S_{\mathrm{kin}}(\gamma) := m \int_{\Sigma} \mathrm{dvol}(\gamma^*g)
\]
needs to be extended or \textit{coupled} with an appropriate \textit{gauge action} $S_{\mathrm{gauge}}(-)$. Using the fact that $dF = 0$ we can find an open covering $\left\{ U_i \right\}$ of $M$ such that locally $dA_i = F|_{U_i}$. This allows us to define the amplitude of the gauge action locally, i.e. assuming we are given $\gamma \colon \Sigma \rightarrow U_i$ by
\[
\mathrm{exp} \left( 2\pi i S_{\mathrm{gauge}}(\gamma) \right)= \mathrm{exp} \left( 2\pi i q \int_{\Sigma} \gamma^* A_i \right),
\]
or globally in the case $\gamma$ is a contractible loop via an extension $\tilde{\gamma} \colon D^2 \rightarrow M$ by
\begin{equation} \label{Equation: Contractible loop holonomy}
\mathrm{exp} \left( 2\pi i S_{\mathrm{gauge}}(\gamma) \right)= \mathrm{exp} \left( 2\pi i q \int_{D^2} \tilde{\gamma}^* F \right).
\end{equation}

However, to extend the gauge action of a charged relativistic particle in the presence of an electromagnetic background field $F$ to a more general class of paths, the appropriate geometry is provided by a principal $U(1)$-bundle with connection $A$ and curvature $F$. Then the amplitude is defined for any closed loop $\gamma \colon S^1 \rightarrow M$ by
\begin{align*}
\mathrm{exp} \left( 2\pi i S(\gamma) \right) &= \mathrm{exp}( 2 \pi i S_{\mathrm{kin}}(\gamma))\mathrm{exp}(2 \pi i S_{\mathrm{gauge}}(\gamma)) \\
&= \mathrm{exp}(2 \pi i S_{\mathrm{kin}}(\gamma)) \mathrm{hol}(A,\gamma),
\end{align*}
where the amplitude of the gauge action equals the holonomy of the of the connection $A$ along the loop $\gamma$. The condition for $F$ to be the curvature of a connection of a $U(1)$-bundle is also known as \textit{Dirac's charge quantization condition}. It is precisely this condition that makes the amplitude \eqref{Equation: Contractible loop holonomy} well defined, i.e. independent of the choice of extension $\tilde{\gamma}$.  \\

The observation that for closed loops $\gamma$ the amplitude $\mathrm{exp}(2 \pi i S_{\mathrm{gauge}}(\gamma)) = \mathrm{hol}(A,\gamma)$ is given by the holonomy, indicates that the global gauge action assigns to an open path $\gamma \colon \Sigma \rightarrow M$ the associated parallel transport along $\gamma$. The interpretation of the exponentiated gauge action functional as parallel transport suggests that the $\sigma$-model of a charged particle propagating on a spacetime manifold $M$ in the presence of a background gauge field $A$ is fully captured by an associated \textit{geometric field theory over the target manifold} $M$ encoding the data of the circle bundle with connection $A$. \\

Looking at higher dimensional $\sigma$-models in the presence of higher background gauge fields, such as the Wess--Zumino--Witten $\sigma$-model, the global definition of the gauge action requires likewise a higher geometrical object. The background gauge field is then described by a connection on the higher circle bundle. In dimension $d = 2$ the WZW $\sigma$-model requires the notion of a bundle gerbe with connection, as observed first by Gawȩdzki in \cite{Gawedski}. The need for even higher principal bundles with connections then appeared in the study of the associated $\sigma$-model to $3$-dimensional classical Chern--Simons theory, where the action can be interpreted as the parallel transport of a connection on the Chern--Simons circle 3-bundle \cite{Gomi, CJMSW}. The vast applications of higher gauge theory in mathematical physics then motivated further research in particular on the relationship between parallel transport and higher connections and their non-abelian analogs by the program of Schreiber--Waldorf \cite{Schreiber-Waldorf-NonAb}. \\

So far, the higher bundles and gauge fields considered were always abelian. However, the worldvolume theory of several coincident five-branes in M-theory possibly contains a non-abelian higher gauge field, as argued in \cite{Aschieri, Fiorenza-Sati-Schreiber}. It is therefore crucial to also understand the role of higher dimensional non-abelian parallel transport. The appearance of higher gauge fields in string theory and supergravity always served as a major motivation behind the development of higher gauge theory. As such, it is one of the various mathematical theories that grew out of the very rich interaction between physics and mathematics.

\section{Outline}

Chapter \ref{Chapter: Higher Gauge Theory} introduces the most common models of higher circle bundles with connection. Starting with ordinary circle bundles with connection we then introduce bundle gerbes, bundle gerbe connections, and curvings following \cite{Murray}. Next, we introduce the smooth Deligne complex and show how a degree 2 \v{C}ech-Deligne cocycle encodes the data of a bundle gerbe with connection and curving. To conclude the chapter we present a conceptually much simpler model equivalent to smooth Deligne cohomology, the \textit{Cheeger--Simons differential characters} and finish by introducing Gajer's higher holonomy morphisms in the last section. \\

Chapter \ref{Chapter: Diffeological Spaces} gives a short introduction to diffeological spaces and their characterization as concrete sheaves on the site of Cartesian spaces. We follow here the outline of Baez--Hoffnung \cite{BaezHoffnung}. After a short section on diffeological principal bundles, we switch to the homotopy theory of diffeological spaces. Here the main reference is Christensen--Wu \cite{Christensen-Wu}. \\

Chapter \ref{Chapter: Local Homotpy Theory} starts with a short introduction to model categories with a particular emphasis on model structures on functor categories. Then we introduce the Local and \v{C}ech-local model structure on the category of simplicial presheaves on a site. For the local model structure we follow Jardine \cite{JardineBook}. In the remaining section of the chapter, we introduce the cohomology of simplicial presheaves together with the main computational tool later used, the universal coefficient spectral sequence. This section finishes with the computation of extensions in the abelian category of sheaves of abelian groups on Cartesian spaces. \\

The proof of the main result \eqref{Equation: The Main isomorphism} then occupies all of Chapter \ref{Chapter: Skeletal Diffeologies and Differential Characters}. First, we use the results of the previous chapter to compute the cohomology $H^k_{\infty}(\mathbb{M}_k,U(1))$ via the universal coefficient spectral sequence. Then the isomorphism \eqref{Equation: The higher holonomy isomorphism} is constructed generalizing ideas of Schreiber--Waldorf \cite{Schreiber-Waldorf-Parallel}. More precisely, we first assign to any smooth holonomy homomorphism $h$ a curvature form $A_h$. To check that the form $A_h$ is indeed a curvature form in the sense of Cheeger--Simons, we use a coordinate-free variant of the Riemann integral. This approach, due to \cite{Lackman}, defines the Riemann sums via the Van Est homomorphism for Lie groupoids. \\

In Chapter \ref{Chapter: Geometric Loop Groups} we introduce the geometric loop group $G(M,x_0)$ of Gajer \cite{GajerHigher} and show that for any diffeological space $(X,x_0)$ there is a surjective homomorphism \newline $\theta_1 \colon G(X,x_0) \rightarrow \pi_1(X_1,x_0)$. We also argue why the proof of the contractibility of $E(M,x_0)$ is incomplete, which, however, does not affect the construction of the morphism $\theta_1$. This morphism is then used iteratively to construct the connecting homomorphism \eqref{Equation: Connecting Homomorphism} in the case $M$ is an $(k-1)$-connected manifold.  \\

The thesis then concludes with an outlook \ref{Chapter: Outlook} motivating how to extend the model $H^k_{\infty}(\mathbb{M}_k,U(1))$ to the non-abelian setting. That is, we present an outline of how to find a possible differential refinement for degree 2 non-abelian cohomology $H^2_{\infty}(M,\mathfrak{G})$ for $\mathfrak{G}$ some Lie 2-group via an iterated application of skeletal diffeologies of increasing dimension.

\chapter{Higher Gauge Theory} \label{Chapter: Higher Gauge Theory}

As outlined in the introduction, higher gauge theory serves as an essential tool
to fully understand the path integral quantization of higher dimensional
classical field theories, in particular their associated notion of
\textit{higher dimensional parallel transport}. We start this chapter by first
recalling ordinary circle bundles and their consecutive higher-dimensional
counterpart called bundle gerbes. We then proceed by introducing the smooth
Deligne complex and argue that the associated \v{C}ech-cohomology
$H^*(M,\mathbb{Z}(n+1)_D^{\infty})$ classifies circle $n$-bundles with
connection over a smooth manifold $M$. Next, we introduce the Cheeger--Simons differential characters
providing an equivalent but technically much simpler model than Deligne
cohomology classes. We conclude by introducing the higher holonomy morphisms of
Gajer based on an iterative application of the geometric loop group $G(M,x_0)$.

\section{Circle \texorpdfstring{$n$}{n}-bundles with connection}

\subsection{Principal Bundles with Connection}

Let us start first with the easiest and most familiar case, the case of principal $U(1)$-bundles with connection. As we eventually wish to give a classification using \v{C}ech cohomology, we define bundles with connections in terms of local data with respect to an open covering. 

\begin{definition} \label{Definition: Local Connection Data n=1}
    Let $M$ be a smooth manifold and let $\left\{ U_i \right\}_{i \in I}$ be a good open cover of $M$. The \textbf{local data of a principal} $U(1)$\textbf{-bundle with connection over } $M$ \textbf{with respect to} $\left\{ U_i \right\}_{i \in I}$ is given by:
    \begin{itemize}
        \item a collection of smooth functions $g_{ij} \colon U_{ij} \rightarrow U(1)$ defined on every two-fold intersection $U_{ij}$ called the transition functions, 
        \item a collection of differential forms $A_i \in \Omega^1(U_i)$ called local connection 1-forms, 
    \end{itemize}
    satisfying the following relations. 
    \begin{itemize}
        \item The cocycle identity holds on every triple intersection $U_{ijk}$, that is
        \[
        g_{jk}g_{ij} = g_{ik}.
        \]
        \item The local connection 1-forms transform on every two-fold intersection $U_{ij}$
        \[
        A_j = A_i  - d\mathrm{log} g_{ij}.
        \]
    \end{itemize}
\end{definition}

\subsection{Circle 2-bundles with Connection}

What we will call a \textit{circle 2-bundle} over a smooth manifold $M$ in this section has appeared under various names in the literature. To clarify, Table \ref{Table: The Zoo of Gerbes} presents an overview of various objects we refer to as higher (nonabelian) principal bundles.

\begin{table}[h!]
\caption {Terminology for gerbes} \label{Table: The Zoo of Gerbes}
\centering
\resizebox{\textwidth}{!}{%
    \begin{tabular}{c|c|c|c|c}
    & Name      & Construction  & Cohomology  & Structure              \\[5pt] 
    \hline
    \multirow{2}{*}{ \rule{0pt}{6ex} Giraud} & \rule{0pt}{5ex} \begin{tabular}[c]{@{}c@{}}gerbe banded by $G$\\ over $M$\end{tabular}  & \begin{tabular}[c]{@{}c@{}}sheaf of groupoids on $M$\\ i.e. a stack on $M$\end{tabular}  & $\check{H}^1(M,\mathrm{AUT}(G))$  & nonabelian   \\[10pt] \cline{2-5} 
                            & \rule{0pt}{5ex} \begin{tabular}[c]{@{}c@{}}abelian gerbe banded by $A$\\ over $M$\end{tabular}       & \begin{tabular}[c]{@{}c@{}}sheaf of groupoids on $M$\\ i.e. a stack on $M$\end{tabular}                                                       & $\check{H}^2(M,A)$                                                                       & abelian    \\[10pt] 
    \hline
    
    Brylinski  & \begin{tabular}[c]{@{}c@{}}gerbe with band $A$ \\ over $M$\end{tabular}  & \rule{0pt}{5ex} \begin{tabular}[c]{@{}c@{}}sheaf of groupoids on $M$\\ i.e. a stack on $M$\end{tabular} & $\check{H}^2(M,A)$  & abelian     \\[10pt] 
    \hline
     Murray  & bundle gerbe over $M$  & \rule{0pt}{8ex}  \begin{tabular}[c]{@{}c@{}}local principal $U(1)$-bundles\\ satisfying coherent gluing \\ conditions via the  \\ bundle gerbe product\end{tabular} & \begin{tabular}[c]{@{}c@{}}$\check{H}^2(M,U(1))$ \\ $\cong H^3(M,\mathbb{Z})$\end{tabular} & abelian \\[26pt]
     \hline
     Bartels & \begin{tabular}[c]{@{}c@{}}principal $\mathfrak{G}$-2-bundle\\ over $M$\end{tabular} & \rule{0pt}{5ex}  \begin{tabular}[c]{@{}c@{}}categorified principal bundle\\ with structure Lie 2-group $\mathfrak{G}$\end{tabular}  & $\check{H}^1(M,\mathfrak{G})$  & nonabelian
    \end{tabular}
}
\end{table}
The first notion to
appear was that of a \textit{gerbe}, introduced by Giraud \cite{Giraud} to
construct nonabelian degree 2 cohomology. More precisely, considering a
nonabelian Lie group $G$ and some base manifold $M$, Giraud defined the nonabelian \v{C}ech cohomology $H^1(M, \mathrm{AUT}(G))$ to be the isomorphism classes of \textit{gerbes banded by} $G$ over $M$. A gerbe banded by $G$ is a special type of sheaf of groupoids over
$M$, i.e. what we call nowadays a \textit{stack} over $M$. The nonabelian cohomology classifying gerbes banded by $G$ has coefficients given by the automorphism Lie 2-group $\mathrm{AUT}(G)$ of $G$, which is why in \cite{Giraud} this cohomology is referred to as nonabelian degree 2 cohomology. Given $A$ an abelian Lie group, Giraud also defined \textit{abelian gerbes banded by} $A$ which are then classified by the usual degree 2 \v{C}ech cohomology $\check{H}^2(M,A)$ with coefficients in the sheaf $A$. It is important to note that for $A$ an abelian Lie group, the notion of a gerbe banded by $A$ and the notion of an abelian gerbe banded by $A$ are \textbf{not} equivalent. This is because for gerbes banded by $A$ the structure Lie 2-group is $\mathrm{AUT}(A)$, whereas for abelian gerbes banded by $A$, it is the delooping $BA$ of $A$. \\

Later, Brylinski and Deligne \cite{Brylinski} showed that degree 2 Deligne cocycles have a geometric interpretation in terms of \textit{gerbes with band} $U(1)$ endowed with \textit{a connective structure and curving}. In their terminology, gerbes with band $U(1)$ correspond to abelian gerbes banded by $U(1)$ in the sense of Giraud, and as such are classified by $\check{H}^2(M,U(1)) \cong H^3(M,\mathbb{Z})$. The integral cohomology class associated to any gerbe with band $U(1)$ is called the Dixmier--Douady class and is a generalization of the Chern class of a principal $U(1)$-bundle to one degree higher. \\

Giraud's approach to gerbes using sheaves of groupoids has the drawback of being rather involved and
obscuring the geometric intuition. In the case of the circle group $U(1)$, Murray \cite{Murray} introduced a new object also providing a geometric interpretation of the integral cohomology $H^3(M,\mathbb{Z})$ called \textit{bundle gerbes}. It is therefore no surprise that they are equivalent to abelian gerbes banded by $U(1)$. This much simpler construction offers a better geometric understanding and has been generalized to the nonabelian setting by Bartels \cite{Bartels} introducing principal $\mathfrak{G}$-2-bundles given any Lie 2-group $\mathfrak{G}$. These objects are then classified by the nonabelian \v{C}ech cohomology $\check{H}^1(M,\mathfrak{G})$ as shown by \cite{Nikolaus-Waldorf}. \\

Even if Murray's bundle gerbes provided an easier approach to abelian gerbes, it is still difficult to generalize them to higher categorical degrees. To overcome this issue, Gajer \cite{Gajer} constructed a differentiable structure on the iterated classifying space $B^nU(1)$ and showed that, in any degree, cohomology classes in $H^n(M, \mathbb{Z})$ correspond to isomorphism classes of \textit{smooth principal} $B^{n-2}U(1)$\textit{-bundles over} $M$. Further, he showed that the smooth Deligne complex provides the desired differential refinement of integral cohomology, i.e. the cohomology groups $H^{n+1}(M, \mathbb{Z}(n+1)_D^{\infty})$ classify isomorphism classes of smooth principal $B^{n-1}U(1)$-bundles with connections. \\

\begin{table}[h!]
\caption {Higher $U(1)$-bundles} \label{Table: Higher circle bundles}
\centering
\begin{tabular}{c|ccc|c}
degree  &  \multicolumn{3}{c|}{topological object}  & cohomology class  \\[5pt] 
\hline
\rule{0pt}{4ex} $n =1$  & \multicolumn{3}{c|}{principal $U(1)$-bundle} & $H^2(M,\mathbb{Z})$  \\[10pt] 
\hline
$n=2$   & \multicolumn{1}{c|}{\begin{tabular}[c]{@{}c@{}}abelian gerbe \\ banded by $U(1)$\end{tabular}}   & \multicolumn{1}{c|}{bundle gerbe}  & \begin{tabular}[c]{@{}c@{}}smooth principal \\ $BU(1)$-bundle\end{tabular}  & $H^3(M,\mathbb{Z})$   \\ 
\hline
$n = 3$ & \multicolumn{1}{c|}{\begin{tabular}[c]{@{}c@{}}abelian 2-gerbe \\ banded by $U(1)$\end{tabular}} & \multicolumn{1}{c|}{bundle 2-gerbe}  & \begin{tabular}[c]{@{}c@{}}smooth principal \\ $B^2U(1)$-bundle\end{tabular}     & $H^4(M,\mathbb{Z})$  \\ 
\hline
\multicolumn{1}{c|}{$\vdots$} & \multicolumn{1}{c|}{$\vdots$}  & \multicolumn{1}{c|}{\vdots} & $\vdots$   & \multicolumn{1}{c}{$\vdots$} \\ 
\hline
$n$ & \multicolumn{1}{c|}{\begin{tabular}[c]{@{}c@{}}abelian $(n-1)$-gerbe \\ banded by $U(1)$\end{tabular}}  & \multicolumn{1}{c|}{?} & \begin{tabular}[c]{@{}c@{}}smooth principal \\ $B^{n-1}U(1)$-bundle\end{tabular} & $H^{n+1}(M,\mathbb{Z})$                  
\end{tabular}
\end{table}

Table \ref{Table: Higher circle bundles} gives an overview of various constructions of higher circle bundles that have appeared in the literature, all allowing for classification by integral cohomology. In the first column, we have Giraud's abelian gerbes banded by $U(1)$, which have been generalized to abelian 2-gerbes banded by $U(1)$ by Breen \cite{Breen}. The full generalization for arbitrary degree is then given by the notion of an (abelian)\footnote{In \cite{Lurie} these objects are simply called $n$-gerbes. However, to avoid confusion with other versions of gerbes introduced earlier, we call them here abelian $n$-gerbes banded by $U(1)$.} $n$-gerbe banded by $U(1)$ in the $\infty$-topos $\mathrm{sSh}(M)$ of simplicial sheaves on $M$ due to Lurie \cite[Definition 7.2.2.20]{Lurie}. Equivalence classes of (abelian) $n$-gerbes on $\mathrm{sSh}(M)$ banded by $U(1)$ are classified by $H^{n+1}(M,\mathbb{Z})$ as a direct consequence of \cite[Corollary 7.2.2.27]{Lurie}. For more details on the $\infty$-topos of simplicial sheaves on $M$, we refer to \cite[Section 6.5.4]{Lurie}. The next column starts with Murray's notion of bundle gerbes, which has been generalized to bundle 2-gerbes by Stevenson \cite{StevensonArticle}. The last column features the construction of smooth principal $B^nU(1)$-bundles by Gajer. \\

Table \ref{Table: Higher circle bundles with connection} gives a similar overview, this time featuring the various notions of higher connections appearing in the literature living on the higher circle bundles of Table \ref{Table: Higher circle bundles}. These all share the property of being classified by smooth Deligne cohomology.

\begin{table}[htb]
\caption {Higher $U(1)$-bundles with connection} \label{Table: Higher circle bundles with connection}
\centering
\resizebox{\textwidth}{!}{%
\begin{tabular}{c|ccc|c}
degree  & \multicolumn{3}{c|}{geometric object} & cohomology class \\[5pt] 
\hline
\rule{0pt}{4ex} $n =1$  & \multicolumn{3}{c|}{\begin{tabular}[c]{@{}c@{}}principal $U(1)$-bundle\\ with connection\end{tabular}} & $H^2 \left( M;\mathbb{Z}(2)_D^{\infty}\right)$  \\[10pt] 
\hline
$n=2$   & \multicolumn{1}{c|}{\begin{tabular}[c]{@{}c@{}}abelian gerbe banded by $U(1)$\\ with connection and curving\end{tabular}} & \multicolumn{1}{c|}{\begin{tabular}[c]{@{}c@{}}bundle gerbe with\\ connection and curving\end{tabular}}     & \begin{tabular}[c]{@{}c@{}}smooth principal $BU(1)$-bundle\\ with $k$-connections for $k=1,2$\end{tabular} & $H^3 \left( M;\mathbb{Z}(3)_D^{\infty}\right)$  \\
 \hline
$n = 3$ & \multicolumn{1}{c|}{?} & \multicolumn{1}{c|}{\begin{tabular}[c]{@{}c@{}}bundle 2-gerbe with\\ connection and 2-curving\end{tabular}} & \begin{tabular}[c]{@{}c@{}}smooth principal $B^2U(1)$-bundle\\ with $k$-connections for $k=1,2,3$\end{tabular}       & $H^4 \left( M;\mathbb{Z}(4)_D^{\infty}\right)$       \\ 
\hline
\multicolumn{1}{c|}{$\vdots$} & \multicolumn{1}{c|}{$\vdots$} & \multicolumn{1}{c|}{$\vdots$}  & $\vdots$  & \multicolumn{1}{c}{$\vdots$}  \\ 
\hline
$n$  & \multicolumn{1}{c|}{?} & \multicolumn{1}{c|}{?} & \begin{tabular}[c]{@{}c@{}}smooth principal $B^{n-1}U(1)$-bundle\\ with $k$-connections for $k=1,...,n$\end{tabular} & $H^{n+1} \left( M;\mathbb{Z}(n+1)_D^{\infty}\right)$
\end{tabular}%
}
\end{table}

For a better understanding of circle 2-bundles, we introduce the notion of bundle gerbes as they are the easiest to work with. Then bundle gerbe connections and curving are defined and we outline how to extract the local connection data similar to the case of local connection 1-forms for principal bundles. This paves the way for the \v{C}ech--Deligne complex presented in the next section as a generalized approach to higher local connection data. 

\begin{definition}[\cite{Murray}] \label{Definition: Bundle Gerbe}
    A \textbf{bundle gerbe} on a smooth manifold $M$ is a triple $(P,\pi, M)$ where $\pi \colon Y \rightarrow M$ is a surjective submersion and $P$ a principal $U(1)$-bundle over $Y^{[2]}$ endowed with a \textbf{bundle gerbe product}. That is, an isomorphism of principal $U(1)$-bundles over $Y^{[3]}$ of the form 
    \[
    m : \pi_1^*P \otimes \pi_3^*P \rightarrow \pi_2^*P,
    \]
    such that the following associativity condition is satisfied. For every $(y_1,y_2,y_3,y_4) \in Y^{[4]}$ commutativity holds in the diagram  
    \begin{center}
    \begin{tikzcd}
{P_{(y_1,y_2)} \otimes P_{(y_2,y_3)} \otimes P_{(y_3,y_4)}} \arrow[d, "{\mathrm{id} \otimes m_{(y_2,y_3,y_4)}}"'] \arrow[r, "{m_{(y_1,y_2,y_3)} \otimes \mathrm{id}}"] & {P_{(y_1,y_3)} \otimes P_{(y_3,y_4)}} \arrow[d, "{m_{(y_1,y_3,y_4)}}"] \\
{P_{(y_1,y_2)} \otimes P_{(y_2,y_4)}} \arrow[r, "{m_{(y_1,y_2,y_4)}}"']                                                                                                & {P_{(y_1,y_4)}}                                                       
\end{tikzcd}
\end{center}
\end{definition}
\begin{remark} $ $ 
    \begin{enumerate}[label={(\arabic*)}]
        \item The monoidal structure used to define the bundle gerbe product is given by
        the contracted product of $U(1)$-bundles. More precisely, given two principal $U(1)$-bundles $P$ and $Q$ represented 
        by their transition functions $g_{ij}$ and $h_{ij}$ respectively, their \textbf{contracted product} denoted $P \otimes Q$ 
        is defined as the bundle with transition functions $g_{ij}h_{ij}$.
        The \textbf{inverse} or \textbf{dual} principal $U(1)$-bundle denoted $P^*$ is defined as the bundle with transition functions 
        $g^{-1}_{ij}$. 
        
        \item Given a surjective submersion $\pi \colon Y \rightarrow M$ we denote by $Y^{[n]}$ the $n$-fold fiber product given by
        \[
        Y^{[n]} := \underbrace{Y \times_{M} \cdots \times_{M} Y}_{n\text{-times}} 
        \]
        which is equipped with projection maps $\pi_i \colon Y^{[n]} \rightarrow Y^{[n-1]}$ for $1 \leq i \leq n$.

        \item Given $P$ a principal
        $U(1)$-bundle over $Y^{[2]}$, denote by $\delta(P)$ the $U(1)$-bundle over
        $Y^{[3]}$ given by 
        \[
        \delta(P) := \pi^*_1 P \otimes \left( \pi_2^* P \right)^* \otimes \pi_3^*P. 
        \]
        The existence of a bundle gerbe product $m$ then forces the bundle $\delta(P)$ to be trivial and as such admits a section $s$. 
    \end{enumerate}
\end{remark}

A nice diagrammatical interpretation of a bundle gerbe $(P,\pi,M)$ on the manifold $M$ is
\begin{center}
\begin{tikzcd}
P \arrow[d]                                                                   &                    \\
{Y^{[2]}} \arrow[r, "\pi_1", shift left=2] \arrow[r, "\pi_2"', shift right=2] & Y \arrow[d, "\pi"] \\
                                                                              & M                 
\end{tikzcd}
\end{center}

As indicated in Table \ref{Table: Higher circle bundles}, to any bundle gerbe $(P,\pi,M)$ we can associate a cohomology class $DD(P,\pi,M) \in H^3(M;\mathbb{Z})$ being the analog of the Chern class associated to a principal $U(1)$-bundle in degree $n=2$. This characteristic class, called the Dixmier--Douady class of the bundle gerbe, plays an important role in understanding the differential cohomology hexagon introduced in Section \ref{Section: Differential Characters}. This will also be highlighted in Lemma \ref{Lemma: Dixmier-Douady Class and Curvarture}.  \\

\begin{construction}
    Let $(P,\pi,M)$ be a bundle gerbe and let $\cat{U} = \left\{ U_i \right\}_{i \in I}$ be a good open covering such that $\pi \colon Y \rightarrow M$ admits local sections $s_i \colon U_i \rightarrow Y$. These local sections provide maps 
    \[
    \begin{array}{rcl}
    (s_i,s_j) \colon U_{ij} & \rightarrow & Y^{[2]}  \\
         x & \mapsto & (s_i(x),s_j(x)). 
    \end{array}
    \]
    The pullback of the principal $U(1)$-bundle $P$ via $(s_i,s_j)$ is denoted by $P_{ij}$. As we have chosen $\cat{U}$ to be a good open covering, the two-fold intersections all are diffeomorphic to some cartesian space $\mathbb{R}^n$, and therefore the bundles $P_{ij} \to U_{ij}$ are all trivializable. That is, there exist sections 
    \[
    \sigma_{ij} \colon U_{ij} \rightarrow P_{ij}
    \]
    Now define a smooth map $g_{ijk} \colon U_{ijk} \rightarrow U(1)$ by setting
    \[
    m(\sigma_{jk} \otimes \sigma_{ij}) = \sigma_{ik}g_{ijk}.
    \]
    The associativity condition which the bundle gerbe product $m$ satisfies further implies that the maps $g_{ijk}$ indeed satisfy the cocycle condition
    \[
    g_{jkl}g^{-1}_{ikj}g_{ijl}g^{-1}_{ijk} = 1.
    \]
    Hence we have constructed a representative for a class in the \v{C}ech cohomology group of $M$ taking values in the sheaf $U(1)$
    \[
    [g_{ijk}] \in \check{H}^2 \left( M, U(1) \right).
    \]
    Now using the classical fact that the structure sheaf $\mathbb{R}$ of a smooth manifold $M$ is soft, the exact sequence of sheaves 
    \[
    0 \rightarrow \mathbb{Z}\rightarrow \mathbb{R} \rightarrow U(1) \rightarrow 0
    \]
    induces an isomorphism
    \[
    \begin{array}{rcl}
         \check{H}^2\left( M, U(1)\right)  & \xrightarrow{\cong} & H^3(M,\mathbb{Z}) \\
         {[g_{ijk}]} & \mapsto & DD(P,\pi,M),  
    \end{array}
    \]
    whose image of the class $[g_{ijk}]$ is defined to be the Dixmier--Douady class. 
\end{construction}

\begin{definition}[\cite{Murray}]\label{Definition: Bundle Gerbe Connection}
    A \textbf{bundle gerbe connection} on a bundle gerbe $(P,\pi,M)$ consists of a connection on the principal $U(1)$-bundle $P \rightarrow Y^{[2]}$, i.e. a connection 1-form $A \in \Omega^1(P)$ such that it respects the bundle gerbe product, that is
    \begin{equation} \label{equation: bundle gerbe connection}
    s^* \delta(A) = 0,
       \end{equation}
       where here $\delta(A)$ denotes the induced connection on the bundle $\delta(P) \rightarrow Y^{[3]}$ and $s$ the section of $\delta(P)$ induced by the bundle gerbe product.  
\end{definition}

Given a bundle gerbe $(P,\pi,M)$ with connection $A \in \Omega^1(P)$, the
associated curvature form $F_A \in \Omega^2 \left( Y^{[2]} \right)$ is such that
\[
\delta(F_A) = 0
\]
as a direct consequence of equation (\ref{equation: bundle gerbe connection}).
The exactness of the fundamental complex\footnote{See Example \ref{Example:
Fundamental Complex}} implies that there exists a 2-form $f \in \Omega^2(Y)$
such that 
\[
\delta(f) = F_A.
\]
This form is not unique and a choice of such a form is called a \textit{curving}
for the bundle gerbe connection $A$. Since $F_A$ is a curvature form, it is
closed. This further implies that
\[
d(F_A) = d(\delta(f)) = \delta(df) = 0.
\]
Again by the exactness of the fundamental complex, this implies that there
exists a 3-form $\omega \in \Omega^3(M)$ such that:
\[
\pi^*(\omega) = df.
\]
This 3-form is called the 3\textit{-curvature} of the \textit{connective structure} $(A,f)$ and is a closed 3-form, since
\[
\pi^*(d\omega) = d \pi^*(\omega) = d^2f = 0.
\]
The diagram below gives an overview of how these various differential forms
arising from a bundle gerbe with connection are related to each other, summing
up the above discussion.   

\begin{center}
\begin{tikzcd}[every matrix/.append style={nodes={font=\large}}]
\omega \atop \Omega^3(M) \arrow[r, "\pi^*", maps to] & \pi^* \omega = df \atop \Omega^3(Y) \arrow[r, "\delta", maps to]         & {\delta(df) = dF_A = 0\atop \Omega^3(Y^{[2]})}                                                 &                                          \\
                                                     & f \atop \Omega^2(Y) \arrow[r, "\delta", maps to] \arrow[u, "d", maps to] & {\delta(f) = F_A \atop \Omega^2(Y^{[2]})} \arrow[r, "\delta", maps to] \arrow[u, "d", maps to] & {\delta(F_A) =0 \atop \Omega^2(Y^{[3]})}
\end{tikzcd}
\end{center}

Consider for a moment the case where the surjective submersion $\pi \colon \coprod_{i \in I} U_i \rightarrow M$ is given by an open covering $\left\{ U_i \right\}_{i \in I}$. Then the data of a bundle gerbe $(P,\pi,M)$ with connection and curving reduces to 
\begin{itemize}
    \item local connection data $A_{ij} \in \Omega^1(U_{ij})$ such that 
    \[
    (F_A)_{ij} = dA_{ij},
    \]
    \item local curving data $f_{i} \in \Omega^2(U_i)$ for all $i \in I$ such that 
    \[
    \delta(f) = f_j - f_i = (F_A)_{ij} = dA_{ij}
    \]
    for all $i,j \in I$, and
    \item cocycle data $g_{ijk} : U_{ijk} \rightarrow U(1)$ satisfying 
    \[
    g_{jkl}g^{-1}_{ikj}g_{ijl}g^{-1}_{ijk} = 1.
    \]
\end{itemize}
Moreover, the fact that the connection $A$ respects the bundle gerbe product further implies that for all $i,j,k \in I$
\[
A_{jk} - A_{ik} + A_{ij} = d\mathrm{log}g_{ijk}.
\]
Since any surjective submersion can be "refined" by a surjective submersion arising from a good open cover $\left\{ U_i \right\}$, a bundle gerbe with connection and curving boils down to the described local data.

\begin{definition} \label{Definition: Local Connection Data n=2}
    Let $M$ be a manifold and $\left\{ U_i \right\}_{i \in I}$ be a good open covering of $M$. The local data of a bundle gerbe with connection and curving over $M$ with respect to $\left\{ U_i \right\}_{i \in I}$ is given by: 
    \begin{enumerate}[label=(\alph*)]
        \item On every open set $U_i$ a 2-form $B_i \in \Omega^2(U_i)$
        \item On every two-fold intersection $U_{ij}$ a 1-form $A_{ij} \in \Omega^2(U_{ij})$
        \item On every three-fold intersection $U_{ijk}$ a smooth map $g_{ijk} : U_{ijk} \rightarrow U(1)$
    \end{enumerate}
    such that the following cocycle conditions are satisfied: 
    \begin{enumerate}
        \item On every two-fold intersection $U_{ij}$:
        \[
        dA_{ij} = B_j - B_i
        \]
        \item On every three-fold intersection $U_{ijk}$:
        \[
        A_{ij} - A_{jk} + A_{ki} = d\mathrm{log}g_{ijk}
        \]
        \item On every four-fold intersection $U_{ijkl}$:
        \[
        g_{jkl}g^{-1}_{ikj}g_{ijl}g^{-1}_{ijk} = 1
        \]
    \end{enumerate}
\end{definition}

To conclude this section, we sketch out the correlation between the topological Dixmier--Douady class $DD(P,\pi,M)$ and the geometric class given by the 3-curvature of a bundle gerbe $(P,\pi,M)$ with connective structure $(A,f)$. Recall that the 3-curvature $\omega \in \Omega^3(M)$ being a closed form has an associated real cohomology class under the de Rham theorem which we also denote by $[\omega]$ 
\[
[\omega] \in H_{\mathrm{dR}}^3(M) \mapsto [\omega] \in H^3(M,\mathbb{R}).
\]
The change of coefficients $\mathbb{Z} \hookrightarrow \mathbb{R}$
provides a morphism 
\[
r \colon H^3(M,\mathbb{Z}) \rightarrow H^3(M, \mathbb{R})
\]
and we have the following result identifying the classes $[\omega]$ and $DD(P,\pi,M)$.

\begin{lemma}[\cite{Murray}] \label{Lemma: Dixmier-Douady Class and Curvarture}
    Let $(P,\pi, M)$ be a bundle gerbe with connective structure $(A,f)$ and associated 3-curvature form $\omega$. Then the real cohomology class $[\omega] \in H^3(M,\mathbb{R})$ coincides with the image of the Dixmier--Douady class $DD(P,\pi,M)$ under the change of coefficient map $r$, i.e. 
    \[
    \begin{array}{rcl}
        H^3(M,\mathbb{Z}) & \xrightarrow{r} & H^3(M, \mathbb{R}) \\
        DD(P,\pi,M) & \mapsto & [\omega]
    \end{array}
    \]
    In particular, the class $[\omega]$ is independent of the choice of connective structure on $(P,\pi,M)$. 
\end{lemma}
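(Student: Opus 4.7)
The plan is to compare the two classes by representing both sides via the \v{C}ech--de Rham double complex $C^p(\mathcal{U},\Omega^q)$ associated to a good open cover $\mathcal{U}=\{U_i\}_{i\in I}$ refining $\pi$. The point is that the local data of the bundle gerbe with connective structure, assembled as in Definition \ref{Definition: Local Connection Data n=2}, provides an explicit zigzag in this bicomplex whose top end is (a representative of) $\omega$ and whose bottom end is (a representative of) $DD(P,\pi,M)$, up to the change-of-coefficients $r$.

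First I would choose $\mathcal{U}$ fine enough that $\pi\colon Y\to M$ admits local sections $s_i\colon U_i\to Y$ and use these to pull back $f$, $A$ and the bundle gerbe product to the local forms $B_i\in\Omega^2(U_i)$, $A_{ij}\in\Omega^1(U_{ij})$, and transition cocycle $g_{ijk}\colon U_{ijk}\to U(1)$. By construction the 3-curvature satisfies $\omega|_{U_i}=dB_i$, and the local data obey the relations $B_j-B_i=dA_{ij}$, $A_{jk}-A_{ik}+A_{ij}=d\log g_{ijk}$, and the $U(1)$-cocycle condition on quadruple overlaps. Interpreted in the \v{C}ech--de Rham bicomplex with total differential $D=d\pm\delta$, this exactly says that $(B_i,A_{ij},d\log g_{ijk})$ is a $D$-cocycle of total degree $3$ whose image under the augmentation $\Omega^\bullet(M)\hookrightarrow C^0(\mathcal{U},\Omega^\bullet)$ is $\omega$.

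Next I would descend along the opposite edge. Since the covering is good, each $g_{ijk}$ admits a smooth real lift $\hat g_{ijk}\colon U_{ijk}\to\mathbb{R}$ with $g_{ijk}=\exp(2\pi i\,\hat g_{ijk})$; then $n_{ijkl}:=(\delta\hat g)_{ijkl}$ is a $\mathbb{Z}$-valued \v{C}ech $3$-cocycle whose class is precisely $DD(P,\pi,M)$ under the connecting homomorphism of $0\to\mathbb{Z}\to\mathbb{R}\to U(1)\to 0$. Under the inclusion $\mathbb{Z}\hookrightarrow\mathbb{R}$ the class $r(DD)$ is thus represented by $\hat g_{ijk}$ viewed in $C^2(\mathcal{U},\Omega^0)\subset C^2(\mathcal{U},\mathbb{R})$, and its image under the standard quasi-isomorphism $\check{C}^\bullet(\mathcal{U},\mathbb{R})\simeq\mathrm{Tot}\,C^\bullet(\mathcal{U},\Omega^\bullet)\simeq\Omega^\bullet(M)$ is obtained by applying $d$ to get $d\hat g_{ijk}=\tfrac{1}{2\pi i}d\log g_{ijk}$, i.e.\ the same bottom-right entry of the zigzag (up to the normalization absorbed in the de Rham isomorphism).

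Putting it together, both $[\omega]\in H^3(M,\mathbb{R})$ and $r(DD(P,\pi,M))\in H^3(M,\mathbb{R})$ arise as the image of the same total cocycle $(B_i,A_{ij},d\log g_{ijk})$ under the two edge maps of the \v{C}ech--de Rham bicomplex, which is a quasi-isomorphism; hence they agree. The main obstacle is bookkeeping: one must be careful with signs in the total differential and with the $2\pi i$-normalizations coming from the exponential sequence, and one must verify that the local lifts $\hat g_{ijk}$ can indeed be chosen smoothly on each $U_{ijk}$ (which uses that the cover is good, so that $U_{ijk}$ is contractible and $\exp$ has smooth local sections). Independence of the connective structure is then immediate, since $DD(P,\pi,M)$ depends only on the underlying bundle gerbe.
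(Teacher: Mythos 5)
The paper does not give a proof of this lemma; it cites Murray and uses the statement as a black box. Your proposal reconstructs the standard \v{C}ech--de Rham ``tic-tac-toe'' argument, which is essentially the argument one finds in Murray and in the bundle-gerbe literature, and the overall strategy is sound: assemble the local connective data $(B_i, A_{ij}, g_{ijk})$ on a good cover, run the staircase down the bicomplex from $\omega$ to the integral \v{C}ech $3$-cocycle $n_{ijkl}=\delta\hat g_{ijk}$, and conclude by the quasi-isomorphism that the two edge classes in $H^3(M,\mathbb{R})$ agree.

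One bookkeeping point deserves correction: the triple $(B_i, A_{ij}, d\log g_{ijk})$ is \emph{not} a $D$-cocycle of total degree $3$ in the \v{C}ech--de Rham double complex. The entries live in bidegrees $(0,2)$, $(1,1)$ and $(2,1)$ respectively, so their total degrees are $2,2,3$ and do not match; moreover, even after replacing $d\log g_{ijk}$ with the lift $\hat g_{ijk}$ (bidegree $(2,0)$, giving a uniform total degree $2$), the resulting chain is still not $D$-closed. The correct statement is that the chain $c:=(B_i, \mp A_{ij}, \hat g_{ijk})$ of total degree $2$ is a zigzag whose total differential $Dc$ has only the two end components $\omega|_{U_i}\in C^0(\mathcal{U},\Omega^3)$ and $\pm\,n_{ijkl}\in C^3(\mathcal{U},\Omega^0)$, with all the mixed components cancelling by the local relations of Definition \ref{Definition: Local Connection Data n=2}. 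This exhibits $\omega$ and $n_{ijkl}$ as cohomologous in the total complex, which is what the quasi-isomorphism argument actually requires. Once this is phrased precisely, the rest of your argument (the existence of smooth lifts $\hat g_{ijk}$ on the contractible $U_{ijk}$, the identification of $[n_{ijkl}]$ with $DD(P,\pi,M)$, and the immediate independence of the connective structure) is correct.
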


As we will see in Section \ref{Section: Differential Characters}, this lemma proves that the right-hand square inside the differential cohomology hexagon is indeed commutative, at least in degree $n = 2$.

\subsection{The smooth Deligne Complex}

As already presented in Table \ref{Table: Higher circle bundles with connection} there exists a cohomology theory classifying higher circle bundles with connection data. To introduce this cohomology, recall the notion of \textit{hypercohomology} of a bounded below complex of sheaves $K^{\bullet}$ on a manifold $M$, denoted by
\[
H^*(M; K^{\bullet}).
\]
This short introduction follows \cite{Brylinski} and \cite{Stevenson}. \\

First, recall the definition of the \v{C}ech complex associated to a sheaf of abelian groups $A$ provided an open covering $\cat{U} = \left\{ U_i \right\}_{i \in I}$ of $M$.

\begin{definition}
    Given a sheaf of abelian groups $A$ on a smooth manifold $M$ and $\cat{U} = \left\{ U_i \right\}_{i \in I}$ an open covering, for $p \geq 0$ define 
    \[
    C^p(\cat{U},A) := \prod_{i_0, ..., i_p} A(U_{i_0, ..., i_p})
    \]
    to be the product ranging over $(p+1)$-tuples of elements of $I$. An element of $C^p(\cat{U},A)$ is therefore a family $\alpha = \left\{ \alpha_{i_0, ..., i_p} \in A(U_{i_0, ..., i_p})\right\}$. The \v{C}ech coboundary morphism is then defined by
    \[
    \begin{array}{rcl}
         \delta \colon C^p(\cat{U},A) & \rightarrow & C^{p+1}(\cat{U},A)  \\
         \alpha_{i_0, ..., i_p} & \mapsto & \sum^{p+1}_{j = 0} (-1)^j \alpha_{i_0, ..., i_{j-1}, i_{j+1}, ..., i_{p+1}}.
    \end{array}
    \]
    In particular, one has $\delta^2 = 0$, and the corresponding complex is called the \textbf{\v{C}ech complex of the sheaf} $A$ \textbf{with respect to the open covering} $\cat{U}$. 
\end{definition}

\begin{example} \label{Example: Fundamental Complex}
The \v{C}ech complex of the sheaf of differential $q$-forms $\Omega^q(-)$ on some smooth manifold $M$ with respect to an open covering $\cat{U}$ is also called the \textbf{fundamental complex}. 
\[
0 \rightarrow \Omega^q(M) \xrightarrow{\delta} \Omega^q\left( \coprod_{i_0} U_{i_0} \right) \xrightarrow{\delta} \Omega^q \left( \coprod_{i_0,i_1} U_{i_0,i_1} \right) \xrightarrow{\delta} \cdots
\]
Moreover, this complex is exact, which follows from the classical fact that the sheaf of differential $q$-forms $\Omega^q(-)$ on a smooth manifold is soft\footnote{For the definition of a \textit{soft sheaf} see \cite[Definition 1.4.5]{Brylinski}. The fact that the sheaf of $q$-forms on a smooth manifold is soft is due to \cite[Theorem 1.4.15]{Brylinski}.} for all $q \geq 0$.   
\end{example}

Let $K^{\bullet} = \left[ K^0 \xrightarrow{d} K^1 \xrightarrow{d} \cdots \right]$ be a bounded below complex of sheaves of abelian groups on $M$. Given an open covering $\cat{U} = \left\{ U_i \right\}_{i \in I}$ of $M$, consider the associated \v{C}ech double complex: 
\begin{center}
    \begin{tikzcd}
                           & \vdots                                                        & \vdots                                                    & \vdots                                                        &        \\
\cdots \arrow[r, "\delta"] & {C^{p-1}(\cat{U},K^{q+1})} \arrow[u, "d"] \arrow[r, "\delta"] & {C^p(\cat{U},K^{q+1})} \arrow[u, "d"] \arrow[r, "\delta"] & {C^{p+1}(\cat{U},K^{q+1})} \arrow[u, "d"] \arrow[r, "\delta"] & \cdots \\
\cdots \arrow[r, "\delta"] & {C^{p-1}(\cat{U},K^q)} \arrow[r, "\delta"] \arrow[u, "d"]     & {C^p(\cat{U},K^q)} \arrow[r, "\delta"] \arrow[u, "d"]     & {C^{p+1}(\cat{U},K^q)} \arrow[r, "\delta"] \arrow[u, "d"]     & \cdots \\
\cdots \arrow[r, "\delta"] & {C^{p-1}(\cat{U},K^{q-1})} \arrow[u, "d"] \arrow[r, "\delta"] & {C^p(\cat{U},K^{q-1})} \arrow[u, "d"] \arrow[r, "\delta"] & {C^{p+1}(\cat{U},K^{q-1})} \arrow[u, "d"] \arrow[r, "\delta"] & \cdots \\
                           & \vdots \arrow[u, "d"]                                         & \vdots \arrow[u, "d"]                                     & \vdots \arrow[u, "d"]                                         &       
\end{tikzcd}
\end{center}
This first quadrant double complex is denoted by $C^{\bullet}(\cat{U}, K^{\bullet})$. Its associated total complex $\mathrm{Tot}(C^{\bullet}(\cat{U}, K^{\bullet}))$ with differential $D = \delta + (-1)^pd$ in bidegree $(p,q)$ then gives rise to the \v{C}ech cohomology groups 
\[
\check{H}^*(\cat{U};K^{\bullet}) := H^*\left( \mathrm{Tot}(C^{\bullet}(\cat{U}, K^{\bullet})) , D \right).
\]

\begin{definition} \label{Definition: Cech Hypercohomology}
    Let $K^{\bullet}$ be a bounded below complex of sheaves of abelian groups on a smooth manifold $M$. The \textbf{\v{C}ech hypercohomology groups} $\check{H}^n(M;K^{\bullet})$ are defined as the colimit of abelian groups
    \[
    \check{H}^p(M;K^{\bullet}) := \underset{\cat{U}}{\mathrm{colim}} \; H^p(\cat{U};K^{\bullet})
    \]
    taken over the directed set of open coverings $\cat{U}$ of $M$. 
\end{definition}

The smooth Deligne cohomology is now defined as the \v{C}ech hypercohomology of a specific complex of sheaves.

\begin{definition} \label{Definition: Smooth Deligne-Beilinson Complex}
    For $M$ a smooth manifold and $n \in \mathbb{N}$, the \textbf{smooth Deligne complex of} $M$ is the complex of sheaves of abelian groups on $M$ given by
    \begin{center}
    \begin{tikzcd}[row sep=tiny, column sep =small,/tikz/column 1/.append style={anchor=base east}]
        0 & 1                                    & 2                     &                  & n                         & n+1                \\
    \mathbb{Z}(n+1)_D^{\infty}: = \scalebox{1.5}[1.8]{\normalfont{[}} \mathbb{Z} \arrow[r, hook] & {C^{\infty}(-,\mathbb{R})} \arrow[r, "d"] & \Omega^1(-) \arrow[r, "d"] & \cdots \arrow[r, "d"] & \Omega^{n-1}(-) \arrow[r, "d"] & {\Omega^n(-)} \scalebox{1.5}[1.8]{\normalfont{]}}. 
    \end{tikzcd}
    \end{center}
    The associated \textbf{smooth Deligne cohomology of} $M$ is then denoted by 
    \[
    H^* \left( M, \mathbb{Z}(n+1)^{\infty}_D \right).
    \]
\end{definition}

Note, that the smooth Deligne cohomology can also be expressed as the hypercohomology of a slightly modified complex of sheaves. This is recognized as the standard definition for smooth Deligne cohomology. 
\newpage

\begin{definition} \label{Definition: Smooth Deligne-Beilinson Complex}
    For $M$ a smooth manifold and $n \in \mathbb{N}$, consider the complex of sheaves of abelian groups on $M$ given by
    \begin{center}
    \begin{tikzcd}[row sep=tiny, column sep =scriptsize,/tikz/column 1/.append style={anchor=base east}]
        0 \hspace{1cm} & 1                                    & 2                     &                  & n-1                         & n                \\
    B^n_{\nabla}U(1) :=  \scalebox{1.5}[1.8]{\normalfont{[}} C^{\infty}\left(-,U(1)\right) \arrow[r, "d\mathrm{log}"] & \Omega^1(-) \arrow[r, "d"] & \Omega^2(-) \arrow[r, "d"] & \cdots \arrow[r, "d"] & \Omega^{n-1}(-) \arrow[r, "d"] & {\Omega^n(-)} \scalebox{1.5}[1.8]{\normalfont{]}}. 
    \end{tikzcd}
    \end{center}
    The associated cohomology groups of $M$ with values in the above complex are then denoted by 
    \[
    H^* \left( M; B^n_{\nabla}U(1) \right).
    \]
\end{definition}
\begin{remark} \label{Remark: The dlog map}
    Let us have a closer look at the morphism of sheaves
    \[
    d\mathrm{log} \colon C^{\infty}\left( -, U(1) \right) \rightarrow \Omega^1(-)
    \]
    used to define the smooth Deligne complex. Notice that any smooth function $f : U \rightarrow U(1)$ admits locally a representative in $C^{\infty}(U,\mathbb{R})$. Hence, locally, at a point $x_0 \in U$, the morphism is given by applying the usual de Rham differential to any representative in $C^{\infty}(U,\mathbb{R})_{x_0}$ of an element in $C^{\infty}\left( U, U(1) \right)_{x_0}$. Globally this amounts to assign to the smooth map $f \colon U \rightarrow U(1)$ the pullback $f^*\theta \in \Omega^1(U)$ of the Maurer--Cartan form $\theta$ on $U(1)$.   
\end{remark}

\begin{lemma}[\cite{Brylinski}] \label{Lemma: Quasi-isomorphism Deligne Complex}
    The chain map between the following chain complexes of sheaves 
    \begin{center}
        \begin{tikzcd}
\mathbb{Z} \arrow[r, hook] \arrow[d] & \mathbb{R} \arrow[r, "d"] \arrow[d, "\mathrm{exp}"] & \Omega^1(-) \arrow[r, "d"] \arrow[d, "\mathrm{id}"] & \Omega^2(-) \arrow[r, "d"] \arrow[d, "\mathrm{id}"] & \cdots \arrow[r, "d"] & \Omega^n(-) \arrow[d, "\mathrm{id}"] \\
0 \arrow[r]                          & U(1) \arrow[r, "d\mathrm{log}"]                  & \Omega^1(-) \arrow[r, "d"]                          & \Omega^2(-) \arrow[r, "d"]                          & \cdots \arrow[r, "d"] & \Omega^n(-)                         
\end{tikzcd}
    \end{center}
    is a quasi-isomorphism. In particular, there is an induced isomorphism in cohomology 
    \[
    H^{n+1}(M;\mathbb{Z}(n+1)_D^{\infty}) \cong H^{n+1}(M ; B^n_{\nabla}U(1)[-1]) = H^n(M; B^n_{\nabla}U(1)).
    \]
\end{lemma}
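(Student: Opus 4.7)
The plan is to realize the given chain map as the quotient in a short exact sequence of complexes of sheaves of abelian groups whose kernel is easily seen to be acyclic, and then invoke the standard fact that hypercohomology is an invariant of quasi-isomorphism of complexes of sheaves.

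First I would observe that the chain map is termwise surjective as a morphism of sheaves on $M$. The identity maps on $\Omega^k(-)$ and the map $\mathbb{Z} \to 0$ are obviously so, and the exponential $\mathrm{exp}\colon C^{\infty}(-,\mathbb{R}) \to C^{\infty}(-,U(1))$, $f \mapsto e^{2\pi i f}$, is surjective on stalks since every germ of a smooth $U(1)$-valued function at $x \in M$ admits a smooth real lift on a sufficiently small contractible neighborhood via a local branch of the logarithm.

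Next I would compute the kernel complex $K^{\bullet}$ of the chain map. In degree $0$ the kernel is the constant sheaf $\mathbb{Z}$; in degree $1$ the kernel of $\mathrm{exp}$ is the sheaf of locally integer-valued smooth functions, i.e.\ again the constant sheaf $\mathbb{Z}$; and all higher kernels vanish because the vertical maps there are identities. The induced differential $K^0 \to K^1$ is the identity of $\mathbb{Z}$, so $K^{\bullet}$ is acyclic. The resulting short exact sequence of complexes of sheaves
\[
0 \longrightarrow K^{\bullet} \longrightarrow \mathbb{Z}(n+1)_D^{\infty} \longrightarrow B^n_{\nabla}U(1)[-1] \longrightarrow 0
\]
and the associated long exact sequence in cohomology sheaves then force the chain map to be a quasi-isomorphism. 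The isomorphism in cohomology claimed in the statement then follows because the \v{C}ech hypercohomology of Definition \ref{Definition: Cech Hypercohomology}, computed along the directed system of good open covers of the smooth manifold $M$, depends only on the quasi-isomorphism class of the coefficient complex.

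The main step requiring care is verifying that the middle square commutes, namely $d\log \circ \, \mathrm{exp} = d$ as morphisms $C^{\infty}(-,\mathbb{R}) \to \Omega^1(-)$. With $d\log$ defined as the pullback of the Maurer--Cartan form on $U(1)$ as in Remark \ref{Remark: The dlog map}, this reduces to the local calculation $(e^{2\pi i f})^{*}\theta = 2\pi i\, df$, with the normalization factor absorbed into the conventions of the smooth Deligne complex. Once this commutativity is checked, the rest of the argument is a formal manipulation of short exact sequences of complexes of sheaves and presents no further obstacle.
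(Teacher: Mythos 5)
Your argument is correct and is in essence the standard proof (which is what the cited \cite[Proposition 1.5.2]{Brylinski} does as well): the paper itself does not spell out the verification but only cites the literature, so you are filling in details the paper defers. The key steps you identify are all sound. Termwise surjectivity of the vertical map follows since $\mathrm{exp}$ is surjective on stalks, the kernel complex is concentrated in degrees $0$ and $1$ where it is $\mathbb{Z} \xrightarrow{\mathrm{id}} \mathbb{Z}$, hence acyclic, and the long exact sequence of cohomology sheaves then forces a quasi-isomorphism. Passing to \v{C}ech hypercohomology along good open covers of a manifold is indeed a quasi-isomorphism invariant, so the claimed isomorphism follows, and the final equality is just the degree reindexing $H^{n+1}(M;C^{\bullet}[-1]) = H^n(M;C^{\bullet})$.

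The one place you are right to flag as requiring care is the commutativity of the square involving $\mathrm{exp}$ and $d\log$. Taken literally, the paper's Remark~\ref{Remark: The dlog map} describes $d\log$ as the pullback of the Maurer--Cartan form, which together with $\mathrm{exp}(f) = e^{2\pi i f}$ (needed so that $\ker(\mathrm{exp}) = \mathbb{Z}$) gives $d\log(\mathrm{exp}(f)) = 2\pi\, df$ rather than $df$. The conventional fix is to rescale $d\log$ by $\tfrac{1}{2\pi i}$, or equivalently to work with the coefficient group $\mathbb{R}/\mathbb{Z}$; your remark that the normalization is absorbed into the conventions is the right way to resolve this. With that convention made explicit, the proof is complete and matches the structure of the argument the paper references.
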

\begin{proof}
    This follows from an adaptation of \cite[Proposition 1.5.2]{Brylinski}. See also \cite[p.~184]{Fiorenza-Schreiber-Stasheff}.
\end{proof}

\begin{remark} \label{Remark: Definition of Deligne Cohomology}
Notice, that the terminology of the degree of a Deligne cocycle depends on the choice of the complex. In the following, the term \textit{Deligne cocycle} or \textit{smooth Deligne cohomology} will always refer to the hypercohomology of the complex $B^n_{\nabla}U(1)$. Accordingly, by a degree $n$-Deligne cocycle we always mean a representative of a class in $H^n(M,B^n_{\nabla}U(1))$.
\end{remark}

To understand what smooth Deligne cohomology classifies, we need to have a look at the associated \v{C}ech--Deligne double complex. In this double complex, the vertical direction is given by the differential of the complex $B^n_{\nabla}U(1)$, whereas the horizontal direction is given by the \v{C}ech coboundary operator $\delta$. \\

Let $\cat{U} = \left\{ U_i \right\}$ be an open covering of $M$, then the associated double complex reads
\begin{center}
\begin{tikzcd}
0 \arrow[r]                                                                                                                    & 0 \arrow[r]                                                                                                                       & 0 \arrow[r]                                                                                                                          & \cdots \\
\Omega^{n+1}\left( \underset{i}{\coprod} U_i \right) \arrow[u, "d"'] \arrow[r, "\delta"]                                       & {\Omega^{n+1}\left( \underset{i,j}{\coprod} U_{ij} \right)} \arrow[u, "d"'] \arrow[r, "\delta"]                                   & {\Omega^{n+1}\left( \underset{i,j,k}{\coprod} U_{ijk} \right)} \arrow[u, "d"'] \arrow[r, "\delta"]                                   & \cdots \\
\vdots \arrow[u, "d"']                                                                                                         & \vdots \arrow[u, "d"']                                                                                                            & \vdots \arrow[u, "d"']                                                                                                               &        \\
\Omega^{2}\left( \underset{i}{\coprod} U_{i} \right) \arrow[u, "d"'] \arrow[r, "\delta"]                                       & {\Omega^{2}\left( \underset{i,j}{\coprod} U_{ij} \right)} \arrow[u, "d"'] \arrow[r, "\delta"]                                     & {\Omega^{2}\left( \underset{i,j,k}{\coprod} U_{ijk} \right)} \arrow[u, "d"'] \arrow[r, "\delta"]                                     & \cdots \\
\Omega^{1}\left( \underset{i}{\coprod} U_{i} \right) \arrow[u, "d"'] \arrow[r, "\delta"]                                       & {\Omega^{1}\left( \underset{i,j}{\coprod} U_{ij} \right)} \arrow[u, "d"'] \arrow[r, "\delta"]                                     & {\Omega^{1}\left( \underset{i,j,k}{\coprod} U_{ijk} \right)} \arrow[u, "d"'] \arrow[r, "\delta"]                                     & \cdots \\
{C^{\infty}\left( \underset{i}{\coprod} U_{i} , U(1) \right)} \arrow[u, "d\mathrm{log}"'] \arrow[r, "\delta"] & {C^{\infty}\left( \underset{i,j}{\coprod} U_{ij} , U(1) \right)} \arrow[u, "d\mathrm{log}"'] \arrow[r, "\delta"] & {C^{\infty}\left( \underset{i,j,k}{\coprod} U_{ijk} , U(1) \right)} \arrow[u, "d\mathrm{log}"'] \arrow[r, "\delta"] & \cdots
\end{tikzcd}
\end{center}
and the total differential of this double complex is given by 
\[
D = d + (-1)^{\mathrm{deg}}\delta.
\]
Let us have a look at some low-dimensional examples. 
\begin{itemize}

\item A \v{C}ech-Deligne cocycle in degree 1 is given by the data $( \left\{ A_i \right\}, \left\{g_{ij} \right\})$ where $A_i \in \Omega^1(U_i)$ and $g_{ij} \in C^{\infty}(U_{ij}, U(1))$ such that:
\begin{center}
    \begin{tikzcd}
\left\{A_i \right\} \arrow[r, maps to, "\delta"] & A_j - A_i = d\mathrm{log}g_{ij}   &    \\
     &  \left\{ g_{ij} \right\} \arrow[u, maps to, "d\mathrm{log}"'] \arrow[r, maps to, "\delta"] &  g_{ij}g_{jk}g^{-1}_{ik} = 1
\end{tikzcd}
\end{center}
which is equivalent to the local data of a principal $U(1)$-bundle with connection introduced in Definition \ref{Definition: Local Connection Data n=1}. 
\item A \v{C}ech-Deligne cocycle in degree 2 is given by the data $( \left\{ B_i \right\}, \left\{ A_{ij} \right\}, \left\{g_{ijk} \right\})$ where $B_i \in \Omega^2(U_i)$, $A_{ij} \in \Omega^1(U_{ij})$ and $g_{ijk} \in C^{\infty}(U_{ijk}, U(1))$ such that:
\begin{center}
    \begin{tikzcd}[column sep = small]
\left\{ B_i \right\} \arrow[r, "\delta", maps to] & B_j - B_i = dA_{ij}                                                           &                                                                                           &                                            \\
                                                  & \left\{A_{ij} \right\} \arrow[u, "d"', maps to] \arrow[r, "\delta", maps to] & A_{jk} - A_{ik} + A_{ij} = d\mathrm{log}g_{ijk}                                          &                                            \\
                                                  &                                                                              & \left\{ g_{ijk} \right\} \arrow[u, "d\mathrm{log}"', maps to] \arrow[r, "\delta", maps to] & g_{jkl}g^{-1}_{ikl}g_{ijl}g^{-1}_{ijk} = 1
\end{tikzcd}
\end{center}
This data is equivalent to the local data of a bundle gerbe with connection and curving introduced in Definition \ref{Definition: Local Connection Data n=2}. 
\end{itemize}

\section{Cheeger--Simons Differential Characters}

\subsection{Differential Characters} \label{Section: Differential Characters}

The abelian group of degree $k$ differential characters $\hat{H}^k(M,\mathbb{Z})$ on a smooth manifold $M$ was introduced by Cheeger and Simons \cite{Cheeger-Simons} as an object providing a refinement for characteristic classes. Around the same time, Deligne introduces the (smooth) Deligne complex $\mathbb{Z}^{\infty}_D(q)$ whose cohomology ring is strongly related to the ring of differential characters. In the subsequent years, various other equivalent cohomology theories appear classifying the same objects as the smooth Deligne cohomology and the Cheeger--Simons differential characters. The work of Simons and Sullivan \cite{Simons-Sullivan} then shows that all these cohomology theories fit into a \textit{character diagram} and that this diagram is sufficient to uniquely characterize such cohomology theories. This \textit{character diagram} nowadays is usually referred to as the \textit{differential cohomology hexagon}. What follows is a brief introduction to differential characters based on the exposition of Bär and Becker \cite{Bär-Becker} and the book by Amabel, Debray, and Haine \cite{Amabel-Debray-Haine}.

\begin{definition} \label{Definition: Smooth singular chains }
    Let $M$ be a smooth manifold. For $k \in \mathbb{N}$, denote by $C_k(M,\mathbb{Z})$ the abelian group of smooth singular $k$-chains in $M$ with integral coefficients. That is, the free abelian group generated by smooth $k$-simplices $|\Delta^k| \rightarrow M$ in $M$. The resulting smooth singular complex is denoted by $(C_k(M,\mathbb{Z}), \partial)$ where $Z_k(M,\mathbb{Z}) = \mathrm{Ker}(\partial)$ denotes the group of $k$-cycles and $B_k(M;\mathbb{Z}) = \mathrm{Im}(\partial)$ the group of $k$-boundaries.  
\end{definition}

\begin{remark}
    For the precise terminology of what we mean by a smooth map $|\Delta^k| \rightarrow M$ defined on the standard compact $k$-simplex $|\Delta^k| \subset \mathbb{R}^k$,  we refer to Lemma \ref{Lemma: Subset diffeology of compact simplex}.  
\end{remark}

\begin{definition}[\cite{Bär-Becker}, Section 5.1] \label{Definition: Differential Character}
    Let $k \geq 1$ be an integer and $M$ a manifold. A \textbf{degree} $k$ \textbf{differential character} on $M$ is a homomorphism of abelian groups
    \[
    \chi : Z_{k-1}(M,\mathbb{Z}) \rightarrow U(1)
    \]
    such that there exists a $k$-form $\omega_{\chi} \in \Omega^k(M)$ with the property that, for every smooth chain $c \in C_k(M;\mathbb{Z})$, we have 
    \[
    \chi(\partial c) = \mathrm{exp} \left( 2\pi i \int_c \omega_{\chi} \right).  
    \]
    Write $\hat{H}^k(M;\mathbb{Z}) \subset \mathrm{Hom}_{\mathbb{Z}}\left( Z_{k-1}(M;\mathbb{Z}), U(1)  \right)$ for the abelian group of degree $k$ differential characters on $M$ with point-wise multiplication. Notice that the differential form $\omega_h$ is uniquely determined, closed, and has integral periods. That is, we have a map
    \[
    \mathrm{curv} : \hat{H}^k(M;\mathbb{Z}) \rightarrow \Omega^k(M), \hspace{4mm} \chi \mapsto \omega_{\chi}
    \]
    whose image is given exactly by the space of closed $k$-forms with integral periods
    \[
    \mathrm{Im}(\mathrm{curv}) = \Omega^k_{\mathrm{cl}}(M)_{\mathbb{Z}}.
    \]
    A differential character is called \textbf{flat} if we have that $\mathrm{curv}(h) = 0$. 
\end{definition}

Simons and Sullivan show \cite{Simons-Sullivan} that the collection of functors 
\[
\hat{H}^k(-,\mathbb{Z}) \colon \mathbf{Mfd} \rightarrow \mathbf{Ab}
\]
admits the structure of a \textit{character functor}. That is, there are natural transformations 
\[
\begin{array}{ll}
   i \colon H^{k-1}(-,U(1)) \rightarrow \hat{H}^k(-,\mathbb{Z}) & j \colon \Omega^{k-1}(-)/\Omega^{k-1}_{\mathrm{cl}}(-)_{\mathbb{Z}} \rightarrow \hat{H}^k(-,\mathbb{Z}) \\
    & \\
    cc \colon \hat{H}^k(-,\mathbb{Z}) \rightarrow H^k(-,\mathbb{Z}) & \mathrm{curv} \colon \hat{H}^k(-,\mathbb{Z}) \rightarrow \Omega^{k}_{\mathrm{cl}}(-)_{\mathbb{Z}}
\end{array}
\]
such that, for all smooth manifolds $M$, the following diagram commutes, and its diagonal sequences are exact.

 \begin{center}
     \begin{tikzcd}
0 \arrow[rd]                                   &                                                                                                         &                                                                                                                              &                                                                & 0                      \\
                                               & H^{k-1}(M,U(1)) \arrow[rd, "i" description, hook] \arrow[rr, "B" description] &                                                                                                                              & H^k(M;\mathbb{Z}) \arrow[rd, "r"] \arrow[ru]                        &                        \\
H^{k-1}_{\mathrm{dR}}(M) \arrow[ru, "\alpha"] \arrow[rd, "\beta"'] &                                                                                                         & \hat{H}^k(M; \mathbb{Z}) \arrow[ru, "\mathrm{cc}" description, two heads] \arrow[rd, "\mathrm{curv}" description, two heads] &                                                                & H^{k}_{\mathrm{dR}}(M) \\
                                               & \Omega^{k-1}(M)/\Omega^{k-1}_{\mathrm{cl}}(M)_{\mathbb{Z}} \arrow[ru, "j", hook] \arrow[rr, "d" description] &                                                                                                                              & \Omega^{k}_{\mathrm{cl}}(M)_{\mathbb{Z}} \arrow[ru, "s"'] \arrow[rd] &                        \\
0 \arrow[ru]                                   &                                                                                                         &                                                                                                                              &                                                                & 0                     
\end{tikzcd}
 \end{center}
The upper sequence of the hexagon is given by the Bockstein sequence associated to the coefficient exact sequence 
\[
0 \rightarrow \mathbb{Z} \hookrightarrow \mathbb{R} \rightarrow U(1) \rightarrow 0
\]
together with the de Rham theorem, identifying cohomology with real coefficients with de Rham cohomology. The lower sequence of the hexagon follows from the definition of de Rham cohomology together with $d$ being the usual exterior derivative. The necessary natural transformations for the differential characters to be a character functor can be constructed as follows.
\begin{construction} $ $
\begin{itemize}
    \item The \textbf{inclusion of flat classes} 
    \[
    i : H^{k-1}(M,U(1)) \hookrightarrow \hat{H}^{k}(M,\mathbb{Z}) 
    \]
    is a direct consequence of $U(1)$ being a divisible abelian group. Indeed, recall the universal coefficient sequence
    \[
    0 \rightarrow \mathrm{Ext}^1(H_{k-2}(M),U(1)) \rightarrow H^{k-1}(M,U(1)) \xrightarrow{\langle -,- \rangle } \mathrm{Hom}_{\mathbf{Ab}}(H_{k-1}(M),U(1)) \rightarrow 0.
    \]
    Since $U(1)$ is divisible, it follows that $\mathrm{Ext}^1(H_{k-2}(M),U(1)) = 0$ and therefore 
    \[
   \langle -,- \rangle \colon H^{k-1}(M,U(1)) \cong \mathrm{Hom}_{\mathbf{Ab}}(H_{k-1}(M),U(1)). 
    \]
    Denote now by $q : Z_{k-1}(M,\mathbb{Z}) \rightarrow H_{k-1}(M)$ the quotient map, sending a smooth $(k-1)$-cycle to its homology class. Then the induced pullback morphism gives
    \[
    q^* \colon \mathrm{Hom}_{\mathbf{Ab}}(H_{k-1}(M),U(1)) \hookrightarrow \mathrm{Hom}_{\mathbf{Ab}}(Z_{k-1}(M;\mathbb{Z}),U(1)). 
    \]
    Together with the universal coefficient isomorphism, this gives the desired morphism
    \[
      H^{k-1}(M,U(1)) \cong \mathrm{Hom}_{\mathbf{Ab}}(H_{k-1}(M),U(1)) \hookrightarrow \mathrm{Hom}_{\mathbf{Ab}}(Z_{k-1}(M;\mathbb{Z}),U(1)). 
    \]
    It is clear from the construction that this map factors through the subgroup $\hat{H}^k(M,\mathbb{Z})$ and its image are the flat differential characters. Geometrically, this map encodes the inclusion of the isomorphism classes of higher circle bundles with \textit{flat connections} into the isomorphism classes of higher circle bundles with arbitrary connections.   

\item   The \textbf{characteristic class map} $cc : \hat{H}^k(M, \mathbb{Z}) \rightarrow H^k(M,\mathbb{Z})$ is defined as follows. Since $Z_{k-1}(M,\mathbb{Z})$ is a free $\mathbb{Z}$-module and the exponential $\mathbb{R} \rightarrow U(1)$ is an epimorphism, we have that any differential character $\chi$ lifts to some $\tilde{\chi}$
    \begin{center}
        \begin{tikzcd}
                                                                       & \mathbb{R} \arrow[d, two heads] \\
Z_{k-1}(M, \mathbb{Z}) \arrow[r, "\chi"'] \arrow[ru, "\tilde{\chi}", dashed] & U(1)          
\end{tikzcd}
    \end{center}
    Now, define
    \[
    I(\tilde{\chi}) : C_k(M,\mathbb{Z}) \rightarrow \mathbb{Z}; \hspace{4mm} c \mapsto -\tilde{\chi}(\partial c) + \int_c \mathrm{curv}(\chi).
    \]
    To see that this is well-defined, i.e. that $I(\tilde{\chi})$ takes integral values, note that 
    \[
    \chi(\partial c) = \mathrm{exp} \left( 2\pi i  \int_c \omega_{\chi} \right) = \mathrm{exp}(2\pi i \tilde{\chi}(\partial c))
    \] 
    where the first equality follows from $\chi$ being a differential character with $\omega_{\chi} = \mathrm{curv}(\chi)$, and the second equality by construction of the lift $\tilde{\chi}$. Further, using that $\mathrm{curv}(\chi)$ is a closed form, the above map $I(\tilde{\chi})$ defines a cocycle. Its cohomology class $[I(\tilde{\chi})] \in H^k(M, \mathbb{Z})$ does not depend on the choice of lift $\tilde{\chi}$ and we define the characteristic class map 
    \[
    cc : \hat{H}^k(M,\mathbb{Z}) \rightarrow H^k(M,\mathbb{Z}); \hspace{4mm} \chi \mapsto [I(\tilde{\chi})].
    \]
    
    \item The \textbf{topological trivialization} is the inclusion 
    \[
    j \colon \Omega^{k-1}(M)/\Omega^{k-1}_{\mathrm{cl}}(M)_{\mathbb{Z}} \hookrightarrow \hat{H}^k(M,\mathbb{Z})
    \]
    which is constructed by first considering the map 
    \[
    \begin{array}{rcl}
      \iota \colon \Omega^{k-1}(M)  & \rightarrow &  \hat{H}^k(M,\mathbb{Z}) \\
         \omega & \mapsto & \iota(\omega)(c):= \mathrm{exp}(2 \pi i \int_c \omega). 
    \end{array}
    \]
    Stokes' theorem shows that the curvature of this differential character is simply given by
    \[
    \mathrm{curv}(\iota(\omega)) = d\omega.
    \]
Recall that, for the characteristic class map $cc$, a map $I(\tilde{\chi})$ was associated with $\tilde{\chi}$ being a real lift of some differential character $\chi$. In this case, the character $\iota(\omega)$ has a canonical lift given simply by 
\[
\tilde{\iota(\omega)}(c) = \int_c \omega
\]
Now the associated map $I(\tilde{\iota(\omega)})$ reads 
\begin{align*}
I(\tilde{\iota(\omega)})(c) &= - \tilde{\iota(\omega)}(\partial c) + \int_c \mathrm{curv}(\iota(\omega)) \\
&= - \int_{\partial c} \omega + \int_c d\omega \\
&= 0. 
\end{align*}
Stokes' theorem implies therefore that the characteristic class associated with $\iota(\omega)$ is \textit{topologically trivial}. In this case, one calls the form $\omega$ a \textit{topological trivialization} of the differential character $\iota(\omega)$. Let's now examine the kernel of the map $\iota$. That is, assume for $\omega \in \Omega^{k-1}(M)$ we have that for all $c \in Z_{k-1}(M;\mathbb{Z})$ 
\begin{align*}
    \iota(\omega)(c) = \mathrm{exp} \left( 2 \pi i \int_c \omega \right) = 1.
\end{align*}
It follows that the kernel is given by the closed $(k-1)$-forms with integral periods
\[
\mathrm{ker}(\iota) = \Omega^{k-1}_{\mathrm{cl}}(M)_{\mathbb{Z}}.
\]
\newpage 
By taking the quotient with respect to the kernel one gets the desired inclusion map $j$.

\item The \textbf{curvature} map is simply the map 
\[
\mathrm{curv} \colon \hat{H}^k(M;\mathbb{Z}) \to \Omega^k_{\mathrm{cl}}(M)_{\mathbb{Z}}
\]
sending a differential character $\chi$ to its unique curvature form $\omega_{\chi}$ as defined in Definition \ref{Definition: Differential Character}.
\end{itemize}
\end{construction}

\begin{theorem}[\cite{Brylinski}, Proposition 1.5.7] \label{Theorem: Differential Characters = Smooth Deligne-Beilinson Cohomology}
    Let $M$ be a smooth manifold. Then there is a canonical isomorphism between the abelian group of differential characters $\hat{H}^k(M,\mathbb{Z})$ and the smooth Deligne cohomology group $H^k(M, \mathbb{Z}(k)^{\infty}_D)$.
\end{theorem}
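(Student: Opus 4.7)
The plan is to construct a canonical homomorphism $\Phi \colon H^k(M,\mathbb{Z}(k)^{\infty}_D) \to \hat{H}^k(M,\mathbb{Z})$ by integrating \v{C}ech--Deligne cocycle data against smooth singular cycles, and then to establish bijectivity via the differential cohomology hexagon. First, fix a good cover $\mathcal{U} = \{U_i\}$ of $M$ and represent a Deligne class by a \v{C}ech--Deligne cocycle $\eta$ whose components $\eta^{p}$ in total degree $k$ sit in \v{C}ech degree $p$ and form degree $k-1-p$, with the bottom piece being the $\mathbb{R}$-valued cochain whose jumps encode the integer class. Given a smooth $(k-1)$-cycle $c$, apply iterated barycentric subdivision until each constituent simplex is contained in some cover element, and assign a compatible indexing of the faces by tuples from the nerve of $\mathcal{U}$. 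Define
\[
\Phi(\eta)(c) \; := \; \exp\Bigl( 2\pi i \sum_{p=0}^{k-1} \sum_{\tau} \pm \int_\tau \eta^{p}_{i_0(\tau), \ldots, i_p(\tau)} \Bigr),
\]
where the inner sum ranges over the $(k-1-p)$-dimensional faces $\tau$ of the subdivision and the signs follow the \v{C}ech--Deligne sign conventions. The total cocycle identity $(d + (-1)^{\mathrm{deg}}\delta)\eta = 0$ ensures that the correction terms arising from a change of indexing, a refinement of the cover, or a finer subdivision are integer multiples of $2\pi i$ and vanish after exponentiation; the same identity shows that a Deligne coboundary produces only integer contributions via Stokes.

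Second, I would verify the curvature condition. The top-degree local $(k-1)$-forms $\{B_i\}_i$ of the cocycle satisfy $B_j - B_i = d A_{ij}$ on double intersections, so the $k$-forms $\{dB_i\}_i$ patch to a globally defined closed form $\omega_\eta \in \Omega^k_{\mathrm{cl}}(M)$. A direct application of Stokes' theorem to $\Phi(\eta)(\partial b)$ for a smooth $k$-chain $b$ yields $\Phi(\eta)(\partial b) = \exp(2\pi i \int_b \omega_\eta)$, so $\Phi(\eta)$ is a genuine differential character with curvature $\omega_\eta$. Integrality of the periods of $\omega_\eta$ follows because $[\omega_\eta]$ is the image in real cohomology of the integer class extracted from the $U(1)$-valued component of $\eta$, along the lines of Lemma \ref{Lemma: Dixmier-Douady Class and Curvarture}.

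Finally, to show that $\Phi$ is an isomorphism I would argue via the character-functor structure. Truncating the Deligne complex at the top and at the bottom yields two short exact sequences
\begin{align*}
0 \to \Omega^{k-1}(M)/\Omega^{k-1}_{\mathrm{cl}}(M)_{\mathbb{Z}} &\to H^k(M,\mathbb{Z}(k)_D^{\infty}) \to H^k(M,\mathbb{Z}) \to 0, \\
0 \to H^{k-1}(M,U(1)) &\to H^k(M,\mathbb{Z}(k)_D^{\infty}) \to \Omega^k_{\mathrm{cl}}(M)_{\mathbb{Z}} \to 0,
\end{align*}
which endow $H^k(-,\mathbb{Z}(k)_D^{\infty})$ with curvature, characteristic class, inclusion-of-flat-classes, and topological-trivialization morphisms, all of which $\Phi$ intertwines by construction with their counterparts on $\hat{H}^k(-,\mathbb{Z})$ presented in the paper. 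The five lemma applied to either exact sequence then forces $\Phi$ to be bijective. The main technical obstacle is the first step: identifying the subdivision-plus-indexing with a smooth simplicial map from a triangulation of the realization of $c$ into the nerve of $\mathcal{U}$, and then tracking how the total cocycle identity produces the exact pairwise cancellations needed for independence of all auxiliary choices, is where essentially all of the combinatorial work is concentrated.
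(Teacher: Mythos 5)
The paper does not actually prove this theorem; it cites Brylinski's Proposition~1.5.7, whose argument runs through a zig-zag of quasi-isomorphisms between complexes of sheaves. Your proposal instead sketches the explicit geometric construction --- assembling a higher-holonomy pairing from \v{C}ech--Deligne cocycle data by subdividing smooth cycles against a good cover and integrating local forms over faces of matching dimension --- which is essentially the alternative route the paper attributes to Gajer's Theorem~3.1 and explicitly contrasts with Brylinski's abstract argument. This is a genuinely different approach, and it buys a concrete description of the isomorphism as holonomy, in the spirit of everything the paper subsequently does.

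The proposal has a real gap, though, and you flag it yourself: the well-definedness of $\Phi$ --- independence of the subdivision, of the assignment of nerve tuples to faces, of the good cover, and invariance under addition of a \v{C}ech--Deligne coboundary --- is where the entire argument lives, and none of it is carried out. The claim that the total cocycle identity "produces the exact pairwise cancellations" is not automatic; a codimension-$p$ face of a triangulation can abut more than $p+1$ top simplices, so the tuple $i_0(\tau),\dots,i_p(\tau)$ is not canonically determined without imposing an additional flag structure (as a barycentric subdivision adapted to a simplicial map into the nerve would supply), and the boundary terms across \v{C}ech degrees must then be tracked explicitly. There is also a small slip: for the complex $\mathbb{Z}(k)^\infty_D$ the integer class is encoded by $\mathbb{Z}\hookrightarrow C^\infty(-,\mathbb{R})$, not by a $U(1)$-valued component; this only matches the $U(1)$ picture after passing through the paper's quasi-isomorphism relating the two models of the Deligne complex, and the distinction matters exactly at the step where you argue that the correction terms disappear upon exponentiation. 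Finally, the five-lemma step requires actually checking that $\Phi$ is a homomorphism and that the comparison squares commute; the compatibility with the characteristic-class map in particular is not immediate from the integration formula. The strategy is sound, but as written this is a program rather than a proof.
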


The classical\footnote{By ``classical" we mean without using the characterization theorem of Simons--Sullivan \cite{Simons-Sullivan}.} isomorphism between the smooth Deligne cohomology and the Cheeger--Simons differential characters unfortunately does not admit an easy geometric interpretation as it is induced from a sequence of quasi-isomorphisms of complexes of sheaves. However, the fact that smooth Deligne classes can be represented by circle $n$-bundles with connections allows for the construction of an isomorphism from $H^k(M, \mathbb{Z}(k)^{\infty}_D)$ to $\hat{H}^k(M,\mathbb{Z})$ encoding the notion of \textit{higher holonomy}.

\begin{theorem}[\cite{GajerHigher}, Theorem 3.1]
    For every smooth manifold $M$ there is an isomorphism of abelian groups for all $k \geq 0$
    \[
    \begin{array}{rcl}
         H^{k+1}(M,\mathbb{Z}(k+1)_D^{\infty})& \xrightarrow{\cong} & \hat{H}^{k+1}(M,\mathbb{Z}) \\
        c = [(g_{i_0, ..., i_k}, ..., A_{i_0i_1i_2}, B_{i_0i_1}, C_{i_0})] & \mapsto & h^c 
    \end{array}
    \]
    where $h^c$ represents the holonomy map associated with the higher connection data $c$. 
\end{theorem}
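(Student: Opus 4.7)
The plan is to define the map $c \mapsto h^c$ explicitly by a higher--holonomy formula and then verify it is an isomorphism of abelian groups. The construction generalizes the classical case $k = 0$, where the holonomy of a loop $\gamma$ with respect to $(A_i, g_{ij})$ is computed by subdividing $\gamma$ into smooth arcs $\gamma_\alpha \colon [t_\alpha, t_{\alpha+1}] \to U_{i_\alpha}$ and forming $\prod_\alpha \exp\bigl(2\pi i \int_{\gamma_\alpha} A_{i_\alpha}\bigr) \cdot \prod_\alpha g_{i_\alpha i_{\alpha+1}}(\gamma(t_{\alpha+1}))$. First I would fix a good open cover $\{U_i\}$ of $M$ and a Deligne representative $c = (g_{i_0 \ldots i_k}, \ldots, A_{i_0 i_1 i_2}, B_{i_0 i_1}, C_{i_0})$. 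Given a smooth $k$-cycle $z = \sum_\alpha n_\alpha \sigma_\alpha$, compactness allows iterated barycentric subdivision of the $\sigma_\alpha$ until every closed top sub--simplex $\tau$ admits a labelling $(i_0(\tau), \ldots, i_k(\tau))$ with image inside $U_{i_0(\tau) \ldots i_k(\tau)}$, and I would set
\[
h^c(z) := \exp\!\left( 2\pi i \sum_{p=0}^{k-1} \sum_{\tau^{(k-p)}} \varepsilon_\tau \int_\tau \omega^{(p)}_\tau \right) \cdot \prod_{\tau^{(0)}} g_{i_0(\tau) \ldots i_k(\tau)}(\tau)^{\varepsilon_\tau},
\]
where $\tau^{(k-p)}$ ranges over the $(k-p)$-dimensional sub--simplices of the refinement, $\omega^{(p)}_\tau$ is the $p$-th--level form in $c$ with indices determined by $\tau$ (so $\omega^{(0)}_\tau = C_{i_0(\tau)}$, $\omega^{(1)}_\tau = B_{i_0(\tau) i_1(\tau)}$, and so on), and $\varepsilon_\tau \in \{\pm 1\}$ records orientations.

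The main obstacle will be proving well--definedness: independence of the labellings, of the subdivision, and of the Deligne coboundary representative. For labelling independence, I would compare two valid choices of indices on a common face and show that the resulting discrepancies are cancelled precisely by the Deligne cocycle identities
\[
C_j - C_i = dB_{ij}, \qquad B_{jk} - B_{ik} + B_{ij} = dA_{ijk}, \qquad \ldots, \qquad \delta g = 1,
\]
combined with Stokes' theorem applied face by face. Subdivision independence reduces to the same local cancellations on the newly introduced interior faces, and coboundary invariance is verified by direct substitution: adding $D \beta$ for a Deligne cochain $\beta$ produces a telescoping product that is trivial on any $k$-cycle by $\partial z = 0$. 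Showing that all these separate combinatorial cancellations align consistently with the orientation signs $\varepsilon_\tau$ is where the bulk of the technical bookkeeping lies.

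Once well--definedness is established, the differential character property follows from a Stokes computation: for any $y \in C_{k+1}(M;\mathbb{Z})$ every lower--level boundary contribution telescopes against its neighbour, leaving only the top--level form, so that
\[
h^c(\partial y) = \exp\!\left( 2\pi i \int_y \omega_c \right),
\]
where $\omega_c \in \Omega^{k+1}(M)$ is the unique globally defined closed form with $\omega_c|_{U_i} = dC_i$; its global nature is guaranteed by $d(C_j - C_i) = d^2 B_{ij} = 0$. This simultaneously identifies the curvature of $h^c$ with the Chern--Weil curvature of $c$.

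Finally, to upgrade the homomorphism $c \mapsto h^c$ to an isomorphism, I would use the character--functor characterization of Simons--Sullivan: the constructed natural transformation intertwines curvature and the characteristic class on both sides, and both $H^{k+1}(M;\mathbb{Z}(k+1)_D^\infty)$ and $\hat H^{k+1}(M;\mathbb{Z})$ sit in differential cohomology hexagons with common kernel $H^k(M;U(1))$ of the curvature map and common cokernel $\Omega^k(M)/\Omega^k_{\mathrm{cl}}(M)_{\mathbb{Z}}$ of the topological trivialisation. A five--lemma argument on these hexagons yields bijectivity; alternatively, one may appeal directly to Lemma \ref{Lemma: Quasi-isomorphism Deligne Complex} together with Theorem \ref{Theorem: Differential Characters = Smooth Deligne-Beilinson Cohomology} to promote the well--defined natural homomorphism to the required isomorphism.
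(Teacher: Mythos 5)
The paper does not supply a proof of this statement at all: it is quoted as Gajer's Theorem~3.1 and used as a black box. So there is nothing internal to compare against line by line, and I will instead assess your sketch on its own merits.

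Your route --- realise $h^c$ by a local-data holonomy formula over a fine subdivision of the cycle, verify well-definedness via the Deligne cocycle identities and Stokes, establish the differential character property by a telescoping Stokes argument, then upgrade to an isomorphism via the hexagon and the five-lemma --- is the standard way such holonomy isomorphisms are constructed, and it is essentially the shape of Gajer's own argument. The overall architecture is sound, but there are several points that would need tightening before this counts as a proof rather than an outline.

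First, your motivating example is off by one: the data $(A_i, g_{ij})$ is a degree-$1$ Deligne cocycle representing a class in $H^2(M,\mathbb{Z}(2)_D^\infty) \cong \hat H^2(M;\mathbb{Z})$, so it is the case $k=1$ of the theorem, not $k=0$. The case $k=0$ is the degenerate one $H^1(M,\mathbb{Z}(1)_D^\infty) = C^\infty(M,U(1)) = \hat H^1(M;\mathbb{Z})$, where there is no holonomy formula to write down. This does not undermine your approach, but it does suggest you should be more careful with the degree bookkeeping throughout, since the indexing $c = (g_{i_0\dots i_k},\dots,C_{i_0})$ and the depth of the nested product must be kept exactly aligned with $k$.

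Second, in your display formula the phrase ``indices determined by $\tau$'' conceals the genuinely delicate point. The labels on a $(k-p)$-dimensional face $\tau$ of the subdivision are not intrinsic to $\tau$: they are the $p+1$ labels $i_0(\tau),\dots,i_p(\tau)$ carried by the $p+1$ top-dimensional cells adjacent to $\tau$, and the form $\omega^{(p)}_\tau$ is the component of $c$ with that multi-index. Making this precise requires the subdivision to be combinatorially nice enough that interior faces of codimension $p$ are shared by exactly $p+1$ top cells, which is automatic for iterated barycentric subdivision of a simplicial chain but should be stated. More importantly, the well-definedness under relabelling and refinement is exactly where the cocycle identities, orientation signs $\varepsilon_\tau$, and Stokes' theorem must interact coherently, and you acknowledge but do not carry out this bookkeeping. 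That cancellation is the crux of the theorem; without it the construction is not a proof.

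Third, the bijectivity argument at the end is fine in principle, but as stated it mixes up the two short exact sequences. To run the five-lemma you want, for example, the sequence
\[
0 \longrightarrow H^{k}(M;U(1)) \longrightarrow \hat H^{k+1}(M;\mathbb{Z}) \xrightarrow{\ \mathrm{curv}\ } \Omega^{k+1}_{\mathrm{cl}}(M)_{\mathbb{Z}} \longrightarrow 0
\]
and its Deligne counterpart, and then you need to verify that $h$ is the identity (or a known isomorphism) on the outer terms --- i.e.\ that it intertwines the curvature maps and restricts to the universal-coefficients isomorphism on flat classes. You state only compatibility with curvature and with the characteristic class; those two do not bound a single short exact sequence. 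Your fallback to Lemma~\ref{Lemma: Quasi-isomorphism Deligne Complex} and Theorem~\ref{Theorem: Differential Characters = Smooth Deligne-Beilinson Cohomology} also needs more: those results provide some isomorphism between the two groups via a quasi-isomorphism of complexes, but that does not by itself show your holonomy map $c \mapsto h^c$ coincides with it. You would have to appeal to the Simons--Sullivan uniqueness of character functors, which is exactly the argument you gesture at earlier; just be aware that both the curvature and the flat-inclusion diagonals must be checked, not merely curvature and $cc$.

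In summary: the approach is the right one and would work if completed, but the heart of the proof --- the coherent cancellation of all the cocycle and Stokes contributions under relabelling, refinement, and coboundary --- is exactly the part you leave to ``technical bookkeeping,'' so the submission is an outline rather than a proof, and the final isomorphism argument needs the exact sequence and the diagonals it uses stated more carefully.
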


The fact that differential characters can be thought of as higher holonomy maps of circle $k$-bundles with connection will be the main motivation behind the next section.

\section{Geometric Loop Groups and Higher Holonomy}

The interpretation of differential characters in terms of higher holonomies motivated the work of Gajer \cite{GajerHigher, Gajer_Geometry} where he introduced higher holonomies as smooth group morphisms based on an iterative application of \textit{geometric loop groups}. In the case of principal $G$-bundles with connection this gives the following classification theorem.  

\begin{theorem}[\cite{GajerHigher}, Theorem 1.4]
    Given a connected and pointed smooth manifold $(M,x_0)$ and a Lie group $G$, we denote by $\mathrm{Bun}(M,G,\nabla)$ the pointed set of isomorphism classes $[P,p_0,A]$ of smooth principal $G$-bundles $P$ over $M$ endowed with a connection $A$ together with a designated point $p_0 \in P_{x_0}$ in the fiber over the basepoint. Then there is an isomorphism of pointed sets 
    \begin{align*}
        \mathrm{Bun}(M,G,\nabla) &\cong \mathrm{Hom}^{\infty}(G(M,x_0),G) \\
        [P,p_0,A] &\mapsto h^{A} : G(M,x_0) \rightarrow G,
    \end{align*}
    given by the holonomy morphism associated to a principal bundle with connection. In the case that $G$ is abelian, the above assignment becomes a group isomorphism where on the left-hand side the group structure is given by the contracted product of principal bundles with connection. 
\end{theorem}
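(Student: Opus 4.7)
The plan is to construct the forward and reverse maps explicitly using the universal properties of the geometric cobar construction, then verify they are mutual inverses. For the forward map, given a representative $(P, p_0, A)$, I would define the holonomy homomorphism $h^A \colon G(M,x_0) \to G$ by parallel transport along based loops: for a class in $G(M,x_0)$ represented by a smooth loop $\gamma \colon S^1 \to M$ based at $x_0$, set $h^A(\gamma) = g$ where $g \in G$ is the unique element such that $\tilde{\gamma}_A(1) = p_0 \cdot g$ for $\tilde{\gamma}_A$ the horizontal lift of $\gamma$ starting at $p_0$. Well-definedness on the equivalence classes underlying $G(M,x_0)$ follows from the invariance of parallel transport under the reparametrizations and degeneracies built into the cobar construction, the homomorphism property from the compatibility of horizontal lifting with concatenation, and smoothness in the diffeological sense from smooth dependence of ODE solutions on smooth families of loops.

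For the reverse map, I would use that $G(M,x_0)$ is the fiber of the geometric cobar bundle $E(M,x_0) \to M$, a diffeological principal $G(M,x_0)$-bundle carrying a canonical connection and a distinguished point $e_0 \in (E(M,x_0))_{x_0}$. Given a smooth homomorphism $h \colon G(M,x_0) \to G$, one forms the associated principal $G$-bundle
\[
P_h := E(M,x_0) \times_{G(M,x_0)} G \longrightarrow M
\]
with the connection $A_h$ obtained by pushing the canonical connection on $E(M,x_0)$ through $h$, and the distinguished point $p_0 := [e_0, 1] \in (P_h)_{x_0}$. To verify the round trips: starting from $h$, the holonomy $h^{A_h}$ equals $h$ essentially by construction of the canonical connection, since a loop class $[\gamma]$ parallel-transports $e_0$ to itself acted on by $[\gamma]$ in the fiber of the cobar bundle, which $h$ then sends to $h([\gamma])$. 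Starting from $(P, p_0, A)$, one obtains a morphism of principal bundles $E(M,x_0) \to P$ covering the identity on $M$, equivariant along $h^A$ and sending $e_0 \mapsto p_0$, by horizontally lifting smooth families of loops in $P$; this exhibits the required isomorphism $(P_{h^A}, p_0, A_{h^A}) \cong (P, p_0, A)$.

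The abelian case requires the additional observation that the contracted product of bundles with connection corresponds to addition of connection forms on the total spaces, which in turn corresponds to pointwise multiplication of holonomies. The main obstacle will be executing all of the above carefully within the diffeological formalism: checking that $E(M,x_0) \to M$ really is a diffeological principal $G(M,x_0)$-bundle with a well-defined smooth canonical connection, that the associated-bundle construction outputs a genuine smooth principal $G$-bundle with smooth connection, and that $h^A$ lands in $\mathrm{Hom}^{\infty}(G(M,x_0), G)$ in Gajer's sense. As flagged in the introduction, the argument in \cite{GajerHigher} for the contractibility of $E(M,x_0)$ is incomplete; fortunately, contractibility is not needed for the present bijection, only the universal property of parallel transport on $E(M,x_0)$ together with the diffeological associated-bundle construction, both of which admit independent justifications.
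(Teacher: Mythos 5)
The paper does not actually prove this theorem---it is cited directly from \cite{GajerHigher}---so there is no ``paper proof'' to compare against step by step. But the paper does discuss how Gajer's proof works in Remark~\ref{Remark: Differences with Gajer's Paper}: it rests on $E(M,x_0) \to M$ being a universal bundle carrying a universal parallel-transport structure, which requires the weak contractibility of $E(M,x_0)$. The paper then shows that Gajer's contractibility argument contains an error, records contractibility as the open Conjecture~\ref{Hypothesis: Weak Contractibility of the total space}, and states---without carrying it out---that an alternative proof avoiding contractibility should be possible by adapting \cite[Theorem~6]{Caetano-Picken} with $\pi_1^1(M,x_0)$ replaced by $G(M,x_0)$.

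Your proposal is essentially that alternative, and the key move that makes it work---constructing $\phi \colon E(M,x_0) \to P$ directly by horizontal lifting from $p_0$, and using that any morphism of principal $G$-bundles covering the identity is automatically an isomorphism---is a legitimate way to bypass universality, so you are right that contractibility is not required for the bijection. Be careful with the phrase ``universal property of parallel transport on $E(M,x_0)$'', though: if you mean the classifying-space universality, you would be re-invoking contractibility through the back door. What your argument actually needs is only the \emph{canonical} parallel-transport structure (horizontal lift of $\delta$ starting at $[\gamma]$ is $t \mapsto [\delta|_{[0,t]} * \gamma]$), which exists without any contractibility assumption; it is worth rephrasing so as not to conflate the two.

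The substantive gap you wave at but do not close is hidden in ``the connection $A_h$ obtained by pushing the canonical connection on $E(M,x_0)$ through $h$''. Since $E(M,x_0)$ is only a diffeological space, it carries a smooth parallel-transport structure, not a $\mathfrak{g}$-valued $1$-form. To produce a genuine connection form on the manifold bundle $P_h = E(M,x_0) \times_{G(M,x_0)} G$ you must show that the parallel-transport functor induced by a \emph{smooth} homomorphism $h$ is differentiable; that differentiability step is exactly the content of \cite[Theorem~6]{Caetano-Picken} (see also \cite{Schreiber-Waldorf-Parallel}), and it is why the paper points to Caetano--Picken as the model. Without that step you only get a bijection with bundles equipped with smooth parallel transport, not with smooth principal bundles with connection, and the theorem as stated does not follow. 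You should likewise make explicit that $h^A$ lands in smooth group homomorphisms: this uses smooth dependence of horizontal lifts on a plot $U \to \Omega(M,x_0)$ together with the quotient diffeology on $G(M,x_0)$, which you gesture at but do not spell out. With those two points addressed, the rest of your outline---well-definedness on $\sim_1,\sim_2,\sim_3$-classes, $h^A$-equivariance of $\phi$, smoothness of $\bar\phi$ via the subduction $P(M,x_0) \to E(M,x_0)$, and the abelian refinement---is sound.
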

\begin{remark}
    The basepoint $[M \times G,e,\mathrm{pr}^*\theta_G]$ in $\mathrm{Bun}(M,G,\nabla)$ is given by the class of the trivial bundle endowed with the trivial flat connection together with the choice $p_0 = e$. Here $\theta_G$ denotes the Maurer--Cartan form on $G$ and $\mathrm{pr} \colon M \times G \rightarrow G$ the projection to the second factor. 
\end{remark}

The geometric loop group is constructed as a diffeological quotient space of the space of based piecewise smooth loops in $(M,x_0)$ and will be studied in detail in Chapter \ref{Chapter: Geometric Loop Groups}. Its diffeological abelianization is denoted $A(M,x_0)$ and is a crucial ingredient to classify higher circle bundles with connection in terms of their higher holonomy morphisms.   

\begin{theorem}[\cite{GajerHigher}, Theorem 4.2] \label{Theorem: Gajer's holonomy Theorem}
    Given a connected and pointed smooth manifold $(M,x_0)$, there is an isomorphism of abelian groups
    \[
    \hat{H}^k(M; \mathbb{Z}) \cong \mathrm{Hom}^{\infty}(G^{k-1} \left( A(M,x_0) \right), U(1) ),
    \]
    where $A(M,x_0)$ is the abelianization of the geometric loop group, considered as a diffeological group. 
\end{theorem}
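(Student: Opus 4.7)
My plan is to proceed by induction on $k$, combining Gajer's Theorem 1.4 in the base case with an iterated transgression argument that lowers the degree by one at the cost of replacing $M$ by its (abelianized) loop group. First, I would invoke Theorem \ref{Theorem: Differential Characters = Smooth Deligne-Beilinson Cohomology} to identify $\hat{H}^k(M;\mathbb{Z})$ with smooth Deligne cohomology $H^k(M;\mathbb{Z}(k)_D^{\infty})$, so that each class is represented by a \v{C}ech--Deligne cocycle $c = (g_{i_0\cdots i_k}, \ldots, B_{i_0i_1}, C_{i_0})$ describing a circle $(k{-}1)$-bundle with connection. For the base case $k=2$, Theorem 1.4 applied to $G = U(1)$ identifies $\hat{H}^2(M;\mathbb{Z}) \cong \mathrm{Bun}(M,U(1),\nabla) \cong \mathrm{Hom}^\infty(G(M,x_0),U(1))$; since $U(1)$ is abelian, every such homomorphism factors uniquely through the diffeological abelianization and can be repackaged to match the stated target $\mathrm{Hom}^\infty(G(A(M,x_0)),U(1))$.

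For the inductive step, I would \emph{transgress}. Fibre integration of the local connection data of a degree-$k$ Deligne cocycle along the evaluation map from based loops yields a degree-$(k{-}1)$ Deligne cocycle on the space of smooth loops; invariance under thin homotopies then causes the result to descend to the geometric loop group $G(M,x_0)$, and abelianness of $U(1)$ lets it factor further through $A(M,x_0)$. This produces a homomorphism $\hat{H}^k(M;\mathbb{Z}) \to \hat{H}^{k-1}(A(M,x_0);\mathbb{Z})$. Applying the inductive hypothesis to the diffeological abelian group $A(M,x_0)$ in place of $M$ and absorbing the outer abelianization via $A(H) \simeq H$ for abelian diffeological $H$ recovers $\mathrm{Hom}^\infty(G^{k-1}(A(M,x_0)),U(1))$, matching the right-hand side of the theorem.

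The principal obstacle is promoting this transgression into a genuine \emph{isomorphism}. On one side I must verify that the Cheeger--Simons compatibility $\chi(\partial c) = \exp(2\pi i \int_c \omega_\chi)$ survives integration along the evaluation map, which reduces to a Stokes-type identity for the fibre integral of the curvature $(k+1)$-form. On the other side, given a smooth homomorphism $h \colon G^{k-1}(A(M,x_0)) \to U(1)$, one must reconstruct an explicit \v{C}ech--Deligne cocycle on $M$ by probing $h$ against small loops and higher null-homotopic cycles to produce patch-wise connection forms $B$ and $C$ satisfying the required coboundary conditions. This inversion is the delicate step; it is essentially Gajer's construction via the geometric cobar tower $E^{(k)}(M,x_0)$, whose contractibility proof is flagged as incomplete in Chapter \ref{Chapter: Geometric Loop Groups}. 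I would handle this either by repairing that gap directly or, mirroring the thesis's strategy in the $k=2$ case, by passing to the thin analogues $\pi_1^D(M_1,x_0)$ and using the surjective comparison morphisms $\theta_1$, $\theta_{\leq 1}$, $\theta$ to transfer the reconstruction to a setting where the required contractibility and coherence are under control.
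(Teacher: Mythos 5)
This statement is cited from Gajer (\cite{GajerHigher}, Theorem 4.2) and is \emph{not proved} in the thesis; indeed, the remark immediately following it flags that Gajer's proof ``relies on a result whose proof contains a serious gap,'' namely the contractibility of $E(M,x_0)$, which the thesis downgrades to Conjecture \ref{Hypothesis: Weak Contractibility of the total space} and discusses in Remark \ref{Remark: Differences with Gajer's Paper}. Your sketch reconstructs Gajer's strategy reasonably faithfully and, to your credit, identifies the same gap. But the proposal cannot stand as a proof, for reasons beyond the one you name.

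First, your base case $k=2$ has a non sequitur. Theorem 1.4 plus abelianization gives $\hat{H}^2(M;\mathbb{Z}) \cong \mathrm{Hom}^{\infty}(G(M,x_0),U(1)) \cong \mathrm{Hom}^{\infty}(A(M,x_0),U(1))$, but the theorem's right-hand side for $k=2$ is $\mathrm{Hom}^{\infty}(G^{1}(A(M,x_0)),U(1)) = \mathrm{Hom}^{\infty}(G(A(M,x_0)),U(1))$ --- morphisms out of the geometric loop group \emph{of the diffeological space} $A(M,x_0)$, which is a genuinely different group. Your phrase ``can be repackaged to match the stated target'' hides precisely the step that needs the looping/transgression machinery, and it is where the two unproved inputs enter: (i) the weak contractibility of $E(M,x_0)$ (Conjecture \ref{Hypothesis: Weak Contractibility of the total space}), and (ii) the natural isomorphism $\pi_i(A(M,x_0)) \cong H_{i+1}(M)$, flagged in the thesis as obstacle \ref{Point: Second Obstacle}, whose proof via smooth approximation does not extend to diffeological spaces.

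Second, your inductive step proposes to transgress a Deligne cocycle to obtain a class in ``$\hat{H}^{k-1}(A(M,x_0);\mathbb{Z})$.'' But $A(M,x_0)$ is only a diffeological group, not a manifold; Cheeger--Simons differential characters (and the \v{C}ech--Deligne model you invoke) are set up on manifolds, and making sense of them on $A(M,x_0)$ would itself require nontrivial development. Gajer sidesteps this by arguing directly on the group side with a short exact sequence involving $\pi_0(G^{k-1}(A(M)))$ and the curvature map, as sketched in the proof of Proposition \ref{Proposition: The (k-1)-connected Gajer theorem}; note that that proof also has to \emph{assume} $(k-1)$-connectivity and Conjecture \ref{Hypothesis: Weak Contractibility of the total space}, and is only given as a sketch. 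Finally, the ``thin analogue'' escape hatch you propose --- using $\theta_1, \theta_{\leq 1}, \theta$ --- is developed in the thesis only at the level of $\pi_1$ (Lemma \ref{Geeometric loop group factors through pi}) and does not automatically propagate through the iterated loop group tower $G^{k-1}$, so it does not repair the argument as written.
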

\begin{remark}
    \begin{enumerate}[label={(\arabic*)}] $ $
        \item     For $k \geq 1$ we denote by $G^k(M,x_0)$ the $k$-th fold loop group defined iteratively by 
        \[
        G^k(M,x_0) := G(G^{k-1}(M,x_0),[x_0]),    
        \]
        where $[x_0]$ denotes the equivalence class of the constant loop at $x_0$.
        \item It is important to point out that this theorem relies on a result whose proof contains a serious gap. This issue will be discussed in detail in section \ref*{Chapter: Geometric Loop Groups}, specifically in Remark \ref*{Remark: Differences with Gajer's Paper}. 
    \end{enumerate}
\end{remark}
\chapter{Diffeological Spaces} \label{Chapter: Diffeological Spaces}

In the introduction, we mentioned our goal of introducing a new variation of thin homotopy based on skeletal diffeological spaces. Further, these skeletal diffeologies are then used to construct for any smooth manifold $M$ a simplicial presheaf $\mathbb{M}_k$, providing us with a refinement from ordinary cohomology to differential cohomology. In this chapter, we introduce diffeological spaces, show how they relate to concrete sheaves on Cartesian spaces, and review their smooth homotopy theory. Our main references in this chapter are Iglesias-Zemmour \cite{Iglesias-Zemmour}, Baez--Hoffnung \cite{BaezHoffnung}, Christensen--Wu \cite{Christensen-Wu} and Kihara \cite{Kihara22}.

\section{Basics of Diffeological Spaces} \label{Subsection: Diffeological Spaces}

\begin{definition} \label{Definition: Diffeological Space}
    A \textbf{diffeological space} is a set of points $X$ together with a specified set $\cat{D}$ of functions $p \colon U \rightarrow X$ called \textbf{plots} for each open set $U \subset \mathbb{R}^n$ and for each $n \in \mathbb{N}$, such that the following axioms are satisfied:
    \begin{enumerate}[label={(\arabic*)}]
        \item (Covering) Every constant map $\mathbb{R}^0 \rightarrow X$ is a plot. In the following, we will write $\mathrm{pt}$ for the one-point space $\mathbb{R}^0$.
        \item (Compatibility) For every plot $p \colon U \rightarrow X$ and any smooth function $f \colon V \rightarrow U$, the composition
        \[
        p \circ f \colon V \rightarrow X
        \]
        is also a plot.
        \item (Sheaf Condition) Given a map $p \colon U \rightarrow X$ defined on some $U \subset \mathbb{R}^n$ and given an open covering $\left\{ U_i \right\}_{i \in I}$ of $U$ for which all restrictions $p|_{U_i} \colon U_i \rightarrow X$ are plots of $X$, then also $p$ is a plot.
     \end{enumerate}
     Given two diffeological spaces $X$ and $Y$ let $f \colon X \rightarrow Y$ be a map. Then we call $f$ \textbf{smooth} if, for every plot $p \colon U \rightarrow X$ of $X$, the composition $f \circ p \colon U \rightarrow Y$ is a plot of $Y$.
\end{definition}

The category of diffeological spaces together with smooth maps is denoted by $\mathbf{Diff}$. The category of smooth manifolds $\mathbf{Mfd}$ embeds fully faithfully into the category of diffeological spaces by endowing the underlying set of a smooth manifold $M$ with the canonical diffeology where a plot is simply given by a smooth map $p \colon U \rightarrow M$ in the usual sense. Contrary to the category of smooth manifolds, $\mathbf{Diff}$ is well-behaved under categorical constructions such as quotients and function spaces. In particular, it can be shown that the category $\mathbf{Diff}$ is both complete and cocomplete \cite{BaezHoffnung}. \\

Let us have a closer look at some important examples of diffeological spaces:

\begin{example} \label{Example: Diffeological Spaces} \nl
    \begin{itemize}
        \item \textbf{Subspaces:} Any subset $Y \subset X$ of a diffeological space becomes a diffeological space if we define $p \colon U \rightarrow Y$ to be a plot in $Y$ if and only if its composite $ p \colon U \rightarrow X$ with the inclusion defines a plot in $X$. By definition, this turns the inclusion $i \colon Y \hookrightarrow X$ into a smooth map.
        \item \textbf{Quotient spaces:} Given $X$ a diffeological space and $\sim$ any equivalence relation on $X$, the quotient space $Y := X/\!\sim$ becomes a diffeological space if we define a plot in $Y$ to be any function
        \[
        p \colon U \rightarrow Y
        \]
        for which there exists an open covering $\left\{ U_i \right\}_{i \in I}$ of $U$ together with plots \newline $\left\{ p_i \colon U_i \rightarrow X \right\}_{i \in I}$ of $X$ such that the following diagram commutes, where $q \colon X \rightarrow Y$ denotes the quotient map.
        \begin{center}
\begin{tikzcd}
U_i \arrow[d, hook] \arrow[r, "p_i"] & X \arrow[d, "q"] \\
U \arrow[r, "p"']                    & Y
\end{tikzcd}
        \end{center}
        Note that the quotient diffeology agrees with the pushforward diffeology $q_*(\cat{D})$ of $X$ along the quotient map $q \colon X \rightarrow Y$, see for example \cite[Art. 1.43 and Art. 1.50]{Iglesias-Zemmour}.

        \item \textbf{Product spaces:} Given diffeological spaces $X$ and $Y$, endow the Cartesian product $X \times Y$ with the diffeology where a function $p \colon U \rightarrow X \times Y$ is a plot if and only if $p$ composed with the projection maps $\mathrm{pr}_1 \colon X \times Y \rightarrow X$ and $\mathrm{pr}_2 \colon X \times Y \rightarrow Y$ defines plots for $X$ and $Y$ respectively.

        \item \textbf{Function spaces:} Given diffeological spaces $X$ and $Y$, endow the set
        \[
        C^{\infty}(X,Y) = \left\{ f : X \rightarrow Y \; | \; f \text{ is smooth} \right\}
        \]
        with the diffeology where a function $p \colon U \rightarrow C^{\infty}(X,Y)$ is declared to be a plot if and only if the corresponding function
        \begin{align*}
            U \times X &\rightarrow Y \\
            (t,x) &\mapsto p(t)(x)
        \end{align*}
        is smooth. The function diffeology turns $\mathbf{Diff}$ into a cartesian closed category. To avoid confusion we write $D(X,Y)$ for the \textbf{diffeological space of smooth maps between} $X$ \textbf{and} $Y$, and $C^{\infty}(X,Y)$ for the \textbf{set of smooth maps between} $X$ \textbf{and} $Y$.
    \end{itemize}
\end{example}

An important feature of diffeological spaces is that a diffeology induces a topology on the underlying set.

\begin{definition} \label{Definition: D-topology}
    Given a diffeological space $X$, the final topology induced by its plots, where each domain is equipped with the standard topology, is called the $D$\textbf{-topology} on $X$. That is, a subset $A$ of $X$ is open in the $D$-topology if and only if $p^{-1}(A)$ is open for all $p \in \cat{D}$.
\end{definition}
\begin{example}
    The $D$-topology on a smooth manifold endowed with the standard diffeology coincides with the usual topology on the manifold.
\end{example}

Notice that any smooth map $f \colon X \rightarrow Y$ is continuous when $X$ and $Y$ are equipped with the corresponding $D$-topologies. In particular, there is a functor
\[
\tau \colon \mathbf{Diff} \rightarrow \mathbf{Top}
\]
sending a diffeological space to the underlying topological space equipped with the $D$-topology.

\begin{remark}[\cite{Christensen-Sinnamon-Wu}] \label{Remark: Functional Topologies}
Let $X$ and $Y$ be two diffeological spaces and consider the diffeological space of smooth functions $D(X,Y)$. The associated $D$-topology we call the \textbf{functional topology}. Given $M$ and $N$ two manifolds, the set of functions $C^{\infty}(M,N)$ can be endowed with various topologies, such as the compact-open topology, the weak topology, and the strong topology, see \cite{Hirsch}. In this case, the functional topology relates to the above-mentioned other topologies as follows.
\[
\text{compact-open topology} \subseteq \text{weak topology} \subseteq \text{functional topology} \subseteq \text{strong topology}
\]
Here the coarsest topology is on the left and the finest topology is on the right-hand side. For $M$ a compact manifold, the weak, functional, and strong topologies all coincide.
\end{remark}

Diffeological spaces naturally define presheaves taking values in $\mathbf{Set}$. To understand how this works, we first define the category of Cartesian spaces.

\begin{definition}\label{Definition: Category of Cartesian Spaces}
    Let $\mathbf{Cart}$ denote the small category of open subsets $U \subseteq \mathbb{R}^{n}$ that are diffeomorphic to $\mathbb{R}^n$ for $n \in \mathbb{N}$ arbitrary. Such an open subset $U$ is called a \textbf{Cartesian space}. The morphisms are given by ordinary smooth maps.
\end{definition}

A diffeological space $X$ now defines a presheaf on the category $\mathbf{Cart}$ by
\begin{equation*}
\begin{array}{rcl}
      X \colon \mathbf{Cart}^{\mathrm{op}} & \rightarrow & \mathbf{Set}  \\
     U & \mapsto & X(U) = \left\{ \varphi \colon U \rightarrow X \; | \; \varphi \text{ is a plot in } X \right\}  \\
      f \colon U' \rightarrow U &\mapsto & X(f) \colon X(U) \rightarrow X(U')
\end{array}
\end{equation*}
where the map $X(f)$ is given by pre-composition with $f$ and is well defined by the compatibility axiom. \\

Not all presheaves on $\mathbf{Cart}$ however, represent a diffeological space. To specify presheaves of this kind, the two other axioms of Definition \ref{Definition: Diffeological Space} need to be reformulated in the language of presheaves and will lead us to the notions of \textit{coverage} and \textit{concrete site}.

\begin{definition}
A \textbf{coverage} on a category $\cat{D}$ is a function assigning to each object $D \in \cat{D}$ a collection $\cat{I}(D)$ of families $\left\{ f_i \colon D_i \rightarrow D \; | \; i \in I \right\}$ called \textbf{covering families}, with the following property:
\begin{itemize}
    \item[] Given a covering family $\left\{ f_i \colon D_i \rightarrow D \; | \; i \in I \right\}$ and a morphism $g \colon C \rightarrow D$ then there exists a covering family $\left\{ h_j \colon C_j \rightarrow C \; | \; j \in J \right\}$ such that each morphism $g \circ h_j$ factors through some $f_i$, that is there exists a dashed arrow making the following diagram commute:
    \begin{center}
    \begin{tikzcd}
    C_j \arrow[r, "h_j"] \arrow[d, dashed, "k_{ji}"'] & C \arrow[d, "g"] \\
    D_i \arrow[r, "f_i"']                  & D
    \end{tikzcd}
    \end{center}
\end{itemize}
\end{definition}
A coverage on a category is precisely what will allow us to re-phrase the sheaf condition appearing in Definition \ref{Definition: Diffeological Space} as a property of presheaves.

\begin{definition}
A small category equipped with a coverage is called a \textbf{site}.
\end{definition}

\begin{example} \label{Example: All the sites I need} $ $
\begin{enumerate}[label={(\arabic*)}]
    \item Consider the small category $\mathbf{Open}$ with objects given by open subsets $U \subseteq \mathbb{R}^{n}$ for $n \in \mathbb{N}$ arbitrary, and with morphisms given by ordinary smooth maps. To each object $U$ we assign the collection $\cat{I}(U)$ consisting of open coverings of $U$. That is, a covering family $\left\{ U_j \rightarrow U \, | \, j \in J\right\}$ is  given by an open cover $\left\{ U_j \right\}$ of $U$. It can be shown that this indeed defines a coverage on the category $\mathbf{Open}$ and we denote the according site by $(\mathbf{Open}, \tau_{\mathrm{open}})$.
    \item The same category $\mathbf{Open}$ can also be equipped with a slightly different coverage by only allowing \textit{good open covers}\footnote{An open cover $\left\{U_i \right\}$ of a smooth manifold $M$ is said to be a \textbf{good open covering} if all
    nonempty finite intersections $U_{i_0} \cap \cdots \cap U_{i_n}$ are diffeomorphic to $\mathbb{R}^n$.} as covering families. This site is denoted $(\mathbf{Open}, \tau_{\mathrm{gd}})$.
    \item The category $\mathbf{Cart}$ of Cartesian spaces can be equipped with a similar coverage as follows. To each Cartesian space $U$ we assign the collection $\cat{I}(U)$ of good open covers of $U$, that is a family in $\cat{I}(U)$ is given by a good open cover $\left\{ i_j \colon U_j \rightarrow U \; | \; j \in J \right\}$ where $i_j \colon U_j \rightarrow U$ are the inclusion maps. To show that this defines a coverage, let $\left\{ U_j \right\}$ be a good open cover of $U$ and let $g \colon V \rightarrow U$ be a morphism, i.e. a smooth map. Define the covering family on $V$ by pulling back the open cover to $V$, i.e. $\left\{ V_j \right\}$ open cover defined by $V_j = g^{-1}(U_j)$. Now refine this cover by a good open cover, which is always possible\footnote{For the fact that any open covering of a manifold can be refined by a good open cover we refer to \cite[Corollary 5.2]{Bott-Tu}}. Then, the diagram commutes by construction, and the category $\mathbf{Cart}$ equipped with the coverage of good open covers defines a site, denoted $(\mathbf{Cart},\tau_{\mathrm{gd}})$.
    \item Consider the small category $\mathbf{Mfd}$ of smooth manifolds recognized as submanifolds of $\mathbb{R}^{\infty}$ with smooth maps. To each manifold $M$ we assign the collection $\cat{I}(M)$ consisting of open coverings of $M$. That is, a covering family $\left\{ U_j \rightarrow M \, | \, j \in J \right\}$ is given by an open cover $\left\{ U_j \right\}$ of $M$. Similar to all the other examples, this also defines a coverage and the according site we denote by $(\mathbf{Mfd},\tau_{\mathrm{open}})$.
\end{enumerate}
\end{example}

\begin{definition}
Given a covering family  $\left\{ f_i \colon D_i \rightarrow D \; | \; i \in I \right\}$ in a site $\cat{D}$ and a presheaf $X \in \mathrm{PSh}(\cat{D})$, a collection of sections or plots $\left\{ \varphi_i \in X(D_i)\right\}$ is called \textbf{compatible} if whenever given arrows $g \colon C \rightarrow D_i$ and $h \colon C \rightarrow D_j$ making the following diagram commute
\begin{center}
    \begin{tikzcd}
C \arrow[r, "h"] \arrow[d, "g"'] & D_j \arrow[d, "f_j"] \\
D_i \arrow[r, "f_i"']            & D
\end{tikzcd}
\end{center}
then $X(g)(\varphi_i) = X(h)(\varphi_j)$.
\end{definition}
\begin{remark}
Think of $D_i$ being an open covering and $C$ being a double intersection $C = D_{ij}$ with its inclusion maps into $D_i$ and $D_j$. Then the collection of sections is called compatible if they agree when restricted to double intersections.
\end{remark}

\begin{definition} \label{Definition: Sheaf Property}
Given a site $\cat{D}$, a presheaf $X \in \mathrm{PSh}(\cat{D})$ is a \textbf{sheaf} if it satisfies the following condition:
\begin{itemize}
    \item[] Given a covering family $\left\{ f_i \colon D_i \rightarrow D \; | \; i \in I \right\}$ and a compatible collection of sections $\left\{ \varphi_i \in X(D_i)\right\}$, then there exists a unique section $\varphi \in X(D)$ such that $X(f_i)(\varphi) = \varphi_i$ for each $i \in I$.
\end{itemize}
Denote the full subcategory of sheaves on $\cat{D}$ by $\mathrm{Sh}(\cat{D})$.
\end{definition}

The Yoneda embedding functor generates a special class of presheaves, namely the representable ones as we will see in the next definition.

\begin{definition}
A presheaf $X \in \mathrm{PSh}(\cat{D})$ is called \textbf{representable} if it is naturally isomorphic to $\mathrm{Hom}(-,D) \in \mathrm{PSh}(\cat{D})$ for some $D \in \cat{D}$.
\end{definition}

In general representable presheaves do not need to satisfy the sheaf property, however, a site having this property is called subcanonical.

\begin{definition}
A site $\cat{D}$ is said to be \textbf{subcanonical} if every representable presheaf on this site is a sheaf. That is, the Yoneda embedding functor
\begin{align*}
\cat{Y} \colon \cat{D} &\longrightarrow \mathrm{PSh}(\cat{D}) \\
D &\longmapsto \mathrm{Hom}(-,D)
\end{align*}
factors through $\mathrm{Sh}(\cat{D})$.
\end{definition}

\begin{remark}
Why do we introduce the notion of a subcanonical site? We have seen before, that diffeological spaces define presheaves on the site $\mathbf{Cart}$. In particular, presheaves arising from a space $X$ sending an object $U$ to the set of plots $X(U)$ satisfy the sheaf property and hence define sheaves on the corresponding site. Of course, every object $U$ itself can be regarded as a diffeological space via the Yoneda embedding, and therefore should also define a sheaf and not merely a presheaf. Therefore, any site encoding some notion of generalized smooth space should be subcanonical by this observation. \\

Another important property of diffeological spaces that sheaves in general do not satisfy is \textit{concreteness}. The idea behind concreteness is that for a diffeological space $X$ functions into $X$ are completely specified by the underlying set of points of $X$. More precisely, if two smooth maps $f,f' \colon X' \rightarrow X$ agree as functions on the underlying set of points, then they agree also as smooth maps of diffeological spaces.
\end{remark}

\begin{definition}
A subcanonical site $\cat{D}$ is said to be \textbf{concrete} if it has a terminal object $1$ satisfying the following two conditions:
\begin{itemize}
    \item The functor $\mathrm{Hom}(1,-) \colon \cat{D} \rightarrow \mathbf{Set}$ is faithful.
    \item For each covering family $\left\{ f_i \colon D_i \rightarrow D \; | \; i \in I \right\}$, the resulting family of functions \newline  $\left\{ \mathrm{Hom}(1,f_i) \colon \mathrm{Hom}(1,D_i) \rightarrow \mathrm{Hom}(1,D) \; | \; i \in I \right\}$ is \textbf{jointly surjective}, that is, the function
    \[
    \coprod_{i \in I } \mathrm{Hom}(1,D_i) \rightarrow \mathrm{Hom}(1,D)
    \]
    is surjective.
\end{itemize}
\end{definition}

\begin{remark}
Let us have a closer look at the two conditions introduced above.
\begin{itemize}
    \item Requiring the functor $\mathrm{Hom}(1,-)$ to be faithful implies that for any two objects $C,D$ in $\cat{D}$ we have that the following map is injective:
    \[
    \mathrm{Hom}_{\cat{D}}(C,D) \rightarrow \mathrm{Hom}_{\mathbf{Set}}\left( \mathrm{Hom}(1,C),\mathrm{Hom}(1,D) \right)
    \]
    That is, two morphisms $f,g \colon C \rightarrow D$ in $\cat{D}$ are equal if they induce the same functions from the set of points in $C$ to the set of points in $D$, i.e.
    \[
    \mathrm{Hom}(1,g) = \mathrm{Hom}(1,f) \colon \mathrm{Hom}(1,C) \rightarrow  \mathrm{Hom}(1,D)
    \]
    This condition implies that the objects in $\cat{D}$ have enough points to distinguish morphisms.
    \item The requirement for a covering family to induce a family of jointly surjective functions \newline $\left\{ \mathrm{Hom}(1,f_i) \colon \mathrm{Hom}(1,D_i) \rightarrow \mathrm{Hom}(1,D)\right\}$ implies that given a covering family, the induced functions on the corresponding sets of points define again a covering family.
    \item All the sites introduced earlier in Example \ref{Example: All the sites I need} are concrete.
\end{itemize}
\end{remark}

Having introduced the definition of a concrete site, we now turn to the notion of a concrete sheaf on a concrete site. In principle, we want a concrete sheaf to have the fundamental properties given by sheaves induced by diffeological spaces. Let us consider a concrete site $\cat{D}$ and a sheaf $X \in \mathrm{Sh}(\cat{D})$. Then, $X$ has an underlying set of points, namely $X(1)$. In the case that $X$ is induced by a diffeological space we have
\[
X(1) = \left\{ \varphi \colon * \rightarrow X \; | \; \varphi \text{ is a plot in } X \right\} \overset{\text{covering axiom}}{=} X
\]
the underlying set of the diffeological space. \\

Moreover, for any sheaf on a concrete site, any section $\varphi \in X(D)$ induces a function on the underlying set of points
\begin{align*}
\underline{\varphi} \colon \mathrm{Hom}(1,D) &\longrightarrow X(1) \\
d \colon 1 \rightarrow D &\longmapsto X(d)(\varphi),
\end{align*}
where $X(d) \colon X(D) \rightarrow X(1)$ is the arrow induced by $d$. Hence, any section of $X$ defines an underlying function on the corresponding set of points.  In particular, for a concrete sheaf, this assignment is one-to-one.

\begin{definition}
Given a concrete site $\cat{D}$ then a sheaf $X \in \mathrm{Sh}(\cat{D})$ is said to be \textbf{concrete}, if for every object $D \in \cat{D}$ the assignment sending sections $\varphi \in X(D)$ to their underlying functions $\underline{\varphi} \colon \mathrm{Hom}(1,D) \rightarrow X(1)$ is one-to-one. That is, the following function is injective:
\begin{align*}
    X(D) &\longrightarrow \mathrm{Hom}_{\mathbf{Set}}\left( \mathrm{Hom}(1,D) , X(1) \right) \\
    \varphi &\longmapsto \underline{\varphi} \colon 1 \rightarrow d \mapsto X(d)(\varphi)
\end{align*}
\end{definition}

To conclude, we were able to reformulate the three axioms which completely characterize a diffeological space abstractly. More specifically, given a diffeological space $X$, then
\begin{align*}
    \text{compatibility axiom} &\rightsquigarrow X \text{ defines a presheaf on } \mathbf{Open}, \\
    \text{sheaf axiom} &\rightsquigarrow X \text{ defines a sheaf on the site } (\mathbf{Open},\tau_{\mathrm{open}}), \\
    \text{covering axiom} &\rightsquigarrow X \text{ defines a concrete sheaf on the  concrete site } (\mathbf{Open},\tau_{\mathrm{open}}).
\end{align*}

This observation is summed up by the following result of Baez--Hoffnung.

\begin{theorem}[\cite{BaezHoffnung}, Proposition 24] \label{Theorem: Baez-Hoffnung main result}
The category of diffeological spaces and smooth maps denoted by $\mathbf{Diff}$ is equivalent to the category of concrete sheaves on the site $(\mathbf{Open},\tau_{\mathrm{open}})$.
\end{theorem}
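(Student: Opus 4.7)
The plan is to promote the translation summarized in the bullet points immediately preceding the statement into an explicit pair of mutually quasi-inverse functors
\[
F \colon \mathbf{Diff} \longrightarrow \mathrm{ConcSh}(\mathbf{Open},\tau_{\mathrm{open}}), \qquad G \colon \mathrm{ConcSh}(\mathbf{Open},\tau_{\mathrm{open}}) \longrightarrow \mathbf{Diff}.
\]
First I would define $F$ by sending a diffeological space $(X,\cat{D})$ to the presheaf $F(X)(U) = \{p \colon U \to X \mid p \in \cat{D}\}$ with restriction given by precomposition, and a smooth map $f \colon X \to Y$ to the natural transformation $F(f)_U(p) = f \circ p$. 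The compatibility axiom makes $F(X)$ a well-defined presheaf, the sheaf axiom of Definition~\ref{Definition: Diffeological Space} is precisely the sheaf condition for the open-cover coverage on $\mathbf{Open}$, and concreteness is automatic because a plot is literally a set-theoretic function from $\mathrm{Hom}_{\mathbf{Open}}(\mathrm{pt},U) = U$ to $F(X)(\mathrm{pt}) = X$, where the last identification uses the covering axiom.

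Next I would construct the quasi-inverse $G$ by sending a concrete sheaf $X$ to the set of points $X(\mathrm{pt})$ endowed with the diffeology whose plots $p \colon U \to X(\mathrm{pt})$ are exactly those functions arising as the underlying map $\underline{\varphi}$ of some section $\varphi \in X(U)$. The covering axiom follows because every constant function $\mathrm{pt} \to X(\mathrm{pt})$ is tautologically underlying a section in $X(\mathrm{pt})$; the compatibility axiom follows from functoriality, since precomposition of $\underline{\varphi}$ with a smooth map $f \colon V \to U$ corresponds via concreteness to the restriction $X(f)(\varphi) \in X(V)$; and the sheaf axiom will be verified by gluing, as discussed below. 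On morphisms, a natural transformation $\eta \colon X \to Y$ of concrete sheaves restricts at the terminal object to a function $\eta_{\mathrm{pt}} \colon X(\mathrm{pt}) \to Y(\mathrm{pt})$, which is smooth because the naturality squares of $\eta$ guarantee that its postcomposition with any plot is again the underlying map of a section.

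Finally I would verify the natural isomorphisms $GF \cong \mathrm{id}_{\mathbf{Diff}}$ and $FG \cong \mathrm{id}_{\mathrm{ConcSh}}$. The first is immediate: the covering axiom identifies the underlying set $GF(X) = F(X)(\mathrm{pt})$ with $X$, and the plots of the resulting diffeology are by construction exactly the plots of $X$. For the second, the sections of $FG(X)$ over $U$ are the plots of $G(X)$ parametrized by $U$, which by definition are in bijection with sections of $X(U)$; concreteness supplies injectivity of the comparison map $X(U) \hookrightarrow \mathrm{Hom}_{\mathbf{Set}}(U, X(\mathrm{pt}))$, while surjectivity onto the plot set is the defining property of $G$. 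The main obstacle, and the step warranting the most care, is verifying that the plots of $G(X)$ satisfy the sheaf axiom of Definition~\ref{Definition: Diffeological Space}: given a function $p \colon U \to X(\mathrm{pt})$ whose restrictions $p|_{U_i}$ are plots, one must produce sections $\varphi_i \in X(U_i)$ with $\underline{\varphi_i} = p|_{U_i}$, check that they form a compatible family in the sheaf-theoretic sense (using concreteness on the double overlaps $U_{ij}$ where the underlying functions agree by construction), apply the sheaf property of $X$ to glue them to a unique $\varphi \in X(U)$, and finally invoke concreteness once more to conclude that $\underline{\varphi} = p$ globally. Once this two-way compatibility between set-theoretic and sheaf-theoretic gluing is established, the rest of the equivalence is purely formal bookkeeping.
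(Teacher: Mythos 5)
The paper does not supply its own proof of this theorem: it states the result as a citation to Baez--Hoffnung, Proposition~24, and the discussion immediately preceding the statement (the three bullet points translating the compatibility, sheaf, and covering axioms) serves only as informal motivation. Your proposal turns that informal dictionary into an actual proof by constructing explicit quasi-inverse functors, which is precisely how the cited reference proceeds, so the route is the same in substance. Your construction of $F$ and $G$ and the verification of the four diffeology axioms for $G(X)$, including the gluing argument for the sheaf axiom on the plots of $G(X)$ via concreteness on double overlaps, are all correct. One small remark: the final invocation of concreteness to deduce $\underline{\varphi} = p$ globally is unnecessary --- once $\underline{\varphi}|_{U_i} = p|_{U_i}$ for each $i$ and the $U_i$ cover $U$, the equality $\underline{\varphi} = p$ is just set-theoretic equality of functions, with no further appeal to concreteness needed.
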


For purposes that will become clear later, it is more convenient for us to regard diffeological spaces as concrete sheaves on the site of Cartesian spaces $(\mathbf{Cart},\tau_{\mathrm{gd}})$.

\begin{proposition}[\cite{Minichiello}] \label{Proposition: Diffeological Spaces are Concrete Sheaves on Cart}
The category of diffeological spaces and smooth maps denoted by $\mathbf{Diff}$ is equivalent to the category of concrete sheaves on the site $(\mathbf{Cart},\tau_{\mathrm{gd}})$.
\end{proposition}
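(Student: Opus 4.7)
The plan is to deduce this from the Baez--Hoffnung theorem (the previous result stated in the excerpt) by showing that restriction along the inclusion $\iota \colon \mathbf{Cart} \hookrightarrow \mathbf{Open}$ induces an equivalence between the associated categories of concrete sheaves. Equivalently, I would establish the stronger statement that $\iota$ is a morphism of sites for which restriction yields an equivalence $\mathrm{Sh}(\mathbf{Open},\tau_{\mathrm{open}}) \simeq \mathrm{Sh}(\mathbf{Cart},\tau_{\mathrm{gd}})$, and then check that this equivalence preserves concreteness.

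First I would invoke the comparison lemma for sites. The subcategory $\mathbf{Cart}$ is dense in $\mathbf{Open}$ in the required sense: every open $U \subseteq \mathbb{R}^n$ admits a good open cover by Cartesian spaces (by \cite[Corollary 5.2]{Bott-Tu}, already cited in the excerpt), and this cover is in particular an element of $\tau_{\mathrm{open}}(U)$. The comparison lemma then produces an equivalence
\[
\iota^* \colon \mathrm{Sh}(\mathbf{Open},\tau_{\mathrm{open}}) \xrightarrow{\;\simeq\;} \mathrm{Sh}(\mathbf{Cart},\tau_{\mathrm{gd}}),
\]
with quasi-inverse $\iota_*$ given, on a general open $U$, by the formula
\[
(\iota_* Y)(U) \;=\; \mathrm{eq}\!\left( \prod_{j} Y(U_j) \rightrightarrows \prod_{j,k} \prod_{\ell} Y(V^{jk}_{\ell}) \right),
\]
where $\{U_j\}$ is any good open cover of $U$ by Cartesian spaces and $\{V^{jk}_\ell\}$ is a good cover of $U_j \cap U_k$; standard arguments show this is independent of the chosen covers and defines a sheaf.

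Next, I would verify that the equivalence preserves concreteness. The forward direction is immediate: if $X \in \mathrm{Sh}(\mathbf{Open},\tau_{\mathrm{open}})$ is concrete, then the injectivity of $X(U) \hookrightarrow \mathrm{Hom}_{\mathbf{Set}}(\mathrm{Hom}(1,U), X(1))$ in particular holds for Cartesian $U$, so $\iota^* X$ is concrete. For the reverse direction, let $Y \in \mathrm{Sh}(\mathbf{Cart},\tau_{\mathrm{gd}})$ be concrete and let $U$ be any open subset of some $\mathbb{R}^n$. Choose a good open cover $\{U_j\}$ of $U$ by Cartesian spaces. A section of $\iota_* Y(U)$ is a compatible family of sections $\varphi_j \in Y(U_j)$. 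If two such families $\{\varphi_j\}$ and $\{\varphi'_j\}$ determine the same underlying function $\mathrm{Hom}(1,U) \to Y(1)$, then their restrictions agree on points of each $U_j$, so by concreteness of $Y$ applied to each Cartesian $U_j$ one gets $\varphi_j = \varphi'_j$ for all $j$, hence the two families coincide; this shows $\iota_* Y$ is concrete.

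The main obstacle I anticipate is carefully handling the two sites simultaneously: the underlying categories differ, and the coverages differ (arbitrary open covers versus good open covers). The crux is that good open covers are cofinal among open covers, so the sheaf condition with respect to $\tau_{\mathrm{gd}}$ over Cartesian spaces is enough to recover the full sheaf condition on $\mathbf{Open}$; this cofinality is exactly what makes the comparison lemma applicable. Once this cofinality is in place, the rest of the argument is formal, and composing with the Baez--Hoffnung equivalence $\mathbf{Diff} \simeq \mathrm{ConSh}(\mathbf{Open},\tau_{\mathrm{open}})$ yields $\mathbf{Diff} \simeq \mathrm{ConSh}(\mathbf{Cart},\tau_{\mathrm{gd}})$ as desired.
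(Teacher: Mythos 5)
Your proposal is correct and follows essentially the same route as the paper: both apply the comparison lemma to get $\mathrm{Sh}(\mathbf{Open},\tau_{\mathrm{open}}) \simeq \mathrm{Sh}(\mathbf{Cart},\tau_{\mathrm{gd}})$ (the paper passes through the intermediate observation $\mathrm{Sh}(\mathbf{Open},\tau_{\mathrm{open}}) = \mathrm{Sh}(\mathbf{Open},\tau_{\mathrm{gd}})$, which you implicitly absorb into the cofinality remark), both then check that the equivalence descends to concrete sheaves, and both conclude by composing with Baez--Hoffnung. The one genuine difference is stylistic: for the concreteness-preservation step the paper cites a lemma of Watts--Wolbert, whereas you prove the needed implication ("concrete at each $U_j$ of a good cover implies concrete at $U$") directly by the patching/injectivity argument on underlying functions, which is a self-contained unwinding of the same fact.
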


Proposition \ref{Proposition: Diffeological Spaces are Concrete Sheaves on Cart} follows from a classical result of topos theory known as the \textit{comparison Lemma}.

\begin{proposition}[\cite{Johnstone}, Theorem C.2.2.3] \label{Proposition: Comparison Lemma}
    Let $(\cat{C},\tau)$ be a locally small site with a Grothendieck coverage and $\cat{C}' \hookrightarrow \cat{C}$ a small $\tau$-dense subcategory. Then the restriction functor $\mathrm{res} \colon \mathrm{PSh}(\cat{C}) \rightarrow \mathrm{PSh}(\cat{C}')$ restricts to an equivalence of categories
    \[
    \mathrm{res} \colon \mathrm{Sh}(\cat{C},\tau) \rightarrow \mathrm{Sh}(\cat{C}',\tau|_{\cat{C}'})
    \]
\end{proposition}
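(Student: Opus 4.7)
The plan is to construct an explicit quasi-inverse to the restriction functor, using that the inclusion $\iota \colon \cat{C}' \hookrightarrow \cat{C}$ is, in the covering sense, sufficiently ``cofinal''. First I would verify that $\mathrm{res}$ actually lands in $\mathrm{Sh}(\cat{C}', \tau|_{\cat{C}'})$. Since a covering family in $\cat{C}'$ is by definition of the induced coverage a covering family in $\cat{C}$, the sheaf condition for $F \in \mathrm{Sh}(\cat{C},\tau)$ on such a family directly gives the sheaf condition for $\mathrm{res}(F)$.

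The heart of the argument is the construction of the inverse. Given a sheaf $G$ on $\cat{C}'$, I would define an extension to $\cat{C}$ via the right Kan extension along $\iota$, namely
\[
\tilde{G}(X) = \lim_{(U,f) \in (X \downarrow \iota)} G(U),
\]
where the limit ranges over pairs consisting of $U \in \cat{C}'$ and a morphism $f \colon X \to \iota(U)$ (dualised appropriately since $G$ is a presheaf). The $\tau$-density of $\cat{C}'$ ensures that every $X \in \cat{C}$ admits a covering family $\{U_i \to X\}_i$ with $U_i \in \cat{C}'$, and a cofinality argument on the indexing category then reduces this potentially large limit to an equalizer of the same shape that appears in the sheaf condition.

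The main obstacle is showing that $\tilde{G}$ really is a sheaf on $(\cat{C}, \tau)$, not merely a presheaf. The delicate point is that if $\{V_\alpha \to X\}$ is an arbitrary cover of $X \in \cat{C}$, the fibered products $V_\alpha \times_X V_\beta$ need not live in $\cat{C}'$, so the gluing data in $\tilde{G}$ does not directly translate into sections of $G$. The strategy is to use density twice: first refine $\{V_\alpha \to X\}$ to a cover $\{U_i \to X\}$ with $U_i \in \cat{C}'$, then further cover each $V_\alpha \times_X V_\beta$ by objects of $\cat{C}'$. The Grothendieck coverage axioms (stability under pullback and the transitivity/local character) then allow the equalizer computing $\tilde{G}(X)$ to be rewritten entirely in terms of sections and gluing data for $G$ on $\cat{C}'$, where the sheaf condition for $G$ applies.

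Finally I would check that the restriction and extension are mutually inverse. The counit $\iota^* \iota_* G \to G$ is immediate, because for $U \in \cat{C}'$ the indexing category $(U \downarrow \iota)$ contains the terminal pair $(U, \mathrm{id}_U)$ and the limit collapses to $G(U)$. The unit $F \to \iota_* \iota^* F$ is the substantive step: covering $X \in \cat{C}$ by $\{U_i \to X\}$ with $U_i \in \cat{C}'$ and applying the sheaf condition for $F$ identifies $F(X)$ with the equalizer of $\prod_i F(U_i) \rightrightarrows \prod_{i,j} F(U_i \times_X U_j)$, which after another density refinement on the double intersections matches exactly the limit defining $(\iota_* \iota^* F)(X)$. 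This delivers the equivalence of categories between $\mathrm{Sh}(\cat{C},\tau)$ and $\mathrm{Sh}(\cat{C}', \tau|_{\cat{C}'})$.
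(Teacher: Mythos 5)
The paper does not prove this proposition; it is imported directly from Johnstone~\cite{Johnstone} (Theorem C.2.2.3) and used as a black box, so there is no internal argument to compare your sketch against. With that caveat, your outline does track the standard proof of the comparison lemma: the quasi-inverse to restriction is given by (pointwise) right Kan extension along $\iota^{\mathrm{op}}$, one checks this lands in sheaves by refining covers via $\tau$-density, and the unit and counit are handled exactly as you describe.

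One slip worth flagging: as written, your comma category runs the wrong way. For a presheaf $G$ on $\cat{C}'$, the pointwise right Kan extension at $X \in \cat{C}$ is
\[
\tilde{G}(X) \;=\; \lim_{(U,\,f)\,\in\,(\iota \downarrow X)} G(U),
\]
where the objects are pairs $(U, f)$ with $U \in \cat{C}'$ and $f \colon \iota(U) \to X$ in $\cat{C}$ -- arrows \emph{into} $X$, matching the covering families $\{U_i \to X\}$ you invoke in the very next sentence. Your displayed formula instead uses $(X \downarrow \iota)$ with $f \colon X \to \iota(U)$, which is the comma category governing the \emph{left} Kan extension; that functor is left adjoint to restriction and would require sheafification to land in sheaves, breaking the argument. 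Your parenthetical ``dualised appropriately'' and the rest of the sketch make it clear you have the correct picture in mind, so this is a notational slip rather than a conceptual gap -- but the formula should be corrected, since the cofinality argument that follows (passing to a $\cat{C}'$-cover of $X$) only makes sense with the arrows pointing towards $X$. Beyond that, your discussion of needing density twice to control the fibered products $V_\alpha \times_X V_\beta$, and the collapse of the limit to $G(U)$ for $U \in \cat{C}'$ via the initial object of the indexing category, are exactly the key points of the standard proof.
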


An application of the comparison Lemma gives the following.

\begin{corollary} \label{Corollary: Comparison Lemma applied}
   The restriction functor induces an equivalence of categories
   \[
   \mathrm{res} \colon \mathrm{Sh}(\mathbf{Open},\tau_{\mathrm{open}}) \xrightarrow{\simeq} \mathrm{Sh}(\mathbf{Cart},\tau_{\mathrm{gd}})
   \]
   which further restricts to an equivalence between the corresponding subcategories of concrete sheaves
   \[
   \mathrm{CSh}(\mathbf{Open},\tau_{\mathrm{open}}) \simeq \mathrm{CSh}(\mathbf{Cart},\tau_{\mathrm{gd}})
   \]
\end{corollary}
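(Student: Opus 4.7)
The plan is to deduce the corollary from the Comparison Lemma (Proposition \ref{Proposition: Comparison Lemma}) applied to the fully faithful inclusion $\mathbf{Cart} \hookrightarrow \mathbf{Open}$, and then show that the resulting equivalence descends to concrete sheaves by tracking the underlying sets of points.

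First, I would verify the hypotheses of the Comparison Lemma. The category $\mathbf{Cart}$ is a small full subcategory of $\mathbf{Open}$, and the coverage $\tau_{\mathrm{gd}}$ on $\mathbf{Cart}$ is precisely the restriction of $\tau_{\mathrm{open}}$ in the sense that a family in $\tau_{\mathrm{gd}}(U)$ is the same data as a good open cover of $U$ viewed as an $\tau_{\mathrm{open}}$-cover whose pieces and finite intersections remain in $\mathbf{Cart}$. The key density property to check is that $\mathbf{Cart}$ is $\tau_{\mathrm{open}}$-dense in $\mathbf{Open}$: for every open subset $U \subseteq \mathbb{R}^n$ there exists a covering family $\{U_i \hookrightarrow U\}$ in $\tau_{\mathrm{open}}$ with each $U_i \in \mathbf{Cart}$. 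This is standard since every open subset of $\mathbb{R}^n$ admits a good open cover by Cartesian spaces (e.g.\ by convex open balls or, more generally, by \cite[Corollary 5.2]{Bott-Tu}). Granted these hypotheses, the Comparison Lemma yields the equivalence
\[
\mathrm{res} \colon \mathrm{Sh}(\mathbf{Open},\tau_{\mathrm{open}}) \xrightarrow{\;\simeq\;} \mathrm{Sh}(\mathbf{Cart},\tau_{\mathrm{gd}}).
\]

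Next, I would show that this equivalence restricts to concrete sheaves. Both sites share the same terminal object $\mathrm{pt} = \mathbb{R}^0$, so for a sheaf $X$ on either site the underlying set of points $X(\mathrm{pt})$ is invariant under $\mathrm{res}$. Given $X \in \mathrm{CSh}(\mathbf{Open},\tau_{\mathrm{open}})$, its restriction to $\mathbf{Cart}$ is automatically concrete, since for each $U \in \mathbf{Cart}$ the concreteness map $X(U) \to \mathrm{Hom}_{\mathbf{Set}}(\mathrm{Hom}(\mathrm{pt},U), X(\mathrm{pt}))$ is exactly the one coming from $X$ as an $\mathbf{Open}$-sheaf and is injective by assumption. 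Conversely, given a concrete $Y \in \mathrm{CSh}(\mathbf{Cart},\tau_{\mathrm{gd}})$, let $\widetilde{Y}$ be its image under the quasi-inverse of $\mathrm{res}$; explicitly, for $U \in \mathbf{Open}$, $\widetilde{Y}(U)$ is the set of compatible families of elements of $Y$ on a good open cover of $U$ by Cartesian pieces. To verify concreteness of $\widetilde{Y}$, suppose $\varphi,\varphi' \in \widetilde{Y}(U)$ induce the same underlying function $\mathrm{Hom}(\mathrm{pt},U) \to Y(\mathrm{pt})$. Restricting to any Cartesian $V \subseteq U$, the restrictions $\varphi|_V, \varphi'|_V \in Y(V)$ also induce the same underlying function and hence agree by concreteness of $Y$; the sheaf condition then forces $\varphi = \varphi'$.

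Finally, I would observe that these two assignments are mutually inverse at the level of concrete sheaves, as they are the restrictions of the equivalence $\mathrm{res}$ and its quasi-inverse already constructed on all sheaves. The main subtlety—and really the only nontrivial step—is confirming concreteness of $\widetilde{Y}$; everything else is a bookkeeping check on top of the Comparison Lemma together with the elementary density statement that good open covers by Cartesian spaces exist for every open subset of $\mathbb{R}^n$.
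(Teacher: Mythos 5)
Your proof is correct and follows the same overall route as the paper: invoke the Comparison Lemma for the full subcategory $\mathbf{Cart} \hookrightarrow \mathbf{Open}$, then check that the equivalence preserves concreteness in both directions. Two minor presentational differences are worth noting. First, the paper passes to the auxiliary site $(\mathbf{Open},\tau_{\mathrm{gd}})$ before applying the Comparison Lemma, observing that $\mathrm{Sh}(\mathbf{Open},\tau_{\mathrm{open}}) = \mathrm{Sh}(\mathbf{Open},\tau_{\mathrm{gd}})$; you instead apply the lemma to $\tau_{\mathrm{open}}$ directly and absorb the refinement by good covers into the identification of the induced coverage on $\mathbf{Cart}$, which is the same underlying observation in a different order. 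Second, for the concreteness of the extension $\widetilde{Y}$, the paper cites an adaptation of a lemma of Watts--Wolbert (a sheaf concrete on each piece of a good cover is concrete) together with the natural isomorphism $\mathrm{res}\circ\mathrm{ext}\cong\mathrm{id}$, while you carry out the verification by hand: two sections of $\widetilde{Y}(U)$ with equal underlying functions restrict to equal sections on every Cartesian piece by concreteness of $Y$, hence agree by the sheaf condition. Your version is more self-contained and, to my mind, more transparent; you are effectively inlining the proof of the cited Watts--Wolbert lemma rather than appealing to it. One small point of precision: the restriction of $\tau_{\mathrm{open}}$ to $\mathbf{Cart}$ is not literally the coverage $\tau_{\mathrm{gd}}$ (it allows arbitrary open covers of Cartesian spaces, not just good ones), but the two generate the same Grothendieck topology and hence the same sheaf category, which is what the Comparison Lemma actually cares about; the paper glosses over exactly the same point.
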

\begin{proof}
First notice that since any open covering in $\mathbf{Open}$ can be refined by a good open covering, it follows that
\[
\mathrm{Sh}(\mathbf{Open},\tau_{\mathrm{open}}) = \mathrm{Sh}(\mathbf{Open},\tau_{\mathrm{gd}})
\]
such that automatically we have
\[
\mathrm{CSh}(\mathbf{Open},\tau_{\mathrm{open}}) = \mathrm{CSh}(\mathbf{Open},\tau_{\mathrm{gd}}).
\]
We now wish to apply the comparison Lemma. That is, we have to show that $\mathbf{Cart} \hookrightarrow \mathbf{Open}$ is indeed a small $\tau_{\mathrm{gd}}$-dense subcategory\footnote{See \cite[Definition 2.2.1]{Johnstone} for the precise definition of a dense subcategory.}. This however, follows from the fact that for any object $U \in \mathrm{Open}$  and any good open covering $\left\{ U_i \rightarrow U \, | \, i \in I \right\}$ the associated covering sieve $S_{U}$ is generated by a family of morphisms, in this case, $\left\{ U_i \rightarrow U \, | \, i \in I \right\}$, whose domains $U_i$ are objects in $\mathbf{Cart}$ by assumption. As such, the comparison Lemma now implies that the restriction functor
\begin{equation}\label{Equation: Comparison Lemma}
       \mathrm{res} \colon \mathrm{Sh}(\mathbf{Open},\tau_{\mathrm{gd}}) \xrightarrow{\simeq} \mathrm{Sh}(\mathbf{Cart},\tau_{\mathrm{gd}})
\end{equation}
is an equivalence of categories. \\

What is left to show is that this equivalence descends to an equivalence of concrete sheaves. This follows from an adaptation of \cite[Lemma 2.9]{Watts-Wolbert} to the present case. More precisely, one can show analogously to \cite[Lemma 2.8]{Watts-Wolbert} that given a sheaf of sets $F$ on the site $(\mathbf{Open},\tau_{\mathrm{gd}})$ and given any $U \in \mathbf{Open}$ together with a good open cover $\left\{ U_i \rightarrow U \right\}$, if $F$ is concrete at each $U_i$, then it is concrete at $U$. By assumption $U_i \in \mathbf{Cart}$ hence it follows that a sheaf $F \in \mathrm{Sh}(\mathbf{Open},\tau_{\mathrm{gd}})$ is concrete if $\mathrm{res}(F)$ is concrete as a sheaf on the cartesian site $\mathbf{Cart},\tau_{\mathrm{gd}})$. Since $\mathrm{res}$ is an equivalence of categories it follows that $\mathrm{res} \circ \mathrm{ext}$ is naturally isomorphic to the identity and therefore any concrete sheaf $F \in \mathrm{Sh}(\mathbf{Open},\tau_{\mathrm{gd}})$ is sent to a concrete sheaf $\mathrm{res} \left( \mathrm{ext}(F) \right)$. By the previous observation, it follows now that $\mathrm{ext}(F)$ is concrete. Hence we have shown that the extension functor $\mathrm{ext} \colon \mathrm{Sh}(\mathbf{Cart},\tau_{\mathrm{gd}}) \rightarrow \mathrm{Sh}(\mathbf{Open},\tau_{\mathrm{gd}})$ takes concrete sheaves to concrete sheaves. On the other hand, $\mathrm{res}$ automatically takes concrete sheaves to concrete sheaves. Therefore the equivalence (\ref{Equation: Comparison Lemma}) indeed descends to the desired equivalence on the corresponding subcategories of concrete sheaves
\[
    \mathrm{CSh}(\mathbf{Open},\tau_{\mathrm{gd}}) \simeq \mathrm{CSh}(\mathbf{Cart},\tau_{\mathrm{gd}}).
\]
This concludes the proof.
\end{proof}

\begin{proof}[Proof of Proposition  \ref{Proposition: Diffeological Spaces are Concrete Sheaves on Cart}]
This is now a direct consequence of Corollary \ref{Corollary: Comparison Lemma applied} and Theorem \ref{Theorem: Baez-Hoffnung main result}.
\end{proof}
\begin{remark}
    For more details, such as the precise notions of Grothendieck coverage and covering sieves, we refer to the exposition \cite[Appendix A]{Minichiello} which has served as the main reference for Proposition \ref{Proposition: Diffeological Spaces are Concrete Sheaves on Cart}.
\end{remark}

As already mentioned, the category $\mathbf{Diff}$ is complete and cocomplete. Let us, therefore, have a closer look at the computation of small limits and colimits. The fact that we have identified the category $\mathbf{Diff}$ with a certain category of sheaves comes in handy and one shows that the limits in $\mathbf{Diff}$ may be computed pointwise\footnote{See \cite[Proposition 39]{BaezHoffnung}.}. For colimits, however, the story is more involved. \\

Recall that the sheafification process associates a sheaf to any presheaf in a universal manner and likewise, there exists a concretization functor associating a concrete presheaf to any presheaf. In other words, sheafification and concretization are both left adjoints to the inclusion functors respectively:

\begin{center}
    \begin{tikzcd}
\mathrm{PSh}(\mathbf{Cart}) \arrow[r, "C", shift left=3] \arrow[r, "\perp", phantom] & \mathrm{Conc}(\mathbf{Cart}) \arrow[l, "i", shift left=3]
\end{tikzcd}
\end{center}
and
\begin{center}
    \begin{tikzcd}
\mathrm{PSh}(\mathbf{Cart}) \arrow[r, "(-)^{\#}", shift left=3] \arrow[r, "\perp", phantom] & \mathrm{Sh}(\mathbf{Cart}). \arrow[l, "i", shift left=3]
\end{tikzcd}
\end{center}
Given that the sheafification of a concrete presheaf is again concrete\footnote{See \cite[Lemma 49]{BaezHoffnung}} and that left adjoints preserve colimits, it follows that a colimit in $\mathbf{Diff}$ can be computed by  first computing the colimit pointwise and then applying the concretization and the sheafification functor: \\

Let $F \colon \cat{D} \rightarrow \mathbf{Diff}$ be a small diagram. Then we have
\begin{equation} \label{Equation: Colimits in Diff}
    \mathrm{colim} \, F \cong \left( C \left( \mathrm{colim} \, \tilde{F} \right) \right)^{\#}
\end{equation}
where $\tilde{F} \colon \cat{D} \rightarrow \mathrm{PSh}(\mathbf{Cart})$ represents the underlying diagram of presheaves, whose colimit is simply computed pointwise. \\

An interesting example are quotient spaces. We have already encountered them in Example \ref{Example: Diffeological Spaces} where we introduced quotient diffeology on $X/\! \sim$. Consider for the moment the equivalence relation $\sim$ as a subset $R \subset X \times X$. We endow $R$ with the subset diffeology and denote the smooth projection maps to the first and second factors by
\[
r_i \colon R \hookrightarrow X \times X \xrightarrow{\mathrm{pr}_i} X
\]
for $i = 1,2$.

\begin{lemma}[\cite{Collier-Lerman-Wolbert}, Construction A.15] \label{Lemma: Quotient Space is Coequalizer}
Let $X$ be a diffeological space and $R \subset X \times X$ an equivalence relation on $X$. Then the diffeological quotient space $X/\! \sim$ is given by the coequalizer in $\mathbf{Diff}$ of the diagram
\begin{center}
    \begin{tikzcd}
R  \arrow[r, "r_1", shift left] \arrow[r, "r_2"', shift right] & X \arrow[r, "q", dotted] & X/\sim,
\end{tikzcd}
\end{center}
where $R$ is the diffeological space endowed with the subset diffeology of $X \times X$.
\end{lemma}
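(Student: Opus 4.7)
The plan is to verify directly that $(X/\!\sim, q)$ satisfies the universal property of a coequalizer in $\mathbf{Diff}$. The first observation is that $q \circ r_1 = q \circ r_2$: for any $(x,y) \in R$ we have $x \sim y$ by definition, hence $q(r_1(x,y)) = q(x) = q(y) = q(r_2(x,y))$ at the level of underlying sets, and both composites are smooth as compositions of smooth maps, so the equality holds in $\mathbf{Diff}$.

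Next I would verify the universal property. Suppose $f \colon X \rightarrow Y$ is a smooth map in $\mathbf{Diff}$ with $f \circ r_1 = f \circ r_2$. For any $x,y \in X$ with $x \sim y$, the pair $(x,y)$ defines a point in $R$, and evaluating the hypothesis at this point yields $f(x) = f(y)$. Consequently $f$ descends, at the level of underlying sets, to a unique function $\bar{f} \colon X/\!\sim \rightarrow Y$ satisfying $\bar{f} \circ q = f$. Uniqueness as a morphism in $\mathbf{Diff}$ is immediate from the surjectivity of $q$ on points.

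The main step is to verify the smoothness of $\bar{f}$. This is where the explicit description of the quotient diffeology given in the example on quotient spaces comes into play. Let $p \colon U \rightarrow X/\!\sim$ be a plot. By definition of the quotient diffeology, there exists an open covering $\{U_i\}$ of $U$ together with plots $p_i \colon U_i \rightarrow X$ such that $p|_{U_i} = q \circ p_i$. Then $\bar{f} \circ p|_{U_i} = \bar{f} \circ q \circ p_i = f \circ p_i$, which is a plot of $Y$ since $f$ is smooth and $p_i$ is a plot of $X$. By the sheaf axiom of the diffeology on $Y$, the map $\bar{f} \circ p$ is itself a plot, proving that $\bar{f}$ is smooth. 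The anticipated obstacle here is purely bookkeeping: one must check that the local factorizations of $p$ through $q$ are compatible enough for the sheaf condition to apply, but this is automatic since the $p_i$ need only factor locally, and the induced $\bar{f} \circ p_i$ glue because they all equal the single function $\bar{f} \circ p$ restricted to $U_i$.

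As a sanity check, one can compare this construction with the abstract formula for colimits in $\mathbf{Diff}$, namely $\mathrm{colim}\, F \cong (C(\mathrm{colim}\, \tilde{F}))^{\#}$ from the discussion preceding the statement: the presheaf coequalizer of $r_1,r_2$ assigns to $U \in \mathbf{Cart}$ the set $X(U)/\!\sim_U$, its concretization forces two plots with the same underlying function into $X(1)/\!\sim$ to be identified, and sheafification then yields exactly the plots described by the quotient diffeology, which matches the pushforward diffeology $q_*(\cat{D})$ noted in the example. This provides an independent confirmation that the object produced by the universal property agrees with the diffeological space $X/\!\sim$ as originally defined.
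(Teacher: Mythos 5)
The paper cites this lemma to Collier--Lerman--Wolbert (Construction A.15) without giving its own proof, so there is no in-paper argument to compare against. Your direct verification of the universal property is correct and is the natural argument. The set-theoretic factorization step correctly uses that each $(x,y) \in R$ defines a point-plot of $R$ (the covering axiom) so that $f \circ r_1 = f \circ r_2$ in $\mathbf{Diff}$ does descend to equality of underlying functions, and the smoothness of $\bar{f}$ is exactly a chase through the quotient diffeology followed by the sheaf axiom on $Y$, as you wrote.

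One small thing worth recording for completeness: a full verification of the universal property should also note that $q$ itself is smooth. This is automatic because the quotient diffeology is by construction the pushforward diffeology $q_*(\mathcal{D})$, for which $q$ is smooth by definition, but it is an explicit requirement of the cocone. Your closing sanity check via $(C(\mathrm{colim}\,\tilde{F}))^{\#}$ is a nice independent confirmation, and in fact anticipates Lemma~\ref{Lemma: Presheaf quotient is concrete}, which shows that for coequalizers of equivalence relations the concretization step is already a no-op, so only sheafification remains.
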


According to (\ref{Equation: Colimits in Diff}) we can first compute the coequalizer in $\mathrm{PSh}(\mathbf{Cart})$, i.e. pointwise. That is, set $Y := \mathrm{coeq}_{\mathrm{PSh}(\mathbf{Cart})} \left( R \rightrightarrows X \right)$ which is the presheaf given by
\[
Y(U) = \mathrm{coeq} \left( R(U) \rightrightarrows X(U) \right) = X(U)/\sim
\]
where two plots $p \colon U \rightarrow X$ and $p' \colon U \rightarrow X$ are equivalent if and only if there is a plot $q \colon U \rightarrow R$ such that
\[
r_1 \circ q = p \text{ and } r_2 \circ q = p'
\]
Now observe that to recover the diffeological quotient space we only need to apply the sheafification functor to $Y$, since the presheaf $Y$ is already concrete.

\begin{lemma} \label{Lemma: Presheaf quotient is concrete}
    Let $X$ be a diffeological space and $R$ an equivalence relation on $X$. Endow $R \subset X \times X$ with the subset diffeology. Then the coequalizer taken in the category $\mathrm{PSh}(\mathbf{Cart})$ is a concrete presheaf. In particular, we have that
    \[
    X/\! \sim = \mathrm{coeq}_{\mathbf{Diff}}\left( R \rightrightarrows X \right) \cong \left(\mathrm{coeq}_{\mathrm{PSh}(\mathbf{Cart})} \left( R \rightrightarrows X \right) \right)^+
    \]
\end{lemma}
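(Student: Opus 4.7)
The plan is to unpack the presheaf coequalizer pointwise and then show that the subset diffeology on $R$ makes the resulting presheaf automatically concrete, with no further concretization step required. First, since colimits in $\mathrm{PSh}(\mathbf{Cart})$ are computed sectionwise, the coequalizer $Y := \mathrm{coeq}_{\mathrm{PSh}(\mathbf{Cart})}(R \rightrightarrows X)$ is the presheaf sending $U$ to $Y(U) = X(U)/\!\sim_U$, where $p \sim_U p'$ iff there exists a plot $q \colon U \to R$ with $r_1 \circ q = p$ and $r_2 \circ q = p'$. Taking $U = 1 = \mathbb{R}^0$ recovers $Y(1) = X/\!\sim$ on underlying sets.

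The key step is the following criterion: $p \sim_U p'$ if and only if $(p(u), p'(u)) \in R$ for every point $u \in U$. The forward direction is immediate by evaluating the witnessing plot at each $u$. For the converse, define $q \colon U \to R$ by $q(u) := (p(u), p'(u))$, which lands in $R$ by the pointwise assumption. Its composition with the inclusion $R \hookrightarrow X \times X$ is the plot $(p, p') \colon U \to X \times X$, so by the very definition of the subset diffeology on $R$ the map $q$ is itself a plot of $R$, and it witnesses $p \sim_U p'$. I expect this to be the conceptual crux: the argument itself is short, but one must recognize that it is precisely the subset diffeology on $R$ that allows assembling pointwise witnesses into a single global plot, without invoking any patching arguments that would otherwise demand sheafification.

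With the criterion in hand, concreteness of $Y$ is immediate. If two classes $[p], [p'] \in Y(U)$ induce the same underlying function $\mathrm{Hom}(1, U) \to Y(1) = X/\!\sim$, this says exactly that $(p(u), p'(u)) \in R$ for every $u \in U$; by the criterion, $p \sim_U p'$, hence $[p] = [p']$ in $Y(U)$. The same criterion incidentally verifies that $\sim_U$ is already an equivalence relation on $X(U)$, using reflexivity, symmetry, and transitivity of $R$ as a subset of $X \times X$, so no passage to the generated equivalence relation is needed in forming $Y(U)$.

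Finally, combining with Equation \eqref{Equation: Colimits in Diff} and Lemma \ref{Lemma: Quotient Space is Coequalizer}, we obtain $X/\!\sim \cong (C(Y))^+$; since $Y$ is concrete, $C(Y) = Y$, yielding the identification $X/\!\sim \cong Y^+$ as claimed.
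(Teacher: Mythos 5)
Your proof is correct and follows essentially the same route as the paper: both compute the presheaf coequalizer pointwise, then use concreteness of $R$ together with the defining property of the subset diffeology to show that two plots agreeing on underlying points are already related by a plot $U \to R$. The only difference is presentational — you isolate the criterion "$p \sim_U p'$ iff $(p(u),p'(u)) \in R$ for all $u$" as an explicit intermediate step and note that it also makes $\sim_U$ an equivalence relation (so no closure is needed in forming the coequalizer), a small point the paper leaves implicit but which is worth spelling out.
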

\begin{proof}
Regard $X$ as a concrete sheaf on $\mathbf{Cart}$. Therefore we have that for all $U \in \mathbf{Cart}$ the assignment
\begin{align*}
X(U) &\rightarrow \mathrm{Hom} \left( \mathrm{Hom}_{\mathbf{Cart}}(*,U) ,X(*) \right) \\
p &\mapsto \left\{ \underline{p} \colon f \mapsto X(f)(p) \right\}
\end{align*}
is injective. Now let $R \subset X \times X$ denote an equivalence relation on $X$. We endow $R$ with its subset diffeology. Then the coequalizer in $\mathrm{PSh}(\mathbf{Cart})$
\begin{center}
\begin{tikzcd}
R \subset X \times X \arrow[r, "r_1", shift left] \arrow[r, "r_2"', shift right] & X \arrow[r, "q", dotted] & Y
\end{tikzcd}
\end{center}
is computed pointwise. That is
\[
Y(U) = \left\{ [p] \; | p \in X(U) \right\}
\]
with $p \sim p'$ if and only if $q = (p,p') \in R(U)$. We now want to show that the presheaf $Y$ is again concrete. First notice that since $R$ is a diffeological space it is also concrete and hence the assignment
\begin{align}\label{concreteness of R}
R(U) &\rightarrow \mathrm{Hom} \left( \mathrm{Hom}_{\mathbf{Cart}}(*,U) ,R(*) \right) \\
(p,p') &\mapsto \left[ \underline{(p,p')} \colon f \mapsto R(f)(p,p') \right]
\end{align}
is injective. We now want to show that for all $U \in \mathbf{Cart}$ the assignment
\[
Y(U) \ni [p] \mapsto \underline{[p]} \colon \mathrm{Hom}_{\mathbf{Cart}}(*,U) \rightarrow Y(*)
\]
is injective. Let therefore $[p],[p']$ be given such that for all $f \colon * \rightarrow U$ we have that
\[
\underline{[p]}(f) = \underline{[p']}(f)
\]
in $Y(*)$ which implies that
\[
(\underline{p}(f),\underline{p'}(f) ) \in R(*)
\]
for all $f \colon * \rightarrow U$. Since $R$ is endowed with the subset diffeology we conclude that
\[
(p,p') \in R(U)
\]
which then implies that $[p] = [p']$. This shows that $Y$ is concrete. The fact that colimits in $\mathbf{Diff}$ may be computed via equation (\ref{Equation: Colimits in Diff}) finishes the proof.
\end{proof}

Not all colimits of diagrams taking values in concrete presheaves are again concrete, so Lemma \ref{Lemma: Presheaf quotient is concrete} can be only applied to the case of quotient spaces. To show why in general the statement can not be true we give the following counter-example:
\newpage

\begin{example} \label{Example: Non-concrete sheaf}
Let $n \in \mathbb{N}$ such that $n \geq 1$. Then consider the \textbf{sheaf of differential} $n$\textbf{-forms} on $\mathbf{Cart}$:
\begin{align*}
    \mathbf{Cart}^{\mathrm{op}} &\longrightarrow \mathbf{Set} \\
    U &\longmapsto \Omega^n(U)
\end{align*}
This sheaf is non-concrete since we have chosen $n \geq 1$. To see that, we notice that the underlying set of points of the sheaf $\Omega^n(-)$ only has one element:
\[
\Omega^n(\mathbb{R}^0) = *
\]
On the other hand, for $U = \mathbb{R}^k$ with $k > 0$ the sheaf admits a large number of plots. Therefore the map
\begin{align*}
\Omega^n(\mathbb{R}^k) &\rightarrow \mathrm{Hom} \left( \mathrm{Hom}_{\mathbf{Cart}}(*,U) , \Omega^n(\mathbb{R}^0) \right) = *\\
\omega &\mapsto \left\{ \underline{\omega} \colon f \mapsto f^*\omega \right\}
\end{align*}
clearly can not be one-to-one. \\

Regarding $\Omega^n(-)$ as a presheaf it is well known that one can represent it as a colimit of representables. More precisely, any presheaf $F \in \mathrm{PSh}(\mathbf{Cart})$ can be written as a colimit of representables
\[
F \cong \underset{\cat{Y}(X) \rightarrow F}{\mathrm{colim}} \cat{Y}(X)
\]
via the canonical diagram:
\[
\cat{Y} \downarrow F \rightarrow \mathrm{PSh}(\mathbf{Cart})
\]
Notice that $\cat{Y}: \mathbf{Cart} \rightarrow \mathrm{PSh}(\mathbf{Cart})$ denotes the Yoneda embedding functor, where for $U \in \mathbf{Cart}$ the representable presheaf $\cat{Y}(U)$ is precisely the concrete presheaf characterizing the standard diffeology on $U \cong \mathbb{R}^k$. Returning now to the non-concrete sheaf of differential forms, we conclude that it can be written as a colimit in $\mathrm{PSh}(\mathbf{Cart})$ of a diagram taking values in concrete sheaves:
\[
\Omega^n(-) \cong \underset{\cat{Y}(U) \rightarrow \Omega^n(-)}{\mathrm{colim}} \cat{Y}(U)
\]
\end{example}

\section{Diffeological Bundles}

In this short section, we introduce diffeological fiber bundles and diffeological principal $G$-bundles. These definitions are standard and were introduced by Iglesias-Zemmour \cite{Iglesias-Zemmour}. Brief introductions to diffeological bundles can also be found in \cite{Christensen-Wu} and \cite{Magnot-Watts}.

\begin{definition}
    A \textbf{diffeological group} is a group $G$ equipped with a diffeology such that the multiplication map $m \colon G \times G \rightarrow G$ and the inversion map $i \colon G \rightarrow G$ are smooth. A \textbf{diffeological group action} of $G$ on a diffeological space $X$ is a group action of $G$ on the set $X$ such that the action map
    \[
    G \times X \rightarrow X
    \]
    is smooth.
\end{definition}

\begin{definition}\label{Definition: Diffeological bundle}
    Let $F$ be a diffeological space and $\pi \colon E \rightarrow X$ a smooth surjective map of diffeological spaces.
    \begin{enumerate}[label={(\arabic*)}]
        \item The map $\pi$ is \textbf{trivial of fiber type} $F$ if there exists a diffeomorphism $\varphi \colon E \rightarrow F \times X$ such that the following diagram commutes:
        \begin{center}
        \begin{tikzcd}
E \arrow[d, "\pi"'] \arrow[r, "\varphi"] & F \times X \arrow[ld, "\mathrm{pr}_2"] \\
X                                        &
\end{tikzcd}
        \end{center}
        \item The map $\pi$ is \textbf{locally trivial of fiber type} $F$ if there exists a $D$-open cover $\left\{ U_i \right\}_{i \in I}$ of $X$ such that $\pi|_{\pi^{-1}(U_i)} \colon \pi^{-1}(U_i) \rightarrow U_i$ is trivial of fiber type $F$ for all $i \in I$.
        \item The map $\pi$ is called a \textbf{diffeological bundle of fiber type} $F$ if the pullback of $\pi$ along any plot $ p \colon U \rightarrow X$ is locally trivial of fiber type $F$.
    \end{enumerate}
\end{definition}

The following equivalent characterization of diffeological bundles will play an important role after introducing the smooth singular complex in the next section.
\begin{lemma}[\cite{Iglesias-Zemmour}]
    A smooth surjective map $\pi \colon E \rightarrow X$ between diffeological spaces is a diffeological bundle of fiber type $F$ if and only if the pullback of $\pi$ along any global plot, that is a plot of the form $p \colon \mathbb{R}^n \rightarrow X$, is trivial of fiber type $F$:
    \begin{center}
    \begin{tikzcd}
p^*E \arrow[d, "p^*(\pi)"'] \arrow[r, "\varphi"] & F \times \mathbb{R}^n \arrow[ld, "\mathrm{pr}_2"] \\
\mathbb{R}^n                                        &
    \end{tikzcd}
    \end{center}
\end{lemma}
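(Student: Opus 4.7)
The proof naturally splits into the two implications.

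For the easier direction ($\Leftarrow$), suppose the pullback of $\pi$ along every global plot is trivial of fiber type $F$. I would verify the diffeological bundle condition by fixing an arbitrary plot $p \colon U \rightarrow X$ with $U \subset \mathbb{R}^m$ open and showing local triviality of $p^*\pi$. For each $u_0 \in U$, choose an open ball $B \subset U$ around $u_0$ together with a diffeomorphism $\varphi \colon \mathbb{R}^m \xrightarrow{\cong} B$. The composite $p \circ \varphi \colon \mathbb{R}^m \rightarrow X$ is a global plot by the compatibility axiom, so by hypothesis $(p \circ \varphi)^*\pi$ admits a trivialization $\psi \colon (p\circ\varphi)^*E \xrightarrow{\cong} F \times \mathbb{R}^m$ over $\mathbb{R}^m$. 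Transporting this trivialization back along $\varphi^{-1}$ (using that pullback along a diffeomorphism preserves trivializations up to diffeomorphism) furnishes a trivialization of $p^*\pi$ over $B$. Letting $u_0$ range over $U$ produces a $D$-open cover by such balls, which witnesses local triviality of $p^*\pi$.

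For the forward direction ($\Rightarrow$), assume $\pi$ is a diffeological bundle of fiber type $F$ and let $p \colon \mathbb{R}^n \rightarrow X$ be a global plot. By the definition of a diffeological bundle applied to $p$ itself, the pullback $p^*\pi \colon p^*E \rightarrow \mathbb{R}^n$ is locally trivial of fiber type $F$. The crux is therefore to upgrade this local triviality on $\mathbb{R}^n$ to a global trivialization. The route I would take exploits the smooth contractibility of $\mathbb{R}^n$: consider the smooth homotopy $H \colon \mathbb{R}^n \times [0,1] \rightarrow \mathbb{R}^n$ defined by $H(x,t) = tx$, connecting the constant plot at $0$ to the identity. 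Pulling $p^*\pi$ back along $H$ yields a locally trivial bundle over $\mathbb{R}^n \times [0,1]$ whose fiber over every $(x,0)$ is canonically identified with the model fiber $F_0 := (p^*\pi)^{-1}(0) \cong F$. A fiberwise extension of this identification along the parameter $t$, obtained by gluing the given local trivializations, then produces the desired global diffeomorphism $p^*E \cong F \times \mathbb{R}^n$ over $\mathbb{R}^n$.

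The main obstacle is precisely the last step of the forward direction: in the smooth manifold setting the passage from local to global triviality over $\mathbb{R}^n$ is standard and usually relies on a partition of unity and a connection, but in the purely diffeological category these tools are not automatically available on $p^*E$ (there is no a priori smooth structure permitting Ehresmann-style arguments). The cleanest way to circumvent this is to appeal to the result of Iglesias-Zemmour \cite{Iglesias-Zemmour} that any locally trivial diffeological bundle with a smoothly contractible base is trivializable, which is proved by a direct inductive construction refining the cover and using the diffeological transition data. Invoking this result, one obtains the required global trivialization and concludes the proof.
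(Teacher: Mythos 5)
The paper gives no proof of this lemma; it is cited to Iglesias-Zemmour, so there is no internal argument against which to compare. Evaluating your proposal on its own terms:

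Your backward direction is correct. The reduction to global plots via a diffeomorphism $\varphi \colon \mathbb{R}^m \to B$ onto an open ball, followed by transporting the trivialization of $(p\circ\varphi)^*\pi$ through $(\varphi^{-1})^*$, is exactly the right argument, and the resulting balls do form a $D$-open cover of $U$ (since the $D$-topology on $U$ with its standard diffeology is the usual topology).

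The forward direction has a genuine gap. You correctly reduce the problem to showing that a \emph{locally} trivial diffeological bundle over $\mathbb{R}^n$ is \emph{globally} trivial, and you correctly observe that the manifold-theoretic shortcuts (partitions of unity, connections) are unavailable. But the step you then sketch --- pulling back along $H(x,t)=tx$ and performing ``a fiberwise extension of this identification along the parameter $t$, obtained by gluing the given local trivializations'' --- is not a construction, it is a restatement of what must be proved. The local trivializations over different opens of $\mathbb{R}^n$ need not agree on overlaps; their transition functions are arbitrary smooth maps into $\mathrm{Diff}(F)$, and ``gluing'' them into a single global trivialization is precisely the assertion that the bundle is trivial. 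Neither the homotopy $H$ nor the identification over $t=0$ gives you a recipe for doing this: without a homotopy lifting mechanism (or an exhaustion of $\mathbb{R}^n$ by nested balls over which one inductively modifies the trivializations, which is how Iglesias-Zemmour actually argues), there is nothing to extend. Your fallback --- invoking Iglesias-Zemmour's theorem that locally trivial diffeological bundles over smoothly contractible bases are trivial --- is not circular, since that is a distinct result, but it means the essential content of the hard direction is being outsourced rather than proved. As a proof sketch the forward direction therefore stops short: it identifies where the difficulty lies but supplies neither the inductive trivialization argument nor a homotopy-lifting argument that would close it.
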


The most important examples of diffeological fiber bundles we will encounter are \textit{diffeological principal} $G$\textit{-bundles}, for $G$ some diffeological group.

\begin{definition}[\cite{Waldorf-Transgression}] \label{Definition: Diffeological Principal Bundle}
    Let $G$ be a diffeological group. We call a map $\pi \colon P \rightarrow X$ a \textbf{diffeological principal} $G$\textbf{-bundle} if $\pi$ is a subduction, the space $P$ is endowed with a fibre-preserving right $G$-action, and the shear map
    \begin{align*}
        P \times G \rightarrow P \times_X P \\
        (p,g) \mapsto (p,p \cdot g)
    \end{align*}
    is a diffeomorphism.
\end{definition}

Definition \ref{Definition: Diffeological Principal Bundle} is the straightforward generalization of ordinary principal $G$-bundles from the category of manifolds to diffeological spaces. However, it is not completely trivial how this definition relates to Definition \ref{Definition: Diffeological bundle} of a diffeological bundle of fiber type $G$. In the following, we will show that diffeological principal $G$-bundles indeed are a special case of diffeological bundles.

\begin{lemma}[\cite{Iglesias-Zemmour}] \label{Lemma: Inductive action map}
    Let $P$ be a diffeological space endowed with a right $G$-action. We call the map
    \begin{align*}
       \alpha:  P \times G \rightarrow P \times P \\
        (p,g) \mapsto (p,p \cdot g)
    \end{align*}
    the \textbf{action map}. If $\alpha$ is an induction, then the projection map
    \[
    q \colon P \rightarrow P/G
    \]
    is a diffeological bundle of fiber type $G$.
\end{lemma}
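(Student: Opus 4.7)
The plan is to verify the conditions of Definition \ref{Definition: Diffeological bundle}(3). The map $q \colon P \to P/G$ is smooth and surjective by the very definition of the quotient (pushforward) diffeology, so the substance of the argument is to show that for every plot $p \colon U \to P/G$, the pullback $p^*P \to U$ is locally trivial of fiber type $G$.

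First I would extract from the induction hypothesis the two ingredients it is really providing: a free action together with a smooth division map. Injectivity of $\alpha$ forces the action to be free, and the image of $\alpha$ in $P \times P$ is precisely the fiber product $P \times_{P/G} P = \{(p,p') : q(p) = q(p')\}$, since $q(p) = q(p')$ holds iff $p$ and $p'$ lie in a common orbit. The induction condition says that the diffeology on $P \times G$ coincides with the pullback of the product diffeology along $\alpha$; equivalently, $\alpha$ restricts to a diffeomorphism
\[
\alpha \colon P \times G \xrightarrow{\;\cong\;} P \times_{P/G} P,
\]
with the target endowed with the subspace diffeology from $P \times P$. Composing the inverse with the projection to the second factor $G$ produces a smooth \emph{division map}
\[
\delta \colon P \times_{P/G} P \longrightarrow G, \qquad p' = p \cdot \delta(p,p').
\]

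Now let $p \colon U \to P/G$ be an arbitrary plot and fix $x_0 \in U$. Since $q$ is a subduction, after passing to a $D$-open neighborhood $V \subseteq U$ of $x_0$ we may choose a smooth local lift $s \colon V \to P$ with $q \circ s = p|_V$. Define
\[
\varphi \colon V \times G \longrightarrow (p|_V)^*P, \qquad (x,g) \longmapsto (x,\, s(x) \cdot g).
\]
Smoothness of $\varphi$ follows from smoothness of the action, and the pair $(s(x), y)$ lies in $P \times_{P/G} P$ by construction whenever $(x,y) \in (p|_V)^*P$, so the candidate inverse $\varphi^{-1}(x,y) = (x, \delta(s(x), y))$ is smooth by the previous paragraph. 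This yields the required local trivialization over $V$ commuting with the projection to the base, and varying $x_0$ gives local triviality on all of $U$.

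The main subtlety to be careful about is extracting the smooth division map from the induction hypothesis: one must confirm that the subspace diffeology on $P \times_{P/G} P$ inherited from $P \times P$ is exactly the diffeology that makes $\delta$ smooth — which is precisely what the induction condition on $\alpha$ buys us. Once $\delta$ is in hand, the remaining steps reduce to unwinding definitions.
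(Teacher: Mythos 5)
Your proposal is correct. The paper does not supply its own proof of this lemma — it is cited directly to Iglesias-Zemmour — but your argument reproduces the standard one found there: injectivity of $\alpha$ yields freeness, the image of $\alpha$ is exactly the fiber product $P \times_{P/G} P$, the induction condition promotes $\alpha$ to a diffeomorphism onto that fiber product so that the division map $\delta = \mathrm{pr}_2 \circ \alpha^{-1}$ is smooth, and local sections of the subduction $q$ then produce local trivializations via $(x,g) \mapsto (x, s(x)\cdot g)$ with smooth inverse $(x,y) \mapsto (x, \delta(s(x),y))$. One small point worth making explicit for the reader: smoothness of the action itself, which you invoke when checking $\varphi$ is smooth, is not a stated hypothesis but follows from $\alpha$ being an induction, since $\mathrm{pr}_2 \circ \alpha$ is then smooth.
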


Let us unwind Definition \ref{Definition: Diffeological Principal Bundle} to understand how it relates to diffeological bundles via Lemma \ref{Lemma: Inductive action map}. Consider a diffeological group $G$ and a diffeological $G$-space $P$. The first condition for a map $\pi \colon P \rightarrow X$ to be a diffeological principal $G$-bundle is that it is submersive. Notice that submersions can always be considered projections to a quotient. More precisely, given a submersion $\pi \colon P \rightarrow X$ we introduce the equivalence relation $p \sim p'$ if and only if $\pi(p) = \pi(p')$. Then the map sending the class $[p]$ to $\pi(p)$ defines a diffeomorphism making the following diagram commutative.
\begin{center}
\begin{tikzcd}
P \arrow[rd, "\pi"] \arrow[d, "q"'] &   \\
P/\sim \arrow[r, "\cong"']           & X
\end{tikzcd}
\end{center}
The second condition is that the shear map
    \begin{align*}
        P \times G \rightarrow P \times_X P \\
        (p,g) \mapsto (p,p \cdot g)
    \end{align*}
is a diffeomorphism. Notice that for two elements $p,p'$ such that $\pi(p) = \pi(p')$ by definition we have $(p,p') \in P \times_X P$. Since the shear map is a diffeomorphism this tells us that there exists a unique $(p,g) \in P \times G$ such that $(p,p \cdot g) = (p,p')$. Hence the equivalence relation induced by the submersion $\pi$ now translates via the shear map to $p \sim p'$ if and only if there is a $g \in G$ such that $p' = p \cdot g$. In particular, this shows that
\[
P/ \! \sim \; = P/G,
\]
and that there is a diffeomorphism $P/G \xrightarrow{\cong} X$ making the following diagram commute.
\begin{center}
\begin{tikzcd}
P \arrow[rd, "\pi"] \arrow[d, "q"'] &   \\
P/G \arrow[r, "\cong"']           & X
\end{tikzcd}
\end{center}
To conclude, notice that as the shear map is a diffeomorphism it is also inductive, which then implies that the action map
\[
\alpha \colon P \times G \rightarrow P \times_X P \hookrightarrow P \times P
\]
is inductive. Lemma \ref{Lemma: Inductive action map} now implies that $q$ is a diffeological bundle of fiber type $G$. \\

To summarize, from Definition \ref{Definition: Diffeological Principal Bundle} we were able to extract an inductive action map $\alpha$ leading to a quotient map $q : P \rightarrow P/G$ being a diffeological bundle of fiber type $G$ together with a diffeomorphism identifying $P/G$ with $X$ making the obvious diagram commute. This is precisely the definition of a \textit{diffeological principal fibration with structure group} $G$ introduced in \cite[Art. 8.11]{Iglesias-Zemmour}.

\section{Homotopy Theory of Diffeological Spaces}

In this section, we introduce the homotopy theory of diffeological spaces closely following Christensen--Wu \cite{Christensen-Wu}.

\begin{remark} \label{Remark: Cutoff-function}
    For $0 < \varepsilon < 1/2$, by an $\varepsilon$\textbf{-cut-off function} we mean a smooth function $\psi \colon \mathbb{R} \rightarrow \mathbb{R}$ such that:
    \begin{itemize}
        \item $0 \leq \psi(t) \leq 1$ for all $t \in \mathbb{R}$
        \item $\psi(t) = 0$ if $t < \varepsilon$ and
        \item $\psi(t) = 1$ if $t > 1- \varepsilon$.
    \end{itemize}
    such functions exist for all such $\varepsilon$.
\end{remark}

Before introducing the homotopy groups of diffeological spaces, we introduce some elementary vocabulary needed. Let $X$ be a diffeological space. By a \textbf{path} in $X$ we mean a smooth function $\gamma : \mathbb{R} \rightarrow X$. We say that the path $\gamma$ has \textbf{sitting instants} if there is an $\varepsilon > 0$ such that $\gamma$ is constant on $(- \infty,\varepsilon)$ and $(1-\varepsilon, \infty)$. Sitting instances are needed to concatenate smooth paths. \\

 We denote by $(X,U)$ a pair of diffeological spaces, that is $U$ is a diffeological subspace of $X$. A map of pairs $f \colon (X,U) \rightarrow (Y,V)$ is then a smooth map $f \colon X \rightarrow Y$ such that $f(U) \subset V$. We denote the set of maps of pairs by $C^{\infty}\left( (X,U) , (Y,V) \right)$. As a subset of $C^{\infty}(X,Y)$ we endow $C^{\infty}\left( (X,U) , (Y,V) \right)$ with the subset diffeology of the functional diffeology, which is then denoted by $D\left( (X,U) , (Y,V) \right)$. Given a pointed diffeological space $(X,x_0)$ we denote by
 \[
 \Omega(X,x_0) := D\left( (\mathbb{R}, \left\{ 0 ,1 \right\} ),(X,x_0) \right)
 \]
the \textbf{diffeological based loop space} of $X$ at $x_0$. The loop space itself is again pointed with the constant loop $c_{x_0}$ as basepoint. This allows for the iterative definition of the \textbf{diffeological} $n$\textbf{-fold loop space} via
\[
\Omega^n(X,x_0) := \Omega \left( \Omega^{n-1}(X,x_0), c_{x_0} \right).
\]

\begin{construction}
    Let $(X,x_0)$ be a pointed diffeological space. Define a relation on $X$ as follows: say $x \sim y$ if and only if there is a smooth path $\gamma$ in $X$ connecting $x$ and $y$, i.e. $\gamma(0) = x$ and $\gamma(1) = y$. Notice that by the existence of cut-off functions, given such a path $\gamma$ we can always replace it with a path having sitting instances. This makes sure that the relation $\sim$ defines an equivalence relation on $X$. Denote the quotient set by
    \[
    \pi^D_0(X,x_0) := (X/\! \sim, [x_0]),
    \]
    which is a pointed set with basepoint the path-component of $x_0$. We call this set the \textbf{smooth path components of} $X$.
\end{construction}

\begin{definition}
    Given a pointed diffeological space $(X,x_0)$ and $n > 0$ we define the $n$\textbf{-th smooth homotopy group} as the pointed set
    \[
    \pi_n^D(X,x_0) := \pi_0^D \left( \Omega^n(X,x_0), c_{x_0} \right)
    \]
    endowed with the group structure given by concatenation.
\end{definition}
\begin{remark}
    To show that the pointed sets $\pi_n^D(X,x_0)$ are indeed groups, observe that the existence of cut-off functions allows for the choice of a representative with sitting instances in any equivalence class $[\gamma] \in \pi_n^D(X,x_0)$. The group multiplication is now defined by choosing $\gamma' \in [\gamma]$ and $\delta' \in [\delta]$ representatives with sitting instances, which can be smoothly concatenated
    \[
    [\gamma] * [\delta] := [\gamma' * \delta'].
    \]
\end{remark}

Many classical results from algebraic topology translate into their smooth counterparts for diffeological spaces.

\begin{proposition}[\cite{Iglesias-Zemmour}] \label{Proposition: Long Exact Sequence Fibration}
    Let $ \pi \colon E \rightarrow X$ be a diffeological bundle of fiber type $F = \pi^{-1}(x)$ for some $x \in X$. Then for any $y \in F$, there is a long exact sequence of smooth homotopy groups:
    \[
    \cdots \rightarrow \pi^D_n(F,y) \xrightarrow{i_*} \pi^D_n(E,y) \rightarrow \pi_n^D(X,x) \rightarrow \pi_{n-1}^D(F,y) \rightarrow \cdots \rightarrow \pi_0^D(X,x) \rightarrow 0
    \]
\end{proposition}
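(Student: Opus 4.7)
The plan is to adapt the classical proof of the long exact sequence of a fibration to the diffeological setting, where the crucial new ingredient is a smooth homotopy lifting property for diffeological bundles with respect to Cartesian spaces. The relative homotopy group $\pi_n^D(E,F,y)$ will serve as the intermediate object producing the connecting map.

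First I would establish the following smooth HLP: given a diffeological bundle $\pi \colon E \rightarrow X$ of fibre type $F$, a smooth map $f \colon \mathbb{R}^n \rightarrow X$, and a smooth lift $\tilde{f}_0 \colon \mathbb{R}^{n-1} \times \{0\} \rightarrow E$ of $f|_{\mathbb{R}^{n-1}\times\{0\}}$, there exists a smooth extension $\tilde{f} \colon \mathbb{R}^n \rightarrow E$ with $\pi \circ \tilde{f} = f$. This follows directly from the Lemma preceding Definition \ref{Definition: Diffeological Principal Bundle}: the pullback $f^*E \rightarrow \mathbb{R}^n$ is trivial of fibre type $F$, so a choice of trivialisation reduces the problem to extending a smooth map into $F \times \mathbb{R}^{n-1}$ over $\mathbb{R}^n$, which is immediate. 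To keep basepoints and sitting instants compatible, I would precompose with the $\varepsilon$-cut-off functions from Remark \ref{Remark: Cutoff-function} to arrange that $\tilde{f}$ sends any prescribed boundary face of the unit cube into a chosen fibre.

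Next I would define the connecting homomorphism $\partial \colon \pi_n^D(X,x) \rightarrow \pi_{n-1}^D(F,y)$. Represent a class in $\pi_n^D(X,x)$ by a smooth $\gamma \colon \mathbb{R}^n \rightarrow X$ with sitting instants that sends a neighbourhood of $\partial I^n$ to $x$. Apply the HLP to obtain a lift $\tilde{\gamma} \colon \mathbb{R}^n \rightarrow E$ starting at $y$ on the face $I^{n-1}\times\{0\}$ (lifted as the constant map at $y$); then the restriction of $\tilde{\gamma}$ to the opposite face $I^{n-1}\times\{1\}$ automatically factors through $F = \pi^{-1}(x)$, and I would define $\partial[\gamma]$ to be its class in $\pi_{n-1}^D(F,y)$. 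Independence of all choices and the group homomorphism property follow by lifting smooth homotopies between two candidate representatives, reusing the same HLP with one dimension added.

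Exactness at each of the three positions is then verified by the familiar arguments transported to the diffeological category. At $\pi_n^D(E)$, the composite $\pi_* \circ i_*$ vanishes because $\pi \circ i$ is constant; conversely, if $\pi_*[\tilde{\alpha}] = 0$ via a smooth null-homotopy $H$ in $X$, lifting $H$ by the HLP with initial condition $\tilde{\alpha}$ deforms $\tilde{\alpha}$ into the fibre. At $\pi_n^D(X)$, the composite $\partial\circ\pi_*$ vanishes since a lift of $\pi\circ\tilde{\beta}$ can be taken to be $\tilde{\beta}$ itself, whose boundary is constant; conversely, if $\partial[\gamma] = 0$ with null-homotopy $K \colon \mathbb{R}^{n} \rightarrow F$, gluing $K$ to the lift $\tilde{\gamma}$ along the common face yields, after smoothing via cut-offs, a lift with constant boundary, i.e.\ an element of $\pi_n^D(E)$ projecting to $[\gamma]$. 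At $\pi_{n-1}^D(F)$, exactness follows by the same two complementary gluings.

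The main obstacle will be keeping everything honestly smooth across cube faces under iterated lifts, concatenations, and gluings. All such issues are resolved uniformly by reparametrising in each coordinate through an $\varepsilon$-cut-off function before gluing, which forces every constructed map to be locally constant near the faces where smoothness could fail and hence to be a genuine plot of the target diffeological space.
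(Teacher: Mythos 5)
The paper does not actually prove this proposition; it is stated with a citation to Iglesias--Zemmour's book and no proof appears in the text. So the comparison is against the cited source, where the argument is indeed a direct homotopy-lifting-property argument in the diffeological category --- the same general route you take.

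Your strategy is the right one, and the reduction of the lifting problem to triviality of the pullback $f^*E$ over a Cartesian domain is correct: that is exactly the content of the lemma preceding Definition~\ref{Definition: Diffeological Principal Bundle}. The gap is in the claim that cut-off functions alone fix all the boundary conditions. Consider the construction of $\partial[\gamma]$. You trivialize $\gamma^*E \cong F \times \mathbb{R}^n$, transport the constant initial section $y$ on $\mathbb{R}^{n-1}\times\{0\}$ across the trivialization to a map $g_0\colon \mathbb{R}^{n-1}\to F$, extend by projection, and map back to $E$. The resulting $\tilde\gamma$ satisfies $\tilde\gamma|_{\mathbb{R}^{n-1}\times\{0\}} = y$ and $\tilde\gamma|_{\mathbb{R}^{n-1}\times\{1\}} \subset F$. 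However, on the remaining side faces $\partial I^{n-1}\times I$ (where $\gamma$ is constant $= x$), the chosen trivialization $\phi$ need not agree with the canonical identification $\gamma^*E|_{\{\gamma = x\}} \cong F \times \{\gamma = x\}$. So $\tilde\gamma$ lands in $F$ there but is in general \emph{not} constant $= y$, and hence $\tilde\gamma|_{\mathbb{R}^{n-1}\times\{1\}}$ need not send the grid of $\mathbb{R}^{n-1}$ to $y$. In the iterated-loop-space model used by the paper, this means it does not a priori define an element of $\Omega^{n-1}(F,y)$, so $\partial[\gamma]$ is not yet well-formed.

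Precomposing $\gamma$ with $\varepsilon$-cut-offs does not remove this problem: it makes $\gamma$ constant in the $t_i$-directions near the grid, but the global trivialization of $\gamma^*E$ is an arbitrary diffeomorphism to $F \times \mathbb{R}^n$ and carries no compatibility with that constancy. What is actually needed is either a relative HLP (lifting with prescribed boundary values on $J^{n-1} = I^{n-1}\times\{0\} \cup \partial I^{n-1}\times I$, not merely on one face), or a normalization step in which the trivialization $\phi$ is corrected so that it restricts to the canonical constant trivialization over a neighborhood of the grid --- an argument that requires extending transition data from the grid tubes to all of $\mathbb{R}^n$, i.e.\ a partition-of-unity or face-by-face induction. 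This is the genuinely nontrivial step of the diffeological long exact sequence, and it is precisely the part your sketch compresses into ``cut-offs resolve all such issues.'' Everything else in the outline is sound, and once the relative HLP (or the normalized trivialization) is established, the exactness verifications you describe go through essentially as stated.
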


The previous result already suggests that a diffeological bundle is a special case of what we call for the moment a \textit{smooth fibration}. To make this concept precise and to show that this is indeed the case, we need to introduce the smooth singular complex of a diffeological space. It will allow us to translate the smooth homotopy theory of diffeological spaces to the classical homotopy theory of simplicial sets.

Like the classical singular complex of a topological space, the smooth singular complex is an instance of the general nerve--realization construction. This machinery automatically provides us with the corresponding adjoint, the diffeological realization functor. Following the general procedure of the nerve--realization construction, we first specify a cosimplicial object in $\mathbf{Diff}$. There are two cosimplicial objects in $\mathbf{Diff}$ of special interest to us:

\begin{definition}\label{Definition: Affine Comsimplical Object}
    Denote by $\mathbb{A}^n = \left\{ (x_0, ..., x_n) \in \mathbb{R}^{n+1} \, | \, \sum x_i = 1 \right\}$ the \textbf{affine} or \textbf{extended smooth} $n$\textbf{-simplex} endowed with the subset diffeology of $\mathbb{R}^{n+1}$. The associated cosimplicial object $\mathbb{A}^{\bullet}$ in $\mathbf{Diff}$ is the functor
    \begin{align*}
        \mathbb{A}^{\bullet} \colon \Delta &\rightarrow \mathbf{Diff} \\
        [n] &\mapsto \mathbb{A}^n. \\
    \end{align*}
    On morphisms, the cosimplicial object $\mathbb{A}^{\bullet}$ associates to $f \colon [n] \rightarrow [m]$ the smooth map
    \begin{align*}
        \mathbb{A}^n &\longrightarrow \mathbb{A}^m \\
        (x_0, ..., x_n) &\longmapsto \left( \sum_{ i \in f^{-1}(0)} x_i , ...,  \sum_{ i \in f^{-1}(m)} x_i  \right).
    \end{align*}
\end{definition}

\begin{definition}\label{Definition: Compact Cosimpicial Object}
    Denote by $|\Delta^{n}| = \left\{ (x_0, ..., x_n) \in \mathbb{R}^{n+1} \, | \, \sum x_i = 1 \text{ and } x_i \geq 0 \text{ for each } i\right\}$ the \textbf{standard} or \textbf{compact smooth} $n$\textbf{-simplex} endowed with the subset diffeology of $\mathbb{R}^{n+1}$. The associated simplicial object we denote by $|\Delta^{\bullet}|$ and is constructed analog to the standard topological simplex.
\end{definition}

The extended smooth simplex and the standard smooth simplex now lead to the \textit{extended smooth singular complex} and the \textit{standard smooth singular complex}:

\begin{definition}\label{Definition: Extended Smooth Singular Complex} $ $
    \begin{enumerate}[label={(\arabic*)}]
        \item The adjoint pair associated to the extended cosimplicial object $\mathbb{A}^{\bullet}$ we call \textbf{extended smooth singular complex} $S_{e}$ and \textbf{extended realization} $\lvert - \rvert_{e}$:
        \begin{center}
\begin{tikzcd}
\lvert - \rvert_{e} : \mathbf{sSet} \arrow[r, shift left=3] \arrow[r, "\perp", phantom] & \mathbf{Diff} : S_e \arrow[l, shift left=3]
\end{tikzcd}
        \end{center}
        \item The adjoint pair associated to the standard cosimplicial object $|\Delta^{\bullet}|$ we call \textbf{smooth singular complex} $S$ and \textbf{smooth realization} $\lvert - \rvert$:
        \begin{center}
\begin{tikzcd}
\lvert - \rvert : \mathbf{sSet} \arrow[r, shift left=3] \arrow[r, "\perp", phantom] & \mathbf{Diff} : S \arrow[l, shift left=3]
\end{tikzcd}
        \end{center}
    \end{enumerate}
\end{definition}

Notice that for the smooth compact simplex $|\Delta^n|$ in Definition \ref{Definition: Compact Cosimpicial Object}, the notion of a smooth map $\sigma \colon |\Delta^n| \rightarrow M$ into some smooth manifold $M$ is a priori not identical to the standard definition of smooth simplices. Thankfully, the next Lemma shows that all these notions do coincide in the case of the compact $n$-simplex.

\begin{lemma} \label{Lemma: Subset diffeology of compact simplex}
    Denote by $|\Delta^n| = \{ (x_1, ...,x_n) \in \mathbb{R}^n \, | \, \sum_{i = 1}^n x_i \leq 1 \text{ with } x_i \geq 0  \}$ the compact standard $n$-simplex considered as a closed subset of $\mathbb{R}^n$ and let $f \colon |\Delta^n| \rightarrow \mathbb{R}$ be a function. Then the following are equivalent:
    \begin{enumerate}[label={(\arabic*)}]
        \item  For every point $p \in |\Delta^n|$ there is a neighborhood $U$ of $p$ in $\mathbb{R}^n$ and a smooth function $f_p \colon U \rightarrow \mathbb{R}$ such that $(f_p)|_{U \cap |\Delta^n|} = f|_{U \cap |\Delta^n|}$.
        \item There is an open neighborhood $U$ of $|\Delta^n|$ in $\mathbb{R}^n$ together with a smooth function $\tilde{f} \colon U \rightarrow \mathbb{R}$ such that $f = \tilde{f}|_{|\Delta^n|}$.
        \item The function $f \colon |\Delta^n| \rightarrow \mathbb{R}$ is smooth as a morphism of diffeological spaces where $|\Delta^n|$ is equipped with the subset diffeology of $\mathbb{R}^n$.
    \end{enumerate}
\end{lemma}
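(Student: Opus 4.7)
The plan is to establish the cycle $(2) \Rightarrow (1) \Rightarrow (2)$ separately, then close the loop via $(2) \Rightarrow (3) \Rightarrow (1)$. The implication $(2) \Rightarrow (1)$ is immediate by restriction. For $(1) \Rightarrow (2)$, I would use a standard partition of unity argument exploiting that $|\Delta^n|$ is compact. That is, for each $p \in |\Delta^n|$ choose the pair $(U_p, f_p)$ provided by $(1)$; by compactness extract a finite subcover $\{U_{p_i}\}_{i=1}^N$ of $|\Delta^n|$, set $V = \bigcup_i U_{p_i}$, pick a smooth partition of unity $\{\psi_i\}$ on $V$ subordinate to this cover, and define $\tilde{f} := \sum_i \psi_i f_{p_i} \in C^\infty(V)$. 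On $|\Delta^n| \cap V = |\Delta^n|$ this agrees with $\sum_i \psi_i f = f$ since all local pieces coincide with $f$ there.

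The implication $(2) \Rightarrow (3)$ is direct from the definition of the subset diffeology: any plot $p \colon W \to |\Delta^n|$ is by definition a smooth map $W \to \mathbb{R}^n$ with image in $|\Delta^n| \subset V$, so $f \circ p = \tilde{f} \circ p$ is a composition of ordinary smooth maps.

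The substantive step is $(3) \Rightarrow (1)$. At an interior point of $|\Delta^n|$ the identity on a small open ball is itself a plot, so diffeological smoothness reduces immediately to ordinary smoothness. The real work lies at boundary points. Near any such point $p_0$ one has, after an affine change of coordinates, a diffeomorphism of $|\Delta^n|$ with the local model $[0,\infty)^k \times \mathbb{R}^{n-k}$ sending $p_0$ to the origin, where $k$ is the codimension of the open face containing $p_0$. Consider the plot
\[
\phi \colon \mathbb{R}^n \longrightarrow [0,\infty)^k \times \mathbb{R}^{n-k}, \qquad \phi(u_1,\ldots,u_n) = (u_1^2,\ldots,u_k^2, u_{k+1}, \ldots, u_n).
\]
By hypothesis $g := f \circ \phi$ is smooth on a neighbourhood of $0$ in $\mathbb{R}^n$, and it is even in each of the first $k$ variables since $\phi$ is invariant under the sign flips $u_i \mapsto -u_i$ for $i \leq k$. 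I would then invoke the parametric Whitney theorem on even functions (applied iteratively in $u_1, \ldots, u_k$) to produce a smooth function $h$ on a neighbourhood of $0$ in $\mathbb{R}^n$ with
\[
g(u_1,\ldots,u_n) = h(u_1^2,\ldots,u_k^2, u_{k+1}, \ldots, u_n).
\]
Since $\phi$ is surjective onto the local model and $f = h$ on $[0,\infty)^k \times \mathbb{R}^{n-k}$ near the origin, $h$ provides the required smooth local extension at $p_0$.

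The main obstacle is precisely the boundary case of $(3) \Rightarrow (1)$: it is the only place where one cannot just chase definitions. The key analytic ingredient is Whitney's theorem on smooth even functions (in its parametric, multi-variable form), which converts the diffeological smoothness of $f$, witnessed through the squaring plot $\phi$, into the existence of an honest smooth extension across the corners of $|\Delta^n|$. Everything else—compactness, partitions of unity, and the definition of the subset diffeology—is routine bookkeeping.
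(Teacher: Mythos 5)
Your proof is correct, but it takes a genuinely different route than the paper. The paper establishes the cycle $(2)\Rightarrow(1)\Rightarrow(3)\Rightarrow(2)$, deferring $(1)\Rightarrow(3)$ and $(3)\Rightarrow(2)$ to Karshon--Watts; in particular, the hard step $(3)\Rightarrow(2)$ is done there via the classical Whitney Extension Theorem for closed subsets of $\mathbb{R}^n$, which requires exhibiting a coherent jet along $|\Delta^n|$ and checking Whitney's compatibility estimates. You instead first prove $(1)\Leftrightarrow(2)$ by a compactness/partition-of-unity globalization, reduce $(3)\Rightarrow(2)$ to $(3)\Rightarrow(1)$, and handle the latter locally using Whitney's theorem on smooth even functions applied to the squaring plot $\phi(u)=(u_1^2,\dots,u_k^2,u_{k+1},\dots,u_n)$. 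This is a legitimate and arguably more self-contained alternative: it exploits the corner structure of $|\Delta^n|$ directly and avoids the jet bookkeeping of the general extension theorem, at the cost of invoking the (iterated, parametric) even-function theorem. Two small points worth spelling out in a full write-up: in the iterated application of the even-function theorem you should symmetrize the intermediate extension in the next variable before iterating (replace $h$ by $\tfrac{1}{2}(h(t,u_2,\dots)+h(t,-u_2,\dots))$), since the even-function theorem only controls $h$ on the closed half-space; and in the partition-of-unity step, the products $\psi_i f_{p_i}$ should be understood as extended by zero outside $U_{p_i}$ so that $\tilde{f}=\sum_i\psi_i f_{p_i}$ is well-defined and smooth on all of $V$.
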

\begin{proof}
    It is immediate that $(2) \Rightarrow (1)$. The fact that $(1) \Rightarrow (3)$ follows from Lemma 2.1 of \cite{Karshon-Watts}. The missing step $(3) \Rightarrow (2)$ follows from the Whitney Extension Theorem, as stated in the proof of Theorem 4.1 of \cite{Karshon-Watts}.
\end{proof}

Let us briefly unwind the definition of the smooth singular simplicial sets introduced above. Let $X$ be a diffeological space, then the extended smooth singular complex $S_e(X)$ is the simplicial set with $n$-simplices given by
\[
S_e(X)_n = C^{\infty}(\mathbb{A}^n,X) \cong C^{\infty}(\mathbb{R}^n, X)
\]
Here we have used the identification of the affine $n$-simplex $\mathbb{A}^n$ with $\mathbb{R}^n$ given by the diffeomorphism via forgetting the first coordinate for example. Similarly, the standard smooth singular complex $S(X)$ has $n$-simplices given by
\[
S_n(X) = C^{\infty}(|\Delta^{n}|, X)
\]
In the case that $X = M$ is a manifold, the previous lemma allows us to identify the $n$-simplices as ordinary smooth maps of manifolds. This even holds in the case where $M$ is a smooth manifold with boundary and corners. This follows from the fact that the compact smooth $n$-simplex, which by definition can be considered as a diffeological manifold with boundary and corners, is equivalently also an ordinary smooth manifold with boundary and corners, and these two notions agree by \cite[Theorem 3]{Gurer-Iglesias-Zemmour}.  \\

Recall that both $\mathbb{A}^n$ and $|\Delta^n|$ are endowed with the subset diffeology from $\mathbb{R}^{n+1}$. Hence it is clear that the inclusion map is smooth:
\[
i \colon |\Delta^n| \hookrightarrow \mathbb{A}^n
\]
This induces a natural morphism of simplicial sets for all diffeological spaces $X$:
\[
S_e(X) \rightarrow S(X)
\]

\newpage

\begin{theorem} \label{Theorem: The Fundamental Theorem of Diffeological Homotopy}
The natural morphism of simplicial sets
\[
S_e(X) \rightarrow S(X)
\]
is a weak equivalence. Moreover given a pointed diffeological space $(X,x_0)$, both $\pi_i(S_e(X),x_0)$ and $\pi_i(S(X),x_0)$ are naturally isomorphic to the smooth homotopy groups $\pi^D_i(X,x_0)$ for all $i \geq 0$.
\end{theorem}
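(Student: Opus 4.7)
My plan is to reduce the weak equivalence claim to the two identifications with $\pi_i^D(X,x_0)$: once $\pi_i(S(X),x_0) \cong \pi_i^D(X,x_0)$ and $\pi_i(S_e(X),x_0) \cong \pi_i^D(X,x_0)$ are established compatibly with the natural inclusion (which is already the identity on $0$-simplices), the comparison map induces isomorphisms on all homotopy groups and is therefore a weak equivalence. The core task is therefore to identify the simplicial homotopy groups of $S(X)$ and $S_e(X)$ with the smooth homotopy groups of $X$.

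First I would show that both $S(X)$ and $S_e(X)$ are Kan complexes, so that ordinary simplicial homotopy groups compute their homotopy type. For $S_e(X)$, a smooth horn $\Lambda^n_k \to S_e(X)$ amounts to a smooth map on the union of $n$ affine hyperplanes in $\mathbb{A}^n \cong \mathbb{R}^n$, which extends to a smooth map on all of $\mathbb{R}^n$ via the Whitney extension theorem together with smoothing against a partition of unity subordinate to a collar neighborhood of the horn. For $S(X)$, combine this with Lemma \ref{Lemma: Subset diffeology of compact simplex} to pass to an ambient Euclidean neighborhood of $|\Delta^n|$ and run the same extension argument.

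Next I would establish the identification $\pi_n(S(X),x_0) \cong \pi_n^D(X,x_0)$. For $\pi_0$ this is immediate since both simplicial sets have $X$ as $0$-simplices, and the component relation in either case coincides with the path-component relation of $\pi_0^D(X)$ after reparametrizing smooth paths to have sitting instants using the cut-off functions of Remark \ref{Remark: Cutoff-function}. For $n \geq 1$, represent a class on the left by a smooth map $\sigma \colon |\Delta^n| \to X$ sending $\partial |\Delta^n|$ to $x_0$, and reparametrize it to an $n$-fold based loop $I^n \to X$ via a smooth collapsing map $I^n \to |\Delta^n|$ that crushes $\partial I^n$ onto $\partial |\Delta^n|$; after imposing sitting instants via Remark \ref{Remark: Cutoff-function}, this produces a well-defined element of $\pi_n^D(X,x_0) = \pi_0^D(\Omega^n(X,x_0))$. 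An inverse goes through a smooth collapsing map $|\Delta^n| \to I^n$. The case of $S_e(X)$ is handled by the same argument, using that any smooth map $\mathbb{A}^n \to X$ restricts to a smooth $|\Delta^n|$-simplex and that Whitney extension provides a smooth extension in the other direction.

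The principal obstacle is verifying that these cube-to-simplex and simplex-to-cube reparametrizations descend to inverse bijections on equivalence classes: the collapsing maps must be chosen so that the composite operations are smoothly homotopic to the identity relative to the basepoint, and simplicial homotopies must be translated into smooth homotopies of $n$-fold loops and back. This demands careful smoothing near the corners via cut-off functions, but once these technical verifications are in place, naturality in $X$ and compatibility with the map $S_e(X) \to S(X)$ are automatic, completing the proof.
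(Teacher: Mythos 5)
Your proposed argument hinges on the claim that both $S(X)$ and $S_e(X)$ are Kan complexes for \emph{every} diffeological space $X$, and this is false. Indeed, the paper itself tells you so: by Definition \ref{Definition: fibration of diffeological spaces}, a diffeological space $X$ is fibrant precisely when $S_e(X)$ is a Kan complex, and the discussion at the start of Chapter \ref{Chapter: Geometric Loop Groups} (following \cite[Example 4.44]{Christensen-Wu}) explicitly exhibits non-fibrant diffeological spaces by showing that diffeological loop spaces $\Omega(X,x_0)$ can fail to be fibrant. So the first step of your proposal already breaks down, and since the rest of the argument (the cube-to-simplex reparametrizations identifying $\pi_n(S(X),x_0)$ and $\pi_n(S_e(X),x_0)$ with $\pi_n^D(X,x_0)$) is carried out inside the Kan-combinatorial model for simplicial homotopy groups, the whole chain collapses. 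Note that the paper sidesteps the Kan issue entirely: in Chapter \ref{Chapter: Local Homotpy Theory} it declares $\pi_n(K,x_0) := \pi_n(|K|,x_0)$ via geometric realization, so no fibrancy of $S(X)$ or $S_e(X)$ is assumed in the statement being proved.

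The Whitney-extension mechanism you invoke to fill horns also does not apply here. Whitney extension produces a smooth map $\mathbb{R}^n \to \mathbb{R}^m$ from a Whitney field (a compatible collection of jets) on a closed subset; it says nothing about maps into an arbitrary diffeological space $X$, which carries no intrinsic notion of jets and admits no extension theorem of this kind. Even for $X$ a manifold, the horn data lives on a union of hyperplanes with a subtle subset diffeology near the intersections, and there is no smooth retraction of $\mathbb{A}^n$ onto the affine horn to compose with. This is exactly why the theorem is nontrivial and why the paper defers to \cite[Theorem 1.1 and Corollary 1.2]{Kihara22}: Kihara's argument does \emph{not} go through Kan-ness of $S_e(X)$ or $S(X)$ (which fail), but through a substantially different construction comparing these singular complexes directly, together with the alternative identification in \cite[Proposition 2.18]{BBP}. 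To repair your approach you would have to drop the Kan hypothesis entirely and work with $\pi_n(|S_e(X)|)$, $\pi_n(|S(X)|)$ and $\pi_n^D(X)$ directly, which is where the real technical work in the cited references lies.
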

\begin{proof}
    This is Theorem 1.1 together with Corollary 1.2 in \cite{Kihara22}. An alternative proof for the natural isomorphism $\pi_i(S(X),x_0) \cong \pi^D_i(X,x_0)$ is also given in \cite[Proposition 2.18]{BBP}.
\end{proof}

\begin{definition} \label{Definition: fibration of diffeological spaces}
    A morphism of diffeological spaces $p \colon X \rightarrow Y$ is called a \textbf{fibration} if the induced morphisms of simplicial sets via the extended smooth singular complex functor $S_e(X) \rightarrow S_e(Y)$ is a Kan fibration.
\end{definition}

\begin{example}
Any diffeological principal $G$-bundle $\pi \colon P \rightarrow X$ for $G$ a diffeological group is a fibration of diffeological spaces which follows from \cite[Proposition 4.28]{Christensen-Wu}. Further, the morphism of simplicial sets $S_e(P) \rightarrow S_e(X)$ is even a principal $S_e(G)$-fibration by \cite[Theorem 1.3]{Kihara22}.
\end{example}

\chapter{Local Homotopy Theory} \label{Chapter: Local Homotpy Theory}

Introducing smooth homotopy groups and smooth singular complexes allowed us in the previous chapter to combine elements of classical homotopy theory with the concept of smoothness through diffeological spaces. At the heart of this unification lies the interplay between simplicial sets with their inherent homotopical structures, and sheaves on the Cartesian site, encoding smooth structures. The abstract theory combining these concepts is called \textit{local homotopy theory}, which is formulated through the language of model categories. The following two sections provide a short introduction to the topic concluding with the definition of the \textit{local} and \textit{\v{C}ech local model structures on simplicial presheaves}. The third section advances to give an introduction to the cohomology of simplicial sheaves. Our main references for this chapter include Hirschhorn \cite{Hirschhorn}, Lurie \cite{Lurie}, Dugger--Hollander--Isaksen \cite{Dugger-Hollander-Isaksen}, Jardine \cite{JardineBook} and Bunk \cite{Bunk}.

\section{A Primer on Model Categories} \label{Subsection: A Primer on Model Categories}

Model categories originated from the idea of axiomatizing the principles of homotopy theory, to provide sufficiently general methods to numerous applications. Also, model categories allowed for the interpretation of several fundamental concepts of homological algebra, such as derived categories and derived functors, as ``homotopical" in nature in a broad spectrum. As an example, we can consider the homological algebra of modules over a ring $R$ as an instance of abstract homotopy theory via the Dold--Kan correspondence. Concepts like injective and projective resolutions are then understood to be of the same nature as approximations of topological spaces by CW-complexes.

\begin{definition}
Let $\cat{C}$ be a category. Then a morphism $f \colon A \rightarrow B$ in $\cat{C}$ is a \textbf{retract} of a morphism $g$ if there is a commutative diagram
\begin{center}
    \begin{tikzcd}
            A \arrow[d, "f"] \arrow[r] & C \arrow[d, "g"] \arrow[r]  & A \arrow[d, "f"]  \\
            B  \arrow[r] & D \arrow[r] & B
    \end{tikzcd}
\end{center}
such that the horizontal composites are the identities on $A$ and $B$, respectively.
\end{definition}

\begin{remark}
Recall from ordinary homotopy theory that a space $A \hookrightarrow B$ is said to be a retract of $B$ if there is a map $r \colon B \rightarrow A$ such that $r \circ i = \mathrm{id}_{A}$. Considering the arrow category of $\cat{C}$ given by $\mathrm{Mor} \cat{C} = \mathrm{Funct}([1], \cat{C})$, the notion of a retract in $\mathrm{Mor} \cat{C}$ in the classical sense gives the above definition.
\end{remark}

\begin{definition} \label{Definition: Model Category}
A \textbf{model category} is a category $\cat{M}$ together with three distinguished classes of morphisms $\mathbf{Cof}, \mathbf{Fib}$ and $\mathbf{W}$, called cofibrations, fibrations, and weak equivalences, satisfying the following axioms.
\begin{enumerate}[label=\textbf{M\arabic*}]
    \item \underline{Limit Axiom:} The category $\cat{M}$ is complete and cocomplete i.e. has all small limits and colimits.
    \item \underline{2-of-3:} Given two composable morphisms  $f,g$ in $\cat{M}$, if two of the following morphisms $\left\{ f,g,g \circ f \right\}$ are in $W$, then so is the third.
    \item \underline{Retract Axiom:} The classes $\mathbf{Cof}, \mathbf{Fib}$ and $\mathbf{W}$ are closed under retracts.
    \item \underline{Lifting Axiom:} Given the following commutative diagram of solid arrows
\begin{center}
    \begin{tikzcd}
A \arrow[d, "i"] \arrow[r] & X \arrow[d, "p"] \\
B \arrow[r] \arrow[ur, dashed]& Y
    \end{tikzcd}
\end{center}
then the lift exists if either
\begin{itemize}
    \item $i \in \mathbf{Cof}$ and $p \in \mathbf{Fib} \cap \mathbf{W}$, or
    \item $i \in \mathbf{Cof} \cap \mathbf{W}$ and $p \in \mathbf{Fib}$.
\end{itemize}
    \item \underline{Factorization Axiom:} Every morphism $g$ in $\cat{M}$ has two factorizations:
\begin{itemize}
    \item $g = q \circ i$, with $i \in \mathbf{Cof}$ and $q \in \mathbf{Fib} \cap \mathbf{W}$, and
    \item $g = p \circ j$, with $j \in  \mathbf{Cof} \cap \mathbf{W}$ and $p \in \mathbf{Fib}$.
\end{itemize}
\end{enumerate}
\end{definition}

\begin{remark}
It is usually required for the factorization to be functorial. That is, given maps $f,g$ in $\cat{M}$ being part of a commutative diagram
\begin{center}
\begin{tikzcd}
A \arrow[d, "f"] \arrow[r, "h"] & C \arrow[d, "g"] \\
B \arrow[r, "k"'] & D
    \end{tikzcd}
\end{center}

the factorization of $f$ and $g$ gives:
\begin{center}
\begin{tikzcd}
A \arrow[d, "i/j"] \arrow[r, "h"] & C \arrow[d, "i'/j'"] \\
E \arrow[d, "q/p"] \arrow[r, dashed, "{F(h,k)}"] & F \arrow[d, "q'/p'"] \\
B \arrow[r, "k"'] & D
    \end{tikzcd}
\end{center}
Whereas the existence of the dashed map $F(h,k)$ follows from the fourth axiom, its naturality/functoriality $F(h_1 \circ h_2,k_1 \circ k_2) = F(h_1,k_1) \circ F(h_2,k_2)$ does not follow from any axiom and is usually guaranteed by the additional requirement for the factorization to be functorial. Three distinguished classes of morphisms that satisfy this property are called a \textit{functorial factorization system}.
\end{remark}

\begin{remark} \label{Remark: Overdetermined Axioms}
Notice that the axioms are overdetermined. That is, given only two classes the third one is completely determined via the lifting axiom, the factorization axiom, and the 2-of-3 axiom. Indeed,
\begin{itemize}
    \item given $\mathbf{Fib}$ and $\mathbf{W}$ then $\mathbf{Cof} := \mathrm{llp}(\mathbf{Fib} \cap \mathbf{W})$
    \item given $\mathbf{Cof}$ and $\mathbf{W}$ then $\mathbf{Fib} := \mathrm{rlp}(\mathbf{Cof} \cap \mathbf{W})$
    \item given $\mathbf{Cof}$ and $\mathbf{Fib}$  then $\mathbf{Fib} \cap \mathbf{W} := \mathrm{rlp}(\mathbf{Cof})$ and  $\mathbf{Cof} \cap \mathbf{W} := \mathrm{llp}(\mathbf{Fib})$, which then determine $\mathbf{W}$ via the factorization axiom.
\end{itemize}
\end{remark}

A model category should be thought of as a place specifically set up for abstract homotopy theory. Accordingly, one of the most important examples of a model category is constituted by the category of simplicial sets and weak homotopy equivalences.

\begin{definition} \label{Definition: Kan fibration}
A map $p \colon X \rightarrow Y$ is said to be a \textbf{Kan fibration} if it has the right lifting property with respect to the horn inclusions. That is, for every commutative square of solid arrows

\begin{center}
    \begin{tikzcd}
\Lambda^k[n] \arrow[r] \arrow[d, hook] & X \arrow[d, "p"] \\
\Delta[n] \arrow[r] \arrow[ur, dashed] & Y
    \end{tikzcd}
\end{center}
there is a lift making the diagram commute for any $0 \leq k \leq n$ and $n >0$.
\end{definition}

\begin{theorem}[\cite{Quillen67}] \label{Theorem: Kan-Quillen Structure}
The category $\mathbf{sSet}$ with
\begin{itemize}
    \item $\mathbf{Fib} = \left\{ \text{Kan fibrations} \right\}$,
    \item $\mathbf{W} = \left\{ \text{weak equivalences of simplicial sets} \right\}$, where a map $f \colon X \rightarrow Y$ is called a weak equivalence if its geometric realization $|f| \colon |X| \rightarrow |Y|$ is a weak homotopy equivalence,
    \item $\mathbf{Cof} = \left\{ \text{monomorphisms} \right\}$,
\end{itemize}
has the structure of a model category.
\end{theorem}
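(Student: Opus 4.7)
The plan is to verify each of the five model category axioms in turn. The limit axiom \textbf{M1} holds because $\mathbf{sSet} = \mathrm{PSh}(\Delta)$ is a presheaf category, and presheaf categories are complete and cocomplete with limits and colimits computed objectwise. The 2-of-3 axiom \textbf{M2} for weak equivalences is inherited from the analogous property of weak homotopy equivalences in $\mathbf{Top}$ via the geometric realization functor. The retract axiom \textbf{M3} holds separately for each of the three classes: monomorphisms are evidently closed under retracts; Kan fibrations are closed under retracts because any class defined by a right lifting property automatically is; and weak equivalences are closed under retracts since $|{-}|$ preserves retracts and weak homotopy equivalences in $\mathbf{Top}$ enjoy this property.

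The factorization axiom \textbf{M5} and the lifting axiom \textbf{M4} are addressed jointly via Quillen's small object argument applied to the two sets of generators
\begin{align*}
I &= \{ \partial\Delta[n] \hookrightarrow \Delta[n] \}_{n \geq 0}, \\
J &= \{ \Lambda^k[n] \hookrightarrow \Delta[n] \}_{0 \leq k \leq n,\; n \geq 1}.
\end{align*}
A direct combinatorial inspection of how a monomorphism of simplicial sets is built up by successively attaching non-degenerate simplices shows that the class of cofibrations coincides with the retracts of relative $I$-cell complexes, so that $\mathbf{Cof} = \mathrm{llp}(\mathrm{rlp}(I))$. Hence both factorizations required by \textbf{M5} exist---one yielding a cofibration followed by a map in $\mathrm{rlp}(I)$, the other an element of the saturated class $\overline{J}$ (the \emph{anodyne extensions}) followed by a Kan fibration---and the lifting axiom \textbf{M4} reduces to the two identifications
\[
\mathrm{rlp}(I) = \mathbf{Fib} \cap \mathbf{W}, \qquad \overline{J} = \mathbf{Cof} \cap \mathbf{W}.
\]

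Establishing these two characterizations is the real content of the theorem and the main obstacle. For the first, one direction is straightforward: a map with the right lifting property against all boundary inclusions is a Kan fibration (since each horn inclusion is a retract of an iterated pushout of boundary inclusions), and an elementary simplicial homotopy argument then exhibits it as a homotopy equivalence, hence a weak equivalence after realization. The converse---that every Kan fibration which is a weak equivalence belongs to $\mathrm{rlp}(I)$---is where the hard work lies: the classical route invokes the theory of \emph{minimal Kan fibrations}, proving that every Kan fibration admits a fiberwise strong deformation retract onto a minimal one, and that a minimal Kan fibration which is a weak equivalence is a trivial bundle by a skeletal induction. For the second identification, anodyne extensions are monomorphisms (the class of monomorphisms contains $J$ and is closed under pushouts, transfinite compositions, and retracts) and weak equivalences (by an induction showing horn inclusions realize to deformation retracts in $\mathbf{Top}$, combined with the fact that $|{-}|$ commutes with colimits); conversely, given $i \in \mathbf{Cof} \cap \mathbf{W}$, one factors $i = p \circ j$ with $j \in \overline{J}$ and $p \in \mathbf{Fib}$, deduces $p \in \mathbf{Fib} \cap \mathbf{W}$ from 2-of-3, and then exhibits $i$ as a retract of $j$ by solving the lifting problem of $i$ against $p$. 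The chief technical obstacle throughout is the minimal fibration machinery, whose delicate combinatorics with degenerate simplices underpin essentially every treatment of this theorem; a modern alternative avoiding minimal fibrations proceeds via Kan's $\mathrm{Ex}^{\infty}$ functor to construct functorial fibrant replacements and compare simplicial with topological homotopy groups directly.
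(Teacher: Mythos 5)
The paper states this theorem with only a citation to Quillen's monograph and provides no proof of its own, so there is nothing internal to compare against; your sketch is a correct and standard outline of the classical proof (in the spirit of Quillen or Goerss--Jardine). You correctly reduce everything to the small object argument on $I$ and $J$ together with the two identifications $\mathrm{rlp}(I) = \mathbf{Fib}\cap\mathbf{W}$ and $\overline{J} = \mathbf{Cof}\cap\mathbf{W}$, and you rightly flag the minimal Kan fibration machinery (or the $\mathrm{Ex}^{\infty}$ alternative) as the genuine technical crux hiding behind the converse inclusion $\mathbf{Fib}\cap\mathbf{W}\subseteq\mathrm{rlp}(I)$.
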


Another important class of model categories are functor categories. Let $\cat{M}$ be a fixed model category and consider the category of functors or diagrams $[\cat{C}, \cat{M}]$ from some small category $\cat{C}$ into $\cat{M}$. A natural transformation $\eta \colon F \rightarrow G$ in $[\cat{C}, \cat{M}]$ is said to be
\begin{itemize}
    \item an \textbf{injective cofibration} if for all $C \in \cat{C}$ the components $\eta_C \colon F(C) \rightarrow G(C)$ are cofibrations in $\cat{M}$,
    \item a \textbf{projective fibration} if for all $C \in \cat{C}$ the components $\eta_C \colon F(C) \rightarrow G(C)$ are fibrations in $\cat{M}$,
    \item a \textbf{pointwise} or \textbf{objectwise weak equivalence} if for all $C \in \cat{C}$ the components $\eta_C \colon F(C) \rightarrow G(C)$ are weak equivalences in $\cat{M}$.
\end{itemize}

In light of Remark \ref{Remark: Overdetermined Axioms} we expect the category of diagrams $[\cat{C},\cat{M}]$ to allow for two distinguished model structures.

\begin{itemize}
    \item The \textit{injective structure} with cofibrations given by injective cofibrations, weak equivalences given by pointwise weak equivalences, and fibrations determined by the lifting property.
    \item The \textit{projective structure} with fibrations given by projective fibrations, weak equivalences given by pointwise weak equivalences, and cofibrations determined by the lifting property.
\end{itemize}

The following Proposition confirms our hypothesis in the case $\cat{M}$ is sufficiently ``nice", i.e. a combinatorial model category.

\begin{proposition}[\cite{Lurie}, Proposition A.2.8.2] \label{Proposition: Existence of inj. / proj.}
    Let $\cat{M}$ be a combinatorial model category and $\cat{C}$ a small category. Then two combinatorial model structures on the category of diagrams $[\cat{C},\cat{M}]$ exist:
    \begin{itemize}
        \item The \textbf{projective model structure} given by projective fibrations, pointwise weak equivalences, and projective cofibrations being maps having the left lifting property with respect to trivial projective fibrations.
        \item The \textbf{injective model structure} given by injective cofibrations, pointwise weak equivalences, and injective fibrations being maps having the right lifting property with respect to trivial injective cofibrations.
    \end{itemize}
\end{proposition}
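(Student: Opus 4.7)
The plan is to invoke Jeff Smith's recognition principle for combinatorial model structures (the standard tool behind this proposition, e.g.\ Proposition A.2.6.8 in \cite{Lurie}) in two separate ways, one for each structure. Both applications rely on the preliminary observation that $[\cat{C},\cat{M}]$ is itself locally presentable, since $\cat{M}$ is and all limits and colimits in a functor category are computed pointwise. I would also record at the outset that the class $W$ of objectwise weak equivalences is accessible and accessibly embedded, because it is the preimage of an accessible subcategory under the jointly conservative product of evaluation functors $\prod_{c}\mathrm{ev}_{c} \colon [\cat{C},\cat{M}] \to \prod_{c}\cat{M}$, and that $W$ inherits the $2$-out-of-$3$ property and closure under retracts pointwise from $\cat{M}$.

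For the projective structure I would exploit the free--evaluation adjunction $F_{c} \dashv \mathrm{ev}_{c}$, where $F_{c} \colon \cat{M} \to [\cat{C},\cat{M}]$ is the copower with the representable $\mathrm{Hom}_{\cat{C}}(c,-)$. Fixing generating (trivial) cofibrations $I$ and $J$ of $\cat{M}$, I would set
\[
I_{\mathrm{proj}} := \{F_{c}(i) : c \in \cat{C},\, i \in I\}, \qquad J_{\mathrm{proj}} := \{F_{c}(j) : c \in \cat{C},\, j \in J\}.
\]
By adjunction a morphism $\eta$ has right lifting against $I_{\mathrm{proj}}$ (resp.\ $J_{\mathrm{proj}}$) if and only if each component $\eta_{c}$ lifts against $I$ (resp.\ $J$) in $\cat{M}$, i.e.\ iff $\eta$ is an objectwise trivial fibration (resp.\ objectwise fibration). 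Combined with the pointwise characterisation of $W$, Smith's four conditions then reduce to pointwise statements about $\cat{M}$, and a small-object argument produces the two functorial factorisations.

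For the injective structure the challenge is that there is no a priori set of generating cofibrations. The key technical step is to produce one. Choose a regular cardinal $\kappa$ such that $[\cat{C},\cat{M}]$ is $\kappa$-accessible, $\cat{C}$ has fewer than $\kappa$ morphisms, and the generating cofibrations of $\cat{M}$ have $\kappa$-compact domain and codomain; then let $I_{\mathrm{inj}}$ be a set of representatives for the isomorphism classes of objectwise cofibrations between $\kappa$-compact diagrams. A standard presentability argument (writing an arbitrary objectwise cofibration $F \hookrightarrow G$ as a transfinite union of its pushouts of $\kappa$-compact sub-cofibrations) shows that $\mathrm{cof}(I_{\mathrm{inj}})$ is exactly the class of objectwise cofibrations. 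One then verifies Smith's conditions: that $\mathrm{rlp}(I_{\mathrm{inj}}) \subseteq W$ follows from the pointwise lifting axiom in $\cat{M}$ via the fact that every injective cofibration pointwise admits the required lift, while closure of $\mathrm{cof}(I_{\mathrm{inj}}) \cap W$ under pushouts and transfinite composition is inherited pointwise from the corresponding closure of trivial cofibrations in $\cat{M}$.

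The main obstacle is the injective half, specifically the extraction of the set $I_{\mathrm{inj}}$ and the verification that it generates all objectwise cofibrations. The projective case is essentially mechanical once the adjunctions $F_{c} \dashv \mathrm{ev}_{c}$ are in hand, because everything transports componentwise; the injective case genuinely uses that $\cat{M}$ is \emph{combinatorial}, not merely cofibrantly generated, since the cardinality/presentability arguments rely on local presentability of both $\cat{M}$ and $[\cat{C},\cat{M}]$ in an essential way.
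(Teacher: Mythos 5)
Your proof follows essentially the same line as Lurie's own argument for Proposition A.2.8.2 in \cite{Lurie}, which the paper cites without reproducing: Smith's recognition principle, the pointwise free--evaluation adjunctions $F_c \dashv \mathrm{ev}_c$ for the projective case, and a presentability/cardinality argument producing a set of generating injective cofibrations for the injective case. The one place where your sketch is loose --- the verification that $\mathrm{rlp}(I_{\mathrm{inj}}) \subseteq W$ --- is cleanest to see by noting that each $F_c(i) \in I_{\mathrm{proj}}$ is itself an objectwise cofibration (at $d$ it is a coproduct of copies of $i$ indexed over $\mathrm{Hom}_{\cat{C}}(c,d)$), so $\mathrm{rlp}(I_{\mathrm{inj}}) \subseteq \mathrm{rlp}(I_{\mathrm{proj}})$, and the latter class consists of objectwise trivial fibrations, which lie in $W$.
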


\begin{remark}\label{Remark: Combinatorial and Cofibrantly generated model categories}
    The notion of a combinatorial model category goes back to Jeffrey Smith \cite[Theorem 1.7]{Beke} and is as follows. A model category $\cat{M}$ is said to be \textbf{combinatorial} if the following two conditions are satisfied:
    \begin{enumerate}
        \item The category $\cat{M}$ is \textbf{locally presentable}, and
        \item the model category $\cat{M}$ is \textbf{cofibrantly generated}.
    \end{enumerate}
    The notion of a cofibrantly generated model category is due to Daniel M. Kan and is based on the observation that in the Kan--Quillen model structure on $\mathbf{sSet}$ the fibrations and trivial fibrations are completely determined by the right lifting property not merely with respect to the classes of all trivial cofibrations and cofibrations, but also with respect to a specific subset of trivial cofibrations $\left\{ \Lambda^k[n] \hookrightarrow \Delta[n] \; | \; n > 0 \text{ and } 0 \leq k \leq n \right\}$ called \textbf{generating trivial cofibrations} and a specific subset of cofibrations $\left\{ \partial \Delta[n] \hookrightarrow \Delta[n] \; | \; n \geq 0 \right\}$ called \textbf{generating cofibrations}. For a precise definition of a cofibrantly generated model category in general, we refer to \cite[Definition 11.1.2]{Hirschhorn}. \\

    It is interesting to point out that the existence of the projective model structure on the category of small diagrams $[\cat{C},\cat{M}]$ relies only on $\cat{M}$ being cofibrantly generated \cite[Theorem 11.6.1]{Hirschhorn}. This has been observed first by Dwyer--Kan in \cite{Dwyer-Kan} in the case where the diagrams take values in simplicial sets. For the injective model structure however, the additional requirement of $\cat{M}$ being locally presentable\footnote{Notice that what we call here locally presentable is denoted presentable in \cite{Lurie}.} is needed to prove its existence, see for example \cite[Proposition A.2.8.2]{Lurie}. As the category $\mathbf{sSet}$ is locally presentable it follows from the discussion in the previous paragraph that the Kan--Quillen model structure on $\mathbf{sSet}$ provides an example of a combinatorial model category.
\end{remark}

As already mentioned at the beginning of this section, local homotopy theory is about combining the homotopy theory of simplicial sets with the theory of sheaves, in our case defined on the Cartesian site $\mathbf{Cart}$. By considering the category of simplicial presheaves on $\mathbf{Cart}$, which is denoted by
\[
\mathrm{sPSh}(\mathbf{Cart}) := [\mathbf{Cart}^{\mathrm{op}}, \mathbf{sSet}],
\]
we conclude from Proposition \ref{Proposition: Existence of inj. / proj.} together with Theorem \ref{Theorem: Kan-Quillen Structure} that the injective and projective model structures on $\mathrm{sPSh}(\mathbf{Cart})$ both exist. From now on, the terms  \textit{injective} and \textit{projective model structure} always refer to the case of simplicial presheaves on the Cartesian site, if not specified otherwise. \\

The last important class of model categories we will encounter are \textit{transferred model structures}. Again, we start by fixing a model category $\cat{M}$ together with an adjoint pair of functors:
\begin{center}
\begin{tikzcd}
\cat{M} \arrow[r, "L", shift left=3] \arrow[r, "\perp", phantom] & \cat{D} \arrow[l, "R", shift left=3]
\end{tikzcd}
\end{center}
We now use the adjunction to transfer the model structure on the left-hand side along the right adjoint $R$, to define a model structure on $\cat{D}$. More precisely, we declare a morphism $f \colon D \rightarrow D'$ in $\cat{D}$ to be a
\begin{itemize}
    \item weak equivalence if its image under $R$ is a weak equivalence in $\cat{M}$.
    \item a fibration if its image under $R$ is a fibration in $\cat{M}$.
    \item a cofibration if it has the left lifting property with respect to all trivial fibrations.
\end{itemize}
Whenever the category $\cat{D}$ together with these three classes of morphisms satisfy the axioms of a model category, we call this the \textbf{right transferred model structure from} $\cat{M}$ \textbf{along} $R$. Analogous results to Proposition \ref{Proposition: Existence of inj. / proj.} guaranteeing the existence of such transferred model structures exist, such as \cite[Theorem 11.3.2]{Hirschhorn}. For our purposes, however, we only focus on one particular case: the free-forgetful adjunction between simplicial sets and simplicial groups together with its abelian counterpart.

\begin{center}
\begin{tikzcd}
\mathbf{sSet} \arrow[r, "F", shift left=3] \arrow[r, "\perp", phantom] & \mathbf{sGrp} \arrow[l, "U", shift left=3] & \mathbf{sSet} \arrow[r, "\mathbb{Z}", shift left=3] \arrow[r, "\perp", phantom] & \mathbf{sAb} \arrow[l, "U", shift left=3]
\end{tikzcd}
\end{center}

In both cases, one can show that the Kan--Quillen model structure on $\mathbf{sSet}$ of Theorem \ref{Theorem: Kan-Quillen Structure} can be right transferred along the forgetful functor as summarized in the following theorem.

\begin{theorem} \label{Theorem: model structure simplicial groups}
    The category of simplicial groups $\mathbf{sGrp}$ (simplicial abelian groups $\mathbf{sAb}$) allows for a right transferred model structure along the free-forgetful adjunction $F \dashv U$ ($\mathbb{Z} \dashv U$) where a morphism is
    \begin{itemize}
        \item a weak equivalence if the underlying map of simplicial sets is a weak equivalence.
        \item a fibration if the underlying map of simplicial sets is a Kan fibration.
        \item a cofibration if it has the left lifting property with respect to all trivial fibrations.
    \end{itemize}
    In particular, the model structure is cofibrantly generated with generating (trivial) cofibrations given by the images of the generating (trivial) cofibrations in $\mathbf{sSet}$ under the free simplicial group functor $F$ (free simplicial abelian group functor $\mathbb{Z}$).
\end{theorem}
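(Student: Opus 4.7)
The plan is to invoke Kan's transfer theorem for cofibrantly generated model categories, in the form presented for instance in Hirschhorn \cite{Hirschhorn} Theorem 11.3.2 (or equivalently the Crans/Berger--Moerdijk criterion). Taking the Kan--Quillen structure of Theorem \ref{Theorem: Kan-Quillen Structure} with generating cofibrations $I = \{\partial\Delta[n] \hookrightarrow \Delta[n]\}$ and generating trivial cofibrations $J = \{\Lambda^k[n] \hookrightarrow \Delta[n]\}$, and the adjunction $F \dashv U$ (respectively $\mathbb{Z} \dashv U$), it suffices to verify: (a) the sets $F(I)$ and $F(J)$ permit the small object argument in the target category, and (b) the forgetful functor $U$ sends every relative $F(J)$-cell complex to a weak equivalence of simplicial sets. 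Once (a) and (b) are established, the transferred model structure exists, is automatically cofibrantly generated by $F(I)$ and $F(J)$, and has the classes described in the statement.

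For (a), both $\mathbf{sGrp}$ and $\mathbf{sAb}$ are locally presentable, being categories of models in $\mathbf{sSet}$ for finitary algebraic theories (equivalently, simplicial objects in the locally presentable categories $\mathbf{Grp}$ and $\mathbf{Ab}$). Hence every object is small with respect to the relevant transfinite sequences, and the small object argument applies to any set of morphisms, in particular to $F(I)$ and $F(J)$.

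The main obstacle is condition (b). I would verify it via the standard path-object argument. The key input is Moore's classical theorem: the underlying simplicial set of any simplicial group is a Kan complex. Consequently every object of $\mathbf{sGrp}$ (and $\mathbf{sAb}$) is fibrant in the putative transferred structure. Next, I would produce a functorial path object by setting $\mathrm{Path}(G) := G^{\Delta[1]}$, where the cotensor is formed in $\mathbf{sSet}$ and inherits a pointwise group structure. The diagonal $d\colon G \to G^{\Delta[1]}$ factors the identity diagonal through the endpoint evaluation $G^{\Delta[1]} \to G \times G$. On underlying simplicial sets, $U(d)$ is a Kan--Quillen weak equivalence because $\Delta[1]$ is contractible and $U$ preserves cotensors, while $U(G^{\Delta[1]} \to G \times G)$ is a Kan fibration because it is the pullback-power by the cofibration $\partial\Delta[1] \hookrightarrow \Delta[1]$ into the Kan complex $U(G)$.

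Having such a fibrant path object in a category where every object is fibrant implies, via the classical path-object argument (Quillen, \emph{Homotopical Algebra}, Part~II), that pushouts of maps in $F(J)$ are sent by $U$ to trivial cofibrations of simplicial sets, and the property is preserved by transfinite composition, yielding (b). The abelian case is essentially identical: $\mathbb{Z}(K)$ is already a simplicial abelian group, Moore's theorem applies verbatim to $U(A)$ for $A \in \mathbf{sAb}$, and the cotensor $A^{\Delta[1]}$ is again a simplicial abelian group, so the same path-object argument closes the proof.
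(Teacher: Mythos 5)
Your proof is correct and takes essentially the same route as the paper: the paper establishes this theorem by citing Quillen for the existence of the model structures and Hirschhorn's Theorem 11.3.2 (Kan's transfer theorem) for the cofibrant generation, which is precisely the machine you apply. Your contribution is to actually verify the hypotheses of the transfer theorem — local presentability for the smallness condition, and Quillen's path-object argument (resting on Moore's theorem that every simplicial group is a Kan complex, together with the cotensor $G^{\Delta[1]}$) for condition~(b) — details the paper leaves to the references.
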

\begin{proof}
    The existence of these two model structures has been shown in \cite{Quillen67}. The fact that they are cofibrantly generated\footnote{See Remark \ref{Remark: Combinatorial and Cofibrantly generated model categories} for more details on cofibrantly generated model categories.} follows from the fact that the Kan--Quillen structure is cofibrantly generated together with \cite[Theorem 11.3.2]{Hirschhorn}.
\end{proof}

\begin{definition}
Since any model category $\cat{M}$ is complete and cocomplete, it has an initial object $\emptyset$ and a terminal object $*$. Then we say that an object $X \in \cat{M}$ is
\begin{enumerate}[label=\roman*)]
    \item \textbf{cofibrant} if the unique map $\emptyset \rightarrow X$ is a cofibration.
    \item \textbf{fibrant} if the unique map $X \rightarrow *$ is a fibration.
\end{enumerate}
\end{definition}

\begin{remark} $ $ \\
A direct consequence of the functorial factorization axiom is that we have two functors \newline
$Q \colon \cat{M} \rightarrow \cat{M}$ and $R \colon \cat{M} \rightarrow \cat{M}$ characterized by the following properties:
\begin{itemize}
    \item $Q(X)$ is cofibrant for all $X$ and there is a weak equivalence $Q(X) \xrightarrow{\simeq} X$. \\
    \item  $R(X)$ is fibrant for all $X$ and there is a weak equivalence $X \xrightarrow{\simeq} R(X)$.
\end{itemize}
These functors, called \textbf{cofibrant} and \textbf{fibrant approximation} functors, are obtained by applying the functorial factorization to the unique maps $\emptyset \rightarrow X$ and $X \rightarrow *$.
\end{remark}

\begin{example} $ $
\begin{itemize}
    \item In the Quillen model structure on $\mathbf{sSet}$ the fibrant objects are given by the Kan complexes. On the other hand, all the objects are cofibrant since $\emptyset \rightarrow X$ is indeed a monomorphism.
    \item In the injective model structure all the objects are cofibrant, as they are pointwise cofibrant.
    \item In the projective model structure the fibrant objects are given by simplicial presheaves taking values in Kan complexes.
    \item In the transferred model structures on both $\mathbf{sGrp}$ and $\mathbf{sAb}$ all the objects are fibrant since a simplicial group is automatically a Kan complex, see for example \cite{Quillen67}.
    \item In the transferred model structures on both $\mathbf{sGrp}$ and $\mathbf{sAb}$ any free simplicial group $F(K)$ and any free simplicial abelian group $\mathbb{Z}(K)$ are cofibrant respectively.
\end{itemize}
\end{example}

\begin{definition}[\cite{Riehl}]
Let $\cat{M}$ and $\cat{N}$ be model categories. Then a functor
\begin{itemize}
    \item $F \colon \cat{M} \rightarrow \cat{N}$ is said to be  \textbf{left Quillen} if $F$ preserves colimits, cofibrations and trivial cofibrations.
    \item $G \colon \cat{N} \rightarrow \cat{M}$ is said to be \textbf{right Quillen} if $G$ preserves limits, fibrations and trivial fibrations.
\end{itemize}
If, in addition, we have that $F \dashv G$ are an adjoint pair, we call it a \textbf{Quillen adjunction}.
\end{definition}
There is a nice fact about Quillen adjunctions: if we are given adjoint functors, then the fact that it is Quillen follows if either the right or the left adjoint is Quillen.
\begin{lemma} \label{Lemma: Quillen Adj Lemma}
An adjunction $\adj{F: \cat{M} \:}{\: \cat{N}: G}$ is a Quillen adjunction if any of the following equivalent conditions hold:
\begin{enumerate}[label=\roman*)]
    \item $F$ is left Quillen.
    \item $G$ is right Quillen.
\end{enumerate}
\end{lemma}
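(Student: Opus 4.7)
The plan is to exploit the characterizations of (trivial) cofibrations and (trivial) fibrations by lifting properties, together with the standard fact that lifting properties are transferred across an adjunction. Recall: for $F \dashv G$, given $i \colon A \to B$ in $\cat{M}$ and $p \colon X \to Y$ in $\cat{N}$, the map $F(i)$ has the left lifting property with respect to $p$ if and only if $i$ has the left lifting property with respect to $G(p)$. This follows by transposing the commutative square defining the lifting problem across the adjunction.

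First, I would observe that the preservation of colimits by $F$ and preservation of limits by $G$ are automatic consequences of $F$ being a left adjoint and $G$ being a right adjoint, so these conditions in the definition of left/right Quillen functor are free. Hence the content of the lemma reduces to the equivalence of
\begin{center}
$F$ preserves cofibrations and trivial cofibrations $\iff$ $G$ preserves fibrations and trivial fibrations.
\end{center}

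For the direction (i) $\Rightarrow$ (ii), assume $F$ is left Quillen. Let $p \colon X \to Y$ be a fibration in $\cat{N}$; I want to show $G(p)$ is a fibration in $\cat{M}$, i.e. has the right lifting property against every trivial cofibration $i$ of $\cat{M}$ (this characterization is available by Remark \ref{Remark: Overdetermined Axioms}). By hypothesis, $F(i)$ is a trivial cofibration in $\cat{N}$, and since $p$ is a fibration it has the right lifting property with respect to $F(i)$. The adjunction then transfers this to the right lifting property of $G(p)$ with respect to $i$. The same argument with $p$ a trivial fibration and $i$ a cofibration shows $G$ preserves trivial fibrations. The converse direction (ii) $\Rightarrow$ (i) is formally dual: given a (trivial) cofibration $i$ in $\cat{M}$ and a (trivial) fibration $p$ in $\cat{N}$, apply the adjunction to convert a lifting problem for $F(i)$ against $p$ into one for $i$ against $G(p)$, which is solvable by hypothesis.

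There is no real obstacle here; the only point requiring care is to be clear about which characterization of the model-category classes one is using. Since Definition \ref{Definition: Model Category} only directly provides the lifting property between cofibrations and trivial fibrations (and between trivial cofibrations and fibrations), one has to invoke the fact that cofibrations are exactly the maps with the left lifting property against trivial fibrations, and fibrations are exactly the maps with the right lifting property against trivial cofibrations, as recorded in Remark \ref{Remark: Overdetermined Axioms}. Granting this, the proof is just a bookkeeping exercise in adjoint transposition.
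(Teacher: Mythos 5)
The paper states this lemma without proof, so there is no in-text argument to compare against; your proof is the standard one (as found in Hovey, Hirschhorn, or Riehl) and is correct. The two ingredients you identify are exactly the right ones: adjoint transposition of lifting squares (so $F(i)$ has the left lifting property against $p$ precisely when $i$ has it against $G(p)$), and the characterizations of (trivial) cofibrations and (trivial) fibrations by lifting properties recorded in Remark~\ref{Remark: Overdetermined Axioms}. Your observation that (co)limit preservation is automatic for a left/right adjoint is also correct and is the right way to reduce the content of the lemma to the (co)fibration clauses.
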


The most important examples of Quillen adjunctions are the \textit{Quillen equivalences}. From a heuristic point of view, two model categories are said to be Quillen equivalent if their homotopy theories are ``equivalent".

\begin{definition}[\cite{Hirschhorn}] \label{Definition: Quillen Equivalence}
    Let $\cat{M}$ and $\cat{N}$ be model categories and let $F : \cat{M} \rightleftarrows \cat{N} : G$ be a Quillen adjunction. Then we call
    \begin{enumerate}
        \item $F$ a \textbf{left Quillen equivalence},
        \item $G$ a \textbf{right Quillen equivalence},
        \item $(F,G)$ a \textbf{pair of Quillen equivalences},
    \end{enumerate}
    if for every cofibrant object $B$ in $\cat{M}$ and every fibrant object $X$ in $\cat{N}$, and every morphism $f \colon B \rightarrow G(X)$ in $\cat{M}$, the morphism $f$ is a weak equivalence in $\cat{M}$ if and only if its adjoint morphism $f^* \colon F(B) \rightarrow X$ is a weak equivalence in $\cat{N}$.
\end{definition}

\begin{example} \label{Example: Equivalence between proj and inj on diagrams}
    Recall the injective and projective structures introduced earlier on the diagram category $[\cat{C}, \cat{M}]$. Here the identity functor provides a first example of a Quillen pair.
    \begin{center}
    \begin{tikzcd}
{[\cat{C},\cat{M}]_{\mathrm{proj}}} \arrow[r, "\mathrm{id}", shift left=3] \arrow[r, "\perp", phantom] & {[\cat{C},\cat{M}]_{\mathrm{inj}}} \arrow[l, "\mathrm{id}", shift left=3]
\end{tikzcd}
\end{center}
Indeed, first notice that in both model structures, the class of weak equivalences coincide. As a consequence of Lemma \ref{Lemma: Quillen Adj Lemma} it suffices to show that one of the identity functors either preserves cofibrations or fibrations. It then directly follows that the adjunction is a pair of Quillen equivalence. The fact that every projective cofibration is also an injective cofibration (or,  dually, that every injective fibration is a projective fibration) is presented in \cite[Remark A.2.8.5]{Lurie}.
\end{example}

\begin{definition} \label{Definition: Simplicial Model Category}
A \textbf{simplicial model category} is a model category $\cat{M}$ that is a simplicially enriched category, which is tensored and cotensored over $\mathbf{sSet}$ such that the following condition is satisfied: \\

For any cofibration $i \colon A \rightarrow B$ and any fibration $p \colon X \rightarrow Y$ in $\cat{M}$, the map of simplicial sets
    \[
    \mathrm{Map}(B,X) \rightarrow \mathrm{Map}(A,X) \times_{\mathrm{Map}(A,Y)} \mathrm{Map}(B,Y)
    \]
    is a fibration that is a trivial fibration if either $i$ or $p$ is a weak equivalence.
\end{definition}

All the model structures that have been introduced so far are simplicial. The most important ones for us are the model structures on simplicial presheaves, where the corresponding simplicial enrichment is defined below.

\begin{itemize}
    \item[] The category of simplicial presheaves $\mathrm{sPSh}(\mathbf{Cart})$ carries a natural simplicial enrichment. For $K$ a simplicial set and $F$ a simplicial presheaf, define the simplicial presheaves $K \otimes F$ and $F^K$ objectwise by
    \[
    (K \otimes F)(U):= K \times F(U) \text{ and } \left( F^K \right)(U) = F(U)^K
    \]
    The simplicial mapping space $\mathrm{Map}(F,G)$ for $F,G \in \mathrm{sPSh}(\mathbf{Cart})$ is then given by
    \[
    \mathrm{Map}(F,G)_n := \mathrm{Hom}(\Delta[n] \otimes F,G)
    \]
    This turns $\mathrm{sPSh}(\mathbf{Cart})$ into a simplicially enriched category which is tensored and cotensored over $\mathbf{sSet}$. Both the injective and the projective model structures are simplicial.
\end{itemize}

\begin{definition}
    Given a simplicial model category $\cat{M}$ that is also cofibrantly generated, we denote by $\mathbb{R}\mathrm{Map}(-,-)$ the functor
    \begin{align*}
        \cat{M}^{\mathrm{op}} \times \cat{M} &\longrightarrow \mathbf{sSet} \\
        (X,Y) &\longmapsto \mathrm{Map}(QX,RY)
    \end{align*}
    where $R$ and $Q$ denote the simplicial cofibrant and fibrant replacement functors. Given objects $X$ and $Y$, we call the space $\mathbb{R}\mathrm{Map}(X,Y)$ the \textbf{derived hom space} or \textbf{homotopy function complex} from $X$ to $Y$.
\end{definition}

\begin{remark}
    For the assignment
    \[
    (X,Y) \rightarrow \mathbb{R}\mathrm{Map}(X,Y)
    \]
    to be functorial, we need the cofibrant and fibrant replacement functors to be simplicial. The condition of $\cat{M}$ being cofibrantly generated assures that the replacement functors have this property, see \cite[Theorem 6.1]{Blumberg-Riehl}.
\end{remark}

The condition for a simplicially enriched model category to be a simplicial model category in Definition \ref{Definition: Simplicial Model Category} appears cryptic at first glance. From a more naive point of view, a simplicial model category $\cat{M}$ is such that the classic notion of \textit{simplicial homotopy} between two morphisms $f,g \in \mathbb{R}\mathrm{Map}(X,Y)$ relates to the abstract notion of homotopy between $f$ and $g$. To be more precise, given a model category $\cat{M}$, an abstract notion of homotopy can be introduced using either a cylinder object or a path-space object. This notion of homotopy between morphisms in $\cat{M}$, in turn, allows to define an equivalence relation $\sim$ on the set of morphisms $\mathrm{Hom}_{\cat{M}}(Q(X),R(Y))$. The \textit{homotopy category} $\mathrm{Ho} \, \cat{M}$ of $\cat{M}$ can then be defined as having the same objects as $\cat{M}$ and as morphisms the equivalence classes, i.e. abstract homotopy classes of morphisms in $\cat{M}$.
\[
\mathrm{Hom}_{\mathrm{Ho} \, \cat{M}}(X,Y) := \mathrm{Hom}_{\cat{M}}(RQ(X),RQ(Y))/\! \sim
\]

\begin{proposition}[\cite{Hirschhorn}, Theorem 17.7.2] \label{Proposition: simplicial homotopy classes and morphism in Ho-Cat}
    Let $\cat{M}$ be a simplicial model category. Then for objects $X,Y \in \cat{M}$ there is a natural isomorphism
    \[
    \pi_0 \mathbb{R}\mathrm{Map}(X,Y) \cong \mathrm{Hom}_{\mathrm{Ho} \, \cat{M}}(X,Y)
    \]
\end{proposition}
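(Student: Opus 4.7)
The plan is to unpack $\mathbb{R}\mathrm{Map}(X,Y) = \mathrm{Map}(QX, RY)$ and identify its $\pi_0$ with the set of homotopy classes of maps from a cofibrant replacement of $X$ to a fibrant replacement of $Y$. The $0$-simplices of $\mathrm{Map}(QX, RY)$ are by definition the morphisms $QX \to RY$ in $\cat{M}$, while via the tensor--cotensor adjunction the $1$-simplices correspond to morphisms $\Delta[1] \otimes QX \to RY$, that is, to \emph{simplicial homotopies} between such maps. Applying the pushout-product condition of Definition \ref{Definition: Simplicial Model Category} to the cofibration $\emptyset \to QX$ and the fibration $RY \to *$ shows that $\mathrm{Map}(QX, RY) \to *$ is a Kan fibration, so the mapping space is a Kan complex, and its $\pi_0$ is exactly $\mathrm{Hom}_{\cat{M}}(QX, RY)$ modulo the relation of being connected by a single $1$-simplex, i.e., by simplicial homotopy.

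Next I would exhibit $\Delta[1] \otimes QX$ as a cylinder object for $QX$ in the abstract sense. Feeding the cofibration $\emptyset \to QX$ and the boundary inclusion $\partial \Delta[1] \hookrightarrow \Delta[1]$ into SM7 yields a cofibration $QX \sqcup QX \hookrightarrow \Delta[1] \otimes QX$, while feeding in $\emptyset \to QX$ together with the trivial cofibration $\Delta[0] \hookrightarrow \Delta[1]$ produces a trivial cofibration $QX \hookrightarrow \Delta[1] \otimes QX$; by 2-out-of-3 the collapse $\Delta[1] \otimes QX \to QX$ is a weak equivalence. With this cylinder in hand, a simplicial homotopy between morphisms $f,g \colon QX \to RY$ is literally the same data as a left homotopy from $f$ to $g$ in the model-theoretic sense.

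Finally, I would invoke the classical theory of homotopy in a model category: because $QX$ is cofibrant and $RY$ is fibrant, the left-homotopy relation on $\mathrm{Hom}_{\cat{M}}(QX, RY)$ is an equivalence relation and coincides with the homotopy relation $\sim$ used to define $\mathrm{Ho}\, \cat{M}$. Moreover, since the weak equivalences $QX \xrightarrow{\simeq} X$ and $Y \xrightarrow{\simeq} RY$ become isomorphisms in the homotopy category, one has $\mathrm{Hom}_{\mathrm{Ho}\, \cat{M}}(X, Y) \cong \mathrm{Hom}_{\cat{M}}(QX, RY)/{\sim}$. Concatenating with the earlier identification of $\pi_0$ gives the desired bijection, which is natural in $X$ and $Y$ by functoriality of $Q$, $R$, $\mathrm{Map}$, and the simplicial tensoring. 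The main technical hurdle is the cylinder-object verification together with the identification of simplicial homotopy with left homotopy: this is where SM7 is doing the real work, and it is precisely what forces us to work in a genuine simplicial model category rather than a merely simplicially enriched one.
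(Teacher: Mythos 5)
The paper offers no argument here: it states the result and cites Hirschhorn (Theorem 17.7.2) directly, so there is no in-text proof to compare against. Your proof supplies one, and it is the standard argument: it is correct. You (i) use the $\mathrm{Map}$-form of SM7 with $\emptyset\to QX$ and $RY\to *$ to see $\mathrm{Map}(QX,RY)$ is a Kan complex, so $\pi_0$ is $\mathrm{Hom}_{\cat{M}}(QX,RY)$ modulo the relation of being joined by a single $1$-simplex; (ii) use the pushout-product form of SM7 to exhibit $\Delta[1]\otimes QX$ as a good cylinder, identifying $1$-simplices in $\mathrm{Map}(QX,RY)$ with left homotopies; and (iii) invoke the classical identification $\mathrm{Hom}_{\mathrm{Ho}\,\cat{M}}(X,Y)\cong \mathrm{Hom}_{\cat{M}}(QX,RY)/\!\sim$ for $QX$ cofibrant and $RY$ fibrant. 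All three steps are sound.

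Two small expository points worth tightening. First, the paper's Definition of a simplicial model category only states the $\mathrm{Map}$-form of SM7; you silently pass to its adjoint (the pushout-product form in terms of $\otimes$) when constructing the cylinder. This is a standard equivalence via the two-variable tensor--cotensor--$\mathrm{Map}$ adjunction, but it should be flagged, since you use both formulations in the same argument and the second is not literally what the paper defines. Second, the paper defines $\mathrm{Hom}_{\mathrm{Ho}\,\cat{M}}(X,Y)$ as $\mathrm{Hom}_{\cat{M}}(RQX,RQY)/\!\sim$, whereas your final step produces $\mathrm{Hom}_{\cat{M}}(QX,RY)/\!\sim$; these agree by the general theory (any cofibrant source and fibrant target compute homotopy-category hom-sets), but an explicit sentence noting this identification would close the gap between your conclusion and the paper's stated definition of $\mathrm{Ho}\,\cat{M}$.
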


\begin{definition} \label{Definition: Left/Right derived functors}
    Let $\cat{M}$ and $\cat{N}$ be model categories and let $F \colon \cat{M} \rightarrow \cat{N}$ be a left Quillen functor. Denote by $Q(-)$ and $R(-)$ a choice of functorial cofibrant and fibrant approximation on $\cat{M}$. Then the \textbf{left derived functor} $\mathbb{L}F$ \textbf{of} $F$ is defined as $\mathbb{L}F := F(Q(-))$. Dually, if $F$ is a right Quillen functor, the \textbf{right derived functor} $\mathbb{R}F$ \textbf{of} $F$ is defined as $\mathbb{R}F := F(R(-))$.
\end{definition}
\begin{remark} \label{Remark: Derived Functors}
    First notice that the left and right derived functors both are functors $\mathbb{L}F \colon \cat{M} \rightarrow \cat{N}$ and $\mathbb{R}F \colon \cat{M} \rightarrow \cat{N}$, in contrast to the notion of \textit{total left/right derived functor} which has as domain and codomain the corresponding homotopy categories. An important aspect of the definition of left/right derived functor, as introduced above, is that it depends on a choice of cofibrant/fibrant approximation. Defining left/right derived functors like this corresponds to a choice of model for $\mathbb{L}F$ and $\mathbb{R}F$. More precisely, the concept of left/right derived functor can be defined for a much wider class of objects than model categories, such as categories with weak equivalences (also known as homotopical categories). In this setting a left derived functor $\mathbb{L}F$ of $F \colon \cat{M} \rightarrow \cat{N}$ is a pair $(\mathbb{L}F,\lambda)$ where $\mathbb{L}F$ is a functor from $\cat{M}$ to $\cat{N}$ preserving weak equivalences and $\lambda \colon \mathbb{L}F \Rightarrow F$ a natural transformation, such that $\delta\lambda \colon \delta \circ \mathbb{L}F \Rightarrow \delta \circ F$ is a right Kan extension $\mathrm{Ran}_{\gamma}\delta F$, where $\gamma \colon \cat{M} \rightarrow \mathrm{Ho} \, \cat{M}$ and $\delta \colon \cat{N} \rightarrow \mathrm{Ho} \, \cat{N}$ are the localization functors.
    \begin{center}
    \begin{tikzcd}[row sep=huge, column sep=large]
\cat{M} \arrow[r, "F", bend left, shift left=3] \arrow[d, "\gamma"'] \arrow[r, "\mathbb{L}F"', bend right, shift right] \arrow[r, "\Uparrow \lambda", phantom, shift left] & \cat{N} \arrow[d, "\delta"] \\
{\mathrm{Ho} \, \cat{M}} \arrow[r, "\mathrm{Ran}_{\gamma}\delta F"'] \arrow[r, "\Uparrow \mathrm{id}", phantom, shift left=6]                                              & {\mathrm{Ho} \, \cat{N}}
\end{tikzcd}
    \end{center}
    Consider now the case where the homotopical categories are also model categories and $F$ is a left Quillen functor. Then the pair $(F(Q(-)), F(q))$, where $q \colon Q  \Rightarrow \mathrm{id}$, is the natural transformation with components given by the weak equivalences $q_X \colon Q(X) \xrightarrow{\simeq} X$, is indeed a left derived functor of $F$. This is spelled out in detail in Riehl \cite[Chapter 2]{Riehl}.
\end{remark}

\begin{proposition} \label{Proposition: Derived Quillen adjunction is adjunction in Ho-Cat}
    Let $\cat{M}$ and $\cat{N}$ be simplicial model categories and let $F : \cat{M} \rightleftarrows \cat{N} : G$ be a Quillen adjunction. Then there are natural isomorphisms
    \[
    \pi_0 \mathbb{R}\mathrm{Map}\left( \mathbb{L}F(X),Y \right) \cong \pi_0 \mathbb{R}\mathrm{Map}\left(X,\mathbb{R}G(Y)\right)
    \]
\end{proposition}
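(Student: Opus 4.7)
The plan is to reduce the statement, via Proposition \ref{Proposition: simplicial homotopy classes and morphism in Ho-Cat}, to the classical fact that a Quillen adjunction descends to an adjunction between the corresponding homotopy categories. Applying that proposition to both sides yields
\[
\pi_0 \mathbb{R}\mathrm{Map}(\mathbb{L}F(X),Y) \cong \mathrm{Hom}_{\mathrm{Ho}\,\cat{N}}(\mathbb{L}F(X),Y), \qquad \pi_0 \mathbb{R}\mathrm{Map}(X,\mathbb{R}G(Y)) \cong \mathrm{Hom}_{\mathrm{Ho}\,\cat{M}}(X,\mathbb{R}G(Y)),
\]
so it suffices to exhibit a natural isomorphism between these two hom-sets in the homotopy categories.

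Unwinding the definitions, $\mathbb{L}F(X) = F(Q(X))$ and $\mathbb{R}G(Y) = G(R(Y))$. Since $F$ is left Quillen the object $F(Q(X))$ is cofibrant in $\cat{N}$, and dually $G(R(Y))$ is fibrant in $\cat{M}$. Consequently, both derived hom-sets can be computed as homotopy classes of ordinary morphisms between cofibrant-fibrant representatives:
\[
\mathrm{Hom}_{\mathrm{Ho}\,\cat{N}}(F(Q(X)),Y) = \mathrm{Hom}_{\cat{N}}(F(Q(X)),R(Y))/\!\sim, \qquad \mathrm{Hom}_{\mathrm{Ho}\,\cat{M}}(X,G(R(Y))) = \mathrm{Hom}_{\cat{M}}(Q(X),G(R(Y)))/\!\sim.
\]

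The underlying set-theoretic bijection between $\mathrm{Hom}_{\cat{N}}(F(Q(X)),R(Y))$ and $\mathrm{Hom}_{\cat{M}}(Q(X),G(R(Y)))$ is supplied directly by the ordinary adjunction $F \dashv G$, and its naturality in $X$ and $Y$ is inherited from the naturality of that adjunction. The substantive step, and the one I expect to be the main obstacle, is verifying that this bijection respects the homotopy relation on both sides. The key observation is that for $Q(X)$ cofibrant, if $Q(X) \amalg Q(X) \hookrightarrow \mathrm{Cyl}(Q(X)) \xrightarrow{\simeq} Q(X)$ is a cylinder object, then applying $F$ yields a cylinder object for $F(Q(X))$: indeed $F$ preserves colimits (hence the pushout defining the cylinder), cofibrations, and trivial cofibrations, so the resulting factorization $F(Q(X)) \amalg F(Q(X)) \hookrightarrow F(\mathrm{Cyl}(Q(X))) \xrightarrow{\simeq} F(Q(X))$ is a valid cylinder object (using Ken Brown's lemma to see that $F$ preserves the weak equivalence between cofibrant objects). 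Dually $G$ sends any path object for $R(Y)$ to a path object for $G(R(Y))$.

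Combining these two facts with the adjunction bijection, a left homotopy $H \colon F(\mathrm{Cyl}(Q(X))) \to R(Y)$ between $f,g \colon F(Q(X)) \to R(Y)$ corresponds under $\varphi$ to a right homotopy between $\varphi(f)$ and $\varphi(g)$ through the path object $G(\mathrm{Path}(R(Y)))$, and conversely; since left and right homotopy agree on maps out of a cofibrant object into a fibrant object, the induced map on equivalence classes is a well-defined bijection. Putting everything together yields the desired natural isomorphism
\[
\pi_0 \mathbb{R}\mathrm{Map}(\mathbb{L}F(X),Y) \cong \pi_0 \mathbb{R}\mathrm{Map}(X,\mathbb{R}G(Y)).
\]
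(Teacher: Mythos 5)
Your proof is correct, and the reduction via Proposition \ref{Proposition: simplicial homotopy classes and morphism in Ho-Cat} to a statement about homotopy categories is exactly the same reduction the paper uses. The difference is in how the remaining fact — that a Quillen adjunction descends to an adjunction of homotopy categories — is handled: the paper simply cites \cite[Theorem~8.5.18]{Hirschhorn}, while you prove it from scratch by exhibiting the bijection on genuine hom-sets between cofibrant--fibrant representatives and checking it respects homotopy. Your version is more self-contained and makes the mechanism visible (left Quillen functors take cylinder objects on cofibrant objects to cylinder objects, via Ken Brown's lemma; dually for path objects), at the cost of being longer; the paper's citation is efficient but opaque.

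One small wrinkle in your exposition: you say a left homotopy $H \colon F(\mathrm{Cyl}(Q(X))) \to R(Y)$ ``corresponds under $\varphi$ to a right homotopy through $G(\mathrm{Path}(R(Y)))$.'' What the adjunction bijection literally produces from $H$ is the map $\mathrm{Cyl}(Q(X)) \to G(R(Y))$, i.e.\ a \emph{left} homotopy in $\cat{M}$; that alone, together with your observation that $F(\mathrm{Cyl}(Q(X)))$ is a cylinder object for the cofibrant $F(Q(X))$, already gives the bijection on homotopy classes and is the cleanest route. To literally pass to a right homotopy through $G(\mathrm{Path}(R(Y)))$ you would first have to convert $H$ to a right homotopy in $\cat{N}$ (using that $F(Q(X))$ is cofibrant and $R(Y)$ fibrant) and then adjunct. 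Both routes are valid, but your sentence mixes them; the appeal to ``left and right homotopy agree for cofibrant-to-fibrant maps'' then becomes either the bridge between the two routes or is unnecessary, depending on which one you commit to. This is only a wording issue and does not affect correctness.
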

\begin{proof}
    This follows from \cite[Theorem 8.5.18]{Hirschhorn} together with the previous observation of Proposition \ref{Proposition: simplicial homotopy classes and morphism in Ho-Cat}.
\end{proof}
\begin{remark}
    Spelled out this means that, given objects $X \in \cat{M}$ and $Y \in \cat{N}$, the left-hand side reads:
    \begin{align*}
    \pi_0 \mathbb{R}\mathrm{Map}\left( \mathbb{L}F(X),Y \right) &= \pi_0 \mathrm{Map}\left( Q_{\cat{N}}\left( F(Q_{\cat{M}}(X)) \right), R_{\cat{N}}(Y) \right) \\
    &\cong \pi_0\mathrm{Map}\left(F(Q_{\cat{M}}(X)), R_{\cat{N}}(Y) \right)
    \end{align*}
    and likewise for the right-hand side:
        \begin{align*}
    \pi_0 \mathbb{R}\mathrm{Map}\left( X,\mathbb{R}G(Y) \right) &= \pi_0 \mathrm{Map}\left(Q_{\cat{M}}(X), R_{\cat{M}}\left( F \left( R_{\cat{N}}(Y) \right) \right) \right) \\
    &\cong \pi_0 \mathrm{Map}\left(Q_{\cat{M}}(X),  F \left( R_{\cat{N}}(Y) \right) \right)
    \end{align*}
    so we conclude that there are natural isomorphisms:
    \[
    \pi_0\mathrm{Map}\left(F(Q_{\cat{M}}(X)), R_{\cat{N}}(Y) \right) \cong \pi_0 \mathrm{Map}\left(Q_{\cat{M}}(X),  F \left( R_{\cat{N}}(Y) \right) \right)
    \]
\end{remark}

\begin{definition}
Let $\cat{M}$ be a simplicial model category and $S$ a set of arrows in $\cat{M}$. Then we say that an object $X$ is $S$\textbf{-local} if, for all $f \colon A \rightarrow B$ in $S$, the induced map
\[
\mathbb{R}\mathrm{Map}(B,X) \rightarrow \mathbb{R}\mathrm{Map}(A,X)
\]
is a weak equivalence of simplicial sets.
\end{definition}

The following theorem guarantees the existence of a localized model structure having as fibrant objects the $S$-local objects.

\begin{theorem}[\cite{Barwick10}]\label{Theorem: Enriched Bousfield Localization}
Let $\cat{M}$ be a combinatorial left proper simplicial model category and let $S$ be a set of arrows in $\cat{M}$. Then there is a model structure on $\cat{M}$ denoted by $L_S\cat{M}$ satisfying the following properties:
\begin{enumerate}
    \item The cofibrations in $L_S\cat{M}$ are the cofibrations in $\cat{M}$.
    \item The fibrant objects of $L_S\cat{M}$ are the fibrant objects of $\cat{M}$ which are also $S$-local.
    \item The weak equivalences of $L_S\cat{M}$ are the maps $f \colon X \rightarrow Y$ such that the induced map
    \[
    \mathbb{R}\mathrm{Map}(Y,Z) \rightarrow \mathbb{R}\mathrm{Map}(X,Z)
    \]
    is a weak equivalence of simplicial sets for every $S$-local object $Z$.
\end{enumerate}
Moreover, $L_S\cat{M}$ is left proper and combinatorial. In particular, if $\cat{M}$ admits a set of generating cofibrations with a cofibrant source, then $L_S\cat{M}$ is also simplicial.
\end{theorem}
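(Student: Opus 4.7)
The plan is to apply Jeff Smith's recognition theorem for combinatorial model structures, which reduces the problem to specifying a class of weak equivalences $W_S$ with suitable closure properties together with a small set of generating cofibrations $I$ and a small set $J_S$ whose right lifting property characterizes the trivial fibrations. I would take $I$ to be the generating cofibrations of $\cat{M}$, so that the classes of cofibrations in $\cat{M}$ and in $L_S\cat{M}$ agree, and define $W_S$ as prescribed in statement (3): a map $f \colon X \to Y$ belongs to $W_S$ if and only if $\mathbb{R}\mathrm{Map}(Y,Z) \to \mathbb{R}\mathrm{Map}(X,Z)$ is a weak equivalence of simplicial sets for every fibrant $S$-local object $Z$. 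With these choices, properties (1) and (3) of the theorem hold tautologically, and (2) will fall out of the construction of $J_S$.

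The first verification step is to check that $W_S$ has the closure properties required by Smith's theorem. The $2$-of-$3$ property and closure under retracts are inherited from $\mathbf{sSet}$ applied objectwise to $\mathbb{R}\mathrm{Map}(-,Z)$, and it is immediate that $W_S$ contains all weak equivalences of $\cat{M}$ since $\mathbb{R}\mathrm{Map}(-,Z)$ is homotopy invariant. The crucial point is that $W_S$ is an \emph{accessible} subcategory of the arrow category, which is where combinatoriality of $\cat{M}$ enters: one exhibits $W_S$ as the intersection, over a small set of homotopically representative fibrant $S$-local objects $Z$, of the preimages of the weak equivalences under the accessible functors $\mathbb{R}\mathrm{Map}(-,Z)$. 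Left properness of $\cat{M}$ then ensures that $W_S$ is closed under pushouts along cofibrations, which is the final closure condition one needs.

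The main obstacle will be producing the set $J_S$ and proving that a fibration has the right lifting property against $J_S$ precisely when it is both a fibration of $\cat{M}$ and lies in $W_S$. The standard construction replaces each $f \in S$ by a cofibration $f' \colon A' \hookrightarrow B'$ between cofibrant objects via cofibrant replacement and the factorization axiom, forms a mapping cylinder inclusion $j_f \colon A' \hookrightarrow \mathrm{Cyl}(f')$, and then adjoins to the original generating trivial cofibrations of $\cat{M}$ the pushout-product maps of the form
\[
(A' \otimes \Delta[n]) \cup_{A' \otimes \partial\Delta[n]} (\mathrm{Cyl}(f') \otimes \partial\Delta[n]) \hookrightarrow \mathrm{Cyl}(f') \otimes \Delta[n].
\]
The key lemma to prove is that a fibrant object $Z$ of $\cat{M}$ lifts against every element of this enlarged set if and only if $\mathbb{R}\mathrm{Map}(f,Z)$ is a weak equivalence for each $f \in S$, i.e. $Z$ is $S$-local. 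This simultaneously identifies the fibrant objects of $L_S\cat{M}$ with fibrant $S$-local objects of $\cat{M}$ and supplies property (2) of Smith's theorem.

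Once the model structure exists, left properness of $L_S\cat{M}$ is inherited from $\cat{M}$ because the cofibrations are unchanged and the class of weak equivalences has been enlarged; combinatoriality is automatic from the exhibited generating sets $I$ and $J_S$. For the simplicial enrichment in the last assertion, the pushout-product axiom against the unchanged cofibrations reduces to a compatibility check against $J_S$; this holds by construction, since the elements of $J_S$ were themselves built as pushout-products with the boundary inclusions $\partial\Delta[n] \hookrightarrow \Delta[n]$, and cofibrant source of the generating cofibrations of $\cat{M}$ is precisely what is needed to guarantee that the cylinders $\mathrm{Cyl}(f') \otimes \Delta[n]$ behave well with respect to the enrichment.
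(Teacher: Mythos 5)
The paper does not prove this theorem; it is cited directly from Barwick~\cite{Barwick10}, so there is no internal proof to compare against. Your overall strategy --- verifying Jeff Smith's recognition theorem by keeping the generating cofibrations $I$, declaring $W_S$ to be the $S$-local equivalences, and forming an augmented set $J_S$ of pushout-products of cofibrant approximations of $S$-maps with the boundary inclusions $\partial\Delta[n]\hookrightarrow\Delta[n]$ --- is indeed the route Barwick takes, and it is the correct high-level architecture. However, three of your justifications have genuine gaps.

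First, your accessibility argument is wrong as stated. You claim $W_S$ is accessible because it is ``the intersection, over a small set of homotopically representative fibrant $S$-local objects $Z$, of the preimages of the weak equivalences under $\mathbb{R}\mathrm{Map}(-,Z)$.'' There is in general no small set of homotopically representative $S$-local objects; the $S$-local objects form a proper class, and passing to a small dense subcategory of $\cat{M}$ does not give a set against which locality can be tested. The actual argument is more delicate: one typically builds an accessible $S$-local fibrant replacement functor $L_S$ from the small object argument against $J_S$, shows $f\in W_S$ iff $L_S(f)$ is a weak equivalence of $\cat{M}$, and then appeals to the fact that preimages of accessible full subcategories under accessible functors are accessible. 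Notice that this is circular relative to the order you propose, because you want accessibility of $W_S$ \emph{before} invoking Smith's theorem, so the argument has to be structured carefully around the fact that $J_S$ can be written down explicitly without reference to $W_S$.

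Second, in the final paragraph you assert that left properness of $L_S\cat{M}$ ``is inherited from $\cat{M}$ because the cofibrations are unchanged and the class of weak equivalences has been enlarged.'' This does not follow: enlarging the class of weak equivalences makes left properness \emph{harder}, not easier, since there are more maps whose pushouts along cofibrations must be weak equivalences. The correct justification is exactly the closure of $W_S$ under cobase change along cofibrations, which you did verify earlier as one of Smith's hypotheses (using left properness of $\cat{M}$). Cite that verification rather than the flawed heuristic. Third, the simpliciality check is not ``automatic from the form of $J_S$.'' The nontrivial direction of the pushout-product axiom is showing that $i\,\square\,j$ is a \emph{trivial} cofibration of $L_S\cat{M}$ when $i$ is a trivial cofibration of $L_S\cat{M}$ and $j$ a monomorphism of simplicial sets; this requires showing $S$-local equivalences are stable under $-\otimes K$ for cofibrant $K$, which is where the hypothesis that generating cofibrations have cofibrant source actually enters, rather than any property of $J_S$ per se.
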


\section{The Local and \v{C}ech-Local model structures}

Recall that Proposition \ref{Proposition: Diffeological Spaces are Concrete Sheaves on Cart} identifies diffeological spaces as a special kind of sheaves defined on the Cartesian site $\mathbf{Cart}$. The essence of Proposition \ref{Proposition: Diffeological Spaces are Concrete Sheaves on Cart} is part of a bigger picture to be introduced within this section. \\

Let $\cat{D}$ be a site and $F$ a sheaf of sets on $\cat{D}$. Given a covering family $\left\{ f_i \colon D_i \rightarrow D \, | \, i \in I \right\}$ the \textit{sheaf property} of $F$ can be phrased as follows: for all $i_0,i_1 \in I$ consider the fiber product of representable presheaves $\cat{Y}\left( D_{i_0} \right) \times_{\cat{Y}(D)} \cat{Y}\left( D_{i_1} \right)$ which comes equipped with projection maps $\mathrm{pr}_0$ and $\mathrm{pr}_1$ to $\cat{Y}\left( D_{i_0} \right)$ and $\cat{Y}\left( D_{i_1} \right)$. Here we denote by $\cat{Y} \colon \cat{D} \rightarrow \mathrm{PSh}(\cat{D})$ the Yoneda embedding. These projection maps now induce two natural maps
\begin{center}
\begin{tikzcd}
\prod_{i \in I}F(D_i) \arrow[r, "\mathrm{pr}^*_1"', shift right=2] \arrow[r, "\mathrm{pr}^*_0", shift left=2] & {\prod_{(i_0,i_1) \in I \times I} \mathrm{Hom}_{\mathrm{PSh}(\cat{D})}\left( \cat{Y}\left( D_{i_0} \right) \times_{\cat{Y}(D)} \cat{Y} \left( D_{i_1} \right),F \right)}
\end{tikzcd}
\end{center}
where on the left-hand side we have identified $\mathrm{Hom}_{\mathrm{PSh}(\cat{D})} \left( \cat{Y}(D_i) , F \right) \cong F(D_i)$ via the Yoneda Lemma. The maps $f_i \colon D_i \rightarrow D$ themselves induce a map
\[
\begin{array}{rcl}
     F(D) & \rightarrow & \displaystyle \prod_{i \in I } F(D_i)  \\
     s & \mapsto  & (s|_{D_i})_{i \in I}
\end{array}
\]
Then the property of $F$ being a sheaf on $\cat{D}$ is equivalent to the above map being an equalizer of the projection maps $\mathrm{pr}^*_0$ and $\mathrm{pr}^*_1$. More precisely, we can give an equivalent definition of a sheaf of sets on a site.
\newpage

\begin{definition}
    Let $\cat{D}$ be a site and consider $F$ a presheaf on $\cat{D}$. Then $F$ is said to be a \textbf{sheaf on the site} $\cat{D}$ if for every covering family $\left\{ f_i \colon D_i \rightarrow D \right\}_{i \in I}$ the diagram
\begin{center}
\begin{tikzcd}
F(D) \arrow[r] & \prod_{i \in I}F(D_i) \arrow[r, "\mathrm{pr}^*_1"', shift right=2] \arrow[r, "\mathrm{pr}^*_0", shift left=2] & {\prod_{(i_0,i_1) \in I \times I} \mathrm{Hom}_{\mathrm{PSh}(\cat{D})}\left( \cat{Y}\left( D_{i_0} \right) \times_{\cat{Y}(D)} \cat{Y} \left( D_{i_1}\right),F \right)}
\end{tikzcd}
\end{center}
exhibits the left arrow as an equalizer.
\end{definition}
\begin{remark} $ $
    \begin{enumerate}[label={(\arabic*)}]
        \item In the case the site $\cat{D}$ has the property that given any covering family \newline $\left\{ f_i \colon D_i \rightarrow D \right\}_{i \in I}$ the pullbacks $D_{i_0} \times_D D_{i_1}$ exist in $\cat{D}$, the sheaf property simplifies via the Yoneda Lemma
        \[
            \mathrm{Hom}_{\mathrm{PSh}(\cat{D})}\left( \cat{Y}\left( D_{i_0} \right) \times_{\cat{Y}(D)} \cat{Y} \left( D_{i_1} \right),F \right) \cong F \left( D_{i_0} \times_{D} D_{i_1} \right).
        \]
        Therefore, a presheaf $F$ is a sheaf if and only if the diagram
        \begin{center}
            \begin{tikzcd}
            F(D) \arrow[r] & \prod_{i \in I}F(D_i) \arrow[r, "\mathrm{pr}^*_1"', shift right=2] \arrow[r, "\mathrm{pr}^*_0", shift left=2] & {\prod_{(i_0,i_1) \in I \times I} F \left( D_{i_0} \times_{D} D_{i_1} \right)}
            \end{tikzcd}
            \end{center}
            exhibits the left arrow as an equalizer.
        \item This definition certainly agrees with Definition \ref{Definition: Sheaf Property} presented in the previous chapter. Indeed, consider the site $\mathbf{Cart}$ of Cartesian spaces and good open coverings. First, we notice that in this site the pullbacks $U_i \times_U U_j$ indeed exist, since they are simply given by the intersection $U_{ij}$. Then the equalizer diagram translates into: for any Cartesian space $U$ together with a good open covering $\left\{U_i\right\}_{i \in I}$ and for any family of sections $s_i \in F(U_i)$ satisfying
        \begin{equation} \label{Equation: compatible section}
        s_i|_{U_{ij}} = s_j|_{U_{ij}}
        \end{equation}
        there exists a unique $s \in F(U)$ with $s_i = s|_{U_i}$. The condition \eqref{Equation: compatible section} exhibits the collection of sections $\left\{ s_i \in F(U_i) \right\}$ as a compatible collection of sections.
    \end{enumerate}
\end{remark}

So far, we have considered only presheaves of sets, and no homotopy theory was needed to introduce the theory of sheaves. To understand how to generalize the sheaf condition for presheaves of spaces using homotopy theory, we first present a heuristic discussion inspired by the survey of Mestrano--Simpson \cite{Mestrano-Simpson}.

The gluing property a sheaf of sets $F$ satisfies tells us that $F(U)$ is isomorphic to the set of \textit{descent data} with respect to some open covering of $U$. This set of descent data is simply the equalizer of the above diagram for some covering family $\left\{ U_i \rightarrow U \right\}_{i \in I}$ of $U$. However, considering a presheaf of spaces, i.e. a simplicial presheaf, we would rather define a \textit{space of descent data} that is well-behaved in the context of homotopy theory. \\

The idea is that given sections $s_i \in F(U_i)$, instead of asking for an equality
\[
s_i|_{U_{ij}} = s_j|_{U_{ij}},
\]
it is required to ask for a path connecting these local sections inside the space $F(U_{ij})$. A choice of such a path is then also considered part of the gluing data. This approach results in the concept of a \textit{space of descent data}
\[
\mathrm{Desc}(F, \left\{ U_i \right\}) \ni (s_i,s_j, \gamma_{ij})
\]
consisting of triples where $s_i$ and $s_j$ are sections $s_i \in F(U_i)$ and $s_j \in F(U_j)$ and $\gamma_{ij}$ a path in $F(U_{ij})$ such that $\gamma_{ij}(0) = s_i|_{U_{ij}}$ and $\gamma_{ij}(1) = s_j|_{U_{ij}}$. \\

Nevertheless, this space is conceptually not yet satisfactory. To see why, consider a covering of $U$ given by three open subsets $U_0,U_1$ and $U_2$ such that $U_0 \cap U_1 \cap U_2 \neq \emptyset$. Now assume that in the space of descent data $\mathrm{Desc}(F, \left\{ U_i \right\})$ we are given elements $(s_0,s_1,\gamma_{01})$, $(s_1,s_2,\gamma_{12})$ and $(s_0,s_2,\gamma_{02})$. By restricting all the data to the space $F(U_{012})$ one gets the following picture.
\begin{center}
    \begin{tikzpicture}[x=0.75pt,y=0.75pt,yscale=-1,xscale=1]

\draw    (105,220) .. controls (144.75,187) and (133.25,170.5) .. (145,155) .. controls (156.75,139.5) and (184.75,144) .. (195,125) ;
\draw [shift={(195,125)}, rotate = 298.35] [color={rgb, 255:red, 0; green, 0; blue, 0 }  ][fill={rgb, 255:red, 0; green, 0; blue, 0 }  ][line width=0.75]      (0, 0) circle [x radius= 3.35, y radius= 3.35]   ;
\draw [shift={(105,220)}, rotate = 320.3] [color={rgb, 255:red, 0; green, 0; blue, 0 }  ][fill={rgb, 255:red, 0; green, 0; blue, 0 }  ][line width=0.75]      (0, 0) circle [x radius= 3.35, y radius= 3.35]   ;
\draw    (195,125) .. controls (199.75,154) and (216.75,150) .. (225,175) .. controls (233.25,200) and (241.75,184.5) .. (260,220) ;
\draw [shift={(260,220)}, rotate = 62.79] [color={rgb, 255:red, 0; green, 0; blue, 0 }  ][fill={rgb, 255:red, 0; green, 0; blue, 0 }  ][line width=0.75]      (0, 0) circle [x radius= 3.35, y radius= 3.35]   ;
\draw [shift={(195,125)}, rotate = 80.7] [color={rgb, 255:red, 0; green, 0; blue, 0 }  ][fill={rgb, 255:red, 0; green, 0; blue, 0 }  ][line width=0.75]      (0, 0) circle [x radius= 3.35, y radius= 3.35]   ;
\draw    (105,220) .. controls (145,190) and (156.25,226.5) .. (185,215) .. controls (213.75,203.5) and (233.25,210.5) .. (260,220) ;
\draw [shift={(105,220)}, rotate = 323.13] [color={rgb, 255:red, 0; green, 0; blue, 0 }  ][fill={rgb, 255:red, 0; green, 0; blue, 0 }  ][line width=0.75]      (0, 0) circle [x radius= 3.35, y radius= 3.35]   ;

\draw (56,218.4) node [anchor=north west][inner sep=0.75pt]    {$s_{0} |_{U_{012}}{}$};
\draw (174,97.4) node [anchor=north west][inner sep=0.75pt]    {$s_{1} |_{U_{012}}$};
\draw (269,218.4) node [anchor=north west][inner sep=0.75pt]    {$s_{2} |_{U_{012}}$};
\draw (81,147.4) node [anchor=north west][inner sep=0.75pt]    {$\gamma _{01} |_{U}{}_{_{012}}$};
\draw (231,147.4) node [anchor=north west][inner sep=0.75pt]    {$\gamma _{12} |_{U}{}_{_{012}}$};
\draw (178,227.4) node [anchor=north west][inner sep=0.75pt]    {$\gamma _{02} |_{U}{}_{_{012}}$};
\draw (166,172.4) node [anchor=north west][inner sep=0.75pt]    {$\Downarrow H$};

\end{tikzpicture}
\end{center}
The three paths form a triangle, i.e. the boundary of a 2-simplex in the space $F(U_{012})$. Instead of asking for a cocycle condition to hold, that is
\[
\gamma_{01} * \gamma_{12} = \gamma_{02},
\]
one should rather ask for a homotopy to exist, which mighty not always be the case, connecting these two paths $H \colon \gamma_{01} * \gamma_{12} \Rightarrow \gamma_{02} $, i.e. a $2$-simplex $H \colon \Delta^2 \rightarrow F(U_{012})$ filling in the above diagram. This homotopy should therefore also be part of the descent data and our preliminary definition for the space of descent data $\mathrm{Desc}(F, \left\{ U_i \right\}) $ needs to be further enhanced. However, this argument continues further considering arbitrary good open coverings and one concludes that these homotopies themselves need to be connected by other homotopies on four-fold intersections and so forth. \\

The solution to finding an appropriate definition of the space $\mathrm{Desc}(F,\left\{ U_i \right\})$ for an arbitrary open covering is to consider the \textit{homotopy limit} of the extended diagram:
\begin{center}
\begin{tikzcd}
\mathrm{holim} \arrow[r] & \prod_{i \in I}F(U_i) \arrow[r, shift left=2] \arrow[r, shift right=2] & {\underset{(i_0,i_1) \in I \times I}{\prod} F(U_{i_0i_1})} \arrow[r, shift left=3] \arrow[r] \arrow[r, shift right=3] & {\underset{(i_0,i_1,i_2) \in I \times I \times I}{\prod} F(U_{i_0i_1i_2})} \arrow[r, shift left=3] \arrow[r, shift left] \arrow[r, shift right] \arrow[r, shift right=3] & \cdots
\end{tikzcd}
\end{center}
 This idea is made precise by the concept of \textbf{\v{C}ech descent} for simplicial presheaves on a site $\cat{D}$.

\begin{definition}[\cite{Dugger-Hollander-Isaksen}, Definition 4.3] \label{Definition: Cech descent}
Let $\cat{D}$ be a site satisfying the additional requirement that for any covering family $\left\{ f_i \colon D_i \rightarrow D \; | \; i \in I \right\}$, the pullbacks $D_{i_0 \cdots i_n} = D_{i_0 \cdots i_{n-1}} \times_D D_{i_n}$ exists in $\cat{D}$ for all $n \geq 1$. Given an objectwise fibrant simplicial presheaf $F \in \mathrm{sPSh}(\cat{D})$ on $\cat{D}$ we say that $F$ satisfies \textbf{\v{C}ech descent} if for every covering family $\left\{ f_i \colon D_i \rightarrow D \; | \; i \in I \right\}$ the natural map from $F(D)$  to the homotopy limit of the diagram
\begin{equation} \label{Equation: Cech descent natural map}
    \begin{tikzcd}
     \underset{i \in I}{\prod}F(D_i) \arrow[r, shift left=2] \arrow[r, shift right=2] & {\underset{(i_0,i_1) \in I \times I}{\prod} F(D_{i_0i_1})} \arrow[r, shift left=3] \arrow[r] \arrow[r, shift right=3] & {\underset{(i_0i_1i_2) \in I \times I \times I}{\prod} F(D_{i_0,i_1,i_2})} \arrow[r, shift left=3] \arrow[r, shift left] \arrow[r, shift right] \arrow[r, shift right=3] & \cdots
    \end{tikzcd}
    \end{equation}
    is a weak equivalence. In the case $F$ is not an objectwise fibrant simplicial presheaf, we say it satisfies \textbf{\v{C}ech descent} if some objectwise fibrant replacement for $F$ does.
\end{definition}
\begin{remark}
    Notice that all the sites introduced so far satisfy this additional requirement, in particular the Cartesian site $\mathbf{Cart}$.
\end{remark}
We want to characterize simplicial presheaves on a site $\cat{D}$ satisfying \v{C}ech descent using a certain model structure on the category of simplicial presheaves. As before, such a model structure is obtained by localization of either the injective or the projective model structure on $\mathrm{sPSh}(\cat{D})$. To localize an appropriate set of morphisms, $S$ needs to be constructed such that the $S$-local objects are exactly the presheaves satisfying \v{C}ech descent. As a way to construct this set $S$ the \v{C}ech nerve of a covering family is slightly adapted to what is called the associated \v{C}ech complex. This also allows us to define \v{C}ech descent for arbitrary sites $\cat{D}$ which do not satisfy the additional requirement of Definition \ref{Definition: Cech descent}.

\begin{definition} \label{Definition: Cech complexx}
    Let $\cat{D}$ be a site and let $\left\{ f_i \colon D_i \rightarrow D \right\}_{i \in I}$ be a covering family. There is an associated presheaf of sets on $\cat{D}$ defined as the coproduct
    \[
    U_D := \coprod_{i \in I } \cat{Y}(D_i)
    \]
    of the associated representable presheaves. The morphisms $f_i$ further induce a natural map $f \colon U \rightarrow \cat{Y}(D)$ and the associated \textbf{\v{C}ech complex} is the simplicial presheaf given by
    \[
    [n] \mapsto \underbrace{ U_D \times_{\cat{Y}(D)} \cdots \times_{\cat{Y}(D)} U_D}_{(n+1)\text{ factors}}.
    \]
    The \v{C}ech complex is denoted $\check{C}(U_D)$ and is equipped with a natural map of simplicial presheaves
    \[
    f \colon \check{C}(U_D) \rightarrow \cat{Y}(D)
    \]
    which is also denoted by $f$.
\end{definition}

Recall that any site $\cat{D}$ comes equipped with a \textbf{set} $\mathrm{Cov}(\cat{D})$ of coverings of $\cat{D}$ since by definition a site is a small category. Then the set of morphisms in $\mathrm{sPSh}(\cat{D})$, denoted by $S_{\check{C}}$, is given by
\[
S_{\check{C}} := \left\{f \colon \check{C}(U_D) \rightarrow \cat{Y}(D) \, | \, \left\{ f_i \colon D_i \rightarrow D \right\} \in \mathrm{Cov}(\cat{D})  \right\},
\]
which is the collection of \v{C}ech complexes associated to any covering family that exists in $\cat{D}$. The next lemma now characterizes the $S_{\check{C}}$-local objects.

\begin{lemma}[\cite{Dugger-Hollander-Isaksen}] \label{Lemma: S-local equals Cech descent}
    Let $\cat{D}$ be a site and consider the associated set of morphisms of \v{C}ech complexes $S_{\check{C}}$. A simplicial presheaf $F \in \mathrm{sPSh}(\cat{D})$ satisfies \v{C}ech descent if and only if for any $ f \colon \check{C}(U_D) \rightarrow \cat{Y}(V)$ in $S_{\check{C}}$ the induced map of homotopy function complexes
    \[
    \mathbb{R}\mathrm{Map}(\cat{Y}(V),F) \rightarrow \mathbb{R}\mathrm{Map}(\cat{C}(U_D),F)
    \]
    is a weak equivalence, i.e. $F$ is an $S_{\check{C}}$-local object. Notice that the homotopy function complexes can be taken to be either with respect to the projective or injective model structure on simplicial presheaves.
\end{lemma}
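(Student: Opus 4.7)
The plan is to reduce both sides to the same homotopy limit by using the derived Yoneda lemma together with the standard ``mapping out of a (homotopy) colimit into a fibrant target becomes a homotopy limit'' identity. Throughout, I would work with a fixed objectwise fibrant replacement of $F$ (invoking Definition \ref{Definition: Cech descent} in the second case), so that $\mathbb{R}\mathrm{Map}(\cat{Y}(D), F) \simeq F(D)$ by a derived Yoneda argument.

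The first step is to observe that the \v{C}ech complex $\check{C}(U_D)$ defined in Definition \ref{Definition: Cech complexx} is the simplicial object in $\mathrm{sPSh}(\cat{D})$ whose object of $n$-simplices is
\[
\check{C}(U_D)_n \;=\; \coprod_{(i_0,\dots,i_n) \in I^{n+1}} \cat{Y}(D_{i_0\cdots i_n}),
\]
since representables send pullbacks in $\cat{D}$ (or, in the general case, the corresponding fibre products of representable presheaves) to iterated fibre products of representables. A direct check shows that $\check{C}(U_D)$, viewed as a bisimplicial presheaf, is Reedy cofibrant in either the projective or injective model structure, so it models its own homotopy colimit; equivalently, the diagonal presents $\check{C}(U_D)$ as $\mathrm{hocolim}_{[n]\in \Delta^{\mathrm{op}}} \check{C}(U_D)_n$.

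Next, I would apply the fundamental adjunction-type identity for simplicial mapping spaces: for any simplicial diagram $X_\bullet$ of cofibrant simplicial presheaves and any fibrant $F$,
\[
\mathbb{R}\mathrm{Map}(\mathrm{hocolim}_{[n]} X_n,\, F) \;\simeq\; \mathrm{holim}_{[n]} \mathbb{R}\mathrm{Map}(X_n, F),
\]
together with the fact that $\mathbb{R}\mathrm{Map}$ sends coproducts in its first argument to products of simplicial sets. Combined with derived Yoneda, this yields
\[
\mathbb{R}\mathrm{Map}(\check{C}(U_D), F) \;\simeq\; \mathrm{holim}_{[n]\in\Delta} \prod_{(i_0,\dots,i_n)} F(D_{i_0\cdots i_n}),
\]
which is exactly the homotopy limit of the cosimplicial diagram \eqref{Equation: Cech descent natural map}. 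On the other hand, Yoneda gives $\mathbb{R}\mathrm{Map}(\cat{Y}(D), F) \simeq F(D)$.

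Under these identifications, the map $f^* \colon \mathbb{R}\mathrm{Map}(\cat{Y}(D), F) \to \mathbb{R}\mathrm{Map}(\check{C}(U_D), F)$ corresponds, by naturality of Yoneda and of the $\mathrm{hocolim}/\mathrm{holim}$ isomorphism, to the canonical map $F(D) \to \mathrm{holim}$ appearing in Definition \ref{Definition: Cech descent}. Therefore $f^*$ is a weak equivalence of simplicial sets if and only if $F$ satisfies \v{C}ech descent with respect to the covering family $\{f_i \colon D_i \to D\}$, and the lemma follows by ranging over all covering families in $\mathrm{Cov}(\cat{D})$. The main technical obstacle is the Reedy-cofibrancy of $\check{C}(U_D)$ needed to freely exchange $\mathrm{hocolim}$ with $\mathbb{R}\mathrm{Map}$; this is standard but merits an explicit verification, using that the latching maps are inclusions of coproducts of representables (projective cofibrations) at each simplicial level.
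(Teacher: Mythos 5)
Your proposal is correct and self-contained, but it takes a genuinely different route from the paper: the paper's proof is a one-line citation to \cite[Lemma~4.4]{Dugger-Hollander-Isaksen}, specializing the hypercover there to a \v{C}ech complex, whereas you reprove the relevant part of that lemma from scratch. The key steps in your argument --- identifying $\check{C}(U_D)_n$ with a coproduct of representables (using that $\mathrm{PSh}(\cat{D})$ is a topos, so coproducts are disjoint and universal, together with Yoneda preserving limits), expressing $\check{C}(U_D)$ as $\mathrm{hocolim}_{[n]}\check{C}(U_D)_n$, and then pushing this through $\mathbb{R}\mathrm{Map}(-,F)$ and derived Yoneda to land on the cosimplicial diagram in Definition~\ref{Definition: Cech descent} --- are all sound, and this is essentially the content hidden inside DHI's proof. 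The advantage of your approach is that it makes transparent why $S_{\check{C}}$-locality is literally the same condition as \v{C}ech descent; the advantage of the paper's is brevity.

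One technical point deserves adjustment. You invoke Reedy cofibrancy of $\check{C}(U_D)$ in \emph{both} the injective and projective structures to exchange $\mathbb{R}\mathrm{Map}$ with $\mathrm{hocolim}$, and propose to verify it by noting the latching maps are coproduct inclusions. In the injective case cofibrations are exactly monomorphisms, so this is immediate. In the projective case, however, the latching map $L_n\check{C}(U_D)\to\check{C}(U_D)_n$ is a coproduct inclusion only when the degenerate $n$-simplices split off as a sub-coproduct of representable summands; this happens when the covering maps $f_i\colon D_i\to D$ are monomorphisms (as for open covers in $\mathbf{Cart}$), but it can fail in a general site, since the image of the diagonal $\cat{Y}(D_i)\to\cat{Y}(D_i\times_D D_i)$ need not be representable. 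Fortunately you do not actually need Reedy cofibrancy at all here: for a bisimplicial presheaf, the diagonal always computes the homotopy colimit over $\Delta^{\mathrm{op}}$ with respect to objectwise weak equivalences (\cite[Corollary~18.7.7]{Hirschhorn}), and the identity $\mathbb{R}\mathrm{Map}(\mathrm{hocolim}_{[n]}X_n, F)\simeq\mathrm{holim}_{[n]}\mathbb{R}\mathrm{Map}(X_n,F)$ holds unconditionally. Replacing the cofibrancy claim with these two general facts removes the dependence on the covering maps being monic and makes the argument uniform across sites.
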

\begin{proof}
    This Lemma is a special case of \cite[Lemma 4.4]{Dugger-Hollander-Isaksen} where the hypercover is simply given by the \v{C}ech complex associated with a covering family.
\end{proof}
\begin{remark}
    As mentioned earlier this lemma can be used as a definition for \v{C}ech descent in the case the site $\cat{D}$ does not have the pullbacks $D_{i_0 \cdots i_n} = D_{i_0 \cdots i_{n-1}} \times_D D_{i_n}$ for all $n\geq 1$ given any covering family $\left\{ f_i \colon D_i \rightarrow D \; | \; i \in I \right\}$.
\end{remark}

The fact that the $S_{\check{C}}$-local objects in $\mathrm{sPSh}(\cat{D})$ are precisely the simplicial presheaves satisfying \v{C}ech descent allows for the definition of the \textit{\v{C}ech-local model structure} on simplicial presheaves.

\begin{proposition} \label{Proposition: Cech Local Model Structures}
Let $\cat{D}$ be a site and consider the injective and the projective model structures on the category of simplicial presheaves on $\cat{D}$ denoted by $\mathbf{sPSh}(\cat{D})_{\mathrm{proj}}$ and $\mathbf{sPSh}(\cat{D})_{\mathrm{inj}}$. Then the left Bousfield localization of $\mathbf{sPSh}(\cat{D})_{\mathrm{proj}}$ and $\mathbf{sPSh}(\cat{D})_{\mathrm{inj}}$ at the set $S_{\check{C}}$ of all \v{C}ech complexes associated to all covering families $\left\{ f_i \colon D_i \rightarrow D \; | \; i \in I \right\}$ exists. The resulting model structures, called the \textbf{\v{C}ech-local model structures on simplicial presheaves}, are denoted by $\mathbf{sPSh}(\cat{D})_{\mathrm{proj, \check{C}}}$ and $\mathbf{sPSh}(\cat{D})_{\mathrm{inj, \check{C}}}$. The fibrant objects in these \v{C}ech-local model structures are given by projective/injective fibrant objects which, in addition, satisfy \v{C}ech descent.
\end{proposition}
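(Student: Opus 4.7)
The plan is to verify the hypotheses of Theorem \ref{Theorem: Enriched Bousfield Localization} for both the projective and the injective model structures on $\mathrm{sPSh}(\cat{D})$, and then read off the characterization of fibrant objects using Lemma \ref{Lemma: S-local equals Cech descent}. Concretely, I need to check four things before applying the theorem: that the ambient model structure is (i) combinatorial, (ii) left proper, (iii) simplicial, and (iv) that $S_{\check{C}}$ is genuinely a set (not a proper class).

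For combinatoriality, recall from Theorem \ref{Theorem: Kan-Quillen Structure} and the discussion in Remark \ref{Remark: Combinatorial and Cofibrantly generated model categories} that the Kan--Quillen model structure on $\mathbf{sSet}$ is combinatorial (it is locally presentable and cofibrantly generated with generating (trivial) cofibrations the boundary inclusions $\partial \Delta[n] \hookrightarrow \Delta[n]$ and horn inclusions $\Lambda^k[n] \hookrightarrow \Delta[n]$). Proposition \ref{Proposition: Existence of inj. / proj.} then immediately gives that $\mathrm{sPSh}(\cat{D})_{\mathrm{proj}}$ and $\mathrm{sPSh}(\cat{D})_{\mathrm{inj}}$ are both combinatorial. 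The simplicial structure on $\mathrm{sPSh}(\cat{D})$ was described explicitly in the excerpt (tensoring and cotensoring with simplicial sets objectwise), and the pushout-product axiom reduces to the objectwise statement in $\mathbf{sSet}$, so both model structures are simplicial. Left properness follows from the fact that weak equivalences, cofibrations, and their objectwise counterparts coincide in the injective case (so pushouts along cofibrations preserve objectwise weak equivalences, since the Kan--Quillen structure is left proper and colimits in $\mathrm{sPSh}(\cat{D})$ are computed objectwise), while for the projective structure one uses that every projective cofibration is also an injective cofibration (see Example \ref{Example: Equivalence between proj and inj on diagrams}).

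For (iv), since $\cat{D}$ is a small category, the collection $\mathrm{Cov}(\cat{D})$ of all covering families is a set, and hence the assignment of a \v{C}ech complex to each covering family defined in Definition \ref{Definition: Cech complexx} yields a set $S_{\check{C}}$ of morphisms in $\mathrm{sPSh}(\cat{D})$. With all four hypotheses verified, Theorem \ref{Theorem: Enriched Bousfield Localization} produces the localized model structures $\mathrm{sPSh}(\cat{D})_{\mathrm{proj},\check{C}}$ and $\mathrm{sPSh}(\cat{D})_{\mathrm{inj},\check{C}}$, which are themselves combinatorial, left proper, and simplicial (the latter because in both cases the generating cofibrations $\cat{Y}(D) \otimes \partial \Delta[n] \hookrightarrow \cat{Y}(D) \otimes \Delta[n]$ have cofibrant source).

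To identify the fibrant objects, part (2) of Theorem \ref{Theorem: Enriched Bousfield Localization} tells us they are precisely the objects that are fibrant in the ambient (projective or injective) model structure and are simultaneously $S_{\check{C}}$-local. Lemma \ref{Lemma: S-local equals Cech descent} then identifies $S_{\check{C}}$-local objects with simplicial presheaves satisfying \v{C}ech descent, completing the characterization. The only mildly delicate step is left properness of the projective structure, since projective cofibrations are not objectwise, but this is a standard consequence of the inclusion $\mathbf{Cof}_{\mathrm{proj}} \subseteq \mathbf{Cof}_{\mathrm{inj}}$ noted above combined with left properness of the injective structure; no deeper obstacle arises.
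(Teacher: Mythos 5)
Your proposal is correct and follows essentially the same route as the paper's own proof, which simply invokes Theorem \ref{Theorem: Enriched Bousfield Localization} for existence and Lemma \ref{Lemma: S-local equals Cech descent} for the characterization of fibrant objects. You have usefully filled in the verification of the hypotheses (combinatorial, left proper, simplicial, $S_{\check{C}}$ a set) that the paper leaves implicit, and your arguments for those hypotheses are sound.
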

\begin{proof}
    The existence of the left Bousfield localizations follows from Theorem \ref{Theorem: Enriched Bousfield Localization}. The fact that the fibrant objects in the localized model structures agree with projectively/injectively fibrant simplicial presheaves satisfying \v{C}ech descent follows from Lemma \ref{Lemma: S-local equals Cech descent}.
\end{proof}

\begin{lemma}[\cite{Dugger}, Proposition 3.3.2] \label{Lemma: constant presheaf and sheafification is weak equivalence}
    Let $\cat{D}$ be a site and let $F$ be a presheaf on $\cat{D}$.
    \begin{itemize}
        \item The discrete simplicial presheaf $F^{\delta}$ is fibrant in the \v{C}ech-local model structure if and only if $F$ is a sheaf.
        \item If $F^{\#}$ denotes the sheafification of $F$, the map of discrete simplicial presheaves $F^{\delta} \rightarrow (F^{\#})^{\delta}$ is a \v{C}ech-local weak equivalence.
    \end{itemize}
    The above statement holds for both the \v{C}ech-local injective and the \v{C}ech-local projective model structures.
\end{lemma}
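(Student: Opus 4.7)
My plan is to treat the two assertions separately, using Proposition~\ref{Proposition: Cech Local Model Structures} and Theorem~\ref{Theorem: Enriched Bousfield Localization} as the main inputs. Since the weak equivalences in the \v{C}ech-local projective and injective structures are detected on the same underlying data, and since all objects in the injective structure are cofibrant (so derived mapping spaces coincide with ordinary simplicial mapping spaces), I would carry out the verifications in the injective setting and then remark that the statements transfer to the projective setting via the Quillen equivalence induced by the identity functor.

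For the first claim, first note that $F^\delta$ is automatically fibrant in the unlocalised injective structure, since $F^\delta(U)$ is a discrete simplicial set (hence a Kan complex) for every $U$ and levelwise-discrete presheaves admit all trivial-injective-cofibration lifts. By Proposition~\ref{Proposition: Cech Local Model Structures}, \v{C}ech-local fibrancy then reduces to \v{C}ech descent. The key observation is that mapping any simplicial presheaf $X$ into a discrete target factors through path-component presheaves, giving $\mathrm{Map}(X, F^\delta) \cong \mathrm{Hom}_{\mathrm{PSh}}(\pi_0 X, F)$, a discrete simplicial set. Applying this to $X = \check{C}(U_D)$, and noting that $\pi_0(\check{C}(U_D))$ is the coequaliser of $U_D\times_{\cat{Y}(D)} U_D\rightrightarrows U_D$ in $\mathrm{PSh}(\cat{D})$, the homotopy function complex $\mathbb{R}\mathrm{Map}(\check{C}(U_D), F^\delta)$ is the discrete set of compatible families of $F$-sections over the covering family. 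By Lemma~\ref{Lemma: S-local equals Cech descent} and the Yoneda lemma, \v{C}ech descent for $F^\delta$ is therefore equivalent to the condition that $F(D)$ maps bijectively onto this set of compatible families for every covering family, which is precisely the sheaf axiom on $F$.

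For the second claim, I would use characterisation (3) of Theorem~\ref{Theorem: Enriched Bousfield Localization}: it is enough to show that, for every \v{C}ech-local fibrant $Z$, the induced map $\mathbb{R}\mathrm{Map}((F^\#)^\delta, Z) \to \mathbb{R}\mathrm{Map}(F^\delta, Z)$ is a weak equivalence of simplicial sets. The easy case $Z = H^\delta$ with $H$ a sheaf is immediate: by the computation of the previous paragraph the mapping space is the discrete simplicial set $\mathrm{Hom}_{\mathrm{PSh}}(F, H)$, and the universal property of sheafification supplies the bijection $\mathrm{Hom}_{\mathrm{PSh}}(F^\#, H) \cong \mathrm{Hom}_{\mathrm{PSh}}(F, H)$.

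The main technical obstacle is to upgrade this to arbitrary \v{C}ech-local fibrant $Z$, which need not be levelwise a sheaf. My plan is to sidestep the test against a general $Z$ by producing $F^\delta \to (F^\#)^\delta$ as an honest \v{C}ech-local trivial cofibration. To do this I would factor $F\to F^\#$ through the plus construction as $F \to F^+ \to F^{++} = F^\#$, and realise each plus-construction map on discrete simplicial presheaves as a transfinite composition of pushouts along morphisms of the form $\check{C}(U_D) \to \cat{Y}(D)$ belonging to $S_{\check{C}}$. Since the class of \v{C}ech-local trivial cofibrations is closed under pushouts and transfinite compositions, the composite $F^\delta \to (F^\#)^\delta$ is then itself a \v{C}ech-local trivial cofibration, and hence in particular a \v{C}ech-local weak equivalence; the statement for the projective structure follows by the reduction indicated at the outset.
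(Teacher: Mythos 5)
The paper cites \cite{Dugger} for this lemma and gives no proof of its own, so I am assessing your proposal on its own terms. Your argument for the first bullet is essentially correct: discrete simplicial presheaves are injective-fibrant (given a levelwise trivial cofibration $A\hookrightarrow B$, a map $A\to F^{\delta}$ factors through the objectwise $\pi_0 A\to F$, and $\pi_0 A\to\pi_0 B$ is then an isomorphism of presheaves, which produces the lift), and the identification $\mathrm{Map}(X,F^{\delta})\cong\mathrm{Hom}_{\mathrm{PSh}}(\pi_0 X,F)$ applied to $X=\check{C}(U_D)$ and $\cat{Y}(D)$ turns \v{C}ech descent into precisely the sheaf equaliser.

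The last step of your plan for the second bullet does not go through. The morphisms $\check{C}(U_D)\to\cat{Y}(D)$ in $S_{\check{C}}$ are not cofibrations in either model structure: already in degree $0$ the map $U_D=\coprod_i\cat{Y}(D_i)\to\cat{Y}(D)$ is generally not a monomorphism, so their pushouts need not be \v{C}ech-local trivial cofibrations (the generating trivial cofibrations of a Bousfield localisation are not the localising maps themselves). Independently, the pushout of $\check{C}(U_D)\to\cat{Y}(D)$ along a map into a discrete simplicial presheaf is never discrete, since $\check{C}(U_D)_n$ depends genuinely on $n$ while $\cat{Y}(D)$ does not; so no transfinite composition of these pushouts can land on the discrete object $(F^{+})^{\delta}$. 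Both defects are repaired by passing to the discrete covering-sieve inclusions $\pi_0\big(\check{C}(U_D)\big)^{\delta}\hookrightarrow\cat{Y}(D)$: these are levelwise monomorphisms between discrete objects, and they are \v{C}ech-local weak equivalences by two-out-of-three, since $\check{C}(U_D)\to\pi_0\big(\check{C}(U_D)\big)^{\delta}$ is an objectwise weak equivalence (the fibre of the \v{C}ech nerve over each section of the sieve is the nerve of a non-empty codiscrete groupoid, hence contractible). Even with the right generators, however, realising $F\to F^{\#}$ as a transfinite composition of such pushouts is not a formality: pushing out sieve inclusions only \emph{adjoins} candidate sections over $D$, while sheafification also \emph{identifies} matching families defined over mutually refining covers, which requires pushing out along codiagonals $\cat{Y}(D)\sqcup_{\pi_0(\check{C}(U_D))^{\delta}}\cat{Y}(D)\to\cat{Y}(D)$ as well. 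Supplying this identification is where the remaining content of the lemma lies, and your sketch asserts it rather than establishes it.
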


\begin{remark}
    Given a presheaf of sets $F \in \mathrm{PSh}(\cat{D})$ the \textbf{discrete simplicial presheaf} $F^{\delta}$ is the simplicial presheaf such that for every $U \in \cat{D}$ the simplicial set $F^{\delta}(U)$ is the \textit{constant} or \textit{discrete simplicial set} associated to the set $F(U)$. Note that there also exists the \textbf{constant simplicial presheaf} $\mathsf{c}(K)$ given any simplicial set $K \in \mathbf{sSet}$ which is simply characterized by $\mathsf{c}(K)(U) = K$ for all $U \in \cat{D}$.
\end{remark}

Besides the concept of descent with respect to \v{C}ech complexes, Jardine \cite{Jardine} proposed another approach to a \textit{local model structure} of simplicial presheaves, which focuses more on the homotopy theoretic nature of simplicial presheaves and reduces the definition of descent to the classical case of presheaves of sets on a site. However, it has been observed by Dugger--Hollander--Isaksen that Jardine's theory of descent is equivalent to descent with respect to all hypercovers, which is not equivalent to the notion of \v{C}ech-descent as pointed out in \cite[A.10]{Dugger-Hollander-Isaksen}.   \\

For the specific site of Cartesian spaces with good open coverings $\mathbf{Cart}$ however, they coincide. This follows from the fact that the site $(\mathbf{Cart},\tau_{\mathrm{gd}})$ satisfies the additional property of being \textit{hypercomplete}, see Definition \ref{Definition: Hypercomplete Site}. \\

We follow Jardine \cite{JardineFields} to introduce the local model structure on simplicial presheaves. For the rest of this section and if not otherwise specified, given a simplicial set $K$ together with a vertex $x_0$ its $n$th homotopy group for $n \geq$ is defined
\[
\pi_n(K,x_0) := \pi_n(|K|,x_0),
\]
via the homotopy group of its associated geometric realization. For $n = 0$ set
\[
\pi_0(K) := \mathrm{coeq} \left( K_1 \rightrightarrows K_0 \right).
\]
To avoid choices of base points, define a \textit{base point free} version for $n \geq 1$ by setting
\[
\pi_n(K) := \coprod_{x \in K_0} \pi_n(K,x),
\]
where here the coproduct is taken in the category of sets. The set $\pi_n(K)$ is equipped with a canonical function $\pi_n(K) \rightarrow K_0$ which gives $\pi_n(K)$ the structure of a group object over $K_0$, i.e. $\pi_n(K) \rightarrow K_0$ is a group object in the over-category $\mathbf{Set} \downarrow K_0$.

Let now $\cat{D}$ be a site. Given a simplicial presheaf $F \in \mathrm{sPSh}(\cat{D})$, consider the associated presheaves of sets
\begin{align*}
\pi_n(F)(U) &:= \pi_n(F(U)) = \coprod_{x_U \in F(U)_0 } \pi_n(F(U),x_U) \; \text{ for } n \geq 1\\
\pi_0(F)(U) &:= \pi_0(F(U)).
\end{align*}

\begin{definition} \label{Definition: Homotopy Sheaf}
    Let $\cat{D}$ be a site. The above construction defines a functor for all $n \geq 0$
    \[
    \begin{array}{rcl}
         \pi_n \colon \mathrm{sPSh}(\cat{D}) & \rightarrow & \mathrm{PSh}(\cat{D}) \\
         F & \mapsto  & \pi_n(F)
    \end{array}
    \]
    sending a simplicial presheaf $F$ to its associated \textbf{homotopy presheaf} $\pi_n(F)$. By further applying the sheafification procedure, one obtains a functor for all $n \geq 0$
    \[
    \begin{array}{rcl}
         \tilde{\pi}_n \colon \mathrm{sPSh}(\cat{D}) & \rightarrow & \mathrm{Sh}(\cat{D}) \\
         F & \mapsto  & \tilde{\pi}_n(F)
    \end{array}
    \]
    sending a simplicial presheaf $F$ to its associated \textbf{homotopy sheaf} $\tilde{\pi}_n(F)$.
\end{definition}

Before stating the definition of \textit{local weak equivalence} of simplicial presheaves it is important to reformulate the classical notion of weak equivalence of simplicial sets in terms of the base point free versions of the homotopy groups.

\begin{lemma}[\cite{JardineFields}]
    A map of simplicial sets $f \colon X \rightarrow Y$ is a weak equivalence if and only if
    \begin{enumerate}
        \item the function $\pi_0(X) \rightarrow \pi_0(Y)$ is a bijection, and
        \item the commutative diagram induced by $f$
        \begin{center}
        \begin{tikzcd}
\pi_n(X) \arrow[r] \arrow[d] & \pi_n(Y) \arrow[d] \\
X_0 \arrow[r, "f_0"]                & Y_0
        \end{tikzcd}
        \end{center}
        is a pullback diagram for $n \geq 1$.
    \end{enumerate}
\end{lemma}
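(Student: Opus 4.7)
The plan is to reduce the lemma to the classical characterization of a weak equivalence of simplicial sets in terms of based homotopy groups, and then to recognize the pullback condition as merely repackaging the based statement for all vertices at once. Recall that $f \colon X \to Y$ is a weak equivalence if and only if $\pi_0(f) \colon \pi_0(X) \to \pi_0(Y)$ is a bijection and, for every vertex $x \in X_0$ and every $n \geq 1$, the induced homomorphism $f_* \colon \pi_n(X,x) \to \pi_n(Y,f(x))$ is an isomorphism. Hence I only need to show that condition (2) of the lemma is equivalent to $f_*$ being a bijection on each basepoint for all $n \geq 1$.

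First I would compute the pullback $X_0 \times_{Y_0} \pi_n(Y)$ explicitly. Since $\pi_n(Y) \to Y_0$ is the canonical projection $\coprod_{y \in Y_0} \pi_n(Y,y) \to Y_0$ sending the component indexed by $y$ to the point $y$, and since pullbacks of sets over a discrete set distribute over coproducts, one gets
\[
X_0 \times_{Y_0} \pi_n(Y) \;\cong\; \coprod_{x \in X_0}\; \{x\} \times \pi_n\bigl(Y,f(x)\bigr) \;\cong\; \coprod_{x \in X_0} \pi_n\bigl(Y,f(x)\bigr).
\]
Under this identification, I would check that the comparison map $\pi_n(X) \to X_0 \times_{Y_0} \pi_n(Y)$ induced by the square is the disjoint union over $x \in X_0$ of the basepointed maps $f_* \colon \pi_n(X,x) \to \pi_n(Y,f(x))$. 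This is immediate from the definition of the base-point-free homotopy presheaf as $\pi_n(X) = \coprod_{x \in X_0} \pi_n(X,x)$ together with functoriality of $\pi_n$.

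Having identified the comparison map as $\coprod_x f_*$, the square is a pullback of sets if and only if this comparison map is a bijection, and since it respects the coproduct decomposition with matching index sets, this holds if and only if each component $f_* \colon \pi_n(X,x) \to \pi_n(Y,f(x))$ is a bijection. Combining this with the $\pi_0$-bijection condition in (1) recovers the classical characterization, completing the equivalence in both directions.

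I do not expect any serious obstacles here; the statement is essentially a bookkeeping reformulation. The only point that merits attention is ensuring that the pullback is formed in the category of sets (not groups), so that the decomposition into coproducts is legitimate, and that the group structure on each fiber $\pi_n(Y,f(x))$ is irrelevant for detecting bijectivity. Once the coproduct decomposition is in place, both implications follow in one line.
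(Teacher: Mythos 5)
Your proof is correct, and it takes a mildly different route from the paper's. The paper invokes the pasting law for pullback squares: it first notes that $\pi_n(X,x_0)$ is the pullback of $\pi_n(X)\to X_0$ along $x_0\colon *\to X_0$, then pastes this square with the one in the lemma, and concludes that if the right square is a pullback the outer square is too, identifying $\pi_n(X,x_0)$ with $\pi_n(Y,f(x_0))$; the converse is dispatched with "follows directly." You instead compute the pullback $X_0\times_{Y_0}\pi_n(Y)$ explicitly as $\coprod_{x\in X_0}\pi_n(Y,f(x))$ and observe that the comparison map is the coproduct of the based maps $f_*$. Both arguments hinge on the same observation — that the base-point-free homotopy set $\coprod_{x}\pi_n(-,x)$ recovers each based group as a fiber — but yours makes the set-level coproduct decomposition explicit rather than appealing to the abstract pasting law, and it treats both directions of the biconditional on an equal footing, which the paper's sketch does not. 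One small quibble: your phrase "pullbacks of sets over a discrete set distribute over coproducts" is slightly off-target; what you actually use is the plain fact that in $\mathbf{Set}$, $Z\times_B\bigl(\coprod_i A_i\bigr)\cong\coprod_i(Z\times_B A_i)$, no discreteness hypothesis on $B$ needed. That fact is of course true, so the argument is unaffected, but the phrasing could be tightened.
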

\begin{proof}
    This follows from the simple observation that given $\pi_n(X)$ and $x_0$ a vertex in $X$, the classical based homotopy groups are recovered as a pullback.
    \begin{center}
    \begin{tikzcd}
{\pi_n(X,x_0)} \arrow[d] \arrow[r] & \pi_n(X) \arrow[d] \\
{*} \arrow[r, "x_0"]       & X_0
\end{tikzcd}
    \end{center}
    Now the usual pasting law of pullback diagrams implies that, if the right square is a pullback, then the total diagram is a pullback precisely if the left square is a pullback.
    \begin{center}
    \begin{tikzcd}
{\pi_n(X,x_0)} \arrow[d] \arrow[r] & \pi_n(X) \arrow[d] \arrow[r] & \pi_n(Y) \arrow[d] \\
{*} \arrow[r, "x_0"]       & X_0 \arrow[r]                & Y_0
\end{tikzcd}
    \end{center}
    This then identifies $\pi_n(X,x_0)$ with $\pi_n(Y,f(x_0))$ for all $n \geq 1$ and all vertices $x_0 \in X$, i.e. $f$ is a weak equivalence. The opposite direction follows directly.
\end{proof}

\begin{definition}[\cite{JardineFields}] \label{Definition: Local Weak Equivalence}
    Let $\cat{D}$ be a site and $f \colon F \rightarrow G$ a map of simplicial presheaves on $\cat{D}$. We say that $f$ is a \textbf{local weak equivalence} if
    \begin{enumerate}
        \item the map $\tilde{\pi}_0(F) \rightarrow \tilde{\pi}_0(G)$ is an isomorphism of sheaves, and
        \item the diagram of morphisms of sheaves
        \begin{center}
        \begin{tikzcd}
\tilde{\pi}_n(F) \arrow[d] \arrow[r] & \tilde{\pi}_n(G) \arrow[d] \\
\tilde{F}_0 \arrow[r]                & \tilde{G}_0
\end{tikzcd}
        \end{center}
        is a pullback for all $n \geq 1$ in $\mathrm{Sh}(\cat{D})$.
    \end{enumerate}
\end{definition}

Of course, there also exist base point dependent versions of the homotopy sheaves introduced above. Whereas the base point free construction allows clean constructions and definitions, it is often more convenient to use the dependent versions to prove certain statements.

\begin{definition}
    Let $F \in \mathrm{sPSh}(\cat{D})$ be a simplicial presheaf. Then, given an object $V \in \cat{D}$ and a vertex $x \in F_0(V)$, we define the homotopy presheaves $\pi_n(F,x)$ on the site $\cat{D} \downarrow V$ by the assignment:
    \[
    \pi_n(F,x) : U \longmapsto \pi_n(F(U),x|_U)
    \]
    which is a presheaf of groups for $n \geq 1$. The associated sheaves are denoted $\Tilde{\pi}_n(F,x)$.
\end{definition}

\begin{remark}
    Similar in the case of the usual homotopy groups, given the presheaves $\pi_n(F)$ one can recover the base point dependent version $\pi_n(F,x)$ via a pullback. Also, notice that the over-category $\cat{D} \downarrow V$ naturally inherits a coverage from $\cat{D}$ turning it into a site. Indeed, a collection of maps $\left\{ U_{i} \xrightarrow{f_i} U \rightarrow V \, | \, i \in I \right\}$ is a covering family of $U \rightarrow V$ if and only if the family $\left\{ U_i \rightarrow U \, | \, i \in I \right\}$ is a covering family of $U$ in $\cat{D}$.
\end{remark}

The following Lemma already gives a class of examples of local weak equivalences:

\begin{lemma}[\cite{JardineBook}]
    Let $f : F \rightarrow G$ be an objectwise weak equivalence of simplicial presheaves. Then $f$ is a local weak equivalence.
\end{lemma}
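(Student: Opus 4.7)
The plan is to deduce the two conditions in Definition \ref{Definition: Local Weak Equivalence} directly by applying the preceding simplicial-set lemma objectwise and then passing to sheaves via sheafification, using crucially that sheafification preserves finite limits.

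First I would invoke the previous lemma pointwise: since $f\colon F \to G$ is an objectwise weak equivalence, for every $U \in \cat{D}$ the map $f(U) \colon F(U) \to G(U)$ is a weak equivalence of simplicial sets, so the induced function $\pi_0(F(U)) \to \pi_0(G(U))$ is a bijection and, for each $n \geq 1$, the square
\[
\begin{tikzcd}
\pi_n(F(U)) \arrow[r] \arrow[d] & \pi_n(G(U)) \arrow[d] \\
F(U)_0 \arrow[r, "f(U)_0"'] & G(U)_0
\end{tikzcd}
\]
is a pullback of sets. These assemble naturally in $U$, so the induced morphism of presheaves $\pi_0(F) \to \pi_0(G)$ is an objectwise isomorphism, hence an isomorphism of presheaves, and the corresponding square of presheaves with vertices $\pi_n(F),\pi_n(G),F_0,G_0$ is a pullback, since limits in $\mathrm{PSh}(\cat{D})$ are computed objectwise.

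Next I would apply the sheafification functor $(-)^{\#} \colon \mathrm{PSh}(\cat{D}) \to \mathrm{Sh}(\cat{D})$. Sheafification preserves isomorphisms tautologically, so $\tilde{\pi}_0(F) \to \tilde{\pi}_0(G)$ is an isomorphism of sheaves, giving condition (1). For condition (2), the decisive ingredient is that sheafification is a left exact left adjoint (it is the associated sheaf functor of a Grothendieck topos), so it commutes with finite limits, and in particular with pullbacks. Applying $(-)^{\#}$ to the pullback square of presheaves above therefore yields a pullback square of sheaves
\[
\begin{tikzcd}
\tilde{\pi}_n(F) \arrow[r] \arrow[d] & \tilde{\pi}_n(G) \arrow[d] \\
\tilde{F}_0 \arrow[r] & \tilde{G}_0
\end{tikzcd}
\]
for every $n \geq 1$, which is precisely condition (2) of Definition \ref{Definition: Local Weak Equivalence}. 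Together these two points show $f$ is a local weak equivalence.

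There is no real obstacle here; the only nontrivial input is the left exactness of sheafification, which is standard topos theory. The whole argument is essentially a transport of the base-point-free characterization of weak equivalences through the objectwise-to-sheaf passage, with sheafification doing the work of ensuring that both the bijectivity and the pullback property survive.
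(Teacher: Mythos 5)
Your proof is correct, and it is slightly more careful than the paper's own argument in a way worth noting. The paper's proof works with the base-point dependent presheaves $\pi_n(F,x) \to \pi_n(G,f(x))$, shows they are objectwise isomorphisms, and then invokes exactness of sheafification to conclude; it leaves implicit the translation from the pointed statement back to the base-point free formulation that actually appears in Definition \ref{Definition: Local Weak Equivalence}. You instead verify the two conditions of that definition head-on: you apply the preceding lemma (the base-point free characterization of weak equivalences of simplicial sets) objectwise to get a bijection on $\pi_0$ and a pullback square of presheaves for each $n\geq 1$, and then you use left exactness of $(-)^{\#}$ to pass both statements through sheafification. The key technical input — that the associated sheaf functor preserves finite limits — is the same in both; what you buy with the base-point free route is that the conclusion lands exactly on the definition with no further translation, at the modest cost of carrying the $F_0, G_0$ vertices of the square through the argument rather than fixing a base point.
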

\begin{proof}
Since $f$ is a objectwise weak equivalence for any $V \in \cat{D}$ and any $x \in F(V)$, the induced map of homotopy presheaves
    \[
    \pi_n(F,x) \rightarrow \pi_n(G,f(x))
    \]
    are isomorphisms. Indeed, for any $U \in \cat{D} \downarrow V$ the induced map
    \[
    \pi_n(F(U),x|_U) \xrightarrow{\cong} \pi_n(G(U), f(x|_U) )
    \]
    is an isomorphism, as $F(U) \rightarrow G(U)$ is a weak equivalence of simplicial sets. Now we apply the sheafification functor, which is exact hence the induced maps on the associated sheaves are also isomorphisms.
\end{proof}

\begin{theorem}[\cite{JardineBook}, Theorem 2.3] \label{Theorem: Jardine local structure}
    Let $\cat{D}$ be a site. Then the category of simplicial presheaves $\mathrm{sPSh}(\cat{D})$ together with the classes of
    \begin{enumerate}
        \item cofibrations given by objectwise monomorphisms,
        \item weak equivalences given by local weak equivalences, and
        \item fibrations having the right lifting property with respect to all trivial cofibrations,
    \end{enumerate}
    satisfies the axioms for a closed model category. We call this model structure the \textbf{local injective model structure on simplicial presheaves} and denote it by
    \[
    \mathrm{sPSh}(\cat{D})_{\mathrm{inj,loc}}.
    \]
\end{theorem}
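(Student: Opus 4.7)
The plan is to verify the model category axioms M1--M5 for the proposed classes on $\mathrm{sPSh}(\cat{D})$. Axiom M1 is automatic since $\mathrm{sPSh}(\cat{D})$ is a functor category into the complete and cocomplete category $\mathbf{sSet}$, so limits and colimits are computed pointwise. Axiom M3 is routine: objectwise monomorphisms are clearly stable under retracts; local weak equivalences, characterized via an isomorphism of $\tilde{\pi}_0$ and a pullback condition on $\tilde{\pi}_n$, are also preserved, because both isomorphisms of sheaves and pullback squares in $\mathrm{Sh}(\cat{D})$ are preserved by retracts; and fibrations defined by a right lifting property are always retract-closed. Axiom M2 for local weak equivalences follows by unwinding Definition \ref{Definition: Local Weak Equivalence} using the standard pasting law for pullback squares together with exactness of sheafification.

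The substantive content is in M5. For the $(\mathrm{cof},\,\mathrm{triv.fib})$ factorization I would piggyback on the injective model structure of Proposition \ref{Proposition: Existence of inj. / proj.}: it provides a functorial factorization of any $f \colon F \to G$ as $F \hookrightarrow H \twoheadrightarrow G$ with the first map an objectwise monomorphism and the second an objectwise trivial Kan fibration. Objectwise weak equivalences are in particular local weak equivalences, and objectwise trivial Kan fibrations have the right lifting property with respect to all objectwise monomorphisms, hence are fibrations in the local injective structure. This delivers the easy half of M5.

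The genuinely hard half is the $(\mathrm{triv.cof},\,\mathrm{fib})$ factorization, since the trivial cofibrations are defined in terms of sheaf-level homotopical data and are not manifestly generated by a set. The central tool is the \textbf{bounded cofibration lemma}: there exists an infinite regular cardinal $\kappa$, depending on the size of the site $\cat{D}$, such that for any local trivial cofibration $A \hookrightarrow B$ and any sub-presheaf $C \subseteq B$ with $|C| \leq \kappa$, one can find an intermediate sub-presheaf $C \subseteq D \subseteq B$ with $|D| \leq \kappa$ such that $A \cap D \hookrightarrow D$ is again a local trivial cofibration. Granted this, the set $J$ of $\kappa$-bounded local trivial cofibrations generates the whole class, and a small object argument applied to $J$ produces the desired functorial factorization.

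Axiom M4 then follows by standard arguments: one half is the definition of fibrations. For the other half, if $i$ is a cofibration which is also a local weak equivalence and $p$ is a fibration, factor $i = p' j$ with $j \in J\text{-cell}$ a trivial cofibration and $p'$ a fibration; by 2-of-3, $p'$ is also a local weak equivalence, so the lifting problem against $p'$ admits a solution exhibiting $i$ as a retract of $j$, which then lifts against $p$. The main obstacle, and the real work of the proof, is the bounded cofibration lemma itself. The hard point is the cardinality estimate controlling the combinatorial size of the homotopy sheaves $\tilde{\pi}_n$ and the sheafifications involved when enlarging $D$ inductively to absorb the finitely many witnesses required for each generator of each $\tilde{\pi}_n(A \cap D \to D)$ to vanish on a covering family; one must choose $\kappa$ larger than $|\mathrm{Mor}(\cat{D})|$ and than the cardinalities of all covering families in $\cat{D}$, and then carry out a transfinite induction of length $\kappa$ while maintaining the bound.
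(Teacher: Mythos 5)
The paper does not give its own proof of this theorem; it merely cites Jardine's book, so I am comparing your sketch against Jardine's argument. Your overall architecture is correct: the bounded cofibration lemma (a cardinal bound $\kappa$ controlled by the size of the site, letting one shrink any local trivial cofibration to $\kappa$-bounded ones) is indeed the central tool, and the factorizations, the small object argument applied to the set $J$ of $\kappa$-bounded local trivial cofibrations, and the retract manipulations all follow Jardine's route. However, there is a genuine gap in your treatment of M2. You assert the $2$-of-$3$ property follows by unwinding Definition \ref{Definition: Local Weak Equivalence} via the pasting law for pullbacks plus exactness of sheafification, but this is not enough. The troublesome direction is: if $f$ and $gf$ are local weak equivalences, so is $g$. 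At the level of the homotopy-sheaf squares, the pasting law allows cancellation only when the \emph{right-hand} square is already known to be a pullback; to go the other way one needs the map on vertex sheaves $\tilde{X}_0 \rightarrow \tilde{Y}_0$ to be an epimorphism, which the hypotheses do not supply. What rescues the argument is the $\tilde{\pi}_0$-isomorphism combined with the invariance of $\pi_n$ under change of basepoint along a path within one component, a genuinely non-trivial transport step; in Jardine's book this is handled via a Boolean localization, where local weak equivalences become sectionwise weak equivalences, for which $2$-of-$3$ is immediate, and then descend. Passing this off as a routine pasting argument obscures one of the most delicate verifications in the whole theorem.

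A secondary issue: your write-up of M4 discusses the same case twice. After noting that the lifting for $i \in \mathbf{Cof} \cap \mathbf{W}$, $p \in \mathbf{Fib}$ is definitional, the remaining case to treat is $i \in \mathbf{Cof}$, $p \in \mathbf{Fib} \cap \mathbf{W}$; instead your retract-argument paragraph again addresses the first case (or, more charitably, establishes that the right lifting property against $J$ coincides with the right lifting property against all local trivial cofibrations, which is useful but not the missing half). The missing half is standard, shown by factoring an arbitrary local trivial fibration through the injective $(\mathrm{cof},\,\mathrm{triv.fib})$ factorization and using a retract argument to identify it with an objectwise trivial Kan fibration, but note that this argument leans on the $2$-of-$3$ axiom you have not yet secured.
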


\begin{remark}
    By construction, it now follows that the local injective model structure is given by the left Bousfield localization of the injective model structure $\mathrm{sPSh}(\cat{D})_{\mathrm{inj}}$ at the \textbf{class} of local weak equivalences. It is important to note that the existence of a left Bousfield localization at a class of morphisms is not guaranteed. Therefore, it is only possible to identify the local injective model structure as a left Bousfield localization after having proven its existence via Theorem \ref{Theorem: Jardine local structure}. Hence we have:
    \[
    L_{W_{\mathrm{loc}}} \left( \mathrm{sPSh}(\cat{D})_{\mathrm{inj}} \right) = \mathrm{sPSh}(\cat{D})_{\mathrm{inj,loc}}.
    \]
\end{remark}

Recall from Example \ref{Example: Equivalence between proj and inj on diagrams} that the identity functors provide a Quillen equivalence between the projective and the injective model structure on simplicial presheaves on a site $\cat{D}$.
\begin{center}
\begin{tikzcd}
\mathrm{sPSh}(\cat{D})_{\mathrm{proj}} \arrow[r, "\perp", phantom] \arrow[r, "\mathrm{id}", shift left=3] & \mathrm{sPSh}(\cat{D})_{\mathrm{inj}} \arrow[l, "\mathrm{id}", shift left=3]
\end{tikzcd}
\end{center}

The existence of the local injective model structure as a localization at the class of local weak equivalences suggests that also its projective counterpart, \textit{the local projective model structure} should exist and act as the left Bousfield localization of the projective structure. This is indeed the case as shown by Blander \cite[Theorem 1.6]{Blander}. The \textbf{local projective model structure} is denoted by $\mathrm{sPSh}(\cat{D})_{\mathrm{proj,loc}}$ with weak equivalences given by local weak equivalences and cofibrations given by projective cofibrations. In particular, the Quillen equivalence between the projective and injective model structures descends to an equivalence of the respective local model structures.\footnote{See \cite{Dugger-Hollander-Isaksen}}
\begin{equation}\label{Equation: Local injective and local projective Quillen Equivalence}
\begin{tikzcd}
\mathrm{sPSh}(\cat{D})_{\mathrm{proj,loc}} \arrow[r, "\perp", phantom] \arrow[r, "\mathrm{id}", shift left=3] & \mathrm{sPSh}(\cat{D})_{\mathrm{inj,loc}} \arrow[l, "\mathrm{id}", shift left=3]
\end{tikzcd}
\end{equation}

The big question that now arises is how the \v{C}ech-local model structure and Jardine's local model structure on simplicial presheaves $\mathrm{sPSh}(\cat{D})$ are related to each other. It was shown by Dugger--Hollander--Isaksen \cite{Dugger-Hollander-Isaksen} that both the local injective and local projective model structure coincide with the left Bousfield localization of the injective/projective model structure at the collection of all \textit{hypercovers}\footnote{See \cite[Definition 4.2]{Dugger-Hollander-Isaksen} for a precise definition of hypercover.} in $\mathrm{sPSh}(\cat{D})$.

\begin{theorem}[\cite{Dugger-Hollander-Isaksen}] \label{Theorem: The DHI-fibrancy Theorem} $ $
\begin{itemize}
    \item The fibrant objects in the local injective model structure $\mathrm{sPSh}(\cat{D})_{\mathrm{inj,loc}}$ are the simplicial presheaves which
    \begin{enumerate}[label={(\roman*)}]
        \item are fibrant in the injective model structure $\mathrm{sPSh}(\cat{D})_{\mathrm{inj}}$, and
        \item satisfy the descent condition for all hypercovers.
    \end{enumerate}

    \item The fibrant objects in the local projective model structure $\mathrm{sPSh}(\cat{D})_{\mathrm{proj,loc}}$ are the simplicial presheaves which
    \begin{enumerate}[label={(\roman*)}]
        \item are fibrant in the projective model structure $\mathrm{sPSh}(\cat{D})_{\mathrm{proj}}$, i.e. are objectwise fibrant, and
        \item satisfy the descent condition for all hypercovers.
    \end{enumerate}
\end{itemize}
\end{theorem}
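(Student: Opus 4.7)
The proof strategy is to identify both local model structures as left Bousfield localizations at the class of all hypercovers, and then invoke the universal characterization of fibrant objects in a Bousfield localization. Concretely, by Theorem \ref{Theorem: Enriched Bousfield Localization}, for a left Bousfield localization $L_S \cat{M}$ the fibrant objects are precisely those objects of $\cat{M}$ that are both fibrant in $\cat{M}$ and $S$-local in the sense that $\mathbb{R}\mathrm{Map}(B,X) \to \mathbb{R}\mathrm{Map}(A,X)$ is a weak equivalence for every $f \colon A \to B$ in $S$. So the whole theorem reduces to two claims: first, that $\mathrm{sPSh}(\cat{D})_{\mathrm{inj,loc}}$ (respectively the projective version) agrees with the Bousfield localization of $\mathrm{sPSh}(\cat{D})_{\mathrm{inj}}$ (respectively $\mathrm{sPSh}(\cat{D})_{\mathrm{proj}}$) at the collection of hypercover maps $\cat{H}(U_\bullet) \to \cat{Y}(D)$; and second, that being $\cat{H}$-local is the same as satisfying the hypercover descent condition.

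For the first claim, I would proceed in two steps. I would first show that every hypercover map is a local weak equivalence in the sense of Definition \ref{Definition: Local Weak Equivalence}: a straightforward inspection of the sheaves of homotopy groups $\tilde{\pi}_n$ shows that the augmentation $\cat{H}(U_\bullet) \to \cat{Y}(D)$ induces an isomorphism on all $\tilde\pi_n$, using that the sheafification of the coend-like presheaf $\pi_n(\cat{H}(U_\bullet))$ collapses by the acyclicity of the hypercover on stalks. This embeds the localization at hypercovers into the local model structure. Conversely, one must show that every local weak equivalence is a weak equivalence in the Bousfield-localized structure; here I would reduce to the case of maps between local-fibrant simplicial presheaves and invoke a Whitehead-style argument: a map between hypercover-local fibrant objects that induces isomorphisms on $\tilde{\pi}_n$ is already an objectwise weak equivalence, by a stalkwise or Verdier-hypercovering argument.

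For the second claim, I would repeat the argument sketched for Lemma \ref{Lemma: S-local equals Cech descent} but with hypercovers replacing \v{C}ech complexes. An objectwise fibrant $F$ is $\cat{H}$-local precisely when, for each hypercover $U_\bullet \to D$, the natural map $F(D) \to \mathbb{R}\mathrm{Map}(\cat{H}(U_\bullet), F)$ is a weak equivalence; unpacking the right-hand side as the homotopy limit of the cosimplicial object $[n] \mapsto F(U_n)$ recovers exactly the descent condition. The proof of this unpacking uses that the hypercover is, levelwise, a coproduct of representables, together with the Yoneda Lemma applied in the enriched setting.

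The main obstacle is the second step of the first claim, namely that the local weak equivalences defined via sheaves of homotopy groups are not a priori generated (even as the saturated closure) by hypercovers. This is the heart of the Dugger--Hollander--Isaksen result and ultimately rests on a non-trivial input: given a local-fibrant simplicial presheaf $F$, sheaves of homotopy groups compute the actual homotopy groups of the mapping spaces $\mathbb{R}\mathrm{Map}(\cat{Y}(D),F)$ locally, so that a local weak equivalence between local-fibrant objects is automatically an objectwise, hence injective/projective, weak equivalence. Once this identification is secured, the theorem follows cleanly by combining it with the Bousfield localization machinery of Theorem \ref{Theorem: Enriched Bousfield Localization}. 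The projective variant is then handled by the same argument, observing along the way that the Quillen equivalence \eqref{Equation: Local injective and local projective Quillen Equivalence} ensures the two fibrancy characterizations differ only in replacing objectwise-injective fibrancy by objectwise Kan-fibrancy.
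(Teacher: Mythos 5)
The paper does not prove this statement; it is taken as a citation, with Theorem~1.1 of Dugger--Hollander--Isaksen covering the injective case and Theorems~1.3 and~6.2 covering the projective case. Your proposal, by contrast, actually tries to reconstruct the argument, and the outline you give is recognizably the right one: identify the local model structures as Bousfield localizations at hypercovers, invoke the characterization of fibrant objects in a localization, and unpack hypercover-locality as hypercover descent, with the crux being the Whitehead/Verdier-hypercovering argument that a local weak equivalence between hypercover-local fibrant presheaves is already objectwise. You are honest that this last step is the nontrivial input, which is correct.

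There is, however, one genuine gap you should not pass over. You state the strategy as localizing at \emph{the class of all hypercovers} and then directly invoke Theorem~\ref{Theorem: Enriched Bousfield Localization}. But that theorem (Barwick's combinatorial Bousfield localization) requires a \emph{set} of arrows; the collection of all hypercovers of all objects is a proper class, and left Bousfield localization at an arbitrary class is not guaranteed to exist --- the paper itself flags exactly this point in the remark after Theorem~\ref{Theorem: Jardine local structure}. The actual Dugger--Hollander--Isaksen argument deals with this by showing that a small set of hypercovers (a set of ``dense'' or ``internal'' hypercovers, bounded by cardinality) already generates the same local equivalences, and only then is the Bousfield machinery applicable; the class-versus-set reduction is precisely what makes their Theorem~1.1 a theorem rather than a tautology. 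Without this reduction, your invocation of Theorem~\ref{Theorem: Enriched Bousfield Localization} does not typecheck, and the claimed identification $\mathrm{sPSh}(\cat{D})_{\mathrm{inj,loc}} = L_{\cat{H}}\,\mathrm{sPSh}(\cat{D})_{\mathrm{inj}}$ is not justified by the tools you cite. If you add the cardinality-bound step, the rest of the sketch goes through as you describe.
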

\begin{proof}
    The statement for the local injective model structure is precisely \cite[Theorem 1.1]{Dugger-Hollander-Isaksen}. The case for the local projective model structure then follows from \cite[Theorem 1.3]{Dugger-Hollander-Isaksen} together with \cite[Theorem 6.2]{Dugger-Hollander-Isaksen}.
\end{proof}

Here one should think of a \textit{hypercover} as a generalized version of a \v{C}ech-complex. For an arbitrary site $\cat{D}$, descent with respect to \textit{hypercovers} is a stronger condition than \v{C}ech descent. However, there are sites where these two descent conditions agree and the according model structures coincide. This motivates the following definition.

\begin{definition} \label{Definition: Hypercomplete Site}
Let $\cat{D}$ be a site. We say that $\cat{D}$ is a \textbf{hypercomplete site} if, in the associated category of simplicial presheaves $\mathrm{sPSh}(\cat{D})$, the notion of hypercover descent coincides with the notion of \v{C}ech descent. More precisely, if in both, the injective and the projective case, the local and \v{C}ech-local model structures coincide:
\begin{align*}
    \mathrm{sPSh}(\cat{D})_{\mathrm{inj,loc}} &= \mathrm{sPSh}(\cat{D})_{\mathrm{inj,\check{C}}} \\
    \mathrm{sPSh}(\cat{D})_{\mathrm{proj,loc}} &= \mathrm{sPSh}(\cat{D})_{\mathrm{proj,\check{C}}}.
\end{align*}
\end{definition}

\begin{remark}
The notion of \textit{hypercompleteness} is usually phrased in the language of $\infty$-topoi such as presented in \cite[Section~6.5]{Lurie}. However, to justify the definition of a hypercomplete site as stated above, notice that by \cite[Proposition~6.5.2.14]{Lurie} it follows that the local injective model structure on the category of simplicial presheaves $\mathrm{sPSh}(\cat{D})$ on some site $\cat{D}$ presents indeed a hypercomplete $\infty$-topos. Moreover, the local injective model structure is a further left Bousfield localization of the \v{C}ech-local model structure
\[
\mathrm{PSh}(\cat{D})_{\mathrm{inj,\check{C}}} \rightarrow \mathrm{PSh}(\cat{D})_{\mathrm{inj,loc}}
\]
and this localization identifies $\mathrm{PSh}(\cat{D})_{\mathrm{inj,loc}}$ as the \textit{hypercompletion} of $\mathrm{PSh}(\cat{D})_{\mathrm{inj,\check{C}}}$. Since in a \textit{hypercomplete site} the notion of hypercover descent coincides with the notion of \v{C}ech descent, it then follows that the \v{C}ech-local injective and local injective model structures share the same cofibrations and fibrant objects. Since a model category is determined by the class of cofibrations and fibrant objects the two model structures indeed coincide.
\end{remark}

\begin{theorem}[\cite{Amabel-Debray-Haine}, \cite{Schreiber}] \label{Theorem: Hypercompleteness of Cart}
    The Cartesian site $\mathbf{Cart}$ is a hypercomplete site.
\end{theorem}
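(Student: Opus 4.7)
The plan is to reduce the claim to an identification of fibrant objects. In both the injective and the projective case, the local model structure is by construction a further left Bousfield localization of the \v{C}ech-local one, and the two share the same cofibrations (monomorphisms in the injective case, projective cofibrations in the projective case). Hence it suffices to show that in each case the fibrant objects coincide. By Theorem \ref{Theorem: The DHI-fibrancy Theorem} and Proposition \ref{Proposition: Cech Local Model Structures}, this amounts to proving that an (injectively or projectively) fibrant simplicial presheaf $F$ on $\mathbf{Cart}$ satisfying \v{C}ech descent automatically satisfies descent for every hypercover. Having shown the injective case, the projective case is a formal consequence: the identity Quillen equivalence \eqref{Equation: Local injective and local projective Quillen Equivalence} restricts to a Quillen equivalence between the respective \v{C}ech-local structures by the same argument used for the local structures in \cite{Dugger-Hollander-Isaksen}.

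The geometric input I would exploit is the specific shape of the Cartesian site. Every $U \in \mathbf{Cart}$ is diffeomorphic to some $\mathbb{R}^n$, hence contractible, and a good open cover $\{U_i \to U\}$ has the defining property that every finite intersection $U_{i_0 \cdots i_k}$ is again an object of $\mathbf{Cart}$, and in particular contractible. Consequently the \v{C}ech complex $\check{C}(U_U)$ associated to any good open cover has each simplicial level a coproduct of Cartesian spaces, and the map $\check{C}(U_U) \to \cat{Y}(U)$ is a local weak equivalence in the sense of Definition \ref{Definition: Local Weak Equivalence}. This, combined with the fact that each Cartesian space $\mathbb{R}^n$ has covering dimension $n$, allows one to estimate the homotopy dimension of each slice $\mathrm{Sh}_\infty(\mathbf{Cart})_{/U}$ of the $\infty$-topos of sheaves on $\mathbf{Cart}$: it is finite and bounded in terms of $\dim U$.

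To conclude, I would invoke Lurie's criterion \cite[Corollary 7.2.1.12]{Lurie}: an $\infty$-topos that is locally of finite homotopy dimension is hypercomplete. Translating back through the presentation of the $\infty$-topos of $\infty$-sheaves on $\mathbf{Cart}$ by the \v{C}ech-local injective model structure (cf.\ \cite[Proposition 6.5.2.14]{Lurie}), hypercompleteness of the $\infty$-topos means exactly that the further localization that produces the local injective model structure is trivial, so the two model structures agree. Alternatively, and more hands-on, one can prove the equality by showing directly that every hypercover $U_\bullet \to \cat{Y}(U)$ of a Cartesian space admits, after passing to a suitable refinement, a factorization through a \v{C}ech complex of a good open cover; contractibility of all finite intersections then forces a \v{C}ech-descent map to be a hypercover-descent map as well.

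The main obstacle I expect is the precise bookkeeping of the dimension estimate: one must verify uniformly in $U$ that the slice topos $\mathrm{Sh}_\infty(\mathbf{Cart})_{/U}$ has finite homotopy dimension, not merely finite cohomological dimension. The standard way around this is to use the fact that good open covers of $U \cong \mathbb{R}^n$ can always be refined to ones whose nerves are of dimension at most $n$ (by the Lebesgue covering theorem applied to a paracompactification); once this combinatorial bound is available, the application of Lurie's criterion is formal, and the remainder of the argument is routine model-categorical translation.
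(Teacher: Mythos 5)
The paper's own proof of this statement is a bare citation to \cite{Amabel-Debray-Haine} and \cite{Schreiber}, so you are supplying an argument where the paper gives none. Your high-level strategy — reduce to identifying fibrant objects in the local versus \v{C}ech-local structures, then show \v{C}ech descent implies hypercover descent by invoking Lurie's criterion that an $\infty$-topos locally of finite homotopy dimension is hypercomplete — is the standard route and likely mirrors what the cited references do.

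The step that does not hold up as written is the dimension estimate. You propose to bound the homotopy dimension of $\mathrm{Sh}_\infty(\mathbf{Cart})_{/\cat{Y}(U)}$ by the covering dimension of $U$, invoking HTT Theorem 7.2.3.6 for paracompact spaces. But that theorem bounds the homotopy dimension of the \emph{little} $\infty$-topos $\mathrm{Sh}_\infty(U)$ of sheaves on the topological space $U$, whereas the slice $\mathrm{Sh}_\infty(\mathbf{Cart})_{/\cat{Y}(U)} \simeq \mathrm{Sh}_\infty(\mathbf{Cart}_{/U})$ is sheaves on the slice \emph{site}, whose objects are all smooth maps $V \to U$ from Cartesian $V$, not just open embeddings into $U$. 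These are very different $\infty$-topoi, and the covering dimension of $U$ gives no direct control over the slice of the big topos. The case $U = \mathrm{pt}$ makes the issue concrete: $\mathrm{Sh}_\infty(\mathbf{Cart})_{/\cat{Y}(\mathrm{pt})}$ is the whole $\infty$-topos, so your claimed bound would already assert that $\mathrm{Sh}_\infty(\mathbf{Cart})$ has homotopy dimension $0$ — which your Lebesgue-covering argument does not establish.

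That assertion is nonetheless true, and proving it directly yields the cleanest route. The terminal object $\mathrm{pt} = \mathbb{R}^0 \in \mathbf{Cart}$ admits no nontrivial good covers, so sheafification does not alter sections over $\mathrm{pt}$ and the global sections functor of $\mathrm{Sh}_\infty(\mathbf{Cart})$ is $\mathrm{ev}_{\mathrm{pt}}$. If $F \to *$ is an effective epimorphism, then $\tau_{\leq 0}F \to *$ is locally surjective, and over $\mathrm{pt}$ — where covers are trivial — this forces $\tau_{\leq 0}F(\mathrm{pt}) = \pi_0\bigl(F(\mathrm{pt})\bigr) \neq \emptyset$. Any vertex of $F(\mathrm{pt})$ is then a global section $* \to F$, so $\mathrm{Sh}_\infty(\mathbf{Cart})$ has homotopy dimension $\leq 0$ and Lurie's criterion applies in one step, with no slices or covering-dimension bookkeeping. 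Your hands-on fallback (refining every hypercover of a Cartesian space through a \v{C}ech cover) is also viable, but it carries essentially the same content with more combinatorics and would still require you to control the depth of the refinement.
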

\begin{proof}
    In \cite{Amabel-Debray-Haine} it is shown in Proposition A.5.3 that for the site of manifolds $\mathbf{Mfd}$ with covering families given by open covers, the associated category of simplicial presheaves is a hypercomplete $\infty$-topos. For the Cartesian site $\mathbf{Cart}$ we refer to \cite[Proposition 4.4.6]{Schreiber}.
\end{proof}

It is due to this theorem that whenever we are considering simplicial presheaves on the Cartesian site $\mathbf{Cart}$ the terms local and \v{C}ech-local coincide and we can use both terms interchangeably.

\subsection{About Homotopy (Co-)Limits}

Homotopy limits and colimits in their full generality do not play an essential role in this thesis. However, they are indispensable in the treatment of descent of simplicial presheaves. A very thorough introduction to homotopy limits and colimits can be found in the book by Riehl \cite{Riehl}. Nevertheless, for the sake of self-containment, this section provides a brief discussion of homotopy limits and colimits in the case of simplicial/cosimplicial spaces, as the diagrams arising in the definition of \v{C}ech descent are precisely of this form. \\

Recall the case of classical limits and colimits, i.e. given diagrams indexed by some small category $\cat{C}$ and taking values in simplicial sets, there are functors
\[
\begin{array}{rcl}
  [\cat{C},\mathbf{sSet}] & \rightarrow  & \mathbf{sSet} \\
     F & \mapsto & \mathrm{lim}_{\cat{C}} \, F \\
     F & \mapsto & \mathrm{colim}_{\cat{C}} \, F,
\end{array}
\]
which assign to each diagram $F$ either its limit or colimit. A homotopy limit/colimit should be understood as a universal way of approximating the classical limit/colimit functor by a functor that \textit{preserves weak equivalences}. Here the weak equivalences in the diagram category are objectwise weak equivalences. Therefore, a homotopy limit is a functor
\[
\mathrm{holim}_{\cat{C}} \colon [\cat{C},\mathbf{sSet}] \rightarrow \mathbf{sSet}
\]
that preserves weak equivalences and at the same time is a universal replacement of the classical limit functor with this property. In Remark \ref{Remark: Derived Functors} we have introduced left and right-derived functors, which incorporate exactly this feature. Therefore, define
\begin{align*}
&\mathrm{holim}_{\cat{C}} := \mathbb{R}  \mathrm{lim}_{\cat{C}}, \text{ and} \\
&\mathrm{hocolim}_{\cat{C}} := \mathbb{L}  \mathrm{colim}_{\cat{C}}.
\end{align*}

In light of Definition \ref{Definition: Left/Right derived functors}, it follows that homotopy limits/colimits being right/left derived functors can be defined in special cases by endowing the functor category $[\cat{C},\mathbf{sSet}]$ with a suitable model structure having as weak equivalences the objectwise weak equivalences and such that the classical limit/colimit functors are right/left Quillen functors.\footnote{See for example \cite[Theorem~14.3.1]{Riehl} in the case where the indexing category $\cat{D}$ is a certain Reedy category.} \\

In the special case where the indexing category is given by the simplex category $\Delta$ or $\Delta^{\mathrm{op}}$, as occurring in Definition \ref{Definition: Cech descent}, the homotopy limits/colimits can be computed as follows.
\begin{proposition} $ $
\begin{enumerate}[label={(\arabic*)}]
    \item  Let $F : \Delta \rightarrow \mathbf{sSet}$ be a cosimplicial space. Denote by $R(F)$ the fibrant replacement of $F$ in the Reedy model structure on $[\Delta,\mathbf{sSet}]$. Then the homotopy limit of $F$ is modeled by the totalization of the fibrant replacement
    \[
    \mathrm{holim} \, F \simeq \mathrm{Tot}(R(F)).
    \]
    \item Let $F : \Delta^{\mathrm{op}} \rightarrow \mathbf{sSet}$ be a simplicial space. Then the homotopy colimit of $F$ is modeled by the diagonal simplicial set
    \[
    \mathrm{hocolim} \, F \simeq d(F).
    \]
\end{enumerate}
\end{proposition}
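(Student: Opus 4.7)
The starting point is to equip both $[\Delta,\mathbf{sSet}]$ and $[\Delta^{\mathrm{op}},\mathbf{sSet}]$ with the Reedy model structure, whose weak equivalences are the objectwise weak equivalences. Under this model structure, the constant diagram functor $c \colon \mathbf{sSet} \to [\Delta,\mathbf{sSet}]$ is left Quillen, so its right adjoint $\mathrm{lim}_{\Delta} \colon [\Delta,\mathbf{sSet}] \to \mathbf{sSet}$ is right Quillen; dually, $c \colon \mathbf{sSet} \to [\Delta^{\mathrm{op}},\mathbf{sSet}]$ is right Quillen, making $\mathrm{colim}_{\Delta^{\mathrm{op}}}$ left Quillen. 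Once this is established, by the definition of right/left derived functors in Definition \ref{Definition: Left/Right derived functors} we have
\[
\mathrm{holim}\,F \;=\; \mathbb{R}\mathrm{lim}\,F \;=\; \mathrm{lim}(R(F)),
\qquad
\mathrm{hocolim}\,F \;=\; \mathbb{L}\mathrm{colim}\,F \;=\; \mathrm{colim}(Q(F)),
\]
with $R$, $Q$ Reedy fibrant and cofibrant replacement functors respectively.

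For part (1), the plan is then to identify $\mathrm{lim}(R(F))$ with the totalization $\mathrm{Tot}(R(F))$. Recall that the totalization of a cosimplicial simplicial set $X^{\bullet}$ is the end $\mathrm{Tot}(X^{\bullet}) = \int_{[n]\in\Delta} (X^n)^{\Delta[n]}$, or equivalently the equalizer
\[
\mathrm{Tot}(X^{\bullet}) \;\longrightarrow\; \prod_{n}(X^n)^{\Delta[n]} \;\rightrightarrows\; \prod_{[m]\to[n]}(X^n)^{\Delta[m]}.
\]
The reduction $\Delta[n] \rightsquigarrow \Delta[0]$ gives a natural comparison $\mathrm{Tot}(X^{\bullet}) \to \mathrm{lim}\,X^{\bullet}$, which is a weak equivalence precisely when $X^{\bullet}$ is Reedy fibrant, since Reedy fibrancy of $X^{\bullet}$ is exactly the condition that the matching maps $X^n \to M^n X$ are fibrations, guaranteeing that the two expressions agree up to homotopy. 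Applying this to $R(F)$ yields the desired $\mathrm{holim}\,F \simeq \mathrm{Tot}(R(F))$.

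For part (2), view a simplicial space $F \colon \Delta^{\mathrm{op}}\to\mathbf{sSet}$ as a bisimplicial set, and consider the diagonal $d(F)_n = F_{n,n}$. The strategy is to exhibit $d$ as a model for $\mathbb{L}\mathrm{colim}$ by verifying the two defining properties of a left derived functor: (i) $d$ preserves objectwise weak equivalences, and (ii) there is a natural transformation $d \Rightarrow \mathrm{colim}$ that is a weak equivalence when restricted to Reedy cofibrant diagrams. Property (i) is a classical theorem on bisimplicial sets (Bousfield--Friedlander), which I would quote. For (ii), one exploits the coend presentation $\mathrm{hocolim}\,F \cong F \otimes_{\Delta} \Delta[\bullet]$, together with the identity $d(F) \cong F \otimes_{\Delta} \Delta[\bullet]$ obtained from the Yoneda lemma, to realize $d(F)$ as the Bousfield--Kan homotopy colimit formula. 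Then, because the Reedy cofibrant replacement $Q(F) \to F$ is an objectwise weak equivalence, property (i) forces $d(Q(F)) \simeq d(F)$, and property (ii) gives $d(Q(F)) \simeq \mathrm{colim}(Q(F)) = \mathrm{hocolim}\,F$.

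The main obstacle is checking the compatibility between the Reedy fibrancy/cofibrancy conditions and the point-set formulas for $\mathrm{Tot}$ and the diagonal; in particular, the non-trivial step for (1) is the comparison $\mathrm{Tot}(X^{\bullet}) \simeq \mathrm{lim}\,X^{\bullet}$ for Reedy fibrant $X^{\bullet}$, and for (2) the identification of the diagonal with the coend $F \otimes_{\Delta}\Delta[\bullet]$ together with the fact that the latter computes $\mathrm{hocolim}$ on Reedy cofibrant inputs. Both are standard consequences of Reedy theory, and I would cite the relevant results from Hirschhorn and Riehl rather than reproduce them.
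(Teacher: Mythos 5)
There is a fundamental error in the opening paragraph that undermines the whole strategy. The constant functor $c \colon \mathbf{sSet} \to [\Delta,\mathbf{sSet}]_{\mathrm{Reedy}}$ is \emph{not} left Quillen, and $c \colon \mathbf{sSet} \to [\Delta^{\mathrm{op}},\mathbf{sSet}]_{\mathrm{Reedy}}$ is \emph{not} right Quillen. To see the first: for $n = 1$, the latching category of $\Delta$ consists of the two coface maps $[0] \hookrightarrow [1]$ with no morphisms between them, so the latching object of the constant diagram at $X$ is $X \sqcup X$ and the latching map is the fold map $X \sqcup X \to X$, which fails to be a monomorphism as soon as $X \neq \emptyset$; hence $c(X)$ is not even Reedy cofibrant for cofibrant $X$. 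Dually, the degree-$1$ matching map of the constant bisimplicial set on a Kan complex $X$ is the diagonal $X \to X \times X$, which is not a Kan fibration in general. In the terminology of Hirschhorn \S 15.10, $\Delta$ has \emph{fibrant} constants but not \emph{cofibrant} constants, and $\Delta^{\mathrm{op}}$ has cofibrant but not fibrant constants. The Reedy structure therefore makes $\mathrm{colim}$ left Quillen on cosimplicial objects and $\lim$ right Quillen on simplicial objects --- exactly the opposite of what you need for $\mathrm{holim}$ of a cosimplicial space and $\mathrm{hocolim}$ of a simplicial space. Consequently the identities $\mathrm{holim}\,F = \lim R(F)$ and $\mathrm{hocolim}\,F = \mathrm{colim}\,Q(F)$ do not follow, and in fact both fail: taking the cobar construction $X^n = K^{G^n}$ for a discrete group $G$ acting on a Kan complex $K$ gives a Reedy fibrant cosimplicial space with $\lim X^\bullet = K^G$ (strict fixed points) while $\mathrm{Tot}\,X^\bullet = K^{hG}$, and for $K = K(\mathbb{Z},2)$, $G = \mathbb{Z}/2$ trivial, these are inequivalent; dually, the levelwise-discrete nerve $X_n = G^n$ is Reedy cofibrant (every bisimplicial set is), yet $d(X) = BG$ while $\mathrm{colim}\,X = *$.

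This is precisely why the passage from Reedy fibrancy/cofibrancy to the homotopy (co)limit cannot go through the strict $\lim/\mathrm{colim}$: one must instead use the enriched end/coend against the Reedy cofibrant, objectwise contractible cosimplicial simplicial set $\Delta[\bullet]$, which is the content of the Bousfield--Kan theorem (Hirschhorn, Theorems 18.7.4 and 18.7.5 / Corollary 18.7.7). The claims in your plan that ``Reedy fibrancy guarantees $\mathrm{Tot}$ and $\lim$ agree up to homotopy'' and ``there is a natural transformation $d \Rightarrow \mathrm{colim}$ that is a weak equivalence on Reedy cofibrant diagrams'' are both false for the same reason. The paper's proof avoids all of this by quoting the Bousfield--Kan/Hirschhorn comparison directly, which is the correct and essentially unavoidable move here; the supporting observation in the paper that homotopy limits send objectwise weak equivalences (in particular the map $F \to R(F)$) to weak equivalences is all that is additionally needed. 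A minor further slip in your write-up: the natural comparison induced by $\Delta[n] \to \Delta[0]$ runs $\lim X^\bullet \to \mathrm{Tot}\,X^\bullet$, not the other way around.
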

\begin{proof}
    For $(2)$ the statement is \cite[Corollary~18.7.7]{Hirschhorn}. For $(1)$ notice first that, for $F \in [\Delta,\mathbf{sSet}]$ a Reedy fibrant cosimplicial space, the Bousfield-Kan map
    \[
    \mathrm{Tot}(F) \xrightarrow{\sim} \mathrm{holim} \, F
    \]
    is a natural weak equivalence by \cite[Theorem~18.7.4]{Hirschhorn}. As homotopy limits preserve weak equivalences, it then follows:
    \[
    \mathrm{holim} \, F \simeq \mathrm{holim} \, R(F) \simeq \mathrm{Tot}(R(F)).
    \]
\end{proof}

To see how the totalization can be used to check the \v{C}ech descent condition, notice that in the case we are given an objectwise fibrant simplicial presheaf $F \in \mathrm{sPSh}(\cat{D})$, then the associated cosimplicial space given a covering family $\left\{ D_i \rightarrow D \, | \, i \in I \right\}$ denoted $F\left( \check{C}(D_i \rightarrow D) \right)$
\begin{center}
\begin{tikzcd}
   \prod_{i \in I}F(D_i) \arrow[r, shift left=2] \arrow[r, shift right=2] & {\prod_{i_0,i_i \in I}F(D_{i_0i_1})} \arrow[l] \arrow[r] \arrow[r, shift left=4] \arrow[r, shift right=4] & {\prod_{i_0,i_i,i_2 \in I}F(D_{i_0i_1i_2})} \arrow[l, shift left=2] \arrow[l, shift right=2] \cdots
\end{tikzcd}
\end{center}
is fibrant in the Reedy model structure on cosimplicial spaces $\mathbf{sSet}^{\Delta}$ by \cite[Lemma C.5]{Glass-etAl}. Hence by the previous proposition it follows that
\[
    \mathrm{holim}_{[n] \in \Delta}  F\left( \check{C}(D_i \rightarrow D) \right)_n \simeq \mathrm{Tot} \left( F\left( \check{C}(D_i \rightarrow D) \right) \right).
\]
In Definition \ref{Definition: Cech descent} however, the homotopy limit is taken over the diagram
\begin{center}
    \begin{tikzcd}
        \underset{i \in I}{\prod}F(D_i) \arrow[r, shift left=2] \arrow[r, shift right=2] & {\underset{(i_0,i_1) \in I \times I}{\prod} F(D_{i_0i_1})} \arrow[r, shift left=3] \arrow[r] \arrow[r, shift right=3] & {\underset{(i_0i_1i_2) \in I \times I \times I}{\prod} F(D_{i_0i_1i_2})} \arrow[r, shift left=3] \arrow[r, shift left] \arrow[r, shift right] \arrow[r, shift right=3] & \cdots
       \end{tikzcd}
\end{center}
which is a semi-cosimplicial space. That is an element of $\mathbf{sSet}^{\Delta_+}$ where $\Delta_+$ is the subcategory of the simplex category containing only injective functions. It is now a consequence of the inclusion functor $j \colon \Delta_+ \hookrightarrow \Delta$ being left cofinal \cite[Section 3.17]{Dror-Dwyer} together with the fact that $F$ is objectwise fibrant, that we can apply the cofinality theorem \cite[Theorem 3.12]{Dror-Dwyer} which implies that the homotopy limit of both, the cosimplicial and the semi-cosimplicial space agree up to weak equivalence. This shows that to check \v{C}ech descent for $F$ it is equivalent to checking that the canonical map
\[
F(D) \rightarrow \mathrm{Tot} \left( F\left( \check{C}(D_i \rightarrow D) \right) \right)
\]
is a weak equivalence of simplicial sets.

\section{Sheaf Cohomology}

In this section, we introduce the cohomology for arbitrary simplicial presheaves $X$ on a site $\cat{D}$ with coefficients in an abelian presheaf $A$ following Jardine's book \cite{JardineBook} as the main reference. Therefore, in what follows, we will always consider the category of simplicial presheaves $\mathrm{sPSh}(\cat{D})$ endowed with the \textbf{local injective model structure}  of Theorem \ref{Theorem: Jardine local structure}.

\subsection{The Cohomology of a Simplicial Presheaf: Theory}

\begin{definition} \label{Definition: Cohomology of simplicial presheaf}
Let $X$ be a simplicial presheaf and $A$ an abelian presheaf on a site $\cat{D}$. Then the $n$\textbf{-th cohomology of} $X$ \textbf{with coefficients in} $A$ is given by
\[
H_{\infty}^n(X,A) := \pi_0 \mathbb{R}\mathrm{Map}(X,K(A,n)),
\]
where $K(A,n) := \Gamma(A[-n])$ is the Eilenberg--MacLane object in degree $n$ associated with the abelian presheaf $A$.
\end{definition}

Here the derived hom space is considered to be taken in the local injective model structure. Further, to clarify what we mean by the Eilenberg--MacLane object $K(A,n)$, recall the Dold--Kan correspondence being a link between homological algebra and simplicial homotopy theory.

\begin{theorem}[\cite{JardineEt}]  \label{Theorem: Dold-Kan correspondence}
    Let $\mathbf{Ab}$ denote the category of abelian groups and $\mathbf{sAb}$ the category of simplicial abelian groups. Then the normalized chain complex functor $N$ induces an adjoint equivalence of categories
    \begin{center}
\begin{tikzcd}
\mathbf{sAb} \arrow[r, "N"', shift right=3] \arrow[r, "\perp", phantom] & \mathrm{Ch}_{\geq 0}(\mathbf{Ab}). \arrow[l, "\Gamma"', shift right=3]
\end{tikzcd}
    \end{center}
\end{theorem}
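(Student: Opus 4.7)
The plan is to establish the adjoint equivalence by constructing both functors explicitly, exhibiting the adjunction, and then verifying that the unit and counit are isomorphisms via a canonical decomposition of simplicial abelian groups.

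First I would define the normalized chain complex functor. Given $A \in \mathbf{sAb}$, set $N_n(A) := \bigcap_{i=0}^{n-1} \ker(d_i) \subseteq A_n$ with differential $\partial := (-1)^n d_n$. A quick calculation using the simplicial identities shows $\partial^2 = 0$, so this defines a functor $N \colon \mathbf{sAb} \to \mathrm{Ch}_{\geq 0}(\mathbf{Ab})$. In parallel, I would introduce the Moore complex $M(A)$ with $M_n(A) := A_n$ and differential $\sum_{i=0}^n (-1)^i d_i$, together with the subcomplex of degeneracies $D_n(A) := \sum_{i} s_i(A_{n-1})$. The first key technical lemma is the internal splitting $M(A) = N(A) \oplus D(A)$ as chain complexes, with $N(A)$ identified with the quotient $M(A)/D(A)$; this follows from a careful induction using the identity $s_j s_i = s_i s_{j+1}$ and the fact that the degeneracies are split monomorphisms.

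Next I would construct the inverse functor $\Gamma \colon \mathrm{Ch}_{\geq 0}(\mathbf{Ab}) \to \mathbf{sAb}$. For a chain complex $C_\bullet$, set
\[
\Gamma(C)_n := \bigoplus_{\sigma \colon [n] \twoheadrightarrow [k]} C_k,
\]
with the sum ranging over order-preserving surjections. The face and degeneracy operators are defined using the unique epi–mono factorization in $\Delta$: given $\alpha \colon [m] \to [n]$ and a summand indexed by $\sigma \colon [n] \twoheadrightarrow [k]$, factor $\sigma \circ \alpha = \tau \circ \mu$ with $\tau$ a surjection and $\mu \colon [j] \hookrightarrow [k]$ an injection, and then map into the $\tau$-summand via the identity if $\mu$ is the identity, via $\partial$ (up to sign) if $\mu$ is the last coface $d^k$, and via zero otherwise. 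Checking that the simplicial identities hold is a bookkeeping exercise on the category $\Delta$.

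I would then establish the adjunction $N \dashv \Gamma$ by writing down the unit $\eta \colon \mathrm{id} \Rightarrow \Gamma N$ and counit $\varepsilon \colon N\Gamma \Rightarrow \mathrm{id}$ directly. The counit is the easier direction: by construction, $N(\Gamma(C))_n$ picks out exactly the summand of $\Gamma(C)_n$ indexed by the identity surjection, which is $C_n$, and one checks that the induced differential matches that of $C$, giving an isomorphism $\varepsilon_C \colon N\Gamma(C) \xrightarrow{\cong} C$. For the unit $\eta_A \colon A \to \Gamma N(A)$, one uses the decomposition from the first step: the element $a \in A_n$ decomposes uniquely as $a = \sum_\sigma \sigma^*(x_\sigma)$ with $x_\sigma \in N_k(A)$ for $\sigma \colon [n] \twoheadrightarrow [k]$, which determines the components of $\eta_A$.

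The main obstacle, and the step I expect to require the most care, is showing that the unit $\eta$ is an isomorphism. This reduces to the statement that every element of $A_n$ admits a unique expression $a = \sum_\sigma \sigma^*(x_\sigma)$ with $x_\sigma \in N(A)_k$; this is a non-trivial uniqueness-of-decomposition result that must be proved by induction on $n$, exploiting the splitting $M(A) = N(A) \oplus D(A)$ and the epi–mono factorization in $\Delta$. Once this decomposition is in hand, the two triangle identities follow formally, naturality is clear from the functorial definitions, and the equivalence of categories is immediate.
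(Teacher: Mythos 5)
The paper states the Dold--Kan correspondence as a cited classical result (attributed to \cite{JardineEt}) and does not supply a proof, so there is no in-paper argument to compare against; you are effectively reproving a textbook theorem. Your outline is the standard proof (as in Goerss--Jardine, Chapter III, or Weibel, \S 8.4): define $N$ via the intersection of kernels with $\partial = (-1)^n d_n$, prove the internal splitting $M(A) = N(A) \oplus D(A)$ of the Moore complex against the degeneracy subcomplex by induction on the simplicial identities, construct $\Gamma$ as a direct sum over order-preserving surjections with structure maps determined by the epi--mono factorization in $\Delta$, observe that $N\Gamma(C)$ isolates the summand indexed by the identity surjection so $\varepsilon$ is an isomorphism, and reduce the unit being an isomorphism to the unique decomposition $a = \sum_\sigma \sigma^*(x_\sigma)$ with $x_\sigma \in N(A)$. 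You correctly flag that this last uniqueness is the real work, proved by induction using the splitting. The argument is correct and complete in outline.

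Two small points. First, the factorization in $\Delta$ should be written $\sigma \circ \alpha = \mu \circ \tau$ with $\tau \colon [m] \twoheadrightarrow [j]$ surjective and $\mu \colon [j] \hookrightarrow [k]$ injective --- the monomorphism on the outside; as you have written it, $\tau \circ \mu$ does not type-check since $\sigma \circ \alpha$ has domain $[m]$, not $[j]$. Second, you choose to exhibit $N \dashv \Gamma$, while the paper's diagram displays $\Gamma \dashv N$; since the functors form an equivalence, both adjunctions hold, and the paper itself notes this in the remark immediately following the theorem, so this is a difference in presentation rather than substance.
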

\begin{remark}
    Notice that since the above is an equivalence of categories, we also have that $N \dashv \Gamma$. Moreover, this equivalence of categories induces an equivalence of presheaf categories
    \begin{equation} \label{Equation: Presheaf Dold-Kan version}
\begin{tikzcd} \
{N: \mathrm{PSh}(\cat{D},\mathbf{sAb})} \arrow[r, shift right=3] \arrow[r, "\perp", phantom] & {\mathrm{Ch}_{\geq 0}(\mathrm{PSh}(\cat{D}, \mathbf{Ab})) : \Gamma}, \arrow[l, shift right=3]
\end{tikzcd}
\end{equation}
where on the left-hand side we have the category of presheaves of simplicial abelian groups on the site $\cat{D}$ and on the right-hand side we have the category of bounded below chain complexes of presheaves of abelian groups on $\cat{D}$.
\end{remark}

Suppose we are now given a presheaf of abelian groups $A$ on a site $\cat{D}$. Then denote by $A[-n]$ the chain complex of abelian presheaves concentrated in degree $n$ which is given by
\[
(A[-n])_k = \begin{cases}
    A \text{ if } k = n \\
    0 \text{ else }
\end{cases}
\]
and differential given by $d = 0$. Then under the Dold--Kan correspondence the associated simplicial presheaf $\Gamma(A[-n]) =: K(A,n)$ is called the \textbf{Eilenberg--MacLane object of degree} $n$ associated with $A$. Notice that the Eilenberg--MacLane object can also be computed using the simplicial classifying space construction $\overline{W}$.

\begin{lemma} \label{Lemma: simplicial classifying space is Eilenberg-Mac Lane object}
    Given a presheaf of abelian groups $A$ on $\cat{D}$, then for every $n \geq 0$ there are isomorphisms of simplicial presheaves
    \[
    \overline{W}^nA \cong K(A,n) := \Gamma(A[-n])
    \]
\end{lemma}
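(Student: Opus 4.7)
Working objectwise in the site $\cat{D}$ -- since $\overline{W}$, $\Gamma$, $N$ and the shift $[-n]$ are all defined levelwise on presheaves -- the statement reduces to the corresponding isomorphism of simplicial sets for a fixed abelian group $A$. I would argue by induction on $n$. The base case $n=0$ is immediate: $\overline{W}^0 A = A$ viewed as a constant simplicial set, and this agrees with $\Gamma(A[0])$ since the Dold--Kan functor sends a chain complex concentrated in degree zero to the associated discrete simplicial abelian group.

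For the inductive step, the essential input is the compatibility of the bar construction with Dold--Kan: when restricted to simplicial abelian groups, $\overline{W}$ factors as a functor $\overline{W} \colon \mathbf{sAb} \to \mathbf{sAb}$, and under the equivalence of Theorem \ref{Theorem: Dold-Kan correspondence} it corresponds to the shift functor $[-1]$ on $\mathrm{Ch}_{\geq 0}(\mathbf{Ab})$. Equivalently, there is a natural isomorphism
\[
\overline{W} \circ \Gamma \;\cong\; \Gamma \circ [-1],
\]
or dually $N \circ \overline{W} \cong [-1] \circ N$. Granting this, the induction is immediate:
\[
\overline{W}^{n+1} A \;=\; \overline{W}(\overline{W}^n A) \;\cong\; \overline{W}\,\Gamma(A[-n]) \;\cong\; \Gamma(A[-n-1]) \;=\; K(A, n+1).
\]

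The real content -- and the main obstacle -- lies in establishing this compatibility as a \emph{strict} natural isomorphism rather than merely a weak equivalence between two $K(A,1)$-type objects (of which there are many nonisomorphic models). The plan would be to use the standard decomposition $(\overline{W}G)_n = G_{n-1} \oplus G_{n-2} \oplus \cdots \oplus G_0$ available for every simplicial abelian group $G$, with its explicit face and degeneracy operators, and to compute the normalized Moore complex directly. One then checks term by term that $N(\overline{W}G)_k = N(G)_{k-1}$ and that the induced Moore differential agrees, up to the conventional sign, with the shifted differential of $N(G)$. Applying this to $G = \Gamma(A[-n])$ at each stage and iterating yields the desired isomorphism, and the passage from abelian groups to presheaves of abelian groups is automatic by the objectwise naturality of every construction involved.
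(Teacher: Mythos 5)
Your proof is correct and takes the same inductive route that the paper defers to (\cite[Lemma 4.36]{Minichiello}): reduce objectwise, establish the strict natural isomorphism $N\overline{W}A \cong NA[-1]$ of chain complexes for simplicial abelian groups $A$, and iterate. You rightly identify that the crux is obtaining a \emph{strict} isomorphism rather than merely a weak equivalence, and your plan to verify $N(\overline{W}G)_k \cong N(G)_{k-1}$ directly from the explicit face-map formulas for $\overline{W}G$ (with the expected sign $(-1)$ appearing on the differential) is exactly how this is carried out.
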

\begin{proof}
    This can be shown by induction. The base case follows from the discussion in section 4.6 of \cite{JardineEt}. For a full proof using induction see \cite[Lemma 4.36]{Minichiello}.
\end{proof}

As a consequence, the cohomology of a simplicial presheaf $X$ with coefficients in an abelian presheaf $A$ can be computed by
\[
H^n_{\infty}(X,A) = \pi_0 \mathbb{R}\mathrm{Map}(X,\overline{W}^nA),
\]
where the derived mapping space is again taken with respect to the local injective model structure. To compute the cohomology of a simplicial presheaf $X$ with coefficients in a sheaf of abelian groups $A$ on a site $\cat{D}$, recall the free-forgetful adjunction between simplicial sets and simplicial abelian groups.

\begin{definition} \label{Definition: Presheaf free-forgetful adjunction}
    The free forgetful adjunction
    \begin{center}
\begin{tikzcd}
\mathbb{Z}(-): \mathbf{sSet} \arrow[r, shift left=2] \arrow[r, "\perp", phantom] & \mathbf{sAb} : U \arrow[l, shift left=2]
\end{tikzcd}
    \end{center}
    assigning to a simplicial set its free simplicial abelian group induces an adjunction between presheaf categories
    \begin{center}
        \begin{tikzcd}
\mathbb{Z}(-):  \mathrm{sPSh}(\cat{D}) \arrow[r, shift left=2] \arrow[r, "\perp", phantom] & {\mathrm{PSh}(\cat{D},\mathbf{sAb}) : U.} \arrow[l, shift left=2]
\end{tikzcd}
    \end{center}
\end{definition}

\begin{definition}[\cite{JardineBook}]
    Let $X \in \mathrm{PSh}(\cat{D})$ be a simplicial presheaf on $\cat{D}$. Then we define the \textbf{homology presheaf associated to} $X$ to be the presheaf of abelian groups given by
    \[
    H_n(X) := H_n \left( N(\mathbb{Z}X) \right) \in \mathrm{PSh}(\cat{D}, \mathbf{Ab}),
    \]
    where $N(\mathbb{Z}X) \in \mathrm{Ch}_{\geq 0}(\mathrm{PSh}(\cat{D}, \mathbf{Ab}))$ is the chain complex of presheaves obtained by first applying the induced free simplicial abelian group functor of Definition \ref{Definition: Presheaf free-forgetful adjunction} followed by the normalized chain complex functor of \eqref{Equation: Presheaf Dold-Kan version}.
    The \textbf{homology sheaf associated to} $X$ is then the sheafification of the homology presheaf and denoted by
    \[
    \tilde{H}_n(X) \in \mathrm{Sh}(\cat{D}, \mathbf{Ab}).
    \]
\end{definition}
\begin{remark}
    Recall that the simplicial homology for a simplicial set $K$ is defined as the homology of the abelian chain complex
    \[
    H_*(K) = H_*(N(\mathbb{Z}(K))).
    \]
    It follows therefore that the homology presheaf of $X$ is then given by the assignment
    \[
    U \in \cat{D} \longmapsto H_n(X)(U) = H_n(X(U)),
    \]
   i.e. as the simplicial homology of the simplicial set $X(U)$.
\end{remark}

The homology sheaves associated with $X$ can be used to compute the cohomology $H_{\infty}^*(X,A)$ via a generalized universal coefficients spectral sequence.

\begin{proposition}[\cite{JardineBook}, Corollary 8.28] \label{Proposition: Universal coefficient in stack cohomology}
    Let $X$ be a simplicial presheaf and $A$ a sheaf of abelian groups, regarded as a discrete presheaf of simplicial abelian groups. Then there is a spectral sequence, with
    \[
    E^{p,q}_2 = \mathrm{Ext}^q \left( \tilde{H}_p(X) , A \right) \Rightarrow H^{p+q}_{\infty}(X,A),
    \]
    where $\tilde{H}_*(X)$ denotes the homology sheaf associated to $X$.
\end{proposition}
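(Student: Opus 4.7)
The plan is to first convert the cohomology $H^{p+q}_\infty(X,A)$ into hyperext in the abelian category of sheaves of abelian groups on $\cat{D}$, and then invoke the standard universal coefficient/hyperderived spectral sequence from homological algebra. Throughout, all derived mapping spaces are taken in the local injective model structure of Theorem \ref{Theorem: Jardine local structure}.

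The first step is to use the free-forgetful adjunction $\mathbb{Z}(-) \dashv U$ of Definition \ref{Definition: Presheaf free-forgetful adjunction}. With the transferred local model structure on $\mathrm{PSh}(\cat{D},\mathbf{sAb})$ (obtained by right-transferring along $U$ and then further localizing to inforce descent), this is a Quillen adjunction, and the coefficient object $K(A,n) = \Gamma(A[-n])$ lives naturally on the right-hand side (since $A$ is already a sheaf and hence already fibrant in the sheaf-theoretic sense by Lemma \ref{Lemma: constant presheaf and sheafification is weak equivalence}). Therefore
\[
H^n_{\infty}(X,A) \;=\; \pi_0\,\mathbb{R}\mathrm{Map}\bigl(X, K(A,n)\bigr) \;\cong\; \pi_0\,\mathbb{R}\mathrm{Map}_{\mathrm{PSh}(\cat{D},\mathbf{sAb})}\bigl(\mathbb{Z}(X), K(A,n)\bigr).
\]

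Next I would apply the Dold--Kan correspondence \eqref{Equation: Presheaf Dold-Kan version} to pass from simplicial abelian presheaves to non-negatively graded chain complexes of abelian presheaves. Under $N$, the presheaf $\mathbb{Z}(X)$ is sent to the chain complex $N(\mathbb{Z}X)$ whose homology presheaves (respectively sheaves after sheafification) are, by definition, the $H_p(X)$ (respectively $\tilde{H}_p(X)$). Similarly, $K(A,n)$ is sent to the complex $A[-n]$ concentrated in degree $n$. Combining this with the previous step and the fact that local weak equivalences become quasi-isomorphisms of sheafified complexes, one obtains
\[
H^n_{\infty}(X,A) \;\cong\; \mathrm{Ext}^n_{\mathrm{Ch}(\mathrm{Sh}(\cat{D},\mathbf{Ab}))}\bigl(N(\mathbb{Z}X),\,A\bigr),
\]
where the right-hand side is the $n$th hyperext of the complex $N(\mathbb{Z}X)$, computed in the abelian category of sheaves of abelian groups on $\cat{D}$ (which has enough injectives, being a Grothendieck abelian category).

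Finally, I would apply the classical hyperext spectral sequence to the bounded-below chain complex $N(\mathbb{Z}X)$: choosing a Cartan--Eilenberg resolution of $N(\mathbb{Z}X)$ by complexes of injective sheaves of abelian groups and filtering the resulting double complex by rows, one obtains a convergent spectral sequence with
\[
E_2^{p,q} \;=\; \mathrm{Ext}^q\bigl(\tilde{H}_p(X),\, A\bigr) \;\Longrightarrow\; \mathbb{R}^{p+q}\mathrm{Hom}\bigl(N(\mathbb{Z}X),\,A\bigr) \;=\; H^{p+q}_{\infty}(X,A),
\]
which is the desired conclusion.

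The main obstacle I anticipate is not the spectral sequence itself, which is standard homological algebra once the right setup is in place, but rather justifying the first two identifications carefully. Specifically, one must check that the free-forgetful adjunction descends to a Quillen adjunction between the local injective model structure on $\mathrm{sPSh}(\cat{D})$ and the appropriate local model structure on $\mathrm{PSh}(\cat{D},\mathbf{sAb})$ (so that the derived mapping spaces are preserved), and that under Dold--Kan the derived mapping space in $\mathrm{PSh}(\cat{D},\mathbf{sAb})$ with its local model structure is naturally identified with hyperext in the abelian category of sheaves -- equivalently, that sheafification of the homology presheaves matches the effect of passing from the presheaf-level mapping space to the sheaf-theoretic Ext. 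Once these comparison statements are established, the spectral sequence is obtained for free.
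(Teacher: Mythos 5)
Your overall route matches the paper's. The paper begins by citing \cite[Theorem 8.25]{JardineBook}, which directly identifies $H^n_{\infty}(X,A)$ with the chain homotopy classes $\pi\bigl(N\mathbb{Z}X,\,\tau(J[-n])\bigr)$ for $J^\bullet$ an injective resolution of $A$; your first two steps (passing through the free-forgetful adjunction and Dold--Kan to rewrite $H^n_{\infty}(X,A)$ as a hyperext of $N\mathbb{Z}X$ against $A$) are precisely what that citation encapsulates, so you are unpacking a step the paper takes as a black box. After that both arguments run the spectral sequence of a double complex and arrive at $\mathrm{Ext}^q(\tilde{H}_p(X),A)$ on the $E_2$ page, using that $\mathrm{Hom}_{\mathrm{PSh}}\bigl(H_p(X),J^q\bigr)\cong\mathrm{Hom}_{\mathrm{Sh}}\bigl(\tilde{H}_p(X),J^q\bigr)$ because $J^q$ is an injective sheaf.

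The one place you should be more careful is the phrase ``choosing a Cartan--Eilenberg resolution of $N(\mathbb{Z}X)$ by complexes of injective sheaves.'' As written this does not compute what you want: $\mathrm{Hom}(-,A)$ is contravariant, so deriving it via resolutions of the \emph{first} argument requires \emph{projective} resolutions, and $\mathrm{Sh}(\cat{D},\mathbf{Ab})$ does not have enough projectives. Injectively resolving $N\mathbb{Z}X$ and then applying $\mathrm{Hom}(-,A)$ would not produce $\mathrm{Ext}$ groups at all. The construction that works---and what the paper uses---is to take an injective resolution $A \to J^\bullet$ of the \emph{second} argument, form the double complex $\mathrm{Hom}(N\mathbb{Z}X_p, J^q)$, and filter so that the $E_1$ page is obtained by first taking homology in the $p$-direction; exactness of $\mathrm{Hom}(-, J^q)$ then gives $E_1^{p,q}=\mathrm{Hom}(H_p(X),J^q)\cong\mathrm{Hom}(\tilde{H}_p(X),J^q)$, and running the $q$-direction differential yields $E_2^{p,q}=\mathrm{Ext}^q(\tilde{H}_p(X),A)$. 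With that fix, your argument coincides with the paper's.
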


To use Proposition \ref{Proposition: Universal coefficient in stack cohomology} it is essential to understand how the universal coefficients spectral sequence for sheaf cohomology arises. First, recall that the category of sheaves of abelian groups on a site $\cat{C}$ has enough injectives, and as such any such sheaf $A$ admits an injective resolution. That is, there exists a cochain complex $J^{\bullet}$ together with a map $A \rightarrow J^0$ of sheaves of abelian groups such that each $J^i$ is injective and the augmented complex
\[
0 \rightarrow A \rightarrow J^0 \rightarrow J^1 \rightarrow \cdots
\]
is exact.

Given a bounded below chain complex $(C_{\bullet}, \partial)$ and bounded below cochain complex $(K^{\bullet}, d)$ in the abelian category $\mathrm{PSh}(\cat{C}, \mathrm{Ab})$ we can define a first quadrant double complex by \newline $\mathrm{hom}(C,K)^{p,q} := \mathrm{Hom}(C_p,K^q)$ with the corresponding induced differentials $\partial' := (\partial)^*$ and $\partial'':= (d)_*$. To this double complex we associate the total complex 
\[
\mathrm{Tot}^n := \bigoplus_{p + q = n} \mathrm{hom}(C,K)^{p,q} 
\]
with differential $D := \partial' + (-1)^p \partial''$ whose cohomology we denote by $H^n(\mathrm{Tot})$. The universal coefficients spectral sequence is then simply the spectral sequence of this double complex $\mathrm{hom}(C,K)^{p,q}$ for specific choices of $C_{\bullet}$ and $K^{\bullet}$. \\

Before specifying the complexes $C_{\bullet}$ and $K^{\bullet}$ observe the following. The first quadrant double complex $\{ \mathrm{hom}(C,K)^{p,q}\}_{p,q \geq 0}$ can be understood equivalently as a third quadrant double complex $\mathrm{hom}'(C,K)_{p,q}$ for $p,q \leq 0$ by setting $\mathrm{hom}'(C,K)_{p,q} = \mathrm{Hom}(C_{-p},K^{-q})$. The differentials here have degrees
\begin{align*}
& \partial' \colon \mathrm{hom}'(C,K)_{p,q} \rightarrow \mathrm{hom}'(C,K)_{p-1,q} \\
& \partial'' \colon \mathrm{hom}'(C,K)_{p,q} \rightarrow \mathrm{hom}'(C,K)_{p,q-1}
\end{align*}
and the associated total complex $\mathrm{Tot}'_n$ is concentrated in negative degrees. In particular, one has that for $n \geq 0$
\[
H_{-n}(\mathrm{Tot}'_{\bullet}) = H^n(\mathrm{Tot}^{\bullet}).
\]
The reason behind this identification is the observation that bounded below chain complexes $\mathrm{Ch}_{\geq 0 }(\cat{A})$ embed into the category of unbounded cochain complexes $\mathrm{CoCh}(\cat{A})$ via the assignment sending a chain complex $(C_{\bullet},\partial)$ to the cochain complex $(K^{\bullet},d)$ specified by
\[
K^n = \begin{cases}
    0 \text{ for } n > 0 \\
    C_{-n} \text{ for } n \leq 0
\end{cases}
\]
This allows us to make sense of the set of chain homotopy classes of maps $\pi\left( C_{\bullet} , K^{\bullet} \right)$ where $C_{\bullet}$ is a bounded below chain complex and $K^{\bullet}$ a cochain complex. It follows from pure homological algebra that for all $n \geq 0$ there is a natural isomorphism\footnote{This is \cite[Lemma 8.21]{JardineBook}.}
\begin{equation} \label{Equation: Lemma 8.21}
H_{-n}(\mathrm{Tot}'_{\bullet}) \cong \pi \left( C_{\bullet}, K[-n] \right).
\end{equation}

\begin{proof}[Proof of Proposition \ref{Proposition: Universal coefficient in stack cohomology}]
As already hinted above, the spectral sequence arises as a spectral sequence of a double complex and thus we only need to make the right choices for $(C_{\bullet},\partial)$ and $(K^{\bullet},d)$. The obvious choice for the cochain complex is simply a choice of injective resolution $J^{\bullet}$  of $A$. Then \cite[ Theorem 8.25]{JardineBook} implies that there is a natural isomorphism in $X$
\[
\pi \left( N\mathbb{Z}X, \tau(J[-n])\right) \cong H_{\infty}^n(X,A).
\]
Since $N\mathbb{Z}X$ is indeed a bounded below chain complex in $\mathrm{PSh}(\cat{C},\mathbf{Ab})$ the left-hand side denotes the abelian group of chain homotopy classes of maps, regarding $N\mathbb{Z}X$ as a cochain complex concentrated in negative degrees. From observation (\ref{Equation: Lemma 8.21}) it follows that
\begin{align*}
\pi \left( N\mathbb{Z}X, \tau(J[-n])\right) &\cong \pi \left( \tau^*(N\mathbb{Z}X), J[-n]\right) \\
&\cong \pi \left(N\mathbb{Z}X , J[-n]\right) \\
&\cong   H_{-n} \left(  \mathrm{Tot'}_{\bullet} \mathrm{hom}(N\mathbb{Z}X,J) \right) \\
&= H^{n} \left(  \mathrm{Tot}_{\bullet} \mathrm{hom}(N\mathbb{Z}X,J) \right)
\end{align*}
which presents us the right choice $(C_{\bullet},\partial) =N\mathbb{Z}X$. \\

What we have shown so far is that the spectral sequence of the double complex $\mathrm{hom}(N\mathbb{Z}X,J)$, being a first quadrant double complex, converges to the cohomology of the total complex, which we argued above, indeed computes the cohomology $H_{\infty}^n(X,A)$. Therefore, we are left to show that the $E_2^{p,q}$ page is indeed given by
\[
E^{p,q}_2 = \mathrm{Ext}^q \left( \widetilde{H}_p(X) , A \right) \Rightarrow H^{p+q}_{\infty}(X,A).
\]
The spectral sequence of the double complex $\mathrm{hom}(N\mathbb{Z}X,J)$ is constructed as follows. Start by taking the cohomology with respect to $\partial'$, i.e. the horizontal differential. This then gives the first page $E_1^{p,q} := H^p(\mathrm{hom}(N\mathbb{Z}X,J)^{\bullet,q})$. That is,
\begin{align*}
E_1^{p,q} = H^p\left(\mathrm{hom} ( N\mathbb{Z}X_{\bullet}, J^q) \right) &\cong \mathrm{Hom}_{\mathrm{PSh}(\cat{C},\mathbf{Ab})} \left( H_p(X), J^q \right) \\
&\cong \mathrm{Hom}_{\mathrm{Sh}(\cat{C},\mathbf{Ab})} \left( \widetilde{H}_p(X), J^q \right)
\end{align*}
using the fact that $J^q$ is an injective sheaf for all $q \geq 0$. The second page is then obtained by taking the cohomology with respect to the induced vertical differential $d_1$ on the first page. Using that $J^{\bullet}$ is an injective resolution of $A$ then readily implies that
\[
E_2^{p,q} = H^q \left( \mathrm{Hom} \left( \widetilde{H}_p(X), J^{\bullet} \right) \right) = \mathrm{Ext}_{\mathrm{Sh}(\cat{C},\mathbf{Ab})}^q \left( \widetilde{H}_p(X) , A \right)
\]
and the differential $d_2$ on the second page has degree $(-1,2)$.
\end{proof}

\begin{remark} \label{Remark: The aboutment Remark}
One last detail to clarify is what is meant in Proposition \ref{Proposition: Universal coefficient in stack cohomology} when we say that there is a spectral sequence  $E_2^{p,q}$ \textit{converging to} $H^{p+q}_{\infty}(X,A)$. As we have seen, the universal coefficients spectral sequence arises as the spectral sequence of a double complex and as such, its limiting page converges to the associated graded of the total cohomology. More specifically, for all $n \geq 0$, there is a filtration
\[
H^n_{\infty}(X,A) = J_{n,0} \supset J_{n-1,1} \supset \cdots \supset J_{0,n} \supset J_{-1,n+1} = 0
\]
such that $J_{p,q}/J_{p-1,q+1} \cong E_{\infty}^{p,q}$.
\end{remark}

\subsection{The Cohomology of a Simplicial Presheaf: Practice}

Now that the notion of cohomology for a simplicial presheaf with coefficients in an abelian presheaf $A$ has been introduced, let us focus for a moment on how to compute these objects, at least for some special cases. By definition, we have that
\[
H^n_{\infty}(X,A) := \pi_0 \mathbb{R}\mathrm{Map}(X,K(A,n)) = \pi_0 \mathrm{Map}(X,R\left( K(A,n) \right))
\]
where here $R(-)$ denotes the fibrant replacement in the local injective model structure and using the fact that in this model structure, all the objects are cofibrant. It is therefore clear that using the local injective structure, the obstacle to computing the cohomology is completely shifted to finding a concrete fibrant replacement $R(K(A,n))$ for $K(A,n)$. Proposition \ref{Proposition: Universal coefficient in stack cohomology} addresses this problem using the fact that the simplicial presheaves $K(J,n)$ for injective resolutions $A \rightarrow J$ of $A$ serve as good candidates in approximating $R\left( K(A,n) \right)$. On the other hand, Proposition \ref{Proposition: Universal coefficient in stack cohomology} only translates the problem of computing the cohomology into computing $\mathrm{Ext}(-,-)$ groups in the abelian category of abelian sheaves on $\cat{D}$. This issue will be addressed in detail in the next section. Equivalently, we could work with the local projective structure where the fibrant objects are much easier to characterize as we have seen by Theorem \ref{Theorem: The DHI-fibrancy Theorem}. It is precisely for this reason, that we will occasionally have to make use of the local projective structure. On the other hand, the local projective structure has the downside in that one needs to find a suitable cofibrant replacement for $X$. The good news is that such cofibrant replacements have been constructed for arbitrary simplicial presheaves, such as in \cite[Proposition 2.8]{DuggerUniversal}.    \\

In some special cases, however, one can find even simpler cofibrant replacements. Consider the local projective model structure on the category of simplicial presheaves on the site $\mathbf{Cart}$ with good open coverings. The following fact shows that constant presheaves are cofibrant objects in the projective model structure.

\begin{lemma}[\cite{Bunk}, Section 2.2] \label{Lemma: Constant-Evaluation Quillen pair}
    The constant simplicial presheaf functor $\mathsf{c}(-)$ and the evaluation at the point $\mathrm{ev}_{\mathrm{pt}}$ form a Quillen adjoint pair
    \begin{center}
        \begin{tikzcd}
\mathsf{c} : \mathbf{sSet} \arrow[r, shift left=3] \arrow[r, "\perp", phantom] & \mathrm{sPSh}(\mathbf{Cart})_{\mathrm{inj/proj}} : \mathrm{ev}_{\mathrm{pt}} \arrow[l, shift left=3]
\end{tikzcd}
    \end{center}
    between the Kan--Quillen model structure on simplicial sets and both the injective and projective model structure on simplicial presheaves on Cartesian spaces. In particular, the constant presheaves $\mathsf{c}(K)$ are cofibrant objects in both the injective and the projective model structure.
\end{lemma}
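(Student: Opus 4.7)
The plan is to verify the adjunction abstractly and then check the Quillen property in each model structure separately, finally deducing cofibrancy as a formal consequence.

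First I would observe that $\mathrm{pt} = \mathbb{R}^0$ is the terminal object of $\mathbf{Cart}$, since every Cartesian space admits a unique smooth map to the one-point space. For any simplicial set $K$ and simplicial presheaf $F$, a natural transformation $\mathsf{c}(K) \to F$ consists of maps $f_U \colon K \to F(U)$ natural in $U$; naturality with respect to the unique morphism $U \to \mathrm{pt}$ forces $f_U = F(U \to \mathrm{pt}) \circ f_{\mathrm{pt}}$, so restriction to $U = \mathrm{pt}$ yields a natural bijection
\[
\mathrm{Hom}_{\mathrm{sPSh}(\mathbf{Cart})}(\mathsf{c}(K), F) \;\cong\; \mathrm{Hom}_{\mathbf{sSet}}(K, F(\mathrm{pt})),
\]
establishing the adjunction $\mathsf{c} \dashv \mathrm{ev}_{\mathrm{pt}}$.

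For the projective case, I would note that by definition projective (trivial) fibrations are objectwise (trivial) Kan fibrations, so $\mathrm{ev}_{\mathrm{pt}}$ preserves them tautologically. Hence $\mathrm{ev}_{\mathrm{pt}}$ is right Quillen, and by Lemma \ref{Lemma: Quillen Adj Lemma} the adjunction is Quillen. For the injective case, I would dually observe that injective (trivial) cofibrations are, by definition, objectwise (trivial) cofibrations of simplicial sets; since every component of $\mathsf{c}(f)$ equals $f$, the constant functor $\mathsf{c}$ sends (trivial) cofibrations to (trivial) cofibrations, so it is left Quillen and the adjunction is again Quillen by Lemma \ref{Lemma: Quillen Adj Lemma}.

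Finally, for the cofibrancy statement, I would use that in the Kan--Quillen model structure every simplicial set is cofibrant (cofibrations are monomorphisms and $\emptyset \hookrightarrow K$ is always a monomorphism). Since $\mathsf{c}$ is left Quillen in both cases, it preserves cofibrant objects, so $\mathsf{c}(K)$ is cofibrant in both $\mathrm{sPSh}(\mathbf{Cart})_{\mathrm{inj}}$ and $\mathrm{sPSh}(\mathbf{Cart})_{\mathrm{proj}}$. There is no real obstacle here; the whole argument rests on identifying $\mathrm{pt}$ as the terminal object of $\mathbf{Cart}$ to pin down the adjunction and then matching the pointwise definitions of the projective fibrations and injective cofibrations with the fact that both $\mathsf{c}$ and $\mathrm{ev}_{\mathrm{pt}}$ are defined pointwise.
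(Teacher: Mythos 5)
Your proof is correct and complete: the identification of $\mathrm{pt} = \mathbb{R}^0$ as the terminal object of $\mathbf{Cart}$ does pin down $\mathsf{c}$ as the left adjoint to $\mathrm{ev}_{\mathrm{pt}}$ (this is just the observation that the left Kan extension $K \mapsto K \times \mathrm{Hom}_{\mathbf{Cart}}(-,\mathrm{pt})$ collapses to the constant presheaf), and the pointwise definitions of projective fibrations and injective cofibrations make the two Quillen checks tautological, with cofibrancy following because a left Quillen functor sends cofibrant objects to cofibrant objects. The paper itself gives no proof, delegating to Bunk (Section 2.2); your argument is the standard one and agrees with that reference.
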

\begin{remark} \label{Remark: Const-EV quillen pair local}
    Notice that by composing the above Quillen adjunction with the localization adjunction $\mathrm{sPSh}(\mathbf{Cart})_{\mathrm{proj}} \rightleftarrows \mathrm{sPSh}(\mathbf{Cart})_{\mathrm{proj,loc}}$ one obtains a Quillen adjunction
\begin{center}
\begin{tikzcd}
\mathsf{c} : \mathbf{sSet} \arrow[r, shift left=3] \arrow[r, "\perp", phantom] & \mathrm{sPSh}(\mathbf{Cart})_{\mathrm{proj,loc}} : \mathrm{ev}_{\mathrm{pt}} . \arrow[l, shift left=3]
\end{tikzcd}
\end{center}
\end{remark}

Recall that given diffeological spaces $X$ and $Y$ we denote by $D(X,Y)$ the diffeological space of smooth functions from $X$ to $Y$, see Example \ref{Example: Diffeological Spaces}. Further, any diffeological space $X$ is assigned a simplicial set which we denoted $S_e(X)$ called the extended smooth singular complex. Its $n$-simplices are given by smooth maps from the $n$-dimensional affine smooth simplex $\mathbb{A}^n$ into $X$, see Definition \ref{Definition: Extended Smooth Singular Complex}.

\begin{definition}
    Given a diffeological space $X$ denote by $\overline{X}$ the enriched simplicial presheaf on $\mathbf{Cart}$ given by
    \[
    \begin{array}{rcl}
        \mathbf{Cart}^{\mathrm{op}} & \rightarrow & \mathbf{sSet} \\
         U & \mapsto & S_e(D(U,X))
    \end{array}
    \]
\end{definition}

\begin{lemma} \label{Lemma: constant simplicial presheaf is cofibrant resolution}
    Let $X$ be a diffeological space. Then the natural morphism from the constant simplicial presheaf $\mathsf{c}(S_e(X)) \rightarrow \overline{X}$ to the enriched simplicial presheaf exhibits $\mathsf{c}(S_e(X))$ as a cofibrant replacement of $\overline{X}$ in the local projective model structure.
\end{lemma}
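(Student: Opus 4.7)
The plan is to verify the two defining properties of a cofibrant replacement separately: that $\mathsf{c}(S_e(X))$ is cofibrant in $\mathrm{sPSh}(\mathbf{Cart})_{\mathrm{proj,loc}}$, and that the canonical map $\mathsf{c}(S_e(X)) \to \overline{X}$ is a local weak equivalence.

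For cofibrancy, I would invoke Remark \ref{Remark: Const-EV quillen pair local}: the constant-presheaf functor $\mathsf{c} \colon \mathbf{sSet} \to \mathrm{sPSh}(\mathbf{Cart})_{\mathrm{proj,loc}}$ is the left adjoint in a Quillen adjunction, and every simplicial set is cofibrant in the Kan--Quillen structure, so $\mathsf{c}(S_e(X))$ is cofibrant. This step is essentially by citation.

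For the weak equivalence, I would aim for the stronger claim that $\mathsf{c}(S_e(X)) \to \overline{X}$ is an objectwise weak equivalence, which in particular is a local weak equivalence. Fix $U \in \mathbf{Cart}$ and recall $U \cong \mathbb{R}^n$. The map in question is $S_e(X) \to S_e(D(U,X))$ induced by the smooth inclusion $\mathrm{const} \colon X \hookrightarrow D(U,X)$ sending $x$ to the constant function at $x$. The key geometric input is that $U$ admits a smooth contraction: using an $\varepsilon$-cut-off function $\psi$ from Remark \ref{Remark: Cutoff-function}, the map $H \colon U \times \mathbb{R} \to U$ with $H(u,t) = \psi(t)\cdot u$ is smooth and deforms $\mathrm{id}_U$ to the constant map at $0$. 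Exponential transpose then produces a smooth map $\tilde{H} \colon D(U,X) \times \mathbb{R} \to D(U,X)$, $\tilde{H}(f,t)(u) = f(\psi(t)\cdot u)$, which is a smooth homotopy from $\mathrm{const} \circ \mathrm{ev}_0$ to $\mathrm{id}_{D(U,X)}$, while $\mathrm{ev}_0 \circ \mathrm{const} = \mathrm{id}_X$ on the nose. Hence $\mathrm{const}$ is a smooth homotopy equivalence between the diffeological spaces $X$ and $D(U,X)$.

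It then remains to pass this smooth homotopy equivalence through $S_e$. By Theorem \ref{Theorem: The Fundamental Theorem of Diffeological Homotopy}, the natural isomorphisms $\pi_i(S_e(Y),y) \cong \pi_i^D(Y,y)$ are compatible with smooth maps, and smoothly homotopic maps act identically on $\pi_*^D$; thus the smooth homotopy equivalence induces isomorphisms on all homotopy groups at all basepoints and a bijection of smooth path components, so $S_e(\mathrm{const}) \colon S_e(X) \to S_e(D(U,X))$ is a weak equivalence of simplicial sets. Since this holds for every $U \in \mathbf{Cart}$, the map $\mathsf{c}(S_e(X)) \to \overline{X}$ is an objectwise weak equivalence, hence in particular a local weak equivalence, completing the argument. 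The only nontrivial ingredient is the smooth contractibility of Cartesian spaces (together with exponential transpose in $\mathbf{Diff}$); no obstacle of substance appears beyond this standard fact.
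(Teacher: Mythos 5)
Your proposal is correct and takes essentially the same approach as the paper: the paper's proof simply cites Corollary \ref{Corollary: Constant Homology sheaves} for the objectwise weak equivalence and Lemma \ref{Lemma: Constant-Evaluation Quillen pair} for cofibrancy, and the proof of that Corollary is exactly the smooth contraction argument (exhibit $X$ as a smooth deformation retract of $D(U,X)$ using the smooth contractibility of $U$) that you spell out. The only cosmetic slip is the direction of your deformation ($\psi(0)=0$ gives the constant map, $\psi(1)=1$ gives the identity), which has no bearing on the conclusion.
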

\begin{proof}
It follows from Corollary \ref{Corollary: Constant Homology sheaves} that the morphism of simplicial presheaves $\mathsf{c}(S_e(X)) \rightarrow \overline{X}$ is an objectwise weak equivalence and hence also a local projective weak equivalence. From Lemma \ref{Lemma: Constant-Evaluation Quillen pair} we know that $\mathsf{c}(S_e(X))$ is cofibrant in the projective model structure and therefore also in the local projective structure.
\end{proof}

\begin{proposition}
Let $X$ be a diffeological space and let $A$ be an abelian diffeological group. Then there is an isomorphism
    \[
    H_{\infty}^1\left( \overline{X}, A \right) \cong H^1(X,A(\mathrm{pt}))
    \]
    between the cohomology of $\overline{X}$ with values in the discrete simplicial sheaf $A$ and the simplicial cohomology of $X$ with values in the abelian group $A(\mathrm{pt})$, i.e. the underlying group of points by forgetting the diffeology on $A$.
\end{proposition}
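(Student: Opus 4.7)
The plan is to work in the local projective model structure on $\mathrm{sPSh}(\mathbf{Cart})$ and exploit the cofibrant replacement $\mathsf{c}(S_e(X)) \xrightarrow{\sim} \overline{X}$ provided by Lemma \ref{Lemma: constant simplicial presheaf is cofibrant resolution}, combined with the Quillen adjunction $\mathsf{c} \dashv \mathrm{ev}_{\mathrm{pt}}$ of Remark \ref{Remark: Const-EV quillen pair local}. Since the Quillen equivalence \eqref{Equation: Local injective and local projective Quillen Equivalence} between the local injective and projective structures lets us compute $H^1_{\infty}(\overline{X}, A) = \pi_0\mathbb{R}\mathrm{Map}(\overline{X}, K(A,1))$ in either structure, the derived mapping space is modelled by $\pi_0 \mathrm{Map}(\mathsf{c}(S_e(X)), R\,K(A,1))$, where $R\,K(A,1)$ denotes a local projective fibrant replacement of $K(A,1)$.

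Next, I would apply Proposition \ref{Proposition: Derived Quillen adjunction is adjunction in Ho-Cat} to the Quillen adjunction $\mathsf{c} \dashv \mathrm{ev}_{\mathrm{pt}}$ to transport the mapping space across to simplicial sets, obtaining
\[
\pi_0 \mathrm{Map}(\mathsf{c}(S_e(X)), R\,K(A,1)) \cong \pi_0 \mathrm{Map}_{\mathbf{sSet}}\bigl(S_e(X),\, \mathrm{ev}_{\mathrm{pt}}(R\,K(A,1))\bigr).
\]
The task then reduces to identifying $\mathrm{ev}_{\mathrm{pt}}(R\,K(A,1)) \simeq K(A(\mathrm{pt}),1)$, after which the right-hand side becomes $\pi_0 \mathrm{Map}_{\mathbf{sSet}}(S_e(X), K(A(\mathrm{pt}),1)) = H^1(S_e(X), A(\mathrm{pt})) = H^1(X, A(\mathrm{pt}))$ by the standard interpretation of simplicial cohomology as homotopy classes of maps into Eilenberg--MacLane spaces, and by the definition of simplicial cohomology of $X$ via its extended singular complex.

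The main obstacle is the identification $\mathrm{ev}_{\mathrm{pt}}(R\,K(A,1)) \simeq K(A(\mathrm{pt}),1)$. The strategy I would pursue is to argue that for any objectwise fibrant simplicial presheaf $F$ on $\mathbf{Cart}$, evaluation at the terminal object $\mathrm{pt} = \mathbb{R}^0$ is insensitive to local projective fibrant replacement. Concretely: the only good open covers of $\mathrm{pt}$ in $\mathbf{Cart}$ are the trivial ones, so the sheafification of any presheaf agrees with the presheaf itself when evaluated at $\mathrm{pt}$. Consequently, for the objectwise Kan presheaf $K(A,1)$ we have $\pi_n(K(A,1)(\mathrm{pt})) = \tilde{\pi}_n(K(A,1))(\mathrm{pt})$ and likewise for $R\,K(A,1)$. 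Since $K(A,1) \to R\,K(A,1)$ is a local weak equivalence, the induced isomorphism on homotopy sheaves implies that $K(A,1)(\mathrm{pt}) = K(A(\mathrm{pt}),1) \xrightarrow{\sim} (R\,K(A,1))(\mathrm{pt})$ is a weak equivalence of simplicial sets, as required.

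As a sanity check and alternative route, one could instead apply the universal coefficient spectral sequence of Proposition \ref{Proposition: Universal coefficient in stack cohomology}: the homology sheaves $\tilde{H}_p(\overline{X})$ are the constant sheaves on $H_p(S_e(X))$ by Corollary \ref{Corollary: Constant Homology sheaves}, and the low-degree entries $E_2^{1,0} = \mathrm{Hom}(\underline{H_1(S_e(X))}, A)$ and $E_2^{0,1} = \mathrm{Ext}^1(\underline{H_0(S_e(X))}, A)$ reduce, via the adjunction $\underline{(-)} \dashv \mathrm{ev}_{\mathrm{pt}}$ on abelian sheaves, to $\mathrm{Hom}_{\mathbf{Ab}}(H_1(X), A(\mathrm{pt}))$ and $\mathrm{Ext}^1_{\mathbf{Ab}}(H_0(X), A(\mathrm{pt}))$ respectively. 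Assembling these with the classical universal coefficient sequence would independently recover $H^1(X, A(\mathrm{pt}))$.
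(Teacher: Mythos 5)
Your proposal follows the same overall structure as the paper's proof: work in the local projective model structure, replace $\overline{X}$ cofibrantly by $\mathsf{c}(S_e(X))$, and transport the mapping space along the Quillen adjunction $\mathsf{c} \dashv \mathrm{ev}_{\mathrm{pt}}$. The difference lies in how you resolve the crux step $\mathrm{ev}_{\mathrm{pt}}(R\,K(A,1)) \simeq K(A(\mathrm{pt}),1)$. The paper invokes an external fibrancy result (Pavlov's Proposition 4.13) to the effect that $\overline{W}A$ is already fibrant in the local projective structure, whence $R(\overline{W}A) \simeq \overline{W}A$ and the evaluation is immediate. You instead argue from first principles: since the terminal object $\mathrm{pt}$ of $\mathbf{Cart}$ admits only the trivial good cover, sheafification is the identity on sections over $\mathrm{pt}$, so the homotopy sheaves $\tilde\pi_n(F)(\mathrm{pt})$ coincide with the homotopy presheaves $\pi_n(F(\mathrm{pt}))$; because local weak equivalences are isomorphisms of homotopy sheaves (in the pullback sense of Definition \ref{Definition: Local Weak Equivalence}), and pullbacks of sheaves are computed objectwise, the map $K(A,1)(\mathrm{pt}) \to (R\,K(A,1))(\mathrm{pt})$ is a weak equivalence of simplicial sets. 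This stalk argument is sound and more self-contained — it does not rely on knowing that $\overline{W}A$ descends — at the modest cost of unwinding the definition of local weak equivalence at a terminal object. Your alternative route via the universal coefficient spectral sequence of Proposition \ref{Proposition: Universal coefficient in stack cohomology} combined with Corollary \ref{Corollary: Constant Homology sheaves} and Corollary \ref{Corollary: ext of constant sheaf} also goes through and in fact mirrors the technique the paper uses later for $\mathbb{M}_k$; it would be a legitimate independent verification.
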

\begin{remark}
    Given a diffeological space $X$ and $B$ an abelian group the \textbf{simplicial cohomology of} $X$ \textbf{with coefficients in } $B$ is defined to be the simplicial cohomology of the associated extended smooth singular complex $S_e(X)$ with coefficient in $B$. That is
    \[
    H^*(X,B) := H^*\left( S_e(X), B \right),
    \]
    where the right-hand side is isomorphic to the homotopy classes of maps $[S_e(X),K(B,n)]$ via the classical representation theorem \cite[Theorem 2.19]{GoerssJardine}.
\end{remark}
\begin{proof}
    By definition, we have that
    \[
    H_{\infty}^1\left( \overline{X}; A \right) = \pi_0 \mathbb{R}\mathrm{Hom}(\overline{X},\overline{W}A).
    \]
    Working in the local projective structure, we have that a cofibrant resolution of $\overline{X}$ is given by $\mathsf{c}(S_e(X))$ using Lemma \ref{Lemma: constant simplicial presheaf is cofibrant resolution}. Since $A$ is an abelian diffeological group it follows first by Lemma \ref{Lemma: constant presheaf and sheafification is weak equivalence} that $A$ is fibrant in the local projective structure. The functor $\mathsf{c}(-)$ agrees with its left derived functor $\mathbb{L}\mathsf{c}(-) = \mathsf{c}(Q(-)) = \mathsf{c}(-)$ since all objects in $\mathbf{sSet}$ are cofibrant. Therefore we have
    \begin{align*}
        \pi_0\mathbb{R}\mathrm{Hom}(\overline{X},\overline{W}A) &\cong \pi_0\mathbb{R}\mathrm{Map}(\mathbb{L}\mathsf{c}(S_e(X)),\overline{W}A) \\ &\cong \pi_0\mathbb{R}\mathrm{Map}\left(S_e(X),\mathbb{R}\mathrm{ev}_{\mathrm{pt}}\left(\overline{W}A\right)\right)
    \end{align*}
    using the fact that we are given a Quillen pair from remark \ref{Remark: Const-EV quillen pair local}. Now $\mathbb{R}\mathrm{ev}_{\mathrm{pt}}\left(\overline{W}A\right) = \mathrm{ev}_{\mathrm{pt}} \left(R \left( \overline{W}A \right) \right) \simeq \mathrm{ev}_{\mathrm{pt}}\left(\overline{W}A\right)$ using that $\overline{W}A$ is fibrant by \cite[Proposition 4.13]{Pavlov} and that $\mathrm{ev}_{\mathrm{pt}}$ is right Quillen. Hence we have
    \begin{align*}
\pi_0\mathbb{R}\mathrm{Map}\left(S_e(X),\mathbb{R}\mathrm{ev}_{\mathrm{pt}}\left(\overline{W}A\right)\right) &\cong \pi_0\mathbb{R}\mathrm{Map}(S_e(X),\overline{W}A(\mathrm{pt})) \\
        &\cong \pi_0 \mathrm{Map}\left(S_e(X),(\overline{W}A)(\mathrm{pt})\right).
    \end{align*}
    In the last step, we have also used the fact that
    \[
    (\overline{W}A)(\mathrm{pt}) = \overline{W}(A(\mathrm{pt})) = K(A(\mathrm{pt}),1)
    \]
    is the Eilenberg--MacLane space associated with the underlying group of points of the diffeological group $A$ and hence a Kan complex. The right-hand side denotes now the ordinary simplicial mapping space. Therefore, we have that
    \[
    \pi_0 \mathrm{Map}(S_e(X), \overline{W}A(\mathrm{pt}) ) \cong [S_e(X), K(A(\mathrm{pt}),1)] \cong H^1(X,A(\mathrm{pt})).
    \]
\end{proof}

How can we interpret this result?
\begin{itemize}
        \item First notice that the cohomology of the diffeological space $X$ when considered in the standard way as a discrete simplicial presheaf with values in $A$ is given by
        \[
        H_{\infty}^1(X,A) \cong \pi_0 \mathrm{DiffBun}_A(X)
        \]
        the \textit{isomorphism classes of diffeological principal} $A$\textit{-bundles} over $X$ for every diffeological abelian group $A$. This follows from \cite[Proposition 5.39]{Minichiello} and \cite[Theorem 5.15]{Krepski-Watts-Wolbert}. In the case where $M$ is a manifold and $A$ a Lie group, the above cohomology group coincides with the \v{C}ech cohomology of $M$ with values in $A$
        \[
        H_{\infty}^1(M,A) \cong \check{H}^1(M,A) \cong \pi_0 \mathrm{Bun}_A(M).
        \]
        \item The cohomology of the diffeological space $X$ considered now as an enriched simplicial presheaf $\overline{X}$ with values in $A$ is given by:
        \[
        H_{\infty}^1(\overline{X},A) \cong H^1(X,A(\mathrm{pt}))
        \]
        In the case where $X =M$ is a smooth manifold  and $A = U(1)$ is the circle group it thus follows that the cohomology of $\overline{X}$ with values in $U(1)$ classifies \textit{flat principal circle bundles over} $M$:
        \[
        H_{\infty}^1(\overline{M},U(1)) \cong H^1(M,U(1)(\mathrm{pt})) \cong \pi_0 \mathrm{Bun}^{\nabla_{flat}}_{U(1)}(M)
        \]
        For the latter isomorphism see \cite[Proposition 2.1.12]{Brylinski}. In the next chapter, we will then show how a specific refinement of $\overline{M}$ can be used to recover all circle bundles with connection and not only the flat ones.
    \end{itemize}

\subsection{Extensions in the category of smooth abelian groups}

As mentioned earlier, it is difficult in general to compute the cohomology of a simplicial presheaf $X$ taking values in a sheaf of abelian groups $A$. For us, the main computational tool will be the universal coefficients spectral sequence, which however requires computing extensions in the category of sheaves of abelian groups $\mathrm{Ab}(\cat{D}) := \mathrm{Sh}(\cat{D},\mathbf{Ab})$. In this section, we focus on the category of \textit{smooth abelian groups}, that is sheaves of abelian groups on the Cartesian site $\mathrm{Ab}(\mathbf{Cart})$, and show how to use local homotopy theory to compute higher extensions
\[
\mathrm{Ext}_{\mathrm{Ab}(\mathbf{Cart})}^r(B, A)
\]
in terms of derived mapping spaces. This will then in turn allow us to show that for special cases, the computation of these \textit{smooth extensions} reduces simply to the classical case of extensions in the category $\mathbf{Ab}$. \\

First notice that indeed, the category of sheaves of abelian groups is abelian, such that the Ext groups are defined as usual.

\begin{lemma}[\cite{stacks}, Lemma 18.3.1]
    Let $\cat{D}$ be a site and consider the category of abelian sheaves on $\cat{D}$ denoted by $\mathrm{Ab}(\cat{D})$. That is, the category of sheaves on the site $\cat{D}$ with values in the category of abelian groups $\mathbf{Ab}$. Let $\varphi \colon \cat{F} \rightarrow \cat{G}$ be a morphism of abelian sheaves.
    \begin{enumerate}
        \item The category $\mathrm{Ab}(\cat{D})$ is an abelian category.
        \item The morphism $\varphi$ is injective if and only if it is injective as a morphism of presheaves.
        \item The morphism $\varphi$ is surjective if and only if it is surjective as a morphism of sheaves of sets.
        \item A complex of abelian sheaves
        \[
        \cat{F} \rightarrow \cat{G} \rightarrow \cat{H}
        \]
        is exact at $\cat{G}$ if and only if for all $U \in \cat{C}$ and all $s \in \cat{G}(U)$ mapping to zero in $\cat{H}(U)$ there exists a covering $\left\{ U_i \rightarrow U\right\}_{i \in I}$ in $\cat{C}$ such that each $s_i = s|_{U_i}$ lies in the image of the map $\cat{F}(U_i) \rightarrow \cat{G}(U_i)$.
    \end{enumerate}
\end{lemma}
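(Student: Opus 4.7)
The plan is to reduce everything to the well-understood presheaf situation together with the exactness properties of sheafification. The starting point is that the category of abelian presheaves $\mathrm{PSh}(\cat{D},\mathbf{Ab})$, being a functor category into the abelian category $\mathbf{Ab}$, is itself abelian, with all kernels, cokernels, images and coimages computed objectwise. First I would recall (or quickly verify) that the inclusion $\mathrm{Ab}(\cat{D}) \hookrightarrow \mathrm{PSh}(\cat{D},\mathbf{Ab})$ admits a left adjoint, the sheafification functor $(-)^{\#}$, and that this sheafification is exact. Exactness of $(-)^{\#}$ follows from the fact that it is a left adjoint (so it preserves colimits, in particular cokernels) and from the standard construction of sheafification via the plus construction, which preserves finite limits and hence kernels.

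For part (1) I would then argue as follows. Limits of sheaves of abelian groups are computed pointwise and remain sheaves, so $\mathrm{Ab}(\cat{D})$ has a zero object and all kernels, inherited directly from $\mathrm{PSh}(\cat{D},\mathbf{Ab})$. Cokernels and coproducts in $\mathrm{Ab}(\cat{D})$ are obtained by sheafifying the corresponding objects in $\mathrm{PSh}(\cat{D},\mathbf{Ab})$. The key axiom of an abelian category, namely that the canonical map from the coimage to the image of any morphism is an isomorphism, then follows by applying sheafification to the corresponding isomorphism in presheaves: since sheafification is exact, it commutes with both the formation of coimages (sheafify a presheaf cokernel of a kernel) and of images (a kernel of a presheaf cokernel that is then sheafified), and the result is immediate.

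Part (2) is then a direct corollary: a morphism $\varphi$ is injective, that is monic, in $\mathrm{Ab}(\cat{D})$ exactly when its kernel vanishes; but since kernels in $\mathrm{Ab}(\cat{D})$ agree with kernels in $\mathrm{PSh}(\cat{D},\mathbf{Ab})$ and these are computed objectwise, this is equivalent to $\varphi$ being a monomorphism of presheaves. For part (3), $\varphi$ is surjective, that is epic, in $\mathrm{Ab}(\cat{D})$ iff its cokernel vanishes, iff $\bigl(\mathrm{coker}_{\mathrm{PSh}}(\varphi)\bigr)^{\#}=0$. Unwinding the construction of sheafification, the vanishing of the associated sheaf of an abelian presheaf $\cat{Q}$ is equivalent to the condition that each local section of $\cat{Q}$ is locally zero. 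Translating this for the presheaf cokernel of $\varphi$ gives exactly the local surjectivity characterization, which is the same condition as $\varphi$ being an epimorphism in $\mathrm{Sh}(\cat{D},\mathbf{Set})$.

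Finally, for part (4) I would combine (2) and (3): exactness at $\cat{G}$ means that the image of $\cat{F}\to\cat{G}$ coincides with the kernel of $\cat{G}\to\cat{H}$. The kernel is the objectwise kernel, so its sections over $U$ are exactly those $s \in \cat{G}(U)$ mapping to zero in $\cat{H}(U)$. The image is the sheafification of the presheaf image, so a section $s$ lies in it precisely when, locally on some cover $\{U_i\to U\}$, each restriction $s|_{U_i}$ lies in the presheaf image $\varphi(\cat{F}(U_i))$. Equating the two descriptions yields the stated criterion. The main obstacle, such as it is, is not conceptual but rather the need to be careful with sheafification: I would spend the bulk of the argument verifying that $(-)^{\#}$ is exact and commutes with the formation of kernels, cokernels, images and coimages as formed in presheaves, since everything else then falls out formally.
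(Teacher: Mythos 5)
The paper does not give its own proof of this lemma — it simply cites \cite[Lemma 18.3.1]{stacks} — so there is no internal argument to compare against. Your proposed proof is correct and follows precisely the standard route taken in that reference: the key steps (sheafification $(-)^{\#}$ is an exact left adjoint to the inclusion; kernels of abelian sheaves are computed objectwise as in presheaves; cokernels and images are obtained by sheafifying their presheaf analogues; the coimage-to-image isomorphism in $\mathrm{Ab}(\cat{D})$ descends from the presheaf case by exactness of $(-)^{\#}$; and the vanishing of the sheafification of the presheaf cokernel unwinds to the local-surjectivity criterion) are exactly what one needs, and you have correctly identified exactness of sheafification as the one technical point worth spelling out carefully.
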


To begin with, let us have a look at the first ext group $\mathrm{Ext}^1_{\cat{A}}(B,A)$ in a general abelian category $\cat{A}$.

\begin{lemma}[\cite{stacks}, Lemma 13.27.6]
    Let $\cat{A}$ be an abelian category and $A,B$ objects in $\cat{A}$. Then $\mathrm{Ext}^1_{\cat{A}}(B,A)$ is the group given by isomorphism classes of extensions of $B$ by $A$. That is, the objects are given by classes $[E]$ represented by short exact sequences
    \[
    0 \rightarrow A \rightarrow E \rightarrow B \rightarrow 0
    \]
    up to isomorphism. More precisely we have $[E] = [E']$ if there is an isomorphism $E \xrightarrow{\cong} E'$ making the following diagram commute.
    \begin{center}
    \begin{tikzcd}
0 \arrow[r] & A \arrow[r] \arrow[d, "\mathrm{id}"] & E \arrow[r] \arrow[d, "\cong"] & B \arrow[r] \arrow[d, "\mathrm{id}"] & 0 \\
0 \arrow[r] & A \arrow[r]                          & E' \arrow[r]                   & B \arrow[r]                          & 0
    \end{tikzcd}
    \end{center}
\end{lemma}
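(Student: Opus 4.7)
The plan is to exhibit a bijection between the set of equivalence classes of extensions $[E]$ and the abelian group $\mathrm{Ext}^1_{\cat{A}}(B,A)$, where I will view the latter via the interpretation $\mathrm{Ext}^1_{\cat{A}}(B,A) \cong \mathrm{Hom}_{D(\cat{A})}(B, A[1])$ in the derived category $D(\cat{A})$. This formulation avoids committing to the existence of projectives or injectives in $\cat{A}$, which are not available in the full generality of an arbitrary abelian category.

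First, I would construct the map sending an extension $0 \to A \xrightarrow{\iota} E \xrightarrow{\pi} B \to 0$ to an element of $\mathrm{Ext}^1_{\cat{A}}(B,A)$. The key observation is that any short exact sequence in $\cat{A}$ induces a distinguished triangle
\[
A \xrightarrow{\iota} E \xrightarrow{\pi} B \xrightarrow{\delta_E} A[1]
\]
in $D(\cat{A})$, where $\delta_E$ is the connecting morphism. Concretely, $\delta_E$ is represented by the roof $B \xleftarrow{\sim} E^{\bullet} \to A[1]$, where $E^{\bullet}$ denotes the two-term complex $[A \xrightarrow{\iota} E]$ placed in degrees $-1, 0$, together with its canonical quasi-isomorphism to $B$ (concentrated in degree $0$) and the canonical projection to $A[1]$. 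Setting $[E] \mapsto \delta_E$ defines the desired assignment. That equivalent extensions produce equal classes follows from functoriality of the mapping cone: an isomorphism $\varphi \colon E \xrightarrow{\cong} E'$ compatible with $\iota, \iota', \pi, \pi'$ induces an isomorphism of the associated distinguished triangles fixing $A$ and $B$, and hence $\delta_E = \delta_{E'}$.

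Next, I would construct the inverse map. Given $\delta \in \mathrm{Hom}_{D(\cat{A})}(B, A[1])$, represent $\delta$ by a roof $B \xleftarrow{s} C^{\bullet} \xrightarrow{f} A[1]$ with $s$ a quasi-isomorphism. By truncating and replacing, one may assume $C^{\bullet}$ is the two-term complex $[C^{-1} \to C^0]$ and $f$ is identified with the map $C^{-1} \to A$. Form the pushout
\[
E := \mathrm{coker}\bigl( C^{-1} \xrightarrow{(d,-f)} C^{0} \oplus A \bigr),
\]
and verify that $E$ fits into an extension $0 \to A \to E \to B \to 0$ whose class is independent of the choice of roof. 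Well-definedness uses that two roofs representing the same derived morphism differ by a common refinement, for which the pushouts agree up to canonical isomorphism of extensions.

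The final step is to check the two constructions are mutually inverse and that the bijection intertwines the Baer sum on extensions with the addition on $\mathrm{Hom}_{D(\cat{A})}(B,A[1])$. The composite starting from an extension $E$ recovers $E$ (up to equivalence) because the pushout construction applied to the canonical representative of $\delta_E$ gives back $E$, and conversely, the distinguished triangle built from the pushout extension has connecting morphism represented by the original roof. The zero class corresponds to the split extension $A \oplus B$ via the null-homotopic roof. The main obstacle I expect is purely technical: namely, verifying that the roof-to-extension construction is well-defined on equivalence classes of roofs, which requires the standard machinery of the calculus of fractions in $D(\cat{A})$; in practice, for the categories of sheaves appearing later in the thesis one can sidestep this by working with injective resolutions of $A$ in $\mathrm{Ab}(\cat{D})$, computing $\mathrm{Ext}^1(B,A) = H^1 \mathrm{Hom}_{\cat{A}}(B, J^{\bullet})$ directly and building the extension as the pullback of $J^0 \to J^1$ along a cocycle $B \to J^1$.
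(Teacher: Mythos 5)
The paper does not prove this lemma; it is cited directly from the Stacks Project, and there is no proof in the thesis to compare against. Your sketch follows essentially the same derived-category strategy underlying that reference: you identify $\mathrm{Ext}^1_{\cat{A}}(B,A)$ with $\mathrm{Hom}_{D(\cat{A})}(B,A[1])$ (which matches the paper's own Definition of ext via the derived category, so the formulation is well-aligned with later use), build the forward map via the distinguished triangle attached to a short exact sequence, and recover an extension from a roof by a pushout along the degree $-1$ component.

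The outline is correct, and the pushout $E = \mathrm{coker}(C^{-1} \xrightarrow{(d,-f)} C^0 \oplus A)$ does yield a short exact sequence $0 \to A \to E \to B \to 0$: injectivity of $A \to E$ uses $H^{-1}(C^\bullet) = 0$, and the kernel of $E \to B$ is exactly the image of $A$ because $\ker(C^0 \to B) = \mathrm{im}(d)$. That said, a few steps are stated without justification and would need to be filled in to make the proof complete. The reduction to a two-term roof is not automatic: one must use good truncations $\tau_{\geq -1}$ and $\tau_{\leq 0}$ of $C^\bullet$ and check that the chain map to $A[1]$ (which lives only in degree $-1$) passes to the truncation; this works because $f$ vanishes on $\mathrm{im}(C^{-2} \to C^{-1})$ by the chain map condition, but that observation should be recorded. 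Well-definedness over different roofs genuinely requires the common-refinement argument you gesture at, and the compatibility of the bijection with the Baer sum and with the group structure on $\mathrm{Hom}_{D(\cat{A})}(B,A[1])$ is asserted but not argued; it is a nontrivial verification. You are right that one can sidestep much of this once the category has enough injectives, as is the case for $\mathrm{Ab}(\cat{D})$ later in the thesis, where $\mathrm{Ext}^1(B,A) = H^1\mathrm{Hom}(B,J^\bullet)$ and the extension is obtained by pulling back $J^0 \to J^1$ along a $1$-cocycle.
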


As we are interested in the case $\mathrm{Ab}(\mathbf{Cart})$, recall that the category of diffeological spaces is equivalent to the category of concrete sheaves on the Cartesian site by Proposition \ref{Proposition: Diffeological Spaces are Concrete Sheaves on Cart}
\[
\mathbf{Diff} \xrightarrow{\simeq} \mathrm{Conc}(\mathbf{Cart}).
\]
Therefore, given diffeological abelian groups $N$ and $H$ we can on one side consider the abelian group $\mathrm{Ext}^1_{\mathrm{Ab}(\mathbf{Cart})}(H,N)$ of extensions of smooth abelian groups. On the other hand, there also exists then the notion of diffeological group extensions.

\begin{definition}[\cite{Iglesias-Zemmour}, Art. 7.3] \label{Definition: Diffeological Group Extension}
    Let $G$ be a diffeological group and $N$ a normal subgroup such that the quotient group $G/N$ is canonically a diffeological group with respect to the quotient diffeology. Then we get a short exact sequence
    \[
    1 \rightarrow N \xrightarrow{j} G \xrightarrow{\pi} H = G/N \rightarrow 1
    \]
    where $j$ is an induction and $\pi$ a subduction. Now given two diffeological groups $N$ and $H$, a \textbf{diffeological extension of} $H$ \textbf{by} $N$ is given by a diffeological group $G$ together with morphisms $j$ and $\pi$ satisfying the above diagram and such that $j$ is an induction and $\pi$ is a subduction.
\end{definition}

This raises the question if there is a difference between the abelian group of extensions $\mathrm{Ext}^1_{\mathrm{Ab}(\mathbf{Cart})}(H,N)$ and the abelian group of diffeological extensions $\mathrm{DiffExt}(H,N)$, given two diffeological abelian groups $N$ and $H$. \\

First, we note that a diffeological extension is a special case of a diffeological principal bundle.

\begin{lemma}[\cite{Iglesias-Zemmour}, Art. 7.3]
    Diffeological group extensions are a special case of diffeological principal fiber bundles. More precisely, for any extension $(G,j,\pi)$  of $H$ by $N$ we have that the projection
    \[
    \pi : G \rightarrow G/N = H
    \]
    exhibits $G$ as a diffeological principal fiber bundle of fiber type $N$ over $H$.
\end{lemma}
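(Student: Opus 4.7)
The plan is to verify each of the three conditions appearing in Definition 2.2.4 (the definition of a diffeological principal $G$-bundle) for the map $\pi\colon G\to H=G/N$ with the right $N$-action on $G$ given by restriction along the inclusion $j\colon N\hookrightarrow G$.

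First, by Definition 2.4.4 of a diffeological group extension, $\pi$ is a subduction, so condition (1) is immediate. For condition (2), right multiplication $G\times G\to G$ is smooth because $G$ is a diffeological group, so its restriction $G\times N\to G$, $(g,n)\mapsto g\cdot j(n)$, is smooth; it is clearly fibre-preserving since for $n\in N$ we have $\pi(g\cdot j(n))=\pi(g)\pi(j(n))=\pi(g)$, using $\pi\circ j=1$.

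The main work is condition (3): we must show that the shear map
\[
\sigma\colon G\times N\longrightarrow G\times_H G,\qquad (g,n)\longmapsto (g,\,g\cdot j(n))
\]
is a diffeomorphism. For set-theoretic bijectivity, given $(g_1,g_2)\in G\times_H G$ we have $\pi(g_1^{-1}g_2)=\pi(g_1)^{-1}\pi(g_2)=e$, so $g_1^{-1}g_2\in\ker\pi=j(N)$; hence there is a unique $n\in N$ with $j(n)=g_1^{-1}g_2$, and this produces a two-sided inverse
\[
\sigma^{-1}\colon G\times_H G\longrightarrow G\times N,\qquad (g_1,g_2)\longmapsto \bigl(g_1,\,j^{-1}(g_1^{-1}g_2)\bigr).
\]
Smoothness of $\sigma$ itself follows from the smoothness of multiplication in $G$ and of $j$. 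The delicate step, and the real content of the claim, is smoothness of $\sigma^{-1}$, which is exactly where the hypothesis that $j$ is an induction will be used.

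I plan to handle this as follows: the map $\mu\colon G\times_H G\to G$, $(g_1,g_2)\mapsto g_1^{-1}g_2$ is smooth as a restriction of the composition of multiplication and inversion in $G$, where $G\times_H G$ carries the subset diffeology of $G\times G$. Its image lies in $j(N)$, so it factors set-theoretically through $j$ as $\mu=j\circ\widetilde{\mu}$ for a unique function $\widetilde{\mu}\colon G\times_H G\to N$. Since $j$ is an induction, a function into $N$ is smooth if and only if its post-composition with $j$ is smooth into $G$; applied to $\widetilde{\mu}$ this yields smoothness of $\widetilde{\mu}$, and hence of $\sigma^{-1}=(\mathrm{pr}_1,\widetilde{\mu})$. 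This is the only nontrivial point: the entire argument rests on the induction property of $j$ in order to descend smoothness from $G$ to $N$. Once $\sigma^{-1}$ is smooth, $\sigma$ is a diffeomorphism and $\pi\colon G\to H$ is a diffeological principal $N$-bundle, completing the proof.
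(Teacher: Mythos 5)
The paper does not actually prove this lemma; it is stated with a citation to Iglesias-Zemmour, Art.\ 7.3, and the text moves directly to the subsequent remark without supplying an argument. Your proof is therefore not being compared against a proof in the paper, but it is a correct and sensible direct verification of the Waldorf-style definition of a diffeological principal $N$-bundle used elsewhere in the paper. You are right to single out smoothness of the inverse shear map as the only nontrivial point: the subduction hypothesis on $\pi$ gives the first axiom immediately, smoothness of the right $N$-action $(g,n)\mapsto g\cdot j(n)$ is automatic from smoothness of multiplication, and the induction hypothesis on $j$ is exactly what is needed to descend smoothness from $\mu = j\circ\widetilde{\mu}$ to $\widetilde{\mu}$, since $j$ being an induction means $N$ carries the pullback diffeology along $j$. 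Combined with the paper's earlier observation that a Waldorf-style diffeological principal $G$-bundle is in particular a diffeological bundle of fiber type $G$ (via the inductive-action-map criterion), your argument yields the statement as phrased.
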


\begin{remark}
    Notice that for a diffeological group $G$, the associated sheaf of sets on $\mathbf{Cart}$ is of course concrete, that is the natural map
    \[
    G(U) \rightarrow \mathrm{Hom}_{\mathbf{Set}} \left( \mathbf{Cart}(\mathrm{pt},U),G(\mathrm{pt}) \right)
    \]
    is injective. However notice that this map naturally becomes an injective morphism of groups, when we equip the target set with the pointwise group structure in $G(\mathrm{pt})$. Then notice that given $\varphi,\varphi' \in G(U)$ two plots, we have that for all $d \in \mathbf{Cart}(\mathrm{pt},U)$:
    \[
    \underline{(\varphi \cdot \varphi')}(d) = G(d)\left( \varphi \cdot \varphi' \right) = G(d)(\varphi) \cdot G(d)(\varphi')
    \]
    Hence we indeed have that $\underline{(\varphi \cdot \varphi')} = \underline{\varphi} \cdot \underline{\varphi'}$.
\end{remark}

To see if smooth extensions and diffeological extensions agree, we first have to check, given concrete sheaves of abelian groups $N$ and $H$ and a short exact sequence of smooth abelian groups
\[
1 \rightarrow N \rightarrow G \rightarrow H \rightarrow 1,
\]
if the concreteness of $N$ and $H$ also implies that $G$ is concrete.

\begin{lemma}
    Let $\cat{D}$ be a site and consider the category of abelian sheaves on $\cat{D}$. Then the sections functor
    \[
     \Gamma(U,-) : \mathrm{Ab}(\cat{D}) \rightarrow \mathbf{Ab}
    \]
    is left exact, for all $U \in \cat{D}$.
\end{lemma}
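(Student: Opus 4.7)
The plan is to prove left exactness by recognizing $\Gamma(U,-)$ as a right adjoint, from which left exactness is automatic in any abelian category. The strategy is to factor $\Gamma(U,-)$ through the category of presheaves of abelian groups and apply standard adjunctions at each stage.

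Concretely, I would decompose the functor as
\[
\Gamma(U,-) \colon \mathrm{Ab}(\cat{D}) \xrightarrow{\iota} \mathrm{PSh}(\cat{D},\mathbf{Ab}) \xrightarrow{\mathrm{ev}_U} \mathbf{Ab},
\]
where $\iota$ is the forgetful inclusion and $\mathrm{ev}_U$ sends a presheaf $F$ to its value $F(U)$. Evaluation at $U$ in any presheaf category computes limits objectwise, so $\mathrm{ev}_U$ preserves all small limits. The inclusion $\iota$ admits the sheafification functor $(-)^{\#}$ as left adjoint, so $\iota$ also preserves all small limits. Composing, $\Gamma(U,-)$ preserves all small limits, in particular finite ones, which is exactly the condition of left exactness for an additive functor between abelian categories.

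Unwinding the statement concretely for a short exact sequence $0 \to F \to G \to H \to 0$ in $\mathrm{Ab}(\cat{D})$: injectivity of $F(U) \to G(U)$ is precisely part (2) of the preceding lemma (monomorphisms of sheaves are pointwise monomorphisms), and exactness at $G(U)$ follows because the kernel of $G \to H$ formed in $\mathrm{Ab}(\cat{D})$ agrees with the presheaf kernel (a consequence of the adjunction above), which is evaluated pointwise by definition. There is no real obstacle in this argument; the only nontrivial input is the existence of sheafification as a left adjoint to $\iota$, which is standard for any small site.
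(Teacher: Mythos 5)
Your proof is essentially identical to the paper's: both factor $\Gamma(U,-)$ through the inclusion $\mathrm{Ab}(\cat{D}) \hookrightarrow \mathrm{PSh}(\cat{D},\mathbf{Ab})$ (left exact as the right adjoint of sheafification) followed by evaluation at $U$ (exact because presheaf limits are computed objectwise). The paper phrases the second step as exactness of the sections functor on presheaves rather than limit preservation of $\mathrm{ev}_U$, but the argument is the same.
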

\begin{proof}
    Let $U \in \cat{D}$. Recall that $\mathrm{Ab}(\cat{D})$ is an abelian category. Notice that the right adjoint to the sheafification functor preserves limits, i.e. is a left exact functor between abelian categories\footnote{For this fact we refer to \cite[Lemma 12.7.2]{stacks}.}
    \[
    \mathrm{Ab}(\cat{D}) \rightarrow \mathrm{PAb}(\cat{D}).
    \]
    Now using the fact that the sections functor
    \[
    \Gamma(U,-) : \mathrm{PAb}(\cat{D}) \rightarrow \mathbf{Ab}
    \]
    is exact for all $U \in \cat{D}$ implies that the sections functor
    \[
     \Gamma(U,-) : \mathrm{Ab}(\cat{D}) \rightarrow \mathbf{Ab},
    \]
    being the composition of a left exact and an exact functor is again left exact.
\end{proof}

The next proposition gives an affirmative answer to the question posed earlier, identifying smooth abelian extensions with diffeological extensions, provided $N$ and $H$ are both diffeological abelian groups.

\begin{proposition} \label{Proposition: Diffeological extensions and classical extensions}
    Consider a short exact sequence of abelian sheaves on $\mathbf{Cart}$
    \[
    0 \rightarrow N \xrightarrow{j} G \xrightarrow{\pi} H \rightarrow 0,
    \]
    where $N$ and $H$ are both concrete sheaves, i.e. diffeological abelian groups. Then the abelian sheaf $G$ is also concrete and the short exact sequence exhibits $G$ as a diffeological extension of $H$ by $N$. In particular, there is an isomorphism
    \[
    \mathrm{Ext}^1_{\mathrm{Ab}(\mathbf{Cart})}(H,N) \cong \mathrm{DiffExt}(H,N).
    \]
\end{proposition}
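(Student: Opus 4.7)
The plan is to split the argument into three self-contained pieces: first show that $G$ is concrete, then identify the short exact sequence with a diffeological extension, and finally promote this pointwise bijection to a group isomorphism. Throughout I will use the previous lemma stating that $\Gamma(U,-) \colon \mathrm{Ab}(\mathbf{Cart}) \to \mathbf{Ab}$ is left exact, so for every $U \in \mathbf{Cart}$ the sequence $0 \to N(U) \xrightarrow{j_U} G(U) \xrightarrow{\pi_U} H(U)$ is exact.

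First I would verify concreteness of $G$ by a small diagram chase. Fix $U \in \mathbf{Cart}$ and suppose $g_1, g_2 \in G(U)$ induce the same underlying function $\mathrm{Hom}_{\mathbf{Cart}}(\mathrm{pt},U) \to G(\mathrm{pt})$. Setting $g := g_1 - g_2$, the element $\pi_U(g) \in H(U)$ has underlying function $d \mapsto \pi_{\mathrm{pt}}(G(d)(g)) = \pi_{\mathrm{pt}}(0) = 0$. Concreteness of $H$ forces $\pi_U(g)=0$, so by left-exactness there is a unique $n \in N(U)$ with $j_U(n)=g$. Because $j_{\mathrm{pt}}$ is injective, the induced $\underline{n}$ is also zero, and concreteness of $N$ gives $n = 0$, hence $g_1 = g_2$. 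So $G$ is concrete, i.e. a diffeological abelian group.

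Next I would show the sequence is an extension in the sense of Definition \ref{Definition: Diffeological Group Extension}, i.e. $j$ is an induction and $\pi$ is a subduction. For $j$ an induction, I need to verify that a set map $p \colon U \to N(\mathrm{pt})$ is a plot of $N$ iff $j \circ p$ is a plot of $G$. The non-trivial direction uses that if $\psi \in G(U)$ represents $j \circ p$, then $\pi_U(\psi)$ has vanishing underlying function (by the same argument as above), hence is zero by concreteness of $H$; then exactness in the abelian sheaf category together with the sheaf axiom for $N$ produces the required $\varphi \in N(U)$ with $j_U(\varphi) = \psi$. For $\pi$ a subduction, I would use that surjectivity of $\pi$ in $\mathrm{Ab}(\mathbf{Cart})$ is precisely the local-lifting statement that every $h \in H(U)$ admits an open cover $\{U_i\}$ of $U$ together with $g_i \in G(U_i)$ satisfying $\pi_{U_i}(g_i) = h|_{U_i}$; this is exactly the defining property of a pushforward diffeology on $H = G/N$.

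Finally, I would assemble the isomorphism of abelian groups. The construction above defines a map $\mathrm{Ext}^1_{\mathrm{Ab}(\mathbf{Cart})}(H,N) \to \mathrm{DiffExt}(H,N)$ which is well-defined on isomorphism classes, since a morphism of extensions in $\mathrm{Ab}(\mathbf{Cart})$ automatically restricts to a smooth (hence diffeological) isomorphism by the same concreteness argument. The inverse map is tautological: a diffeological extension of $H$ by $N$ is, by definition, an inductive inclusion followed by a subductive quotient, which when viewed in $\mathrm{Ab}(\mathbf{Cart})$ gives an injective morphism with cokernel $H$, hence a short exact sequence of abelian sheaves. Compatibility with the Baer sum on both sides reduces to the observation that the pullback and pushout used to form the Baer sum, both in $\mathrm{Ab}(\mathbf{Cart})$ and in $\mathbf{Diff}$, are computed by the same concrete sheaves, since limits are computed pointwise and the pushout case is handled by the previous concreteness argument applied to the Baer sum extension. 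The main technical obstacle will be the induction step for $j$: going from the vanishing of $\pi_U(\psi)$ to an actual lift $\varphi \in N(U)$ relies crucially on the fact that surjectivity of $\pi$ in $\mathrm{Ab}(\mathbf{Cart})$ is only local, so the sheaf condition for $N$ must be invoked to glue the local lifts into a global one.
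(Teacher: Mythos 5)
Your proposal is correct and follows essentially the same route as the paper's proof, with two stylistic variations worth noting. For concreteness of $G$, you do a direct diagram chase where the paper invokes the 4-Lemma on the comparison square built from left-exactness of $\Gamma(U,-)$ and of $\mathrm{Hom}_{\mathbf{Set}}(\mathbf{Cart}(\mathrm{pt},U),-)$; both arguments are fine and are really the same chase in different clothing. For the induction property of $j$, the paper argues by identifying $N \cong \ker\pi$, which is a diffeological subspace of $G$ because $G$ and $H$ are concrete and limits of concrete sheaves are computed pointwise, whereas you propose lifting plots directly. One point in your argument should be corrected: the closing worry that the sheaf axiom for $N$ is needed to glue local lifts into a global $\varphi \in N(U)$ is a red herring. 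That step uses exactness at $G$, not surjectivity of $\pi$, and since $j$ is a monomorphism of sheaves, left-exactness of $\Gamma(U,-)$ — which you already invoked at the outset — gives $\ker(\pi_U) = \mathrm{Im}(j_U)$, so the lift $\varphi$ exists globally over $U$ with no gluing. The only place the local-only phenomenon genuinely appears is in the subduction argument, where surjectivity of $\pi$ in $\mathrm{Ab}(\mathbf{Cart})$ only produces lifts over a cover, and that is precisely what a subduction asks for. Finally, your remarks on Baer-sum compatibility go beyond what the paper's proof makes explicit; the sketch is reasonable, with the key point being that the quotient in the Baer-sum construction is again concrete because it sits in a short exact sequence with concrete ends, which your concreteness argument already establishes.
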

\begin{proof}
Let us first show that given a short exact sequence of abelian sheaves on $\mathbf{Cart}$ where $N$ and $H$ are concrete it automatically follows that $G$ is also concrete. Indeed, by evaluating at the terminal object we get a short exact sequence of abelian groups
    \[
    0 \rightarrow N(\mathrm{pt}) \rightarrow G(\mathrm{pt}) \rightarrow H(\mathrm{pt}) \rightarrow 0.
    \]
    This follows from the fact that $\mathbf{Cart}$ is a local site. Indeed, we notice first that by definition the morphism  $N(\mathrm{pt}) \rightarrow G(\mathrm{pt})$ is injective. Using the definition of surjectivity and exactness of abelian sheaves on site, together with the fact that the terminal object in a local site only allows the trivial cover, we conclude that the above sequence is exact. Now notice that the functor
    \[
    \mathrm{Hom}_{\mathbf{Set}}\left(\mathbf{Cart}(\mathrm{pt},U),- \right) : \mathbf{Ab} \rightarrow \mathbf{Ab}
    \]
    is left exact as it preserves all limits. This now implies that we get a left exact sequence
    \begin{equation*}
    \resizebox{.95\hsize}{!}{$0 \rightarrow     \mathrm{Hom}_{\mathbf{Set}}\left(\mathbf{Cart}(\mathrm{pt},U),N(\mathrm{pt}) \right) \rightarrow     \mathrm{Hom}_{\mathbf{Set}}\left(\mathbf{Cart}(\mathrm{pt},U), G(\mathrm{pt}) \right) \rightarrow     \mathrm{Hom}_{\mathbf{Set}}\left(\mathbf{Cart}(\mathrm{pt},U), H(\mathrm{pt}) \right),$}
    \end{equation*}
    where the abelian group structure is pointwise. Since $N$ and $H$ are by assumption concrete and by naturality we get a commutative diagram
    \begin{center}
    \adjustbox{scale=0.8,center}{%
        \begin{tikzcd}
0 \arrow[r] \arrow[d, "\mathrm{id}"] & N(U) \arrow[r] \arrow[d, hook]                                                   & G(U) \arrow[r] \arrow[d, "i"]                                                    & H(U) \arrow[d, hook]                                                   \\
0 \arrow[r]                          & {    \mathrm{Hom}_{\mathbf{Set}}\left(\mathbf{Cart}(\mathrm{pt},U),N(\mathrm{pt})\right) } \arrow[r] & {    \mathrm{Hom}_{\mathbf{Set}}\left(\mathbf{Cart}(\mathrm{pt},U),G(\mathrm{pt})\right) } \arrow[r] & {    \mathrm{Hom}_{\mathbf{Set}}\left(\mathbf{Cart}(\mathrm{pt},U),H(\mathrm{pt})\right) }
\end{tikzcd}
    }
\end{center}
where both rows are exact. The exactness of the top row follows from the fact that the sections functor is left exact. Now it follows by the 4-Lemma that the map $i$ is injective, i.e. $G$ is a concrete sheaf. \\

To conclude that the above short exact sequence is a diffeological extension of diffeological abelian groups, we need to show that the map $\pi$ is a subduction and that the map $j$ is an induction. \\

Indeed, first notice that since $\pi$ is a surjective map of sheaves on Cartesian spaces, we have that for any $U \in \mathbf{Cart}$ and every plot $p \in H(U)$ there is a good open covering $\left\{U_i \rightarrow U \right\}$ of $U$ such that the restriction $p|_{U_i}$ lies in the image of
\[
\pi_{U_i} : G(U_i) \rightarrow H(U_i)
\]
Hence there exists a plot $q : U_i \rightarrow G$ in $G$ such that $\pi \circ q = p|_{U_i}$, which shows that $\pi$ is a subduction.

We are left to show that $j$ is an induction. From the exactness of the sequence of abelian sheaves, we have that $\mathrm{Im}(j) \cong \mathrm{Ker}(\pi)$. By the concreteness of $G$ and $H$, it follows that $\mathrm{Ker}(\pi)$ is a diffeological subspace of $G$ and therefore the inclusion $\mathrm{Im}(j) \hookrightarrow G$ is an induction\footnote{This follows from the fact that the kernel of a morphism of diffeological spaces agrees with the kernel taken in the category of sheaves on Cartesian spaces. This is a consequence of how to compute limits in the category of diffeological spaces, see \cite[Proposition 39]{BaezHoffnung}.}. This now implies that the map $N \xrightarrow{j} G$ is an induction. \\

In the last step, we have to argue that any diffeological extension is also an extension of smooth abelian groups. Since $j$ is an induction the according morphism of sheaves is injective. Similarly for $\pi$ being a subduction, it follows that the underlying morphism of sheaves is surjective. This leaves us to check exactness. Let $U \in \mathbf{Cart}$ and let $p \colon U \rightarrow G$ be a plot such that $\pi \circ p = 0$. By definition of the subset diffeology, it follows that $p \colon U \rightarrow \mathrm{Ker}(\pi)$. Since $\mathrm{Ker}(\pi) \cong N$ it then follows that there is a plot $p' \colon U \rightarrow N$ such that $j \circ p' = p$. This shows exactness on the level of sheaves.
\end{proof}

However, Proposition \ref{Proposition: Diffeological extensions and classical extensions} allows us only to interpret a certain class of extensions in $\mathrm{Ab}(\mathbf{Cart})$ as diffeological extensions, which a priori does not make it easier to classify them. Also, the result only holds for the first extension group whereas we wish to understand also the higher ones. Nevertheless, this exercise helps to understand the main principle applied later to compute the higher extension groups. That is, for $A \in \mathbf{Ab}$ an abelian group and $H$ an abelian Lie group consider the two sheaves of abelian groups on $\mathbf{Cart}$
\begin{align*}
U &\mapsto \mathsf{c}(A)(U) = A, \text{ and} \\
U &\mapsto H(U) = C^{\infty}(U,H).
\end{align*}
In particular, both sheaves of abelian groups are concrete, corresponding to the discrete diffeological abelian group $\mathsf{c}(A)$ and the diffeological abelian group $H$. According to Proposition \ref{Proposition: Diffeological extensions and classical extensions} it then follows that
\[
\mathrm{Ext}^1_{\mathrm{Ab}(\mathbf{Cart})}(\mathsf{c}(A),H) \cong \mathrm{DiffExt}(\mathsf{c}(A),H)
\]
and hence any extension of $\mathsf{c}(A)$ by $H$ is equivalently a diffeological extension. Since a diffeological extension of $\mathsf{c}(A)$ by $H$ is represented by a diffeological principal $H$-bundle over the discrete diffeological space $\mathsf{c}(A)$, the bundle is trivial. As such, the ``smooth data" is not necessary to classify extensions of $\mathsf{c}(A)$ and one simply has
\begin{equation} \label{Equation: The Discrete First Extension}
\mathrm{Ext}^1_{\mathrm{Ab}(\mathbf{Cart})}(\mathsf{c}(A),H)  \cong \mathrm{Ext}^1_{\mathbf{Ab}}(A,H(\mathrm{pt})).
\end{equation}

The goal of the remaining part of this chapter is to prove a generalized version of equation \eqref{Equation: The Discrete First Extension}. More precisely, we show that for $A \in \mathbf{Ab}$ any abelian group and any $Y \in \cat{A} := \mathrm{Ab}(\mathbf{Cart})$ there is a natural isomorphism
\[
\mathrm{Ext}^k_{\cat{A}} (\mathsf{c}(A),Y) \cong \mathrm{Ext}^k_{\mathbf{Ab}}(A,Y(\mathrm{pt}))
\]
for all $k \geq 0$. This is done by identifying $\mathrm{Ext}^i_{\cat{A}}(A,B)$ as the path components of a derived mapping space for some suitable simplicial model category and using the fact that the constant simplicial presheaf functor $\mathsf{c}$ has a right adjoint provided by the evaluation functor $\mathrm{ev}_{\mathrm{pt}}$. \\

As a warm-up, let us show how this is indeed the case in the classical case where $\cat{A} = R\mathbf{Mod}$ is the abelian category of $R$-modules for $R$ an associative ring with unit. In this case, the required model category is given by the \textit{projective model structure on bounded below chain complexes} $\mathbf{Ch}_+(R)$, which is defined as follows.

\begin{theorem}[\cite{Goerss-Schemmerhorn}, Theorem 1.5]
    Define a map $ f : M \rightarrow N$ in $\mathbf{Ch}_+(R)$ to be
\begin{enumerate}
    \item a weak equivalence if the map $f$ induces isomorphisms $H_k(N) \rightarrow H_k(M)$ for all $k \geq 0$.
    \item a cofibration if for each $k \geq 0$ the map $f_k : M_k \rightarrow N_k$ is a monomorphism with a projective $R$-module as its cokernel, and
    \item a fibration if for each $k > 0$ the map $f_k : M_k \rightarrow N_k$ is an epimorphism.
\end{enumerate}
Then with these choices, $\mathbf{Ch}_+(R)$ is a model category called the \textbf{projective model structure on bounded chain complexes}.
\end{theorem}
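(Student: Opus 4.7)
The plan is to verify the five model category axioms in turn, with the bulk of the work concentrated in the factorization and lifting axioms. Axiom M1 is immediate: $\mathbf{Ch}_+(R)$ is both complete and cocomplete since limits and colimits can be computed degreewise in $R\mathbf{Mod}$. Axiom M2 follows from the 2-of-3 property of isomorphisms in $R\mathbf{Mod}$ applied degreewise to the induced maps on homology. For M3, note that retracts in $\mathbf{Ch}_+(R)$ are computed degreewise; the classes of epimorphisms (in positive degrees) and quasi-isomorphisms are evidently closed under retracts, and the class of monomorphisms with projective cokernel is closed under retracts since a direct summand of a projective module is projective.

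The heart of the proof is to carry out Quillen's small object argument with carefully chosen sets of generators. For $n \geq 0$, let $S^n$ denote the chain complex with $R$ in degree $n$ and zero elsewhere, and for $n \geq 1$ let $D^n$ denote the complex $R \xrightarrow{\mathrm{id}} R$ concentrated in degrees $n$ and $n-1$. Define
\[
I = \{S^{n-1} \hookrightarrow D^n \mid n \geq 1\} \cup \{0 \to S^0\}, \qquad J = \{0 \to D^n \mid n \geq 1\}.
\]
The first key computational lemma is that a morphism $f \colon M \to N$ has the right lifting property with respect to $J$ if and only if $f_k$ is surjective for every $k > 0$, by unwinding what a lift against $0 \to D^n$ means degreewise. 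The second, harder lemma is that $f$ has the right lifting property against $I$ if and only if $f$ is both a fibration and a quasi-isomorphism: the lifting against $0 \to S^0$ gives surjectivity of $H_0(f)$, the lifting against $S^{n-1} \to D^n$ yields both surjectivity of $f_n$ and the vanishing of $H_n(\operatorname{ker} f)$ through a diagram chase exploiting the projectivity of $R$ as an $R$-module.

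With these two lemmas in hand, the small object argument applies, since every object of $\mathbf{Ch}_+(R)$ is small with respect to $\omega$-indexed transfinite compositions of monomorphisms. This produces the two required functorial factorizations in M5. The lifting axiom M4 then follows by the standard retract argument: any trivial cofibration is a retract of a relative $J$-cell complex (factor it into a relative $J$-cell complex followed by a fibration and apply the retract argument using that it is also a weak equivalence), and hence inherits the left lifting property against fibrations; similarly cofibrations inherit the left lifting property against trivial fibrations.

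The main obstacle is showing that the class of cofibrations produced by the small object argument — namely retracts of relative $I$-cell complexes — coincides exactly with the chosen class of degreewise monomorphisms with projective cokernel. One direction is straightforward: each map in $I$ has this form, the property is preserved under pushouts and transfinite composition in $R\mathbf{Mod}$, and is closed under retracts. For the converse, given a monomorphism $M \hookrightarrow N$ with projective cokernel $P_\bullet$, one must exhibit it as a retract of a transfinite composition of pushouts of generating cofibrations. This is achieved by choosing, degree by degree upward, a splitting of the cokernel short exact sequence and expressing $P_n$ as a retract of a free module; the differentials in $N$ are then encoded by attaching cells of type $S^{n-1} \to D^n$ indexed by a basis of the free cover of $P_n$, with the splitting providing the retraction. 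Once this structural description of cofibrations is in place, all the axioms fit together to give the claimed projective model structure.
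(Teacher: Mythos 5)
The paper does not prove this theorem; it cites it directly from Goerss--Schemmerhorn, Theorem~1.5. Your proposed proof follows the standard small-object-argument route that is also used in that reference and in Dwyer--Spalinski and Hovey, with the same generating cofibrations $I = \{S^{n-1}\hookrightarrow D^n\}_{n\geq 1}\cup\{0\to S^0\}$ and generating trivial cofibrations $J = \{0\to D^n\}_{n\geq 1}$, so the approach is the expected one.

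Two small imprecisions are worth flagging, though neither is fatal. First, your assertion that \emph{every} object of $\mathbf{Ch}_+(R)$ is $\omega$-small is stronger than what holds and stronger than what you need: the small object argument only requires the domains of the generating maps ($S^{n-1}$ and $0$), which are finitely presented and hence $\omega$-small, to be small relative to the relevant transfinite compositions. Second, in the diagram chase for ``RLP against $I$ iff trivial fibration,'' the lift against $S^{n-1}\hookrightarrow D^n$ with $y=0$ gives vanishing of $H_{n-1}(\ker f)$ rather than $H_n(\ker f)$; and the $y=0$ specialization alone gives surjectivity of $f_n$ only onto cycles, so one needs the full statement with nonzero $y$ and the fibration condition in degree $n+1$ to conclude surjectivity of $f_n$ and then an iso on $H_n$. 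These are bookkeeping details that the standard references also have to handle with some care; your sketch of the cofibration characterization (splitting the cokernel degreewise by projectivity and building $N$ by attaching cells of type $S^{n-1}\to D^n$) is likewise the right idea, and filling it in carefully produces the result.
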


The importance of the projective model structure is, that it can be compared with the standard model structure of simplicial $R$-modules via the Dold--Kan correspondence. Since we are eventually only interested in the case of sheaves of abelian groups, we simply restrict to the case $R = \mathbb{Z}$ for simplicity. However, all the statements do indeed hold also for $R\mathbf{Mod}$. \\

Recall the transferred model structure on simplicial abelian groups via the free-forgetful adjunction of Theorem \ref{Theorem: model structure simplicial groups}.

\begin{theorem} \label{simplicial abelian group model structure}
    The category of simplicial abelian groups $\mathbf{sAb}$ carries a model structure given by the transferred model structure along the forgetful functor over the Kan--Quillen model structure on simplicial sets
    \begin{center}
        \begin{tikzcd}
\mathbf{sAb} \arrow[r, "U"', shift right=3] \arrow[r, "\perp", phantom] & \mathbf{sSet}. \arrow[l, "\mathbb{Z}"', shift right=3]
\end{tikzcd}
    \end{center}
\end{theorem}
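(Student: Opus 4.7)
The plan is to recognize this statement as the abelian specialization of Theorem \ref{Theorem: model structure simplicial groups}, so the substantive content has already been established. The proof will therefore consist of verifying that Kan's transfer criterion applies to the free–forgetful adjunction $\mathbb{Z} \dashv U$ between $\mathbf{sSet}$ and $\mathbf{sAb}$, since the Kan–Quillen model structure on $\mathbf{sSet}$ is cofibrantly generated with generating cofibrations $I = \{\partial \Delta[n] \hookrightarrow \Delta[n]\}_{n\geq 0}$ and generating trivial cofibrations $J = \{\Lambda^k[n] \hookrightarrow \Delta[n]\}_{0 \leq k \leq n, n > 0}$.

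First I would define the three classes by right transfer: a map $f$ in $\mathbf{sAb}$ is a weak equivalence (resp.\ fibration) if and only if $U(f)$ is a weak equivalence (resp.\ Kan fibration) in $\mathbf{sSet}$, and cofibrations are determined by the left lifting property against trivial fibrations. The candidate generating (trivial) cofibrations are then $\mathbb{Z}(I)$ and $\mathbb{Z}(J)$. Completeness and cocompleteness of $\mathbf{sAb}$ are standard (limits are computed degreewise in $\mathbf{Ab}$, and colimits exist since $\mathbb{Z}(-)$ is a left adjoint and $U$ creates filtered colimits and reflexive coequalizers). The $2$-of-$3$ and retract axioms follow immediately because $U$ is a faithful functor reflecting the respective classes in $\mathbf{sSet}$.

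The heart of the argument is the small object argument together with the verification of the transfer criterion: one must show that every relative $\mathbb{Z}(J)$-cell complex becomes a weak equivalence after applying $U$. The key input is that every simplicial abelian group is a Kan complex (as shown by Moore), so $\mathbf{sAb}$ admits a functorial path-object $A^{\Delta[1]}$ giving a fibrant replacement functor that is the identity; combined with the fact that $\mathbb{Z}(J)$ consists of maps which are pointwise injections with free cokernels and that the forgetful functor preserves filtered colimits and pushouts of such maps, one concludes that $U$ applied to any relative $\mathbb{Z}(J)$-cell complex is a trivial cofibration in $\mathbf{sSet}$. This is the main obstacle and is the classical observation going back to Quillen \cite{Quillen67}.

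Given this, Kan's transfer theorem (see \cite[Theorem 11.3.2]{Hirschhorn}) produces the desired cofibrantly generated model structure with generating (trivial) cofibrations $\mathbb{Z}(I)$ and $\mathbb{Z}(J)$, and by construction $(\mathbb{Z}, U)$ is automatically a Quillen adjunction. Since this is exactly the content of the abelian half of Theorem \ref{Theorem: model structure simplicial groups}, the proof reduces to invoking that result.
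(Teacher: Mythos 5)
Your approach is essentially the same as the paper's: the paper proves the statement (in the form of Theorem~\ref{Theorem: model structure simplicial groups}) by citing \cite{Quillen67} for the existence of the transferred structure and \cite[Theorem~11.3.2]{Hirschhorn} for cofibrant generation, and you do the same while additionally sketching the verification of Kan's transfer criterion. That added detail is welcome, but it contains one false claim. The forgetful functor $U \colon \mathbf{sAb} \rightarrow \mathbf{sSet}$ does \emph{not} preserve pushouts, not even along the maps in $\mathbb{Z}(J)$: already in $\mathbf{Ab}$, the pushout of $\mathbb{Z} \leftarrow 0 \rightarrow \mathbb{Z}$ is $\mathbb{Z}\oplus\mathbb{Z}$, whereas the pushout of the underlying sets along the same diagram glues only the two zero elements. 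So the sentence in which you deduce the acyclicity condition from ``the forgetful functor preserves filtered colimits and pushouts of such maps'' is incorrect as written. The conclusion is nevertheless true, and the first half of your own sentence already contains the right ingredient: since every simplicial abelian group is a Kan complex (Moore) and $A^{\Delta[1]}$ gives functorial path objects, the path-object argument for transfer (Quillen, or \cite[Prop.~2.5]{Hirschhorn}-style) shows directly that a map with the left lifting property against all $U$-fibrations is a weak equivalence, without ever computing $U$ of a pushout. Alternatively, for $\mathbf{sAb}$ one can argue via Dold--Kan and the long exact sequence in homology, using that pushouts along monomorphisms in an abelian category induce isomorphisms on cokernels. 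Deleting the incorrect clause about pushout preservation and keeping only the path-object reasoning (or replacing it by the homological argument) would make your sketch correct.
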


The Dold--Kan correspondence now serves as a bridge between the homotopy theory of simplicial abelian groups and the projective model structure on bounded below chain complexes.

\begin{theorem} \label{Theorem: Dold-Kan Quillen equivalence}
    The Dold--Kan correspondence provides Quillen equivalences between the projective model structure on bounded below chain complexes and the transferred model structure on simplicial abelian groups. More precisely, both adjunctions
    \begin{center}
        \begin{tikzcd}
\mathbf{Ch}_+(\mathbb{Z}) \arrow[r, "\Gamma"', shift right=2] \arrow[r, "\perp", phantom] & \mathbf{sAb} \arrow[l, "N"', shift right=2] & \mathbf{sAb} \arrow[r, "N"', shift right=2] \arrow[r, "\perp", phantom] & \mathbf{Ch}_+(\mathbb{Z}) \arrow[l, "\Gamma"', shift right=2]
\end{tikzcd}
    \end{center}
    provide Quillen equivalences with respect to these model structures.
\end{theorem}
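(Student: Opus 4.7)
The plan is to verify that both directions of the Dold--Kan correspondence define Quillen adjunctions, after which the Quillen equivalence property will follow essentially for free from the fact that the underlying adjunctions are already equivalences of categories in the sense of Theorem \ref{Theorem: Dold-Kan correspondence}. Concretely, I want to show that each of $N$ and $\Gamma$ is simultaneously a left Quillen and a right Quillen functor, because the two adjunctions in the statement are precisely $\Gamma \dashv N$ and $N \dashv \Gamma$. By Lemma \ref{Lemma: Quillen Adj Lemma} it suffices to verify in each case that the functor playing the role of the right adjoint preserves fibrations and trivial fibrations.

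The first key step is to prove that both $N$ and $\Gamma$ preserve and reflect weak equivalences. For $N$ this is the classical natural identification $\pi_n(A) \cong H_n(NA)$ for any simplicial abelian group $A$, which converts weak equivalences of underlying simplicial sets (the weak equivalences of $\mathbf{sAb}$ by Theorem \ref{simplicial abelian group model structure}) into quasi-isomorphisms. Since the Dold--Kan correspondence is an equivalence of categories with unit and counit being natural isomorphisms, $\Gamma$ therefore also preserves and reflects weak equivalences.

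The second key step, and the main technical point, is the characterization of fibrations. The fibrations of $\mathbf{Ch}_{+}(\mathbb{Z})$ are the degreewise surjections in positive degrees, while the fibrations of $\mathbf{sAb}$ are the underlying Kan fibrations. The classical Moore theorem for simplicial abelian groups asserts that a morphism $f \colon A \to B$ of simplicial abelian groups is a Kan fibration if and only if $N(f)_n \colon NA_n \to NB_n$ is surjective for all $n \geq 1$. This immediately gives that $N$ preserves fibrations, and together with $N\Gamma \cong \mathrm{id}$ from the equivalence, it also gives that $\Gamma$ preserves fibrations: if $g$ is surjective in positive degrees, then so is $N\Gamma(g) \cong g$, so $\Gamma(g)$ is a Kan fibration. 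Combined with the first step, this shows that $N$ and $\Gamma$ each preserve trivial fibrations as well, so both $\Gamma \dashv N$ and $N \dashv \Gamma$ are Quillen adjunctions. I expect the Moore characterization of Kan fibrations between simplicial abelian groups in terms of surjectivity of the normalized complex to be the main technical obstacle; it can either be cited from Goerss--Jardine or proved directly by a filler argument exploiting the group structure, using that the Moore normalization $NA_n = \bigcap_{i \geq 1} \ker d_i$ naturally parametrizes the horn-filling problem.

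Finally, to promote these Quillen adjunctions to Quillen equivalences, I will invoke Definition \ref{Definition: Quillen Equivalence}. Given a cofibrant $B$ in $\mathbf{Ch}_{+}(\mathbb{Z})$ and a fibrant $X$ in $\mathbf{sAb}$, any morphism $f \colon B \to NX$ factors as $B \xrightarrow{\eta_B} N\Gamma(B) \xrightarrow{N(f^{\sharp})} NX$ with $\eta_B$ a natural isomorphism; since $N$ preserves and reflects weak equivalences, $f$ is a weak equivalence if and only if its adjunct $f^{\sharp} \colon \Gamma(B) \to X$ is. This verifies that $\Gamma \dashv N$ is a Quillen equivalence, and the symmetric argument using the counit handles $N \dashv \Gamma$. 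The entire promotion step is essentially formal, relying only on the fact that natural isomorphisms are weak equivalences in any model category.
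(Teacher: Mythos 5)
The paper states Theorem \ref{Theorem: Dold-Kan Quillen equivalence} without supplying a proof, implicitly treating it as classical. Your argument is correct and is in fact the standard proof of this result. The two key ingredients are exactly the ones you identify: the natural isomorphism $\pi_n(A,0) \cong H_n(NA)$ for a simplicial abelian group $A$, which shows $N$ (and hence $\Gamma$) preserves and reflects weak equivalences, and Moore's characterization that a homomorphism $f$ of simplicial abelian groups is a Kan fibration if and only if $N(f)_n$ is surjective for $n \geq 1$ (see for instance Goerss--Jardine, Chapter III). The deduction that $\Gamma$ preserves fibrations via $N\Gamma \cong \mathrm{id}$ is clean and correct, and the promotion from Quillen adjunction to Quillen equivalence is, as you say, formal once one knows the adjunction is an adjoint equivalence and $N$ preserves and reflects weak equivalences: the unit and counit are isomorphisms, so the derived unit and counit conditions in Definition \ref{Definition: Quillen Equivalence} reduce immediately to the reflection of weak equivalences. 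No gaps.
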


This observation, together with the fact that the model structure on simplicial abelian groups is simplicial, allows us to compute the ext groups via path components of the derived mapping space.

\begin{proposition}[\cite{Dwyer-Spalinski}, Proposition 7.3] \label{Proposition: ext for abelian groups}
    Let $n,m \in \mathbb{N}$ and $A, B$ be abelian groups. Then there is a natural isomorphism
    \[
    \mathrm{Ext}_{\mathbf{Ab}}^{n-m} (B,A) \cong \pi_0 \mathbb{R} \mathrm{Map}(B[-m],A[-n]) \cong \pi_0\mathbb{R}  \mathrm{Map}(\overline{W}^m B,\overline{W}^n A)
    \]
    between the ext groups and the path components of the derived mapping spaces of the projective model structure on chain complexes and the model structure on simplicial abelian groups.
\end{proposition}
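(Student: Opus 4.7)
The plan is to compute the first derived mapping space directly in the projective model structure on $\mathbf{Ch}_+(\mathbb{Z})$ by means of a projective resolution, then transport the result across the Dold--Kan correspondence to obtain the second isomorphism.

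First I would observe that in the projective model structure, every bounded-below chain complex is fibrant: the fibration condition only constrains degrees $k>0$, and $A[-n] \to 0$ is trivially an epimorphism in each such degree. Hence no fibrant replacement of $A[-n]$ is needed. A cofibrant replacement of $B[-m]$ is obtained by choosing a free (hence projective) resolution $P_\bullet \twoheadrightarrow B$ and shifting by $m$: set $\tilde{P}_k := P_{k-m}$ for $k \geq m$ and $0$ otherwise. Each $\tilde{P}_k$ is projective, so $\tilde{P}$ is cofibrant in the projective model structure, and the augmentation induces a quasi-isomorphism $\tilde{P} \xrightarrow{\simeq} B[-m]$.

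Next, using that the projective model structure on $\mathbf{Ch}_+(\mathbb{Z})$ is a simplicial model category (the simplicial enrichment being transported from $\mathbf{sAb}$ via Dold--Kan), Proposition \ref{Proposition: simplicial homotopy classes and morphism in Ho-Cat} identifies
\[
\pi_0 \mathbb{R}\mathrm{Map}(B[-m], A[-n]) \;\cong\; \mathrm{Hom}_{\mathrm{Ho}\,\mathbf{Ch}_+(\mathbb{Z})}(B[-m], A[-n]) \;\cong\; [\tilde{P}, A[-n]],
\]
where $[-,-]$ denotes chain homotopy classes of chain maps. A chain map $\tilde{P} \to A[-n]$ is precisely a homomorphism $f\colon P_{n-m} \to A$ satisfying $f \circ d = 0$, and chain homotopies account for exactly the image of $d^*\colon \mathrm{Hom}(P_{n-m-1}, A) \to \mathrm{Hom}(P_{n-m}, A)$. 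Thus the set of chain homotopy classes coincides with $H^{n-m}(\mathrm{Hom}(P_\bullet, A)) = \mathrm{Ext}^{n-m}_{\mathbf{Ab}}(B,A)$, proving the first isomorphism.

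For the second isomorphism, I would invoke Theorem \ref{Theorem: Dold-Kan Quillen equivalence}: the functor $\Gamma$ is a Quillen equivalence and is simultaneously half of an equivalence of ordinary categories, so it preserves derived mapping spaces up to weak equivalence. Combined with Lemma \ref{Lemma: simplicial classifying space is Eilenberg-Mac Lane object}, which identifies $\Gamma(A[-n]) \cong K(A,n) \cong \overline{W}^n A$ and analogously $\Gamma(B[-m]) \cong \overline{W}^m B$, this yields a weak equivalence
\[
\mathbb{R}\mathrm{Map}_{\mathbf{Ch}_+(\mathbb{Z})}(B[-m], A[-n]) \;\simeq\; \mathbb{R}\mathrm{Map}_{\mathbf{sAb}}(\overline{W}^m B, \overline{W}^n A),
\]
from which the second isomorphism follows after taking $\pi_0$. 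The most delicate point is verifying that the transferred simplicial enrichment on $\mathbf{Ch}_+(\mathbb{Z})$ is compatible with the standard one on $\mathbf{sAb}$ in a sense strong enough for derived mapping spaces to transport; this is ensured because the Quillen equivalence of Theorem \ref{Theorem: Dold-Kan Quillen equivalence}, together with the fact that $N$ and $\Gamma$ are mutually inverse as functors of underlying categories, upgrades to an equivalence of simplicial model categories.
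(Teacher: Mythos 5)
The paper does not actually prove this proposition; it is quoted verbatim from Dwyer--Spalinski and used as a black box. Your argument is a correct and self-contained proof, and it follows the standard route that Dwyer--Spalinski themselves take (replace the source by a shifted projective resolution, observe everything is fibrant, compute simplicial homotopy classes as chain homotopy classes, identify the latter with $H^{n-m}(\mathrm{Hom}(P_\bullet, A))$).

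A few points worth tightening. Your computation of chain maps $\tilde{P}\to A[-n]$ and chain homotopies between them is correct, but be explicit that the ``$d$'' in $f\circ d = 0$ is the differential $P_{n-m+1}\to P_{n-m}$ of the resolution, so that the kernel you are forming is $\ker\bigl(d^*\colon\mathrm{Hom}(P_{n-m},A)\to\mathrm{Hom}(P_{n-m+1},A)\bigr)$, not a vague ``$f$ is a cocycle.'' You should also say a word about why simplicial homotopy classes of chain maps coincide with chain homotopy classes; this follows because the standard algebraic cylinder $X\oplus X[-1]\oplus X$ provides a cylinder object for a cofibrant $X$ in the projective model structure, so the abstract left-homotopy relation is exactly chain homotopy. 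Finally, the edge case $n<m$ matches the paper's convention $\mathrm{Ext}^{k}_{\mathbf{Ab}}(B,A)=0$ for $k<0$, since then $\tilde{P}_n = P_{n-m} = 0$ forces every chain map $\tilde{P}\to A[-n]$ to be zero; you may wish to make that explicit. For the second isomorphism, your appeal to $N$ and $\Gamma$ being mutually inverse equivalences of categories that match the two model structures on the nose is exactly the right justification: this is stronger than a mere Quillen equivalence and makes the transport of the simplicial enrichment and hence of $\mathbb{R}\mathrm{Map}$ entirely unproblematic, so the point you flag as delicate is in fact handled cleanly by this observation.
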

\begin{remark}
    Notice that if $k<0$ we set $\mathrm{Ext}_{\mathbf{Ab}}^k(B,A)$ to be zero.
\end{remark}

We now wish to generalize this statement to the case where the abelian category is given by the category $\mathrm{Ab}(\mathbf{Cart})$ of sheaves of abelian groups on Cartesian spaces. As a first guess one might try to construct a projective model structure on the category of bounded below chain complexes $\mathbf{Ch}_+(\cat{A})$ in some abelian category $\cat{A}$. Unfortunately, this model structure only exists if $\cat{A}$ has enough projectives\footnote{See Theorem 4 in Chapter II.4 of \cite{Quillen67}}, such as we have seen for $R\mathbf{Mod}$. The key is to work in the presheaf category $\mathrm{PSh}(\mathbf{Cart}, \mathbf{Ab})$, which indeed has enough projectives and is again abelian, instead of working with $\mathrm{Ab}(\mathbf{Cart})$. In particular, the category of chain complexes of presheaves of abelian groups $\mathbf{Ch}_+\left( \mathrm{PSh}(\mathbf{Cart}, \mathbf{Ab}) \right)$ can be endowed with an analog of the projective model structure, which we call here the \textit{standard model structure of presheaves of bounded-below chain complexes}. \\

This allows us to prove the following.

\begin{proposition} \label{Proposition: ext for smooth abelian groups}
    Let $X,Y \in \cat{A} :=\mathrm{Ab}(\mathbf{Cart})$ and $k \in \mathbb{N}$. Then there is a canonical isomorphism:
    \[
    \mathrm{Ext}_{\cat{A}}^k(X,Y) \cong \pi_0 \mathbb{R}\mathrm{Map}(X,\overline{W}^k(Y))
    \]
    where the derived mapping space is taken in the local injective model structure on \newline $\mathrm{PSh}(\mathbf{Cart}, \mathbf{sAb})$.
\end{proposition}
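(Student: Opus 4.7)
The plan is to reduce the statement to classical homological algebra via the presheaf-level Dold--Kan correspondence \eqref{Equation: Presheaf Dold-Kan version} and then identify chain homotopy classes of maps with Ext groups, in close analogy with the proof of Proposition \ref{Proposition: Universal coefficient in stack cohomology}. First I would view both sides through the Dold--Kan equivalence: under $\Gamma$ the sheaf $X$ (regarded as a discrete presheaf of simplicial abelian groups) corresponds to the chain complex $X[0]$ concentrated in degree zero, and by Lemma \ref{Lemma: simplicial classifying space is Eilenberg-Mac Lane object} the Eilenberg--MacLane object $\overline{W}^k(Y)$ corresponds to $Y[-k]$. The Quillen equivalence of Theorem \ref{Theorem: Dold-Kan Quillen equivalence}, applied objectwise and then transferred to the local injective structure, identifies the derived mapping space $\mathbb{R}\mathrm{Map}(X, \overline{W}^k(Y))$ with a derived mapping space in an appropriate local model structure on $\mathbf{Ch}_{\geq 0}\bigl(\mathrm{PSh}(\mathbf{Cart}, \mathbf{Ab})\bigr)$.

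Next I would construct a tractable fibrant model of $\overline{W}^k(Y)$. Choose an injective resolution $Y \to J^\bullet$ in $\cat{A} = \mathrm{Ab}(\mathbf{Cart})$. Each $J^i$, being an injective sheaf of abelian groups, has the property that the corresponding Eilenberg--MacLane presheaves are fibrant in the local injective model structure on $\mathrm{PSh}(\mathbf{Cart}, \mathbf{sAb})$; consequently the shifted complex $\Gamma(J^\bullet[-k])$ provides a local injective fibrant replacement of $\overline{W}^k(Y) = \Gamma(Y[-k])$. Via Proposition \ref{Proposition: simplicial homotopy classes and morphism in Ho-Cat} we therefore obtain
\[
\pi_0 \mathbb{R}\mathrm{Map}(X, \overline{W}^k(Y)) \;\cong\; \pi\bigl(X[0],\, J^\bullet[-k]\bigr),
\]
where the right-hand side denotes chain homotopy classes of maps after embedding bounded-below chain complexes as cochain complexes concentrated in non-positive degrees, exactly as in the setup before \eqref{Equation: Lemma 8.21}.

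Finally, I would invoke the purely homological identification \eqref{Equation: Lemma 8.21} applied to $C_\bullet = X[0]$ and $K^\bullet = J^\bullet$. The associated double complex $\mathsf{hom}(X[0], J^\bullet)^{p,q}$ is concentrated in the column $p = 0$ with values $\mathrm{Hom}_{\cat{A}}(X, J^q)$, so its total complex is simply the Hom-complex $\mathrm{Hom}_{\cat{A}}(X, J^\bullet)$. Its cohomology in degree $k$ is by definition $\mathrm{Ext}^k_{\cat{A}}(X, Y)$, and the identification \eqref{Equation: Lemma 8.21} thus gives
\[
\pi\bigl(X[0], J^\bullet[-k]\bigr) \;\cong\; H^k\bigl(\mathrm{Hom}_{\cat{A}}(X, J^\bullet)\bigr) \;=\; \mathrm{Ext}^k_{\cat{A}}(X, Y).
\]
Chaining these natural isomorphisms yields the claim.

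The main obstacle will be the transfer of model-categorical data along Dold--Kan and between presheaves and sheaves. Specifically, one must verify that an injective resolution $Y \to J^\bullet$ in the category of abelian sheaves produces, after applying $\Gamma$, a fibrant replacement in the local injective model structure on $\mathrm{PSh}(\mathbf{Cart}, \mathbf{sAb})$, and not merely in an objectwise structure; this requires that injective abelian sheaves on $\mathbf{Cart}$ satisfy descent, which follows from their flabbiness. A subsidiary point is to argue that computing $\mathrm{Ext}$ in the abelian category $\cat{A}$ of sheaves, rather than in the larger abelian category of presheaves, is compatible with the derived mapping space in the local (as opposed to objectwise) injective structure, and this is ensured precisely by the fact that local weak equivalences invert the sheafification map.
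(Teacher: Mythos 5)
Your proposal takes a genuinely different route from the paper. The paper's proof passes through the stable model structure on \emph{unbounded} chain complexes $\mathbf{Ch}(\mathrm{PSh}(\mathbf{Cart},\mathbf{Ab}))_{\mathrm{stbl}}$, invokes Theorem \ref{Theorem: Stable Model structure computes Derived category} to identify its homotopy category with $D(\cat{A})$ on the nose, and then uses the derived Quillen adjunction $i_0 \dashv T_0$ together with Dold--Kan to move to $\mathrm{PSh}(\mathbf{Cart},\mathbf{sAb})$. No explicit fibrant replacement is ever exhibited; all the work is done by Quillen adjunctions between known model structures. Your approach is more hands-on and closely mirrors the proof of Proposition \ref{Proposition: Universal coefficient in stack cohomology}: choose an injective resolution $Y\to J^\bullet$, exhibit (a truncation of) $\Gamma(J^\bullet[-k])$ as a fibrant model of $\overline{W}^k Y$, and then reduce to classical homological algebra via \eqref{Equation: Lemma 8.21}. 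That approach is attractive because it is more concrete and the computation $\pi(X[0],J^\bullet[-k]) = H^k(\mathrm{Hom}_{\cat{A}}(X,J^\bullet)) = \mathrm{Ext}^k_{\cat{A}}(X,Y)$ is transparent; the paper's route is more abstract but requires no verification of fibrancy or descent.

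However, your argument has a genuine gap at exactly the place you flag as ``the main obstacle.'' You assert that $\Gamma(J^\bullet[-k])$ (you should actually use the good truncation $\Gamma(T_0(J^\bullet[-k]))$ to stay in $\mathbf{Ch}_+$) is a fibrant replacement of $\overline{W}^k(Y)$ in the local injective model structure because each $\overline{W}^n(J^i)$ is fibrant, and you attribute the latter to flabbiness. Neither of these inferences is immediate. First, a complex built out of degreewise injectively fibrant presheaves is not automatically fibrant: fibrancy in the transferred model structure on $\mathrm{PSh}(\mathbf{Cart},\mathbf{sAb})_{\mathrm{inj,loc}}$ is a condition on the whole object, not a termwise one. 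Second, the relevant characterization of local injective fibrancy (Theorem \ref{Theorem: The DHI-fibrancy Theorem}) requires \emph{injective} fibrancy plus descent, and flabbiness addresses only the descent half. The statement you need is a specific piece of Jardine's machinery (the results surrounding \eqref{Equation: Lemma 8.21} and the theorem cited in the proof of Proposition \ref{Proposition: Universal coefficient in stack cohomology}), namely that chain homotopy classes of maps into a complex of injective sheaves compute the derived mapping space in the local structure; this must be invoked rather than rederived from flabbiness. A second, related gap concerns the source side: you pass from $\pi_0\mathbb{R}\mathrm{Map}(X,\overline{W}^k Y)$ to $\pi(X[0], J^\bullet[-k])$ without addressing the cofibrant replacement of $X[0]$. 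In the transferred local injective structure on $\mathrm{PSh}(\mathbf{Cart},\mathbf{sAb})$ not every object is cofibrant, so one must argue that replacing $X[0]$ by $Q(X[0])$ leaves the chain homotopy classes unchanged; this is true precisely because $J^\bullet$ consists of injectives (so that a local weak equivalence on the source induces a bijection on chain homotopy classes of maps into $T_0(J^\bullet[-k])$), but it is a substantive point and should be stated, as it is exactly what the paper's indirect route handles automatically via the derived Quillen adjunction.
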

\begin{remark}
    Notice that as always, we consider $X$ and $Y$ as objects in $\mathrm{PSh}(\mathbf{Cart}, \mathbf{sAb})$ which are constant in the simplicial direction.
\end{remark}

To prove this proposition let us recall the definition of the ext groups for an arbitrary abelian category.
\newpage

\begin{definition} \label{Definition: definition of ext}
    Let $\cat{A}$ be an abelian category, $i \in \mathbb{Z}$ and $X,Y \in \cat{A}$ two objects. Then we define the $i$\textbf{th extension group of} $X$ \textbf{by} $Y$ to be the group
    \[
    \mathrm{Ext}^i_{\cat{A}}(X,Y) := \mathrm{Hom}_{D(\cat{A})}( X[0],Y[i] ),
    \]
    where $X[-i]$ is the chain complex concentrated in the $i$th degree and $D(\cat{A})$ the derived category of $\cat{A}$.
\end{definition}

Now, we construct a model structure on the category of \textbf{unbounded} chain complexes of presheaves with values in abelian groups denoted by $\mathbf{Ch} \left( \mathrm{PSh}(\mathbf{Cart}, \mathbf{Ab}) \right)$, whose homotopy category computes the derived category for $\mathrm{Ab}(\mathbf{Cart})$. To do so, observe that for $\cat{A}$ an abelian category the fully faithful functor
\[
i_0 : \mathbf{Ch}_{+}(\cat{A}) \rightarrow \mathbf{Ch}(\cat{A})
\]
sending a bounded below chain complex to the unbounded complex by inserting zeros in negative degrees, admits a right adjoint
\[
T_0 : \mathbf{Ch}(\cat{A}) \rightarrow  \mathbf{Ch}_{+}(\cat{A})
\]
called the \textbf{good truncation functor} given by
\[
(T_0E)_p = \begin{cases}
    E_p \text{ if } p > 0 \\
    \mathrm{Ker} \left( d : E_0 \rightarrow E_{-1} \right) \text{ if } p = 0.
\end{cases}
\]

Also, we now introduce the standard model structure on bounded below chain complexes of presheaves and then use the above inclusion-truncation adjunction to construct the desired model structure in the unbounded case. \\

First, endow the category of presheaves on $\mathbf{Cart}$ with values in simplicial abelian groups with the local injective model structure. This is achieved by considering the induced free-forgetful adjunction on the presheaf categories.
\begin{equation}\label{Equation: Free-Forgetful presheaf adjucntion}
\begin{tikzcd}
\mathrm{PSh}(\mathbf{Cart},\mathbf{sAb}) \arrow[r, "U"', shift right=3] \arrow[r, "\perp", phantom] & \mathrm{sPSh}(\mathbf{Cart}) \arrow[l, "\mathbb{Z}"', shift right=3]
\end{tikzcd}
\end{equation}
The local injective model structure on $\mathrm{PSh}(\mathbf{Cart},\mathbf{sAb})$  can now be constructed as the right transferred model structure of the local injective structure on $\mathrm{sPSh}(\mathbf{Cart})$.

\begin{theorem}[\cite{JardineBook}, Theorem 8.6]
 Define a map $f : X \rightarrow Y$ in $\mathrm{PSh}(\mathbf{Cart},\mathbf{sAb})$ to be a
    \begin{enumerate}
        \item weak equivalence if the induced map $U(f) : U(X) \rightarrow U(Y)$ is a weak equivalence in the local injective model structure.
        \item fibration if the induced map $U(f) : U(X) \rightarrow U(Y)$ is a fibration in the local injective model structure.
        \item cofibration if it has the left lifting property with respect to all trivial fibrations.
    \end{enumerate}
    With these definitions the category  $\mathrm{PSh}(\mathbf{Cart},\mathbf{sAb})$ has the structure of a proper closed simplicial model category. Denote this model category by $\mathrm{PSh}(\mathbf{Cart},\mathbf{sAb})_{\mathrm{inj, loc}}$.
\end{theorem}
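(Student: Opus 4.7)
The plan is to obtain the model structure by \emph{right transfer} of the local injective structure on $\mathrm{sPSh}(\mathbf{Cart})$ along the right adjoint $U$ in the adjunction
\[
\mathbb{Z}(-) \colon \mathrm{sPSh}(\mathbf{Cart}) \rightleftarrows \mathrm{PSh}(\mathbf{Cart},\mathbf{sAb}) \colon U,
\]
obtained pointwise from the classical free/forgetful adjunction between $\mathbf{sSet}$ and $\mathbf{sAb}$ recalled in Theorem \ref{Theorem: model structure simplicial groups}. First I would dispose of the ``formal'' axioms: completeness and cocompleteness of $\mathrm{PSh}(\mathbf{Cart},\mathbf{sAb})$ follow pointwise from those of $\mathbf{sAb}$; the two-out-of-three and retract properties for weak equivalences and fibrations descend from $\mathrm{sPSh}(\mathbf{Cart})_{\mathrm{inj,loc}}$ since $U$ is a functor and those classes are closed under retracts and satisfy 2-of-3 in the base; cofibrations, being defined by a lifting property, are automatically closed under retracts and satisfy one half of the lifting axiom by construction.

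The substantive work lies in the factorization axiom together with the remaining half of the lifting axiom, which I would deduce from a standard right-transfer theorem such as Hirschhorn Theorem 11.3.2 or Kan's lemma of transfer. The local injective model structure on $\mathrm{sPSh}(\mathbf{Cart})$ is combinatorial, so there exist small sets $I$ and $J$ of generating cofibrations and generating trivial cofibrations. Take $I' = \mathbb{Z}(I)$ and $J' = \mathbb{Z}(J)$ as candidate generators; since both categories are locally presentable and $U$ preserves filtered colimits (it creates them pointwise), the small object argument provides the two required functorial factorizations. The hard step, which I expect to be the main obstacle, is the \emph{acyclicity condition}: every relative $J'$-cell complex must map to a local weak equivalence after applying $U$. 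Granting this, a map having RLP against $J'$ is a fibration in the transferred sense, and the second half of the lifting axiom falls out formally by the usual retract argument.

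For acyclicity, the strategy is a Quillen-style path-object argument. A functorial path object on $\mathrm{PSh}(\mathbf{Cart},\mathbf{sAb})$ is provided by $A \mapsto A^{\Delta[1]}$ computed pointwise in $\mathbf{sAb}$; because every simplicial abelian group is automatically a Kan complex, the underlying simplicial presheaf of $A^{\Delta[1]}$ is a genuine objectwise path object, and composition with a local fibrant replacement functor on $\mathrm{sPSh}(\mathbf{Cart})_{\mathrm{inj,loc}}$ upgrades it to a path object compatible with local weak equivalences. This supplies enough machinery to verify that $J'$-cells are underlying local weak equivalences. An alternative route runs through Dold--Kan, identifying $\mathrm{PSh}(\mathbf{Cart},\mathbf{sAb})$ with presheaves of non-negatively graded chain complexes of abelian groups: acyclicity then reduces to the preservation of quasi-isomorphisms by pushouts along degreewise split monomorphisms of normalized chain complexes, which is immediate from the long exact sequence in homology sheaves.

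Finally, the simplicial structure is inherited by declaring $\mathrm{Map}(X,Y)$ to be the underlying simplicial mapping space, with tensor $K \otimes X := \mathbb{Z}(K) \otimes X$ for $K \in \mathbf{sSet}$; the pushout-product axiom SM7 then transfers from the corresponding axiom on the local injective structure via the adjunction. Right properness is immediate because pullbacks and fibrations are both created pointwise by $U$, so it follows from right properness of $\mathrm{sPSh}(\mathbf{Cart})_{\mathrm{inj,loc}}$. Left properness is subtler: using that $\mathbb{Z}$ preserves monomorphisms, every transferred cofibration is levelwise a split monomorphism of simplicial abelian groups, so pushouts along cofibrations fit into short exact sequences after Dold--Kan, and stability of local weak equivalences (equivalently, isomorphisms on homology sheaves) under such pushouts yields left properness.
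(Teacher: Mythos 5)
Your proposal is correct and is essentially the route taken in the cited source: the paper itself gives no proof, attributing the result to Jardine's book, and the text immediately above the statement already describes it as "the right transferred model structure of the local injective structure on $\mathrm{sPSh}(\mathbf{Cart})$" along the free--forgetful adjunction, which is exactly your framing. The key technical input — that transfinite compositions of pushouts of the maps $\mathbb{Z}(j)$, $j\in J$, have underlying local weak equivalences — is what Jardine isolates as his Lemma~8.2 (the free $R$-module functor preserves local weak equivalences), and this is precisely your second, Dold--Kan argument: $\mathbb{Z}$ of a monomorphism is a degreewise split monomorphism with free cokernel, so pushouts and transfinite compositions fit into degreewise short exact sequences, and the induced long exact sequence in homology sheaves gives acyclicity (and, by the same five-lemma diagram, left properness). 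I would flag your first route via path objects as the weaker of the two: the construction "$A^{\Delta[1]}$ post-composed with a local fibrant replacement" presupposes a fibrant replacement functor on $\mathrm{PSh}(\mathbf{Cart},\mathbf{sAb})$, which is not available before the model structure exists, so as stated it risks circularity; the usual way to deploy Quillen's path-object criterion here is to use the objectwise path object $A^{\Delta[1]}$ directly together with the objectwise fibrancy of simplicial abelian groups, but the Dold--Kan long-exact-sequence argument you give as the alternative is cleaner and is the one that actually matches Jardine. One further small caveat: the generating (trivial) cofibrations $I$ and $J$ of $\mathrm{sPSh}(\mathbf{Cart})_{\mathrm{inj,loc}}$ are not explicit (they arise from a bounded-cofibration/accessibility argument), so the small object argument with $\mathbb{Z}(I),\mathbb{Z}(J)$ is valid but should be understood against that background; Jardine's own proof bakes this in via his $\alpha$-bounded cofibration machinery rather than invoking a general transfer theorem, but the content is the same.
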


In light of Theorem \ref{Theorem: Dold-Kan Quillen equivalence} the correct model structure to consider on the category of presheaves of bounded-below chain complexes is the one induced from $\mathrm{PSh}(\mathbf{Cart},\mathbf{sAb})_{\mathrm{inj, loc}}$ via the Dold--Kan equivalence for abelian presheaves. That is, the equivalence of categories
\begin{equation}\label{Equation: presheaf Dold-Kan}
\begin{tikzcd}
{N: \mathrm{PSh}(\mathbf{Cart},\mathbf{sAb})} \arrow[r, shift right=3] \arrow[r, "\simeq", phantom] & {\mathrm{Ch}_{+}(\mathrm{PSh}(\mathbf{Cart}, \mathbf{Ab})) : \Gamma} \arrow[l, shift right=3]
\end{tikzcd}
\end{equation}
allows us to introduce the \textit{standard model structure of presheaves of bounded-below chain complexes} as follows.
\begin{theorem}[\cite{JardineBook}, Corollary 8.7] \label{Theorem: standard model structure bounded below chain complexes}
    Define a map $f : C \rightarrow D$ in \newline $\mathbf{Ch}_{+}\left(\mathrm{PSh}(\mathbf{Cart}, \mathbf{Ab}) \right)$ to be a
    \begin{enumerate}
        \item weak equivalence if the induced map $\Gamma(f) : \Gamma(C) \rightarrow \Gamma(D)$ is a weak equivalence in the local injective model structure.
        \item fibration if the induced map $\Gamma(f) : \Gamma(C) \rightarrow \Gamma(D)$ is a fibration in the local injective model structure.
        \item cofibration if the induced map $\Gamma(f) : \Gamma(C) \rightarrow \Gamma(D)$ is a cofibration in the local injective model structure.
    \end{enumerate}
    With these definitions the category $\mathbf{Ch}_{+}\left(\mathrm{PSh}(\mathbf{Cart}, \mathbf{Ab}) \right)$ has the structure of a proper closed simplicial model category. We call this the \textbf{standard model structure of} \newline \textbf{presheaves of bounded-below chain complexes}, denoted $\mathbf{Ch}_{+}\left(\mathrm{PSh}(\mathbf{Cart}, \mathbf{Ab}) \right)_{\mathrm{std}}$.
\end{theorem}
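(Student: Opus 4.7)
The plan is to transport the local injective model structure from $\mathrm{PSh}(\mathbf{Cart},\mathbf{sAb})_{\mathrm{inj,loc}}$ to $\mathbf{Ch}_{+}\!\left(\mathrm{PSh}(\mathbf{Cart}, \mathbf{Ab}) \right)$ along the presheaf-level Dold--Kan equivalence \eqref{Equation: presheaf Dold-Kan}. The crucial observation is that $(N,\Gamma)$ is not merely a Quillen adjunction but an honest equivalence of categories, so the transfer avoids any Kan-style smallness or acyclicity hypotheses and becomes purely formal.

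Concretely, I would begin by noting that because $\Gamma$ is an equivalence it preserves and reflects all small limits and colimits, so completeness and cocompleteness of $\mathbf{Ch}_{+}\!\left(\mathrm{PSh}(\mathbf{Cart}, \mathbf{Ab}) \right)$ follow immediately from the corresponding property on the simplicial abelian side. The three classes of maps in the statement are \emph{defined} by pullback of the already-established classes along $\Gamma$, so the 2-of-3 axiom and closure under retracts are trivial. For the lifting axiom, given a commutative square of solid arrows in $\mathbf{Ch}_{+}\!\left(\mathrm{PSh}(\mathbf{Cart}, \mathbf{Ab}) \right)$, apply $\Gamma$, obtain a lift on the simplicial abelian side using the local injective model structure, and then apply the quasi-inverse $N$ to transport the lift back. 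The factorization axiom follows analogously: factor $\Gamma(f)$ in $\mathrm{PSh}(\mathbf{Cart},\mathbf{sAb})_{\mathrm{inj,loc}}$ and apply $N$; functoriality is preserved because both $N$ and $\Gamma$ are functors and the factorization on the source side is functorial.

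For the additional structure, I would pull back the simplicial enrichment, tensoring, and cotensoring along the same equivalence. For $C, D \in \mathbf{Ch}_{+}\!\left(\mathrm{PSh}(\mathbf{Cart}, \mathbf{Ab}) \right)$ and $K \in \mathbf{sSet}$ one sets
\[
\mathrm{Map}(C,D) := \mathrm{Map}(\Gamma C, \Gamma D), \qquad K \otimes C := N(K \otimes \Gamma C), \qquad C^{K} := N\!\left( (\Gamma C)^{K}\right),
\]
and Quillen's SM7 axiom then reduces verbatim to SM7 in the source model structure. Likewise left and right properness transfer automatically, since pushouts and pullbacks are computed by applying $\Gamma$, performing them in $\mathrm{PSh}(\mathbf{Cart},\mathbf{sAb})_{\mathrm{inj,loc}}$, and transporting back via $N$; cofibrations, fibrations, and weak equivalences are all preserved and reflected by the equivalence.

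I do not anticipate a genuine obstacle: since the transport is along an equivalence of categories rather than an adjunction, no left Kan extension of generating (trivial) cofibrations or permission argument is required. The only care needed is to ensure that the pulled-back simplicial structure on $\mathbf{Ch}_{+}\!\left(\mathrm{PSh}(\mathbf{Cart}, \mathbf{Ab}) \right)$ agrees with the natural one coming from the Dold--Kan shuffle product, but this agreement holds up to canonical isomorphism by the uniqueness of tensoring against simplicial sets in a category equivalent to $\mathrm{PSh}(\mathbf{Cart},\mathbf{sAb})$. This is precisely the content of the citation \cite[Corollary 8.7]{JardineBook}.
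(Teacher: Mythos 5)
Your proposal is correct and matches the approach underlying the paper's citation: Jardine's Corollary~8.7 is precisely the transport of the model structure of his Theorem~8.6 on $\mathrm{PSh}(\mathbf{Cart},\mathbf{sAb})$ across the presheaf-level Dold--Kan equivalence $(N,\Gamma)$, exactly as you describe. The only cosmetic point worth making explicit is that in the lifting and factorization arguments one uses the natural isomorphism $N\Gamma\cong\mathrm{id}$ (and its inverse) to convert the solution obtained on the simplicial abelian side back to a solution in the original diagram of chain complexes, but this is, as you say, purely formal for an equivalence of categories.
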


Yet this model category still doesn't represent the derived category $D(\mathrm{Ab}(\mathbf{Cart}))$ as we are only considering \textbf{bounded below chain complexes}. Here is where the good truncation adjunction $ i_0 \dashv T_0$ comes into play.
\begin{equation}
\begin{tikzcd}
{\mathbf{Ch}_{+}\left(\mathrm{PSh}(\mathbf{Cart}, \mathbf{Ab}) \right)} \arrow[r, "i_0", shift left=3] \arrow[r, "\perp", phantom] & {\mathbf{Ch}\left(\mathrm{PSh}(\mathbf{Cart}, \mathbf{Ab}) \right)} \arrow[l, "T_0", shift left=3]
\end{tikzcd}
\end{equation}

\begin{theorem}[\cite{Jardine2003}, Theorem 2.6] \label{Theorem: stable model structure of chain complexes}
    Define a map $ f  : C \rightarrow D$ in \newline $\mathbf{Ch} \left( \mathrm{PSh}(\mathbf{Cart}, \mathbf{Ab}) \right)$ to be a
    \begin{enumerate}
        \item weak equivalence if $f$ induces isomorphisms on all the homology sheaves, i.e.
        \[
        \tilde{H}_k(C) \xrightarrow{\cong} \tilde{H}_k(D)
        \]
        for all $k \in \mathbb{Z}$.
        \item a fibration if all the induced maps
        \[
        T_0(C[-n]) \rightarrow T_0(D[-n])
        \]
        are fibrations in the standard model structure of presheaves of bounded-below chain complexes for all $n \in \mathbb{N}$.
        \item cofibration if it has the left lifting property with respect to trivial fibrations.
    \end{enumerate}
    With these definitions, the category $\mathbf{Ch} \left( \mathrm{PSh}(\mathbf{Cart}, \mathbf{Ab}) \right)$ has the structure of a proper, closed model category. We call this the \textbf{stable model structure of presheaves of chain complexes}, denoted $\mathbf{Ch} \left( \mathrm{PSh}(\mathbf{Cart}, \mathbf{Ab}) \right)_{\mathrm{stbl}}$.
\end{theorem}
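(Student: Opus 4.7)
My plan is to construct the stable model structure by bootstrapping from the standard model structure on bounded-below complexes of Theorem~\ref{Theorem: standard model structure bounded below chain complexes}, exploiting the tower of shifted good truncations $\{T_0(C[-n])\}_{n \in \mathbb{N}}$ as a means of reducing statements about unbounded complexes to statements about bounded-below ones. The crucial compatibility is that, after identifying homology sheaves with homotopy sheaves via Dold--Kan, a map $f\colon C \to D$ is a stable weak equivalence if and only if $T_0(f[-n])$ is a weak equivalence in $\mathbf{Ch}_+(\mathrm{PSh}(\mathbf{Cart},\mathbf{Ab}))_{\mathrm{std}}$ for every $n \geq 0$, since the homology sheaves of $T_0(C[-n])$ in nonnegative degrees reproduce $\tilde{H}_m(C)$ for $m \geq -n$.

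First I would verify the straightforward axioms. Axiom M1 holds because $\mathbf{Ch}(\mathrm{PSh}(\mathbf{Cart},\mathbf{Ab}))$ inherits all small limits and colimits degreewise from the Grothendieck abelian category $\mathrm{PSh}(\mathbf{Cart},\mathbf{Ab})$. Axiom M2 is immediate since homology-sheaf isomorphisms satisfy the $2$-of-$3$ property. Axiom M3 is routine because both explicitly defined classes, together with any class defined by a lifting property, are closed under retracts.

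The technical heart is the factorization and lifting package M4, M5, which I would obtain via a small object argument. By Theorem~\ref{Theorem: standard model structure bounded below chain complexes} combined with the Dold--Kan equivalence~\eqref{Equation: presheaf Dold-Kan} and the combinatoriality of $\mathrm{PSh}(\mathbf{Cart},\mathbf{sAb})_{\mathrm{inj,loc}}$, the standard model structure on bounded-below complexes is cofibrantly generated by explicit sets $I_+$ and $J_+$ of generators with small (co)domains. I would then take as generators in the unbounded setting the shifted families
\[
I = \{\,\iota[n] \mid \iota \in i_0(I_+),\ n \in \mathbb{Z}\,\}, \qquad J = \{\,\jmath[n] \mid \jmath \in i_0(J_+),\ n \in \mathbb{Z}\,\},
\]
and run Quillen's small object argument to produce both functorial factorizations required by M5. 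Using the adjunction $i_0 \dashv T_0$, right lifting against $\iota[n]$ is equivalent to right lifting of $T_0(f[-n])$ against $\iota$ in $\mathbf{Ch}_+$; combined with the homology-shift identification above, this shows that $J$-injectives are precisely stable fibrations in the sense of the theorem, while $I$-injectives are those stable fibrations which are additionally quasi-isomorphisms. Axiom M4 then follows from M5 and the retract argument.

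The step I expect to be the main obstacle is the identification of $I$-injective maps with stable trivial fibrations, or equivalently, the proof that every stable cofibration which is also a quasi-isomorphism lies in the saturation of $J$. Because $T_0$ does not preserve quasi-isomorphisms of unbounded complexes, one cannot argue at a fixed truncation level; instead, one must work across the entire tower of shifts, exploit the reconstruction of $C$ as an appropriate colimit of $i_0(T_0(C[-n]))$, and transfer the characterization of trivial fibrations from the bounded-below standard structure shift-by-shift. Keeping careful track of the compatibility of generators $I_+$ and $J_+$ under the shift functor, and of how the small object constructions interact with the truncation adjunction, will be the most delicate bookkeeping step in closing out the proof.
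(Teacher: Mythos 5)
The paper does not prove this theorem; it is stated with a citation to Jardine's ``Presheaves of chain complexes'' (\cite{Jardine2003}, Theorem~2.6) and no argument is given, so there is no in-paper proof to compare against. Your proposal is therefore a reconstruction, and the strategy --- build a cofibrantly generated structure from shifted images of the generators for the bounded-below standard structure, then use the $i_0 \dashv T_0$ adjunction to identify the (trivial) fibrations --- is a reasonable one and close in spirit to standard constructions. However, there are several genuine gaps.

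First, your diagnosis of the ``main obstacle'' rests on a false premise. You write that ``$T_0$ does not preserve quasi-isomorphisms of unbounded complexes,'' but this is not true: for every $p \geq 0$ one has $H_p(T_0(C)) \cong H_p(C)$ (with the kernel in degree $0$ producing precisely $H_0$), so $T_0$ \emph{does} preserve quasi-isomorphisms, and indeed Corollary~\ref{Corollary: inclusion-truncation adjunction} of the paper records that $T_0$ preserves weak equivalences. What is true, and what you presumably meant, is that $T_0$ does not \emph{reflect} weak equivalences --- a single truncation cannot see negative-degree homology, which is why one must work over the entire tower $\{T_0(f[-n])\}_{n\geq 0}$. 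The correct conclusion (work across the tower) survives, but the stated reason is wrong, and fixing it actually simplifies the argument considerably: once $T_0$ is known to preserve weak equivalences, the identification $I\text{-inj} = W \cap J\text{-inj}$ is essentially the adjunction calculation plus the observation that $H_p(T_0(C[-n])) \cong H_{p-n}(C)$ for $p \geq 0$, so ranging over all $n$ recovers all homology sheaves.

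Second, your generating sets are indexed by $n \in \mathbb{Z}$, whereas the fibrations in the theorem are characterized by the condition on $T_0(f[-n])$ for $n \in \mathbb{N}$ only. Adding the positive shifts $\jmath[n]$, $n > 0$, imposes the extra condition that $T_0(f[m])$ be a fibration in $\mathbf{Ch}_+$ for all $m > 0$, and it is not immediate that this follows from the $n \geq 0$ conditions. Unless you either restrict the index range to match the theorem, or separately prove that the $n > 0$ conditions are redundant (e.g.\ via a Dold--Kan/loop-space argument showing that fibrations in the standard structure on $\mathbf{Ch}_+$ are stable under $T_0((-)[m])$ for $m > 0$), your $J$-injectives could be a strictly smaller class than the theorem's fibrations, and the recognition theorem would then produce a \emph{different} model structure. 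Third, a key step of the recognition theorem --- that relative $J$-cell complexes are weak equivalences --- is not addressed at all in your sketch; since the stable weak equivalences are \emph{sheaf} homology isomorphisms rather than presheaf-level ones, this requires an argument (for instance that sheafification is exact and commutes with the filtered colimits appearing in the transfinite composition) and should not be left implicit. Finally, the theorem asserts \emph{properness}, and your proposal says nothing about it; left properness requires a gluing argument for pushouts along cofibrations, and right properness a dual argument, neither of which follows formally from the recognition theorem.
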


As a corollary to the above theorem, we have the following result.

\begin{corollary}[\cite{JardineBook}]  \label{Corollary: inclusion-truncation adjunction}
    The inclusion-truncation adjunction
    \begin{center}
        \begin{tikzcd}
{\mathbf{Ch} \left( \mathrm{PSh}(\mathbf{Cart}, \mathbf{Ab}) \right)_{\mathrm{stbl}}} \arrow[r, "T_0"', shift right=2] \arrow[r, "\perp", phantom] & {\mathbf{Ch}_+ \left( \mathrm{PSh}(\mathbf{Cart}, \mathbf{Ab}) \right)_{\mathrm{std}}} \arrow[l, "i_0"', shift right=2]
\end{tikzcd}
    \end{center}
    is a Quillen adjunction with respect to the standard and the stable model structures of presheaves of chain complexes. Moreover, both functors $i_0$ and $T_0$ preserve weak equivalences.
\end{corollary}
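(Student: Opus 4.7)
The plan is to check that $T_0$ is a right Quillen functor for the pair $i_0 \dashv T_0$, which by Lemma \ref{Lemma: Quillen Adj Lemma} is sufficient for the whole adjunction to be Quillen. Preservation of fibrations by $T_0$ comes essentially for free from the construction of the stable model structure: by Theorem \ref{Theorem: stable model structure of chain complexes} a map $f \colon C \to D$ is a fibration in $\mathbf{Ch}(\mathrm{PSh}(\mathbf{Cart},\mathbf{Ab}))_{\mathrm{stbl}}$ precisely when $T_0(f[-n])$ is a fibration in the standard model structure for every $n \geq 0$. Specializing to $n = 0$ immediately gives that $T_0(f)$ is a fibration in $\mathbf{Ch}_+(\mathrm{PSh}(\mathbf{Cart},\mathbf{Ab}))_{\mathrm{std}}$. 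Thus the whole point is to show that $T_0$ also preserves weak equivalences; combined with the fibration statement, this gives preservation of trivial fibrations, and simultaneously proves the second half of the claim about $T_0$. The analogous statement for $i_0$ will then be handled in parallel.

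The technical heart of the argument is a common characterization of the weak equivalences in both model structures in terms of homology sheaves. In the stable model structure, weak equivalences are by definition maps inducing isomorphisms on all $\tilde{H}_k$ for $k \in \mathbb{Z}$. For the standard model structure, I would unwind the transfer: a map in $\mathbf{Ch}_+(\mathrm{PSh}(\mathbf{Cart},\mathbf{Ab}))_{\mathrm{std}}$ is a weak equivalence iff its image under $\Gamma$ is a local weak equivalence in $\mathrm{PSh}(\mathbf{Cart},\mathbf{sAb})_{\mathrm{inj,loc}}$ (Theorem \ref{Theorem: standard model structure bounded below chain complexes}), iff its underlying simplicial presheaf is a local weak equivalence (the transferred structure along \eqref{Equation: Free-Forgetful presheaf adjucntion}), iff it induces isomorphisms of homotopy sheaves by Definition \ref{Definition: Local Weak Equivalence}, and finally iff it induces isomorphisms on $\tilde{H}_k$ for $k \geq 0$ via the sectionwise Dold--Kan identification $\pi_k(A(U)) \cong H_k(NA(U))$ composed with sheafification.

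With this characterization in hand, the preservation claims reduce to a direct homological computation: for any complex $C$ one checks that $\tilde{H}_k(i_0C) = \tilde{H}_k(C)$ for $k \geq 0$ and vanishes for $k < 0$, and the same identities hold for $T_0C$ (using that $(T_0C)_0 = \ker(C_0 \to C_{-1})$ exactly realizes the cycles in degree zero). Consequently, $i_0$ sends a weak equivalence in the standard structure (isomorphism on $\tilde{H}_k$ for $k \geq 0$) to a weak equivalence in the stable structure (isomorphism on $\tilde{H}_k$ for all $k \in \mathbb{Z}$, using that negative degrees are zero on both sides), and $T_0$ sends a weak equivalence in the stable structure to one in the standard structure by restriction to non-negative degrees. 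The main obstacle I foresee is the homology-sheaf characterization of standard weak equivalences; once that is in place, the rest of the argument is essentially formal.
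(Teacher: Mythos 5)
Your proposal follows essentially the same route as the paper: show $T_0$ preserves fibrations directly from the defining $n=0$ case of stable fibrations, reduce both weak-equivalence claims to the observation that standard and stable weak equivalences are detected by homology sheaves, and invoke Lemma \ref{Lemma: Quillen Adj Lemma} so that only one side of the adjunction need be checked. The one point where the paper is more economical is that it cites Jardine's Lemma 8.4 outright for the identification of standard weak equivalences with quasi-isomorphisms (maps inducing isomorphisms on all homology sheaves), whereas you propose to re-derive this by unwinding the transfers along $\Gamma$ and $U$ and applying sectionwise Dold--Kan; that derivation is sound but requires handling the basepoint-free pullback formulation of local weak equivalences for simplicial-abelian-group-valued presheaves, which is exactly the technical care that Jardine's lemma packages away.
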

\begin{proof}
    Notice that the right adjoint truncation functor $T_0$ preserves weak equivalences and fibrations. Indeed, for any fibration $f : C \rightarrow D$ in the stable model structure, we have by definition that
    \[
    T_0C \rightarrow T_0D
    \]
    is a fibration in the standard model structure on bounded-below complexes. Moreover, the good truncation functor $T_0$ preserves quasi-isomorphisms, which together with \cite[Lemma 8.4]{JardineBook} show that for $f \colon C \rightarrow D$ a stable weak equivalence the morphism $T_0(f)$ is indeed a standard weak equivalence. By Lemma \ref{Lemma: Quillen Adj Lemma} this shows that $i_0 \dashv T_0$ forms a Quillen pair. What is left to show is that $i_0$ preserves weak equivalences. Again by \cite[Lemma 8.4]{JardineBook} we have that a standard weak equivalence $f \colon C \rightarrow D$ of presheaves of bounded below chain complexes induces isomorphisms in all homology sheaves, i.e. is a quasi-isomorphism. By definition, $i_0$ preserves quasi-isomorphisms and hence preserves weak equivalences.
\end{proof}

\begin{theorem}\label{Theorem: Stable Model structure computes Derived category}
    Let $\cat{A} := \mathrm{Ab}(\mathbf{Cart})$ be the abelian category of sheaves of abelian groups on the Cartesian site $\mathbf{Cart}$. The homotopy category of the stable model category is equivalent to the derived category of $\cat{A}$:
    \[
    \mathrm{Ho}\, \mathbf{Ch} \left( \mathrm{PSh}(\mathbf{Cart}, \mathbf{Ab}) \right)_{\mathrm{stbl}} \simeq D(\cat{A}).
    \]
\end{theorem}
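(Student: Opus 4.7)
The plan is to use the sheafification--inclusion adjunction at the level of chain complexes to identify the two homotopy categories directly, without constructing a second model structure on $\mathbf{Ch}(\cat{A})$.

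I would begin by recalling that $\cat{A} = \mathrm{Ab}(\mathbf{Cart})$ is a Grothendieck abelian category and that $D(\cat{A})$ is by definition the Gabriel--Zisman localization of $\mathbf{Ch}(\cat{A})$ at the class of quasi-isomorphisms. Since sheafification $(-)^{\#} \colon \mathrm{PSh}(\mathbf{Cart}, \mathbf{Ab}) \to \cat{A}$ is exact (it is the left adjoint to the inclusion $i$ and preserves finite limits), it extends termwise to an exact functor $\mathbf{Ch}(\mathrm{PSh}(\mathbf{Cart}, \mathbf{Ab})) \to \mathbf{Ch}(\cat{A})$, still denoted $(-)^{\#}$, with right adjoint $i$. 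Exactness implies that for any chain complex $C$ of presheaves, the homology sheaves $\tilde{H}_{k}(C)$ agree canonically with the homology objects of $C^{\#}$ computed in $\cat{A}$.

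Combined with the characterization of stable weak equivalences in Theorem \ref{Theorem: stable model structure of chain complexes}, this shows that a morphism $f$ is a stable weak equivalence if and only if $f^{\#}$ is a quasi-isomorphism in $\mathbf{Ch}(\cat{A})$. The universal property of the homotopy category of a model category as a localization at its weak equivalences then yields a canonical functor
\[
\Phi \colon \mathrm{Ho}\, \mathbf{Ch}(\mathrm{PSh}(\mathbf{Cart}, \mathbf{Ab}))_{\mathrm{stbl}} \longrightarrow D(\cat{A})
\]
induced by $(-)^{\#}$. Dually, since the inclusion $i$ carries quasi-isomorphisms to stable weak equivalences, it induces a functor $\Psi \colon D(\cat{A}) \to \mathrm{Ho}\, \mathbf{Ch}(\mathrm{PSh}(\mathbf{Cart}, \mathbf{Ab}))_{\mathrm{stbl}}$ in the reverse direction. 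To show that $\Phi$ and $\Psi$ are mutually inverse, I would argue: (i) the counit $(iC)^{\#} \xrightarrow{\cong} C$ for $C \in \mathbf{Ch}(\cat{A})$ is an isomorphism, giving $\Phi \circ \Psi \simeq \mathrm{id}$; and (ii) the unit $\eta_{C} \colon C \to i(C^{\#})$ is a stable weak equivalence for every complex of presheaves $C$, since it induces an isomorphism on homology sheaves by the exactness and idempotence of sheafification. Hence $\eta$ descends to a natural isomorphism $\mathrm{id} \Rightarrow \Psi \circ \Phi$ after passing to the homotopy category.

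The main obstacle will be the careful matching of the two localization procedures: the Gabriel--Zisman construction of $D(\cat{A})$ on one side, and the Quillen model-categorical construction of $\mathrm{Ho}\, \mathbf{Ch}(\mathrm{PSh}(\mathbf{Cart}, \mathbf{Ab}))_{\mathrm{stbl}}$ on the other. One must ensure that the universal property of the latter suffices to produce well-defined functors in both directions, and that the adjunction unit descends compatibly through the localization. This is a formal step but requires some care because the model structure lives on \emph{unbounded} chain complexes and its weak equivalences are defined via sheafified homology rather than purely objectwise or local data; in particular, the standard argument that every presheaf complex admits an equivalent fibrant model must be invoked to guarantee that morphisms in the homotopy category match morphisms in $D(\cat{A})$ under $\Phi$.
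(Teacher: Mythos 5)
Your argument is correct, and it takes a genuinely different route from the paper. The paper's proof is purely a citation: it invokes \cite[Theorem 2.6]{Jardine2003} for the existence of the stable model structure together with Proposition 8.16 and the ensuing discussion in \cite{JardineBook}, where the equivalence with $D(\cat{A})$ is established by way of a second model structure on $\mathbf{Ch}(\cat{A})$ itself (weak equivalences the quasi-isomorphisms, fibrations defined analogously) and a Quillen equivalence along the sheafification--inclusion adjunction $(-)^{\#} \dashv i$. You instead avoid constructing any model structure on $\mathbf{Ch}(\cat{A})$ and work directly with the universal property of localization, observing that stable weak equivalences are created by $(-)^{\#}$, that $i$ carries quasi-isomorphisms to stable weak equivalences, and that the counit is an isomorphism while the unit $\eta_C \colon C \to i(C^{\#})$ is always a stable weak equivalence. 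Your route is more elementary and self-contained; the Quillen-equivalence route has the advantage that it simultaneously equips $D(\cat{A})$ with a model presentation (useful elsewhere) and dispatches the set-theoretic question of whether $D(\cat{A})$ has small hom-sets, which a bare Gabriel--Zisman localization does not. You implicitly address this by noting that $\cat{A}$ is Grothendieck (so $D(\cat{A})$ is locally small, e.g.\ via K-injective resolutions in the sense of Spaltenstein), but it would be worth making that invocation explicit since it is the one place your argument cannot be purely formal. Finally, the closing remark about needing to produce fibrant models is a red herring: once $\Phi$ and $\Psi$ are mutually inverse via the unit and counit, nothing about the computation of hom-sets remains to be checked, so that step can be dropped.
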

\begin{proof}
    This follows from \cite[Theorem 2.6]{Jardine2003} together with Proposition 8.16 and the subsequent discussion in \cite{JardineBook}.
\end{proof}

Now we are ready to prove Proposition \ref{Proposition: ext for smooth abelian groups}.

\begin{proof}[Proof of Proposition \ref{Proposition: ext for smooth abelian groups}]

Let $X,Y \in \cat{A} :=\mathrm{Sh}(\mathbf{Cart}, \mathbf{Ab})$ and $k \in \mathbb{N}$. To simplify notation introduce the abbreviations:
\begin{align*}
    &\cat{M} := \mathbf{Ch} \left( \mathrm{PSh}(\mathbf{Cart}, \mathbf{Ab}) \right)_{\mathrm{stable}} \\
    &\cat{M}_+ := \mathbf{Ch}_+ \left( \mathrm{PSh}(\mathbf{Cart}, \mathbf{Ab}) \right)_{\mathrm{std}}.
\end{align*}

By Definition \ref{Definition: definition of ext} we have that:
\[
\mathrm{Ext}_{\cat{A}}^k(X,Y) = \mathrm{Hom}_{D(\cat{A})}(X[0],Y[-k]).
\]
By Theorem \ref{Theorem: Stable Model structure computes Derived category} it follows that the derived category $D(\cat{A})$ is equivalent to the homotopy category of the stable model structure on $\mathbf{Ch} \left( \mathrm{PSh}(\mathbf{Cart}, \mathbf{Ab}) \right)$ and therefore
\[
\mathrm{Hom}_{D(\cat{A})}(X[0],Y[-k]) \cong \mathrm{Hom}_{\mathrm{Ho}\, \cat{M}} ( X[0],Y[-k] ) = \mathrm{Hom}_{\mathrm{Ho}\, \cat{M}} ( i_0(X[0]),Y[-k] ),
\]
where we have used in the last equality that $i_0(X[0]) = X[0]$. Recall from Definition \ref{Definition: Left/Right derived functors} that the left derived functor is of the form $\mathbb{L}i_0(X[0]) = i_0 \left( Q_{\cat{M}_+}(X[0]) \right)$. Corollary \ref{Corollary: inclusion-truncation adjunction} implies that the standard weak equivalence $Q_{\cat{M}_+}(X[0]) \xrightarrow{\simeq} X[0]$ is preserved by $i_0$, i.e. there is a stable weak equivalence $\mathbb{L}i_0(X[0]) \xrightarrow{\simeq} i_0(X[0])$. This induces a chain of isomorphisms, where the latter is given by Proposition \ref{Proposition: Derived Quillen adjunction is adjunction in Ho-Cat}:
\begin{align*}
\mathrm{Hom}_{\mathrm{Ho}\, \cat{M}} ( i_0(X[0]),Y[-k] ) &\cong \mathrm{Hom}_{\mathrm{Ho}\, \cat{M}} (\mathbb{L}i_0(X[0]),Y[-k] ) \\
&\cong \mathrm{Hom}_{\mathrm{Ho}\, \cat{M}_+} \left( X[0],\mathbb{R}T_0(Y[-k]) \right).
\end{align*}
Since $k \in \mathbb{N}$ notice that $T_0(Y[-k]) = Y[-k]$ and we also have that $T_0(Y[-k]) \xrightarrow{\simeq} \mathbb{R}T_0(Y[-k])$ is a standard weak equivalence by exactly the dual argument used before and the fact that $T_0$ preserves weak equivalences by Corollary \ref{Corollary: inclusion-truncation adjunction}. This shows that:
\[
\mathrm{Hom}_{\mathrm{Ho}\, \cat{M}_+} \left( X[0],\mathbb{R}T_0(Y[-k]) \right) \cong \mathrm{Hom}_{\mathrm{Ho}\, \cat{M}_+} \left( X[0],T_0(Y[-k]) \right) = \mathrm{Hom}_{\mathrm{Ho}\, \cat{M}_+} \left( X[0], Y[-k] \right).
\]

Recall that the standard model structure on bounded below chain complexes is simplicial by Theorem \ref{Theorem: standard model structure bounded below chain complexes} such that by Proposition \ref{Proposition: simplicial homotopy classes and morphism in Ho-Cat}
\begin{align*}
\mathrm{Hom}_{\mathrm{Ho}\, \cat{M}_+} ( X[0],Y[-k]) &\cong \pi_0 \mathbb{R}\mathrm{Map}(X[0],Y[-k]) \\
&= \pi_0 \mathbb{R}\mathrm{Map}(N(X),Y[-k] ) \\
&\cong \pi_0 \mathbb{R}\mathrm{Map}(X,\Gamma(Y[-k])) \\
&\cong \pi_0 \mathbb{R}\mathrm{Map}(X,\overline{W}^k(Y)),
\end{align*}
where for the second isomorphism we have used the fact that the Dold--Kan correspondence induces an equivalence of categories between homotopy categories associated with the standard model structure on bounded below complexes and the local injective model structure on $\mathrm{PSh}(\mathbf{Cart}, \mathbf{sAb})$ (\cite[Lemma 1.5]{Jardine2003}). For the other two isomorphisms note that we have $N(X) = X[0]$ since we consider $X$ as presheaf of simplicial abelian groups which is constant in the simplicial direction and $\Gamma(Y[-k]) \cong \overline{W}^k(Y)$ by Lemma \ref{Lemma: simplicial classifying space is Eilenberg-Mac Lane object}.
\end{proof}

\begin{corollary} \label{Corollary: ext of constant sheaf}
    Let $A \in \mathbf{Ab}$ be an abelian group and let $\mathsf{c}(A) \in \cat{A} := \mathrm{Sh}(\mathbf{Cart}, \mathbf{Ab})$ be the constant sheaf. Then for any $Y \in \cat{A}$ and any $k \geq 0$ there is a natural isomorphism
    \[
    \mathrm{Ext}^k_{\cat{A}} (\mathsf{c}(A),Y) \cong \mathrm{Ext}^k_{\mathbf{Ab}}(A,Y(\mathrm{pt})),
    \]
    where $Y(\mathrm{pt})$ denotes the abelian group obtained by evaluating $Y$ at the terminal object in $\mathbf{Cart}$.
\end{corollary}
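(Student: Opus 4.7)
The plan is to combine Propositions \ref{Proposition: ext for abelian groups} and \ref{Proposition: ext for smooth abelian groups} with a suitably lifted version of the Quillen adjunction of Lemma \ref{Lemma: Constant-Evaluation Quillen pair}. First I would apply Proposition \ref{Proposition: ext for smooth abelian groups} to rewrite the left-hand side as a derived mapping space:
\[
\mathrm{Ext}^k_{\cat{A}}(\mathsf{c}(A),Y) \cong \pi_0 \mathbb{R}\mathrm{Map}(\mathsf{c}(A),\overline{W}^k Y),
\]
where $\mathsf{c}(A)$ is regarded as a presheaf of simplicial abelian groups, constant in both the Cartesian and the simplicial directions, and the derived mapping space is computed in $\mathrm{PSh}(\mathbf{Cart},\mathbf{sAb})_{\mathrm{inj,loc}}$.

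Next I would lift the constant/evaluation Quillen pair $\mathsf{c} \dashv \mathrm{ev}_{\mathrm{pt}}$ from simplicial sets and simplicial presheaves to the abelian setting, obtaining a Quillen adjunction
\[
\mathsf{c} \colon \mathbf{sAb} \rightleftarrows \mathrm{PSh}(\mathbf{Cart},\mathbf{sAb})_{\mathrm{inj,loc}} \colon \mathrm{ev}_{\mathrm{pt}}.
\]
This is automatic from Lemma \ref{Lemma: Constant-Evaluation Quillen pair} and Remark \ref{Remark: Const-EV quillen pair local}, because the forgetful functors to $\mathbf{sSet}$ and $\mathrm{sPSh}(\mathbf{Cart})$ strictly commute with both $\mathsf{c}$ and $\mathrm{ev}_{\mathrm{pt}}$, and the transferred model structures on $\mathbf{sAb}$ and $\mathrm{PSh}(\mathbf{Cart},\mathbf{sAb})_{\mathrm{inj,loc}}$ detect fibrations and weak equivalences after forgetting.

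Having this Quillen pair, I would apply Proposition \ref{Proposition: Derived Quillen adjunction is adjunction in Ho-Cat} to translate the derived mapping space across the adjunction:
\[
\pi_0 \mathbb{R}\mathrm{Map}\bigl(\mathsf{c}(A),\overline{W}^k Y\bigr) \cong \pi_0 \mathbb{R}\mathrm{Map}\bigl(A,\mathrm{ev}_{\mathrm{pt}}(\overline{W}^k Y)\bigr) \cong \pi_0 \mathbb{R}\mathrm{Map}\bigl(A,\overline{W}^k(Y(\mathrm{pt}))\bigr),
\]
where the second isomorphism uses that evaluation at the terminal object commutes with the $\overline{W}$-construction since the latter is defined objectwise. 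Finally, Proposition \ref{Proposition: ext for abelian groups} identifies the right-hand side with $\mathrm{Ext}^k_{\mathbf{Ab}}(A,Y(\mathrm{pt}))$, and naturality in $Y$ follows from the naturality of each step.

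The main technical obstacle will be carefully justifying that the symbols $\mathbb{L}\mathsf{c}$ and $\mathbb{R}\mathrm{ev}_{\mathrm{pt}}$ can be replaced by their underived versions when computing $\pi_0$ of the derived mapping spaces. For the left derivation, one must observe that $\mathsf{c}$ preserves weak equivalences (it is essentially a restriction along a terminal functor), so that a cofibrant replacement $Q(A) \to A$ in $\mathbf{sAb}$ yields a weak equivalence $\mathsf{c}(Q(A)) \to \mathsf{c}(A)$ of cofibrant objects, and hence $\mathbb{R}\mathrm{Map}(\mathsf{c}(Q(A)),-) \simeq \mathbb{R}\mathrm{Map}(\mathsf{c}(A),-)$ in the simplicial model category $\mathrm{PSh}(\mathbf{Cart},\mathbf{sAb})_{\mathrm{inj,loc}}$. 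For the right derivation, every simplicial abelian group is fibrant, and $\mathrm{ev}_{\mathrm{pt}}$ preserves weak equivalences between fibrant objects, so $\mathrm{ev}_{\mathrm{pt}}\bigl(R(\overline{W}^k Y)\bigr) \simeq \mathrm{ev}_{\mathrm{pt}}(\overline{W}^k Y) = \overline{W}^k(Y(\mathrm{pt}))$ in $\mathbf{sAb}$. With these two replacements in hand, the chain of isomorphisms above is honest and produces the stated natural isomorphism.
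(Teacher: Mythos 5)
Your overall plan is the same as the paper's — rewrite the Ext group as a derived mapping space via Proposition \ref{Proposition: ext for smooth abelian groups}, transport it across a constant/evaluation Quillen adjunction, and conclude with Proposition \ref{Proposition: ext for abelian groups}. However, there is a genuine gap in the step where you drop the derived fibrant replacement for $\mathrm{ev}_{\mathrm{pt}}$.

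You carry out the argument entirely in $\mathrm{PSh}(\mathbf{Cart},\mathbf{sAb})_{\mathrm{inj,loc}}$, and at the crucial moment you claim that $\mathrm{ev}_{\mathrm{pt}}\bigl(R(\overline{W}^k Y)\bigr) \simeq \mathrm{ev}_{\mathrm{pt}}(\overline{W}^k Y)$ because ``every simplicial abelian group is fibrant'' and $\mathrm{ev}_{\mathrm{pt}}$ preserves weak equivalences between fibrant objects. The first claim is about objects of $\mathbf{sAb}$, not about $\overline{W}^k Y$ as an object of $\mathrm{PSh}(\mathbf{Cart},\mathbf{sAb})_{\mathrm{inj,loc}}$, which is where fibrancy is actually needed; and the second claim (Ken Brown's lemma) is only useful if you already know $\overline{W}^k Y$ is fibrant in that local injective structure. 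But injective fibrancy is strictly harder to establish than projective fibrancy — injective fibrations are defined by lifting, not objectwise — and there is no reason to expect $\overline{W}^k Y$ to be injectively fibrant for a general sheaf $Y$. Without that, $\mathbb{R}\mathrm{ev}_{\mathrm{pt}}(\overline{W}^k Y)$ does not reduce to the naive $\overline{W}^k(Y(\mathrm{pt}))$ and the chain of isomorphisms breaks down.

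The paper avoids this by explicitly switching to the \v{C}ech-local \emph{projective} model structure on $\mathrm{PSh}(\mathbf{Cart},\mathbf{sAb})$ — which is Quillen equivalent to the local injective one and, importantly, has the same weak equivalences — and then invoking a nontrivial fibrancy result (Pavlov, Proposition 4.13) stating that $\overline{W}^k Y$ \emph{is} fibrant in the \v{C}ech-local projective structure whenever $Y$ is a sheaf of abelian groups. That is precisely the ingredient your proposal is missing: you need either to pass to the projective structure and quote that fibrancy result, or to supply an alternative argument for why the fibrant replacement of $\overline{W}^k Y$ evaluates correctly at the point. The paper signals this in the text (``Notice that it is because of the last isomorphism\ldots that we make use of the \v{C}ech-local projective structure''), and the corresponding Quillen adjunction in the abelian setting is the one established in Lemma \ref{Lemma: constant - evaluation adjunction + weak equivalences}, stated for the projective \v{C}ech-local structure rather than the injective local one.
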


To prove this corollary we need an adapted version of the constant-evaluation Quillen adjunction of \cite[Equation 2.16]{Bunk}.

\begin{lemma} \label{Lemma: constant -  evaluation adjunction + weak equivalences}
    The constant simplicial presheaf functor $\mathsf{c}$ and the evaluation at the point functor $\mathrm{ev}_{\mathrm{pt}}$ form a Quillen adjunction
    \begin{center}
    \begin{tikzcd}
\mathbf{sAb} \arrow[r, "\mathsf{c}", shift left=2] \arrow[r, "\perp", phantom] & {\mathrm{PSh}(\mathbf{Cart}, \mathbf{sAb})_{\mathrm{proj,\check{C}}}} \arrow[l, "\mathrm{ev}_{\mathrm{pt}}", shift left=2]
\end{tikzcd}
    \end{center}
    for the \v{C}ech-local projective model structure on presheaves of simplicial abelian groups. Moreover, the functor $\mathsf{c}$ preserves weak equivalences.
\end{lemma}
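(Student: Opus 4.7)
My plan is to split the proof into three independent pieces: first establish the bare categorical adjunction $\mathsf{c} \dashv \mathrm{ev}_{\mathrm{pt}}$; second upgrade this to a Quillen adjunction with respect to the \v{C}ech-local projective model structure; third verify separately that $\mathsf{c}$ preserves all weak equivalences.

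For the first step, the key observation I would exploit is that $\mathrm{pt} = \mathbb{R}^0$ is a terminal object of $\mathbf{Cart}$, so for every Cartesian space $U$ there is a unique morphism $U \to \mathrm{pt}$. Given $A \in \mathbf{sAb}$ and $F \in \mathrm{PSh}(\mathbf{Cart},\mathbf{sAb})$, a natural transformation $\eta \colon \mathsf{c}(A) \to F$ is determined by its component at $\mathrm{pt}$, because $\mathsf{c}(A)$ has identity restrictions and naturality forces every other component to factor as $\eta_U = F(U \to \mathrm{pt}) \circ \eta_{\mathrm{pt}}$. Conversely, any $\eta_{\mathrm{pt}} \colon A \to F(\mathrm{pt})$ extends uniquely to a natural transformation. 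This yields a natural bijection $\mathrm{Hom}(\mathsf{c}(A), F) \cong \mathrm{Hom}_{\mathbf{sAb}}(A, F(\mathrm{pt}))$.

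For the second step, I would first establish the Quillen adjunction on the unlocalized projective model structure and then transfer along the localization. Since the projective model structure on $\mathrm{PSh}(\mathbf{Cart},\mathbf{sAb})$ is obtained by transfer along the free-forgetful adjunction from the corresponding projective structure on $\mathrm{PSh}(\mathbf{Cart},\mathbf{sSet})$, its fibrations and weak equivalences are detected objectwise on underlying simplicial sets. The functor $\mathrm{ev}_{\mathrm{pt}}$ then visibly preserves fibrations and weak equivalences, so it is right Quillen for the projective structure; by Lemma \ref{Lemma: Quillen Adj Lemma} the pair $\mathsf{c} \dashv \mathrm{ev}_{\mathrm{pt}}$ is Quillen for this structure. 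To pass to the \v{C}ech-local projective structure, I would invoke the standard fact that, for any left Bousfield localization, the identity functor $\mathrm{PSh}(\mathbf{Cart},\mathbf{sAb})_{\mathrm{proj}} \to \mathrm{PSh}(\mathbf{Cart},\mathbf{sAb})_{\mathrm{proj,\check{C}}}$ is left Quillen (cofibrations are unchanged, and weak equivalences are only enlarged). Composing the two Quillen adjunctions produces the desired one.

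The third step is the most direct: if $f \colon A \to B$ is a weak equivalence in $\mathbf{sAb}$, then $\mathsf{c}(f)$ has every component equal to $f$, hence is an objectwise (projective) weak equivalence in $\mathrm{PSh}(\mathbf{Cart},\mathbf{sAb})$. Since every projective weak equivalence is a \v{C}ech-local weak equivalence, $\mathsf{c}(f)$ is one as well. I do not foresee any genuine obstacle in any of the three steps; all reduce to standard facts about Bousfield localizations and transferred model structures, with the only nontrivial input being the terminality of $\mathrm{pt}$ in $\mathbf{Cart}$ and the objectwise characterization of projective fibrations and weak equivalences.
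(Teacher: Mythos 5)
Your decomposition is different from the paper's. The paper does not factor through the unlocalized projective structure on $\mathrm{PSh}(\mathbf{Cart},\mathbf{sAb})$; instead it verifies directly that $\mathrm{ev}_{\mathrm{pt}}$ preserves (trivial) fibrations in the \v{C}ech-local projective structure, by writing down the commutative square $U \circ \mathrm{ev}_{\mathrm{pt}} = \mathrm{ev}_{\mathrm{pt}} \circ U$ and reducing to Bunk's Quillen adjunction for simplicial presheaves of sets. Your route (projective first, then pass through the localization) is cleaner conceptually, and your step 1 re-derives the bare adjunction that the paper simply cites.

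There is, however, a gap in your step 2 that you should close. You say you would ``invoke the standard fact that, for any left Bousfield localization, the identity functor $\mathrm{PSh}(\mathbf{Cart},\mathbf{sAb})_{\mathrm{proj}} \to \mathrm{PSh}(\mathbf{Cart},\mathbf{sAb})_{\mathrm{proj,\check{C}}}$ is left Quillen.'' But in the paper, $\mathrm{PSh}(\mathbf{Cart},\mathbf{sAb})_{\mathrm{proj,\check{C}}}$ is \emph{not} introduced as a left Bousfield localization of the unlocalized projective structure on $\mathrm{PSh}(\mathbf{Cart},\mathbf{sAb})$; it is the right transfer of $\mathrm{sPSh}(\mathbf{Cart})_{\mathrm{proj,\check{C}}}$ along the free-forgetful adjunction. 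So the standard localization fact is not directly available, and you owe an argument that the identity functor is left Quillen between these two transferred structures. The argument does go through: trivial fibrations in $\mathrm{sPSh}(\mathbf{Cart})_{\mathrm{proj}}$ and $\mathrm{sPSh}(\mathbf{Cart})_{\mathrm{proj,\check{C}}}$ coincide (left Bousfield localization keeps cofibrations, hence trivial fibrations, unchanged), so the two transferred structures on $\mathrm{PSh}(\mathbf{Cart},\mathbf{sAb})$ detect trivial fibrations against the same class via $U$, hence have the same cofibrations; and the \v{C}ech-local weak equivalences contain the projective ones. With this said explicitly, your proof is correct and yields the same conclusion as the paper's, but you should not leave the step as a bare citation of the Bousfield localization slogan.
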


\begin{proof}
    We first recall by \cite[Equation 2.16]{Bunk} that for the \v{C}ech-local projective model structure, there is a Quillen adjunction
\begin{equation} \label{Equation: Cech-local projective Quillen Adj.}
\begin{tikzcd}
\mathbf{sSet} \arrow[r, "\mathsf{c}", shift left=2] \arrow[r, "\perp", phantom] & {\mathrm{sPSh}(\mathbf{Cart})_{\mathrm{proj,\check{C}}}.} \arrow[l, "\mathrm{ev}_{\mathrm{pt}}", shift left=2]
\end{tikzcd}
 \end{equation}
Now we recall that the model structure on simplicial abelian groups has been transferred from the Kan--Quillen model structure on simplicial sets. Likewise, the \v{C}ech-local projective structure $\mathrm{PSh}(\mathbf{Cart}, \mathbf{sAb})_{\mathrm{proj,\check{C}}}$ is right transferred from the corresponding structure on simplicial presheaves $\mathrm{sPSh}(\mathbf{Cart})_{\mathrm{proj,\check{C}}}$. In particular, notice that there is a commutative diagram:
\begin{equation}\label{Diagram: sAb proj}
\begin{tikzcd}
\mathbf{sSet}               & \mathrm{sPSh}(\mathbf{Cart}) \arrow[l, "\mathrm{ev}_{\mathrm{pt}}"']                              \\
\mathbf{sAb} \arrow[u, "U"] & {\mathrm{PSh}(\mathbf{Cart},\mathbf{sAb})} \arrow[l, "\mathrm{ev}_{\mathrm{pt}}"] \arrow[u, "U"']
\end{tikzcd}
\end{equation}
We wish to show that the functor $\mathrm{ev}_{\mathrm{pt}}$ defined for presheaves of simplicial abelian groups preserves fibrations and trivial fibrations. Let $f \colon X \rightarrow Y$ be a \v{C}ech-local projective fibration of presheaves of simplicial abelian groups, i.e. $U(f) \colon U(X) \rightarrow U(Y)$ is \v{C}ech-local projective fibration. By (\ref{Equation: Cech-local projective Quillen Adj.}) it follows that $U(f)(\mathrm{pt}) \colon U(X)(\mathrm{pt}) \rightarrow U(Y)(\mathrm{pt})$ is a Kan fibration. By the commutativity of (\ref{Diagram: sAb proj}) this morphisms equals $U(f(\mathrm{pt})) \colon U(X(\mathrm{pt})) \rightarrow U(Y(\mathrm{pt}))$, which now implies that the corresponding map $f(\mathrm{pt}) \colon X(\mathrm{pt}) \rightarrow Y(\mathrm{pt})$ of simplicial abelian groups is indeed a fibration. The same also holds for trivial fibrations, which then implies that $\mathrm{ev}_{\mathrm{pt}}$ is right Quillen. What is left to show is that $\mathsf{c}$ preserves weak equivalences. This follows from the fact that each object-wise weak equivalence is also a \v{C}ech-local weak equivalence. This finishes the proof.
\end{proof}

\begin{proof}[Proof of Corollary \ref{Corollary: ext of constant sheaf}:]
First, we use Proposition \ref{Proposition: ext for smooth abelian groups} and get
    \[
     \mathrm{Ext}^k_{\cat{A}} (\mathsf{c}(A),Y) \cong \pi_0 \mathbb{R}\mathrm{Map}(\mathsf{c}(A),\overline{W}^k(Y)),
    \]
    where the right-hand side denotes the derived mapping space taken in the local injective model structure on $\mathrm{PSh}(\mathbf{Cart}, \mathbf{sAb})$. Notice that there is a Quillen equivalence given by the identity
\begin{center}
\begin{tikzcd}
{\mathrm{PSh}(\mathbf{Cart},\mathbf{sAb})_{\mathrm{proj,\check{C}}}} \arrow[r, "\mathrm{id}", shift left=3] \arrow[r, "\perp", phantom] & {\mathrm{PSh}(\mathbf{Cart},\mathbf{sAb})_{\mathrm{inj,\check{C}}} = \mathrm{PSh}(\mathbf{Cart},\mathbf{sAb})_{\mathrm{inj,loc}}} \arrow[l, "\mathrm{id}", shift left=3]
\end{tikzcd}
\end{center}
and in particular, both model categories have the same weak equivalences\footnote{This follows from the fact that both model structures are right transferred from the corresponding local model structures of simplicial presheaves. Those are Quillen equivalent and have the same weak equivalences by \cite[Proposition~2.9]{Bunk}.}. It follows from this, that we have a natural isomorphism
\[
\mathrm{Hom}_{\mathrm{Ho} \, \cat{M}}(X,Y) \cong \mathrm{Hom}_{\mathrm{Ho} \, \cat{N}}(X,Y)
\]
for all $X,Y \in \mathrm{PSh}(\mathbf{Cart},\mathbf{sAb})$ where we abbreviate
\begin{align*}
    \cat{M} &:= \mathrm{PSh}(\mathbf{Cart},\mathbf{sAb})_{\mathrm{proj,\check{C}}} \\
    \cat{N} &:= \mathrm{PSh}(\mathbf{Cart},\mathbf{sAb})_{\mathrm{inj,loc}}
\end{align*}
to simplify notation. It then follows that
\[
     \mathrm{Ext}^k_{\cat{A}} (\mathsf{c}(A),Y) \cong \mathrm{Hom}_{\mathrm{Ho} \, \cat{N}} \left( \mathsf{c}(A),\overline{W}^kY \right) \cong \mathrm{Hom}_{\mathrm{Ho} \, \cat{M}} \left( \mathsf{c}(A),\overline{W}^kY \right).
\]
Hence assume from this moment on that we work in the \v{C}ech-local projective model structure. Using Lemma \ref{Lemma: constant -  evaluation adjunction + weak equivalences} one can show using the derived adjunction $\mathbb{L}\mathsf{c} \dashv \mathbb{R}\mathrm{ev}_{\mathrm{pt}}$ that
    \begin{align}
    \pi_0 \mathbb{R}\mathrm{Map}\left( \mathsf{c}(A),\overline{W}^kY \right) &\cong \pi_0 \mathbb{R}\mathrm{Map}\left( \mathbb{L}\mathsf{c}(A),\overline{W}^kY\right)  \\
    &\cong \pi_0 \mathbb{R}\mathrm{Map}\left(A,\mathbb{R}\mathrm{ev}_{\mathrm{pt}}\left( \overline{W}^kY \right) \right) \\
    &\cong \pi_0 \mathbb{R}\mathrm{Map}\left(A,(\overline{W}^kY)(\mathrm{pt}) \right), \label{Isomorphism: Projective Needed}
    \end{align}
    where the latter derived mapping space is taken in $\mathbf{sAb}$. Notice that it is because of the last isomorphism (\ref{Isomorphism: Projective Needed}) that we make use of the \v{C}ech-local projective structure. Indeed, since $Y$ is a sheaf of abelian groups on $\mathbf{Cart}$ one can show that $\overline{W}^kY$ is fibrant in the \v{C}ech-local projective structure by \cite[Proposition 4.13]{Pavlov}. It follows from this fact that $(\overline{W}^kY)(\mathrm{pt}) \xrightarrow{\simeq} \mathbb{R}\mathrm{ev}_{\mathrm{pt}}\left( \overline{W}^kY \right)$ is a weak equivalence, inducing this last isomorphism. By definition, we have:
    \[
    (\overline{W}^kY)(\mathrm{pt}) = \overline{W}^k\left( Y(\mathrm{pt}) \right) \cong \Gamma(Y(\mathrm{pt})[-k] ).
    \]
    Then the Dold--Kan correspondence gives together with Proposition \ref{Proposition: ext for abelian groups}:
    \[
    \pi_0 \mathbb{R}\mathrm{Map}(A,\Gamma(Y(\mathrm{pt})[-k] )) \cong \mathrm{Hom}_{\mathrm{Ho}\, \mathbf{Ch}_+(\mathbf{Ab}) } (A[0], Y(\mathrm{pt})[-k]) \cong \mathrm{Ext}_{\mathbf{Ab}}^k(A,Y(\mathrm{pt})).
    \]
\end{proof}

\chapter{Skeletal Diffeologies and Differential Characters} \label{Chapter: Skeletal Diffeologies and Differential Characters}

The purpose of this chapter is to prove the main result of this thesis.

\begin{theorem} \label{Theorem: The Main Theorem}
    Let $M$ be a smooth manifold without boundary. Then for all $k \geq 0$, there is an isomorphism of abelian groups
    \[
    H^k_{\infty}(\mathbb{M}_k,U(1)) \cong \hat{H}^{k+1}(M; \mathbb{Z} )
    \]
    identifying the Cheeger--Simons differential characters with the $k$th cohomology of the simplicial presheaf $\mathbb{M}_k$ with coefficients in $U(1)$.
\end{theorem}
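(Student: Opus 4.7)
The plan is to establish the isomorphism by bridging it through a third group, namely
\[
\mathrm{Hom}_{\mathrm{PSh}(\mathbf{Cart},\mathbf{Ab})}\!\left( H_k(\mathbb{M}_k),\, U(1) \right),
\]
where $H_k(\mathbb{M}_k)$ denotes the homology presheaf of $\mathbb{M}_k$ in the sense of Jardine. Concretely, I would construct two isomorphisms
\[
\hat{H}^{k+1}(M;\mathbb{Z}) \xrightarrow{\;\cong\;} \mathrm{Hom}_{\mathrm{PSh}(\mathbf{Cart},\mathbf{Ab})}\!\left(H_k(\mathbb{M}_k),U(1)\right) \xrightarrow{\;\cong\;} H^k_\infty(\mathbb{M}_k,U(1))
\]
and then compose them.

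For the right-hand isomorphism I would run the universal coefficient spectral sequence of Proposition \ref{Proposition: Universal coefficient in stack cohomology},
\[
E_2^{p,q} = \mathrm{Ext}^q_{\mathrm{Ab}(\mathbf{Cart})}\!\left(\tilde{H}_p(\mathbb{M}_k),\,U(1)\right) \;\Longrightarrow\; H^{p+q}_\infty(\mathbb{M}_k,U(1)).
\]
The key input is that for the $k$-skeletal diffeology the homology sheaves $\tilde{H}_p(\mathbb{M}_k)$ are (objectwise) discrete, so that on each Cartesian space the $\mathrm{Ext}$ groups reduce to classical $\mathrm{Ext}^q_{\mathbf{Ab}}(-,U(1))$, via the techniques developed for Corollary \ref{Corollary: ext of constant sheaf}. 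Since $U(1)$ is a divisible abelian group, these classical higher $\mathrm{Ext}$ groups vanish, so only the $q=0$ column survives; combined with the $\pi_0$ vanishing for $p<k$ (roughly, because $\mathbb{M}_k$ is assembled from $k$-dimensional plots of $M$), the spectral sequence collapses onto the column $E_2^{k,0} = \mathrm{Hom}(H_k(\mathbb{M}_k),U(1))$, and the abutment identifies this with $H^k_\infty(\mathbb{M}_k,U(1))$ modulo the filtration described in Remark \ref{Remark: The aboutment Remark}, which here is trivial.

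For the left-hand isomorphism I would adapt Schreiber--Waldorf's parallel transport reconstruction to the simplicial setting. Given a differential character $\chi \in \hat{H}^{k+1}(M;\mathbb{Z})$, restriction to the smooth $k$-cycles obtained from $k$-skeletal plots defines a presheaf morphism $h_\chi\colon H_k(\mathbb{M}_k)\to U(1)$; functoriality under smooth maps between Cartesian spaces is automatic from the naturality of $\chi$ and the definition of $\mathbb{M}_k$ in terms of the extended smooth singular complex of the function diffeology $D(U,M)_k$. Injectivity follows from the fact that every smooth singular $(k-1)$-cycle is the boundary of a $k$-chain produced from $k$-dimensional plots, so $h_\chi$ determines the curvature $\mathrm{curv}(\chi)$ on $k$-simplices and hence determines $\chi$ on boundaries. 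The heart of the argument, and what I expect to be the main obstacle, is surjectivity: given an arbitrary morphism $h$ of presheaves, one must recover from $h$ a genuine differential character, which in particular requires extracting a well-defined globally smooth curvature form $A_h \in \Omega^{k+1}(M)$ and verifying the Cheeger--Simons identity $h(\partial c) = \exp(2\pi i \int_c A_h)$ on all smooth $k$-chains, not merely those of $k$-skeletal origin.

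To overcome this I would follow the strategy flagged in the introduction: use Lackman's simplicial Riemann integral, in which Riemann sums are built from triangulations of a Cartesian domain rather than cubic partitions, and in which the limit is interpreted through the Van Est homomorphism for Lie groupoids. Concretely, one defines $A_h$ pointwise on $M$ by evaluating $h$ on a family of ``shrinking'' $k$-simplex plots at a point and taking a derivative, then shows via the simplicial Riemann sum convergence theorem that $A_h$ is smooth, of the expected integrality class, and satisfies the curvature identity. Once the pair $(h, A_h)$ is promoted to a differential character, checking that the two constructions are mutually inverse is a direct computation, yielding the desired isomorphism and, by composition with the spectral sequence identification, the theorem.
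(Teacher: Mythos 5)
Your plan coincides with the paper's: it decomposes the isomorphism through $\mathrm{Hom}_{\mathrm{PSh}(\mathbf{Cart},\mathbf{Ab})}(H_k(\mathbb{M}_k),U(1))$, establishes the right-hand identification via the universal-coefficient spectral sequence using the constancy of the homology sheaves $\tilde{H}_p(\mathbb{M}_k)$ for $p<k$ together with divisibility of $U(1)$, and establishes the left-hand identification by extracting a curvature form from a presheaf morphism $h$ via Lackman's Van Est simplicial Riemann integral. Two small corrections to your writeup: the homology sheaves $\tilde{H}_p(\mathbb{M}_k)$ do \emph{not} vanish for $p<k$ (there is no ``$\pi_0$ vanishing'') — they are constant, equal to $H_p(M)$, and what collapses the spectral sequence is the vanishing of $\mathrm{Ext}^q_{\mathbf{Ab}}(H_p(M),U(1))$ for $q\geq 1$; and your injectivity argument is off by one in degree and unnecessary, since $\chi\mapsto h_\chi$ has the explicit one-sided inverse $h\mapsto h_{\mathrm{pt}}\circ q$ (precomposition with the quotient $Z_k(M)\to H_k(M_k)$), so $\Psi\circ\Phi=\mathrm{id}$ is immediate.
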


The simplicial presheaf $\mathbb{M}_k$ is constructed as a refinement of the enriched simplicial presheaf $\overline{M}$ using $k$-skeletal diffeologies, to be introduced in the next section. First, we show that the universal coefficient spectral sequence can be used in this specific case to compute the cohomology $H^k_{\infty}(\mathbb{M}_k,U(1))$. Then, by switching from affine simplices $\mathbb{A}^{\bullet}$ to compact simplices $|\Delta^{\bullet}|$ we show how to recover the abelian group of differential characters.

\section{Skeletal Diffeologies}

\begin{definition}[\cite{Iglesias-Zemmour} Art. 1.66]
    Let $X$ be a set and $\cat{F}$ a family of maps \newline $\left\{ p \colon U \rightarrow X \text{ for } U \in \mathbf{Cart} \right\}$. The finest diffeology on $X$ containing the family $\cat{F}$ is called the \textbf{diffeology generated by} $\cat{F}$ and we denote it by $D_{\cat{F}}$. The plots for the diffeology generated by the family $\cat{F}$ can be characterized by the following property: \\

    A map $p \colon U \rightarrow X$ is a plot in $D_{\cat{F}}$ if and only if for all $z \in U$ there exists an open neighborhood $V$ of $z$ such that either $p|_V$ is constant, or there exists a map $f \colon W \rightarrow X$ belonging to the family $\cat{F}$ together with a smooth map of Cartesian spaces $q \colon V \rightarrow W$ such that $p|_V = f \circ q$.
\end{definition}

\begin{definition}[\cite{KiharaSk}]
    Let $X$ be a diffeological space. For an integer $ 0 \leq k < \infty$ consider the family of plots in $X$ of dimension $\leq k$, i.e.
    \[
    \cat{F}_k := \left\{ p \in C^{\infty}(U,X) \, | \, \mathrm{dim} \, U \leq k \right\}
    \]
    Then the diffeology on $X$ generated by the family $\cat{F}_k$ is called the $k$\textbf{-skeletal diffeology on} $X$. We denote the set $X$ endowed with the $k$-skeletal diffeology by $X_k$.
\end{definition}

Define the $k$-skeleton functor to assign to a diffeological space $X$ its $k$-skeletal diffeological space $X_k$.
\[
\begin{array}{rcl}
     (-)_k \colon \mathbf{Diff} &  \rightarrow & \mathbf{Diff}  \\
     X & \mapsto & X_k
\end{array}
\]

It is important to note that the $d$-skeleton functor preserves the underlying topological properties of a diffeological space. This stands in contrast to the smooth properties that are substantially altered. More precisely we have the following:

\begin{proposition}[\cite{KiharaSk,Christensen-Sinnamon-Wu}, Proposition 1.1 and Theorem 3.7] \label{Proposition: Skeletal D-topology}
    Let $X$ be a diffeological space and $k$ an integer $0 < k < \infty$. The identity map $\mathrm{id} \colon X_k \rightarrow X$ is a smooth map such that its induced map on the underlying topological spaces
    \[
    \mathrm{id} \colon \tau(X_k) \rightarrow \tau(X)
    \]
    is a homeomorphism.
\end{proposition}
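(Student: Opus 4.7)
The plan is to treat the smoothness and the homeomorphism claims separately. Smoothness of $\mathrm{id} \colon X_k \to X$ is immediate from the definition of the skeletal diffeology: the generating family $\cat{F}_k$ consists by construction of plots of $X$, and the skeletal diffeology $D_{\cat{F}_k}$ on $X_k$ is by definition the smallest diffeology containing $\cat{F}_k$. Hence $D_{\cat{F}_k}$ is contained in the diffeology of $X$, so every plot of $X_k$ is already a plot of $X$; this is precisely the smoothness of the identity.

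For the homeomorphism claim I would establish the two inclusions $\tau(X) \subseteq \tau(X_k)$ and $\tau(X_k) \subseteq \tau(X)$ of topologies on the common underlying set. The first is formal: smoothness of $\mathrm{id}$ implies continuity of the induced map between D-topologies, so every $\tau(X)$-open set is $\tau(X_k)$-open. The reverse inclusion is the substantive direction and will be the main obstacle, because on the face of it a $\tau(X_k)$-open subset $A$ only satisfies that $p^{-1}(A)$ is open for plots $p$ in the \emph{smaller} family $D_{\cat{F}_k}$, while $\tau(X)$-openness demands the same for every plot of $X$, many of which have dimension $> k$ and therefore need not be plots of $X_k$.

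To overcome this I would invoke the generating-family characterization of the D-topology due to Christensen--Sinnamon--Wu, namely that a subset $A \subseteq X$ is $\tau(X)$-open if and only if $p^{-1}(A)$ is open in $\mathbb{R}$ for every smooth curve $p \colon \mathbb{R} \to X$. Since by hypothesis $k \geq 1$, every such curve already belongs to the generating family $\cat{F}_k$, and by the explicit local description of plots in the generated diffeology $D_{\cat{F}_k}$ it is automatically a plot of $X_k$ (it factors trivially through itself as a plot of dimension one). Consequently, if $A$ is $\tau(X_k)$-open then $p^{-1}(A)$ is open for every smooth curve $p \colon \mathbb{R} \to X$, and the Christensen--Sinnamon--Wu criterion forces $A$ to be $\tau(X)$-open. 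Combining the two inclusions produces the desired homeomorphism; the hypothesis $k \geq 1$ enters precisely at the step where smooth curves are recognized as plots of $X_k$, and for $k = 0$ the skeletal diffeology collapses to the discrete one, so that the conclusion genuinely fails in that edge case.
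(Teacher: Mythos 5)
Your argument is correct and follows the same route the paper sketches in the remark immediately after the proposition: smoothness of $\mathrm{id}\colon X_k\to X$ is immediate, one inclusion of topologies is formal continuity, and the substantive reverse inclusion is obtained from the Christensen--Sinnamon--Wu result that the $D$-topology is determined by smooth curves $\mathbb{R}\to X$, combined with the observation that for $k\geq 1$ these curves lie in the generating family $\cat{F}_k$ and hence are exactly the smooth curves of $X_k$. Your note that the hypothesis $k\geq 1$ is used precisely at this step, and fails for $k=0$, is also accurate.
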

\begin{remark}
Recall that $\tau \colon \mathbf{Diff} \rightarrow \mathbf{Top}$ denotes the functor sending a diffeological space to the underlying topological space equipped with the $D$-topology, see Definition \ref{Definition: D-topology}.
\end{remark}

Proposition \ref{Proposition: Skeletal D-topology} is based on the fact that the underlying topological space functor $\tau$ factors through the category of \textit{arc-generated topological spaces}. More specifically, call a topological space $X$ \textbf{arc-generated} if its topology is the final topology induced by all continuous curves $\mathbb{R} \rightarrow X$ in $X$. The underlying topological space $\tau(X)$ for any diffeological space $X$ is arc-generated \footnote{See \cite[Theorem 3.7]{Christensen-Sinnamon-Wu} or equivalently section 2.3 in \cite{KiharaMod}.}. Thus for $k > 0$, the set of smooth curves in $X_k$ agrees with the set of smooth curves in $X$ and the map $\mathrm{id} \colon \tau(X_k) \rightarrow \tau(X)$ is indeed a homeomorphism. \\

While skeletal diffeologies are well-behaved regarding their underlying topological spaces, they are pathological as diffeological spaces. What we mean by this is made precise by the next theorem.

\begin{theorem}[\cite{KiharaSk} Theorem 1.6] \label{Theorem: Smooth Homotopy of M_d}
    Let $(M,x_0)$ be a pointed smooth manifold and $k$ an integer with $ 0 < k < \mathrm{dim}\, M$. Then the natural homomorphism
    \[
    \pi_i^D(M_k,x_0) \rightarrow \pi_i(M,x_0)
    \]
    between the smooth homotopy group of $M_k$ and the standard topological homotopy group $\pi_i(M,x_0)$ is an isomorphism for $i < k$ and surjective for $i = k$. In addition, for $i = k$ the kernel is uncountable.
\end{theorem}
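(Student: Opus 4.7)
The proof naturally splits into showing the isomorphism/surjectivity in low degrees and then the uncountability of the kernel at degree $k$.

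The plan for the first part is to reformulate everything in terms of simplicial sets via Theorem~\ref{Theorem: The Fundamental Theorem of Diffeological Homotopy}, which gives $\pi_i^D(M_k,x_0)\cong\pi_i(S_e(M_k),x_0)$ and $\pi_i^D(M,x_0)\cong\pi_i(S_e(M),x_0)\cong \pi_i(M,x_0)$, the latter since $M$ is a smooth manifold. Under this identification the natural homomorphism is induced by the inclusion of simplicial sets $\iota\colon S_e(M_k)\hookrightarrow S_e(M)$ coming from the smooth identity $M_k\to M$. The key observation is that for all $n\leq k$ there is an equality $S_e(M_k)_n=S_e(M)_n$: indeed, a smooth map $\mathbb{A}^n\to M$ is by definition an $n$-plot of $M$, and when $n\leq k$ it sits in the generating family $\cat{F}_k$ for the $k$-skeletal diffeology, hence is also a plot for $M_k$; the converse always holds since $M_k\to M$ is smooth. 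In particular the $k$-skeleta of the two simplicial sets agree, and a standard cellular approximation argument on $|S_e(M)|$ then yields an isomorphism on $\pi_i$ for $i<k$ (representatives and their homotopies sit in dimensions $\leq k$) and a surjection on $\pi_k$.

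For the second part, the plan is to exhibit uncountably many smooth maps $\sigma_t\colon S^k\to M$, indexed by an uncountable set $T$, which are topologically null-homotopic in $M$ but pairwise non-equivalent in $\pi_k^D(M_k,x_0)$. Since $\dim M>k$, I would fix a smooth embedding $\jmath\colon\mathbb{R}^{k+1}\hookrightarrow M$ based at $x_0$ and construct the $\sigma_t$ inside $\jmath(\mathbb{R}^{k+1})$, choosing them so that a closed $(k+1)$-form $\omega$ with non-trivial integral on $\mathbb{R}^{k+1}$ yields pairwise distinct values $\int_{D^{k+1}}\widetilde{\sigma}_t^{\,*}\omega \pmod{1}$ for any topological fill $\widetilde{\sigma}_t$. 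Each $\sigma_t$ is null-homotopic in $M$ via the radial contraction $H_t\colon D^{k+1}\to \jmath(\mathbb{R}^{k+1})$, but $H_t$ fails to be a plot of $M_k$ because the $k$-skeletal diffeology forces plots of dimension $k+1$ to locally factor through plots of dimension $\leq k$, whereas $H_t$ sweeps out a genuinely $(k+1)$-dimensional region. A hypothetical $k$-skeletal null-homotopy of $\sigma_t$ would provide a plot of $M_k$ whose image is locally of dimension $\leq k$, so the pull-back of $\omega$ along it would vanish, contradicting the non-vanishing of the integral invariant. Varying $t$ then produces uncountably many distinct classes in the kernel.

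The main obstacle is the uncountability statement. The simplicial identification in the first part is essentially formal once the bijection of simplices in dimensions $\leq k$ is recorded, but the second part requires a genuine geometric construction detecting that $k$-skeletal null-homotopies cannot ``fill in $(k+1)$-volume''. In the case $k=1$ this recovers the well-known fact that the thin/wire fundamental group $\pi_1^D(M_1,x_0)$ is uncountable, as detected by holonomies of principal $U(1)$-bundles with connection; the higher-degree case proceeds analogously using higher holonomy data coming from closed $(k+1)$-forms on local charts, in the spirit of the results developed later in this thesis.
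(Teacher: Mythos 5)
The cited statement is Theorem 1.6 of \cite{KiharaSk}, which the thesis quotes as a black box without reproducing a proof, so there is no proof in the text to compare against; your argument is evaluated on its own.

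Your first part is correct and is, in essence, the same mechanism the thesis itself uses for the homology analogue in Lemma \ref{Lemma: Smooth Singular Homology X_d}. The identity $S_e(M_k)_n = S_e(M)_n$ for $n\leq k$ holds because $\mathbb{A}^n\cong\mathbb{R}^n$ identifies $n$-simplices with $n$-dimensional plots, and for $n\leq k$ the plots of $M_k$ and $M$ coincide; hence $\mathrm{sk}_k S_e(M_k)=\mathrm{sk}_k S_e(M)$, and the usual CW-skeleton fact (inclusion of the $k$-skeleton is a $\pi_{<k}$-isomorphism and a $\pi_k$-surjection) completes the argument after passing through Theorem~\ref{Theorem: The Fundamental Theorem of Diffeological Homotopy}.

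Your second part has a genuine gap. The closed $(k+1)$-form $\omega$ you introduce lives only on $\mathbb{R}^{k+1}$, but a hypothetical $k$-skeletal (null-)homotopy is a smooth map $H\colon S^k\times I\to M_k$ with no reason to have image inside $\jmath(\mathbb{R}^{k+1})$; so ``$H^*\omega$'' is not defined and the claimed contradiction does not follow. Moreover, when $\dim M>k+1$ you cannot simply push $\omega$ forward to a closed form on $M$, and even if you could, well-definedness of $\int_{D^{k+1}}\widetilde\sigma_t^*\omega$ across arbitrary topological fills requires control of periods over $\pi_{k+1}(M)$ that you have not arranged. The correct way to run this idea is to integrate a globally defined $k$-form rather than a $(k+1)$-form over a fill: pick a $k$-form $\alpha\in\Omega^k(M)$ (for instance, extend a primitive of $\omega$ off a tubular neighborhood of $\jmath(\mathbb{R}^{k+1})$ by a bump function), and consider the invariant $\sigma\mapsto\int_{S^k}\sigma^*\alpha$. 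If $H\colon S^k\times I\to M_k$ is a smooth based homotopy between $\sigma_s$ and $\sigma_t$, Stokes gives $\int_{S^k}\sigma_s^*\alpha-\int_{S^k}\sigma_t^*\alpha=\int_{S^k\times I}H^*(d\alpha)$, and the right-hand side vanishes because $d\alpha$ is a $(k+1)$-form on $M$, so it pulls back to zero along any plot of $M_k$ (such plots locally factor through plots of dimension $\leq k$). This makes the integral a $k$-skeletal homotopy invariant without reference to fills, closedness, or periods; choosing $\alpha$ so that $\int_{\sigma_t}\alpha$ is strictly monotone in $t$ then yields the uncountable family. This is also the mechanism the thesis itself uses to express thin-invariance of differential characters (cf.\ the argument surrounding equation (\ref{Equation: Differential Character is thin invariant})).
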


Recall that for $X$ a diffeological space its \textbf{smooth singular homology} $H_*(X)$ is given by the simplicial homology of the extended smooth singular complex:
\[
H_*(X) := H_*(S_e(X)) = H_* \left( N(\mathbb{Z}[S_e(X)]) \right)
\]

Similarly to Theorem \ref{Theorem: Smooth Homotopy of M_d} the following provides an analog regarding the smooth homology of skeletal diffeological spaces.

\begin{lemma} \label{Lemma: Smooth Singular Homology X_d}
    Let $X$ be a diffeological space and $d$ an integer with $0 < d < \infty$. Then the natural homomorphism induced by the smooth map $\mathrm{id} \colon X_d \rightarrow X$
    \[
    H_j(X_d) \rightarrow H_j(X)
    \]
    is the identity for all $j < d$.
\end{lemma}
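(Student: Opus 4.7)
The plan is to show that the natural map $H_j(X_d) \to H_j(X)$ is in fact induced by an equality of chain complexes in low degrees, so that the identification is tautological once one verifies that the smooth singular simplices agree. The key observation is that the extended singular complex $S_e$ only sees plots whose domains are Cartesian spaces $\mathbb{A}^n \cong \mathbb{R}^n$ of a fixed dimension $n$, and for $n \leq d$ these are unaffected by the passage to the $d$-skeletal diffeology.

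First I would establish the set-level equality $S_e(X_d)_n = S_e(X)_n$ for every $n \leq d$. The inclusion $S_e(X_d)_n \hookrightarrow S_e(X)_n$ is immediate because the identity $X_d \to X$ is smooth, so any smooth map $\mathbb{A}^n \to X_d$ composes with it to a smooth map $\mathbb{A}^n \to X$; equivalently, every plot in the generated diffeology $D_{\cat{F}_d}$ is locally obtained from a plot of $X$, hence is itself a plot of $X$ by the sheaf axiom. For the reverse inclusion, given a plot $p \colon \mathbb{A}^n \to X$ with $n \leq d$, observe that $\mathbb{A}^n \cong \mathbb{R}^n$ is itself a Cartesian space of dimension $\leq d$, and therefore $p$ belongs to the generating family $\cat{F}_d$; taking $V = W = \mathbb{A}^n$ and $q = \mathrm{id}$ exhibits $p$ as a plot of $X_d$.

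Second, I would note that the face and degeneracy maps of both $S_e(X_d)$ and $S_e(X)$ are defined by precomposition with the same cosimplicial morphisms $\mathbb{A}^\bullet$ of Definition~\ref{Definition: Affine Comsimplical Object}. Consequently, once $S_e(X_d)_n = S_e(X)_n$ holds as sets for $n \leq d$, the simplicial identities also coincide in this range, so the truncations of $S_e(X_d)$ and $S_e(X)$ at level $d$ are identical simplicial sets. Applying the free abelian group functor $\mathbb{Z}[-]$ and then the normalized chain complex functor $N$, this yields an equality of chain complexes
\[
N(\mathbb{Z}[S_e(X_d)])_k = N(\mathbb{Z}[S_e(X)])_k \quad \text{for all } k \leq d,
\]
with matching boundary operators in this range.

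Finally I would conclude that $H_j$ depends only on the chain groups in degrees $j-1, j, j+1$, so whenever $j+1 \leq d$, i.e.\ $j < d$, the equality of chain complexes above forces $H_j(X_d) = H_j(X)$ as abelian groups, and the natural homomorphism induced by $\mathrm{id} \colon X_d \to X$ is the identity under this equality. There is no real obstacle here beyond carefully verifying the two inclusions in the first step; the characterization of plots in the generated diffeology $D_{\cat{F}_d}$ makes both directions essentially formal once the dimensional bookkeeping is in place.
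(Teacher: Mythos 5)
Your proof is correct and follows essentially the same approach as the paper: you establish the equality of chain groups $C_k(S_e(X_d)) = C_k(S_e(X))$ for $k \leq d$ (via the agreement of smooth simplices of dimension $\leq d$, which is immediate from the definition of the generated diffeology) and conclude that cycles and boundaries in degree $j$ agree when $j < d$. The paper's proof is a one-liner stating precisely this fact; your version simply fills in the dimensional bookkeeping.
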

\begin{proof}
    This follows easily from the fact that for $j < d$ the notions of smooth $j$-cycles and $j$-boundaries in $X_d$ and $X$ agree. That is, for $j < d$:
    \[
    Z_j(S_e(X_d)) = Z_j(S_e(X)) \text{ and } B_j(S_e(X_d)) = B_j(S_e(X))
    \]
\end{proof}

\begin{corollary} \label{Corollary: Constant Homology sheaves}
    Let $X$ be a diffeological space. For every $U \in \mathbf{Cart}$ the unique map $U \rightarrow \mathrm{pt}$ induces an objectwise weak equivalence in $\mathrm{sPSh}(\mathbf{Cart})$ between
    \begin{equation*}
               \mathsf{c}(S_e(X)) \rightarrow \overline{X}.
    \end{equation*}
    As a consequence, the homotopy and homology sheaves of the simplicial presheaf $\overline{X}$ are constant.
\end{corollary}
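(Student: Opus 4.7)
The plan is to construct the comparison map explicitly, show it is an objectwise weak equivalence by exploiting smooth contractibility of Cartesian spaces, and then deduce constancy of the homotopy and homology sheaves.

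First, I would observe that the unique map $U \to \mathrm{pt}$ induces via the contravariant functor $D(-,X)$ a smooth map $X \cong D(\mathrm{pt}, X) \to D(U, X)$, sending a point $x \in X$ to the constant function at $x$. Applying $S_e$ and using naturality in $U$ assembles these maps into the morphism of simplicial presheaves $\mathsf{c}(S_e(X)) \to \overline{X}$ whose component at $U$ is the one just described.

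To verify that this morphism is an objectwise weak equivalence, by Theorem \ref{Theorem: The Fundamental Theorem of Diffeological Homotopy} it suffices to show that the underlying smooth map $X \to D(U, X)$ induces isomorphisms on all smooth homotopy groups for every $U \in \mathbf{Cart}$. I would do this by producing an explicit smooth homotopy inverse using the fact that Cartesian spaces are smoothly contractible: fix a diffeomorphism $U \cong \mathbb{R}^n$ and a smooth scaling $H \colon U \times \mathbb{R} \to U$ with $H(-,0)$ constant at some basepoint $u_0$ and $H(-,1) = \mathrm{id}_U$. By the cartesian closed structure on $\mathbf{Diff}$, the assignment $(f, t) \mapsto f \circ H(-, t)$ defines a smooth map $D(U, X) \times \mathbb{R} \to D(U, X)$ realizing a smooth homotopy between the identity and the composition
\[
D(U, X) \xrightarrow{\;\mathrm{ev}_{u_0}\;} X \hookrightarrow D(U, X).
\]
This exhibits $X$ as a smooth deformation retract of $D(U, X)$, so the inclusion is a smooth homotopy equivalence, whence $S_e(X) \to S_e(D(U,X)) = \overline{X}(U)$ is a weak equivalence of simplicial sets.

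For the statement about homotopy and homology sheaves, the objectwise weak equivalence $\mathsf{c}(S_e(X)) \to \overline{X}$ induces isomorphisms on the homotopy presheaves $\pi_n$ and homology presheaves $H_n$ of Definition \ref{Definition: Homotopy Sheaf}. Since $\mathsf{c}(S_e(X))$ is constant in $U$, these presheaves are literally constant with values $\pi_n(S_e(X)) \cong \pi_n^D(X)$ and $H_n(S_e(X))$. Because every good open cover of a Cartesian space is nonempty and every Cartesian space is connected (so the nerve of any good cover is connected), constant presheaves on $(\mathbf{Cart}, \tau_{\mathrm{gd}})$ already satisfy the sheaf condition, and sheafification leaves them unchanged. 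Hence $\tilde{\pi}_n(\overline{X})$ and $\tilde{H}_n(\overline{X})$ are constant sheaves.

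The only genuine technicality is verifying that the smooth contraction of $U$ lifts to a bona fide smooth self-homotopy of the diffeological mapping space $D(U,X)$; this is routine given the cartesian closed structure on $\mathbf{Diff}$, so I do not expect any serious obstacle.
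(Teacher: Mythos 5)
Your proof is correct and follows essentially the same route as the paper: identify the component at $U$ as $S_e$ applied to the inclusion of constants $X \hookrightarrow D(U,X)$, exhibit $X$ as a smooth deformation retract of $D(U,X)$ via precomposition with a smooth contraction of $U$, and conclude by applying $S_e$. The observation about constant presheaves on $(\mathbf{Cart},\tau_{\mathrm{gd}})$ already being sheaves is a harmless extra remark not needed for the statement as phrased.
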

\begin{remark}
    Recall that given a diffeological space $X$ the enriched simplicial presheaf $\overline{X}$ is given by
    \[
    U \mapsto S_e(D(U,X)).
    \]
\end{remark}
\begin{proof}
Let $X$ be a diffeological space and let $U \in \mathbf{Cart}$ be a Cartesian space. Since any $U$ is smoothly contractible there exists $r_U \colon U \times \mathbb{R} \rightarrow U$ a smooth homotopy from the identity to a constant map, say $\mathrm{const}_{u_0}$. Note that for every $U \in \mathbf{Cart}$, the map $U \rightarrow \mathrm{pt}$ induces inductions
\[
i \colon X = D(\mathrm{pt},X) \hookrightarrow D(U,X)
\]
sending $x_0 \in X$ to the associated constant map $\mathrm{const}_{x_0} \colon U \rightarrow X$. We now wish to show that $X$ is a smooth deformation retract of $D(U,X)$, identifying $X$ with its image in $D(U,X)$. Indeed, for every $U$ define a smooth map by
\begin{equation*}
    \begin{array}{rcl}
        H \colon D(U,X) \times \mathbb{R} & \rightarrow & D(U,X)  \\
         (f,t) & \mapsto & U \xrightarrow{(r_U)_t} U \xrightarrow{f} X
    \end{array}
\end{equation*}
which is readily checked to satisfy:
\begin{align*}
    &H(f,0) = (r_U)_0 \circ f = f \\
    &H(f,1) = (\mathrm{const}_{u_0}) \circ f = \mathrm{const}_{f(u_0)} \in \mathrm{Im}(i) \\
    &H(\mathrm{const}_{x_0},t) = (r_U)_t \circ \mathrm{const}_{x_0} = \mathrm{const}_{x_0} \text{ for all } t \in \mathbb{R}
\end{align*}
It then follows that the induced maps
\[
S_e(X) \rightarrow S_e \left(D(U,X)\right) = \overline{X}(U)
\]
are indeed weak equivalences for all $U \in \mathbf{Cart}$. That is, we have an objectwise weak equivalence
\[
\mathsf{c}(S_e(X)) \rightarrow \overline{X}
\]
implying that the associated homotopy and homology sheaves are isomorphic and therefore constant.
\end{proof}

Corollary \ref{Corollary: Constant Homology sheaves} is a simplified variant of a much more general recent result \cite[Theorem 1.1]{BBP}. More precisely, given a simplicial presheaf $F$ on $\mathbf{Cart}$ denote by $\mathbf{B}F$ the simplicial presheaf
\[
U \mapsto \mathbf{B}F(U):= \delta \left( F( U \times \mathbb{A}^{\bullet})_{\bullet} \right)
\]
where $\delta$ denotes the diagonal simplicial set functor taking a bisimplicial set $K_{\bullet,\bullet}$ to its diagonal $\delta(K)_n = K_{n,n}$. If $F$ is simplicial presheaf satisfying \v{C}ech-descent, the evaluation map
\[
\mathbf{B}F(U) \rightarrow \mathrm{Map} \left( S_e(U),\mathrm{B}F \right)
\]
is a natural weak equivalence of simplicial sets where $\mathrm{B}F$ is a fibrant replacement of the simplicial set $\mathbf{B}F(\mathrm{pt})$. In the case the simplicial presheaf $F$ is given by some diffeological space $X$, the presheaves $\mathbf{B}X = \overline{X}$ agree. \\

The skeletal diffeology can now be used to \textit{refine} the simplicial presheaf $\overline{X}$ of diffeological space $X$.

\begin{definition} \label{Definition: The Skeletal Simplicial presheaf M_k}
    Given a diffeological space $X$ and an integer $0 \leq k < \infty$ the simplicial presheaf denoted by $\mathbb{X}_k$ is given by
    \[
    \begin{array}{rcl}
         \mathbf{Cart}^{\mathrm{op}} & \rightarrow & \mathbf{sSet}  \\
         U & \mapsto & S_e \left( D(U,X)_k \right).
    \end{array}
    \]
    In the case the diffeological space is given by a smooth manifold $M$ then the simplicial presheaf is denoted $\mathbb{M}_k$.
\end{definition}

\begin{proposition} \label{Proposition: Homology Sheaf M_k constant}
    Let $M$ be a smooth manifold and $ 0 < k \leq \mathrm{dim} \, M$. Then for $j < k$ the $j$th homology sheaf $\tilde{H}_j(\mathbb{M}_k)$ of the simplicial presheaf $\mathbb{M}_k$ is constant. Specifically, it is isomorphic to the sheafification of the constant presheaf
    \[
    U \mapsto H_j(M)
    \]
    where $H_j(M)$ is the usual singular homology of the manifold $M$.
\end{proposition}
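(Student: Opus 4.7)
The strategy is to sandwich $\mathbb{M}_k$ between $\overline{M}$ and $\mathsf{c}(S_e(M))$, using the two results from the previous section. First I would set up two natural morphisms of simplicial presheaves. The smooth identity $D(U,M)_k \to D(U,M)$ assembles into a morphism $\iota_k \colon \mathbb{M}_k \to \overline{M}$; naturality in $U$ uses the fact that precomposition by a smooth map $U' \to U$ of Cartesian spaces sends plots of dimension $\leq k$ to plots of dimension $\leq k$, hence is smooth as a map $D(U,M)_k \to D(U',M)_k$, so that $\mathbb{M}_k$ is genuinely a simplicial presheaf. The unique map $U \to \mathrm{pt}$ yields, by Corollary \ref{Corollary: Constant Homology sheaves} applied to $X = M$, a morphism $\rho \colon \mathsf{c}(S_e(M)) \to \overline{M}$.

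Next I would apply Lemma \ref{Lemma: Smooth Singular Homology X_d} to $X = D(U,M)$ with $d = k$: for each $U \in \mathbf{Cart}$ and each $j < k$, the map
$$H_j(D(U,M)_k) \xrightarrow{\cong} H_j(D(U,M))$$
is an isomorphism because smooth $j$-cycles and $j$-boundaries in $D(U,M)_k$ and $D(U,M)$ literally coincide in these low dimensions. Since this identification on chains manifestly commutes with pushforward along smooth maps between diffeological spaces, the isomorphisms are natural in $U$ and assemble into an isomorphism of presheaves of abelian groups
$$(\iota_k)_* \colon H_j(\mathbb{M}_k) \xrightarrow{\cong} H_j(\overline{M}) \quad \text{for every } j < k.$$

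Then I would invoke Corollary \ref{Corollary: Constant Homology sheaves}: $\rho$ is an objectwise weak equivalence, so it induces an isomorphism of presheaves $\rho_* \colon \mathsf{c}(H_j(M)) = H_j(\mathsf{c}(S_e(M))) \xrightarrow{\cong} H_j(\overline{M})$ for all $j \geq 0$. Composing $(\iota_k)_*^{-1}$ with $\rho_*$ produces an isomorphism of presheaves $H_j(\mathbb{M}_k) \cong \mathsf{c}(H_j(M))$ for $j < k$. Since sheafification is exact it preserves this isomorphism, yielding
$$\tilde{H}_j(\mathbb{M}_k) \cong \widetilde{\mathsf{c}(H_j(M))},$$
i.e.\ the sheafification of the constant presheaf $U \mapsto H_j(M)$. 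There is no substantial obstacle in this argument: the two cited results do all the heavy lifting, and the only step requiring care is the verification of naturality in $U$ of the comparison morphisms, which reduces to the observation that skeletal refinement commutes with precomposition by smooth maps between Cartesian spaces.
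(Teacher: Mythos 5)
Your proposal is correct and takes essentially the same route as the paper: both reduce the statement to Lemma \ref{Lemma: Smooth Singular Homology X_d} (applied to $X = D(U,M)$ with $d = k$) followed by Corollary \ref{Corollary: Constant Homology sheaves}. The only difference is that you spell out the naturality of the comparison maps, whereas the paper leans on the lemma's conclusion that the map on homology is literally the identity in degrees $j<k$, which makes naturality automatic.
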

\begin{remark}
    As introduced in Definition \ref{Definition: The Skeletal Simplicial presheaf M_k} above, the simplicial presheaf $\mathbb{M}_k$ takes the values
    \[
        \mathbb{M}_k(U) := S_e(D(U,M)_k)
        \]
        for $U \in \mathbf{Cart}$.
\end{remark}
\begin{proof}
    Denote by $H_j(\mathbb{M}_k)$ the homology presheaf associated with the simplicial presheaf $\mathbb{M}_k$. That is the presheaf of abelian groups given by
    \[
    U \mapsto H_j \left( S_e \left( D(U,M)_k \right) \right)
    \]
    By Lemma \ref{Lemma: Smooth Singular Homology X_d} it now follows that since $j < k$ for all $U$
    \[
    H_j \left( S_e \left( D(U,M)_k \right) \right) =  H_j \left( S_e \left( D(U,M) \right) \right)
    \]
    In other words, the $j$th homology presheaf of $\mathbb{M}_k$ agrees with the $j$th homology presheaf of $\overline{M}$ whenever $j < k$. It follows now from Corollary \ref{Corollary: Constant Homology sheaves} that the homology presheaf associated with $\overline{M}$ is constant for all $j < k$.
    \end{proof}

\subsection{The Derived Mapping Space}

The fact that for $j < k$ the homology sheaves $\tilde{H}_j(\mathbb{M}_k)$ are constant and agree with the usual homology of $M$ allows us to compute the cohomology $H^k_{\infty}(\mathbb{M}_k,A)$ via the universal coefficient spectral sequence of Proposition \ref{Proposition: Universal coefficient in stack cohomology}.

\begin{proposition} \label{Proposition: Universal Coefficients Theorem for M_k}
    Let $M$ be a manifold and consider the associated simplicial presheaf $\mathbb{M}_k$ on $\mathbf{Cart}$ for $k \geq 0$. Then for any sheaf of abelian groups $A$ on $\mathbf{Cart}$ there is a short exact sequence
    \[
    0 \rightarrow \mathrm{Ext}^1_{\mathbf{Ab}} \left( H_{k-1}(M), \underline{A} \right) \hookrightarrow H^k_{\infty}\left(\mathbb{M}_k , A \right) \rightarrow \mathrm{Hom}_{\mathrm{Sh}(\mathbf{Cart}, \mathbf{Ab})} \left( \tilde{H}_k(\mathbb{M}_k) , A \right) \rightarrow 0
    \]
    where here $\underline{A}$ denotes the underlying abelian group $A(\mathrm{pt})$.
\end{proposition}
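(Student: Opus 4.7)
The plan is to feed the simplicial presheaf $\mathbb{M}_k$ into the universal coefficient spectral sequence of Proposition~\ref{Proposition: Universal coefficient in stack cohomology},
\[
E_2^{p,q} \;=\; \mathrm{Ext}^q_{\mathrm{Sh}(\mathbf{Cart},\mathbf{Ab})}\bigl(\tilde{H}_p(\mathbb{M}_k),A\bigr) \;\Longrightarrow\; H^{p+q}_{\infty}(\mathbb{M}_k,A),
\]
and to show that on the total-degree $k$ diagonal it degenerates at the $E_2$ page with only two surviving entries. The first step is to combine Proposition~\ref{Proposition: Homology Sheaf M_k constant} with Corollary~\ref{Corollary: ext of constant sheaf}: for every $p<k$ the homology sheaf $\tilde{H}_p(\mathbb{M}_k)$ is the constant sheaf $\mathsf{c}(H_p(M))$, so the constant-evaluation adjunction rewrites the relevant $E_2$-entries as ordinary Ext groups of abelian groups,
\[
E_2^{p,q} \;\cong\; \mathrm{Ext}^q_{\mathbf{Ab}}\bigl(H_p(M),\underline{A}\bigr) \qquad \text{for all } p<k.
\]
Since $\mathbb{Z}$ has global dimension one, the right-hand side vanishes whenever $q\geq 2$. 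In particular, on the diagonal $p+q=k$ the only possibly non-zero $E_2$-terms are
\[
E_2^{k,0} \;=\; \mathrm{Hom}_{\mathrm{Sh}(\mathbf{Cart},\mathbf{Ab})}\bigl(\tilde{H}_k(\mathbb{M}_k),A\bigr) \quad\text{and}\quad E_2^{k-1,1} \;=\; \mathrm{Ext}^1_{\mathbf{Ab}}\bigl(H_{k-1}(M),\underline{A}\bigr),
\]
with the top-row term kept in sheaf form because no structural information on $\tilde{H}_k(\mathbb{M}_k)$ is available.

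I would then verify that both surviving terms persist unchanged to the $E_\infty$ page. Every page $E_r$ with $r\geq 2$ is a subquotient of $E_2$, so an entry that is zero on $E_2$ remains zero on every subsequent page. Using the bidegree $(-r+1,r)$ of $d_r$, consistent with the $d_2$-degree $(-1,2)$ recorded in the proof of Proposition~\ref{Proposition: Universal coefficient in stack cohomology}, every outgoing differential from $E_r^{k,0}$ or $E_r^{k-1,1}$ lands at a position with $p<k$ and $q\geq 2$, which is zero by the PID vanishing above, while every incoming differential originates in the half-plane $q<0$ and is therefore trivially zero. Hence $E_\infty^{k,0}=E_2^{k,0}$, $E_\infty^{k-1,1}=E_2^{k-1,1}$, and all other $E_\infty$-entries on the $k$-th diagonal vanish.

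The proposition then falls out of the abutment filtration of Remark~\ref{Remark: The aboutment Remark},
\[
H^k_{\infty}(\mathbb{M}_k,A)\;=\;J_{k,0}\;\supset\;J_{k-1,1}\;\supset\;J_{k-2,2}\;\supset\;\cdots\;\supset\;0,
\]
whose successive quotients $J_{p,q}/J_{p-1,q+1}\cong E_\infty^{p,q}$ are zero for $q\geq 2$. This forces $J_{k-2,2}=0$, and the two non-trivial quotients yield the short exact sequence
\[
0 \longrightarrow E_\infty^{k-1,1} \longrightarrow H^k_{\infty}(\mathbb{M}_k,A) \longrightarrow E_\infty^{k,0} \longrightarrow 0,
\]
which, upon substituting the identifications of $E_\infty^{k-1,1}$ and $E_\infty^{k,0}$, is exactly the claim. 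The only delicate point is the bookkeeping in the second paragraph, where one simultaneously checks that the PID-vanishing region covers every target of a higher differential leaving the two surviving entries and that no differential can enter them from below; everything else is a direct chase through the spectral sequence.
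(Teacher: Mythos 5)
Your proposal is correct and follows essentially the same route as the paper: feed $\mathbb{M}_k$ into the universal coefficient spectral sequence, use Proposition~\ref{Proposition: Homology Sheaf M_k constant} together with Corollary~\ref{Corollary: ext of constant sheaf} to identify $E_2^{p,q}$ with $\mathrm{Ext}^q_{\mathbf{Ab}}(H_p(M),\underline A)$ for $p<k$, invoke the global-dimension-one vanishing, check degeneration on the $p+q=k$ diagonal via the $(1-r,r)$ bidegree, and read off the short exact sequence from the abutment filtration. The only cosmetic difference is that the paper treats $k=0$ as a separate base case, whereas your first-quadrant bookkeeping together with the convention $H_{-1}(M)=0$ absorbs it automatically.
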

\begin{proof}
Recall from Proposition \ref{Proposition: Universal coefficient in stack cohomology} that the universal coefficients spectral sequence converges to $H^{*}_{\infty}(\mathbb{M}_k, A)$. The $E_2$ page of this spectral sequence reads
\[
E_2^{p,q} =  \mathrm{Ext}^q \left( \tilde{H}_p(\mathbb{M}_k) , A \right).
\]
In the case $k=0$ the statement now follows immediately, with the convention $H_{-1}(M) = 0$. For $k > 0$ we claim that the entries for $p < k, q \geq 0$ are given by
\[
E_2^{p,q} =
\begin{cases}
     \mathrm{Hom}_{\mathbf{Ab}} \left( H_{p}(M) , \underline{A} \right) &\text{ for } p < k,q=0 \\
      \mathrm{Ext}_{\mathbf{Ab}}^1 \left( H_{p}(M) , \underline{A} \right) &\text{ for } p< k,q=1 \\
      0 &\text{ for } p < k,q >1.
\end{cases}
\]
Indeed, for $p < k$ by Proposition \ref{Proposition: Homology Sheaf M_k constant} we have that $\tilde{H}_p(\mathbb{M}_k)$ is the constant sheaf given by $H_p(M)$. It then follows from Corollary \ref{Corollary: ext of constant sheaf} that
\[
\mathrm{Ext}_{\mathrm{Sh}(\mathbf{Ab})}^q \left( \tilde{H}_p(\mathbb{M}_k) , A \right) \cong \mathrm{Ext}_{\mathbf{Ab}}^q \left( H_{p}(M) , \underline{A} \right)
\]
and as such vanish for all $q \geq 2$. That is, the $E_2$ page reads \\

\begin{center}
\begin{tikzpicture}[x=0.75pt,y=0.75pt,yscale=-1,xscale=1, scale=0.95]

\draw (348,341.9) node [anchor=north west][inner sep=0.75pt]    {$\mathrm{Hom}\left(\tilde{H}_{k}(\mathbb{M}_{k}) ,A\right)$};
\draw (500.5,342.9) node [anchor=north west][inner sep=0.75pt]    {$\mathrm{Hom}\left(\tilde{H}_{k+1}(\mathbb{M}_{k}) ,A\right)$};
\draw (509,294.9) node [anchor=north west][inner sep=0.75pt]    {$\mathrm{Ext}^{1}\left(\tilde{H}_{k+1}(\mathbb{M}_{k}) ,A\right)$};
\draw (510,248.9) node [anchor=north west][inner sep=0.75pt]    {$\mathrm{Ext}^{2}\left(\tilde{H}_{k+1}(\mathbb{M}_{k}) ,A\right)$};
\draw (577,202.4) node [anchor=north west][inner sep=0.75pt]    {$\vdots $};
\draw (202,349.9) node [anchor=north west][inner sep=0.75pt]    {$\mathrm{Hom}( H_{k-1}( M) ,\underline{A})$};
\draw (201.5,305.9) node [anchor=north west][inner sep=0.75pt]    {$\mathrm{Ext}^{1}( H_{k-1}( M) ,\underline{A})$};
\draw (262,257.4) node [anchor=north west][inner sep=0.75pt]    {$0$};
\draw (258,202.4) node [anchor=north west][inner sep=0.75pt]    {$\vdots $};
\draw (56,349.4) node [anchor=north west][inner sep=0.75pt]    {$\mathrm{Hom}( H_{k-2}( M) ,\underline{A})$};
\draw (62.5,306.9) node [anchor=north west][inner sep=0.75pt]    {$\mathrm{Ext}^{1}( H_{k-2}( M) ,\underline{A})$};
\draw (130,257.4) node [anchor=north west][inner sep=0.75pt]    {$0$};
\draw (126,202.4) node [anchor=north west][inner sep=0.75pt]    {$\vdots $};
\draw (351,295.9) node [anchor=north west][inner sep=0.75pt]    {$\mathrm{Ext}^{1}\left(\tilde{H}_{k}(\mathbb{M}_{k}) ,A\right)$};
\draw (348,246.9) node [anchor=north west][inner sep=0.75pt]    {$\mathrm{Ext}^{2}\left(\tilde{H}_{k}(\mathbb{M}_{k}) ,A\right)$};
\draw (16.5,349.4) node [anchor=north west][inner sep=0.75pt]    {$\cdots $};
\draw (17.5,308.9) node [anchor=north west][inner sep=0.75pt]    {$\cdots $};
\draw (401,202.4) node [anchor=north west][inner sep=0.75pt]    {$\vdots $};

\end{tikzpicture}
\end{center}
Since the differential $d_r$ on the $E_r$ page is of degree $(1-r,r)$ it follows that the diagonal $p+q = k$ stabilizes after the $E_2$ page. That is,
\[
E_{\infty}^{p,q} =
\begin{cases}
     \mathrm{Hom} \left( \tilde{H}_k(\mathbb{M}_k) , A \right) &\text{ for } p=k,q=0 \\
      \mathrm{Ext}_{\mathbf{Ab}}^1 \left( H_{k-1}(M) , \underline{A} \right) &\text{ for } p=k-1,q=1 \\
      0 &\text{ for } p = k -q, q >1
\end{cases}
\]
from which we conclude that the filtration of Remark \ref{Remark: The aboutment Remark}
\[
H^k_{\infty}(\mathbb{M}_k,A) = J_{k,0} \supset J_{k-1,1} \supset \cdots \supset J_{0,k} \supset J_{-1,k+1} = 0,
\]
where $J_{p,q}/J_{p-1,q+1} \cong E_{\infty}^{p,q}$ simplifies to
\[
H^k_{\infty}(\mathbb{M}_k,A) = J_{k,0} \supset J_{k-1,1} \supset  J_{k-2,2} = \cdots = J_{0,k} = J_{-1,k+1} = 0.
\]
We conclude that $J_{k-1,1} =\mathrm{Ext}_{\mathbf{Ab}}^1 \left( H_{k-1}(M) , \underline{A} \right)$ and that there is a short exact sequence
\[
    0 \rightarrow \mathrm{Ext}^1_{\mathbf{Ab}} \left( H_{k-1}(M), \underline{A} \right) \hookrightarrow H^k_{\infty}\left(\mathbb{M}_k , A \right) \rightarrow \mathrm{Hom}_{\mathrm{Sh}(\mathbf{Cart}, \mathbf{Ab})} \left( \tilde{H}_k(\mathbb{M}_k) , A \right) \rightarrow 0
\]
which finishes the proof.
\end{proof}

A direct consequence in the case where $A = U(1)$ is given by the circle group is the following.

\begin{corollary} \label{Corollary: Derived Hom space in the case U(1)}
Let $M$ be a manifold and consider the associated simplicial presheaf $\mathbb{M}_k$ on $\mathbf{Cart}$ for $k \geq 0$. Then there is an isomorphism of abelian groups
\[
H^k_{\infty} \left( \mathbb{M}_k , U(1) \right) \cong \mathrm{Hom}_{\mathrm{Sh}(\mathbf{Cart}, \mathbf{Ab})} \left( \tilde{H}_k(\mathbb{M}_k) , U(1) \right).
\]
\end{corollary}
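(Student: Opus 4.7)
The plan is to reduce the corollary to a direct application of Proposition \ref{Proposition: Universal Coefficients Theorem for M_k} by specializing to $A = U(1)$ and showing that the $\mathrm{Ext}^1$ term on the left of the short exact sequence vanishes for this particular coefficient sheaf. No new spectral sequence computation is needed; the entire content is to identify a divisibility property of the circle group that causes the extension term to collapse.

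First I would specialize the short exact sequence of Proposition \ref{Proposition: Universal Coefficients Theorem for M_k} to the case where $A$ is the sheaf $U(1) = C^{\infty}(-, U(1))$ on $\mathbf{Cart}$. Evaluating at the terminal object $\mathrm{pt} \in \mathbf{Cart}$ yields the underlying abelian group $\underline{A} = A(\mathrm{pt}) = U(1)$, which is the ordinary circle group. Thus the exact sequence takes the form
\[
0 \to \mathrm{Ext}^1_{\mathbf{Ab}}\bigl(H_{k-1}(M), U(1)\bigr) \hookrightarrow H^k_{\infty}(\mathbb{M}_k, U(1)) \to \mathrm{Hom}_{\mathrm{Sh}(\mathbf{Cart}, \mathbf{Ab})}\bigl(\tilde{H}_k(\mathbb{M}_k), U(1)\bigr) \to 0.
\]

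Next I would show that the leftmost term is zero. The abelian group $U(1) \cong \mathbb{R}/\mathbb{Z}$ is divisible, and every divisible abelian group is injective in the category $\mathbf{Ab}$ by Baer's criterion. Consequently, the functor $\mathrm{Ext}^1_{\mathbf{Ab}}(-, U(1))$ is identically zero on every abelian group, in particular on $H_{k-1}(M)$. The short exact sequence therefore degenerates and the remaining surjection becomes an isomorphism of abelian groups, yielding the claimed identification
\[
H^k_{\infty}(\mathbb{M}_k, U(1)) \cong \mathrm{Hom}_{\mathrm{Sh}(\mathbf{Cart}, \mathbf{Ab})}\bigl(\tilde{H}_k(\mathbb{M}_k), U(1)\bigr).
\]
There is no real obstacle here, as all the substantive work was already carried out in establishing Proposition \ref{Proposition: Universal Coefficients Theorem for M_k}; the corollary simply records that the divisibility of $U(1)$ strips away the topological extension piece, leaving only the sheaf-theoretic $\mathrm{Hom}$-term. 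This isomorphism is exactly what will be needed in the next section to realize cohomology classes as smooth higher holonomy morphisms out of the homology sheaf $\tilde{H}_k(\mathbb{M}_k)$.
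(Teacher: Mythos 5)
Your proof is correct and follows exactly the same route as the paper: invoke the short exact sequence of Proposition \ref{Proposition: Universal Coefficients Theorem for M_k} with $A = U(1)$ and kill the $\mathrm{Ext}^1$ term by divisibility (hence injectivity) of the abelian group $U(1)$. The paper's own proof is a one-liner citing the proposition and the divisibility of $U(1)$; you have simply spelled out the same argument in more detail.
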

\begin{proof}
    This follows from Proposition \ref{Proposition: Universal Coefficients Theorem for M_k} together with the fact that $U(1)$ as an abelian group is divisible.
\end{proof}

As a sanity check for Theorem \ref{Theorem: The Main Theorem} let us look at the easiest case, that is $k = 0$. From the previous corollary, it follows that
\[
H^0_{\infty} \left( \mathbb{M}_0 , U(1) \right) \cong \mathrm{Hom}_{\mathrm{Sh}(\mathbf{Cart}, \mathbf{Ab})} \left( \tilde{H}_0(\mathbb{M}_0) , U(1) \right)
\]
By definition the simplicial presheaf $\mathbb{M}_0$ assigns to a Cartesian space $U$ the simplicial set
\[
\mathbb{M}_0(U) = S_e(D(U,M)_0),
\]
where here $D(U,M)_0$ is simply the set $C^{\infty}(U,M)$ endowed with the discrete diffeology. As such, it follows that
\[
H_0\left(\mathbb{M}_0 \right)(U) = \bigoplus_{p \in C^{\infty}(U,M)} \mathbb{Z}.
\]
This then implies that
\begin{align*}
    \mathrm{Hom}_{\mathrm{Sh}(\mathbf{Cart}, \mathbf{Ab})} \left( \tilde{H}_0(\mathbb{M}_0) , U(1) \right) &\cong \mathrm{Hom}_{\mathrm{PSh}(\mathbf{Cart}, \mathbf{Ab})} \left( H_0(\mathbb{M}_0) , U(1) \right) \\
    &\cong C^{\infty}\left( M,U(1) \right),
\end{align*}
where the group structure in the set of smooth maps into $U(1)$ is pointwise. This is precisely a degree $1$-differential character. That is, we have shown the following.

\begin{proposition} \label{Proposition: The case k = 0}
Given $M$ a smooth manifold. Then there is an isomorphism of abelian groups
    \[
    H^0_{\infty}(\mathbb{M}_0,U(1)) \cong \hat{H}^{1}(M; \mathbb{Z} ) = C^{\infty}(M,U(1)).
    \]
\end{proposition}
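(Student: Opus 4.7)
The plan is to apply Corollary \ref{Corollary: Derived Hom space in the case U(1)} at $k=0$ to reduce the cohomology computation to a hom space calculation, and then unpack the $0$-skeletal diffeology explicitly. First I would note that Corollary \ref{Corollary: Derived Hom space in the case U(1)} specializes to
\[
H^0_{\infty}(\mathbb{M}_0, U(1)) \cong \mathrm{Hom}_{\mathrm{Sh}(\mathbf{Cart},\mathbf{Ab})}\bigl(\tilde{H}_0(\mathbb{M}_0),\, U(1)\bigr).
\]
So the task is to identify the homology sheaf $\tilde{H}_0(\mathbb{M}_0)$ and then compute the hom group into $U(1)$.

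Next I would analyze the $0$-skeletal diffeology on $D(U,M)$: a plot $p \colon V \to D(U,M)_0$ must locally factor through a plot of dimension $\leq 0$, hence locally through a constant map. Thus $D(U,M)_0$ is the discrete diffeological space with underlying set $C^{\infty}(U,M)$. Its extended smooth singular complex $\mathbb{M}_0(U) = S_e(D(U,M)_0)$ is then the discrete simplicial set on $C^{\infty}(U,M)$, because any smooth map from the connected space $\mathbb{A}^n$ into a discrete diffeological space is constant. Consequently $H_0(\mathbb{M}_0)(U) = \mathbb{Z}[C^{\infty}(U,M)]$ is the free abelian group on $C^{\infty}(U,M)$, and all higher homologies vanish.

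For the hom computation I would use that $U(1)$ already lies in $\mathrm{Sh}(\mathbf{Cart},\mathbf{Ab})$ together with the sheafification adjunction to rewrite
\[
\mathrm{Hom}_{\mathrm{Sh}}(\tilde{H}_0(\mathbb{M}_0), U(1)) \cong \mathrm{Hom}_{\mathrm{PSh}}(H_0(\mathbb{M}_0), U(1)).
\]
A morphism of presheaves $U \mapsto \mathbb{Z}[C^{\infty}(U,M)]$ into the sheaf $U \mapsto C^{\infty}(U,U(1))$ is, by freeness, the same as a natural transformation of set-valued presheaves $C^{\infty}(-,M) \to C^{\infty}(-,U(1))$, which by the Yoneda lemma is the same as a smooth map $M \to U(1)$. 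This pins down the isomorphism with $C^{\infty}(M,U(1))$ (with pointwise group structure).

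To close the loop I would verify $\hat{H}^1(M;\mathbb{Z}) = C^{\infty}(M,U(1))$ directly from Definition \ref{Definition: Differential Character}: a degree $1$ character is a homomorphism $\chi \colon Z_0(M,\mathbb{Z}) \to U(1)$ together with a $1$-form $\omega_{\chi}$ satisfying $\chi(\partial c) = \exp(2\pi i \int_c \omega_{\chi})$ for every smooth $1$-chain $c$. Since $Z_0(M,\mathbb{Z})$ is freely generated by the points of $M$, such a $\chi$ is simply a set map $f \colon M \to U(1)$, and applying the curvature condition to smooth paths forces $f$ to be smooth with $df = \omega_{\chi}$; conversely, any smooth map yields such a character. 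There is no real obstacle here — this serves as a sanity check of Theorem \ref{Theorem: The Main Theorem}. The only place one must be careful is in confirming that $S_e$ applied to a discrete diffeological space is the discrete simplicial set, which relies on the connectedness of the affine simplices $\mathbb{A}^n$.
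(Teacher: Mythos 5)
Your argument follows the same route as the paper: apply Corollary \ref{Corollary: Derived Hom space in the case U(1)} at $k=0$, identify $D(U,M)_0$ as discrete so that $H_0(\mathbb{M}_0)(U)=\mathbb{Z}[C^{\infty}(U,M)]$, and conclude via the sheafification and free--forgetful adjunctions together with the fully faithful embedding $\mathbf{Diff}\hookrightarrow\mathrm{Sh}(\mathbf{Cart})$ (what you loosely call ``Yoneda'') that the hom group is $C^{\infty}(M,U(1))$. The closing verification that $\hat{H}^1(M;\mathbb{Z})=C^{\infty}(M,U(1))$ is a reasonable addition which the paper leaves implicit; just note that the curvature relation yields $2\pi i\,\omega_\chi = d\log f$ rather than $df = \omega_\chi$, since $f$ is circle-valued.
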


\section{Extracting the curvature} \label{Section: The Direct Approach}

Having reduced the cohomology $H_{\infty}^k \left( \mathbb{M}_k,U(1) \right)$ via the universal coefficients spectral sequence to
\[
\mathrm{Hom}_{\mathrm{Sh}(\mathbf{Cart},\mathbf{Ab})} \left( \tilde{H}_k(\mathbb{M}_k) ,U(1) \right)
\]
we now wish to show that the morphisms $h \colon \tilde{H}_k(\mathbb{M}_k) \rightarrow U(1)$ represent the holonomy of circle $k$-bundles with connection. Accordingly, we will call a morphism $h$ a \textit{smooth holonomy morphism} throughout this chapter.  Using the Cheeger--Simons differential characters as our preferred model for circle $k$-bundles with connection amounts to constructing an isomorphism of abelian groups
\[
\Psi \colon \hat{H}^{k+1}(M;\mathbb{Z}) \xrightarrow{\cong} \mathrm{Hom}_{\mathrm{Sh}(\mathbf{Cart},\mathbf{Ab})} \left( \tilde{H}_k(\mathbb{M}_k) ,U(1) \right)
\]
for any smooth manifold $M$ without boundary, which is deemed fixed for the remainder of this chapter. The inverse $\Phi$ of this isomorphism is then constructed by extracting a curvature $(k+1)$-form from any holonomy morphism $h \colon \tilde{H}_k(\mathbb{M}_k) \rightarrow U(1)$. \\

Let us fix a differential character of degree $k+1$ given by the pair $(\chi,\omega_{\chi})$ where $\chi \colon Z_k(M) \rightarrow U(1)$ is a morphism of abelian groups and $\omega_{\chi} \in \Omega^{k+1}(M)$ the associated curvature form.
\begin{remark} \label{Remark: chains with the subset diffeology}
Here $Z_k(M)$ denotes the abelian group of $k$-cycles in $M$ considered as a subgroup of $C_k(M)$, the free abelian group of $k$-chains in $M$. That is, $C_k(M)$ is the free abelian group
\[
\mathbb{Z} \left(S(M)_k\right)
\]
where $S(M)$ denotes the smooth singular complex as opposed to the extended smooth singular complex $S_e(M)$ used in the previous section. That is, a chain is a formal sum of smooth maps $\sigma \colon |\Delta^k| \rightarrow M$ where the compact $k$-simplex $|\Delta^k| \subset \mathbb{R}^k$ is considered as a diffeological space endowed with the subset diffeology. In Definition \ref{Definition: Differential Character}, a differential character is however based on smooth cycles considering $|\Delta^k| \subset \mathbb{R}^k$ as a closed subset and not as a diffeological space. This technicality is resolved by Lemma \ref{Lemma: Subset diffeology of compact simplex}, that is for $M$ a smooth manifold without boundary any smooth map $\sigma \colon |\Delta^k| \rightarrow M$ is smooth when considered as a map from a closed subset of $\mathbb{R}^k$ into $M$. Indeed, choose an embedding $M \hookrightarrow \mathbb{R}^m$ and consider a tubular neighborhood $(W,q)$ of $M$ in $\mathbb{R}^m$. The map $\sigma$ now extends to give a smooth map
\[
\sigma \colon |\Delta^k| \rightarrow \mathbb{R}^m.
\]
By Lemma \ref{Lemma: Subset diffeology of compact simplex} there exists an open neighborhood $U'$ of $|\Delta^k|$ in $\mathbb{R}^k$ together with a smooth extension
\[
\sigma' \colon U \rightarrow \mathbb{R}^m.
\]
Let $U := \sigma'^{-1}(W)$ and define the extension $\tilde{\sigma} := q \circ \sigma'|_U$ where $q \colon W \rightarrow M$ is the smooth contraction of the tubular neighborhood $W$ of $M$. This shows that $\sigma$ is smooth when regarding $|\Delta^k|$ as a closed subset of $\mathbb{R}^k$. \\

To distinguish the smooth singular homology of a diffeological space $X$ using the compact simplices $|\Delta^{\bullet}|$ from the singular homology introduced in the previous section using the extended simplices $\mathbb{A}^{\bullet}$, write
\[
H^{\Delta}_{*}(X) := H_* \left( S(X) \right) = H_* \left( N(\mathbb{Z}[S(X)]) \right).
\]
\end{remark}

\begin{lemma} \label{Lemma: Compact k-skeletal k-chains}
    Let $M$ be a manifold without boundary and $ 0 < k$. Then for any Cartesian space $U \in \mathbf{Cart}$ the $k$-chains taking values in $D(U,M)_k$ agree with the usual $k$-chains in $D(U,M)$,
    \[
    C_k(D(U,M)_k) = C_k(D(U,M)).
    \]
\end{lemma}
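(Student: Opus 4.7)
The inclusion $C_k(D(U,M)_k) \subseteq C_k(D(U,M))$ is automatic: since the identity map $D(U,M)_k \to D(U,M)$ is smooth, every smooth singular simplex $|\Delta^k| \to D(U,M)_k$ remains smooth when post-composed with this identity. The substantive content of the lemma is the reverse inclusion.

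My plan for the reverse inclusion is to fix a smooth singular simplex $\sigma \colon |\Delta^k| \to D(U,M)$ and produce a smooth extension to an open neighborhood of $|\Delta^k|$ in $\mathbb{R}^k$, which will then serve as a dimension-$k$ plot of $D(U,M)$ that $\sigma$ factors through. By cartesian closedness of $\mathbf{Diff}$, $\sigma$ corresponds to a smooth map $\sigma^\vee \colon |\Delta^k| \times U \to M$, where $|\Delta^k|$ carries the subset diffeology from $\mathbb{R}^k$. Following the tubular-neighborhood construction recalled in Remark~\ref{Remark: chains with the subset diffeology}---embed $M \hookrightarrow \mathbb{R}^m$, extend each coordinate of $\sigma^\vee$ by the Whitney Extension Theorem, and post-compose with the retraction $q$ of a tubular neighborhood $N$ of $M$---I would obtain a smooth extension $\tilde{\sigma}^\vee \colon W \times U \to M$ for some open neighborhood $W$ of $|\Delta^k|$ in $\mathbb{R}^k$. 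Its adjoint $\tilde{\sigma} \colon W \to D(U,M)$ is then a genuine plot of $D(U,M)$ of dimension $k$.

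With $\tilde{\sigma}$ in hand, smoothness of $\sigma$ as a map into $D(U,M)_k$ is immediate: for any plot $p \colon V \to |\Delta^k|$ of the subset diffeology, viewing $p$ as taking values in $W$ we have $\sigma \circ p = \tilde{\sigma} \circ p$, which exhibits $\sigma \circ p$ as factoring through the dimension-$k$ plot $\tilde{\sigma}$. By definition of the $k$-skeletal diffeology this makes $\sigma \circ p$ a plot of $D(U,M)_k$, so $\sigma \colon |\Delta^k| \to D(U,M)_k$ is smooth, giving the reverse inclusion and hence the equality of $k$-chains.

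The main technical subtlety I anticipate is ensuring that the Whitney--tubular construction can be realized on a product neighborhood $W \times U$ rather than on an arbitrary open neighborhood of $|\Delta^k| \times U$ in $\mathbb{R}^k \times U$; since $U$ is in general noncompact, a direct tube-lemma argument need not produce such a $W$ globally. I would address this by arguing pointwise at each $v_0 \in V$: the case $p(v_0) \in \mathrm{int}\,|\Delta^k|$ is trivial, as one may shrink $V'$ around $v_0$ so that $p(V') \subseteq W \subseteq \mathrm{int}\,|\Delta^k|$ and simply take $\tilde{\sigma}^\vee := \sigma^\vee|_{W \times U}$. For boundary points $p(v_0) \in \partial |\Delta^k|$, the extension only needs to be produced on a small $W$ around $p(v_0)$ sufficient to contain $p(V')$ for $V'$ small, and there the extension into the tubular neighborhood $N$ is guaranteed by locally controlling the Whitney extension near $p(v_0)$, leveraging that $M$ has no boundary so that the retraction $q$ is available.
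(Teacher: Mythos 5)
You correctly identified the crux, but the pointwise workaround you propose does not resolve it. Witnessing that $\sigma \circ p$ is a plot of $D(U,M)_k$ near some $v_0 \in V$ requires a plot $\psi \colon W \to D(U,M)$ of dimension $\leq k$ through which $\sigma \circ p$ factors on a neighborhood of $v_0$; unwinding the functional diffeology, $\psi$ is a smooth map $W \times U \to M$ defined on a \emph{product} with the whole of $U$ and a fixed $W \supseteq p(V')$. Localizing $W$ around a boundary point does not change the fact that the natural domain of your extension is $g^{-1}(N)$, where $g$ is the raw Whitney extension of the coordinates of $\sigma^\vee$ and $N$ is the tubular neighborhood of $M$, and when $U$ is noncompact nothing bounds how quickly the $|\Delta^k|$-normal radius of $g^{-1}(N)$ may shrink as $u \to \infty$. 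Remark~\ref{Remark: chains with the subset diffeology}, on which you lean, treats a single $|\Delta^k| \to M$ --- a compact situation where this pathology cannot occur --- and ``locally controlling the Whitney extension near $p(v_0)$'' does not supply the missing uniform control in the $U$-direction.

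The paper's proof takes a shorter route that avoids the issue entirely. Because $|\Delta^k|$ is a closed convex subset of $\mathbb{R}^k$, the nearest-point retraction $r_1 \colon \mathbb{R}^k \to |\Delta^k|$, extended trivially in the $U$-variable, provides a \emph{continuous} extension $\sigma^\vee \circ (r_1 \times \mathrm{id})$ of $\sigma^\vee$ defined on \emph{all} of $\mathbb{R}^k \times U$. Combined with the observation that $\sigma^\vee$ is smooth on the closed subset $|\Delta^k| \times U \subset \mathbb{R}^k \times U$ in the sense of local smooth extensions, this satisfies the hypotheses of the Extension Lemma for Smooth Maps \cite[Corollary 6.27]{Lee}, which produces a genuine \emph{smooth} extension $\tilde{\sigma} \colon \mathbb{R}^k \times U \to M$ defined on the whole of $\mathbb{R}^k \times U$. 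Its adjoint $\tilde{\sigma} \colon \mathbb{R}^k \to D(U,M)$ is then a single $k$-dimensional plot through which $\sigma$ factors, so the product-neighborhood problem never arises. The lesson is that the order of operations matters: rather than smooth-extend first and then struggle to control the domain, one first secures a global, merely continuous extension from convexity, and only then invokes Whitney approximation, which smooths without shrinking the domain.
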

\begin{proof}
    First it is clear that $Z_k(D(U,M)_k) \subset Z_k(D(U,M))$. Let therefore $\sigma \colon |\Delta^k| \rightarrow D(U,M)$ be a smooth $k$-chain. That is, $\sigma \colon |\Delta^k| \times U \rightarrow M$ is a smooth map of diffeological spaces. Note that Remark \ref{Remark: chains with the subset diffeology} implies that $\sigma$ is also smooth when considering $|\Delta^k| \times U \subset \mathbb{R}^k \times U$ as a closed subset. We now wish to show that $\sigma$ extends to a smooth map $\tilde{\sigma} \colon \mathbb{R}^k \rightarrow D(U,M)$. Using the extension Lemma for smooth maps \cite[Corollary 6.27]{Lee} it suffices to find a continuous extension to $\mathbb{R}^k \times U$. Since $|\Delta^k| \times U \subset \mathbb{R}^k \times U$ is a closed convex subset of some Cartesian space it is a strong deformation retract. That is, there is a continuous map $r \colon \mathbb{R}^k \times U \rightarrow |\Delta^k| \times U$ and we define the continuous extension by $\tilde{\sigma} := \sigma \circ r$. \\

    Assume therefore that $\tilde{\sigma} \colon \mathbb{R}^k \times U \rightarrow M$ is such a smooth extension of $\sigma$. Then it follows that $\sigma \colon |\Delta^k| \hookrightarrow \mathbb{R}^k \xrightarrow{\tilde{\sigma}} D(U,M)$ factors through a plot of dimension $\leq k$ and therefore indeed defines a smooth map $\sigma \colon |\Delta^k| \rightarrow D(U,M)_k$.
\end{proof}
\begin{remark}
    Note that this lemma can not be generalized to arbitrary diffeological spaces $X$. As an example consider $|\Delta^1| = [0,1] \subset \mathbb{R}$ endowed with the subset diffeology. Like the half-line $[0,\infty)$ also the interval with the subset diffeology has diffeological dimension $\mathrm{dim}([0,1]) = \infty$\footnote{The argument is essentially the same as for the half-line. For the latter see \cite{Iglesias-Zemmour-Dim}.}. Therefore, assuming $C^{\infty}(|\Delta^1|,|\Delta^1|_1) = C^{\infty}(|\Delta^1|,|\Delta^1|)$ would imply that $|\Delta^1|_1 = |\Delta^1|$. This however is a contradiction since $X = X_k \Leftrightarrow \mathrm{dim}\, X \leq k$.
\end{remark}

Given any Cartesian space $U$ for any point $u \in U$ consider the inclusion map $\mathrm{pt} \xrightarrow{u} U$ sending the one-point space to $u$. This map induces a morphism of chain complexes
\[
(-)|_{ \{ u \} } \colon C_i(D(U,M)_k) \rightarrow C_i(M_k)
\]
which then induces morphisms of homology groups
\[
\begin{array}{rcl}
(-)|_{ \{ u \} } \colon H^{\Delta}_k(D(U,M)_k) & \rightarrow & H^{\Delta}_k(M_k) \\
   {[} \sigma \colon |\Delta^{k}| \times U \rightarrow M  {]_k} & \mapsto & {[} \sigma|_{\{ u \}} \colon |\Delta^k| \rightarrow M {]_k}
\end{array}
\]
Note that to indicate that homology classes are taken with respect to the $k$-skeletal diffeology we write $[\sigma]_k$. By the nature of differential characters, i.e. the fact that they are thin-invariant,  we have that $\chi$ factors through $H_k(M_k)$ as a morphism of abelian groups. That is, there is a map $\chi'$ such that the diagram is commutative.
\begin{equation}\label{Equation: Differential Character is thin invariant}
\begin{tikzcd}
Z_k(M_k) \overset{\ref{Lemma: Compact k-skeletal k-chains}}{=} Z_k(M) \arrow[rd, "\chi"] \arrow[d, two heads] &      \\
H^{\Delta}_k(M_k) \arrow[r, "\chi'"']                            & U(1)
\end{tikzcd}
\end{equation}
Indeed, recall that a differential character $\chi$ evaluated on a boundary of a $(k+1)$-cycle $\partial c$
\[
\chi(\partial c) = \mathrm{exp} \left(2\pi i \int_c \omega \right)
\]
for some $(k+1)$-form $\omega \in \Omega^{k+1}(M)$. If, however, we consider a smooth chain $c \in C_{k+1}(M_k)$ such as $c \colon |\Delta^{k+1}| \rightarrow M_k$ then the pullback $c^*\omega$ is a differential form on $|\Delta^{k+1}|$ considered as a diffeological space with the subset diffeology. By \cite{Gurer-Iglesias-Zemmour} it follows that this is equivalent to the standard notion of differential forms on $|\Delta^{k+1}|$ considered as closed subsets. That is, they extend to a differential form on a neighborhood in $\mathbb{R}^{k+1}$. Notice, that for every plot $p \colon U \rightarrow |\Delta^{k+1}|$ the pullback $p^*(c^*\omega) = (c \circ p)^* \omega = 0$ vanishes since $c \circ p \colon U \rightarrow M_k$ locally factors through $\mathbb{R}^k$. By the definition of $c^*\omega$ as a differential form on $|\Delta^{k+1}|$ considered as a diffeological space this shows $c^*\omega = 0$. Therefore, it follows that
\[
\chi(\partial c) = \mathrm{exp} \left(2\pi i \int_{|\Delta^{k+1}|} c^*\omega \right) = 1.
\]
Accordingly, for every $[\sigma]_k \in H^{\Delta}_k(D(U,M)_k)$ there is an assignment:
\begin{equation}\label{Equation: Smooth Assignemt DiffChar }
\begin{array}{rcl}
   (h_{\chi})_U({[} \sigma {]_k}) \colon U  & \rightarrow & U(1)  \\
     u & \mapsto & \chi'({[} \sigma|_{\{u\}} {]_k})
\end{array}
\end{equation}

\begin{proposition} \label{Proposition: Holonomy morphism associated to Differential Character}
    Let $(\chi,\omega_{\chi})$ be a differential character of degree $(k+1)$ on $M$. Then the assignment $(h_{\chi})_U({[} \sigma {]})$ defined via equation (\ref{Equation: Smooth Assignemt DiffChar }) is smooth and defines a morphism $h_{\chi}$ of abelian presheaves on $\mathbf{Cart}$. That is, for every $U \in \mathbf{Cart}$
    \[
    \begin{array}{rcl}
    (h_{\chi})_U \colon H_k(\mathbb{M}_k)(U) \xrightarrow{\cong} H^{\Delta}_k(D(U,M)_k) & \rightarrow & C^{\infty}(U,U(1)) \\
    {[} \sigma {]_k} & \mapsto &  (h_{\chi})_U({[} \sigma {]_k}).
    \end{array}
    \]
\end{proposition}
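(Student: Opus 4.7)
The plan is to first verify that $\chi'$ is well-defined on classes in $H^{\Delta}_k(D(U,M)_k)$, then establish smoothness of the assignment $u \mapsto \chi'([\sigma|_{\{u\}}]_k)$ via a prism construction, and finally note that naturality in $U$ is automatic.

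For well-definedness on classes, suppose $\sigma, \sigma' \in Z_k(D(U,M)_k)$ with $\sigma - \sigma' = \partial \tau$ for some $\tau \in C_{k+1}(D(U,M)_k)$. Restriction at $u$ is a chain map, so $\sigma|_{\{u\}} - \sigma'|_{\{u\}} = \partial(\tau|_{\{u\}})$ with $\tau|_{\{u\}} \in C_{k+1}(M_k)$. The same vanishing argument that establishes \eqref{Equation: Differential Character is thin invariant} applies verbatim: any $(k+1)$-simplex factoring through the $k$-skeletal diffeology $M_k$ pulls $\omega_\chi$ back to zero, so $\chi(\partial \tau|_{\{u\}}) = \exp(2\pi i \int_{\tau|_{\{u\}}} \omega_\chi) = 1$, proving $\chi'([\sigma|_{\{u\}}]_k) = \chi'([\sigma'|_{\{u\}}]_k)$.

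For smoothness, fix $u_0 \in U$ and write $\sigma = \sum_i n_i \sigma_i$ with each $\sigma_i \colon |\Delta^k| \times U \to M$ smooth (via Cartesian closedness and Lemma \ref{Lemma: Subset diffeology of compact simplex}). Since $U$ is Cartesian, for $u$ in a neighborhood of $u_0$ take the linear path $\gamma_u(t) = (1-t)u_0 + tu$, and let $H_i \colon |\Delta^k| \times [0,1] \to M$ be given by $H_i(x,t) = \sigma_i(x, \gamma_u(t))$. Using the standard prism decomposition of $|\Delta^k| \times [0,1]$ into $(k+1)$ compact simplices, assemble the prism chain $P_u := \sum_i n_i P(H_i) \in C_{k+1}(M)$. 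The prism boundary formula together with $\partial \sigma = 0$ gives $\partial P_u = \sigma|_{\{u\}} - \sigma|_{\{u_0\}}$, so the differential character axiom yields
\[
(h_\chi)_U([\sigma]_k)(u) = \chi(\sigma|_{\{u_0\}}) \cdot \exp\!\left( 2\pi i \int_{P_u} \omega_\chi \right).
\]
The integral $\int_{P_u} \omega_\chi$ is a sum of integrals of $(x,t) \mapsto \sigma_i^*\omega_\chi$ evaluated along the smoothly varying family of simplices indexed by $u$, so it depends smoothly on $u$. This proves the assignment factors through $C^\infty(U,U(1))$. Naturality in $U$ is immediate: for $f \colon U' \to U$ smooth, both sides of $(h_\chi)_{U'}(f^*[\sigma]_k) = f^*(h_\chi)_U([\sigma]_k)$ evaluate at $u'$ to $\chi'([\sigma|_{\{f(u')\}}]_k)$. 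Additivity in $[\sigma]_k$ follows from linearity of the restriction operation and the group homomorphism property of $\chi$.

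The main obstacle is the smoothness step, and specifically the bookkeeping for the prism construction: one needs to verify that the prism decomposition yields a well-defined smooth $(k+1)$-chain depending smoothly on the parameter $u$, and that the boundary contribution from $\partial|\Delta^k| \times [0,1]$ vanishes globally because $\partial \sigma = 0$ as a chain in $D(U,M)$. Once this identification is in place, smooth dependence of the integral on $u$ follows from differentiation under the integral sign on the compact prism $|\Delta^k| \times [0,1]$.
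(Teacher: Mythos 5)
Your proof is correct, but it takes a genuinely different route from the paper on the key smoothness step. The paper localizes at $u_0$ by taking a $k$-good neighborhood $W_0$ of the compact image of $\sigma|_{\{u_0\}}$ (using Fact 2.1 of Simons--Sullivan), invoking $H^{k+1}(W_0;\mathbb{Z}) = 0$ and the differential cohomology hexagon to extract a local primitive $\theta \in \Omega^k(W_0)$ with $\chi(\sigma|_{\{u\}}) = \exp(2\pi i \int_{\sigma|_{\{u\}}} \theta)$, which is then manifestly smooth in $u$ on a small neighbourhood $V$ of $u_0$. You instead run a prism argument: using convexity of the Cartesian space $U$, the linear path $\gamma_u$ gives a homotopy between $\sigma|_{\{u_0\}}$ and $\sigma|_{\{u\}}$, and the resulting prism chain $P_u$ (with the side contributions cancelling because $\partial\sigma = 0$ and the face homotopies match) satisfies $\partial P_u = \sigma|_{\{u\}} - \sigma|_{\{u_0\}}$; the differential character axiom then gives $\chi(\sigma|_{\{u\}}) = \chi(\sigma|_{\{u_0\}}) \cdot \exp(2\pi i \int_{P_u}\omega_\chi)$ and smooth dependence of the integral on $u$ is immediate since $\omega_\chi$ is a global form and $P_u$ varies smoothly over the compact prism $|\Delta^k| \times [0,1]$.

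The tradeoffs: your argument is more elementary --- it relies only on the defining property of a differential character and standard chain-level homological algebra, avoiding both the hexagon and the $k$-good neighborhood result --- and it produces a single global formula in $u$ valid on all of $U$ rather than on a small neighborhood. The cost is precisely the bookkeeping you flag at the end (cancellation of prism side contributions when $\sigma$ is a cycle, not a single simplex), which is standard but must be checked carefully; one needs that the induced face homotopies depend only on the face as a map $|\Delta^{k-1}| \to D(U,M)$, which holds because $H_i(x,t) = \sigma_i(x,\gamma_u(t))$. You also make explicit the well-definedness of the assignment on homology classes and the naturality and additivity checks, which the paper treats implicitly (the former is covered by the paper's observation that restriction at $u$ is a chain map landing in $C_\bullet(M_k)$ together with the vanishing argument for $\omega_\chi$ on $k$-thin chains, the latter is immediate). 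One stylistic point worth noting: your formula $\chi(\sigma|_{\{u\}}) = \chi(\sigma|_{\{u_0\}})\exp(2\pi i\int_{P_u}\omega_\chi)$ is conceptually close in spirit to the curvature extraction $A_h$ carried out in Section 4.2, so the prism approach arguably fits the surrounding development more tightly than the hexagon route.
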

\begin{proof}
The natural isomorphisms $H_k(\mathbb{M}_k)(U) \cong H^{\Delta}_k(D(U,M)_k)$ are induced by the weak equivalences of simplicial sets $S_e(D(U,M)_k) \rightarrow S(D(U,M)_k)$ of Theorem \ref{Theorem: The Fundamental Theorem of Diffeological Homotopy}.
    It is left to show that given a differential character $(\chi, \omega_{\chi})$ and a fixed thin homology class $[\sigma] \in H^{\Delta}_k(D(U,M)_k)$ the assignment
    \[
    u \in U \longmapsto \chi'\left( {[} \sigma|_{\{u\}} {]} \right)
    \]
    smooth. Choose a representative of the thin class $\sigma \colon U \times |\Delta^k| \rightarrow M$ and notice that it follows from equation (\ref{Equation: Differential Character is thin invariant}) that for all $u \in U$
    \[
   \chi'\left( {[} \sigma|_{\{u\}} {]} \right) = \chi \left(  \sigma|_{\{u\}} \right)
    \]
    Let now $u_0 \in U$ be any point, then the compact image of the smooth $k$-cycle $\sigma|_{\{u_0\}} \colon |\Delta^k| \rightarrow M$ in $M$ admits a neighborhood $W_0$ whose integral cohomology vanishes above $k$. Such a neighborhood is called a $k$\textbf{-good neighborhood}. The existence of such a $k$-good neighborhood of the compact image of  $\sigma|_{\{u_0\}}$ follows from Fact 2.1 of Simons--Sullivan \cite{Simons-Sullivan}. In addition the preimage $\sigma^{-1}(W_0)$ is an open neighborhood of $\{ u_0 \} \times |\Delta^{k}|$ inside $U \times |\Delta^{k}|$ hence we can assume by the compactness of $|\Delta^{k}|$, that it contains an open of the form $V \times |\Delta^{k}|$ where $V$ is a suitable neighborhood of $u_0$ in $U$. Denote by $i \colon W_0 \hookrightarrow M$ the inclusion of the $k$-good neighborhood in $M$. Pulling back the differential character $\chi$ along the inclusion $i$ we obtain $i^*\chi \in \hat{H}^{k+1}(W_0, \mathbb{Z})$. Since the neighborhood $W_0$ is $k$-good, i.e. $H^{k+1}(W_0, \mathbb{Z}) = 0$, it follows from the differential cohomology hexagon that $i^*\chi = j([\theta])$ for some class $[\theta] \in \Omega^k(W_0)/\Omega^k_{\mathrm{cl}}(W_0)_{\mathbb{Z}}$. That is, for all $\tau \in Z^{\Delta}_k(W_0)$
    \[
    i^*\chi(\tau) = \mathrm{exp} \left( 2\pi i \cdot \int_\tau  \theta \right)
    \]
    where $\theta \in \Omega^{k}(W_0)$ some $k$-form determined up to an element of $\Omega^k_{\mathrm{cl}}(W_0)_{\mathbb{Z}}$. \\

    This now shows that for any $u \in V$ we have that
    \[
    \chi\left( \sigma|_{\{ u \}}\right) = i^*\chi\left( \sigma|_{\{ u \}} \right) = \mathrm{exp} \left( 2\pi i \cdot \int_{\sigma|_{\{ u \}}} \theta \right)
    \]
    which shows that locally around the point $u_0$ the assignment
    \[
    u \in V \longmapsto \chi\left( \sigma|_{\{ u \}} \right) = \chi'\left( {[} \sigma|_{\{u\}} {]} \right) \in U(1)
    \]
    is indeed smooth. Since this holds for every point $u_0 \in U$, the statement follows.
\end{proof}

An immediate outcome of the preceding discourse is the following corollary.

\begin{corollary} \label{Corollary: The morphism Phi}
    For any smooth manifold $M$ without boundary and $k > 0$ there is a homomorphism of abelian groups
    \[
    \begin{array}{rcl}
    \Phi \colon \hat{H}^{k+1}(M;\mathbb{Z}) & \rightarrow & \mathrm{Hom}_{\mathrm{PSh}(\mathbf{Cart},\mathbf{Ab})} \left( \tilde{H}_k(\mathbb{M}_k) ,U(1) \right) \\
        (\chi,\omega) & \mapsto & h_{\chi}.
    \end{array}
    \]
\end{corollary}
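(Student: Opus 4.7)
The statement bundles together facts that have essentially been proved; the proof is therefore a verification that the assignment $(h_{\chi})_U$ from Proposition~\ref{Proposition: Holonomy morphism associated to Differential Character} assembles into a morphism of presheaves of abelian groups, and that $\chi \mapsto h_{\chi}$ respects the abelian group structures. The plan is to first reduce the target to the presheaf level, then verify naturality in $U$, and finally check additivity.

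First, since $U(1)$ (regarded as the sheaf $C^{\infty}(-,U(1))$) is already a sheaf of abelian groups, the universal property of sheafification gives a canonical isomorphism
\[
\mathrm{Hom}_{\mathrm{PSh}(\mathbf{Cart},\mathbf{Ab})}\bigl(\tilde{H}_k(\mathbb{M}_k),U(1)\bigr) \cong \mathrm{Hom}_{\mathrm{PSh}(\mathbf{Cart},\mathbf{Ab})}\bigl(H_k(\mathbb{M}_k),U(1)\bigr),
\]
so it suffices to produce a morphism of presheaves out of the homology presheaf $H_k(\mathbb{M}_k)$. Proposition~\ref{Proposition: Holonomy morphism associated to Differential Character} already provides, for each $U \in \mathbf{Cart}$, a homomorphism of abelian groups $(h_{\chi})_U \colon H_k(\mathbb{M}_k)(U) \to C^{\infty}(U,U(1))$, using the natural isomorphism $H_k(\mathbb{M}_k)(U) \cong H^{\Delta}_k(D(U,M)_k)$ coming from Theorem~\ref{Theorem: The Fundamental Theorem of Diffeological Homotopy}; all that is missing is the verification of functoriality and additivity.

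Next, I would check naturality. Given a smooth map $f \colon V \to U$ in $\mathbf{Cart}$, the induced map $f^* \colon D(U,M)_k \to D(V,M)_k$ is smooth and induces the map on smooth singular homology corresponding to the presheaf action of $f$. For a representative $\sigma \colon U \times |\Delta^k| \to M$, the restriction identity $(f^*\sigma)|_{\{v\}} = \sigma|_{\{f(v)\}}$ holds tautologically in $C_k(M)$, so
\[
(h_{\chi})_V\bigl([f^*\sigma]_k\bigr)(v) \;=\; \chi'\bigl([\sigma|_{\{f(v)\}}]_k\bigr) \;=\; \bigl((h_{\chi})_U([\sigma]_k) \circ f\bigr)(v),
\]
which is exactly the naturality square for $h_{\chi} \colon H_k(\mathbb{M}_k) \to U(1)$. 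The identification $H_k(\mathbb{M}_k)(U) \cong H^{\Delta}_k(D(U,M)_k)$ from Theorem~\ref{Theorem: The Fundamental Theorem of Diffeological Homotopy} is itself natural in $U$, so pairing it with the above equality yields a well-defined natural transformation.

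Finally, for the additivity of $\Phi$, recall that the group operation on $\hat{H}^{k+1}(M;\mathbb{Z})$ is pointwise multiplication of characters in $U(1)$, and likewise on $C^{\infty}(U,U(1))$. Given characters $(\chi_1,\omega_1)$ and $(\chi_2,\omega_2)$, the factorized map $\chi'$ in diagram \eqref{Equation: Differential Character is thin invariant} satisfies $(\chi_1 \cdot \chi_2)' = \chi_1' \cdot \chi_2'$, so for each $U$, each $[\sigma]_k$, and each $u \in U$,
\[
(h_{\chi_1 \cdot \chi_2})_U([\sigma]_k)(u) \;=\; \chi_1'([\sigma|_{\{u\}}]_k)\cdot\chi_2'([\sigma|_{\{u\}}]_k) \;=\; (h_{\chi_1})_U([\sigma]_k)(u)\cdot(h_{\chi_2})_U([\sigma]_k)(u).
\]
Thus $\Phi(\chi_1 \cdot \chi_2) = \Phi(\chi_1)\cdot\Phi(\chi_2)$ as morphisms of presheaves, finishing the proof. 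The only potentially delicate point — genuine smoothness of the assignment $u \mapsto \chi'([\sigma|_{\{u\}}]_k)$ — has already been handled in Proposition~\ref{Proposition: Holonomy morphism associated to Differential Character} via the existence of $k$-good neighbourhoods and the differential cohomology hexagon, so no new obstacle arises here.
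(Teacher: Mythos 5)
Your proof is correct and takes the same route as the paper: the paper states this corollary as ``an immediate outcome of the preceding discourse,'' delegating everything to Proposition~\ref{Proposition: Holonomy morphism associated to Differential Character}, which already asserts that $(h_\chi)_U$ assembles into a morphism of abelian presheaves but only proves smoothness in its own proof. Your verification of naturality in $U$ via $(f^*\sigma)|_{\{v\}} = \sigma|_{\{f(v)\}}$, and of additivity via the uniqueness of the factorization $(\chi_1\cdot\chi_2)' = \chi_1'\cdot\chi_2'$ through the surjection $Z_k(M_k)\twoheadrightarrow H_k^{\Delta}(M_k)$, spells out exactly the routine checks the paper suppresses, and your reduction from $\tilde H_k$ to $H_k$ via the sheafification adjunction matches the identification the paper uses later in the proof of Theorem~\ref{Theorem: The Main Theorem}.
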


\subsection{The Van Est homomorphism for Lie Groupoids} \label{Section: Van Est Map}
Before proceeding further with the extraction of the curvature form $A_h$ from a smooth holonomy morphism $h$, we take a short detour and introduce the Van Est map for Lie groupoids\footnote{For an introduction to Lie groupoids and Lie algebroids see \cite{Moerdijk-Mrcun}.}. The cause is that the Van Est homomorphism turns out to be an important tool to prove a key result in the next section, specifically Lemma \ref{Lemma: Curvature form gives differential character}.

\begin{definition}
    Let $X$ be a manifold. Denote by $\mathrm{Pair}(X)$ the \textbf{pair Lie groupoid of} $X$ given by $ X \times X \rightrightarrows X$ with source map $s(x',x) = x$ and target map $t(x',x) = x'$. The units are given by the inclusion of the diagonal $X \hookrightarrow X \times X$ and the multiplication is simply given by $(x_1',x_1)(x_2',x_2) = (x_1',x_2)$ whenever $x_1 = x_2'$.
\end{definition}

\begin{definition}[\cite{Weinstein-Xu} Definition 1.2] \label{Definition: Smooth normalized Cochains}
    Let $(\Gamma \rightrightarrows X,s,t)$ be a Lie groupoid. Denote by $\Gamma^{(n)}$ the iterated fiber product
    \[
    \Gamma^{(n)} = \Gamma \tensor[_{s}]{\times}{_{t}} \cdots \tensor[_{s}]{\times}{_{t}} \Gamma
    \]
    of $n$-tuples of composable morphisms in $\Gamma$. Let $C^n(\Gamma,\mathbb{R})_X$ be the space of all maps $\sigma \colon \Gamma^{(n)} \rightarrow \mathbb{R}$ such that $\sigma$ is smooth in a neighborhood of the diagonal \newline $X^{(n)} = \left\{ ( i(x), ..., i(x) ) \, | x \in X \right\} \subset \Gamma^{(n)}$ and such that $\sigma(g_1, ..., g_n) = 0$ if some $g_j = i(x)$ is a unit for some $x \in X$. The elements of the space $C^n(\Gamma, \mathbb{R})_X$ are called \textbf{normalized} $n$\textbf{-cochains on the Lie groupoid} $\Gamma \rightrightarrows X$. There exists a differential
    \[
    \delta \colon C^n(\Gamma, \mathbb{R})_X \rightarrow C^{n+1}(\Gamma, \mathbb{R})_X
    \]
    turning the pair $(C^{\bullet}(\Gamma, \mathbb{R})_X, \delta)$ into a cochain complex. It is given by
    \begin{align*}
        \sigma \in C^n(\Gamma, \mathbb{R})_X  \mapsto \delta(\sigma)(g_0, ..., g_n) := & \sigma(g_1, ..., g_n) + \sum_{i = 1}^{n} (-1)^i \sigma(g_0, ..., g_{i-1}g_i, ..., g_n) \\
        &+ (-1)^{n+1} \sigma(g_0, ..., g_{n-1}).
    \end{align*}
\end{definition}
\begin{example}
    In the case of the pair groupoid $\mathrm{Pair}(X)$ for some manifold $X$, the associated complex of normalized $n$-cochains $C^n(\mathrm{Pair}(X), \mathbb{R})_X$ is given by maps of the form $\sigma \colon X^{n+1} \rightarrow \mathbb{R}$ which are smooth in a neighborhood around the diagonal $X \hookrightarrow X^{n+1}$ and are such that $\sigma(x_0, ..., x_{n}) = 0$ whenever $x_i = x_{i-1}$ for some $1 \leq i \leq n$. This complex is also known as the Alexander--Spanier complex\footnote{To be precise, this complex is the \textit{normalized subcomplex} of the Alexander--Spanier complex.} of $X$, see \cite[Example 1]{Li-Bland-Meinrenken}.
\end{example}

Given a Lie groupoid $(\Gamma \rightrightarrows X,s,t)$ we wish to define an action of the symmetric group $S_n$ on the manifold $\Gamma^{(n)}$. However, we can not simply act by permutation of the components, since permutations do not respect the source-target conditions imposed on the tuples. To resolve this issue, we introduce a slight variation of the iterated fiber product. \\

To any Lie groupoid we associate the smooth manifold $\Gamma_s^{(n)}$ given by the iterated fiber product
\[
    \Gamma_s^{(n)} := \underbrace{\Gamma \tensor[_{s}]{\times}{_{s}} \cdots \tensor[_{s}]{\times}{_{s}} \Gamma}_{n\text{-times}}
\]
only with respect to the source map. Notice that $\Gamma^{(n)}$ and $\Gamma_s^{(n)}$ are diffeomorphic via
\[
\begin{array}{rcl}
    \varphi \colon \Gamma^{(n)} & \rightarrow & \Gamma_s^{(n)} \\
    (g_1,g_2,...,g_n) & \mapsto & (g_1,g_1g_2, ..., g_1g_2 \cdots g_n).
\end{array}
\]
The manifold $\Gamma_s^{(n)}$ now carries an $S_n$-action simply given by the permutation of components. That is, for $(g_1, ..., g_n) \in \Gamma_s^{(n)}$ and $s \in S_n$ we define
\[
    s \cdot (g_1, ..., g_n) := (g_{s(1)}, ..., g_{s(n)}).
    \]
This action can now be used together with the diffeomorphism $\varphi$ to define an action on the manifold $\Gamma^{(n)}$ simply by
\[
s \cdot (g_1, ..., g_n) := \varphi^{-1}(s \cdot \varphi(g_1, ..., g_n)).
\]
This in turn defines an $S_n$-action on $n$-cochains such that for any $\sigma \in C^n(\Gamma,\mathbb{R})_X$ and $s \in S_n$
\begin{equation} \label{Equation: S_n action on n-cochains}
    (s \cdot \sigma)(g_1, ...,g_n) := \sigma\left( s \cdot (g_{1}, ..., g_{n}) \right).
\end{equation}

\begin{remark}
    In the case we are given the pair Lie groupoid $\Gamma:= \mathrm{Pair}(X)$ the $S_n$-action on
    \[
        \Gamma^{(n)} \cong X^{n+1}
    \]
     is simply given by permutations of the elements in $X^{n+1}$ fixing the first component.
\end{remark}

\begin{definition}\label{Definition: S_n antisymmetric cochain}
 An $n$-cochain $\sigma \in C^n(\Gamma, \mathbb{R})_X$ is said to be $S_n$\textbf{-antisymmetric} if for all $\sigma \in S_n$ the identity $s \cdot \sigma = \mathrm{sgn}(s) \sigma$ holds.
\end{definition}

A similar complex can be associated with a Lie algebroid called the Chevalley--Eilenberg complex introduced in Definition \ref{Definition: Chevalley-Eilenberg complex}. First, however, we briefly recall the notion of a Lie algebroid.

\begin{definition}
    A \textbf{Lie algebroid} is given by a triple $(\mathfrak{g}, [-,-],\alpha)$ consisting of the following data.
    \begin{enumerate}[label={(\arabic*)}]
        \item A vector bundle  $\mathfrak{g} \rightarrow X$ over some manifold $X$,
        \item a vector bundle map $\alpha \colon \mathfrak{g} \rightarrow TX$ called the \textbf{anchor},
        \item and a Lie bracket on the space of sections $\Gamma(\mathfrak{g})$,
    \end{enumerate}
   such that the anchor $\alpha$ induces a Lie algebra homomorphism from the Lie algebra of section $\Gamma(\mathfrak{g})$ to the Lie algebra of vector fields of $X$. Moreover for all $f \in C^{\infty}(X)$ we have that
   \[
   [Y,fZ] = f[Y,Z] + \alpha(Y)(f)Z
   \]
   for all sections $Y,Z \in \Gamma(\mathfrak{g})$.
\end{definition}

\begin{example} \label{Example: Tangent Lie algebroid}
    The most important example of a Lie algebroid for our purpose is given by the tangent bundle $TX \rightarrow X$ with anchor map the identity and Lie bracket given by the usual bracket of vector fields on $X$.
\end{example}

Like to any Lie group $G$ we associate the Lie algebra $\mathrm{Lie}(G)$ given by all smooth left-invariant vector fields on $G$ we can associate to any Lie groupoid $(\Gamma \rightrightarrows X)$ a Lie algebroid $(\mathfrak{g}, [-,-],\alpha)$ known as the \textbf{associated Lie algebroid}. A detailed construction of this associated Lie algebroid is presented in \cite[Section 6.1]{Moerdijk-Mrcun}.

\begin{definition} \label{Definition: Chevalley-Eilenberg complex}
    Given a Lie algebroid $(\mathfrak{g}, [-,-],\alpha)$ the \textbf{Chevalley--Eilenberg complex} of $\mathfrak{g}$ is given by the graded differential algebra
    \[
    C^{\bullet}(\mathfrak{g}, \mathbb{R}) := \Gamma \left( \Lambda^{\bullet}\mathfrak{g}^* \right)
    \]
    where $\mathfrak{g}^*$ denotes the associated dual vector bundle over the base manifold $X$, with differential
    \[
    d_{\mathrm{CE}} \colon  C^{\bullet}(\mathfrak{g}, \mathbb{R}) \rightarrow  C^{\bullet +1}(\mathfrak{g}, \mathbb{R}).
    \]
    It is given by
        \begin{align*}
        \phi \in C^{n}(\mathfrak{g}, \mathbb{R}) \mapsto d_{\mathrm{CE}}(\phi)(X_0,...,X_n)  := & \sum_{i = 0}^{n} (-1)^i \alpha(X_i) \phi(X_0, ..., \hat{X}_i, ...,X_n) \\
        &+ \sum_{i < j} (-1)^{i+j} \phi ([X_i,X_j],X_0, ..., \hat{X}_i, ..., \hat{X}_j, ..., X_n).
    \end{align*}
\end{definition}

The Van Est homomorphism then provides a morphism of cochain complexes from the Lie groupoid cochain complex to the Chevalley--Eilenberg complex of its associated Lie algebroid $\mathfrak{g} = \mathrm{Lie}(\Gamma \rightrightarrows X)$.
\[
VE \colon C^{\bullet}(\Gamma, \mathbb{R})_X \rightarrow C^{\bullet}(\mathfrak{g}, \mathbb{R})
\]
Specifically, for the pair Lie groupoid $\mathrm{Pair}(X)$ of some smooth manifold $X$, the Van Est homomorphism defines a map
\[
VE \colon C^{\bullet}(\mathrm{Pair}(X), \mathbb{R})_X \rightarrow \Omega^{\bullet}(X)
\]
to the cochain complex of differential forms on $X$. This follows from the fact that the associated Lie algebroid $\mathrm{Lie} \left( \mathrm{Pair}(X) \rightrightarrows X \right)$ to the pair Lie groupoid is the Lie algebroid of Example \ref{Example: Tangent Lie algebroid}, see \cite[Example 6.3]{Moerdijk-Mrcun}. The associated Chevalley--Eilenberg complex is then simply given by the de Rham complex $(\Omega^{\bullet}(X), d)$ on $X$.

\begin{definition}[\cite{Weinstein-Xu}]
    Let $(\Gamma \rightrightarrows X)$ be a Lie groupoid and $(\mathfrak{g}, [-,-],\alpha)$ its associated Lie algebroid. Define the \textbf{Van Est homomorphism} to be the map
    \[
    \begin{array}{rcl}
       VE \colon C^n(\Gamma, \mathbb{R})_X & \rightarrow & C^n(\mathfrak{g}, \mathbb{R}) \\
    \sigma & \mapsto & VE(\sigma)
    \end{array}
    \]
    where for any $X_1, ..., X_n \in \Gamma(\mathfrak{g})$ and all $x \in X$:
    \[
    VE(\sigma)(X_1, ..., X_n)(x) = \sum_{s \in S_n} (-1)^{\mathrm{sgn}(s)} (X_{s(1)} ...,X_{s(n)} \sigma)(x)
    \]
    \end{definition}
    \begin{remark} \label{Remark: Formula of the Van Est morphism}
        The notation $(X_1 ... X_n \sigma)(x)$ denotes the function on $X$ given as follows. Recall that $\sigma \colon \Gamma^{(n)} \rightarrow \mathbb{R}$ is a map that is smooth in a neighborhood around the diagonal. By fixing the first $(n-1)$-variables $(g_1, ..., g_{n-1}) \in \Gamma^{(n)}$ we can consider $\sigma$ as a function of the variable $g_n$ alone defined on the fiber over $(g_1, ..., g_{n-1})$. Now apply $X_n$ to this function and evaluate it at $g_n = s(g_{n-1})$. This defines a function on $\Gamma^{(n-1)}$. By repeating this process $n$-times one obtains a function on $X$ which is denoted by $(X_1 ... X_n \sigma)(x)$.
    \end{remark}

Let us have a closer look at the Van Est homomorphism in the case of the pair Lie groupoid $\mathrm{Pair}(X)$. As already mentioned before, the cochain complex is given by maps of the form
\[
\sigma \colon X^{n+1} \rightarrow \mathbb{R}
\]
that are smooth in a neighborhood of the diagonal. The associated Lie algebroid is then the tangent bundle $TX \rightarrow X$ with the standard Lie bracket of vector fields. Identify $\Omega^n(X)$ with the vector space of alternating $C^{\infty}(X)$-multilinear functions
\[
\underbrace{\mathfrak{X}(X) \times \cdots \times \mathfrak{X}(X)}_{n\text{-times}} \rightarrow C^{\infty}(X).
\]
Given an $n$-cochain $\sigma$ and vector fields $X_1, ..., X_n \in \mathfrak{X}(X)$  denote by $(X_1 ... X_n \sigma)$ the real-valued function on $X$ defined as follows. \\

For $n = 1$, we have $\sigma \colon X \times X \rightarrow \mathbb{R}$ smooth in a neighborhood of the diagonal. Given $X_1$ a vector field define
\begin{equation*}
    (X_1\sigma)(x) := X_1(\sigma(x,-))(x).
\end{equation*}
That is, for any fixed $x \in X$ consider $\sigma(x,-)$ as a function of one variable $X \rightarrow \mathbb{R}$ that is smooth in a neighborhood of $x$. Now apply the vector field $X_1$ to this function, which yields a smooth\footnote{By smooth we mean here smooth in a neighborhood of $x \in X$.} function $X_1(\sigma(x,-)) \colon X \rightarrow  \mathbb{R}$ which now we evaluate at $x$. \\

For $n =2$ consider now two vector fields $X_1,X_2$ on $X$ and let $(x_0,x_1) \in X \times X$ be fixed. Then
\[
X_2(\sigma(x_0,x_1, -)) \colon X \rightarrow \mathbb{R}
\]
which now by evaluating at $x_1$ gives the map
\[
\begin{array}{rcl}
  (X_2\sigma) \colon   X \times X & \rightarrow & \mathbb{R}  \\
     (x_0,x_1) & \mapsto & X_2(\sigma(x_0,x_1, -))(x_1)
\end{array}
\]
which for the moment we call $(X_2\sigma)$. Now we iterate the process to define the smooth map
\[
\begin{array}{rcl}
     (X_1 X_2 \sigma) \colon X & \rightarrow   & \mathbb{R} \\
    x & \mapsto & X_1\left( (X_2\sigma)(x,-) \right)(x) = \textcolor{blue}{X_1} \left( \textcolor{red}{X_2}(\sigma(x, \textcolor{blue}{-}, \textcolor{red}{-}) ) \right)(x)
\end{array}
\]
where the vector field $\color{red}X_2$ acts on the last variable of $\sigma(x, \textcolor{blue}{-}, \textcolor{red}{-})$ and $\color{blue}X_1$ on the first. Similarly one has
\[
(X_1...X_n \sigma)(x) = X_1 \left( \cdots X_n(\sigma(x,-,...,-) ) \cdots \right)(x).
\]

\vspace{3mm}

\begin{theorem}[\cite{Lackman}] \label{Theorem: Riemann Integral}
    Let $X$ be an oriented $n$-dimensional compact manifold possibly with boundary and corners. Let $\omega$ be an $n$-form on $X$ and consider $\Omega$ a normalized, $S_n$-antisymmetric $n$-cochain such that $VE(\Omega) = \omega$. For any smooth oriented triangulation $(K,f)$ of $X$ denote by $S_{\leq(K)}$ the simplicial set associated with $K$. Define the Riemann sum subordinate to $(K,f)$
    \[
    S_{(K,f)}(\omega) := \sum_{\sigma \in S_{\leq}(K)_n } (i_f)^*\Omega(\sigma).
    \]
    Then the Riemann integral agrees with the usual integral, that is
    \[
    \int_X \omega = \underset{(K,f)}{\mathrm{lim}} \, S_{(K,f)}(\omega)
    \]
    where the limit is taken over the directed set of oriented triangulations $(K,f)$ of $X$ ordered by subdivision.
\end{theorem}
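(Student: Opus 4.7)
The plan is to establish the convergence one simplex at a time by comparing, in local coordinates around a chosen vertex, the cochain value $\Omega$ with the integral of $\omega$ up to an error of order $\mathrm{diam}(\sigma)^{n+1}$, and then summing over a sufficiently fine triangulation.

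First I would prove a local approximation formula: in a chart about $x_0 \in X$,
\[
\Omega(x_0,\,x_0+v_1,\ldots,x_0+v_n) \;=\; \tfrac{1}{n!}\,\omega_{x_0}(v_1,\ldots,v_n) \,+\, O(|v|^{n+1}).
\]
This follows from a Taylor expansion of $\Omega(x_0,y_1,\ldots,y_n)$ around the diagonal $y_1=\cdots=y_n=x_0$. The normalization condition forces $\Omega$ to vanish whenever $y_i=y_{i-1}$, and $S_n$-antisymmetry extends this (via transpositions) to the vanishing whenever any two of $y_0=x_0,y_1,\ldots,y_n$ coincide. Hence every surviving Taylor monomial carries at least one factor of $v_i$ for every $i$, so the leading term is a multilinear alternating form $T(v_1,\ldots,v_n)$ of total degree $n$. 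Substituting into the Van Est formula, the alternation of $T$ makes the partial derivative in slot $i$ along $v_{s(i)}$ yield $T(v_{s(1)},\ldots,v_{s(n)})=(-1)^s T(v_1,\ldots,v_n)$, so all $n!$ antisymmetrization terms add constructively: $\omega_{x_0}=VE(\Omega)=n!\cdot T$, which identifies $T=\omega_{x_0}/n!$.

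Next, a direct Taylor expansion of $\int_{\sigma_{x_0}}\omega$ over the linearly parametrized simplex $\sigma_{x_0}=[x_0,x_0+v_1,\ldots,x_0+v_n]$ yields the same leading term $\tfrac{1}{n!}\omega_{x_0}(v_1,\ldots,v_n)$ with an $O(|v|^{n+1})$ remainder. Subtracting gives a per-simplex discrepancy bounded by $C\cdot\mathrm{vol}(\sigma_{x_0})\cdot\mathrm{diam}(\sigma_{x_0})$, where $C$ is controlled by finitely many derivatives of $\omega$ and of $\Omega$ in a fixed neighborhood of the diagonal. To globalize, I would cover $X$ by finitely many charts adapted to $\partial X$ (with corners); by compactness a common constant $C$ works in all of them. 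Once the mesh of $(K,f)$ is small enough that each image $f(\sigma)$ lies in a single chart, one replaces $f(\sigma)$ by the linear simplex through its vertices (the correction is higher order in $\mathrm{diam}(f(\sigma))$ because $f$ is smooth) and applies the per-simplex estimate. Summation yields
\[
\Bigl|\int_X\omega \,-\, S_{(K,f)}(\omega)\Bigr| \;\leq\; C\cdot\mathrm{vol}(X)\cdot\mathrm{mesh}(K,f),
\]
and the right-hand side tends to $0$ as we refine along the directed set of oriented triangulations.

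The delicate step will be the local approximation: extracting from the combined structure (normalization plus $S_n$-antisymmetry) exactly the vanishing needed to make the leading Taylor coefficient an alternating $n$-form, and then carrying out the combinatorial collapse of the $n!$ Van Est terms to match the factor $1/n!$. By contrast, the per-simplex error bound and the compactness-based globalization are largely routine, with the one technical caveat that the triangulation must be smooth and adapted to $\partial X$ so that no top simplex straddles the boundary or a corner stratum --- this is guaranteed by the hypotheses on $(K,f)$.
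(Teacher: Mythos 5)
The paper does not prove this theorem: it is stated with the attribution to \cite{Lackman}, and only the surrounding remark unpacks the notation, so there is no in-paper argument to compare against. On its own terms, your local analysis is correct: combining normalization with $S_n$-antisymmetry shows $\Omega$ vanishes whenever any two of $x_0, y_1, \dots, y_n$ coincide, which kills every Taylor monomial of total degree below $n$, identifies the degree-$n$ part as a multilinear alternating form $T(v_1, \dots, v_n)$, and feeding $T$ into the Van Est antisymmetrization produces $n!$ equal terms, yielding $T = \omega_{x_0}/n!$.

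The genuine gap is the per-simplex error bound and its globalization. You assert the discrepancy is $O\bigl(\mathrm{vol}(\sigma)\cdot\mathrm{diam}(\sigma)\bigr)$. This holds for the integral side, where $\int_\sigma(\omega - \omega_{x_0})$ is controlled by the oscillation of $\omega$ over the support. But the $\Omega$-side Taylor remainder you actually produced is only $O\bigl(\mathrm{diam}(\sigma)^{n+1}\bigr)$, which is strictly weaker when $\sigma$ is flat. Concretely for $n=2$: the function $R(v_1,v_2) := v_1^{(1)} v_2^{(1)}\bigl(v_1^{(1)}-v_2^{(1)}\bigr)$ is degree-$3$ homogeneous, $S_2$-antisymmetric, normalized, and has vanishing Van Est image, so $\Omega := T + R$ is a legitimate antiderivative of $\omega$; yet taking $v_1=(1,0)$, $v_2=(1/2,\epsilon)$ gives $R = 1/4$ while $\mathrm{vol}(\sigma)\cdot\mathrm{diam}(\sigma) = O(\epsilon)$. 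Thus the $\mathrm{vol}\cdot\mathrm{diam}$ bound fails on the $\Omega$-side, and without it the globalization collapses: the directed set ranges over all subdivisions of a fixed $(K_0,f_0)$, including arbitrarily flat ones, and the honest bound $\sum_\sigma C\,\mathrm{diam}(\sigma)^{n+1}$ need not decay --- for iterated barycentric subdivision each step multiplies the simplex count by $(n+1)!$ while shrinking diameters by only the factor $n/(n+1)$, and $(n+1)!\,\bigl(n/(n+1)\bigr)^{n+1} > 1$ for $n\geq 2$. A correct proof must therefore either extract a genuine $\mathrm{vol}(\sigma)$ factor from the structure of $\Omega$ (the vanishing loci $\{v_i=0\}$ and $\{v_i=v_j\}$ only yield a bound of order $\prod_i|v_i|\cdot\mathrm{diam}$, still much larger than $\mathrm{vol}\cdot\mathrm{diam}$ for flat simplices), or replace the one-simplex-at-a-time absolute-value estimate by a signed-cancellation argument exploiting that adjacent simplices share faces with opposite induced orientations. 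Your proposal does neither, so the asserted convergence over the full directed set of subdivisions is not established.
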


\begin{remark} $ $
\begin{enumerate}
    \item Recall that a \textbf{net} in a topological space $X$ is a function $f \colon I \rightarrow X$ where $(I,\leq)$ is a directed set. Given some $x \in X$, we say that the net $f$ \textbf{converges to} $x$ provided for each neighborhood $U$ of $x$, there is some $i_0 \in I$ such that for all $i_0 \leq i$ we have $f(i) \in U$. Since oriented triangulations form a directed set with respect to subdivision, the Riemann sum provides a net valued in $\mathbb{R}$
    \[
    S_{(-)}(\omega) \colon (K,f) \mapsto S_{(K,f)}(\omega) \in \mathbb{R}.
    \]
    Then the statement
    \[
      \int_X \omega = \underset{(K,f)}{\mathrm{lim}} \, S_{(K,f)}(\omega)
    \]
    is interpreted as, the net $S_{(-)}(\Omega)$ converges to the value $\int_X \omega$ in $\mathbb{R}$.

    \item A smooth triangulation of a manifold $X$ is a pair $(K,f)$ where $K$ is a simplicial complex and $f \colon |K| \rightarrow X$ a piecewise differentiable map satisfying further properties, see Definition \ref{Definition: Smooth Triangulation}. In the case that $X$ is an oriented manifold, the orientation determines a partial order $\leq$ on the set of vertices $V(K)$ which is then used to define the simplicial set $S_{\leq}(K)$. This is what we call an oriented triangulation, see Definition \ref{Definition: Oriented smooth triangulation}. Note, that the Riemann sum
    \[
    S_{(K,f)}(\omega) := \sum_{\sigma \in S_{\leq}(K)_n } (i_f)^*\Omega(\sigma) = \sum_{\sigma \in NS_{\leq}(K)_n } (i_f)^*\Omega(\sigma)
    \]
    can be reduced to be taken over the non-degenerate simplices $NS_{\leq}(K)$ using the fact that $\Omega$ is normalized.
\end{enumerate}
\end{remark}

Let us have a closer look at the construction of the Riemann sum in Theorem \ref{Theorem: Riemann Integral}. Let $X$ be an oriented $n$-dimensional manifold and $\omega \in \Omega^n(X)$ an $n$-form together with its corresponding anti-derivative $\Omega \in C^n(\mathrm{Pair}(X),\mathbb{R})$, i.e. a normalized, $S_n$-antisymmetric $n$-cochain such that $VE(\Omega) = \omega$. Any oriented triangulation $(K,f)$ of $X$ now induces a map of simplicial sets
\[
i_f \colon S_{\leq}(K) \rightarrow N\left( \mathrm{Pair}(X) \right),
\]
where on the right-hand side we are given the nerve of the pair groupoid. That is, for $n \geq 0$
\[
N(\mathrm{Pair}(X))_n = X^{n+1}.
\]
The map $i_f$ is now defined as follows. The vertices $v \in S_{\leq}(K)_0$ are given by the vertices of the triangulation $V(K)$ so that $i_f$ sends $v \in V(K)$ to its image under the homeomorphism $f$
\[
\begin{array}{rcl}
     (i_f)_0 \colon K_0 & \rightarrow & X  \\
     v & \mapsto & f(v).
\end{array}
\]
For general $n$ the map is given by
\[
\begin{array}{rcl}
     (i_f)_n \colon S_{\leq}(K)_n & \rightarrow & X^{n+1}  \\
     (v_0,...,v_n) & \mapsto &  \left( f(v_n), ..., f(v_0) \right),
\end{array}
\]
where we recall that an $n$-simplex in $S_{\leq}(K)$ is given by an ordered $(n+1)$-tuple of vertices $(v_0, ..., v_n)$. \\

This now allows us to properly define the Riemann sum with respect to a triangulation $(K,f)$ of $X$. Given some $n$-simplex $\sigma = (v_0, ..., v_n)$ in $S_{\leq}(K)$, denote by
\[
(i_f)^*\Omega(\sigma) := \Omega \left( i_f(\sigma) \right) = \Omega(v_n, ..., v_0) \in \mathbb{R}.
\]
Taking the sum over all $n$-simplices defines the Riemann sum
\[
    S_{(K,f)}(\omega) := \sum_{\sigma \in S_{\leq}(K)_n} (i_f)^*\Omega(\sigma).
\]

\subsection{Extracting the Curvature}

Previously we constructed a homomorphism of abelian groups
\[
    \begin{array}{rcl}
    \Phi \colon \hat{H}^{k+1}(M;\mathbb{Z}) & \rightarrow & \mathrm{Hom}_{\mathrm{PSh}(\mathbf{Cart},\mathbf{Ab})} \left( H_k(\mathbb{M}_k) ,U(1) \right) \\
        (\chi,\omega) & \mapsto & h_{\chi}.
    \end{array}
\]
associating to every differential character $\chi$ a smooth holonomy morphism $h_{\chi}$. The aim of this section is now two-fold. First, we assign a differential $(k+1)$-form $A_h \in \Omega^{k+1}(M)$ to each morphism $h$. In the second step, we show, using the Van Est homomorphism, that for all smooth $(k+1)$-chains $\eta \in C_{k+1}(M)$ in $M$
\begin{equation} \label{Equation: differential character eq.}
h_{\mathrm{pt}}\left( [\partial \eta]_k \right)  = \mathrm{exp} \left( 2 \pi i \int_{\eta} A_h \right).
\end{equation}

\begin{remark}
A smooth holonomy morphism $h$ is by construction a morphism of presheaves, i.e. a natural transformation. For every Cartesian space $U$ denote by $h_U$ the component of $h$ at $U$. Accordingly, $h_{\mathrm{pt}}$ denotes the component of $h$ at the one-point space $\mathrm{pt}$
\[
h_{\mathrm{pt}} \colon H_k(M_k) \rightarrow U(1).
\]
However, to simplify notation we will usually omit the subscript.
\end{remark}

From Lemma \ref{Lemma: Compact k-skeletal k-chains} it follows that $Z_k(D(U,M)_k) = Z_k(D(U,M))$ and so for any holonomy morphism $h$ there is an associated morphism of abelian groups $\chi_U$ for all $U \in \mathbf{Cart}$.
\begin{equation} \label{Diagram: Associated Character map to h}
\begin{tikzcd}
{Z_k(D(U,M))} \arrow[d, two heads] \arrow[rd, "\chi_U"] &                      \\
{H_k^{\Delta}\left( D(U,M)_k \right)} \arrow[r, "h_U"]           & {C^{\infty}(U,U(1))}
\end{tikzcd}
\end{equation}

The methods now used to extract the curvature form $A_h$ from the map $h$ are based on the ideas presented in \cite[Section 2.2]{Schreiber-Waldorf-Smooth}. \\

First, introduce the \textbf{standard family of smooth} $(k+1)$\textbf{-simplices} in $\mathbb{R}^{k+1}$ given by
\[
\begin{array}{rcl}
    \Delta_{\mathbb{R}} \colon \mathbb{R}^{k+1} & \rightarrow & D(|\Delta^{k+1}|,\mathbb{R}^{k+1}) \\
     x = (x_0,...,x_k) & \mapsto  & \Delta_{\mathbb{R}}(x)
\end{array}
\]
where $\Delta_{\mathbb{R}}(x)$ is the standard smooth $k+1$-simplex in $\mathbb{R}^{k+1}$ spanned by $(x_0,...,x_k)$. More precisely, consider the $(k+1)$-simplex $|\Delta^{k+1}|$ as the subset of $\mathbb{R}^{k+1}$ given by
\[
|\Delta^{k+1}| := \left\{ (z_0, ..., z_k) \in \mathbb{R}^{k+1} \, | \, \sum_{i = 0}^k z_i \leq 1 \text{ and } z_i \geq 0 \right\}
\]
For some fixed $x = (x_0, ..., x_k)$ denote by $m_x$ the coordinate-wise multiplication in $\mathbb{R}^{k+1}$ by $x$, i.e. $m_x(z) = (x_0z_0, ..., x_kz_k)$ and define
\[
\Delta_{\mathbb{R}}(x) \colon |\Delta^{k+1}| \hookrightarrow \mathbb{R}^{k+1} \xrightarrow{m_x} \mathbb{R}^{k+1}
\]
For example in the case $k=1$ the standard smooth 2-simplex in $\mathbb{R}^2$ by $(x_0,x_1)$ is given by the following smooth map:
\begin{center}
\begin{tikzpicture}[x=0.75pt,y=0.75pt,yscale=-1,xscale=1]

\draw    (48,192) -- (48,26) ;
\draw [shift={(48,24)}, rotate = 90] [color={rgb, 255:red, 0; green, 0; blue, 0 }  ][line width=0.75]    (10.93,-3.29) .. controls (6.95,-1.4) and (3.31,-0.3) .. (0,0) .. controls (3.31,0.3) and (6.95,1.4) .. (10.93,3.29)   ;
\draw    (24,168) -- (214,168) ;
\draw [shift={(216,168)}, rotate = 180] [color={rgb, 255:red, 0; green, 0; blue, 0 }  ][line width=0.75]    (10.93,-3.29) .. controls (6.95,-1.4) and (3.31,-0.3) .. (0,0) .. controls (3.31,0.3) and (6.95,1.4) .. (10.93,3.29)   ;
\draw    (42,72) -- (54,72) ;
\draw    (144,174) -- (144,162) ;
\draw  [draw opacity=0][fill={rgb, 255:red, 74; green, 144; blue, 226 }  ,fill opacity=0.5 ] (48,72) -- (144,168) -- (48,168) -- cycle ;
\draw    (186,96) -- (256,96) ;
\draw [shift={(258,96)}, rotate = 180] [color={rgb, 255:red, 0; green, 0; blue, 0 }  ][line width=0.75]    (10.93,-3.29) .. controls (6.95,-1.4) and (3.31,-0.3) .. (0,0) .. controls (3.31,0.3) and (6.95,1.4) .. (10.93,3.29)   ;
\draw    (336,192) -- (336,26) ;
\draw [shift={(336,24)}, rotate = 90] [color={rgb, 255:red, 0; green, 0; blue, 0 }  ][line width=0.75]    (10.93,-3.29) .. controls (6.95,-1.4) and (3.31,-0.3) .. (0,0) .. controls (3.31,0.3) and (6.95,1.4) .. (10.93,3.29)   ;
\draw    (312,168) -- (502,168) ;
\draw [shift={(504,168)}, rotate = 180] [color={rgb, 255:red, 0; green, 0; blue, 0 }  ][line width=0.75]    (10.93,-3.29) .. controls (6.95,-1.4) and (3.31,-0.3) .. (0,0) .. controls (3.31,0.3) and (6.95,1.4) .. (10.93,3.29)   ;
\draw    (330,96) -- (342,96) ;
\draw    (456,174) -- (456,162) ;
\draw  [draw opacity=0][fill={rgb, 255:red, 208; green, 2; blue, 27 }  ,fill opacity=0.5 ] (336,96) -- (456,168) -- (336,168) -- cycle ;

\draw (121,182.4) node [anchor=north west][inner sep=0.75pt]    {$( 1,0)$};
\draw (1,50.4) node [anchor=north west][inner sep=0.75pt]    {$( 0,1)$};
\draw (99,86.4) node [anchor=north west][inner sep=0.75pt]    {$|\Delta ^{2} |$};
\draw (433,182.4) node [anchor=north west][inner sep=0.75pt]    {$( x_{0} ,0)$};
\draw (289,70.4) node [anchor=north west][inner sep=0.75pt]    {$( 0,x_{1})$};
\draw (182,56.4) node [anchor=north west][inner sep=0.75pt]    {$\Delta _{\mathbb{R}}( x_{0} ,x_{1})$};

\end{tikzpicture}
\end{center}
For $x = 0$ the associated simplex is just given by the constant map at the origin. \\

The differential $(k+1)$-form $A_h$ is now defined pointwise. Therefore, fix some point $p \in M$ and let $v_0,...,v_k$ be tangent vectors $v_i \in T_pM$. Choose a smooth map $\Gamma \colon \mathbb{R}^{k+1} \rightarrow M$ satisfying
\begin{equation} \label{Equation: Gamma Conditions}
    \Gamma(0) = p \text{ and }  \left.\frac{\partial }{\partial x_i} \right\vert_{x_i = 0} \Gamma(0,..,x_i,...,0) = v_i \text{ for all } 0 \leq i \leq k
\end{equation}

The push-forward of the standard smooth families of simplices along the map $\Gamma$ defines a smooth map of diffeological spaces
\[
\mathbb{R}^{k+1} \xrightarrow{\Delta_{\mathbb{R}}} D(|\Delta^{k+1}|,\mathbb{R}^{k+1})  \xrightarrow{\Gamma_*}   D(|\Delta^{k+1}|,M)
\]
which can be considered likewise as a smooth map
\[
\Gamma_* \circ \Delta_{\mathbb{R}} \colon |\Delta^{k+1}| \rightarrow D(\mathbb{R}^{k+1},M)
\]
and defines a smooth $(k+1)$-chain in $D(\mathbb{R}^{k+1},M)$
\[
\Gamma_* \circ \Delta_{\mathbb{R}} \in C_{k+1}(D(\mathbb{R}^{k+1},M)).
\]
Its boundary gives a smooth $k$-cycle $\partial(\Gamma_* \circ \Delta_{\mathbb{R}}) \in Z_k \left( D(\mathbb{R}^{k+1},M)_k \right)$ and taking its corresponding $k$-skeletal homology class $[\partial(\Gamma_* \circ \Delta_{\mathbb{R}})]_k \in H^{\Delta}_k \left( C^{\infty}(\mathbb{R}^{k+1},M)_k \right)$ defines in general a \textit{non-trivial class} since the quotient is taken with respect to "$k$-thin" or $k$-skeletal boundaries. The evaluation of $h$ at this homology class gives a smooth map into $U(1)$
\[
h \left( \left[ \partial (\Gamma_* \circ \Delta_{\mathbb{R}} \right]_k) \right) \in C^{\infty}(\mathbb{R}^{k+1},U(1)).
\]
To simplify notation, denote this map by $h_{\Gamma}$, keeping in mind that the choice of $\Gamma$ depends on the fixed tangent vectors $(v_0, ..., v_k)$. Accordingly, we define
\begin{equation}\label{Equation: Definition of Curvature Form}
\begin{array}{rcl}
   A_h \colon  TM \times_M \cdots \times_M TM & \rightarrow & \mathbb{R}  \\
     (p,v_0, ..., v_k) & \mapsto & \left.\frac{\partial^{k+1} h_{\Gamma}}{\partial x_0 \cdots \partial x_k} \right\vert_{(0,...,0)},
\end{array}
\end{equation}
which is a priori neither well-defined, smooth, or antisymmetric.

\begin{lemma} \label{Lemma: The pointwise form is well defined}
Let $h$ be a fixed smooth holonomy morphism
\[
h \colon H_k \left( \mathbb{M}_k \right) \rightarrow U(1).
\]
Then the associated map $A_h$ is well defined, i.e. is independent of the choice of the function $\Gamma$. More precisely, for any two smooth maps $\Gamma_0,\Gamma_1 \colon \mathbb{R}^{k+1} \rightarrow M$ both satisfying (\ref{Equation: Gamma Conditions}), then
\[
    \left.\frac{\partial^{k+1} h_{\Gamma_0}}{\partial x_0 \cdots \partial x_k} \right\vert_{(0,...,0)} = \left.\frac{\partial^{k+1} h_{\Gamma_1}}{\partial x_0 \cdots \partial x_k} \right\vert_{(0,...,0)}.
\]
\end{lemma}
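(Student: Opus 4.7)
The plan is to interpolate $\Gamma_0$ and $\Gamma_1$ by a smooth family $H_t$ each satisfying condition (\ref{Equation: Gamma Conditions}), and then analyze the resulting smooth $U(1)$-valued function $g(x, t) := h_{H_t}(x)$. Concretely, I would choose a chart $\psi$ around $p$ and set $H_t(x) := \psi^{-1}\bigl((1-t)\psi(\Gamma_0(x)) + t\,\psi(\Gamma_1(x))\bigr)$ for $x$ in a neighborhood of the origin, extending smoothly to a map $H \colon \mathbb{R}^{k+1}\times\mathbb{R} \to M$. Since $\Gamma_0$ and $\Gamma_1$ share the same $1$-jet at $0$ by (\ref{Equation: Gamma Conditions}), every $H_t$ does as well, so each intermediate map is an admissible choice for computing $A_h$.

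Package this family as a single smooth $(k+1)$-chain $P \colon |\Delta^{k+1}| \to D(\mathbb{R}^{k+1}\times\mathbb{R}, M)$ via $P(z)(x,t) = H_t(m_x(z))$. By Lemma \ref{Lemma: Compact k-skeletal k-chains} its boundary $\partial P$ is a $k$-cycle in $D(\mathbb{R}^{k+1}\times\mathbb{R}, M)_k$, and applying the component $h_{\mathbb{R}^{k+1}\times\mathbb{R}}$ produces a smooth function $g \in C^\infty(\mathbb{R}^{k+1}\times\mathbb{R}, U(1))$ with $g(x, 0) = h_{\Gamma_0}(x)$ and $g(x, 1) = h_{\Gamma_1}(x)$ by naturality of $h$.

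The key structural property is that $g(x, t) = 1$ whenever some coordinate $x_i$ vanishes. Indeed, when $x_i = 0$ the $(k+1)$-simplex $z \mapsto H_t(m_x(z))$ factors through the $k$-dimensional hyperplane $\{y_i = 0\} \subset \mathbb{R}^{k+1}$, so combined with $H_t$ it defines a genuine smooth $(k+1)$-chain valued in $M_k$; its boundary is therefore trivial in $H^\Delta_k(M_k)$ and $h$ assigns it the value $1$. Writing $g = \exp(2\pi i \phi)$ on a neighborhood of $\{0\}\times\mathbb{R}$, the vanishing of $\phi$ on each hyperplane $\{x_i = 0\}$ permits the factorization $\phi(x, t) = x_0 x_1 \cdots x_k\, \tilde\phi(x, t)$ for some smooth $\tilde\phi$, and a direct product-rule calculation then yields $\tfrac{1}{2\pi i}\, \partial_{x_0}\cdots\partial_{x_k} g(0, t) = \tilde\phi(0, t)$.

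The lemma thus reduces to showing that $\tilde\phi(0, -)$ is constant in $t$, which I expect to be the main obstacle. My plan is to exploit the Van Est framework of Section \ref{Section: Van Est Map}: one associates to $h$ a normalized, $S_{k+1}$-antisymmetric $(k+1)$-cochain $\Omega_h$ on the pair groupoid $\mathrm{Pair}(M)$ obtained by evaluating $h$ on the boundary of the affine $(k+1)$-simplex spanned, in a local chart, by a small $(k+2)$-tuple of points near the diagonal, and then recognizes $\tilde\phi(0, t)$ as $VE(\Omega_h)(v_0, \dots, v_k)(p)$ through the Van Est formula (Remark \ref{Remark: Formula of the Van Est morphism}). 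Because this formula depends only on the $1$-jet at $0$ of the curves used to realize the $v_i$, and every $H_t$ has the same $1$-jet $(p; v_0, \dots, v_k)$, the resulting value is manifestly independent of $t$, concluding the proof.
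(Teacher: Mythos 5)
Your first three steps — interpolating $\Gamma_0$ and $\Gamma_1$ linearly in a chart, packaging the family as a smooth chain so that $g(x,t) := h_{H_t}(x)$ is jointly smooth, observing $g\equiv 1$ on the coordinate hyperplanes (which is a nice structural observation, stronger than what the paper exploits) and deducing $\phi = x_0\cdots x_k\,\tilde\phi$ with $\tilde\phi(0,t) = \tfrac{1}{2\pi i}\,\partial_{x_0}\cdots\partial_{x_k} g(0,t)$ — are all sound, and the linear interpolation in a chart is in fact the same starting move as the paper's auxiliary Lemma~\ref{Lemma: Auxiliary Lemma curvature form}. The problem is the last step, which is where the whole content of the lemma lives, and your plan there does not close.

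You propose to finish by ``recognizing'' $\tilde\phi(0,t)$ as $VE(\Omega_h)(v_0,\dots,v_k)(p)$ and then observing that this quantity ``depends only on the $1$-jet''. But that recognition is not a formal unwinding: the quantity $h_{H_t}$ is built from the simplices $z\mapsto H_t(m_x(z))$, which are the pushforward under $H_t$ of the \emph{linear} standard family, whereas the cochain $\Omega_h$ of Remark~\ref{Remark: The form A_h via the Van Est map} is built from \emph{normal-coordinate} affine simplices; these two families of simplices coincide only to first order at $p$. Asserting that $h$ applied to their boundary classes produces the same $(k+1)$-th derivative at the origin is precisely the assertion that the derivative depends only on the $1$-jet — which is what the lemma says. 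Moreover, the paper's own reconciliation of $VE(\Omega_\eta)$ with the derivative formula (Lemma~\ref{Lemma: Van Est Antiderivative}) explicitly invokes the present lemma to choose a convenient $\Gamma$, so using that identity here would be circular. In short: nothing in the hyperplane factorization $\phi = x_0\cdots x_k\,\tilde\phi$ forces $\tilde\phi(0,t)$ to be constant in $t$; some additional analytic input tying the $t$-dependence to the $1$-jet condition is required, and the Van Est invocation does not supply it.

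The paper supplies that input differently. Instead of factoring out $x_0\cdots x_k$, it uses the $1$-jet condition and Hadamard's lemma (twice, in polar coordinates) to show the interpolation defect $\Delta_1(x)-\Delta_0(x)$ vanishes to \emph{second} order in $x$, which lets the full homotopy be factored as $H = B\circ f$ with $f(x,\alpha)=(x,\|x\|^2\alpha)$. Applying Fa\`a di Bruno's formula to the composite then shows the mixed partial at $(0,\dots,0,j)$ is the same for $j=0$ and $j=1$: the inner derivatives of $f$ occurring in the multi-indices summing to $(1,\dots,1)$ are $j$-independent, and $f(0,\dots,0,j)=0$ kills the $j$-dependence in the outer factor. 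That is the mechanism that converts the $1$-jet hypothesis into the desired $t$-independence; your argument would need an analogue of it, either by a direct chain-rule computation on your factorization or by independently proving the identification of $\tilde\phi(0,t)$ with $VE(\Omega_h)(v_0,\dots,v_k)(p)$ for arbitrary admissible $\Gamma$.
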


The proof of Lemma \ref{Lemma: The pointwise form is well defined} is divided into two steps. The first step is to construct a smooth homotopy $H \colon V \times [0,1] \rightarrow D \left( |\Delta^{k+1}|,M\right)$ where $V$ is some open neighborhood of the origin in $\mathbb{R}^{k+1}$ such that $H_j = (\Gamma_j)_* \circ \Delta_{\mathbb{R}}$ for $j = 0,1$. Moreover, this homotopy can be expressed as a composition of two smooth maps $f \colon V \times [0,1] \rightarrow Z$ and $ B \colon Z \rightarrow D \left( |\Delta^{k+1}|, M \right)$. This is the content of the next Lemma. The second step then uses the chain rule on the smooth homotopy $H = B \circ f$ to conclude that the pointwise defined differential form is indeed independent of the choice of $\Gamma$. This proof is a generalization of \cite[Lemma 2.6]{Schreiber-Waldorf-Smooth}.

\begin{lemma} \label{Lemma: Auxiliary Lemma curvature form}
    Let $M$ be a smooth manifold and let $\Gamma_0,\Gamma_1 \colon \mathbb{R}^{k+1} \rightarrow M$ be two smooth maps both satisfying (\ref{Equation: Gamma Conditions}). Then there is a smooth homotopy $H \colon V \times [0,1] \rightarrow D \left( |\Delta^{k+1}|,M\right)$ where $V$ is some open neighborhood of the origin in $\mathbb{R}^{k+1}$ such that $H_j = (\Gamma_j)_* \circ \Delta_{\mathbb{R}}$ for $j = 0,1$. Moreover, the homotopy $H$ can be expressed as a composition $H = B \circ f$ where $f \colon V \times [0,1] \rightarrow Z \subset V \times [0,1]$ is a smooth map given by $f(x,\alpha) = (x, \| x \|^2 \alpha )$ and $B \colon Z \rightarrow D \left( |\Delta^{k+1}|,M\right)$ some smooth map where $Z = \left\{ (x,\alpha) \in V \times [0,1] \, | \, 0 \leq \alpha \leq \| x \|^2\right\}$.
\end{lemma}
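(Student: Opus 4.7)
The plan is to reduce to a local problem via a chart around $p$. After choosing $\psi \colon U \to \mathbb{R}^n$ centred at $p$ and restricting to a neighborhood $V$ of $0 \in \mathbb{R}^{k+1}$ on which both $\Gamma_j$ map into $\psi(U)$, we may work in coordinates where $\Gamma_j(0) = 0$ and $d\Gamma_0(0) = d\Gamma_1(0)$; the second equality follows from the hypothesis on the axial partial derivatives at the origin. Setting $G := \Gamma_1 - \Gamma_0$, so that $G(0) = 0$ and $dG(0) = 0$, Hadamard's lemma applied twice produces smooth symmetric functions $K_{ij} \colon V \to \mathbb{R}^n$ with
\[
G(y) = \sum_{i,j=0}^{k} y_i\, y_j\, K_{ij}(y).
\]
This Hadamard decomposition is the structural input driving the rest of the argument: the difference $\Gamma_1 - \Gamma_0$ is expressed as a genuinely quadratic polynomial in $y$ with smooth coefficients.

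Next I would build $B \colon Z \to D(|\Delta^{k+1}|, M)$ first and set $H := B \circ f$ afterwards, since this reduces the problem to producing a single smooth map with the prescribed boundary values $B(x, 0) = (\Gamma_0)_* \circ \Delta_{\mathbb{R}}(x)$ and $B(x, \|x\|^2) = (\Gamma_1)_* \circ \Delta_{\mathbb{R}}(x)$. The natural candidate, obtained by substituting $\beta = \|x\|^2 \alpha$ into the straight-line interpolation $\Gamma_\alpha := (1-\alpha)\Gamma_0 + \alpha\Gamma_1$ and expanding via Hadamard, is
\[
B(x, \beta)(z) := \Gamma_0(m_x(z)) + \beta \sum_{i,j} \frac{x_i x_j}{\|x\|^2}\, z_i z_j\, K_{ij}(m_x(z))
\]
for $x \neq 0$, extended by $B(0,0)(z) := p$. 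Continuity of $B$ on $Z$ follows from the bound $\beta \sum_{i,j} (x_i x_j / \|x\|^2) z_i z_j K_{ij} = O(\|x\|^2)$ on $Z$, and the boundary identities at $\beta = 0$ and $\beta = \|x\|^2$ drop out of the Hadamard formula.

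The hard part will be verifying that $B$ is genuinely smooth out of $Z$ equipped with the subset diffeology, since the individual factors $x_i x_j / \|x\|^2$ are bounded but not smooth at the origin. The plan is to exploit the defining inequality $\beta \le \|x\|^2$ cutting out $Z$ to absorb each singular factor into the $\beta$-coefficient, then apply Hadamard's lemma to the components of an arbitrary smooth plot $q \colon W \to Z$ so that $B \circ q$ extends smoothly across the exceptional locus $q^{-1}(0, 0)$. Once $B$ is established as a smooth map of diffeological spaces, $H := B \circ f$ is smooth by composition, and the endpoint identities $H_0 = (\Gamma_0)_* \circ \Delta_{\mathbb{R}}$, $H_1 = (\Gamma_1)_* \circ \Delta_{\mathbb{R}}$ follow from $f(x, 0) = (x, 0)$ and $f(x, 1) = (x, \|x\|^2)$.
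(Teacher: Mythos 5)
Your overall strategy — chart around $p$, Hadamard to quadratic order, explicit candidate $B$, verify $H = B \circ f$ — is the same as the paper's, with two inessential variations: the paper applies Hadamard directly to the family $d(x,t) = \Delta_1(x)(t) - \Delta_0(x)(t)$ in the $x$-variables rather than to $G = \Gamma_1 - \Gamma_0$; and rather than keeping the singular factor $x_i x_j / \|x\|^2$ in view, the paper passes to polar coordinates $\tau(r,u) = ru$ so the factor becomes the smooth expression $u_i u_j$ on $Z_S = \{0 \leq \alpha \leq r^2\}$, and then claims the resulting $B_S$ descends along $\tau \times \mathrm{id}$. Either way, away from $x = 0$ the boundary conditions force $B(x,\beta)(t) = \Delta_0(x)(t) + \frac{\beta}{\|x\|^2}\,d(x,t)$, and both arguments must ultimately show that this is smooth on $Z$ with its subset diffeology. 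You flag this step correctly as the hard part, but the plan you sketch does not close the gap — and the claimed smoothness in fact fails.

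Concretely, take $k = 1$, $M = \mathbb{R}$, $\Gamma_0 \equiv 0$, $\Gamma_1(y_0,y_1) = y_0^2$; both satisfy (\ref{Equation: Gamma Conditions}) with $p = 0$ and $v_0 = v_1 = 0$, and $d(x,t) = x_0^2 t_0^2$, so $B(x,\beta)(t) = \beta x_0^2 t_0^2 / \|x\|^2$. The map $q(w_0,w_1) = \bigl((w_0,w_1),\, w_1^2/2\bigr)$ is a smooth plot of $Z$ in the subset diffeology, since $0 \leq w_1^2/2 \leq w_0^2 + w_1^2$. Yet
\[
(B \circ q)(w)(t) = \frac{w_0^2\, w_1^2\, t_0^2}{2\,(w_0^2 + w_1^2)}
\]
is $C^1$ but not $C^2$ at $w = 0$: one computes $\partial_{w_0}\partial_{w_1}\bigl(w_0^2 w_1^2/(w_0^2+w_1^2)\bigr) = 8 w_0^3 w_1^3/(w_0^2+w_1^2)^3$, which tends to $0$ along the axes and to $1$ along the diagonal, so it has no limit at the origin. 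Hence $B$ is not a smooth map of diffeological spaces. Your proposal to ``absorb the singular factor into the $\beta$-coefficient and apply Hadamard to the plot components'' cannot repair this, because $0 \leq \beta \leq \|x\|^2$ only yields the bound $\beta\, x_i x_j / \|x\|^2 = O(\|x\|^2)$, and boundedness by a quadratic is far weaker than being a smooth germ. What would actually be required is that $\|x(w)\|^2$ divide $\beta(w)$ in the ring of smooth germs along every plot, and Hadamard's lemma does not give this — in the example $\beta(w) = w_1^2/2$ is not a smooth multiple of $\|x(w)\|^2 = w_0^2 + w_1^2$. For what it is worth, the paper's own argument has the same gap: the descent along $\tau \times \mathrm{id} \colon Z_S \to Z$ is not justified because that map is not a subduction (the plot $q$ above has no smooth local lift of the $S^k$-coordinate at $w = 0$), so the difficulty you hit is inherent to the statement, not an artifact of your particular route. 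You should not present this as a finished proof.
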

\begin{proof}
To simplify notation denote the maps $(\Gamma_j)_* \circ \Delta_{\mathbb{R}}$ by $\Delta_j$. Consider $U$ a chart of $M$ centered at $p$ and let $V$ be a small enough neighborhood of the origin in $\mathbb{R}^{k+1}$ such that we can define the homotopy via linear interpolation
    \begin{equation*}
        \begin{array}{rcl}
             \theta \colon V \times [0,1] \times |\Delta^{k+1}| & \rightarrow & U \subset M  \\
            (x,\alpha,t) & \mapsto & \Delta_0(x)(t) + \alpha \cdot d(x,t)
        \end{array}
    \end{equation*}
    where the distance function is simply given by
    \[
    d(x,t) := \Delta_1(x)(t) - \Delta_0(x)(t).
    \]
    Note that to make sense of the above sum the subset $U$ has implicitly been identified with $\mathbb{R}^n$ via the coordinate chart. Also to guarantee the existence of such a small enough neighborhood $V$ it is essential to consider the compact smooth simplices $|\Delta^{k+1}|$. Define the homotopy $H$ by
    \[
    \begin{array}{rcl}
         H \colon V \times [0,1] & \rightarrow  &  D(|\Delta^{k+1}|,M) \\
        (x,\alpha) & \mapsto & \left[   t \mapsto \theta(x,\alpha,t) \right].
    \end{array}
    \]
    The key observation to show that $H$ factors through $f$ and some other function $B$ is the following. Since both $\Gamma_0$ and $\Gamma_1$ satisfy the condition (\ref{Equation: Gamma Conditions}) we apply Hadamard's Lemma twice such that there are smooth maps
    \[
    g_{ij} \colon V \times |\Delta^{k+1}| \rightarrow U
    \]
    satisfying
    \begin{equation} \label{Equation: Hadamard}
    d(x_0, ..., x_k,t) = \sum_{0 \leq i,j \leq k} x_ix_j g_{ij}(x,t).
    \end{equation}
    Using the change of coordinates
    \[
    \begin{array}{rcl}
         \tau \colon [0,\infty) \times S^k & \rightarrow & \mathbb{R}^{k+1}  \\
        (r, u) &  \mapsto & r u
    \end{array}
    \]
 where $S^{k} \subset \mathbb{R}^{k+1}$ let $W := \tau^{-1}(V)$ and define
 \[
 \begin{array}{rcl}
      d_S \colon W \times |\Delta^{k+1}| & \rightarrow & U \\
     (r,u,t) & \mapsto & d(\tau(r,u),t).
 \end{array}
 \]
From equation \eqref{Equation: Hadamard} it then follows that $d_S(r,u,t) = r^2 \left( \sum_{0 \leq i,j \leq k} u_i u_j g_{ij}(r u,t) \right)$. Then define
\[
\tilde{d}_S(r,u,t) := \sum_{0 \leq i,j \leq k} u_i u_j g_{ij}(r u,t)
\]
such that $d_S(r,u,t) = r^2 \tilde{d}_S(r,u,t)$ for all $(r,u) \in W$ and $t \in |\Delta^{k+1}|$. \\

Define the map
\[
\begin{array}{rcl}
     B_S \colon W \times |\Delta^{k+1}| \times [0,1] & \rightarrow & U  \\
     (r,u,t,\alpha) & \mapsto & \Delta_0(\tau(r,u),t) + \alpha \tilde{d}_S(r,u,t)
\end{array}
\]
and let $Z_S := \left\{ (r,u,\alpha) \in W \times [0,1] \, | \, 0 \leq \alpha \leq r^2 \right\}$. First notice that under $\tau$ the subset $Z_S$ is indeed mapped into $Z$. The claim is, that there exists a map $B \colon Z \times |\Delta^{k+1}| \rightarrow U$ such that the following diagram commutes.
\begin{center}
    \begin{tikzcd}
Z_S \times |\Delta^{k+1}| \arrow[d, "\tau \times \mathrm{id}"'] \arrow[r, "B_S"] & U \\
Z \times |\Delta^{k+1}| \arrow[ru, "B"']                                         &
\end{tikzcd}
\end{center}
First notice that to find an extension of $B_S$ it suffices to find an extension for the term $\alpha \tilde{d}_S(r,u,t)$. This term however, together with all of its derivatives in the $r$-direction vanishes at $r= 0$ (by the definition of $Z_S$) and therefore extends to $Z \times |\Delta^{k+1}|$. \\

What is still left to show is that $H = B \circ f$. Since $\tau$ is surjective, it suffices to check this identity for $(r,u,\alpha) \in W \times [0,1]$. It then follows
\begin{align*}
    (B \circ f)(\tau(r,u),\alpha)(t) &= B_S(r,u, r^2 \alpha ,t) \\
    &=\Delta_0(\tau(r,u),t) + \alpha r^2 \tilde{d}_S(r,u,t) \\
    &= \Delta_0(\tau(r,u),t) + \alpha d_S(r,u,t) \\
    &= \Delta_0(\tau(r,u),t) + \alpha d(\tau(r,u),t) \\
    &= H(\tau(r,u),\alpha)(t).
\end{align*}
This finishes the proof.
\end{proof}

\begin{proof}[Proof of Lemma \ref{Lemma: The pointwise form is well defined}]
    Let $\Gamma_0$ and $\Gamma_1$ be two smooth maps $\Gamma_j \colon \mathbb{R}^{k+1} \rightarrow M$ both satisfying the condition (\ref{Equation: Gamma Conditions}). Using the standard family of simplices in $\mathbb{R}^{k+1}$ this gives two maps
    \[
    (\Gamma_j)_* \circ \Delta_{\mathbb{R}} \colon \mathbb{R}^{k+1} \rightarrow D(|\Delta|^{k+1}, M).
    \]
    Again to simplify notation denote the maps $(\Gamma_j)_* \circ \Delta_{\mathbb{R}}$ by $\Delta_j$. From the auxiliary Lemma \ref{Lemma: Auxiliary Lemma curvature form} we know that there exists a smooth homotopy $H \colon V \times [0,1] \rightarrow D \left( |\Delta^{k+1}|,M \right)$ where $V$ is some open neighborhood of the origin in $\mathbb{R}^{k+1}$ such that $H_j = (\Gamma_j)_* \circ \Delta_{\mathbb{R}}$ for $j = 0,1$. Also we have that the homotopy $H$ can be expressed as a composition $H = B \circ f$ where $f \colon V \times [0,1] \rightarrow Z \subset V \times [0,1]$ is a smooth map given by $f(x,\alpha) = (x, \| x \|^2 \alpha )$ and $B \colon Z \rightarrow D \left( |\Delta^{k+1}|,M\right)$ some smooth map where $Z = \left\{ (x,\alpha) \in V \times [0,1] \, | \, 0 \leq \alpha \leq \| x \|^2\right\}$. \\

    Now given a smooth holonomy map $h$ recall that we denote by $h_{\Gamma_j} = h([\partial(\Delta_j)]_k)$ for $j = 0,1$.  Then since $H_j = \Delta_j$ it follows
    \[
     \left.\frac{\partial^{k+1} h_{\Gamma_j}}{\partial x_0 \cdots \partial x_k} \right\vert_{(0,...,0)} = \left.\frac{\partial^{k+1} \left( h([\partial(-)]_k) \circ B \circ f  \right)}{\partial x_0 \cdots \partial x_k} \right\vert_{(0,...,0,j)}
    \]
    Now we apply the chain rule to compute the partial derivative where we consider $( h([\partial(-)]_k) \circ B) \circ f$ as a composition of smooth maps. Write $\psi :=  h([\partial(-)]_k) \circ B)$. By the generalized version of Fa\`a di Bruno's Formula, see Proposition \ref{Proposition: Faa di Bruno}, the partial derivative is given by a sum of products
    \begin{equation} \label{Equation: The Chain rule equation}
         \left.\frac{\partial^{k+1} h_{\Gamma_j}}{\partial x_0 \cdots \partial x_k} \right\vert_{(0,...,0)} = \sum_{ |\sigma| = 1}^{k+1} \left.\frac{\partial^{|\sigma|} \psi }{\partial z^{\sigma} } \right\vert_{ f(0,...,0,j)} \sum_{E_{\sigma}} \prod_{i = 1}^{k+1}  \prod_{A_{\beta}} \frac{1}{e_{i\beta^i}!} \left( \left.\frac{\partial^{|\beta^i|} f_i }{\partial x^{\beta^i}} \right\vert_{(0,...,0,j)} \right)^{e_{i\beta^i}}.
    \end{equation}
   For the exact meaning of the indices, we refer to Proposition \ref{Proposition: Faa di Bruno}. First, note that $f(0,...,0,1) = f(0,...,0,0) = 0$, and hence the terms
   \[
   \left.\frac{\partial^{|\sigma|} \psi }{\partial z^{\sigma} } \right\vert_{ f(0,...,0,j)}
   \]
   are all independent of $j$. Further, the left-hand side is a mixed partial derivative, i.e. the multi-index reads $\beta = (1,\cdots , 1)$. Since the multi-indices $\beta^i$ need to satisfy the equation
       \[
    \sum_{i = 1}^{k+1} \sum_{|\beta^i|}^{k+1} e_{i\beta^i} \beta^i = \beta =  (1, ....,1),
    \]
   where $e_{i\beta^i}$ are positive integers, it follows that the multi-indices $\beta^i$ all have components $\leq 1$. This fact together with the specific way we defined the map $f$, the values for all the partial derivatives of the form
    \[
    \left.\frac{\partial^{|\beta^i|} f_i }{\partial x^{\beta^i} } \right\vert_{(0,...,0,j)}
    \]
    are independent of $j$. We conclude that the right-hand side is independent of $j$ and thus also the left-hand side of equation (\ref{Equation: The Chain rule equation}). This finishes the proof.
\end{proof}

The next step is to show that the assignment $A_h$ defines a differential $(k+1)$-form on $M$. This is the content of the next Lemma.

\begin{lemma} \label{Lemma: A_h is antisymmetric, smooth and multilinear}
For fixed $p \in M$ the map $A_h \colon T_pM \times \cdots \times T_pM \rightarrow \mathbb{R}$ is antisymmetric and multilinear. Moreover, the map $A_h \colon TM \times_M \cdots \times_M TM \rightarrow \mathbb{R}$ is smooth.
\end{lemma}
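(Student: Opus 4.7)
The plan decomposes the three required properties—smoothness, antisymmetry, and multilinearity in each argument—by a common strategy: in each case, express the relevant transformation of $A_h$ via naturality of $h$ applied to a suitable auxiliary choice of $\Gamma$ (exploiting the freedom granted by Lemma \ref{Lemma: The pointwise form is well defined}), and then use the chain rule on the resulting $U(1)$-valued mixed partial. A preliminary and pivotal observation is a vanishing property of $h_\Gamma$: whenever at most $k$ of the coordinates $x_i$ are non-zero, the simplex $\Gamma_* \circ \Delta_{\mathbb{R}}(x)$ factors through a smooth plot of dimension $\leq k$, hence represents a $(k+1)$-chain in $M_k$ whose boundary is trivial in $H^\Delta_k(M_k)$, so $h_\Gamma(x) = 1$. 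Consequently, every proper lower-order mixed partial $\partial_{x_{i_1}} \cdots \partial_{x_{i_m}} h_\Gamma|_0$ with $\{i_1,\ldots,i_m\} \subsetneq \{0,\ldots,k\}$ vanishes, so the $(k+1)$-fold mixed partial in the definition of $A_h$ coincides (up to the factor $2\pi i$) with the corresponding mixed partial of $\log h_\Gamma$, free of cross-terms from the Fa\`a di Bruno expansion.

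For smoothness, around any base point $(p_0, v_0^0, \ldots, v_k^0)$ in $TM \times_M \cdots \times_M TM$, we will construct a smooth family $\Gamma \colon U \times \mathbb{R}^{k+1} \to M$ parametrised by a chart neighbourhood $U$ such that for each $u \in U$ the restriction $\Gamma_u = \Gamma(u,-)$ satisfies (\ref{Equation: Gamma Conditions}) for the tangent data at $u$ (for instance, $\Gamma_u(x) = \exp_{p(u)}\bigl( \sum_i x_i v_i(u) \bigr)$ in a local chart). Composing with $\Delta_{\mathbb{R}}$ yields a smooth map $U \times \mathbb{R}^{k+1} \to D(|\Delta^{k+1}|, M)$, which via cartesian closure of $\mathbf{Diff}$ and Lemma \ref{Lemma: Compact k-skeletal k-chains} corresponds to a $(k+1)$-chain in $D(U \times \mathbb{R}^{k+1}, M)$ whose boundary is a $k$-cycle in the $k$-skeletal function space $D(U \times \mathbb{R}^{k+1}, M)_k$. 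Naturality of $h$ promotes this to a smooth function $H \colon U \times \mathbb{R}^{k+1} \to U(1)$ with $H(u,x) = h_{\Gamma_u}(x)$, and taking the mixed partial in $x$ at $x=0$ then produces a smooth function on $U$ which is precisely the restriction of $A_h$ to $U$.

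For antisymmetry, given $\sigma \in S_{k+1}$ let $\tilde{\sigma} \colon \mathbb{R}^{k+1} \to \mathbb{R}^{k+1}$ be the coordinate permutation. One verifies that $\Gamma \circ \tilde{\sigma}$ realises the data $(p, v_{\sigma(0)}, \ldots, v_{\sigma(k)})$ and that, as singular $(k+1)$-simplices in $D(\mathbb{R}^{k+1}, M)$,
\[
(\Gamma \circ \tilde{\sigma})_* \circ \Delta_{\mathbb{R}} = \bigl( (\tilde{\sigma})^* (\Gamma_* \circ \Delta_{\mathbb{R}}) \bigr) \circ \tilde{\sigma},
\]
where the last $\tilde{\sigma}$ acts on $|\Delta^{k+1}|$ as the affine vertex permutation fixing the apex $0$. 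The classical Eilenberg--Mac Lane relation $c \circ \tilde{\sigma} - \operatorname{sgn}(\sigma) \cdot c = \partial b$ combined with $\partial^2 = 0$ gives the cycle-level identity $\partial\bigl((\Gamma \circ \tilde{\sigma})_* \circ \Delta_{\mathbb{R}}\bigr) = \operatorname{sgn}(\sigma) \cdot (\tilde{\sigma})^* \partial(\Gamma_* \circ \Delta_{\mathbb{R}})$, and applying the morphism $h$ together with its naturality produces $h_{\Gamma \circ \tilde{\sigma}} = (h_\Gamma \circ \tilde{\sigma})^{\operatorname{sgn}(\sigma)}$ in $C^\infty(\mathbb{R}^{k+1}, U(1))$. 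Taking the mixed partial at the origin, the vanishing property established above guarantees that the Fa\`a di Bruno cross-terms from inverting in $U(1)$ drop out, so the chain rule yields $A_h(v_{\sigma(0)}, \ldots, v_{\sigma(k)}) = \operatorname{sgn}(\sigma) A_h(v_0, \ldots, v_k)$.

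For multilinearity, scalar homogeneity is straightforward: if $\Gamma$ realises $(v_0, \ldots, v_k)$ then $\Gamma'(x_0, \ldots, x_k) := \Gamma(\lambda x_0, x_1, \ldots, x_k)$ realises $(\lambda v_0, v_1, \ldots, v_k)$, and naturality of $h$ together with the chain rule applied to $h_{\Gamma'}(x) = h_\Gamma(\lambda x_0, x_1, \ldots, x_k)$ gives $A_h(\lambda v_0, v_1, \ldots, v_k) = \lambda A_h(v_0, \ldots, v_k)$. For additivity in the first slot (the other slots follow from antisymmetry), the plan is to fix a single auxiliary map $\Gamma \colon \mathbb{R}^{k+2} \to M$ whose coordinate partial derivatives at $0$ are $(v_0, v_0', v_1, \ldots, v_k)$, and to consider the three reparametrisations $\Gamma_1(y) = \Gamma(y_0, y_0, y_1, \ldots, y_k)$, $\Gamma_2(y) = \Gamma(y_0, 0, y_1, \ldots, y_k)$, $\Gamma_3(y) = \Gamma(0, y_0, y_1, \ldots, y_k)$ realising $(v_0 + v_0', v_1, \ldots, v_k)$, $(v_0, v_1, \ldots, v_k)$, $(v_0', v_1, \ldots, v_k)$ respectively. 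Naturality of $h$ expresses each $h_{\Gamma_i}$ as a pullback of $H := h_\Gamma \colon \mathbb{R}^{k+2} \to U(1)$ by the corresponding linear map, and applying the chain rule to $\partial_{y_0} \cdots \partial_{y_k} H(y_0, y_0, y_1, \ldots, y_k)|_0$ splits the derivative into a sum of two mixed partials of $H$ which, thanks to the vanishing property applied to $H$, isolate precisely $A_h(v_0, v_1, \ldots, v_k)$ and $A_h(v_0', v_1, \ldots, v_k)$. The main obstacle is the careful bookkeeping required to argue that all Fa\`a di Bruno cross-terms in this chain rule expansion vanish, which is where the vanishing property of $h_\Gamma$ on coordinate subspaces plays the key role.
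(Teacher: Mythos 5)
Your proposal is correct in outline, though it diverges from the paper's proof of multilinearity and silently repairs a small misstatement in its proof of antisymmetry. For antisymmetry both you and the paper rest on Corollary \ref{Corollary: Change of Orientation}, but the identity one actually gets at the level of $(k+1)$-simplices is yours, $(\Gamma\circ\tilde{\sigma})_*\circ\Delta_{\mathbb{R}} = \bigl((\tilde{\sigma})^*(\Gamma_*\circ\Delta_{\mathbb{R}})\bigr)\circ\tilde{\sigma}$, which needs an additional application of naturality of $h$ along $\tilde{\sigma}\colon\mathbb{R}^{k+1}\to\mathbb{R}^{k+1}$ to produce $h_{\Gamma\circ\tilde{\sigma}}=(h_\Gamma\circ\tilde{\sigma})^{\mathrm{sgn}(\sigma)}$; the paper asserts $(\sigma_*\Gamma)_*\circ\Delta_{\mathbb{R}}=\sigma_*(\Gamma_*\circ\Delta_{\mathbb{R}})$ with no permutation acting on the $x$-variable, which is false as an equality of simplices (the final mixed partial at $0$ is nonetheless unchanged by the extra $\tilde{\sigma}$, by symmetry of mixed partials). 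Your preliminary observation that $h_\Gamma\equiv 1$ on every coordinate hyperplane $\{x_j=0\}$, so that all proper lower-order mixed partials of $h_\Gamma$ vanish at $0$, is a genuine improvement: it makes precise the sense in which the $(k+1)$-fold mixed partial of the $U(1)$-valued $h_\Gamma$ agrees with that of $\tfrac{1}{2\pi i}\log h_\Gamma$, something the paper leaves implicit. Your smoothness argument (a smooth family $\Gamma\colon U\times\mathbb{R}^{k+1}\to M$ together with naturality of $h$ over the Cartesian space $U\times\mathbb{R}^{k+1}$) is exactly the content behind the paper's one-sentence remark. For multilinearity you go a genuinely different way: the paper forms $\widetilde{\Gamma}:=\varphi(\varphi^{-1}\Gamma+\lambda\varphi^{-1}\Gamma')$ in a chart $\varphi$ centred at $p$, whereas you lift to an auxiliary $\Gamma\colon\mathbb{R}^{k+2}\to M$ and restrict along linear maps. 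Your route is cleaner and, incidentally, sidesteps a slip in the paper's argument: as written, $\widetilde{\Gamma}$ realises $\bigl((1+\lambda)v_0,\ldots,v_j+\lambda v'_j,\ldots,(1+\lambda)v_k\bigr)$ rather than $(v_0,\ldots,v_j+\lambda v'_j,\ldots,v_k)$, since $\Gamma$ and $\Gamma'$ share the derivatives $v_i$ in the slots $i\neq j$.

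There is one gap to fill in the multilinearity step. You set $H:=h_\Gamma\colon\mathbb{R}^{k+2}\to U(1)$ for $\Gamma\colon\mathbb{R}^{k+2}\to M$, but $h_\Gamma$ is defined in the paper only for $\Gamma\colon\mathbb{R}^{k+1}\to M$, and the obvious analogue of $\Delta_{\mathbb{R}}$ over $\mathbb{R}^{k+2}$ produces $(k+2)$-simplices whose boundaries are $(k+1)$-cycles, the wrong degree for $h$. The correct $H$ is obtained from the $(k+1)$-chain in $D(\mathbb{R}^{k+2},M)$ given by $G(z)(w):=\Gamma(w_0z_0,\,w_1z_0,\,w_2z_1,\ldots,w_{k+1}z_k)$ (two slots of the domain of $\Gamma$ scaled by the single simplex coordinate $z_0$), and setting $H:=h_{\mathbb{R}^{k+2}}\bigl([\partial G]_k\bigr)$. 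A direct check shows $L_i^*G=(\Gamma_i)_*\circ\Delta_{\mathbb{R}}$ for your three linear maps $L_i$, so naturality of $h$ gives $h_{\Gamma_i}=H\circ L_i$, after which your chain-rule split of $\partial_{y_0}\cdots\partial_{y_k}\bigl(H(y_0,y_0,y_1,\ldots,y_k)\bigr)\big|_0$ into $\partial_a\partial_{y_1}\cdots\partial_{y_k}H\big|_0+\partial_b\partial_{y_1}\cdots\partial_{y_k}H\big|_0$ goes through. Note that no Fa\`a di Bruno cross-terms arise in this particular split, because $y_0$ is differentiated only once; so the vanishing property is not actually needed at this step, though it is needed to make sense of the $U(1)$-valued mixed partials in the first place. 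Finally, your invocation of an Eilenberg--MacLane relation $c\circ\tilde{\sigma}-\mathrm{sgn}(\sigma)c=\partial b$ for a single simplex $c$ is too strong; what Corollary \ref{Corollary: Change of Orientation} furnishes, via barycentric subdivision and the chain homotopy to the identity, is only the homology-level identity $[\partial(c\circ\tilde{\sigma})]_k=\mathrm{sgn}(\sigma)[\partial c]_k$ in $H_k(-_k)$, but that is all you use.
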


\begin{proof}
Let us first prove the multilinearity of the map $A_h \colon T_pM \times \cdots \times T_pM \rightarrow \mathbb{R}$ for every fixed $p \in M$. This follows from an analog approach as given in \cite[Lemma 2.7]{Schreiber-Waldorf-Smooth}. Consider therefore tangent vectors $v_i \in T_pM$ for $0 \leq i \leq k$ and $v'_j \in T_pM$ for some fixed $0 \leq j \leq k$. We now wish to show that
\begin{equation} \label{Equation: Multilinearity final}
    A_h(p,v_0, ..., v_j + \lambda v'_j,..,v_k) =  A_h(p,v_0, ..., v_j,..,v_k) + \lambda A_h(p,v_1, ...,v'_j, ...,v_k).
\end{equation}
Let $\Gamma,\Gamma' \colon \mathbb{R}^{k+1} \rightarrow M$ be two smooth functions for the tangent vectors $(v_0, ...,v_j,...,v_k)$ and $(v_0,...,v'_j,...,v_k)$, that is they satisfy equation (\ref{Equation: Gamma Conditions}). Choose now $\varphi \colon U \rightarrow M$ a chart centered at $p$ and construct the smooth map $\widetilde{\Gamma} \colon W \rightarrow M$ defined on a suitable neighborhood $W \subset \mathbb{R}^{k+1}$ of the origin by
\begin{equation*}
    \widetilde{\Gamma} := \varphi \left(  \varphi^{-1} \circ \Gamma + \lambda \varphi^{-1} \circ \Gamma' \right)
\end{equation*}
First, notice that $\widetilde{\Gamma}(0) = p$. Moreover, we have that
\[
 \left.\frac{\partial }{\partial x_i} \right\vert_{x_i = 0} \widetilde{\Gamma}(0,..,x_i,...,0) = \begin{cases}
      v_j + \lambda v'_j &\text{ for } i = j \\
      v_i &\text{ else }

 \end{cases}
\]
which then implies by Lemma \ref{Lemma: The pointwise form is well defined} that
\begin{equation} \label{Equation: Multilinearity}
A_h(p,v_0, ..., v_j + \lambda v'_j, ..., v_k) =  \left.\frac{\partial^{k+1} h_{\widetilde{\Gamma}}}{\partial x_0 \cdots \partial x_k} \right\vert_{(0,...,0)}.
\end{equation}
Recall that $h_{\widetilde{\Gamma}}$ denotes the smooth map
\[
h([\partial(\widetilde{\Gamma}_* \circ \Delta_{\mathbb{R}})]_k) \colon \mathbb{R}^{k+1} \rightarrow U(1).
\]
Now notice that
\[
\widetilde{\Gamma}_* \circ \Delta_{\mathbb{R}} = \varphi_* \left(   \varphi^{-1}_* \left( \Gamma_* \circ \Delta_{\mathbb{R}} \right) + \lambda \varphi_*^{-1} \left( \Gamma'_* \circ \Delta_{\mathbb{R}} \right) \right)
\]
which gives
\[
\left.\frac{\partial^{k+1} h_{\widetilde{\Gamma}}}{\partial x_0 \cdots \partial x_k} \right\vert_{(0,...,0)} = \left.\frac{\partial^{k+1} h_{{\Gamma}}}{\partial x_0 \cdots \partial x_k} \right\vert_{(0,...,0)} + \lambda \left.\frac{\partial^{k+1} h_{{\Gamma'}}}{\partial x_0 \cdots \partial x_k} \right\vert_{(0,...,0)}.
\]
Together with equation (\ref{Equation: Multilinearity}), this shows that identity (\ref{Equation: Multilinearity final}) holds. \\

The antisymmetry property of $A_h$ essentially follows from Corollary \ref{Corollary: Change of Orientation}. Indeed, consider again for some $p \in M$ fixed tangent vectors $v_i \in T_pM$ for $0 \leq i \leq k$. Let $\Gamma \colon \mathbb{R}^{k+1} \rightarrow M$ be a smooth map such that $\Gamma(0) = p$ and satisfying equation (\ref{Equation: Gamma Conditions}) such that
\[
A_h(p,v_0, ...,v_k) = \left.\frac{\partial^{k+1} h_{{\Gamma}}}{\partial x_0 \cdots \partial x_k} \right\vert_{(0,...,0)}.
\]
Given any permutation $\sigma \in S_{k+1}$ it then follows that the smooth map $\sigma_* \Gamma$ given by
\[
\mathbb{R}^{k+1} \xrightarrow{\sigma_*} \mathbb{R}^{k+1} \xrightarrow{\Gamma} M
\]
is precisely such that
\[
A_h(p,v_{\sigma(0)}, ..., v_{\sigma(k)}) = \left.\frac{\partial^{k+1} h_{\sigma_*{\Gamma}}}{\partial x_0 \cdots \partial x_k} \right\vert_{(0,...,0)}.
\]
The smooth map $h_{\sigma_*{\Gamma}}$ is by definition
\[
h([\partial((\sigma_*\Gamma)_* \circ \Delta_{\mathbb{R}})]_k) \colon \mathbb{R}^{k+1} \rightarrow U(1).
\]
where
\[
(\sigma_*\Gamma)_* \circ \Delta_{\mathbb{R}} \colon | \Delta^{k+1} | \times \mathbb{R}^{k+1} \xrightarrow{\Delta_{\mathbb{R}}} \mathbb{R}^{k+1} \xrightarrow{\sigma_*} \mathbb{R}^{k+1} \xrightarrow{\Gamma} M
\]
This shows that as smooth $(k+1)$-simplices in $D(\mathbb{R}^{k+1},M)$
\[
(\sigma_*\Gamma)_* \circ \Delta_{\mathbb{R}} = \sigma_* \left( \Gamma_* \circ \Delta_{\mathbb{R}} \right),
\]
where the right-hand notation is the one introduced in preparation for Corollary \ref{Corollary: Change of Orientation}. Notice that on the right-hand side to act with $\sigma$ on a smooth $(k+1)$-simplex consider $\sigma$ as an element in $S_{k+2}$ preserving $0 \in \{0,...,k+1\}$. It follows now from Corollary \ref{Corollary: Change of Orientation} that
\[
h([\partial((\sigma_*\Gamma)_* \circ \Delta_{\mathbb{R}})]_k) = h([\partial(\sigma_*(\Gamma_* \circ \Delta_{\mathbb{R}})]_k) = h([\partial((\Gamma \circ \Delta_{\mathbb{R}})]_k)^{\mathrm{sgn}(\sigma)}.
\]
By taking the derivatives it then follows
\[
\left.\frac{\partial^{k+1} h_{\sigma_*{\Gamma}}}{\partial x_0 \cdots \partial x_k} \right\vert_{(0,...,0)} = \mathrm{sgn}(\sigma) \left.\frac{\partial^{k+1} h_{{\Gamma}}}{\partial x_0 \cdots \partial x_k} \right\vert_{(0,...,0)}
\]
which shows precisely that $A_h$ is antisymmetric. \\

The smoothness of $A_h$ follows essentially from the fact that $h$ as a morphism of presheaves on $\mathbf{Cart}$ is smooth.
\end{proof}

To recap briefly, Lemma \ref{Lemma: The pointwise form is well defined} together with Lemma \ref{Lemma: A_h is antisymmetric, smooth and multilinear} show that to every smooth holonomy map $h \colon H_k(\mathbb{M}_k) \rightarrow U(1)$ we can associate a differential $(k+1)$-form $A_h \in \Omega^{k+1}(M)$. An alternative construction of the form $A_h$ relying only on antisymmetric, normalized cochains together with the Van Est morphism is presented briefly in Remark \ref{Remark: The form A_h via the Van Est map}.    \\

\begin{lemma} \label{Lemma: Curvature form gives differential character}
    Let $h$ be as above and denote by $A_h$ its associated curvature $(k+1)$-form in $\Omega^{k+1}(M)$. The component $h_{\mathrm{pt}}$ induces a morphism of abelian groups $\chi \colon Z_k(M) \rightarrow U(1)$. Then for any smooth $(k+1)$-chain $\eta \in C_{k+1}(M)$ in $M$
    \[
    \chi(\partial \eta) = \mathrm{exp} \left( 2 \pi i \int_{\eta} A_h \right).
    \]
    In particular, the pair $(\chi,A_h)$ defines a differential character of degree $k+1$ on $M$.
\end{lemma}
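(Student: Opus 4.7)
The plan is to combine the Van Est homomorphism for the pair Lie groupoid (Section \ref{Section: Van Est Map}) with the Riemann-integral-via-triangulations formula of Theorem \ref{Theorem: Riemann Integral}, adapting the one-dimensional strategy of Schreiber--Waldorf \cite{Schreiber-Waldorf-Smooth} to general $k$. By additivity of both sides in the chain $\eta$, the statement reduces immediately to the case of a single smooth simplex $\eta \colon |\Delta^{k+1}| \to M$. A further reduction extends $\eta$ via Lemma \ref{Lemma: Subset diffeology of compact simplex} to a smooth map $\tilde{\eta} \colon W \to M$ on an open neighborhood $W$ of $|\Delta^{k+1}|$ in $\mathbb{R}^{k+1}$, pulls $h$ back to a smooth holonomy morphism $\tilde{h}$ on $W$ with associated curvature $A_{\tilde{h}} = \tilde{\eta}^* A_h$, and reduces the claim to the corresponding identity on $W$ evaluated on the inclusion $\iota := \mathrm{id}_{|\Delta^{k+1}|}$.

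On $W$ fix a convex open neighborhood $V$ of $|\Delta^{k+1}|$ small enough that a branch $\widetilde{\log}$ of the logarithm can be chosen consistently on $\tilde{\chi}(\partial \sigma^{\mathrm{aff}})$ for every affine $(k+1)$-simplex $\sigma^{\mathrm{aff}}$ with vertices in $V$. Define a normalized, $S_{k+1}$-antisymmetric cochain
\[
\Omega \in C^{k+1}(\mathrm{Pair}(V), \mathbb{R})_V, \qquad \Omega(x_0, \ldots, x_{k+1}) := \frac{1}{2\pi i} \widetilde{\log}\, \tilde{\chi}(\partial \sigma^{\mathrm{aff}}_{x_0, \ldots, x_{k+1}}),
\]
where $\sigma^{\mathrm{aff}}_{x_0, \ldots, x_{k+1}}$ denotes the affine $(k+1)$-simplex with the listed vertices. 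Normalization is automatic since $\sigma^{\mathrm{aff}}$ degenerates whenever two consecutive vertices coincide, while $S_{k+1}$-antisymmetry follows from Corollary \ref{Corollary: Change of Orientation} applied to $\tilde{\chi}$. The core technical step is to verify $VE(\Omega) = A_{\tilde{h}}$ on $V$: unwinding the Van Est formula at a point $p$ with tangent vectors $v_0, \ldots, v_k$, each summand in the antisymmetrization is an iterated directional derivative of $\Omega(p, p + t_1 v_{s(1)}, \ldots, p + t_{k+1} v_{s(k+1)})$, which upon choosing $\Gamma(t_0, \ldots, t_k) = p + \sum_i t_i v_i$ in the definition \eqref{Equation: Definition of Curvature Form} of $A_h$ matches the top mixed derivative $\partial^{k+1}_{t_0 \cdots t_k} h_\Gamma|_0$, because $\Gamma_* \circ \Delta_{\mathbb{R}}$ produces precisely the affine family of simplices that defines $\Omega$.

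Once $VE(\Omega) = A_{\tilde{h}}$ is established, apply Theorem \ref{Theorem: Riemann Integral} to the oriented compact manifold with corners $|\Delta^{k+1}|$ with the form $\iota^* A_{\tilde{h}}$ and cochain $\iota^*\Omega$. For any sufficiently fine affine oriented triangulation $(K, f)$ of $|\Delta^{k+1}|$ with all simplices contained in $V$, each simplex $f \circ \sigma$ in the triangulation is itself affine, so
\[
2\pi i\, S_{(K,f)}(\iota^* A_{\tilde{h}}) = \sum_{\sigma \in S_\leq(K)_{k+1}} \widetilde{\log}\,\tilde{\chi}(\partial (f \circ \sigma)).
\]
Additivity of $\tilde{\chi}$ and cancellation of internal $k$-faces in the triangulation chain complex, combined with the thin-invariance of $\tilde{\chi}$ absorbing any subdivision of $\partial \iota$ via $k$-skeletal homology in $W_k$, yield the congruence $\sum_\sigma \widetilde{\log}\,\tilde{\chi}(\partial(f \circ \sigma)) \equiv \widetilde{\log}\,\tilde{\chi}(\partial \iota) \pmod{2\pi i \mathbb{Z}}$. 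Passing to the limit by Theorem \ref{Theorem: Riemann Integral} gives $\widetilde{\log}\,\tilde{\chi}(\partial \iota) \equiv 2\pi i \int_{|\Delta^{k+1}|} \tilde{\eta}^* A_h \pmod{2\pi i \mathbb{Z}}$; exponentiating yields $\chi(\partial \eta) = \exp(2\pi i \int_\eta A_h)$, as required. The hardest step is the direct verification $VE(\Omega) = A_{\tilde{h}}$, which requires careful bookkeeping of the chain-rule derivatives against the antisymmetrization over $S_{k+1}$ and relies on the normalization of $\Omega$ to discard the lower-order terms arising from the Taylor expansion of $\exp(2\pi i\, \Omega)$.
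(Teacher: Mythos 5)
Your proposal follows essentially the same two-stage strategy as the paper: build an antiderivative cochain $\Omega$ on the pair groupoid whose Van Est image is the curvature, then invoke the Riemann-integral-as-limit-of-triangulated-sums theorem, and finally collapse the Riemann sum to a single evaluation of $\chi$ on $\partial\eta$ by thin-invariance. The reduction to a single simplex, the extension to an open neighborhood $W$, the definition of $\Omega$ via the affine simplex on the arguments, and the chain-rule/normalization bookkeeping in verifying $VE(\Omega) = A_{\tilde h}$ (including the observation that the lower-order Fa\`a di Bruno terms vanish because simplices with a repeated vertex are $k$-thin) all match the paper's Lemma \ref{Lemma: Van Est Antiderivative} and its surrounding discussion.

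The one genuine divergence is the final ``collapse the Riemann sum'' step. The paper restricts to iterated barycentric subdivisions and invokes Corollary \ref{Corollary: Subdivision Operator}: the explicit chain homotopy $T$ between $S^r$ and the identity lands in $k$-thin chains (by Lemma \ref{Lemma: Compact k-skeletal k-chains}), which is exactly why $[\partial S^r(\eta)]_k = [\partial\eta]_k$ and the Riemann sum is constant along this subfamily. You instead argue for arbitrary fine affine triangulations: interior $k$-faces cancel pairwise, leaving a $k$-cycle supported on $\partial|\Delta^{k+1}|$ that is homologous to $\partial\iota$ inside $\partial|\Delta^{k+1}|$, and since $\partial|\Delta^{k+1}|$ is $k$-dimensional any such bounding chain is automatically $k$-thin. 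This is a cleaner and more elementary route that avoids the subdivision-operator machinery of Appendix A and applies to all affine triangulations at once rather than only to the barycentric ones, making the constancy of the net more transparent; on the other hand the paper's chain-homotopy argument produces the bounding $k$-thin chain explicitly rather than via a homological existence argument. Either version should be accompanied by a word on why the subfamily of triangulations used is adequate for identifying the limit, a point both proofs leave implicit.

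One small caveat to spell out if you write this up in full: the cochain $\Omega$ only sees the vertices $f(v_i)$, so the expression $S_{(K,f)}$ does not require $(K,f)$ to be affine; but your cancellation-of-interior-faces argument does require the affine simplices $\sigma^{\mathrm{aff}}$ spanned by those vertices to assemble into a genuine chain-level triangulation, which is exactly what the affine hypothesis guarantees. Make this dependence explicit, since it is the reason the restriction to affine $(K,f)$ is not cosmetic.
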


The key ingredient to prove this lemma is Theorem \ref{Theorem: Riemann Integral} expressing the integral as a Riemann sum where the partition is taken over triangulations instead of cubes. To apply this theorem, we first have to find for any smooth $(k+1)$-chain $\eta \in C_{k+1}(M)$ a suitable anti-derivative $\Omega_{(\eta,h)}$ of $\eta^*A_h$, i.e.
\[
VE(\Omega_{(\eta,h)}) = \eta^* A_h.
\]
To simplify notation we omit the dependence of $\Omega_{(\eta,h)}$ on $h$ and write $\Omega_{\eta}$.

\begin{definition}
    Given any smooth $(k+1)$-chain $\eta \in C_{k+1}(M)$ define the normalized $(k+1)$-cochain $\Omega_{\eta} \in C^{k+1}(\mathrm{Pair}(|\Delta^{k+1}|), \mathbb{R})$ on a suitable neighborhood $D$ of the diagonal
    \[
    \begin{array}{rcl}
      \Omega_{\eta} \colon  D \subset |\Delta^{k+1}| \times \cdots \times |\Delta^{k+1}| & \rightarrow & \mathbb{R}  \\
          (x_0, ..., x_{k+1}) & \mapsto & \mathrm{log} \left( h\left( [\partial(\eta|_{[x_0, ..., x_{k+1}]})]_k\right) \right) \cdot \frac{1}{2 \pi i},
    \end{array}
    \]
    where here $[x_0, ..., x_{k+1}]$ denotes the convex hull or linear $(k+1)$-simplex as a subset of $|\Delta^{k+1}|$ such that the restriction of $\eta$ to $[x_0, ..., x_{k+1}]$ gives a smooth $(k+1)$-chain denoted by $\eta|_{[x_0, ..., x_{k+1}]}$.
\end{definition}

\begin{lemma} \label{Lemma: Van Est Antiderivative}
    The $(k+1)$-cochain $\Omega_{\eta}$ is a normalized, $S_{k+1}$-antisymmetric $(k+1)$-cochain such that
    \[
    VE(\Omega_{\eta}) = \eta^*A_h.
    \]
\end{lemma}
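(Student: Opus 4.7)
The plan is to verify the three claimed properties of the cochain $\Omega_\eta$ separately: normalization, $S_{k+1}$-antisymmetry, and the Van Est identity $VE(\Omega_\eta) = \eta^*A_h$.

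For normalization, I would argue that if $x_{j-1} = x_j$ for some $1 \leq j \leq k+1$, the affine simplex $[x_0, \ldots, x_{k+1}] \subset |\Delta^{k+1}|$ collapses to a simplex of dimension at most $k$. The parametrized restriction $\eta|_{[x_0, \ldots, x_{k+1}]}$ therefore factors through a plot of dimension at most $k$, and by Lemma \ref{Lemma: Compact k-skeletal k-chains} defines a smooth $(k+1)$-chain in $M_k$. Its boundary is then automatically a $k$-skeletal boundary, its class in $H_k^{\Delta}(M_k)$ vanishes, $h$ evaluates to $1 \in U(1)$, and $\Omega_\eta$ vanishes as required. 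For $S_{k+1}$-antisymmetry, I would invoke Corollary \ref{Corollary: Change of Orientation}: a permutation $s \in S_{k+1}$ acting on the vertices $x_1, \ldots, x_{k+1}$ reorients the $(k+1)$-simplex by $\mathrm{sgn}(s)$, which reorients its boundary cycle accordingly; under the homomorphism $h$ this raises the value to the $\mathrm{sgn}(s)$-th power, and taking $(2\pi i)^{-1}\log$ transfers the exponent to the desired prefactor.

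The substantive part is the Van Est identity. Fix $x \in |\Delta^{k+1}|$, tangent vectors $v_1, \ldots, v_{k+1} \in T_x|\Delta^{k+1}|$, and work in a local chart. Setting $\Gamma(r_1, \ldots, r_{k+1}) = \eta(x + \sum_i r_i v_i)$, the definition \eqref{Equation: Definition of Curvature Form} of $A_h$ unwinds into
\[
h_\Gamma(r) \;=\; \exp\bigl(2\pi i\,\Omega_\eta(x,\, x + r_1 v_1,\, \ldots,\, x + r_{k+1}v_{k+1})\bigr).
\]
The normalization established above forces every mixed partial derivative of $\Omega_\eta$ of order strictly less than $k+1$ at the full diagonal $(x, \ldots, x)$ to vanish. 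Consequently, in the expansion of the exponential, only the leading-order contribution survives, and one obtains
\[
(\eta^*A_h)(v_1, \ldots, v_{k+1})|_x \;=\; \left.\frac{\partial^{k+1}}{\partial r_1 \cdots \partial r_{k+1}}\right|_0 \Omega_\eta\bigl(x,\, x + r_1 v_1,\, \ldots,\, x + r_{k+1}v_{k+1}\bigr),
\]
i.e.\ the top-order mixed partial on the star-shaped simplex emanating from $x$.

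On the Van Est side, unwinding the iterated differentiations as in Remark \ref{Remark: Formula of the Van Est morphism} expresses $(v_{s(1)} \cdots v_{s(k+1)}\Omega_\eta)(x)$ as the top-order mixed partial evaluated on the telescoping family $(x,\, x + t_1 v_{s(1)},\, x + t_1 v_{s(1)} + t_2 v_{s(2)},\, \ldots)$. The main obstacle will be reconciling these two different parametrized families of simplices. The plan is to apply the triangular affine substitution $r_i = \sum_{j \geq i} t_{j}$, whose Jacobian is unity, which relates the star and telescoping parametrizations; then invoke the vanishing of all lower-order mixed partials on the diagonal together with the $S_{k+1}$-antisymmetry of $\Omega_\eta$ from the second step to collapse the cross terms produced by the substitution. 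After antisymmetrizing over $s \in S_{k+1}$, the Van Est sum reduces to the same top-order mixed partial computed on the star simplex, yielding $VE(\Omega_\eta) = \eta^*A_h$.
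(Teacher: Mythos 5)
Your proposal tracks the paper's proof almost step for step. The normalization and antisymmetry arguments are exactly the paper's: normalization because a tuple with a repeated vertex gives a $k$-thin $(k+1)$-simplex whose boundary class vanishes in $H_k^{\Delta}(M_k)$, and antisymmetry by Corollary \ref{Corollary: Change of Orientation}. For the Van Est identity the paper likewise reduces to computing a top-order mixed partial of $\Omega_\eta$ on the ``star'' family $(x,\,x+z_1e_1,\dots,x+z_{k+1}e_{k+1})$, identifies that family with the restriction $\tilde\eta|_{[x,\,x+z_1e_1,\dots]}$ (after fixing a smooth extension $\tilde\eta$ on a convex neighbourhood and reducing to coordinate vector fields by multilinearity), and closes by comparing with the definition \eqref{Equation: Definition of Curvature Form} of $A_h$ via the chain rule for $h_\Gamma = \exp(2\pi i\,\gamma_{x_0})$ together with $h_\Gamma(0)=1$.

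The one place you go beyond the paper is in flagging the tension between the ``telescoping'' family implicit in Remark \ref{Remark: Formula of the Van Est morphism} (each evaluation is at the previous argument) and the ``star'' family appearing in equation \eqref{Equation: Partial Derivative in each direction}. You are right that this gap needs addressing; the paper simply asserts \eqref{Equation: Partial Derivative in each direction} from the Remark. However, your proposed mechanism — a triangular affine substitution $r_i = \sum_{j\geq i} t_j$ — is not the resolution. The telescoping point $y_i = x + \sum_{j\leq i} t_j v_j$ depends on \emph{all} of $t_1,\dots,t_i$ and on the vectors $v_1,\dots,v_i$, while the star point $x + r_i v_i$ depends only on $r_i$ and $v_i$; there is no affine change of variables in the parameters that transforms one family into the other when the $v_i$ are distinct. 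What actually reconciles the two is precisely your follow-up observation: computing the top-order mixed partial on the telescoping family via the multivariable chain rule produces the star term plus cross terms involving repeated slot-derivatives, and each cross term is a mixed partial of order $k+1$ hitting fewer than $k+1$ distinct slots; those are forced to vanish at the diagonal by normalization combined with antisymmetry (one checks, e.g., that $\Omega$ factors as the product of the pairwise differences times a smooth function). So the substitution step should be dropped and replaced by that direct chain-rule computation. With that correction the argument is sound and matches the paper's.
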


\begin{proof}
Notice that it suffices to consider the case $\eta \colon |\Delta^k| \rightarrow M$. Also, fix a smooth extension $\tilde{\eta} \colon U \rightarrow M$ where $|\Delta^{k+1}| \subset U \subset \mathbb{R}^{k+1}$ is a convex open neighborhood\footnote{See Remark \ref{Remark: chains with the subset diffeology}.}. Accordingly, we define
\[
\begin{array}{rcl}
        \tilde{\Omega}_{\eta} \colon D' \subset U \times \cdots \times U  & \rightarrow & \mathbb{R} \\
        (x_0, ..., x_{k+1}) & \mapsto & \mathrm{log} \left( h_*\left( [\partial(\tilde{\eta}|_{[x_0, ..., x_{k+1}]})] \right) \right) \cdot \frac{1}{2 \pi i}
\end{array}
\]
as an extension of $\Omega_{\eta}$, where again $D'$ denotes a suitable neighborhood of the diagonal. This extension is well defined since for all $(x_0, ..., x_{k+1}) \in D$ the convex hull $[x_0, ..., x_{k+1}] \subset U$ by the convexity of $U$. Hence $\tilde{\eta}|_{[x_0, ..., x_{k+1}]}$ indeed defines a smooth $(k+1)$-simplex in $M$. \\

Let us first show that $\Omega_\eta$ is $S_{k+1}$-antisymmetric and normalized. The $S_{k+1}$-action on
\[
\mathrm{Pair}(|\Delta^{k+1}|)^{(k+1)} = \underbrace{|\Delta^{k+1}| \times \cdots \times |\Delta^{k+1}|}_{(k+2)\text{-times}}
\]
is given by permutations that fix the first entry. That is, for $s \in S_{k+1}$
\[
s \cdot (x_0, ..., x_{k+1}) = (x_0, x_{s(1)}, ..., x_{s(k+1)}).
\]
The $S_{k+1}$-antisymmetry property of $\Omega_{\eta}$ now follows from Corollary \ref{Corollary: Change of Orientation}:
\[
\left[ \partial(\eta|_{[x_0,x_1,..., x_{k+1}]})\right]_k = \mathrm{sgn}(s) \left[ \partial(\eta|_{[x_0,x_{s(1)}..., x_{s(k+1)}]})\right]_k.
\]

Tho show that $\Omega_{\eta}$ is normalized, consider a tuple $(x_0, ..., x_{k+1}) \in D$ such that there is some $1 \leq i \leq k+1$ with $x_i = x_{i-1}$. The associated $(k+1)$-simplex $[x_0,...,x_{k+1}]$ is under this hypothesis a $k$-thin smooth simplex, i.e. $\eta|_{[x_0,...,x_{k+1}]} \colon |\Delta^{k+1}| \rightarrow M_k$ and hence the homology class vanishes $[\partial [x_0, ..., x_{k+1}] ]_k = 0$. \\

We are left to show that $\tilde{\Omega}_{\eta}$ is an anti-derivative of $\tilde{\eta}^*A_h$. The pullback $\tilde{\eta}^*A_h$ is a $(k+1)$-form on the open subset $U$ of $\mathbb{R}^{k+1}$ and is therefore fully specified by its values on $\left.\frac{\partial}{\partial x_i} \right\vert_{x_0}$ for $ 1 \leq i \leq k +1$ and $x_0 \in U$. This fact, together with by the $S_{k+1}$-antisymmetry of $\tilde{\Omega}_{\eta}$ imply that it suffices to show that for all $x_0 \in U$
    \[
    VE(\tilde{\Omega}_{\eta})\left( \frac{\partial}{\partial x_1}, ..., \frac{\partial}{\partial x_{k+1}} \right)(x_0) = \tilde{\eta}^*A_h\left( \left.\frac{\partial}{\partial x_1} \right\vert_{x_0}, ...,  \left.\frac{\partial}{\partial x_{k+1}} \right\vert_{x_0} \right).
    \]
By Remark \ref{Remark: Formula of the Van Est morphism} it follows that:
\begin{equation} \label{Equation: Partial Derivative in each direction}
    VE(\tilde{\Omega}_{\eta})\left( \frac{\partial}{\partial x_1} ... \frac{\partial}{\partial x_{k+1}} \right)(x_0) = \left.\frac{\partial^{k+1} \tilde{\Omega}_{\eta}(x_0,-) }{\partial x_1 ... \partial x_{k+1}} \right\vert_{(x_0,...,x_0)}.
    \end{equation}
    Notice that for $x_0 \in U$ fixed we have
    \[
    \Omega_{\eta}(x_0,-) \colon D'_{x_0} \subset \underbrace{U \times \cdots \times U}_{(k+1)\text{-times}} \rightarrow \mathbb{R}
    \]
    where $D'_{x_0} = (i_{x_0})^{-1}(D')$ for $i_{x_0} \colon \left\{x_0 \right\} \times U \times \cdots \times U \hookrightarrow U \times \cdots \times U$ and equation (\ref{Equation: Partial Derivative in each direction}) means that we take the $i$-th partial derivative with respect to the $i$th copy of $U$. \\

    Keeping $x_0 \in U$ fixed, define the map
    \[
    U_0 \subset \mathbb{R}^{k+1} \rightarrow U \times \cdots \times U
    \]
    on a neighborhood of the origin sending $(z_1, ..., z_{k+1})$ to the tuple $(x_0 + z_1 e_1, ..., x_0 + z_i e_i, ..., x_0 + z_{k+1} e_{k+1})$. Here $e_i$ denotes the $i$-th standard basis element in $\mathbb{R}^{k+1}$. By composition with $\tilde{\Omega}_h(x_0,-)$ we get a map $\gamma_{x_0}$, which has the property that
    \begin{equation} \label{Equation: Little gamma}
    \left.\frac{\partial \gamma_{x_0} }{\partial z_1 \cdots \partial z_{k+1}} \right\vert_{(0,..,0)} =  VE(\tilde{\Omega}_{\eta})\left( \frac{\partial}{\partial x_1},...,\frac{\partial}{\partial x_{k+1}} \right)(x_0).
    \end{equation}
    We claim the following:
    \begin{equation}\label{Equation: Identity}
    \left.\frac{\partial \gamma_{x_0} }{\partial z_1 \cdots \partial z_{k+1}} \right\vert_{(0,..,0)} = \tilde{\eta}^*A_h\left( \left.\frac{\partial}{\partial x_1} \right\vert_{x_0}, ...,  \left.\frac{\partial}{\partial x_{k+1}} \right\vert_{x_0} \right).
     \end{equation}
    Indeed, denote by $v_i:= \left.\frac{\partial \tilde{\eta}}{\partial x_i} \right\vert_{x_0}$. Then by definition of the curvature form $A_h$ we have that
    \begin{equation} \label{Equation: Big Gamma}
    A_h(v_1, ..., v_{k+1}) = \left.\frac{\partial^{k+1} h_{\Gamma}}{\partial x_1 \cdots \partial x_{k+1}} \right\vert_{(0,...,0)},
    \end{equation}
    for $\Gamma \colon \mathbb{R}^{k+1} \rightarrow M$ some smooth map with $\Gamma(0) = \tilde{\eta}(x_0)$ and  $\left.\frac{\partial }{\partial x_i} \right\vert_{x_i = 0} \Gamma(0,..,x_i,...,0) = v_i$  for all $1 \leq i \leq k+1$. We have a preferred candidate for the map $\Gamma$
    \[
    \Gamma(z_1, ..., z_{k+1}) =  \tilde{\eta} \left(  x_0 + \sum_{i = 1}^{k+1} z_i e_i \right).
    \]
    Notice that for any fixed $x_0 \in U$ this is well defined on a suitable neighborhood $V \subset \mathbb{R}^{k+1}$ around the origin. This map
    \[
    \Gamma \colon V \rightarrow M
    \]
    satisfies $\Gamma(0) = \tilde{\eta}(x_0)$ and the partial derivatives at the origin precisely agree with the $v_i$. Now we recall the definition of $h_{\Gamma}$. Using the standard family of simplices in $\mathbb{R}^{k+1}$ we define in a suitable neighborhood of the origin $W \subset  \mathbb{R}^{k+1}$
    \[
    \begin{array}{rcl}
       W \times |\Delta^{k+1}| & \rightarrow &  M \\
         (z , t) & \mapsto & \Gamma( zt )
    \end{array}
    \]
    and in addition, we notice that for every $z \in W$ the smooth $(k+1)$-simplex in $M$ defined by
    \[
    t \mapsto \Gamma(z \cdot t) = \tilde{\eta} \left( x_0 + \sum_{i = 1}^{k+1} (z_it_i) e_i\right) ,
    \]
    is precisely the restriction of $\tilde{\eta}$  to the simplex spanned by $\left\{x_0, x_0 + z_ie_i, ..., x_0 + z_{k+1}e_{k+1} \right\} $ i.e. $\eta|_{[x_0, x_0 + z_ie_i, ..., x_0 + z_{k+1}e_{k+1} ]}$. By definition of $h_{\Gamma}$
    \[
    h_{\Gamma} := h \left( \left[ \partial (\Gamma_* \circ \Delta_{\mathbb{R}})\right]_k \right) \colon \mathbb{R}^{k+1} \rightarrow U(1),
    \]
    it follows that
    \[
    \mathrm{log} \left( h_{\Gamma} \right) = 2 \pi i \cdot \gamma_{x_0},
    \]
    when restricted to the open neighborhood of the origin $U_0 \cap W$. Applying the chain rule together with the fact that $h_{\Gamma}(0) = 1$ it follows
    \[
     \left.\frac{\partial \gamma_{x_0} }{\partial z_1\cdots \partial z_{k+1}} \right\vert_{(0,..,0)} = \left.\frac{\partial^{k+1} h_{\Gamma}}{\partial z_1 \cdots \partial z_{k+1}} \right\vert_{(0,...,0)},
    \]
    which proves the claim of (\ref{Equation: Identity}). Now equation (\ref{Equation: Little gamma}) together with equation (\ref{Equation: Big Gamma}) imply that indeed for any $x_0 \in U$ one has
    \[
    VE(\tilde{\Omega}_{\eta})\left( \frac{\partial}{\partial x_1}, ...,\frac{\partial}{\partial x_{k+1}} \right)(x_0) = \tilde{\eta}^*A_h\left( \left.\frac{\partial}{\partial x_1} \right\vert_x, ...,  \left.\frac{\partial}{\partial x_{k+1}} \right\vert_{x_0} \right).
    \]
    which implies that for all $x_0 \in |\Delta^{k+1}|$
    \[
    VE(\Omega_{\eta})\left( \frac{\partial}{\partial x_1}, ...,\frac{\partial}{\partial x_{k+1}} \right)(x_0) = \eta^*A_h\left( \left.\frac{\partial}{\partial x_1} \right\vert_x, ...,  \left.\frac{\partial}{\partial x_{k+1}} \right\vert_{x_0} \right).
    \]
    This concludes the proof.
\end{proof}

Now we are ready to give a proof of Lemma \ref{Lemma: Curvature form gives differential character}.

\begin{proof}[Proof of Lemma \ref{Lemma: Curvature form gives differential character}]

Let $h$ be a fixed smooth holonomy morphism and let $A_h$ be its associated curvature $(k+1)$-form. Let $\eta$ be any $(k+1)$-chain in $M$. By Theorem \ref{Theorem: Riemann Integral} we have that the integral can be computed as a limit of Riemann sums
\[
\int_{\eta} A_h = \int_{|\Delta^{k+1}|} \eta^*A_h = \underset{(K,f)}{\mathrm{lim}} S_{(K,f)}(\eta^*A_h)
\]
which then gives
\[
\mathrm{exp} \left( 2 \pi i \int_{\eta} A_h \right) = \underset{(K,f)}{\mathrm{lim}} \mathrm{exp} \left( 2\pi i   S_{(K,f)}(\eta^*A_h)  \right)
\]

Let therefore $(K,f)$ be an oriented triangulation of $|\Delta^{k+1}|$. For a fine enough\footnote{This means that the vertices of all $(k+1)$-dimensional simplices in $(K,f)$ lie in the neighborhood $D$ of the diagonal.} triangulation $(K,f)$ it follows that the Riemann sum $ S_{(K,f)}(\eta^*A_h)$ is given by
\begin{equation*}
\sum_{\sigma \in NS_{\leq} (K)_{k+1}} \Omega_h( (i_f)(\sigma) ) = \sum_{\sigma \in NS_{\leq}(K)_{k+1}} \mathrm{log} \left( h \left( [\partial(\eta|_{\sigma})]_k \right) \right) \cdot \frac{1}{2 \pi i}
\end{equation*}
such that by taking the exponential we get
\begin{align*}
\mathrm{exp} \left( 2\pi i \cdot S_{(K,f)}(\eta^*A_h)  \right)  &= \prod_{\sigma \in NS_{\leq}(K)_{k+1}} h([\partial(\eta|_{\sigma})]_k) \\
&= \left( h \left( \left[ \sum_{\sigma \in NS_{\leq}(K)_{k+1}} \partial(\eta|_{\sigma}) \right]_k \right) \right).
\end{align*}

We can always produce such a fine enough triangulation by an iterated application of the barycentric subdivision since this creates arbitrary small simplices \footnote{This follows from the fact that the diameter of each simplex in the barycentric subdivision of $|\Delta^{n}|$ is at most $\frac{n}{n+1}$ times the diameter of $|\Delta^n|$. An iterated application of barycentric subdivision therefore creates simplices of diameter at most $\left( \frac{n}{n+1} \right)^r$ which approaches $0$ for $r \rightarrow \infty$. For a discussion of this fact see \cite[Proposition 2.21]{Hatcher}.}. Therefore, consider $(K,f)$ to be a triangulation induced by a repeated barycentric subdivision of the smooth simplex $|\Delta^{k+1}|$ endowed with the standard orientation. We wish to show that the homology classes
\begin{equation} \label{Equation: The Subdivision equation}
\left[ \sum_{\sigma \in NS_{\leq}(K)_{k+1}} \partial(\eta|_{\sigma}) \right]_k = \left[ \partial(\eta) \right]_k
\end{equation}
agree in $H_{k}(M_k)$, such that in turn, the right-hand side is independent of the triangulation $(K,f)$. To see this, note first by Corollary \ref{Corollary: Subdivision Operator} that
\begin{equation} \label{Equation: The Subdivision operator equation}
\left[ S^r(\partial \eta ) \right]_k = [\partial \eta]_k.
\end{equation}

where $S^r$ is the iterated subdivision operator. From Example \ref{Example: Subdivision} we conclude that
\begin{equation} \label{Equation: The Baycenter Equation}
\left[ \sum_{\sigma \in NS_{\leq}(K)_{k+1}} \partial(\eta|_{\sigma}) \right]_k  =  \left[ \partial\left( S^r(\eta) \right) \right]_k = \left[ S^r( \partial \eta ) \right]_k
\end{equation}
and identity (\ref{Equation: The Subdivision equation}) now follows from (\ref{Equation: The Subdivision operator equation}) and (\ref{Equation: The Baycenter Equation}).
We conclude that
\[
\mathrm{exp} \left( 2\pi i  S_{(K,f)}(\eta^*A_h)  \right)=  h([\partial \eta]_k)
\]
and as such the right-hand side is independent of the triangulation. That is, the net \newline $\mathrm{exp}\left( 2\pi i \cdot S_{(-)}(\eta^*A_h) \right)$ indeed converges to $h([\partial \eta])$, resulting in the desired identity
\[
\mathrm{exp} \left( 2 \pi i \int_{\eta} A_h  \right) =  h([\partial \eta]).
\]
\end{proof}

\begin{theorem} \label{Corollary: The morphism Phi is an iso}
For $M$ a smooth manifold and $k \geq 1$, the morphism of abelian groups
\[
\begin{array}{rcl}
     \Phi \colon \hat{H}^{k+1}(M;\mathbb{Z}) & \rightarrow & \mathrm{Hom}_{\mathrm{PSh}(\mathbf{Cart},\mathbf{Ab})} \left(H_k(\mathbb{M}_k) ,U(1) \right) \\
     \chi & \mapsto & h_{\chi}
\end{array}
\]
is an isomorphism whose inverse is given by the assignment
\[
\begin{array}{rcl}
   \Psi \colon \mathrm{Hom}_{\mathrm{PSh}(\mathbf{Cart},\mathbf{Ab})} \left( H_k(\mathbb{M}_k) ,U(1) \right)  & \rightarrow &  \hat{H}^{k+1}(M;\mathbb{Z}) \\
    h & \mapsto &  \chi = h_{\mathrm{pt}} \circ q
\end{array}
\]
where $q$ denotes the quotient map $Z_k(M) \rightarrow H_k(M_k)$.
\end{theorem}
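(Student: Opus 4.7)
The plan is to verify that $\Phi$ and $\Psi$ are mutually inverse homomorphisms of abelian groups. Well-definedness of $\Phi$ is Corollary \ref{Corollary: The morphism Phi}, while well-definedness of $\Psi$ is the combination of Lemma \ref{Lemma: The pointwise form is well defined}, Lemma \ref{Lemma: A_h is antisymmetric, smooth and multilinear}, and Lemma \ref{Lemma: Curvature form gives differential character}: the pair $(h_{\mathrm{pt}} \circ q, A_h)$ is a differential character of degree $k+1$. That $\Psi$ is a group homomorphism follows from the linearity of the pointwise definition \eqref{Equation: Definition of Curvature Form} in $h$ and the pointwise group structure on $U(1)$-valued functions.

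The first direction, $\Phi \circ \Psi = \mathrm{id}$, is obtained from naturality of $h$ as a morphism of presheaves. Fix a smooth holonomy morphism $h$ and set $\chi := h_{\mathrm{pt}} \circ q$. For any Cartesian space $U$, any homology class $[\sigma]_k \in H_k(\mathbb{M}_k)(U)$ represented by $\sigma \colon |\Delta^k| \to D(U,M)_k$, and any point $u \in U$, the inclusion $u \colon \mathrm{pt} \to U$ induces a commutative square whose evaluation yields
\[
h_U([\sigma]_k)(u) \;=\; h_{\mathrm{pt}}([\sigma|_{\{u\}}]_k) \;=\; \chi'([\sigma|_{\{u\}}]_k) \;=\; (h_\chi)_U([\sigma]_k)(u),
\]
where the middle equality uses the factorization \eqref{Equation: Differential Character is thin invariant} and the last equality is the defining formula \eqref{Equation: Smooth Assignemt DiffChar }. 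Since $U$ and $u \in U$ were arbitrary, $h_\chi = h$, proving $\Phi(\Psi(h)) = h$.

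For the other direction, $\Psi \circ \Phi = \mathrm{id}$, fix a differential character $(\chi, \omega_\chi)$ and form $h_\chi = \Phi(\chi)$. At the one-point space the definition of $h_\chi$ gives $(h_\chi)_{\mathrm{pt}}([\sigma]_k) = \chi'([\sigma]_k)$, so $(h_\chi)_{\mathrm{pt}} \circ q = \chi' \circ q = \chi$ by the factorization through $H_k^\Delta(M_k)$. It remains to identify the curvature form $A_{h_\chi}$ with $\omega_\chi$. By Lemma \ref{Lemma: Curvature form gives differential character} applied to $h_\chi$, the pair $(\chi, A_{h_\chi})$ satisfies $\chi(\partial \eta) = \exp(2\pi i \int_\eta A_{h_\chi})$ for every $\eta \in C_{k+1}(M)$, and by the definition of a differential character the same identity holds with $\omega_\chi$. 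The curvature form of a differential character is uniquely determined (as noted after Definition \ref{Definition: Differential Character}), so $A_{h_\chi} = \omega_\chi$, giving $\Psi(h_\chi) = (\chi, \omega_\chi)$.

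The substantive content of the proof has already been absorbed into the preceding lemmas: the smoothness and naturality of $h_\chi$ (Proposition \ref{Proposition: Holonomy morphism associated to Differential Character}), the well-definedness of the curvature assignment (Lemmas \ref{Lemma: The pointwise form is well defined} and \ref{Lemma: A_h is antisymmetric, smooth and multilinear}), and the compatibility with the exponential integration via triangulated Riemann sums (Lemma \ref{Lemma: Curvature form gives differential character}). The main potential pitfall is the first direction: one must verify that a morphism of presheaves of abelian groups $h \colon H_k(\mathbb{M}_k) \to U(1)$ is entirely determined by its component $h_{\mathrm{pt}}$ through the pointwise formula above, which is purely a naturality check but relies crucially on the construction of $H_k(\mathbb{M}_k)$ as a presheaf whose transition maps restrict plots $\sigma \colon |\Delta^k| \to D(U,M)_k$ fiberwise.
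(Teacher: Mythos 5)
Your proof is correct and follows the same overall strategy as the paper: establish well-definedness of $\Psi$ via Lemma~\ref{Lemma: Curvature form gives differential character}, then check the two compositions. The only substantive difference is in proving $\Phi\circ\Psi = \mathrm{id}$: the paper reduces the comparison of $h$ and $h_\chi$ to their components at $\mathrm{pt}$ by invoking the concreteness of $U(1)$ together with the adjunctions between presheaves, concrete presheaves, and sheaves. You instead apply the naturality square for $u\colon\mathrm{pt}\to U$ directly to compute $h_U([\sigma]_k)(u) = h_{\mathrm{pt}}([\sigma|_{\{u\}}]_k)$, which is the unwound content of that same reduction. Both versions rely on the same underlying fact, namely that a morphism of presheaves into $U(1)$ is determined pointwise; your version is slightly more elementary in that it avoids explicit appeal to the concretization/sheafification machinery, while the paper's phrasing makes the role of concreteness of the target more visible. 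Your treatment of $\Psi\circ\Phi=\mathrm{id}$ (recover $\chi$ on the point component, then use uniqueness of the curvature form to identify $A_{h_\chi}$ with $\omega_\chi$) matches the paper's argument.
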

\begin{proof}
    From Lemma \ref{Lemma: Curvature form gives differential character} it follows that for every smooth holonomy morphism $h$ the associated morphism of abelian groups
    \[
    \chi \colon Z_k(M) \xrightarrow{q} H^{\Delta}_k(M_k) \xrightarrow{h_{\mathrm{pt}}} U(1)
    \]
    is indeed a differential character. This shows that the inverse map is well-defined. To see that it defines an inverse, let $(\chi, A_{\chi})$ denote a degree $(k+1)$-differential character. Then we have that
    \[
   \Psi\left( \Phi(\chi, A_{\chi}) \right) = (\Phi(\chi, A_{\chi}))_{\mathrm{pt}} \circ q = \chi
    \]
    by construction of the morphism $\Phi$. Also since the curvature $A_{\chi}$ is uniquely determined by the character $\chi$ it follows that
    \[
    \Psi\left( \Phi(\chi, A_{\chi}) \right) = (\chi, A_{\chi}).
    \]
    For the other direction, let $h$ be a smooth holonomy map. Since $U(1)$ is a diffeological group, i.e. a concrete sheaf of abelian groups on $\mathbf{Cart}$ it follows that there is a chain of isomorphisms
    \begin{align*}
    \mathrm{Hom}_{\mathrm{PSh}(\mathbf{Cart},\mathbf{Ab})} \left( H_k(\mathbb{M}_k) ,U(1) \right) &\cong \mathrm{Hom}_{\mathrm{CPSh}(\mathbf{Cart},\mathbf{Ab})} \left( C \left( H_k(\mathbb{M}_k) \right) ,U(1) \right) \\
    &\cong \mathrm{Hom}_{\mathbf{DiffAb}} \left( \left(C\left( H_k(\mathbb{M}_k)\right)\right)^{\#} ,U(1) \right)
    \end{align*}
    Therefore, checking the equality
    \[
    \Phi(\Psi(h)) = h
    \]
    as morphisms of presheaves, is equivalent to checking the equality as morphisms of diffeological abelian groups. By concreteness, this amounts to check their equality only on the points. Indeed, for all $\sigma \in Z_{k}(M)$
    \begin{align*}
        \Phi(\Psi(h))([\sigma]_k)_{\mathrm{pt}} &= \Phi \left( h_{\mathrm{pt}} \circ q \right)_{\mathrm{pt}}([\sigma]_k) \\
        &= h_{\mathrm{pt}}([\sigma])
    \end{align*}
    which concludes the proof.
\end{proof}

The proof of the main theorem, that is Theorem \ref{Theorem: The Main Theorem}, follows now directly.

\begin{proof}[Proof of Theorem \ref{Theorem: The Main Theorem}]
    First recall that the case $k = 0$ has been already proved separately in Proposition \ref{Proposition: The case k = 0}. Hence assume $k>0$. Then Theorem \ref{Corollary: The morphism Phi is an iso} together with Corollary \ref{Corollary: Derived Hom space in the case U(1)} show that there is a sequence of isomorphisms of abelian groups
    \begin{align*}
        \hat{H}^{k+1}(M;\mathbb{Z}) &\cong \mathrm{Hom}_{\mathrm{PSh}(\mathbf{Cart},\mathbf{Ab})} \left( H_k(\mathbb{M}_k) ,U(1) \right) \\
        &\cong \mathrm{Hom}_{\mathrm{Sh}(\mathbf{Cart},\mathbf{Ab})} \left( \tilde{H}_k(\mathbb{M}_k) ,U(1) \right) \\
        &\cong H^k_{\infty} \left( \mathbb{M}_k,U(1) \right).
    \end{align*}
\end{proof}

\begin{remark} \label{Remark: The form A_h via the Van Est map}
    As mentioned earlier, the curvature form $A_h$ extracted from a holonomy map $h$ can also be constructed by directly specifying an $S_{k+1}$-antisymmetric, normalized smooth cochain $\Omega_h \in C^{k+1}(\mathrm{Pair}(M),\mathbb{R})$ such that the curvature form $A_h$ is then defined as its image under the Van Est map:
    \[
    A_h := VE(\Omega_h).
    \]
    Lemma \ref{Lemma: A_h is antisymmetric, smooth and multilinear} is then the analog of proving that the cochain $\Omega_h$ is indeed $S_{k+1}$-antisymmetric and normalized. An interesting observation is that in contrast to the approach presented earlier where the proof of antisymmetry and linearity of $A_h$ relied on the very technical Lemma \ref{Lemma: The pointwise form is well defined}, the approach via the Van Est morphism provides a much cleaner way of defining the curvature form. However, this has the downside of a broader theoretical effort introducing the Van Est morphism together with the additional requirement for $M$ to be a compact smooth manifold, as we will observe next. \\

    Recall from Definition \ref{Definition: Smooth normalized Cochains} that a normalized smooth cochain $\Omega_h \in C^{k+1}(\mathrm{Pair}(M),\mathbb{R})$ is a smooth function
    \[
    \Omega_h \colon U_M \subset \overbrace{M \times \cdots \times M}^{(k+2)\text{-times}} \rightarrow \mathbb{R}
    \]
    defined on a neighborhood $U_M$ of the diagonal $\{ (p,...,p) \in M^{k+2} \, | \, p \in M \}$. Given now a smooth holonomy morphism $h \colon H_k(\mathbb{M}_k) \rightarrow U(1)$ the most natural choice for $\Omega_h$ is to associate to a $(k+2)$-tuple $(p_0,...,p_{k+1}) \in U_M$ some smooth $(k+1)$-simplex $[p_0,...,p_{k+1}] \colon |\Delta^{k+1}| \rightarrow M$ in $M$ spanned by the points $\{p_i\}_{0 \leq i \leq k+1}$ and then set
    \[
    \Omega_h (p_0,...,p_{k+1})  = \frac{1}{2 \pi i} \cdot \mathrm{log} \left( h \left( [ \partial [p_0, ..., p_{k+1}] ]_k \right) \right).
    \]
    The main difficulty is how to construct a smooth $(k+2)$-simplex in $M$ given a $(k+2)$-tuple of points $(p_0, ..., p_{k+1})$. To do so, assume that $M$ is compact and endow $M$ with a Riemannian metric $(M,g)$. At every point $p \in M$ there now exist \textit{normal coordinates centered at} $p$ provided by the exponential map $ U \xrightarrow{\mathrm{exp}_p^{-1}} T_pM \xrightarrow{B} \mathbb{R}^n$. The compactness of $M$ guarantees the existence of a small enough neighborhood $U_M$ of the diagonal, such that for every $(p_0,...,p_{k+1}) \in U_M$ there exists a normal coordinate chart $\varphi \colon U \rightarrow \mathbb{R}^n$ centered at $p_0$ with $p_i \in U$ for all $0 \leq i \leq k+1$. Then, define the smooth $(k+1)$-simplex $[p_0,...,p_{k+1}]$ by
    \begin{equation}
    [p_0,...,p_{k+1}] := \varphi^{-1} \circ [\varphi(p_0), ..., \varphi(p_{k+1})]
    \end{equation}
    where $[\varphi(p_0), ..., \varphi(p_{k+1})]$ denotes the linear map $|\Delta^{k+1}| \rightarrow \mathbb{R}^n$ specified by sending the $i$th vertex of $|\Delta^{k+1}|$ to $\varphi(p_i)$. Then define
    \[
    \Omega_h(p_0,...,p_{k+1}) := \frac{1}{2 \pi i} \cdot \mathrm{log} \left( h \left( [ \partial [p_0, ..., p_{k+1}] ]_k \right) \right).
    \]
    To show that this gives indeed a well-defined smooth map, assume we are given two normal coordinate charts $\varphi$ and $\psi$ defined on $U$. The advantage of using normal coordinates lies in the fact that the transition functions $\psi \circ \varphi^{-1}$ are given by some orthogonal linear transformation $A \in O(n)$\footnote{See for example \cite[Proposition 5.23]{Lee-Riemannian}.}. We now wish to show that
    \[
    \varphi^{-1} \circ [\varphi(p_0), ...,\varphi(p_{k+1})] =   \psi^{-1} \circ [\psi(p_0), ...,\psi(p_{k+1})].
    \]
    First, it follows from the linearity of the transition function $\psi \circ \varphi^{-1} = A$ that
    \[
    A \circ [\varphi(p_0), ...,\varphi(p_{k+1})] = [\psi(p_0), ...,\psi(p_{k+1})]
    \]
    which then in turn indeed shows that
    \begin{align*}
    \psi^{-1} \circ [\psi(p_0), ...,\psi(p_{k+1})] &= \psi^{-1} \circ A \circ [\varphi(p_0), ...,\varphi(p_{k+1})] \\
    &= \psi^{-1} \circ \psi \circ \varphi^{-1} \circ [\varphi(p_0), ...,\varphi(p_{k+1})] \\
    &= \varphi^{-1} \circ [\varphi(p_0), ...,\varphi(p_{k+1})]
    \end{align*}
    and as such, it shows that the partial function $\Omega_h$ is well-defined. \\

    It is left to show that $\Omega_h$ defines a normalized and $S_{k+1}$-antisymmetric cochain. The fact that $\Omega_h$ is normalized follows from the observation that if we are given a tuple $(p_0, ..., p_{k+1}) \in U_M$ such that there is some $1 \leq i \leq k+1$ with $p_i = p_0$, then the smooth $(k+1)$-simplex $[p_0,...,p_{k+1}]$ is taking values in $M_k$, i.e. is $k$-thin. Therefore, the homology class vanishes $[\partial [p_0, ..., p_{k+1}] ]_k = 0$. The $S_{k+1}$-antisymmetry follows again from Corollary \ref{Corollary: Change of Orientation}.
\end{remark}

\chapter{Geometric Loop Groups} \label{Chapter: Geometric Loop Groups}

\section{The Geometric Loop Group} \label{Section: Geometric and Simplicial Loop Groups}

The main concept upon which the definition of the \textit{geometric loop group} arises is the classical path fibration. That is, given a pointed topological manifold $(M,x_0)$ we denote by $\Omega(M,x_0)$ the topological space of continuous loops in $M$ based at $x_0 \in M$. Similarly, we denote by $P(M,x_0)$ the topological space of continuous paths in $M$ starting at $x_0 \in M$. This then gives the well-known path-space fibration
\[
    \Omega(M,x_0) \hookrightarrow P(M,x_0) \xrightarrow{\mathrm{ev}_1} M
\]
with $P(M,x_0)$ contractible and the evaluation map $\mathrm{ev}_1$ a fibration. However, as discussed in Example 4.44 of Christensen--Wu \cite{Christensen-Wu} the path-space fibration fails to produce a fibration of diffeological spaces as defined in Definition \ref{Definition: fibration of diffeological spaces} for arbitrary diffeological spaces $X$. To see how this fails, assume the evaluation map $\mathrm{ev}_1 \colon P(X,x_0) \rightarrow X$ were a fibration. In particular, the loop space $\Omega(X,x_0)$ which arises as the fiber over the point $x_0 \in X$ is necessarily a fibrant diffeological space and so any diffeological space $X$ whose loop space is non-fibrant provides a counter-example.\\

One way of circumventing the problem of having non-fibrant fibers is to force more structure upon them, such that they become fibrant. More precisely, introduce an equivalence relation $\sim$ on the space of paths $P(X,x_0)$ such that the evaluation map $\mathrm{ev}_1$ passes through this quotient yielding a map
\begin{equation} \label{Equation: Geometric Loop Fibration}
\widetilde{\mathrm{ev}}_1 \colon P(X,x_0) / \! \sim \rightarrow X
\end{equation}
whose fiber over the point $x_0$, denoted $G(X,x_0)$, has the structure of a diffeological group. In particular, the map (\ref{Equation: Geometric Loop Fibration}) turns out to be a diffeological fibration in general, and in the case $(X,x_0)$ is connected (\ref{Equation: Geometric Loop Fibration}) defines a diffeological principal $G(X,x_0)$-bundle over $X$. \\

An important property of diffeological spaces that will be used various times is the following.

\begin{lemma}[\cite{Iglesias-Zemmour}, Art. 1.51]\label{Lemma: Smoothness and Subductions}
    Let $X,X'$ and $X''$ be diffeological spaces, and let $\pi \colon X \rightarrow X'$ be a subduction. A map $f \colon X' \rightarrow X''$ is smooth if and only if $f \circ \pi$ is smooth.
    \begin{center}
    \begin{tikzcd}
X \arrow[d, "\pi"'] \arrow[rd, "f \circ  \pi"] &     \\
X' \arrow[r, "f"']                             & X''
\end{tikzcd}
\end{center}

    In particular, $f$ is a subduction if and only if $f \circ \pi$ is a subduction.
\end{lemma}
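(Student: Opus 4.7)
The plan is to treat the two equivalences separately, handling smoothness first and then bootstrapping to subductions.

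For the smoothness equivalence, the forward direction is immediate: since $\pi$ is a subduction, in particular it is smooth, so if $f$ is smooth then $f \circ \pi$ is the composition of two smooth maps and hence smooth. For the converse, I would suppose $f \circ \pi$ is smooth and verify the plot condition for $f$ directly. Given any plot $p \colon U \to X'$, the key is to use that $\pi$ is a subduction, which by definition means that the diffeology on $X'$ is the pushforward of that on $X$ along $\pi$. Unwound, this gives, for every $u \in U$, an open neighborhood $V \subset U$ of $u$ and a plot $q \colon V \to X$ with $p|_V = \pi \circ q$. Then
\[
(f \circ p)|_V \;=\; f \circ \pi \circ q \;=\; (f \circ \pi) \circ q,
\]
which is a plot of $X''$ because $f \circ \pi$ is smooth. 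The sheaf axiom for the diffeology on $X''$ then promotes these local plots to $f \circ p$ itself being a plot, so $f$ is smooth.

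For the subduction equivalence, the forward direction is a straightforward composition argument: if both $f$ and $\pi$ are subductions, then $f \circ \pi$ is surjective (composition of surjections), smooth, and satisfies the plot-lifting property by concatenating the lifts produced by $f$ and then by $\pi$. For the converse, assume $f \circ \pi$ is a subduction. Surjectivity of $f$ is immediate from surjectivity of $f \circ \pi$, and smoothness of $f$ is supplied by the first half of the lemma. The remaining content is the plot-lifting property. Given a plot $p \colon U \to X''$ and $u \in U$, apply the subduction property of $f \circ \pi$ to obtain a neighborhood $V$ of $u$ and a plot $q \colon V \to X$ with $p|_V = (f \circ \pi) \circ q$. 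Setting $q' := \pi \circ q \colon V \to X'$, which is a plot of $X'$ because $\pi$ is smooth, we get $p|_V = f \circ q'$, exhibiting the required local factorization through $f$.

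There is no serious obstacle here: both parts are essentially formal manipulations with the sheaf axiom and the definition of the pushforward diffeology. The one point that deserves a sentence of care in the write-up is that the pushforward/subduction characterization of $\pi$ is precisely what allows one to pass from local plots of $X'$ to plots of $X$, and this is the only non-tautological ingredient used in either direction.
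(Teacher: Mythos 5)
Your proof is correct. The paper does not give its own proof of this lemma — it cites Iglesias-Zemmour, Art.~1.51 — and your argument is exactly the standard one found there: use the local-lifting characterization of the pushforward diffeology together with the sheaf axiom for the smoothness equivalence (the constant-plot case in the pushforward characterization is absorbed because a subduction is surjective), and then bootstrap that to the subduction statement by composing, respectively pushing forward, the local lifts.
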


\begin{definition}[\cite{GajerHigher}]\label{Definition: geometric path space}
    Let $(X,x_0)$ be a pointed diffeological space. Introduce the following three relations $\sim_1, \sim_2, \sim_3$ on the diffeological based path space
    \[
    P(X,x_0) := D \left( (I,\{0\}) ,(X,\{x_0\}) \right)
    \]
    where $I = [0,1]$ is endowed with the subset diffeology of $\mathbb{R}$.
    \begin{enumerate}
        \item \textsc{Reparametrization:} For $\gamma \in P(X,x_0)$ and every orientation preserving diffeomorphism $\varphi$ of $I$ we have
        \[
        \gamma \sim_1 \gamma \circ \varphi.
        \]
        Notice that since $\varphi$ is orientation preserving we have $\varphi(0) = 0$ and $\varphi(1) = 1$ and as such $\gamma \circ \varphi \in P(X,x_0)$.

        \item \textsc{Sitting instances:} For $\gamma \in P(X,x_0)$ and every smooth map $\psi \colon I \rightarrow I$ such that there exists a proper subinterval $I_0 \subset I$ where $\psi|_{I_0}$ is a constant map with $\psi(I_0) = t_0 \in I$ and $\psi|_{I - I_0}$ is an orientation preserving diffeomorphism onto $I - \{t_0 \}$, then
        \[
        \gamma \sim_2 \gamma \circ \psi.
        \]

        \item \textsc{Spikes:} For $\gamma \in P(X,x_0)$ of the form $\alpha * \beta * \beta^{-1} * \eta$ where $\alpha$ and $\beta$ are allowed to be empty paths the relation is given by
        \[
        \gamma = \alpha * \beta * \beta^{-1} * \eta  \sim_3 \alpha * \eta.
        \]
    \end{enumerate}
    Denote by $\sim$ the equivalence relation generated by the three relations $\sim_1, \sim_2$ and $\sim_3$. The \textbf{geometric path space} $E(X,x_0)$ is defined as the quotient diffeological space of $P(X,x_0)$ under the equivalence relation $\sim$.
\end{definition}
\newpage
\begin{remark} $ $
\begin{itemize}
    \item The second relation $\sim_2$ is non-empty by the existence of smooth cut-off functions.
    \item Recall that given a set $X$ a relation denotes a subset $R \subset X \times X$. The \textit{equivalence relation generated by} $R$ is such that $x \sim y$ if and only if one of the following statements hold
    \begin{enumerate}
        \item $x = y$
        \item $x R' y$
        \item There exists a finite sequence of elements $z_1, ...,z_n$ in $X$ such that
        \[
        x R' z_1 R' \cdots R' z_n R' y
        \]
    \end{enumerate}
    where here the expression $x R' y$ means $(x,y) \in R$ or $(y,x) \in R$.
    \item Given a pointed diffeological space $(X,x_0)$ the evaluation at $1$ defines a smooth map
        \[
        \begin{array}{rcl}
             \mathrm{ev}_1 \colon P(X,x_0) & \rightarrow & X \\
             \gamma & \mapsto & \gamma(1)
        \end{array}
        \]
        which factors through the quotient and defines a smooth map.
        \[
        \begin{array}{rcl}
             \widetilde{\mathrm{ev}}_1 \colon E(X,x_0) & \rightarrow & X \\
             {[}\gamma{]} & \mapsto & {[}\gamma(1){]}
        \end{array}
        \]
        To show that $\widetilde{\mathrm{ev}}_1$ is well defined it suffices to note that the three relations $\sim_1$,$\sim_2$ and $\sim_3$ are all endpoint preserving. Therefore, via the commutative diagram
        \begin{center}
        \begin{tikzcd}
        {P(X,x_0)} \arrow[rd, "\mathrm{ev}_1"] \arrow[d, "q"']      &   \\
        {E(X,x_0)} \arrow[r, "\mathrm{\widetilde{\mathrm{ev}}_1}"'] & X
        \end{tikzcd}
        \end{center}
        the smoothness of $\widetilde{\mathrm{ev}}_1$ is now a consequence of Lemma \ref{Lemma: Smoothness and Subductions} using the fact that $q$ is a subduction by construction.
        \end{itemize}
\end{remark}

\begin{definition} \label{Definition: Geometric Loop Group}
    Let $(X,x_0)$ be a pointed diffeological space. The fiber of the smooth map $\widetilde{\mathrm{ev}}_1 \colon E(X,x_0) \rightarrow X$ over the basepoint $x_0$ endowed with the subset diffeology is called the \textbf{geometric loop group} and denoted by $G(X,x_0)$. Its elements $[\gamma] \in G(X,x_0)$ are given by equivalence classes of smooth loops $\gamma \in \Omega(X,x_0)$ based at $x_0$. The equivalence class of the constant loop at $x_0$ is simply denoted by $[x_0]$.
\end{definition}

As the name suggests, the geometric loop group can be equipped with the structure of a diffeological group. The multiplication operation is given by concatenation of loops. From this perspective, the three relations $\sim_1$,$\sim_2$ and $\sim_3$ can now be identified each with one of the group axioms. That is, $\sim_1$ makes sure that the operation of concatenation is associative, $\sim_2$ makes sure that the class of the constant path $[x_0]$ is the identity in $G(X,x_0)$ and $\sim_3$ establishes that the class of the reverse loop $\gamma^{-1} \colon t \mapsto \gamma(1-t)$ is the inverse of $[\gamma]$. This is the content of the next Lemma.

\begin{lemma}\label{Lemma: Geometric Loop Group is smooth Group}
    Given a pointed diffeological space $(X,x_0)$ the diffeological space $G(X,x_0)$ admits the structure of a diffeological group where the multiplication is given by the concatenation of loops in $X$. The identity element is given by the class $[x_0]$ of the constant loop at $x_0$ and the inverse of some element $[\gamma]$ by the class $[\gamma^{-1}]$.
\end{lemma}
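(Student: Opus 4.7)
The plan is to first define a multiplication on representatives in $\Omega(X,x_0)$, show it descends to $G(X,x_0)$, and then match each of the three defining relations of Definition \ref{Definition: geometric path space} with one of the group axioms. Throughout, smoothness of maps out of $G(X,x_0)$ is established via Lemma \ref{Lemma: Smoothness and Subductions} by lifting the problem to $P(X,x_0)$.

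First I would fix, once and for all, a smooth non-decreasing function $\psi\colon I\to I$ with sitting instants at $0$ and $1$ (of the type appearing in Remark \ref{Remark: Cutoff-function}), and use $\psi$ to define a smoothed concatenation
\[
\gamma *_\psi \eta \;:=\; \begin{cases} \gamma\bigl(2\psi(t)\bigr) & t\in[0,\tfrac12],\\ \eta\bigl(2\psi(t)-1\bigr) & t\in[\tfrac12,1], \end{cases}
\]
for $\gamma,\eta\in\Omega(X,x_0)$. Because of the sitting instants of $\psi$, the map $\gamma *_\psi \eta$ is smooth on all of $I$. The assignment $(\gamma,\eta)\mapsto \gamma *_\psi \eta$ defines a smooth map $\mu_\psi\colon \Omega(X,x_0)\times\Omega(X,x_0)\to\Omega(X,x_0)$, which one checks directly from the definition of the functional diffeology.

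Next I would show that the composite $q\circ\mu_\psi$ with the quotient $q\colon\Omega(X,x_0)\to G(X,x_0)$ factors through $q\times q$, yielding a well-defined multiplication $m\colon G(X,x_0)\times G(X,x_0)\to G(X,x_0)$. The key observation is that if $\gamma\sim_i\gamma'$ (for $i=1,2,3$), then $\gamma*_\psi\eta \sim \gamma'*_\psi\eta$, and similarly in the second argument: a reparametrization of $\gamma$ by an orientation-preserving diffeomorphism of $I$ yields a reparametrization of $\gamma*_\psi\eta$ of the same type on the first half-interval, a sitting-instant insertion on $\gamma$ induces one on $\gamma*_\psi\eta$, and a spike cancellation is preserved by concatenation. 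A parallel argument shows independence of the choice of $\psi$: any two admissible cut-off functions $\psi,\psi'$ differ by a reparametrization that is $\sim_1$-equivalent in each half. Smoothness of $m$ then follows from smoothness of $\mu_\psi$ together with Lemma \ref{Lemma: Smoothness and Subductions} applied to the subduction $q\times q$.

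The group axioms now correspond exactly to the three equivalence relations. Associativity $([\gamma]*[\eta])*[\mu]=[\gamma]*([\eta]*[\mu])$ holds because the two concatenations differ by an orientation-preserving diffeomorphism of $I$, and hence are identified by $\sim_1$. The constant loop $[x_0]$ is a two-sided unit: the loop $c_{x_0}*_\psi\gamma$ is obtained from $\gamma$ by reparametrizing through a map of the form described in $\sim_2$, hence $[c_{x_0}]*[\gamma]=[\gamma]$, and symmetrically for the right unit. Finally, $\gamma *_\psi \gamma^{-1}$ fits into the spike pattern $\alpha*\beta*\beta^{-1}*\eta$ with $\alpha$ and $\eta$ empty, so $\sim_3$ gives $[\gamma]*[\gamma^{-1}]=[c_{x_0}]$, and the analogous statement on the other side. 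Smoothness of the inversion $[\gamma]\mapsto[\gamma^{-1}]$ reduces via Lemma \ref{Lemma: Smoothness and Subductions} to smoothness of the map $\Omega(X,x_0)\to\Omega(X,x_0)$, $\gamma\mapsto\gamma\circ r$, where $r(t)=1-t$, which is evident.

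The main obstacle is the bookkeeping in the second step: verifying that $\sim_1,\sim_2,\sim_3$ are each preserved by $\mu_\psi$ and that different choices of cut-off function yield the same class in $G(X,x_0)$. Everything else reduces to applications of Lemma \ref{Lemma: Smoothness and Subductions} and the explicit matching of axioms with relations.
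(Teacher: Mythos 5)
Your proposal follows the same overall strategy as the paper: define a smooth concatenation on representatives, check it descends along the quotient, and invoke Lemma \ref{Lemma: Smoothness and Subductions} to pass smoothness through the subduction $q\times q$, then match associativity, unit, and inverse with the relations $\sim_1,\sim_2,\sim_3$ respectively. The one genuine technical difference is the device used to make concatenation smooth: the paper restricts attention to the space $\Omega_{\mathrm{st}}(X,x_0)$ of stationary loops (where the naive $\gamma_0*\gamma_1$ is already smooth, cf.\ Lemma \ref{Lemma: Smooth Concatenation}) and then observes that the quotient from $\Omega_{\mathrm{st}}(X,x_0)$ remains a subduction thanks to $\sim_2$, whereas you bake a fixed cut-off function $\psi$ into the product $*_\psi$ so as to work on all of $\Omega(X,x_0)$ directly. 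Both devices work and amount to the same idea.

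Two concrete issues. First, your formula for $*_\psi$ has $\gamma(2\psi(t))$ on $[0,\tfrac12]$; this should read $\gamma(\psi(2t))$. As written, the inner argument at $t=\tfrac12$ is $2\psi(\tfrac12)$, which need not equal $1$, so the two halves do not even join continuously, and even if one arranges $\psi(\tfrac12)=\tfrac12$, smoothness at $t=\tfrac12$ needs $\psi$ to sit at $\tfrac12$, which a cut-off function of the type in Remark \ref{Remark: Cutoff-function} does not do. With $\gamma(\psi(2t))$ instead, $\psi(2t)$ ranges over $[0,1]$ and sits at both endpoints of $[0,\tfrac12]$, which gives the required smoothness.

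Second, the step you defer as ``bookkeeping'' --- that $\sim_1,\sim_2,\sim_3$ are each compatible with $*_\psi$ --- is where most of the actual work lives, and your stated reason (that a reparametrization of $\gamma$ ``yields a reparametrization of $\gamma*_\psi\eta$ of the same type on the first half-interval'') is not literally correct: a diffeomorphism of $[0,\tfrac12]$ extended by the identity on $[\tfrac12,1]$ is only piecewise smooth, so it is not itself an element of $\mathrm{Diff}^+(I)$, and the precomposition by $\psi$ is not invertible, so you cannot directly conjugate it through. The paper handles this by constructing a genuinely smooth orientation-preserving $\varphi''$ that agrees with the piecewise-smooth candidate outside a small neighbourhood of $\tfrac12$, using mollifiers and the fact that both halves are constant near the junction, and then appealing to $\sim_1$. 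A complete version of your argument would need the same smooth-approximation step; similarly for the associativity and unit verifications at the end, where the obvious candidate reparametrizations are piecewise smooth and need to be replaced by smooth ones before $\sim_1$ or $\sim_2$ can be applied.
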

To show that the operations of concatenation and taking inverses are indeed smooth, proceed by first showing that the corresponding morphisms are smooth before passing to the quotient. Then after showing that these maps all pass through the quotient, smoothness is then implied by Lemma \ref{Lemma: Smoothness and Subductions}. The concatenation of paths however is not well defined for smooth loops $\Omega(X,x_0) = D\left( (I,\{0,1\}),(X,\{x_0\}) \right)$. Therefore, consider the diffeological path space $P_{\mathrm{st}}(X) \subset D(I,X)$ of paths with sitting instants, also called stationary paths, in $X$. Recall from section \ref{Subsection: Diffeological Spaces} that a path $\gamma \colon I \rightarrow X$ has sitting instants if there exists $0 < \varepsilon < 1/2$ such that $\gamma$ is constant on $[0,\varepsilon)$ and $(1-\varepsilon,1]$.

\begin{lemma}[\cite{Collier-Lerman-Wolbert}, Lemma A.22.] \label{Lemma: Smooth Concatenation}
 Let $X$ be a diffeological space. The concatenation of paths with sitting instants in $X$ defined by
 \[
 \left( \gamma_0 * \gamma_1\right)(t) := \begin{cases}
     \gamma_0(2t) &\text{ for } t \in [0,1/2] \\
     \gamma_1(2t - 1) &\text{ for } t \in [1/2,1]
 \end{cases}
 \]
 is well defined and yields a smooth map of diffeological spaces
 \[
 - * - \colon P_{\mathrm{st}}(X)  \tensor[_{\mathrm{ev}_1}]{\times}{_{\mathrm{ev}_0}} P_{\mathrm{st}}(X) \rightarrow  P_{\mathrm{st}}(X)
 \]
\end{lemma}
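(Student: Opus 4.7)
The plan is to unwind the statement through the function diffeology. A plot $p \colon U \to P_{\mathrm{st}}(X) \tensor[_{\mathrm{ev}_1}]{\times}{_{\mathrm{ev}_0}} P_{\mathrm{st}}(X)$ is the same data as a pair of smooth maps $f_0, f_1 \colon U \times I \to X$ satisfying $f_0(u,1) = f_1(u,0)$ for every $u \in U$, each of whose paths $f_i(u,\cdot)$ has sitting instants. Proving smoothness of $-*-$ amounts to showing that for any such pair, the concatenated map
\[
(f_0 * f_1)(u,t) = \begin{cases} f_0(u, 2t), & t \in [0, 1/2], \\ f_1(u, 2t-1), & t \in [1/2, 1], \end{cases}
\]
is itself smooth as a map $U \times I \to X$ and that its pointwise paths inherit sitting instants. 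The second condition is immediate: the constant stretches of $f_0(u, \cdot)$ near $0$ and of $f_1(u, \cdot)$ near $1$ pull back to constant stretches of $(f_0 * f_1)(u, \cdot)$ near $0$ and $1$.

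For smoothness I would work locally on $U \times I$, which is legitimate by the sheaf axiom. Away from the hypersurface $\{t = 1/2\}$ the concatenation coincides on an open set with either $(u,t) \mapsto f_0(u, 2t)$ or $(u,t) \mapsto f_1(u, 2t-1)$, each smooth by composition. The crux is therefore smoothness at an arbitrary point $(u_0, 1/2)$. I would test this by composing $f_0 * f_1$ with an arbitrary plot $q = (g,h) \colon V \to U \times I$ carrying some $v_0 \mapsto (u_0, 1/2)$ and verifying that $F := (f_0*f_1) \circ q \colon V \to X$ is smooth at $v_0$. On the closed subset $\{h \leq 1/2\}$ the map $F$ agrees with the smooth map $v \mapsto f_0(g(v), 2h(v))$, and on $\{h \geq 1/2\}$ with $v \mapsto f_1(g(v), 2h(v) - 1)$; these two halves match along $\{h = 1/2\}$ by the hypothesis $f_0(u,1) = f_1(u,0)$.

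The remaining task is to see that the two halves glue smoothly, and this is where the sitting-instants condition enters. Pointwise sitting instants of $f_0(u, \cdot)$ at $t = 1$ force $\partial_t^k f_0(u, 1) = 0$ for every $u \in U$ and every $k \geq 1$. Since this vanishing is identical in $u$, differentiating in $u$ gives $\partial_u^\alpha \partial_t^k f_0(u, 1) = 0$ for every multi-index $\alpha$ and $k \geq 1$, with a symmetric statement for $f_1(u, 0)$. Combined with $f_0(u,1) = f_1(u,0)$ and the chain rule applied to $q$, all mixed partial derivatives of the two halves of $F$ then agree along $\{h = 1/2\}$, and a Whitney-type gluing of jets on the two closed halves delivers smoothness of $F$ on a neighborhood of $v_0$.

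The main obstacle is precisely this gluing step. One might hope to express the concatenation globally as a single smooth formula by reparametrizing $f_0$ and $f_1$ against a uniform smooth cutoff centered at $t = 1/2$, but this approach fails because the sitting intervals $\varepsilon_u$ of $f_i(u, \cdot)$ need not be locally uniform in $u$: smooth families of paths with pointwise sitting instants can have $\varepsilon_u$ shrinking to $0$ as $u$ moves. The workaround is to give up on a uniform cutoff and instead use the fact, noted above, that $\partial_t^k f_0(\cdot, 1)$ vanishes \emph{identically} on $U$, which is strictly stronger than the pointwise sitting-instants condition and is exactly what feeds into the derivative matching.
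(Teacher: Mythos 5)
The paper does not actually give a proof of this lemma: it cites Collier--Lerman--Wolbert Lemma A.22 (which treats the case $X$ a smooth manifold) and asserts, without detail, that the argument ``generalizes to arbitrary diffeological spaces without further adjustments.'' So there is nothing substantive to compare against. Your proposal is a reasonable reconstruction of the manifold-case argument, and you correctly zero in on the genuine subtlety, namely that the sitting-instants radii $\varepsilon_u$ need not be locally uniform in $u$, so the naive ``uniform cutoff near $t=1/2$'' approach fails. The derivative-matching workaround (identically vanishing $t$-jets at $t=1$ together with the chain rule, then gluing two smooth pieces along $\{h = 1/2\}$) is correct when $X$ is a manifold.

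However, there is a genuine gap relative to what the lemma actually asserts. You write ``$\partial_t^k f_0(u,1) = 0$,'' ``$\partial_u^\alpha \partial_t^k f_0(u,1) = 0$,'' and invoke ``a Whitney-type gluing of jets.'' All of these presuppose that the codomain is (locally) $\mathbb{R}^n$: they are statements about iterated partial derivatives of an $\mathbb{R}^n$-valued map and about Taylor data. For an arbitrary diffeological space $X$ there is no such structure. A plot $U \times I \to X$ is just a set-map satisfying a compatibility condition; one cannot differentiate it, speak of its jet along $\{t=1\}$, or glue two pieces by matching Taylor expansions, because the diffeology on $X$ carries no linear structure. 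So as written, your argument proves the manifold case of the lemma but does not establish the statement for a general diffeological $X$. It is worth being aware that the paper's parenthetical claim that the manifold proof ``generalizes without further adjustments'' is precisely what needs scrutiny here: the step you (correctly) identify as the crux --- gluing across a non-uniform interface --- is exactly the step whose current proof is codomain-specific, and replacing it for general $X$ requires a different idea (e.g., showing that near each point the concatenation factors through a plot, using the sitting-instant of the path at that specific parameter $u_0$ together with the sheaf axiom), not merely a cosmetic rewording.
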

\begin{proof}
  The proof \cite[Lemma A.22.]{Collier-Lerman-Wolbert} considering only the special case where $X$ is a smooth manifold generalizes to arbitrary diffeological spaces without further adjustments.
\end{proof}

\begin{proof}[Proof of Lemma \ref{Lemma: Geometric Loop Group is smooth Group}]
Consider first the case of concatenation. The smooth concatenation map of Lemma \ref{Lemma: Smooth Concatenation} restricts to a smooth map
\[
* \colon \Omega_{\mathrm{st}}(X,x_0) \times \Omega_{\mathrm{st}}(X,x_0) \rightarrow \Omega_{\mathrm{st}}(X,x_0)
\]
of diffeological loop spaces with sitting instants. By construction the quotient map $q \colon \Omega(X,x_0) \rightarrow G(X,x_0)$ is a subduction. Relation $\sim_2$ however makes sure that every class $[\gamma] \in G(X,x_0)$ contains a representative $\gamma' \in [\gamma]$ having sitting instants. Therefore, also the quotient map $\Omega_{\mathrm{st}}(X,x_0) \rightarrow G(X,x_0)$ is a subduction \footnote{See \cite[Art. 1.53.]{Iglesias-Zemmour} for more details.}. Considering Lemma \ref{Lemma: Smoothness and Subductions} it thus suffices to show that the concatenation map descends to the quotient, i.e. there exists a map making the square commutative.
\begin{center}
    \begin{tikzcd}
{\Omega_{\mathrm{st}}(X,x_0) \times \Omega_{\mathrm{st}}(X,x_0)} \arrow[d, "q \times q"'] \arrow[r, "*"] & {\Omega_{\mathrm{st}}(X,x_0)} \arrow[d, "q"] \\
{G(X,x_0) \times G(X,x_0)} \arrow[r, dashed]                                                             & {G(X,x_0)}
\end{tikzcd}
\end{center}
Then $q \times q$ being a subduction then implies the smoothness of said map. For $[\gamma],[\gamma'] \in G(X,x_0)$ let
\[
[\gamma] * [\gamma'] := [\gamma * \gamma']
\]
To see that this operation is well defined it suffices to check this for the three relations $\sim_1$,$\sim_2$, and $\sim_3$.
\begin{itemize}
    \item Relation $\sim_1$: Consider $\gamma,\gamma' \in \Omega_{\mathrm{st}}(X,x_0)$ and let $\varphi,\varphi' \in \mathrm{Diff}^+(I)$ be two orientation preserving diffeomorphisms. Then $\gamma \sim_1 \gamma \circ \varphi$ and $\gamma' \sim_1 \gamma' \circ \varphi'$. It is to show that there exists an orientation preserving diffeomorphism $\varphi'' \in \mathrm{Diff}^+(I)$ such that
    \[
    (\gamma * \gamma') \circ \varphi'' = (\gamma \circ \varphi) * (\gamma' \circ \varphi')
    \]
    First, denote by $\varphi * \varphi'$ the piecewise smooth diffeomorphism given by
    \[
    (\varphi * \varphi')(t) :=
    \begin{cases}
    \frac{\varphi(2t)}{2} &\text{ for } t \in [0,1/2] \\
   \frac{1 +  \varphi'(2t-1)}{2} &\text{ for } t \in [1/2,1].
    \end{cases}
    \]
    Since both loops $\gamma,\gamma' \in \Omega_{\mathrm{st}}(X,x_0)$ have sitting instants, we can assume that there is an $0 < \varepsilon < \frac{1}{2}$ such that both loops are constant on $[0,\varepsilon)$ and $(1- \varepsilon, 1]$. The concatenation $\gamma * \gamma'$ is then constant on the neighborhood $I_{\varepsilon}:= (\frac{1}{2}- \frac{\varepsilon}{2}, \frac{1}{2} + \frac{\varepsilon}{2})$. Now let $0 < \delta$ be such that $(\frac{1}{2} - \delta , \frac{1}{2} + \delta) \subset (\varphi * \varphi')^{-1}(I_{\varepsilon})$. Using mollifiers and cut-off functions\footnote{See for example \cite[Theorem 2.5]{Hirsch}.} one can show that there exists a smooth orientation preserving diffeomorphism $\varphi''$ of $I$ such that
    \[
    \varphi''(t) = \begin{cases}
        \frac{\varphi(2t)}{2} &\text{ for } t \in [0,1/2 - \delta] \\
         \frac{1 +  \varphi'(2t-1)}{2} &\text{ for } t \in [1/2 + \delta,1],
    \end{cases}
    \]
    i.e. $\varphi''$ is smooth and agrees with $\varphi * \varphi'$ outside of a neighborhood of $1/2$, see Figure \ref{fig: Concatenation well defined}.
    \begin{figure}[h]
        \centering
        \begin{tikzpicture}[x=0.75pt,y=0.75pt,yscale=-1,xscale=1]

\draw    (150,170) -- (150,32) ;
\draw [shift={(150,30)}, rotate = 90] [color={rgb, 255:red, 0; green, 0; blue, 0 }  ][line width=0.75]    (10.93,-3.29) .. controls (6.95,-1.4) and (3.31,-0.3) .. (0,0) .. controls (3.31,0.3) and (6.95,1.4) .. (10.93,3.29)   ;
\draw    (150,170) -- (288,170) ;
\draw [shift={(290,170)}, rotate = 180] [color={rgb, 255:red, 0; green, 0; blue, 0 }  ][line width=0.75]    (10.93,-3.29) .. controls (6.95,-1.4) and (3.31,-0.3) .. (0,0) .. controls (3.31,0.3) and (6.95,1.4) .. (10.93,3.29)   ;
\draw    (210,170) -- (210,175) ;
\draw    (270,170) -- (270,175) ;
\draw    (145,50) -- (150,50) ;
\draw    (145,110) -- (150,110) ;
\draw [color={rgb, 255:red, 208; green, 2; blue, 27 }  ,draw opacity=1 ]   (150,170) .. controls (177.25,167) and (199.17,143.33) .. (210,110) ;
\draw [color={rgb, 255:red, 208; green, 2; blue, 27 }  ,draw opacity=1 ]   (210,110) .. controls (261.25,104) and (218.25,60.5) .. (270,50) ;
\draw  [dash pattern={on 0.84pt off 2.51pt}]  (150,100) -- (235,100) ;
\draw  [dash pattern={on 0.84pt off 2.51pt}]  (150,120) -- (205,120) ;
\draw  [dash pattern={on 0.84pt off 2.51pt}]  (205,120) -- (205,170) ;
\draw  [dash pattern={on 0.84pt off 2.51pt}]  (235,100) -- (235,170) ;
\draw    (206.63,119) .. controls (209.13,112.25) and (221.13,110.25) .. (229.38,103.75) ;
\draw    (273.25,125) .. controls (254.92,133.2) and (237.12,124.02) .. (220.55,116.78) ;
\draw [shift={(218.75,116)}, rotate = 23.23] [color={rgb, 255:red, 0; green, 0; blue, 0 }  ][line width=0.75]    (10.93,-3.29) .. controls (6.95,-1.4) and (3.31,-0.3) .. (0,0) .. controls (3.31,0.3) and (6.95,1.4) .. (10.93,3.29)   ;

\draw (132,42.4) node [anchor=north west][inner sep=0.75pt]    {$1$};
\draw (130,89.4) node [anchor=north west][inner sep=0.75pt]    {$\frac{1}{2}$};
\draw (262,177.4) node [anchor=north west][inner sep=0.75pt]    {$1$};
\draw (201,177.4) node [anchor=north west][inner sep=0.75pt]    {$\frac{1}{2}$};
\draw (186,57.9) node [anchor=north west][inner sep=0.75pt]    {$\varphi \ *\ \varphi '$};
\draw (277.5,113.9) node [anchor=north west][inner sep=0.75pt]    {$\varphi ''$};

\end{tikzpicture}
        \caption{}
        \label{fig: Concatenation well defined}
    \end{figure}

    In particular one computes
    \[
    (\gamma * \gamma') \circ \varphi''(t) =
    \begin{cases}
        \gamma(\varphi(2t)) &\text{ for } t \in [0,1/2 - \delta] \\
        \gamma(1) = \gamma'(0) &\text{ for } t \in [\frac{1}{2} - \delta , \frac{1}{2} + \delta] \\
        \gamma'( \varphi'(2t-1)) &\text{ for } t \in [1/2 + \delta,1],
    \end{cases}
    \]
    which shows that
    \[
    (\gamma * \gamma') \circ \varphi'' = (\gamma \circ \varphi) * (\gamma' \circ \varphi').
    \]
    This then implies by definition
    \[
    (\gamma \circ \varphi) * (\gamma' \circ \varphi') \sim_1 (\gamma * \gamma').
    \]

    \item Relation $\sim_2$: This can be shown using the same strategy as for relation $\sim_1$.
    \item Relation $\sim_3$: This follows directly from the definition of $\sim_3$.
\end{itemize}

Next, notice that the assignment
\[
\begin{array}{rcl}
     (-)^{-1} \colon \Omega(X,x_0) & \rightarrow & \Omega(X,x_0)  \\
     \gamma & \mapsto & \gamma^{-1}
\end{array}
\]
where $\gamma^{-1}(t) := \gamma(1-t)$ denotes the reversed path, is smooth. By the same argument as for the concatenation it suffices to show that this map factors through the quotient since the smoothness is then automatically implied. That is, define
\[
[\gamma]^{-1} := [\gamma^{-1}]
\]
for any class $[\gamma] \in G(X,x_0)$. To check that its well defined consider $\gamma \sim_1 \gamma \circ \varphi$ for $\varphi \in \mathrm{Diff}^+(I)$. Then define $\varphi'(t):= 1 - \varphi(1-t)$ which is again an orientation preserving diffeomorphism. Then a simple computation shows:
\begin{align*}
(\gamma \circ \varphi)^{-1}(t) &= \gamma(\varphi(1-t)) \\
&= \gamma(1 - \varphi'(t)) \\
&= (\gamma^{-1} \circ \varphi')(t)
\end{align*}
i.e. $(\gamma \circ \varphi)^{-1} \sim_1 \gamma^{-1}$. Again the same strategy can be applied for relation $\sim_2$ and for the relation $\sim_3$ it follows directly. \\

To conclude, there are well-defined smooth maps
\begin{align*}
    * \colon G(X,x_0) \times G(X,x_0) \rightarrow G(X,x_0) \\
    (-)^{-1} \colon G(X,x_0) \rightarrow G(X,x_0)
\end{align*}
given by concatenation and reversing loops. What is left to show is that together with the class of the constant loop $[x_0]$ this data satisfies the axioms of a diffeological group. Indeed, the associativity of the concatenation follows from relation $\sim_1$ using an orientation preserving diffeomorphism $\alpha$ obtained from the piecewise-smooth function $\tilde{\alpha}$ of Figure \ref{fig: Associativity and left/right inverse} by smooth approximation. That is, there exists an orientation preserving diffeomorphism $\alpha$ which agrees with $\tilde{\alpha}$ outside a neighborhood of the two points $1/2$ and $3/4$, as we have argued earlier.

The fact that for any class $[\gamma]$ one has $[\gamma] * [\gamma^{-1}] = [\gamma * \gamma^{-1}] = [\gamma]$ follows directly from relation $\sim_3$. And finally, the smooth maps $\psi_0$ and $\psi_1$, again constructed by smooth approximation from the piecewise-smooth maps $\widetilde{\psi_0}$ and $\widetilde{\psi_1}$ of Figure \ref{fig: Associativity and left/right inverse}, show that indeed by the second relation
\begin{align*}
    [x_0] * [\gamma] = [\gamma] \text{ and } [\gamma] * [x_0] = [\gamma].
\end{align*}

\begin{figure}[h]
    \centering
    \begin{tikzpicture}[x=0.75pt,y=0.75pt,yscale=-1,xscale=1]

\draw    (40,160) -- (40,22) ;
\draw [shift={(40,20)}, rotate = 90] [color={rgb, 255:red, 0; green, 0; blue, 0 }  ][line width=0.75]    (10.93,-3.29) .. controls (6.95,-1.4) and (3.31,-0.3) .. (0,0) .. controls (3.31,0.3) and (6.95,1.4) .. (10.93,3.29)   ;
\draw    (40,160) -- (178,160) ;
\draw [shift={(180,160)}, rotate = 180] [color={rgb, 255:red, 0; green, 0; blue, 0 }  ][line width=0.75]    (10.93,-3.29) .. controls (6.95,-1.4) and (3.31,-0.3) .. (0,0) .. controls (3.31,0.3) and (6.95,1.4) .. (10.93,3.29)   ;
\draw [color={rgb, 255:red, 208; green, 2; blue, 27 }  ,draw opacity=1 ]   (40,160) -- (100,160) ;
\draw [color={rgb, 255:red, 208; green, 2; blue, 27 }  ,draw opacity=1 ]   (100,160) -- (160,40) ;
\draw    (100,160) -- (100,165) ;
\draw    (160,160) -- (160,165) ;
\draw    (35,40) -- (40,40) ;
\draw    (35,100) -- (40,100) ;
\draw    (260,160) -- (260,22) ;
\draw [shift={(260,20)}, rotate = 90] [color={rgb, 255:red, 0; green, 0; blue, 0 }  ][line width=0.75]    (10.93,-3.29) .. controls (6.95,-1.4) and (3.31,-0.3) .. (0,0) .. controls (3.31,0.3) and (6.95,1.4) .. (10.93,3.29)   ;
\draw    (260,160) -- (398,160) ;
\draw [shift={(400,160)}, rotate = 180] [color={rgb, 255:red, 0; green, 0; blue, 0 }  ][line width=0.75]    (10.93,-3.29) .. controls (6.95,-1.4) and (3.31,-0.3) .. (0,0) .. controls (3.31,0.3) and (6.95,1.4) .. (10.93,3.29)   ;
\draw    (320,160) -- (320,165) ;
\draw    (380,160) -- (380,165) ;
\draw    (255,40) -- (260,40) ;
\draw    (255,100) -- (260,100) ;
\draw    (480,160) -- (480,22) ;
\draw [shift={(480,20)}, rotate = 90] [color={rgb, 255:red, 0; green, 0; blue, 0 }  ][line width=0.75]    (10.93,-3.29) .. controls (6.95,-1.4) and (3.31,-0.3) .. (0,0) .. controls (3.31,0.3) and (6.95,1.4) .. (10.93,3.29)   ;
\draw    (480,160) -- (618,160) ;
\draw [shift={(620,160)}, rotate = 180] [color={rgb, 255:red, 0; green, 0; blue, 0 }  ][line width=0.75]    (10.93,-3.29) .. controls (6.95,-1.4) and (3.31,-0.3) .. (0,0) .. controls (3.31,0.3) and (6.95,1.4) .. (10.93,3.29)   ;
\draw    (540,160) -- (540,165) ;
\draw    (600,160) -- (600,165) ;
\draw    (475,40) -- (480,40) ;
\draw    (475,100) -- (480,100) ;
\draw [color={rgb, 255:red, 208; green, 2; blue, 27 }  ,draw opacity=1 ]   (260,160) -- (320,40) ;
\draw [color={rgb, 255:red, 208; green, 2; blue, 27 }  ,draw opacity=1 ]   (320,40) -- (380,40) ;
\draw    (475,130) -- (480,130) ;
\draw    (570,160) -- (570,165) ;
\draw [color={rgb, 255:red, 208; green, 2; blue, 27 }  ,draw opacity=1 ]   (480,160) -- (540,130) ;
\draw [color={rgb, 255:red, 208; green, 2; blue, 27 }  ,draw opacity=1 ]   (540,130) -- (570,100) ;
\draw [color={rgb, 255:red, 208; green, 2; blue, 27 }  ,draw opacity=1 ]   (570,100) -- (600,40) ;

\draw (22,32.4) node [anchor=north west][inner sep=0.75pt]    {$1$};
\draw (20,79.4) node [anchor=north west][inner sep=0.75pt]    {$\frac{1}{2}$};
\draw (152,167.4) node [anchor=north west][inner sep=0.75pt]    {$1$};
\draw (91,167.4) node [anchor=north west][inner sep=0.75pt]    {$\frac{1}{2}$};
\draw (242,32.4) node [anchor=north west][inner sep=0.75pt]    {$1$};
\draw (240,79.4) node [anchor=north west][inner sep=0.75pt]    {$\frac{1}{2}$};
\draw (372,167.4) node [anchor=north west][inner sep=0.75pt]    {$1$};
\draw (311,167.4) node [anchor=north west][inner sep=0.75pt]    {$\frac{1}{2}$};
\draw (462,32.4) node [anchor=north west][inner sep=0.75pt]    {$1$};
\draw (460,74.4) node [anchor=north west][inner sep=0.75pt]    {$\frac{1}{2}$};
\draw (592,167.4) node [anchor=north west][inner sep=0.75pt]    {$1$};
\draw (531,167.4) node [anchor=north west][inner sep=0.75pt]    {$\frac{1}{2}$};
\draw (527,82.4) node [anchor=north west][inner sep=0.75pt]    {$\tilde{\alpha }$};
\draw (78,83.4) node [anchor=north west][inner sep=0.75pt]    {$\widetilde{\psi _{0}}$};
\draw (337,83.4) node [anchor=north west][inner sep=0.75pt]    {$\widetilde{\psi _{1}}$};
\end{tikzpicture}
\caption{}
\label{fig: Associativity and left/right inverse}
\end{figure}

\end{proof}
\begin{remark}
Using the same arguments as above it follows that the geometric loop group $G(X,x_0)$ acts smoothly on the geometric path space $E(X,x_0)$ by concatenation. That is, there is a smooth map
\[
\begin{array}{rcl}
     G(X,x_0) \times E(X,x_0) & \rightarrow & E(X,x_0)  \\
     ([\gamma] , [\delta]) & \mapsto & [\gamma] * [\delta] := [\gamma * \delta]
\end{array}
\]
\end{remark}

The next step is to show that $E(X,x_0) \rightarrow X$ is a diffeological principal $G(X,x_0)$-bundle whenever $(X,x_0)$ is connected. Recall from Definition \ref{Definition: Diffeological bundle} that for $\widetilde{\mathrm{ev}}_1$ to be a bundle it is required for this map to be a subduction.

\begin{lemma} \label{Lemma: Evaluation function is subduction for connected case}
Let $(X,x_0)$ be a connected and pointed diffeological space. Then the evaluation map $\mathrm{ev}_1 \colon P(X,x_0) \rightarrow X$ is a subduction.
\end{lemma}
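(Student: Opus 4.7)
The plan is to verify the three ingredients defining a subduction: smoothness, surjectivity, and the local lifting property for plots.

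\textbf{Smoothness and surjectivity.} Smoothness of $\mathrm{ev}_1$ follows immediately from the definition of the functional diffeology on $P(X,x_0)$, since the map $(t,\gamma) \mapsto \gamma(t)$ is smooth and we are evaluating at the fixed point $t=1$. For surjectivity we use that $X$ is connected, meaning $\pi_0^D(X,x_0) = \{[x_0]\}$; hence for every $x \in X$ there is a smooth path $\gamma \in D(I,X)$ with $\gamma(0) = x_0$ and $\gamma(1) = x$, giving an element of $P(X,x_0)$ mapped to $x$ by $\mathrm{ev}_1$.

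\textbf{The local lifting property.} Given any plot $p \colon U \to X$ and any point $u_0 \in U$, I plan to construct, on a convex open neighborhood $V \subset U$ of $u_0$, a plot $\tilde{p} \colon V \to P(X,x_0)$ with $\mathrm{ev}_1 \circ \tilde{p} = p|_V$. The idea is to concatenate a fixed connecting path from $x_0$ to $p(u_0)$ with a family of short paths from $p(u_0)$ to $p(u)$ that varies smoothly in $u$. Concretely, pick a smooth path $\alpha \colon I \to X$ with $\alpha(0)=x_0$, $\alpha(1)=p(u_0)$ and sitting instants at both endpoints (available by Remark \ref{Remark: Cutoff-function} combined with connectedness). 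Next, choose a smooth cut-off function $\psi \colon I \to I$ which is $0$ near $0$ and $1$ near $1$, and on a convex neighborhood $V$ of $u_0$ in $U$ define
\[
\tilde{p}(u)(t) \;=\; \begin{cases} \alpha(2t) & t \in [0, 1/2], \\ p\bigl(u_0 + \psi(2t-1)(u - u_0)\bigr) & t \in [1/2, 1]. \end{cases}
\]
Convexity of $V$ ensures the argument of $p$ stays in $U$, so the formula makes sense.

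\textbf{Why this works.} The two pieces agree at $t = 1/2$ since $\alpha(1) = p(u_0)$ and the sitting instants of $\alpha$ together with $\psi$ being zero near $0$ guarantee that near $t=1/2$ both pieces are constantly equal to $p(u_0)$, so the assembly is smooth as a map $V \times I \to X$. The sitting instant at $t = 0$ yields $\tilde{p}(u)(0) = x_0$, so $\tilde{p}$ indeed lands in $P(X,x_0)$; and by construction $\tilde{p}(u)(1) = p(u)$, giving $\mathrm{ev}_1 \circ \tilde{p} = p|_V$. Smoothness of $\tilde{p}$ as a map $V \to P(X,x_0)$ is then equivalent (by the cartesian closure of $\mathbf{Diff}$ used implicitly when defining the functional diffeology) to smoothness of the adjoint $V \times I \to X$, which is clear. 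The main technical obstacle is the gluing step at $t=1/2$, and this is precisely what the sitting instants of $\alpha$ together with the vanishing of $\psi$ near $0$ are designed to handle; once this is set up, the rest of the argument is a formality.
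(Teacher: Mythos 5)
Your proof is correct and follows essentially the same approach the paper takes by citing Iglesias-Zemmour, Art.~5.6: surjectivity from connectedness, smoothness of $\mathrm{ev}_1$ from cartesian closure, and a local lift built by concatenating a fixed sitting-instant path $\alpha$ from $x_0$ to $p(u_0)$ with a cut-off-reparametrized family of segments $u \mapsto p(u_0 + \psi(\cdot)(u-u_0))$ on a convex neighborhood. You have simply spelled out, for the compact interval $I$, the argument that the paper attributes to the cited proof with the path space $D\left((\mathbb{R},\{0\}),(X,\{x_0\})\right)$, and the gluing at $t=1/2$ is handled correctly by the sitting instants of $\alpha$ together with $\psi$ vanishing near $0$.
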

\begin{proof}
    The proof \cite[Art. 5.6.]{Iglesias-Zemmour} using the path space $D \left( (\mathbb{R},\{0\}) ,(X,\{x_0\})\right)$ instead of $P(X,x_0)$ goes through without any necessary modification.
\end{proof}

\begin{proposition} \label{Proposition: principal bundle associated to geometric loop group}
Let $(X,x_0)$ be a connected, pointed diffeological space. Then the evaluation map
\[
\widetilde{\mathrm{ev}}_1 : E(X,x_0) \rightarrow X, \hspace{3mm} [\gamma] \mapsto \gamma(1)
\]
is a diffeological principal $G(X,x_0)$-bundle. For a general pointed diffeological space $(X,x_0)$, i.e. not necessarily connected, the evaluation map is a fibration of diffeological spaces.
\end{proposition}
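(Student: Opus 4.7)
The plan is to verify the three conditions of Definition \ref{Definition: Diffeological Principal Bundle} in the connected case, and then deduce the fibration property in general.

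First, for the subduction property, I would use the previously established fact that the evaluation $\mathrm{ev}_1 \colon P(X,x_0) \rightarrow X$ is a subduction whenever $X$ is connected (Lemma \ref{Lemma: Evaluation function is subduction for connected case}). The quotient map $q \colon P(X,x_0) \rightarrow E(X,x_0)$ is a subduction by construction of the quotient diffeology, and since the original evaluation factors as $\mathrm{ev}_1 = \widetilde{\mathrm{ev}}_1 \circ q$ is a subduction, Lemma \ref{Lemma: Smoothness and Subductions} forces $\widetilde{\mathrm{ev}}_1$ to be a subduction as well. Its fiber over $x_0$ is by definition $G(X,x_0)$, which is a diffeological group by Lemma \ref{Lemma: Geometric Loop Group is smooth Group}.

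Next, the action of $G(X,x_0)$ on $E(X,x_0)$ (viewed as the appropriate right action via $[\gamma] \cdot [\ell] := [\ell * \gamma]$, or equivalently by passing to the opposite group on the left action from the preceding remark) is smooth by the discussion preceding the proposition, and preserves fibers because loops start and end at $x_0$. The key remaining step is to show that the shear map
\[
E(X,x_0) \times G(X,x_0) \longrightarrow E(X,x_0) \times_X E(X,x_0), \qquad ([\gamma],[\ell]) \longmapsto ([\gamma],[\ell*\gamma])
\]
is a diffeomorphism. The candidate inverse sends $([\gamma_1],[\gamma_2])$, where $\gamma_1(1)=\gamma_2(1)$, to $([\gamma_1],[\gamma_2 * \gamma_1^{-1}])$; the second entry is a based loop at $x_0$ because $\gamma_2$ ends where $\gamma_1^{-1}$ starts. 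After choosing representatives with sitting instants (using relation $\sim_2$, as already exploited in the proof of Lemma \ref{Lemma: Geometric Loop Group is smooth Group}), concatenation becomes smooth by Lemma \ref{Lemma: Smooth Concatenation}, and smoothness of the inverse descends to the quotient via Lemma \ref{Lemma: Smoothness and Subductions} applied to the product subductions. The two composites reduce to the identity precisely through the spike relation $\sim_3$: for instance $(\gamma_2 * \gamma_1^{-1}) * \gamma_1 \sim \gamma_2$ because the middle segment $\gamma_1^{-1} * \gamma_1$ may be cancelled. I expect the main technical obstacle here to be the well-definedness of the candidate inverse on equivalence classes, since one must check that each of the three generating relations $\sim_1,\sim_2,\sim_3$ on $\gamma_1$ and $\gamma_2$ produces an equivalent loop $\gamma_2 * \gamma_1^{-1}$; the argument will mirror, and reuse, the reparametrisation-and-smoothing techniques (orientation-preserving diffeomorphisms of $I$, cut-off functions) that appeared in the proof of Lemma \ref{Lemma: Geometric Loop Group is smooth Group}.

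Finally, for the general (not necessarily connected) case, I would argue as follows. Let $X_0 \subset X$ denote the $D$-path component of $x_0$ endowed with its subset diffeology; since every class $[\gamma] \in E(X,x_0)$ has representative a smooth path starting at $x_0$, the image of $\widetilde{\mathrm{ev}}_1$ lies in $X_0$, and $\widetilde{\mathrm{ev}}_1 \colon E(X,x_0) \rightarrow X_0$ is a diffeological principal $G(X,x_0)$-bundle by the connected case just established. By the example following Definition \ref{Definition: fibration of diffeological spaces}, diffeological principal bundles are fibrations, so $S_e(\widetilde{\mathrm{ev}}_1) \colon S_e(E(X,x_0)) \rightarrow S_e(X_0)$ is a Kan fibration. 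The inclusion $X_0 \hookrightarrow X$ induces an inclusion of $S_e(X_0)$ onto a union of path components of $S_e(X)$, and extending a Kan fibration from such a subcomplex to the ambient simplicial set by the empty map over the remaining components yields a Kan fibration $S_e(E(X,x_0)) \rightarrow S_e(X)$. This establishes the fibration property in the general case.
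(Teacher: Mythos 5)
Your proposal matches the paper's own proof for the three conditions of a diffeological principal bundle: the subduction comes from Lemma \ref{Lemma: Evaluation function is subduction for connected case} combined with the subductive quotient map $q$, the action is fiber-preserving by construction, and the shear map is inverted by exactly the same formula $([\gamma],[\gamma']) \mapsto ([\gamma],[\gamma'*\gamma^{-1}])$ using the spike relation $\sim_3$. Your remarks on checking well-definedness of the inverse against $\sim_1,\sim_2,\sim_3$ are a sensible expansion of the paper's terser "readily checked."

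Where you diverge is in wrapping up the non-connected case. The paper lifts at the diffeological level against $\Lambda^n \hookrightarrow \mathbb{R}^n$ (using the identification of $\Lambda^n$ with the diffeological realization of the simplicial horn), refines the square so that the bottom arrow lands in $X_0$, and then invokes \cite[Proposition~4.28]{Christensen-Wu} together with the fibrancy of $G(X,x_0)$. You instead pass directly to the simplicial picture: having established that $S_e(\widetilde{\mathrm{ev}}_1)\colon S_e(E(X,x_0)) \to S_e(X_0)$ is a Kan fibration, you extend it over the remaining path components of $S_e(X)$, observing that in any horn-lifting problem the connected $\Delta^n$ must land entirely in the union of components containing the image of $\Lambda^n_k$. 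These are two presentations of the same mechanism—by Definition \ref{Definition: fibration of diffeological spaces} the diffeological and simplicial lifting conditions are adjoint to one another—so the choice is one of exposition; yours avoids needing Kihara's realization of the diffeological horn, the paper's stays in the diffeological language used throughout the chapter. One step you leave tacit but should record: the reduction to the connected case assumes $E(X,x_0)=E(X_0,x_0)$ and $G(X,x_0)=G(X_0,x_0)$ as diffeological spaces, which holds because plots have connected Cartesian domains (so any plot of $P(X,x_0)$, being a smooth family of paths starting at $x_0$, factors through $X_0$), but this identification is not automatic and merits a sentence.
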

\begin{proof}
    Consider first the case where $(X,x_0)$ is connected. By Definition \ref{Definition: Diffeological bundle} it suffices to show that
    \begin{enumerate}[label={(\arabic*)}]
        \item the evaluation map $\widetilde{\mathrm{ev}}_1 : E(X,x_0) \rightarrow X$ is a subduction,
        \item the action of $G(X,x_0)$ is fiber preserving, and
        \item the shear map $\tau \colon G(X,x_0) \times E(X,x_0) \rightarrow E(X,x_0) \times_X E(X,x_0)$ is a diffeomorphism.
    \end{enumerate}

    Property $(1)$ follows from the fact that the quotient map $q \colon P(X,x_0) \rightarrow E(X,x_0)$ is a subduction by construction. Indeed, since then Lemma \ref{Lemma: Evaluation function is subduction for connected case} together with Lemma \ref{Lemma: Smoothness and Subductions} show that $\widetilde{\mathrm{ev}}_1$ is a subduction. \\

    Property $(2)$ follows from construction of the $G(X,x_0)$-action on $E(X,x_0)$ via concatenation. \\

    To prove property $(3)$ recall that the shear map is given by
    \[
    \tau : G(X) \times E^o(X) \rightarrow E^o(X) \times_X E^o(X), \hspace{3mm} ([l],[\gamma]) \mapsto ([\gamma], [l * \gamma]).
    \]
    Since the action of $G(X,x_0)$ on $E(X,_0)$ is smooth also the shear map is smooth. Define a smooth inverse to the shear map by
    \[
    \tau^{-1} : E^o(X) \times_X E^o(X) \rightarrow G(X) \times E^o(X), \hspace{3mm} ([\gamma],[\gamma']) \mapsto ([\gamma' * \gamma^{-1}], [\gamma]).
    \]
   This is also a smooth map and it is readily checked using relation $\sim_3$ that it provides an inverse to $\tau$:
    \begin{align*}
    &\tau(\tau^{-1}([\gamma],[\gamma']) = \tau([\gamma' * \gamma^{-1}],[\gamma]) = ([\gamma], [\gamma' * \gamma^{-1} * \gamma]) = ([\gamma],[\gamma']) \\
    &\tau^{-1}(\tau([l],[\gamma])) = \tau^{-1} ( [\gamma], [l * \gamma]) = ([l * \gamma * \gamma^{-1}], [\gamma]) = ([l],[\gamma])
    \end{align*}  \\

    In the case $(X,x_0)$ is not connected, property $(1)$ fails since any subduction is necessarily surjective. However, any commutative diagram of the form
    \begin{center}
    \begin{tikzcd}
\Lambda^n \arrow[d, "a"'] \arrow[r, "b"] & {E(X,x_0)} \arrow[d, "\widetilde{\mathrm{ev}}_1"] \\
\mathbb{R}^n \arrow[r, "p"']             & X
\end{tikzcd}
    \end{center}
    takes values in the connected component $X_0 \subset X$ of $x_0$. Notice that here $\Lambda^n = \{ (x_1, ..., x_n) \in \mathbb{R}^n \, | \, x_i = 0 \text{ for some } i \}$ is the diffeological space constructed as a coequalizer which has the property that $|\Lambda^n_k|_D \cong \Lambda^n$, i.e. it is diffeomorphic to the diffeological realization of all the simplicial horns $\Lambda^n_k$. Thus the above diagram refines to a diagram of the form
    \begin{center}
    \begin{tikzcd}
\Lambda^n \arrow[d, "a"'] \arrow[r, "b"]        & {E(X,x_0)} \arrow[d, "\widetilde{\mathrm{ev}}_1"'] \arrow[rd, "\widetilde{\mathrm{ev}}_1"] &   \\
\mathbb{R}^n \arrow[r, "p"'] \arrow[ru, dashed] & X_0 \arrow[r, hook]                                                                        & X
\end{tikzcd}
    \end{center}
    whose lift is then constructed using the fact that $E(X,x_0) \rightarrow X_0$ is now a principal $G(X,x_0)$-bundle. More precisely, since $G(X,x_0)$ is a diffeological group it is a fibrant diffeological space\footnote{This is \cite[Proposition 4.30.]{Christensen-Wu}} such that we apply \cite[Proposition 4.28]{Christensen-Wu} which shows that the dashed lift in the refined commutative square indeed exists. This concludes the proof.
\end{proof}

\begin{remark}\label{Remark: Differences with Gajer's Paper} $ $
\begin{enumerate}[label={(\arabic*)}]
\item The geometric loop group $G(X,x_0)$ as given by Definition \ref{Definition: Geometric Loop Group} slightly differs from the version introduced by Gajer in \cite{GajerHigher} and \cite{Gajer_Geometry}. Gajer constructs $E_{\mathrm{ps}}(X,x_0)$ as the quotient space of the space of piecewise smooth paths in $X$. That is, let $P_{\mathrm{ps}}(X,x_0)$ denote the set of piecewise smooth paths in $X$ based at $x_0$. This set can be equipped with a diffeology as follows. A map $p \colon U \rightarrow P_{\mathrm{ps}}(X,x_0)$ for $U \in \mathbf{Cart}$ is a plot if there exists a good open covering $\cat{U}$ of $U$ such that for every $V \in \cat{U}$ there exists a partition of the interval $0 = t_{0} < t_{1} < \cdots < t_{m} = 1$ such that the induced map
\[
p \colon U \times I \rightarrow X
\]
restricted to each $[t_{j-1},t_j] \times V$ is a smooth map into $X$. This has the advantage that one only needs to consider the equivalence relation $\sim$ generated by two relations $\sim_1$ and $\sim_3$, where here $\gamma \sim_1 \gamma \circ \varphi$ for every orientation preserving piecewise diffeomorphism $\varphi$ of the interval $I$ and $\sim_3$ denotes the same relation as introduced earlier. Working with piecewise smooth paths allows reducing relation $\sim_2$ to relation $\sim_3$ by simply considering $\beta$ to be the constant path. Then $E_{ps}(X,x_0) := P_{\mathrm{ps}}(X,x_0) / \sim$ is defined as the diffeological quotient space. \\

The existence of cut-off functions, however, guarantees the existence of a smooth map
\[
\begin{array}{rcl}
    S_{\phi} \colon P_{\mathrm{ps}}(X,x_0) & \rightarrow & P_{\mathrm{st}}(X,x_0)  \\
     \gamma & \mapsto & \gamma_{\phi}
\end{array}
\]
for any choice of cut-off function $\phi \in C^{\infty}([0,1])$. That is, given a piecewise smooth path $\gamma \in P_{\mathrm{ps}}(X,x_0)$ there is a partition of the interval $0 = t_0 < t_1 < \cdots < t_m = 1$ such that the restriction $\gamma_j := \gamma|_{[t_{j-1},t_j]}$ is smooth for all $1 \leq j \leq m$. Likewise set $\phi_j(t):= t_{j-1}(1 - \phi(t)) + \phi(t)t_j$. Then define
\[
\gamma_{\phi} := (\gamma_1 \circ \phi_1 ) * (\gamma_2 \circ \phi_2 ) * \cdots * (\gamma_m \circ \phi_m).
\]
In other words, using cut-off functions, every piecewise smooth path gives rise to a smooth path with sitting instances. Therefore, for the rest of this chapter the smooth versions $E(X,x_0)$ and $G(X,x_0)$ are preferred over their piecewise smooth counterparts for simplicity.

\item Gajer shows that for any connected, pointed manifold $(M,x_0)$ the evaluation map \newline $E(M,x_0) \xrightarrow{\widetilde{\mathrm{ev}}_1} M$ is a principal $G(X,x_0)$-bundle. First, it is important to point out that Proposition \ref{Proposition: principal bundle associated to geometric loop group} is a slightly different statement compared to Gajer's result \cite[Theorem 1.2]{GajerHigher}. Whereas we choose to regard $E(M,x_0)$ and $G(M,x_0)$ as diffeological spaces only, Gajer endows the space of paths $P(M,x_0)$ with both a topology and a diffeological structure. The topology in question is the compact-open topology\footnote{Note that it was observed by \cite[pp. 1181-1182]{Barrett} that ordinary holonomy maps are in general not continuous with respect to the compact-open topology on the loop space. One should therefore avoid working in the compact-open topology and rather use the functional topology induced by the diffeological structure.} and is as such always strictly coarser than the $D$-topology of the diffeological space $P(M,x_0)$, see \cite[Example 4.5]{Christensen-Sinnamon-Wu}. The quotient $E(M,x_0)$ is then endowed with the corresponding quotient topology and shown to be a principal $G(M,x_0)$-bundle in the classical sense, where here $G(M,x_0)$ is equipped with the subset topology of $E(M,x_0)$. Further, to argue that this bundle is universal, it is claimed that the total space $E(M,x_0)$ is smoothly contractible via the standard smooth retract
\[
\begin{array}{rcl}
     E(M,x_0) \times [0,1] & \rightarrow & E(M,x_0)   \\
     ([\gamma] ,t) & \mapsto & [\gamma_t]
\end{array}
\]
where here $\gamma_t(s) := \gamma(ts)$. It has already been pointed out in \cite{Ghazel-Kallel} that the proof of the contractibility of $E(M,x_0)$ is incomplete. Indeed, the suggested smooth deformation retract is not well defined. To see where this fails, consider a non-constant smooth path $\gamma \colon I \rightarrow M$ with sitting instants starting at $x_0$ such that $[\gamma] \neq [x_0]$. Then the concatenation $\gamma * \gamma^{-1}$ again defines a smooth path starting at $x_0$. By the third relation $\sim_3$ it is clear that $[\gamma * \gamma^{-1}] = [x_0]$. However, the one-parameter family of smooth paths given by $(\gamma * \gamma^{-1})_t(s) := (\gamma * \gamma^{-1})(ts)$ is such that for $t = \frac{1}{2}$
\[
(\gamma * \gamma^{-1})_{\frac{1}{2}}(s) = \gamma(s)
\]
Thus, by assumption it follows $[(\gamma * \gamma^{-1})_{\frac{1}{2}}] = [\gamma] \neq [x_0]$. This shows that
\[
([\gamma * \gamma^{-1}],t) \mapsto [(\gamma * \gamma^{-1})_{t}]
\]
is not well defined. Therefore, we consider the contractibility of $E(X,x_0)$ to be an open problem.
\end{enumerate}
\end{remark}

\begin{conjecture} \label{Hypothesis: Weak Contractibility of the total space}
    Let $(X,x_0)$ be a pointed diffeological space. Then the geometric path space $E(X,x_0)$ is weakly contractible. More precisely, the simplicial set $S_e \left( E(X,x_0) \right)$ is weakly contractible.
\end{conjecture}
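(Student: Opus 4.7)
My approach is to reduce the weak contractibility of $S_e(E(X,x_0))$ to the analogous, trivially true statement for the based path space $P(X,x_0)$. The diffeological space $P(X,x_0)$ is smoothly contractible via the standard homotopy $H\colon P(X,x_0) \times \mathbb{R} \to P(X,x_0)$, $H(\gamma,t)(s)=\gamma(\psi(t)s)$ for a suitable cut-off $\psi$, and hence $S_e(P(X,x_0))$ is weakly contractible. The plan is therefore to prove that the quotient subduction $q\colon P(X,x_0)\to E(X,x_0)$ induces a weak equivalence $S_e(q)\colon S_e(P(X,x_0))\to S_e(E(X,x_0))$, from which the conjecture follows immediately.

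A direct attempt at pushing the contraction down to $E(X,x_0)$ fails exactly because of the phenomenon noted in Remark \ref{Remark: Differences with Gajer's Paper}: the formula $[\gamma]_t := [\gamma(\psi(t)\,\cdot\,)]$ is not well defined on equivalence classes. I therefore reformulate the problem fibrationally. After restricting to the connected component of $x_0$ (using that $S_e$ preserves coproducts), Proposition \ref{Proposition: principal bundle associated to geometric loop group} exhibits $\widetilde{\mathrm{ev}}_1\colon E(X,x_0)\to X_0$ as a diffeological principal $G(X,x_0)$-bundle, and Kihara's theorem \cite{Kihara22} upgrades this to a principal $S_e(G(X,x_0))$-fibration $S_e(\widetilde{\mathrm{ev}}_1)$ of simplicial sets. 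One then has an honest long exact sequence relating $\pi_*^D(G(X,x_0))$, $\pi_*^D(E(X,x_0))$, and $\pi_*^D(X_0)$. My strategy is to construct a ladder comparing this sequence to the classical loop-space exact sequence of $X_0$ (whose total space $P(X_0,x_0)$ is smoothly contractible), in which the natural maps $\Omega(X_0,x_0)\xrightarrow{q}G(X,x_0)$ and $P(X_0,x_0)\to E(X,x_0)$ serve as the connecting vertical morphisms. A five-lemma argument driven by the contractibility of $P(X_0,x_0)$ should then force $\pi_n^D(E(X,x_0))=0$ for every $n\ge 0$, which by Theorem \ref{Theorem: The Fundamental Theorem of Diffeological Homotopy} gives the desired weak contractibility of $S_e(E(X,x_0))$.

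The main obstacle is the middle vertical arrow of the ladder. Since $P(X_0,x_0)\to X_0$ is not a diffeological fibration, $\pi_n^D(P(X_0,x_0))\to\pi_n^D(X_0)$ is not a priori a connecting homomorphism of any long exact sequence, so one cannot naively fit it into a ladder. To circumvent this, I would prove directly that $q\colon P(X,x_0)\to E(X,x_0)$ induces isomorphisms on all smooth homotopy groups, by showing that the equivalence relation $\sim$ generated by reparametrizations, sitting instants, and spike cancellations can be realized fiberwise by smooth homotopies in $P(X,x_0)$. For $\sim_1$ and $\sim_2$, this is immediate by linearly interpolating the reparametrization through the contractible space of orientation-preserving diffeomorphisms of $I$; for $\sim_3$, one constructs a smooth family shrinking the spike $\beta\ast\beta^{-1}$ to a point. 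The hard step is to promote these fiberwise smooth homotopies to a genuine simplicial-level statement that every smooth $n$-sphere in $P(X,x_0)$ which becomes null-homotopic in $E(X,x_0)$ is already smoothly null-homotopic upstairs. This amounts to a smooth lifting result for smooth maps $\mathbb{A}^n\to E(X,x_0)$ up to boundary extensions, and is the step where subtleties of the diffeology on the path space genuinely enter; it may require additional hypotheses on $X$, such as being a smooth manifold, to be carried through in full.
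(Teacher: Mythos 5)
This statement is presented in the paper as an open conjecture, not a theorem, and the paper offers \emph{no} proof of it. In Remark \ref{Remark: Differences with Gajer's Paper} the authors recall that Gajer's proposed smooth contraction $([\gamma],t)\mapsto[\gamma_t]$ of $E(M,x_0)$ is ill-defined on equivalence classes (following Ghazel--Kallel), and they explicitly ``consider the contractibility of $E(X,x_0)$ to be an open problem,'' introducing Conjecture \ref{Hypothesis: Weak Contractibility of the total space} only as a working hypothesis for the results of Chapter \ref{Chapter: Geometric Loop Groups}. There is therefore no internal proof against which your proposal can be checked; what you have written must stand or fall on its own.

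On its own terms the proposal does not close the argument, and the gap is not incidental — it is the entire content of the conjecture. First, a small logical correction: since $P(X,x_0)$ is smoothly contractible, $\pi_n^D(P(X,x_0))=0$ for all $n$, so you only need the \emph{surjectivity} of $q_*\colon \pi_n^D(P(X,x_0))\to\pi_n^D(E(X,x_0),[c_{x_0}])$, not an isomorphism. Your stated ``hard step'' — that every smooth $n$-sphere in $P(X,x_0)$ which becomes null-homotopic in $E(X,x_0)$ is already null-homotopic upstairs — is the injectivity direction, and it is vacuously true. What actually matters is the surjectivity direction: every smooth map $\mathbb{A}^n\to E(X,x_0)$ must, up to smooth homotopy, factor through $q$. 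This is exactly where the diffeology of the quotient bites. A plot of $E(X,x_0)$ only \emph{locally} lifts to $P(X,x_0)$, and on overlaps the local lifts differ by plots of the relation $R\subset P(X,x_0)\times P(X,x_0)$, which encodes smoothly parametrized families of arbitrarily long chains of elementary moves generated by $\sim_1,\sim_2,\sim_3$. Showing that each individual generator is realized by a smooth endpoint-preserving homotopy in $P(X,x_0)$ — which your outline does — gives no uniform, smoothly parametrized control over such chains and hence does not allow the local lifts to be glued into a global null-homotopy. That is precisely where Gajer's argument failed, and nothing in your outline supplies the missing smooth lifting lemma. Your closing caveat that the claim ``may require additional hypotheses on $X$, such as being a smooth manifold'' is appropriate: the conjecture is genuinely open even in the manifold case, and may well be false for arbitrary diffeological spaces.
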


In \cite{GajerHigher} the geometric loop group $G(M,x_0)$ serves as a suitable refinement of the fundamental group $\pi_1(M,x_0)$ such that the classical isomorphism between
\begin{equation*}
    \begin{array}{rcl}
        \mathrm{Bun}(M,G,\nabla^{\mathrm{flat}}) & \xrightarrow{\cong} & \mathrm{Hom}(\pi_1(M,x_0) , G)  \\
         {[P,p_0,A]} & \mapsto & h^{A}
    \end{array}
\end{equation*}
the pointed set of isomorphism classes $[P,p_0,A]$ to the pointed set of group morphisms can be refined to the case of bundles with arbitrary connection. Here the triple $[P,p_0,A]$ denotes an isomorphism class of a smooth principal $G$-bundle $P$ over $M$ endowed with a flat connection $A$ together with a designated point $p_0 \in P_{x_0}$ in the fiber over the basepoint $x_0 \in M$.

\begin{theorem}[\cite{GajerHigher}, Theorem 1.4.] \label{Theorem: Principal Bundle Version of Theorem}
    Let $(M,x_0)$ be a connected and pointed manifold and $G$ a Lie group. Then there is an isomorphism of pointed sets
    \begin{equation*}
    \begin{array}{rcl}
        \mathrm{Bun}(M, G, \nabla) & \xrightarrow{\cong} & \mathrm{Hom}_{\mathbf{Diff}}(G(M,x_0) , G).  \\
         {[P,p_0,A]} & \mapsto & h^A
    \end{array}
\end{equation*}
The left-hand side denotes the pointed set of isomorphism classes of principal $G$-bundles $P$ with connection $A$ over $M$ together with a point $p_0 \in P_{x_0}$, and the right-hand side the pointed set of morphisms of diffeological groups.
\end{theorem}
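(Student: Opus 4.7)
The plan is to construct explicit maps in both directions and then verify they are mutually inverse. For the forward direction, given $[P, p_0, A] \in \mathrm{Bun}(M, G, \nabla)$, I would define the holonomy map $h^A$ as follows: for a smooth loop $\gamma \in \Omega(M, x_0)$ with sitting instants, lift it horizontally via the connection $A$ starting at $p_0$ to obtain $\widetilde{\gamma}: I \to P$, and define $h^A(\gamma) \in G$ by the equation $\widetilde{\gamma}(1) = p_0 \cdot h^A(\gamma)$. The key verifications are that this descends to a well-defined map on the quotient $G(M, x_0) = P(M, x_0)/\!\sim$: invariance under $\sim_1$ (reparametrization) and $\sim_2$ (sitting instants) follows from the classical reparametrization-invariance of parallel transport, while invariance under $\sim_3$ follows from the fact that parallel transport along $\beta * \beta^{-1}$ is the identity. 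Multiplicativity $h^A([\gamma] * [\delta]) = h^A([\gamma]) \cdot h^A([\delta])$ follows from the functoriality of parallel transport with respect to concatenation. Finally, independence of the isomorphism class $[P, p_0, A]$ is immediate, since bundle isomorphisms preserving the basepoint $p_0$ intertwine holonomies.

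For the inverse direction, given a smooth group homomorphism $h: G(M, x_0) \to G$, I would form the associated bundle $P^h := E(M, x_0) \times_{G(M, x_0)} G$, where $G(M, x_0)$ acts on $G$ via $h$ and left multiplication, with base point $p_0 = [([x_0], e)]$. By Proposition \ref{Proposition: principal bundle associated to geometric loop group} and the fact that the associated bundle construction preserves the principal bundle structure, $P^h \to M$ is a diffeological principal $G$-bundle. The connection on $P^h$ would be defined via parallel transport: for a smooth path $\sigma: I \to M$ starting at $x \in M$, choose any $[\gamma] \in E(M, x_0)$ with $\gamma(1) = x$; then the horizontal lift of $\sigma$ starting at $[([\gamma], g)]$ is defined to be $t \mapsto [([\gamma * \sigma_t], g)]$, where $\sigma_t$ is $\sigma$ restricted to $[0, t]$. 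Well-definedness up to the equivalence relation on $E(M, x_0)$ uses precisely the three defining relations $\sim_1, \sim_2, \sim_3$, and the fact that $h$ is a smooth group homomorphism ensures that this yields an honest smooth connection with the expected transport formula.

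The smoothness of $h^A$ as a morphism of diffeological groups is where the main technical work lies. For any plot $p: U \to G(M, x_0)$, one can locally lift $p$ through the quotient map $q: P_{\mathrm{st}}(M, x_0) \to G(M, x_0)$ (since $q$ is a subduction) to a smooth family of loops $\tilde{p}: U \to P_{\mathrm{st}}(M, x_0)$, and one must then verify that the resulting composition $U \to G$ is smooth. This reduces to showing that parallel transport depends smoothly on a smoothly varying family of loops, which is a standard but careful argument using the local triviality of $P$ and the ODE for parallel transport. I expect this smoothness verification, together with the well-definedness of the connection on $P^h$ via concatenation of equivalence classes, to be the main technical obstacles.

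Finally, the two maps are mutually inverse: starting with $[P, p_0, A]$ and applying both constructions yields a principal $G$-bundle $P^{h^A}$ with a canonical isomorphism to $P$ sending $[([\gamma], g)]$ to $\widetilde{\gamma}(1) \cdot g$, where $\widetilde{\gamma}$ is the horizontal lift starting at $p_0$; this intertwines connections because both are characterized by the same parallel transport. Conversely, starting with $h$ and reading off the holonomy of the connection on $P^h$ recovers $h$ by construction, since the horizontal lift of a loop $\gamma$ starting at $p_0 = [([x_0], e)]$ ends at $[([\gamma], e)] = p_0 \cdot h([\gamma])$. The abelian case follows because for $G$ abelian the contracted product on bundles corresponds to pointwise multiplication of homomorphisms, making both sides abelian groups and the bijection a group isomorphism.
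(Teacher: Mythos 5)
The paper does not supply a proof of this theorem; it is stated as a citation of Gajer's Theorem 1.4, and the attached remark explicitly flags that Gajer's published proof is conditional. Concretely, the remark explains that Gajer's argument hinges on the existence of a universal parallel transport structure on $E(M,x_0) \rightarrow M$, which in turn relies on $E(M,x_0)$ being weakly contractible; the thesis treats that contractibility as an open problem (Conjecture~\ref{Hypothesis: Weak Contractibility of the total space}), because the argument for it in Gajer's paper is shown to be incomplete in Remark~\ref{Remark: Differences with Gajer's Paper}. The remark then proposes the route the thesis actually endorses: adapt Caetano--Picken's Theorem~6, replacing the thin loop group $\pi_1^1(M,x_0)$ by $G(M,x_0)$, precisely because that local, chart-by-chart argument never needs $E(M,x_0)$ at all.

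Your proposal follows Gajer's associated-bundle route rather than the Caetano--Picken one, and the weak point in your sketch sits exactly where the thesis says Gajer's argument is weak. The sentence ``the fact that $h$ is a smooth group homomorphism ensures that this yields an honest smooth connection with the expected transport formula'' is not a consequence of checking well-definedness under $\sim_1,\sim_2,\sim_3$; it \emph{is} the theorem. A smooth homomorphism $h\colon G(M,x_0)\to G$ hands you a parallel transport on the diffeological bundle $P^h$, but recovering an Ehresmann connection --- a $\mathfrak{g}$-valued $1$-form, equivalently a smooth horizontal distribution with a well-defined velocity dependence --- requires a genuine differentiability argument. Gajer performs it by first putting a universal connection on $E(M,x_0)$ and pushing it forward along $h$, and that is exactly where the contractibility of $E(M,x_0)$ enters. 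Your sketch neither invokes that universal structure nor replaces it with the local extraction of connection $1$-forms à la Caetano--Picken, so the key analytic step is missing. Your ``mutual inverse'' verification is also circular at this point: to show that the canonical map $P^{h^A}\to P$ intertwines connections you must already have \emph{some} smooth connection on $P^{h^A}$, and constructing it is the step under discussion.

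Two smaller points. First, you should note that $P^h$ is not merely a diffeological principal $G$-bundle but an ordinary smooth one, since $\mathrm{Bun}(M,G,\nabla)$ in the statement refers to honest principal bundles over the manifold $M$; this does follow from $M$ being a manifold and $G$ a Lie group (diffeological local triviality over a manifold forces ordinary local triviality), but it deserves a sentence. Second, your forward direction --- holonomy of a bundle gives a smooth homomorphism, with smoothness checked by lifting plots through the subduction $q\colon P_{\mathrm{st}}(M,x_0)\to G(M,x_0)$ and smooth dependence of parallel transport on parameters --- is sound and is the uncontroversial half; the thesis has no quarrel with it.
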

\begin{remark}
    The proof Theorem 1.4. in \cite{GajerHigher} is based on the existence of a universal parallel transport structure on the principal $G(M,x_0)$-bundle $E(M,x_0) \rightarrow M$. As such this proof relies on Conjecture \ref{Hypothesis: Weak Contractibility of the total space}. More precisely, the universal parallel transport structure\footnote{Or equivalently the universal connection.} $\mathbb{P}$ on $E(M,x_0) \rightarrow M$ is such that for any smooth holonomy map $h \colon G(M,x_0) \rightarrow G$ the associated bundle $E(M,x_0) \times^h G \rightarrow M$ carries the induced parallel transport structure $\mathbb{P}^{h}$. However, for the assignment
    \[
        \begin{array}{rcl}
         \mathrm{Hom}_{\mathbf{Diff}}(G(M,x_0) , G) & \rightarrow & \mathrm{Bun}(M,G,\nabla) \\
         h & \mapsto & \left( E(M,x_0) \times^h G \rightarrow M,[\mathrm{const}_{x_0},e], \mathbb{P}^h \right)
    \end{array}
    \]
    to be surjective, it is essential for the bundle $E(M,x_0) \rightarrow M$ to be universal. \\

    We believe that an alternative proof of Theorem \ref{Theorem: Principal Bundle Version of Theorem} can be given following the proof of \cite[Theorem 6.]{Caetano-Picken} where the thin homotopy group $\pi_1^1(M,x_0)$ is replaced with the geometric loop group $G(M,x_0)$. This avoids the need for $E(M,x_0)$ to be weakly contractible.
\end{remark}

The next lemma shows how the geometric loop group and the $1$-skeletal diffeological fundamental group are related.

\begin{lemma} \label{Geeometric loop group factors through pi}
    Let $(X,x_0)$ be a pointed diffeological space. Then there exists a surjective group morphism $\theta_1$ completing the diagram:
    \begin{center}
    \begin{tikzcd}
\Omega_{\mathrm{st}}(X,x_0) \arrow[d, "q" description] \arrow[rd, "q_1" description, shift left] \\
G(X,x_0) \arrow[r, "\theta_1"']    & \pi^{D}_1(X_1,x_0)
\end{tikzcd}
    \end{center}
\end{lemma}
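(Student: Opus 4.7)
The plan is to show that the map $q_1 \colon \Omega_{\mathrm{st}}(X, x_0) \to \pi_1^D(X_1, x_0)$, sending a stationary smooth loop to its smooth homotopy class as a loop in $X_1$, factors through the quotient $G(X, x_0)$. First, observe that $q_1$ is well-defined: any smooth loop $\gamma \colon [0,1] \to X$ is a plot of $X$ of one-dimensional domain, so viewed with the same underlying function it is also a plot of $X_1$, and hence defines a class in $\pi_1^D(X_1, x_0)$. By the universal property of the subduction $q \colon \Omega_{\mathrm{st}}(X, x_0) \twoheadrightarrow G(X, x_0)$, it suffices to verify that $q_1$ identifies loops related by each of the generating relations $\sim_1, \sim_2, \sim_3$; this will produce the desired $\theta_1$ as the unique factorization.

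For relations $\sim_1$ and $\sim_2$, given any smooth endpoint-preserving reparametrization $\varphi \colon [0,1] \to [0,1]$, I would interpolate between $\mathrm{id}$ and $\varphi$ by $\varphi_s = (1-s)\mathrm{id} + s\varphi$, inserting sitting instants in the $s$-direction via a cut-off function. The homotopy $H \colon [0,1]^2 \to X$, $H(s,t) = \gamma(\varphi_s(t))$, factors globally as $H = \gamma \circ \Phi$ where $\Phi \colon [0,1]^2 \to [0,1]$ is smooth. Since $\gamma$ is a one-dimensional plot of $X$, this factorization exhibits $H$ as a plot of $X_1$, giving a smooth homotopy in $X_1$ from $\gamma$ to $\gamma \circ \varphi$ rel basepoint.

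The main obstacle is relation $\sim_3$: showing that $\gamma = \alpha * \beta * \beta^{-1} * \eta$ and $\gamma' = \alpha * \eta$ define the same class in $\pi_1^D(X_1, x_0)$. I would construct a homotopy $H \colon [0,1]^2 \to X_1$ that progressively contracts the spike $\beta * \beta^{-1}$ to a point. Partition the $t$-axis into subintervals $I_\alpha, I_{\beta\beta^{-1}}, I_\eta$ corresponding to the three portions of $\gamma$. For each $s \in [0,1]$, define $H_s$ to follow $\alpha$ on $I_\alpha$, traverse $\beta$ up to fractional parameter $1-s$ and return along $\beta^{-1}$ on $I_{\beta\beta^{-1}}$ (suitably rescaled), and follow $\eta$ on $I_\eta$; the sitting instants of $\alpha, \beta, \eta$ make $H$ smooth at the junctions. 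On the subregion $I_\alpha \times [0,1]$ the map $H$ factors as $\alpha \circ \mathrm{pr}_t$ through the one-dimensional plot $\alpha$, and similarly on $I_\eta \times [0,1]$; on the spike subregion $I_{\beta\beta^{-1}} \times [0,1]$ it factors as $\beta \circ T$ for a smooth map $T$ into the parameter interval of $\beta$. Thus every point of $[0,1]^2$ admits a neighborhood on which $H$ factors through a one-dimensional plot of $X$, so $H$ is a plot of $X_1$ and yields the required homotopy rel basepoint.

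Having established that $q_1$ factors as $q_1 = \theta_1 \circ q$, the fact that $\theta_1$ is a group homomorphism is immediate from compatibility of concatenation with the group operation on $G(X, x_0)$ (Lemma~\ref{Lemma: Geometric Loop Group is smooth Group}) and with the group operation on $\pi_1^D(X_1, x_0)$. For surjectivity, any class in $\pi_1^D(X_1, x_0)$ has a representative $\gamma \colon [0,1] \to X_1$, which by Proposition~\ref{Proposition: Skeletal D-topology} is equally a smooth loop in $X$; replacing it by a reparametrized loop with sitting instants via the usual cut-off trick (which, by the same factorization argument as for $\sim_1$, preserves the class in $\pi_1^D(X_1, x_0)$) produces a preimage in $\Omega_{\mathrm{st}}(X, x_0)$. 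This completes the construction of $\theta_1$ with the required properties.
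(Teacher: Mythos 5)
Your overall architecture — showing that $q_1$ identifies loops related by each generator of the equivalence relation, then invoking the universal property of the subduction $q$ to produce $\theta_1$, and handling the group homomorphism and surjectivity separately — matches the paper's, and your treatment of $\sim_1$ and $\sim_2$ by linear interpolation factoring through a one-dimensional domain is essentially identical to the paper's argument. You also explicitly spell out the group morphism property and surjectivity, which the paper leaves implicit; this is a reasonable addition.

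There is, however, a genuine gap in your treatment of $\sim_3$, and it is precisely the point the paper spends most of its effort on. You claim that "the sitting instants of $\alpha$, $\beta$, $\eta$ make $H$ smooth at the junctions," but those sitting instants live at the \emph{fixed endpoints} of $\alpha$, $\beta$, $\eta$ and only control smoothness at the block boundaries $\partial I_\alpha$, $\partial I_{\beta\beta^{-1}}$, $\partial I_\eta$. The problematic point is the \emph{moving turnaround} inside $I_{\beta\beta^{-1}}$, where at time $s$ the path reaches $\beta(1-s)$ and reverses. Writing $H = \beta \circ T$ on that block, $T$ is a tent-like map with a ridge along the moving peak; $\beta$ has no sitting instant at the interior point $\beta(1-s)$ for generic $s$, so nothing kills the kink of $T$, and $H$ fails to be smooth as a map into $X$ — before one even gets to the question of factoring through one-dimensional plots. (At $s=0$ this is fine because the ridge sits at $\beta(1)$, where $\beta$ is stationary; the issue appears only once the peak moves inward.) The paper fixes this by precomposing $\beta_t$ with a smooth cut-off $\psi$ that flattens near $1$, so that each $\beta'_t = \beta_t \circ \psi$ has a sitting instant at its \emph{current} endpoint $\beta(1-t)$, after which the concatenation $\beta'_t * (\beta'_t)^{-1}$ and the resulting homotopy are smooth and locally factor through $\mathbb{R}$. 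Your "suitably rescaled" hedge gestures at this but does not supply the needed sitting instant at the turnaround, so as written the homotopy you describe need not be smooth, and the factorization claim "$H = \beta \circ T$ for a smooth map $T$" is false for $T$ the tent. You need to introduce the analogous flattening device to close the argument.
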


\begin{proof}
The equivalence relation defining the quotient $G(X,x_0)$ is generated by three types of relations. It suffices therefore to show the following:
    \begin{enumerate}[label={(\arabic*)}]
        \item For any $\gamma \in \Omega_{\mathrm{st}}(X,x_0)$ and $\varphi$ an orientation preserving diffeomorphism of $I$ we have:
        \[
        [\gamma \circ \varphi]_1 = [\gamma]_1
        \]
        \item For any $\gamma \in \Omega_{\mathrm{st}}(X,x_0)$ and any $\psi : I \rightarrow I$ such as given in the above definition we have:
        \[
        [\gamma \circ \psi]_1 = [\gamma]_1
        \]
        \item
        For any two $\gamma, \eta \in \Omega_{\mathrm{st}}(M)$ such that $\gamma \sim_3 \eta$ we have
        \[
        [\gamma]_1 = [\eta]_1
        \]
    \end{enumerate}
        To show $(1)$ let $\gamma \in \Omega_{\mathrm{st}}(X,x_0)$ and let $\varphi \in \mathrm{Diff}^+(I)$ be an orientation preserving diffeomorphism. Since $\gamma$ is stationary extend it trivially to a smooth map $\gamma \colon \mathbb{R} \rightarrow X$. The standard smooth homotopy given by
        \[
        \begin{array}{rcccl}
              H \colon I \times I & \rightarrow & \mathbb{R} & \xrightarrow{\gamma} & X \\
              (s,t) & \mapsto & (1-t)\varphi(s) + ts & \mapsto & \gamma\left( (1-t)\varphi(s) + ts \right)
        \end{array}
        \]
        by definition factors trough $\mathbb{R}$ hence indeed $[\gamma \circ \varphi]_1 = [\gamma]_1$.

        To show $(2)$ let $\gamma \in \Omega_{\mathrm{st}}(X,x_0)$ and consider $I_0$ a proper subinterval of $I$ together with a smooth map $\psi : I \rightarrow I$ such that $\psi$ restricted to $I_0$ is a constant map with $\psi(I_0) = t_0$ and $\psi$ is an orientation preserving diffeomorphism of $I - I_0$ onto $I - \left\{ t_0 \right\}$. Again extend $\gamma$ trivially to a smooth map $\gamma \colon \mathbb{R} \rightarrow X$. The same homotopy as in the previous case
        \[
        \begin{array}{rcccl}
              H \colon I \times I & \rightarrow & \mathbb{R} & \xrightarrow{\gamma} & X \\
              (s,t) & \mapsto & (1-t)\psi(s) + ts & \mapsto & \gamma\left( (1-t)\psi(s) + ts \right)
        \end{array}
        \]
        provides a homotopy which factors trough $\mathbb{R}$. Therefore $[\gamma \circ \psi]_1 = [\gamma]_1$.

        To show $(3)$ it suffices to show that given a loop of the form $\alpha * \beta * \beta^{-1} * \eta$ there is a smooth homotopy which locally factors trough $\mathbb{R}$ such that $[\alpha * \beta * \beta^{-1} * \eta]_1 = [\alpha * \eta]_1$. Consider therefore $\alpha : x_0 \rightarrow y$, $\beta : y \rightarrow y'$ and $\eta : y \rightarrow x_0$ smooth paths with sitting instants in $X$. Then $\alpha * \beta * \beta^{-1} * \eta$ defines a loop in $X$ based at $x_0$ and likewise $\alpha * \eta$. Recall the standard contraction homotopy $h_t = (\beta)_t * (\beta_t)^{-1}$ where we define
        \[
        \beta_t(s) =
        \begin{cases}
        \beta(0) \text{ for } s \in [0,t] \\
        \beta(s-t) \text{ for } s \in [t,1]
        \end{cases}
        \]
        Notice that the smoothness at $\beta(0)$ is guaranteed by the fact that $\beta$ is chosen to have sitting instances. However, their concatenation $(\beta)_t * (\beta_t)^{-1}$ is not necessarily smooth as $\beta_t$ does not need to have a sitting instant at $s = 1$. Therefore via a smooth function $\psi \in C^{\infty}([0,1])$ with $\psi([\delta,1]) = 1$ for some $\frac{1}{2} < \delta < 1$ and such that $\psi$ is an orientation preserving diffeomorphism on $[0,\delta)$ construct the family $\beta'_t := \beta_t \circ \psi$. Now by construction it follows that $\beta'_0 = \beta_0 \circ \psi = \beta \circ \psi \sim_2 \beta$. Hence it suffices to show that the smooth homotopy $H'_t :=\beta'_t * (\beta'_t)^{-1}$ locally factors trough $\mathbb{R}$. By construction, it follows that
        \[
        H'(s,t) = \begin{cases}
            \beta'_t(2s) &\text{ for } (s,t) \in [0,1/2] \times [0,1] \\
            (\beta'_t)^{-1}(2s -1) &\text{ for } (s,t) \in [1/2,1] \times [0,1]
        \end{cases}
        \]
        Therefore, on the open subset $[0,1/2) \times [0,1]$ the homotopy factors by definition
        \[
        \begin{array}{rcccl}
         H' \colon [0,1/2) \times [0,1] & \rightarrow & \mathbb{R} & \xrightarrow{\beta'} & X \\
             (s,t) & \mapsto & 2s-t & \mapsto & \beta'(2s-t)
        \end{array}
        \]
        and likewise for $(1/2,1] \times [0,1]$. What is left to show is that any point of the form $(1/2,t)$ for $t \in [0,1]$ admits a neighborhood where $H'$ locally factors trough $\mathbb{R}$. Since for $t$ fixed by construction $\beta'_t([\delta,1]) = \beta'_t(1) = \beta'(1-t)$ it follows that
        \[
        \begin{array}{rcccl}
             H' \colon (1/2 - \delta,1/2 + \delta) \times I & \rightarrow & \mathbb{R} & \xrightarrow{\beta'} & X  \\
             (s,t) & \mapsto & t & \mapsto & \beta'(1-t).
        \end{array}
        \]
        By concatenation with the identity homotopies on the left and the right, which trivially factor trough $\mathbb{R}$, we get the desired smooth homotopy
        \[
        H : \alpha * \beta * \beta^{-1} * \eta \Rightarrow \alpha * \eta.
        \]
\end{proof}

\begin{remark}
    In the case where are given a smooth manifold $(M,x_0)$ then there is a tower of surjective maps that assemble into a commutative diagram.
    \begin{center}
    \begin{tikzcd}
\Omega(M,x_0) \arrow[d, "q" description] \arrow[rd, "q_1" description, shift left] \arrow[rrrd, "q" description, shift left=3] \arrow[rrd, "q_{\leq 1}" description, shift left=2] &                                             &                                 &          \\
G(M,x_0) \arrow[r, "\theta_1"'] & \pi^{D}_1(M_1,x_0) \arrow[r, "\theta_{\leq1}"'] & \pi_1^1(M,x_0) \arrow[r, "\theta"'] & \pi_1(M,x_0)
\end{tikzcd}
    \end{center}
    Here $\pi_1^1(M,x_0)$ denotes the \textit{thin homotopy group} introduced by Caetano--Picken \cite{Caetano-Picken}. Elements in $\pi_1^1(M,x_0)$ are homotopy classes of smooth paths in $M$ where the homotopy $H$ is such that its rank is strictly smaller than $2$.
\end{remark}

\section{Comparing Higher Holonomies}

My first strategy to show that the morphism
\[
\Phi \colon \hat{H}^{k+1}(M;\mathbb{Z}) \rightarrow \mathrm{Hom}_{\mathrm{Sh}(\mathbf{Cart},\mathbf{Ab})} \left( \tilde{H}_k(\mathbb{M}_k) ,U(1) \right)
\]
from the abelian group of differential characters to the abelian group of smooth holonomy morphisms is an isomorphism, was to compare the group of smooth holonomy maps to the higher holonomy morphisms of Gajer. Recall that for a smooth manifold $M$ the simplicial presheaf $\mathbb{M}_k$ of Definition \ref{Definition: The Skeletal Simplicial presheaf M_k} is such that
\[
U \in \mathbf{Cart} \mapsto \mathbb{M}_k(U) := S_e\left( D(U,M)_k \right)
\]
and its associated homology sheaf $\tilde{H}_k(\mathbb{M}_k)$ is the sheafification of the presheaf of abelian groups
\[
U \in \mathbf{Cart} \mapsto H_k(D(U,M)_k).
\]
The idea was to construct a map of presheaves on $\mathbf{Cart}$
\[
\pi \colon G^{k-1}(A(M)) \rightarrow H_{k}(\mathbb{M}_k)
\]
such that accordingly the induced pullback morphism $\pi^*$  would fit into the commutative diagram:
\begin{equation} \label{Equation: Diagram of Differential Characters}
\begin{tikzcd}[row sep=large, column sep=small]
 & \hat{H}^{k+1}(M; \mathbb{Z}) \arrow[dr,"\hat{\kappa}"] \arrow[dl, "\Phi"'] & \\
{\mathrm{Hom} \left( H_{k}(\mathbb{M}_k), U(1)   \right)} \arrow[rr,"\pi^*"] & & {\mathrm{Hom}\left(  G^{k-1}(A(M)) , U(1)   \right)}
\end{tikzcd}
\end{equation}
Here $A(M,x_0)$ denotes the diffeological abelianization of the geometric loop group $G(M,x_0)$. The morphism $\hat{\kappa}$ was constructed by Gajer\footnote{see Theorem \ref{Theorem: Gajer's holonomy Theorem}} and he shows that under the assumption that Conjecture \ref{Hypothesis: Weak Contractibility of the total space} is true, given any $(M,x_0)$ connected, pointed smooth manifold
\[
\hat{\kappa} \colon \hat{H}^{k+1}(M; \mathbb{Z}) \xrightarrow{\cong} \mathrm{Hom}_{\mathrm{PSh}(\mathbf{Cart}, \mathbf{Ab})} \left(  G^{k-1}(A(M)) , U(1)   \right)
\]
is an isomorphism of abelian groups. This approach, however, turned out to be rather challenging due to the following two reasons.
\begin{enumerate}[label={(R\arabic*)}]
    \item \label{Point: First Obstacle} It is unknown to us if the quotient map $\Omega(X,x_0) \rightarrow G(X,x_0)$ is a weak equivalence of diffeological spaces, even for special cases,  such as $(X,x_0)$ a smooth manifold. This issue has already been pointed out through Conjecture \ref{Hypothesis: Weak Contractibility of the total space}.
    \item \label{Point: Second Obstacle} A key ingredient to establish the isomorphism $\hat{\kappa}$ are the natural isomorphisms
    \[
    \pi_i(A(M,x_0), [x_0]) \cong H_{i+1}(M).
    \]
    given for every connected smooth manifold $(M,x_0)$ which are due to \cite[Theorem 4.1.]{GajerHigher}. It is important to point out that here $\pi_i(A(M,x_0),[x_0])$ does not represent the diffeological homotopy group, but the homotopy group of the topological space $A(M,x_0)$ as discussed in Remark \ref{Remark: Differences with Gajer's Paper}. The proof based on the smooth approximation theorem for manifolds can not be generalized to arbitrary diffeological spaces and, in particular, does not apply to the skeletal diffeologies of the form $M_d$.
\end{enumerate}

We show that by assuming $X$ to be $(k-1)$-connected, obstacle \ref{Point: Second Obstacle} can simply be reduced to obstacle \ref{Point: First Obstacle}. That is, for $X$ an $(k-1)$-connected diffeological space, the Hurewicz map $\pi^D_{k}(X,x_0) \xrightarrow{\cong} H_{k}(X)$ is an isomorphism. Note that the Hurewicz map can be related to the diffeological quotient map $G(X,x_0) \rightarrow A(X,x_0)$ as we will show in Theorem \ref{Theorem: Kan's Theorem but diffeological}. The purely simplicial analog of that statement is due to Kan \cite{Kan}. Given a connected and pointed simplicial set $(K,x_0)$ denote by $\mathbb{G}(K,x_0)$ Kan's simplicial loop group, see Definition \ref{Definition: Dwyer-Kan simplicial loop group}. Its abelianization is denoted by $\mathbb{A}(K,x_0)$.

\begin{theorem}[\cite{Kan}, Theorem 15.1 and Theorem 16.1] \label{Theorem: Kan's Theorem}
    Let $(K,x_0)$ be a pointed and connected simplicial set. Then for $i > 0$ there are natural isomorphisms
    \[
    \pi_{i-1}\left( \mathbb{A}(K,x_0) \right) \cong H_{i}(K).
    \]
    Moreover, commutativity holds in the diagram for all $i>0$
\begin{center}
\begin{tikzcd}
{\pi_i(K,x_0)} \arrow[r, "h"] \arrow[d, "\cong"'] & H_i(K) \arrow[d, "\cong"]      \\
{\pi_{i-1}(\mathbb{G}(K,x_0))} \arrow[r]          & {\pi_{i-1}(\mathbb{A}(K,x_0))}
\end{tikzcd}
\end{center}
where the morphism $h$ on the top denotes the simplicial Hurewicz morphism.
\end{theorem}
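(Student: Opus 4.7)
The plan is to work through Kan's classical argument in the modern language of the Dold--Kan correspondence and the Quillen equivalence $\mathbb{G} \dashv \overline{W}$ between reduced simplicial sets and simplicial groups. First I would reduce to the case where $K$ is reduced, which is harmless up to weak equivalence since $K$ is connected. Under this reduction, the left vertical isomorphism $\pi_i(K,x_0) \cong \pi_{i-1}(\mathbb{G}(K,x_0))$ is a direct consequence of the weak equivalence $|\mathbb{G}(K,x_0)| \simeq \Omega|K|$ originally due to Kan, or equivalently the statement that $\overline{W}\mathbb{G}(K,x_0)$ is a fibrant replacement of $K$. This reduces the theorem to computing $\pi_{i-1}(\mathbb{A}(K,x_0))$ and checking the naturality of the Hurewicz square.

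Next I would establish the right vertical isomorphism $\pi_{i-1}(\mathbb{A}(K,x_0)) \cong H_i(K)$ via Dold--Kan. Recall that $\mathbb{G}(K,x_0)_n$ is the free group on $K_{n+1}\setminus s_0 K_n$ with Kan's prescribed face and degeneracy operators. Levelwise abelianization yields the simplicial abelian group $\mathbb{A}(K,x_0)$ whose level $n$ is the corresponding free abelian group on the same generators. The central technical step is to identify the Moore/normalized chain complex $N\mathbb{A}(K,x_0)$ with the reduced normalized chain complex $\widetilde{N}\mathbb{Z}K$ shifted down by one degree: a simplex of $K$ in dimension $n+1$ contributes to $\mathbb{A}(K,x_0)$ in dimension $n$, and the differentials match up to sign once Kan's face maps are unwound. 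Passing to homology then yields $\pi_{i-1}(\mathbb{A}(K,x_0)) \cong H_i(K)$ for $i>0$.

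For commutativity of the square I would use naturality of the abelianization functor on simplicial groups. The bottom horizontal map is induced on $\pi_{i-1}$ by the morphism of simplicial groups $\mathrm{ab}\colon \mathbb{G}(K,x_0) \to \mathbb{A}(K,x_0)$. Under the left vertical isomorphism a class $[\alpha] \in \pi_i(K,x_0)$ corresponds to the Kan-loop on $\alpha$; applying $\mathrm{ab}$ yields its image in $\pi_{i-1}(\mathbb{A}(K,x_0))$. Under the right vertical isomorphism this image is represented by the cycle in $\widetilde{N}_i\mathbb{Z}K$ associated to $\alpha$, which is precisely the simplicial Hurewicz image of $[\alpha]$. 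The square then commutes essentially by the classical description of the Hurewicz morphism as ``represent by a cycle and abelianize.''

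The main obstacle I anticipate is the degree-shift bookkeeping in identifying $N\mathbb{A}(K,x_0)$ with the shifted reduced chain complex of $K$. Although essentially combinatorial, Kan's conventions for degeneracies and the precise formulas for his face operators make the sign tracking delicate, and this verification is the technical heart of Kan's original Theorems 15.1 and 16.1. A more conceptual alternative would be to observe that $\mathbb{A}$ models the endofunctor $K \mapsto \widetilde{\mathbb{Z}}K[-1]$ on the homotopy category of pointed connected simplicial sets via the stabilization/James-type arguments; but to extract the concrete naturality needed for the Hurewicz square one is ultimately forced back to the chain-level identification described above.
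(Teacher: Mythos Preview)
The paper does not prove this theorem; it is stated with a citation to Kan's original paper \cite{Kan} (Theorems 15.1 and 16.1) and used as a black box to establish the diffeological analogue in the subsequent Theorem~\ref{Theorem: Kan's Theorem but diffeological}. There is therefore no proof in the paper to compare against.

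Your sketch is a faithful reconstruction of Kan's classical argument in modern language, and the outline is sound. One small point of alignment with the paper's conventions: the paper defines $\mathbb{G}(K,x_0)$ as the vertex group $\cat{G}K(x_0,x_0)$ of the Dwyer--Kan loop groupoid for an arbitrary connected $K$, and only afterwards identifies it with Kan's free-group description $F(K_{n+1} - (s_0(K_n)\cup T_{n+1}))$ via a choice of maximal tree $T$ (Corollary~\ref{Corollary: Dwyer-Kan group as a free group}). Your reduction to the reduced case makes $T$ trivial and recovers the description you use, so this is harmless, but it is worth noting that the paper's framing does not assume $K$ is reduced. Your identification of the technical heart---matching $N\mathbb{A}(K,x_0)$ with the shifted reduced chains of $K$ and tracking Kan's face map $d_0(\sigma) = d_1^K(\sigma)\cdot (d_0^K(\sigma))^{-1}$ through abelianization---is exactly where the work lies, and you are right that this is delicate but ultimately combinatorial.
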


Using the homotopy theory of diffeological spaces, Theorem \ref{Theorem: Kan's Theorem} has its partial\footnote{Partial in the sense that not all vertical arrows in the diagram of Theorem \ref{Theorem: Kan's Theorem but diffeological} are known to be isomorphisms.} diffeological counterpart.

\begin{theorem} \label{Theorem: Kan's Theorem but diffeological}
    Let $(X,x_0)$ be a pointed and connected diffeological space. Denote by $G(X,x_0)$ the geometric loop group and consider $A(X,x_0)$ its abelianization. The quotient map $q \colon G(X,x_0) \rightarrow A(X,x_0)$ fits into the commutative diagram
    \begin{center}
    \begin{tikzcd}
{\pi^D_i(X,x_0)} \arrow[r, "h"] \arrow[d, "\cong"]                            & H_i(X) \arrow[d, "\cong"]                                           \\
{\pi_{i-1}\left( \mathbb{G}(S_e(X),x_0) \right)} \arrow[r] \arrow[d] & {\pi_{i-1}\left( \mathbb{A}(S_e(X),x_0) \right)} \arrow[d] \\
{\pi^D_{i-1}\left( G(X,x_0) \right)} \arrow[r, "q_*"]                & {\pi^D_{i-1}(A(X,x_0))}
\end{tikzcd}
\end{center}
where the morphism $h$ on top denotes the Hurewicz morphism and $q_*$ the morphism induced by the quotient map $q$.
\end{theorem}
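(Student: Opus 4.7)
The plan is to recognize the upper square as essentially a reformulation of Kan's Theorem \ref{Theorem: Kan's Theorem} applied to the pointed simplicial set $(S_e(X), x_0)$, and then to handle the lower square by constructing a comparison morphism from Kan's simplicial loop group to the extended smooth singular complex of the geometric loop group. First I would observe that, since $X$ is connected, so is $S_e(X)$. By Theorem \ref{Theorem: The Fundamental Theorem of Diffeological Homotopy} there are natural isomorphisms $\pi_i^D(X, x_0) \cong \pi_i(S_e(X), x_0)$, and by definition of smooth singular homology we have $H_i(X) = H_i(S_e(X))$. Composing the first with Kan's isomorphism $\pi_i(S_e(X), x_0) \cong \pi_{i-1}(\mathbb{G}(S_e(X), x_0))$ gives the left vertical isomorphism, and Kan's homology isomorphism $H_i(S_e(X)) \cong \pi_{i-1}(\mathbb{A}(S_e(X), x_0))$ gives the right one. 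The commutativity of the top square is then exactly the commutativity statement contained in Theorem \ref{Theorem: Kan's Theorem}.

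For the lower square, the plan is to construct a natural morphism of simplicial groups
\[
\Theta \colon \mathbb{G}(S_e(X), x_0) \longrightarrow S_e(G(X, x_0))
\]
whose abelianization gives $\bar{\Theta} \colon \mathbb{A}(S_e(X), x_0) \longrightarrow S_e(A(X, x_0))$. Recall that $\mathbb{G}(S_e(X), x_0)_n$ is presented as a free group on generators indexed by the non-degenerate $(n+1)$-simplices $\sigma \colon \mathbb{A}^{n+1} \to X$, modulo those arising from the zeroth degeneracy. To each such $\sigma$ I would associate, for every $t \in \mathbb{A}^n$, the smooth path in $X$ obtained by restricting $\sigma$ to the 1-dimensional edge from the $d_0$-vertex; after reparametrization using a family of cut-off functions (Remark \ref{Remark: Cutoff-function}) this produces a smooth plot $\mathbb{A}^n \to \Omega_{\mathrm{st}}(X, x_0)$. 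Composing with the quotient $q \colon \Omega_{\mathrm{st}}(X, x_0) \to G(X, x_0)$ yields an element of $S_e(G(X, x_0))_n$, and extending multiplicatively gives $\Theta$ on the free group. The construction passes through the abelianization functorially, producing $\bar{\Theta}$.

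Applying $\pi_{i-1}$ to $\Theta$ and $\bar{\Theta}$ and using Theorem \ref{Theorem: The Fundamental Theorem of Diffeological Homotopy} again to identify $\pi_{i-1}(S_e(G(X, x_0))) \cong \pi_{i-1}^D(G(X, x_0))$, and similarly for $A(X, x_0)$, produces the unlabelled vertical arrows in the lower square. Commutativity of that square is then a direct consequence of the strict naturality $S_e(q) \circ \Theta = \bar{\Theta} \circ \mathrm{ab}$, where $\mathrm{ab}$ denotes abelianization of Kan's simplicial loop group: both paths around the square are induced by the same underlying assignment of generators to plots, followed by the quotient $G(X, x_0) \twoheadrightarrow A(X, x_0)$.

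The main obstacle is verifying that $\Theta$ is actually a well-defined morphism of simplicial groups. Kan's loop group is combinatorial, while the relations $\sim_1, \sim_2, \sim_3$ defining $G(X, x_0)$ are of a smooth nature (reparametrization, sitting instants, and spike cancellation). One must therefore check that concatenation of the smoothed representatives matches the formal free-group multiplication in $\mathbb{G}(S_e(X), x_0)$ only after passing to $G(X, x_0)$, and that the sitting instants introduced during smoothing are compatible with all face and degeneracy operators. Carrying this out requires a consistent choice of cut-off data that depends smoothly on the simplex parameter, in the style of the associativity and inverse arguments of Lemma \ref{Lemma: Geometric Loop Group is smooth Group}; I expect this bookkeeping to be the technical core of the proof, after which the rest of the diagram chases automatically.
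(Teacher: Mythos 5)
Your treatment of the top square is the same as the paper's: Theorem \ref{Theorem: Kan's Theorem} applied to $(S_e(X),x_0)$, combined with $\pi_i^D(X,x_0)\cong\pi_i(S_e(X),x_0)$ and $H_i(X)=H_i(S_e(X))$.

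The lower square, however, is where you depart from the paper, and your proposed construction of $\Theta\colon\mathbb{G}(S_e(X),x_0)\to S_e(G(X,x_0))$ has a genuine gap. The prescription ``to each non-degenerate $(n+1)$-simplex $\sigma\colon\mathbb{A}^{n+1}\to X$ assign the path obtained by restricting $\sigma$ to the edge from the $d_0$-vertex, reparametrized by cut-off functions'' does not obviously produce a plot $\mathbb{A}^n\to\Omega_{\mathrm{st}}(X,x_0)$: a single $(n+1)$-simplex has one fixed edge, not an $\mathbb{A}^n$-family of them, and the endpoints of any edge of $\sigma$ need not lie at $x_0$ unless one has already chosen a maximal tree in $S_e(X)$ and conjugated, a step you omit. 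Even granting that some parametrized family can be extracted, there remains the compatibility of $\Theta$ with the Dwyer--Kan face maps, in particular the twisted $d_0$-formula $d_0(\sigma)=d_1^K(\sigma)\cdot\bigl(d_0^K(\sigma)\bigr)^{-1}$ in Definition \ref{Definition: Dwyer-Kan loop groupoid}, which must be matched on the nose by a concatenation-plus-inversion of the associated smooth plots \emph{before} one is allowed to pass to the quotient by $\sim_1,\sim_2,\sim_3$ at the level of $G(X,x_0)$. You correctly identify this as the technical core, but it is precisely the step that has not been carried out, and I do not see a canonical recipe that makes it work.

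The paper avoids this bookkeeping entirely. It uses Proposition \ref{Proposition: principal bundle associated to geometric loop group} together with Theorem \ref{Theorem: Diffeological bundles are principal fibrations} to conclude that $S_e(E(X,x_0))\to S_e(X)$ is a principal $S_e(G(X,x_0))$-fibration, then lets $f\colon S_e(X)\to\overline{W}S_e(G(X,x_0))$ be a classifying map and applies the $\cat{G}\dashv\overline{W}$ adjunction of Theorem \ref{Theorem: Classifying space adjunction} to get a morphism of simplicial groupoids $\cat{G}(S_e(X))\to S_e(G(X,x_0))$; restricting to the automorphism simplicial group at $x_0$ yields the comparison map, whose abelianization is then automatic. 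The non-canonicity of $f$ is harmless because the induced maps on homotopy groups are independent of the choice of classifying map, a point the paper records explicitly. If you want to salvage your approach, you should replace the edge-restriction recipe by this adjunction argument; the rest of your diagram chase (identifying $\pi_{i-1}(S_e(G(X,x_0)))\cong\pi_{i-1}^D(G(X,x_0))$ via Theorem \ref{Theorem: The Fundamental Theorem of Diffeological Homotopy} and stacking on Kan's square) then goes through as written.
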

\begin{remark}
    It is important to note that for the quotient map $q \colon G(X,x_0) \rightarrow A(X,x_0)$ to completely recover the Hurewicz map all vertical morphisms in the diagram need to be isomorphisms, which on the left-hand side is equivalent to Conjecture \ref{Hypothesis: Weak Contractibility of the total space} and on the right-hand side equivalent to observation \ref{Point: Second Obstacle}. The commutativity of the diagram, however, holds in any case.
\end{remark}

\begin{proof}
First recall from Proposition \ref{Proposition: principal bundle associated to geometric loop group} that $E(X,x_0) \xrightarrow{\widetilde{\mathrm{ev}}_1} X$ is a diffeological principal $G(X,x_0)$-bundle. By Theorem \ref{Theorem: Diffeological bundles are principal fibrations} it follows that applying the extended smooth singular complex functor produces $S_e \left( E(X,x_0) \right) \rightarrow S_e(X)$ a principal $S_e(G(X,x_0))$-fibration. Let
\[
f \colon S_e(X) \rightarrow \overline{W} S_e(G(X,x_0))
\]
be any choice of classifying map for $S_e \left( E(X,x_0) \right) \rightarrow S_e(X)$. Using the adjoint pair $\cat{G} \dashv \overline{W}$ of Theorem \ref{Theorem: Classifying space adjunction} this gives a morphism of simplicial groupoids
\[
f \colon \cat{G}(S_e(X)) \rightarrow S_e(G(X,x_0)).
\]
Note that by definition $\mathbb{G}(X,x_0) = \cat{G}(S_e(X))(x_0,x_0)$ and as such by restriction of $f$ to the simplicial group of automorphisms at $x_0$  we obtain a morphism of simplicial groups
\[
f \colon \mathbb{G}(X,x_0) \rightarrow S_e(G(X,x_0)).
\]
By the universal property of the abelianization, the map $f$ descends
\begin{center}
\begin{tikzcd}
{\mathbb{G}(S_e(X),x_0)} \arrow[d] \arrow[r, "f"]             & {S_e(G(X,x_0))} \arrow[d, "q"] \\
{\mathbb{A}(S_e(X),x_0)} \arrow[r, "f_{\mathrm{ab}}", dashed] & {S_e(A(X,x_0))}
\end{tikzcd}
\end{center}
to give a morphism of simplicial abelian groups $f_{\mathrm{ab}}$. The induced diagram of homotopy groups
\begin{center}
\begin{tikzcd}
{ \pi_{i-1} \left( \mathbb{G}(S_e(X),x_0) \right)} \arrow[d] \arrow[r, "f_*"]             & {\pi^D_{i-1}( G(X,x_0) )} \arrow[d, "q_*"] \\
{\pi_{i-1} \left( \mathbb{A}(S_e(X),x_0) \right)} \arrow[r, "(f_{\mathrm{ab}})_*", dashed] & {\pi^D_{i-1}(A(X,x_0))}
\end{tikzcd}
\end{center}
is independent of the choice of classifying map $f$. Together with the commutative square of Theorem \ref{Theorem: Kan's Theorem} this concludes the proof.
\end{proof}
\newpage

\textbf{Assume for the rest of this chapter that we are always considering a smooth, pointed and} $(k-1)$\textbf{-connected diffeological space} $(X,x_0)$ \textbf{and further assume that Conjecture} \ref{Hypothesis: Weak Contractibility of the total space} \textbf{is true}. \\

An immediate consequence is the following.
\begin{corollary} \label{Corollary: Homotopy type of the geometric loop space}
 For every pointed diffeological space $(X,x_0)$ there are natural isomorphisms
    \[
    \pi^D_{k+1}(X,x_0) \xrightarrow{\cong} \pi^D_k(G(X,x_0),[x_0])
    \]
    for all $k \geq 0$.
\end{corollary}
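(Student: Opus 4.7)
The plan is to exhibit the isomorphism as a connecting homomorphism in the long exact sequence of homotopy groups associated to the geometric path space fibration, and then to invoke the contractibility hypothesis to identify it as an isomorphism. Concretely, I would start from Proposition \ref{Proposition: principal bundle associated to geometric loop group}, which asserts that
\[
\widetilde{\mathrm{ev}}_1 \colon E(X,x_0) \longrightarrow X
\]
is a fibration of diffeological spaces (in the sense of Definition \ref{Definition: fibration of diffeological spaces}) whose fiber over $x_0$ is the geometric loop group $G(X,x_0)$ by Definition \ref{Definition: Geometric Loop Group}. Applying the extended smooth singular complex functor $S_e$, which by definition of a diffeological fibration converts $\widetilde{\mathrm{ev}}_1$ into a Kan fibration of simplicial sets, yields a fiber sequence
\[
S_e(G(X,x_0)) \longrightarrow S_e(E(X,x_0)) \longrightarrow S_e(X).
\]

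The next step is to feed this Kan fibration into the classical long exact sequence of homotopy groups of a fibration of simplicial sets. Conjecture \ref{Hypothesis: Weak Contractibility of the total space} is precisely the assertion that $S_e(E(X,x_0))$ is weakly contractible, so all homotopy groups of the middle term vanish. The connecting homomorphism
\[
\partial \colon \pi_{k+1}(S_e(X),x_0) \longrightarrow \pi_k\bigl( S_e(G(X,x_0)),[x_0]\bigr)
\]
is then an isomorphism for every $k \geq 0$. Finally, invoking Theorem \ref{Theorem: The Fundamental Theorem of Diffeological Homotopy} to identify $\pi_i(S_e(-),-)$ with the smooth homotopy group $\pi_i^D(-,-)$ translates $\partial$ into the desired isomorphism $\pi^D_{k+1}(X,x_0) \xrightarrow{\cong} \pi^D_k(G(X,x_0),[x_0])$.

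For naturality, the assignments $(X,x_0) \mapsto P(X,x_0)$, $(X,x_0) \mapsto E(X,x_0)$, and $(X,x_0) \mapsto G(X,x_0)$ are all functorial, and the evaluation maps $\widetilde{\mathrm{ev}}_1$ are natural transformations. Applying $S_e$ preserves these naturalities, so any pointed smooth map $f \colon (X,x_0) \to (Y,y_0)$ induces a morphism of the corresponding Kan fibrations and consequently a commuting ladder of long exact sequences; the connecting homomorphisms therefore assemble into a natural isomorphism. The only genuine ingredient beyond formal manipulations is the weak contractibility of $E(X,x_0)$, which is exactly the content of the standing hypothesis; apart from that, the argument is a direct application of the fibration long exact sequence and the comparison between simplicial and diffeological homotopy groups.
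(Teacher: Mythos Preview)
Your proposal is correct and follows essentially the same route as the paper: both apply $S_e$ to the fibration $\widetilde{\mathrm{ev}}_1 \colon E(X,x_0) \to X$ from Proposition~\ref{Proposition: principal bundle associated to geometric loop group}, invoke the long exact sequence of the resulting Kan fibration, and use Conjecture~\ref{Hypothesis: Weak Contractibility of the total space} to kill the homotopy groups of $S_e(E(X,x_0))$. The paper additionally remarks that $S_e$ preserves limits to identify the fiber of $S_e(\widetilde{\mathrm{ev}}_1)$ with $S_e(G(X,x_0))$, while you spell out the naturality argument and the identification via Theorem~\ref{Theorem: The Fundamental Theorem of Diffeological Homotopy} more explicitly; these are minor differences in emphasis rather than in substance.
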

\begin{proof}
    Recall from Proposition \ref{Proposition: principal bundle associated to geometric loop group} that the evaluation map $E(X,x_0) \rightarrow X$ is a diffeological fibration. That is, the associated morphism of simplicial sets $S_e(E(X,x_0)) \rightarrow S_e(X)$ is a Kan fibration. First notice that since $S_e$ preserves limits, the fiber
    \begin{center}
    \begin{tikzcd}
{S_e(G(X,x_0))} \arrow[d] \arrow[r] & {S_e(E(X,x_0))} \arrow[d] \\
\Delta^0 \arrow[r, "x_0"]           & S_e(X)
\end{tikzcd}
    \end{center}
    can be identified with $S_e(G(X,x_0))$. The long exact sequence of a Kan fibration now gives
        \begin{center}
        \begin{tikzcd}
  \cdots \rar & \pi_{k+1}(S_e(E(X,x_0)),[x_0]) \rar
             \ar[draw=none]{d}[name=X, anchor=center]{}
    & \pi_{k+1}(S_e(X),x_0) \ar[rounded corners,
            to path={ -- ([xshift=2ex]\tikztostart.east)
                      |- (X.center) \tikztonodes
                      -| ([xshift=-2ex]\tikztotarget.west)
                      -- (\tikztotarget)}]{dll}[at end]{\delta} \\
  \pi_{k}(S_e(G(X,x_0)),[x_0]) \rar & \pi_{k}(S_e(E(X,x_0)),[x_0]) \rar & \cdots
  \end{tikzcd}
  \end{center}
    from which we conclude by Conjecture \ref{Hypothesis: Weak Contractibility of the total space} that the natural maps
    \[
    \pi_{k+1}(S_e(X),x_0) \rightarrow \pi_{k}(S_e(G(X,x_0)),[x_0])
    \]
    are indeed isomorphisms for all $k \geq 0$.
\end{proof}

\begin{remark} \label{Remark: Diffeological and ordinary homotopy groups for G(M,x_0)}
    Considering the case where we are given a smooth manifold $(M,x_0)$, then the natural morphism
    \[
    \pi_i^D(M,x_0) \xrightarrow{\cong} \pi_i(M,x_0)
    \]
    from the diffeological homotopy group into the ordinary homotopy group is an isomorphism. Further, recall that Gajer endows the geometric loop groups $G(M,x_0)$ with the quotient topology of the compact-open topology on $P(M,x_0)$. First, we wish to show that there is a continuous map
    \[
    \tau(G(M,x_0)) \xrightarrow{\mathrm{id}} G(M,x_0),
    \]
    where on the left-hand side the geometric loop group is endowed with the $D$-topology and on the right-hand side with the quotient topology of the compact-open topology. From Remark \ref{Remark: Functional Topologies} it first follows that there is a continuous map
    \[
    \tau(D(I,M)) \xrightarrow{\mathrm{id}} C^{\infty}(I,M)
    \]
    since the functional topology is finer than the compact-open topology. Further, the discussion in \cite[Section 3.3]{Christensen-Sinnamon-Wu} then implies that there are indeed continuous maps
    \begin{align*}
        &\tau(P(M,x_0)) \xrightarrow{\mathrm{id}} \Omega(M,x_0) \\
        &\tau(G(M,x_0)) \xrightarrow{\mathrm{id}} G(M,x_0).
    \end{align*}
    Therefore, there is a natural morphism
    \[
    \pi_i^D(G(M,x_0),[x_0]) \rightarrow \pi_i(G(M,x_0),[x_0])
    \]
    such that the following diagram commutes.
\begin{center}
\begin{tikzcd}
{\pi_{i+1}^D(M,x_0)} \arrow[d, "\cong" description] \arrow[r, "\cong"] & {\pi_{i+1}(M,x_0)} \arrow[d, "\cong" description] \\
{\pi_{i}^D(G(M,x_0),[x_0])} \arrow[r]                                  & {\pi_{i}(G(M,x_0),[x_0])}
\end{tikzcd}
\end{center}
The left-hand vertical morphism is an isomorphism from Corollary \ref{Corollary: Homotopy type of the geometric loop space} and the right-hand side vertical morphism follows from \cite{GajerHigher}. In particular, this implies that
    \[
    \pi_i^D(G(M,x_0),[x_0]) \xrightarrow{\cong} \pi_i(G(M,x_0),[x_0])
    \]
    is also an isomorphism.
\end{remark}

\subsection{Three diffeologies on $G^k(X,x_0)$}

The core concept developed in \cite{GajerHigher, Gajer_Geometry} is based on the iterative application of the geometric loop group. That is, the geometric loop group defines a functor
\[
\begin{array}{rcl}
     G \colon \mathbf{Diff}_* & \rightarrow & \mathbf{Diff}_*  \\
    (X,x_0) & \mapsto & \left( G(X,x_0) , [x_0] \right)
\end{array}
\]
such that the iterated geometric loop group $G^k(X,x_0)$ is understood to be equipped with the \textit{standard diffeology} or \textit{canonical diffeology} induced from the functor $G$. \\

Alternatively, consider the smooth map
    \begin{equation} \label{Equation: Iterated quotient map}
    q_M^k \colon \Omega^k(M,x_0) \rightarrow G^k(M,x_0)
    \end{equation}
    induced by an iterative application of the smooth quotient map $q_X \colon \Omega(X,x_0) \rightarrow G(X,x_0)$. For example in the case $k = 2$ the smooth map (\ref{Equation: Iterated quotient map}) is given by:
    \begin{center}
    \begin{tikzcd}
    {\Omega^2(M,x_0)} \arrow[r, "\Omega(q_M)"] \arrow[rd] & {\Omega(G(M,x_0),[x_0])} \arrow[d, "{q_{G(M,x_0)}}"] \\
                                                      & {G(G(M,x_0),[x_0])}
    \end{tikzcd}
    \end{center}
    Now the set $G(G(M,x_0),[x_0])$ can be endowed with the pushforward diffeology of the standard diffeology on the loop space $\Omega^2(M,x_0)$ along the map $q^2$. This is the diffeology that has been considered by Gajer \cite{Gajer_Geometry}.

\begin{definition}[\cite{Gajer_Geometry}]
    Let $(X,x_0)$ be a pointed diffeological space and $k >0$ an integer. Endow the set $G^k(X,x_0)$ with the diffeology where a function $p \colon U \rightarrow G^k(X,x_0)$ is a plot if and only if it locally lifts to a plot $\tilde{p} \colon V \rightarrow \Omega^k(X,x_0)$. Denote this diffeological space by $\widetilde{G}^k(X,x_0)$. In particular the diffeology of $\widetilde{G}^k(X,x_0)$ represents the pushforward diffeology on the set $G^k(X,x_0)$ along the map
    \[
    q_X^k \colon \Omega^k(X,x_0) \rightarrow G^k(X,x_0).
    \]
\end{definition}

\begin{remark}
Note that in the case $k = 1$ we simply recover the canonical diffeology
\[
\tilde{G}(X,x_0) = G(X,x_0).
\]
\end{remark}

For the canonical diffeology $G^k(X,x_0)$ the map
\[
q^k_X \colon \Omega^k(X,x_0) \rightarrow G^k(X,x_0)
\]
is smooth and therefore, using the fact that the pushforward diffeology is the finest diffeology such that the map $q^k_X$ smooth\footnote{See for example Art. 1.43 in \cite{Iglesias-Zemmour}.}, it follows that the set-theoretical identity yields a smooth map
\[
\mathrm{id} \colon \widetilde{G}^k(X,x_0) \rightarrow G^k(X,x_0).
\]

The third diffeology to consider is the one induced by the presheaf of groups on $\mathbf{Cart}$
\[
U \mapsto G^k(D(U,X),[x_0])
\]
where here $x_0$ denotes the constant smooth map $x_0 \colon U \rightarrow X$ at $x_0 \in X$. This means that $G^k(D(U,X),[x_0])$ is considered simply as a set-theoretic group by forgetting the diffeology. Via concretization and sheafification this defines a diffeological group denoted $\hat{G}^k(X,x_0)$. Also for this diffeology the natural map
\[
q^k_X \colon \Omega^k(X,x_0) \rightarrow \hat{G}^k(X,x_0)
\]
is smooth, i.e. again by the universal property of the pushforward the set-theoretic identity map is smooth
\begin{equation} \label{Equation: The Morphism Id}
\mathrm{id} \colon \tilde{G}^k(X,x_0) \rightarrow \hat{G}^k(X,x_0).
\end{equation}

\begin{proposition} \label{Proposition: The (k-1)-connected Gajer theorem}
    Let $(M,x_0)$ be a pointed $(k-1)$-connected smooth manifold for $k > 0$. Then there is an isomorphism of abelian groups
    \[
    \hat{\kappa} \colon \hat{H}^{k+1}(M,\mathbb{Z}) \xrightarrow{\cong} \mathrm{Hom}_{\mathbf{DiffGrp}} \left(  \widetilde{G}^{k}(M) , U(1)   \right).
    \]
\end{proposition}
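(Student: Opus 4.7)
The plan is to combine Gajer's Theorem \ref{Theorem: Gajer's holonomy Theorem}, shifted by one degree, with a diffeological comparison that is available only in the $(k-1)$-connected range. Theorem \ref{Theorem: Gajer's holonomy Theorem} yields
\[
\hat{H}^{k+1}(M;\mathbb{Z}) \xrightarrow{\cong} \mathrm{Hom}_{\mathbf{DiffGrp}}\bigl(G^{k}(A(M,x_0)),U(1)\bigr),
\]
so the problem reduces to exhibiting a natural isomorphism of abelian groups
\[
\mathrm{Hom}_{\mathbf{DiffGrp}}\bigl(\widetilde{G}^{k}(M,x_0),U(1)\bigr) \cong \mathrm{Hom}_{\mathbf{DiffGrp}}\bigl(G^{k}(A(M,x_0)),U(1)\bigr),
\]
induced by pullback along a canonical smooth group homomorphism between the two iterated loop groups, which the pushforward description of $\widetilde{G}^{k}(M,x_0)$ makes it possible to construct.

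The comparison map is built by iterating the diffeological abelianization $q \colon G(X,x_0) \twoheadrightarrow A(X,x_0)$. Functorially composing $\Omega$, $G$ and $q$ at each level assembles into a smooth morphism $\Omega^{k}(M,x_0) \to G^{k}(A(M,x_0))$. Since any smooth group homomorphism into $U(1)$ is insensitive to the abelianization relations at every intermediate stage, this morphism factors set-theoretically through the iterated quotient $q_{M}^{k} \colon \Omega^{k}(M,x_0) \to G^{k}(M,x_0)$, so the universal property of the pushforward diffeology yields
\[
\Phi \colon \widetilde{G}^{k}(M,x_0) \longrightarrow G^{k}(A(M,x_0)).
\]
It then suffices to show that $\Phi^{\ast}$ is a bijection on $\mathrm{Hom}_{\mathbf{DiffGrp}}(-,U(1))$. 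Surjectivity follows because $U(1)$ is abelian: any smooth homomorphism $\widetilde{G}^{k}(M,x_0) \to U(1)$ lifts to $\Omega^{k}(M,x_0)$ and passes through every abelianized stage. For injectivity it is enough that $\Phi$ be a weak equivalence of connected diffeological groups.

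This is where the $(k-1)$-connectedness hypothesis enters. Iterating Corollary \ref{Corollary: Homotopy type of the geometric loop space} shows that $G^{j}(M,x_0)$ is $(k-1-j)$-connected for $0 \leq j \leq k-1$. Consequently, each abelianization $G(G^{j}(M,x_0)) \to A(G^{j}(M,x_0))$ entering the construction of $\Phi$ fits, via Theorem \ref{Theorem: Kan's Theorem but diffeological}, into a commutative square whose top row is a Hurewicz homomorphism on $\pi_{k-j}^{D}$. In this connectivity range the Hurewicz map is an isomorphism, so each stage of the tower of iterated abelianizations is a $\pi_{0}^{D}$-equivalence between connected diffeological groups; stringing these together gives that $\Phi$ is a weak equivalence, and pullback along $\Phi$ then induces the required bijection on $\mathrm{Hom}_{\mathbf{DiffGrp}}(-,U(1))$.

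The main obstacle I anticipate is the bookkeeping required to make the iterated comparison rigorous. The three diffeologies competing on the underlying set $G^{k}(X,x_0)$, namely the canonical one, the pushforward $\widetilde{G}^{k}$ and the sheafified $\hat{G}^{k}$, are a priori distinct, and the smooth identity map (\ref{Equation: The Morphism Id}) only goes in one direction, so care is needed when transferring the Hurewicz-type isomorphisms assembled from Theorem \ref{Theorem: Kan's Theorem but diffeological} across these changes of diffeology. A secondary difficulty is that the entire argument is conditional on Conjecture \ref{Hypothesis: Weak Contractibility of the total space}, used both inside Gajer's theorem and to validate the leftmost vertical isomorphism in the diagram of Theorem \ref{Theorem: Kan's Theorem but diffeological}; fortunately the chapter's standing hypothesis allows us to assume it.
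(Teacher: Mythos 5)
Your proposal takes a genuinely different route from the paper, but it contains two substantive problems.

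\textbf{First, a degree-counting issue.} You build a map $\Phi \colon \widetilde{G}^{k}(M,x_0) \to G^{k}(A(M,x_0))$ by ``composing $\Omega$, $G$ and $q$ at each level.'' Tracking the iteration carefully, applying the abelianization at the innermost stage gives
\[
\Omega^{k}(M,x_0) \cong \Omega^{k-1}\bigl(\Omega(M,x_0)\bigr) \twoheadrightarrow \Omega^{k-1}\bigl(A(M,x_0)\bigr) \twoheadrightarrow G^{k-1}\bigl(A(M,x_0)\bigr),
\]
so the natural target is $G^{k-1}(A(M,x_0))$, not $G^{k}(A(M,x_0))$. This matches what the paper actually does: it produces $q \colon \widetilde{G}^{k}(M) \to \widetilde{G}^{k-1}(A(M))$ by applying $\widetilde{G}^{k-1}$ to the single abelianization $G(M) \to A(M)$. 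Your $\Phi$ overshoots by one loop iteration, and without correcting this the comparison cannot be assembled.

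\textbf{Second, the injectivity argument fails.} You argue that $\Phi^{*}$ is injective because ``$\Phi$ is a weak equivalence of connected diffeological groups.'' This does not follow. If $h'_{1} \circ \Phi = h'_{2} \circ \Phi$ then $h'_{1}$ and $h'_{2}$ agree on the set-theoretic image of $\Phi$, and to conclude $h'_{1} = h'_{2}$ you need this image to exhaust the target, i.e., $\Phi$ should be surjective (or at least a subduction). A weak equivalence of diffeological groups says nothing about surjectivity of the underlying set map; one can have a weak equivalence with small image. So the crucial half of the bijection claim is not established, and the iterated Hurewicz analysis you build toward cannot repair this.

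\textbf{Comparison with the paper.} The paper avoids both pitfalls by proceeding indirectly: it sets up a commutative ladder of short exact sequences coming from the differential cohomology hexagon,
\[
0 \to \mathrm{Hom}\bigl(\pi_{0}(G^{k-1}(A(M))),U(1)\bigr) \to \mathrm{Hom}\bigl(\widetilde{G}^{k-1}(A(M)),U(1)\bigr) \xrightarrow{\mathrm{curv}} \Omega^{k+1}_{\mathrm{cl}}(M)_{\mathbb{Z}} \to 0
\]
and its analogue for $\widetilde{G}^{k}(M)$, with vertical maps induced by the single abelianization $q$. The $(k-1)$-connectedness enters exactly once, to show via Hurewicz that $\pi_{0}(q)$ is an isomorphism, and the $5$-lemma then gives the result without ever needing $q$ itself to be a weak equivalence or surjective. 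This is both logically cleaner and technically lighter than the weak-equivalence-at-every-stage strategy you propose. If you want to salvage your direct approach, you would need to (i) fix the target to $G^{k-1}(A(M))$ and (ii) replace ``weak equivalence'' by an argument that the comparison map is a subduction (or at least set-theoretically surjective), which is not immediately supplied by the connectivity hypothesis.
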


\begin{proof}[Proof of Proposition \ref{Proposition: The (k-1)-connected Gajer theorem}]
Let us first consider the case $k=1$ where $\tilde{G}(M,x_0) = G(M,x_0)$ and the statement simply follows from Theorem \ref{Theorem: Principal Bundle Version of Theorem}. Assume now $k> 1$ and notice that there is a short exact sequence\footnote{For reference see the proof of Theorem 3.6 in \cite{Gajer_Geometry}.}
\[
0 \rightarrow \mathrm{Hom} \left( \pi_{0} \left( G^{k-1}(A(M)) \right), U(1) \right) \xrightarrow{i} \mathrm{Hom}_{}\left(  \tilde{G}^{k-1}(A(M)), U(1) \right) \xrightarrow{\mathrm{curv}} \Omega_{\mathrm{cl}}^{k+1}(M)_{\mathbb{Z}} \rightarrow 0
\]
which under the identification with $ \pi_{0} \left( G^{k-1}(A(M)) \right) \cong H_{k}(M)$ together with the universal coefficients theorem $\mathrm{Hom} \left( \pi_{0} \left( G^{k-1}(A(M)) \right), U(1) \right) \cong H^k(M,U(1))$ corresponds to one of the diagonals in the differential cohomology hexagon. \\

The smooth abelianization map $q_{\mathrm{ab}} \colon G(M) \rightarrow A(M)$ induces smooth morphisms $q \colon \tilde{G}^k(M) \rightarrow \tilde{G}^{k-1}(A(M))$. We now claim that there is a commutative diagram of short exact sequences.
\begin{center}
\adjustbox{scale=0.85,center}{%
\begin{tikzcd}
0 \arrow[r] & {\mathrm{Hom} \left( \pi_{0} \left( G^{k-1}(A(M)) \right), U(1) \right)} \arrow[r, "i", hook] \arrow[d, "\pi_0(q)_*"'] & {\mathrm{Hom} \left( \tilde{G}^{k-1}(A(M)), U(1) \right)} \arrow[r, "\mathrm{curv}"] \arrow[d, "q_*"] & \Omega_{\mathrm{cl}}^{k+1}(M)_{\mathbb{Z}} \arrow[r] \arrow[d, "\mathrm{id}"] & 0 \\
0 \arrow[r] & {\mathrm{Hom} \left( \pi_{0} \left( G^{k}(M) \right), U(1) \right)} \arrow[r, "j", hook]                               & {\mathrm{Hom} \left( \tilde{G}^{k}(M), U(1) \right)} \arrow[r, "\mathrm{curv}"]                       & \Omega_{\mathrm{cl}}^{k+1}(M)_{\mathbb{Z}} \arrow[r]                          & 0
\end{tikzcd}
}
\end{center}
Let us first address the commutativity of the diagram. The fact that the left-hand square commutes follows from the fact that it is induced by precomposition with the map $q$ and the associated map of path-components $\pi_0(q)$ respectively. The fact that the right-hand square commutes follows from the definition of the curvature form. Indeed, to a smooth morphism $h \colon  \tilde{G}^{k-1}(A(M)) \rightarrow U(1)$ the associated curvature form locally at some point $x \in M$ is constructed as follows. Let $\varphi \colon U \rightarrow \mathbb{R}^n$ be a coordinate chart of $M$ centered at $x$. Given a collection of vectors $w := (w_0, ..., w_k)$ in $\mathbb{R}^n$ consider the smooth map\footnote{See p. 231 in \cite{GajerHigher} for more details.} $\gamma_{w} \colon \mathbb{R}^k \rightarrow \mathbb{R}^n$ parametrizing the parallelepiped spanned by the vectors $w_i$ in $\mathbb{R}^n$. The coordinate chart now gives a smooth map $\gamma_{w} \colon \mathbb{R}^k \rightarrow M$ which by construction defines an element $\gamma_{w} \in \Omega^k(M)$. Then locally at $x$ the curvature form is defined by
\[
\sum_{i_0 < \cdots < i_k} K_{i_0,...,i_k}(x) dx_{i_0} \cdots dx_{i_k}
\]
where
\[
K_{i_0,...,i_k}(x) = \underset{t \rightarrow 0}{\mathrm{lim}} \, \frac{\mathrm{log} \left( h \left( \left[\gamma_{t \cdot (e_{i_0}, ..., e_{i_k})} \right]  \right) \right)}{t^{k+1}}.
\]
Here $\left[\gamma_{t \cdot (e_{i_0}, ..., e_{i_k})} \right]$ denotes the image of $\gamma_{t \cdot (e_{i_0}, ..., e_{i_k})} \in \Omega^k(M)$ under the quotient map $\Omega^k(M) \rightarrow \tilde{G}^{k-1}(A(M))$. In the case where we are given a smooth map $h' \colon \tilde{G}^{k}(M) \rightarrow U(1)$, the same construction defines a curvature $(k+1)$-form locally at $x$
\[
K'_{i_0,...,i_k}(x) = \underset{t \rightarrow 0}{\mathrm{lim}} \, \frac{\mathrm{log} \left( h' \left( \left[\gamma_{t \cdot (e_{i_0}, ..., e_{i_k})} \right]  \right) \right)}{t^{k+1}}
\]
but here $\left[\gamma_{t \cdot (e_{i_0}, ..., e_{i_k})} \right]$ denotes the image of $\gamma_{t \cdot (e_{i_0}, ..., e_{i_k})} \in \Omega^k(M)$ under the quotient map $\Omega^k(M) \rightarrow \tilde{G}^{k}(M)$. It follows now from the commutativity of the diagram
\begin{center}
    \begin{tikzcd}
\Omega^k(M) \arrow[d] \arrow[rd] &               \\
\tilde{G}^{k}(M) \arrow[r, "q"']         & \tilde{G}^{k-1}(A(M))
\end{tikzcd}
\end{center}
that these two curvature forms agree in the case $h' = q \circ h$. That is, $\mathrm{curv}( q_*(h)) = \mathrm{curv}( h \circ q ) = \mathrm{curv}(h)$ which shows that the right-hand square indeed commutes. \\

Let us now show the exactness of the lower sequence. The surjectivity of the curvature morphism now follows from the surjectivity of the curvature morphism above. The injectivity of $j$ follows from the fact that the morphism is induced by precomposition with the quotient map to the path components. By construction of the curvature, we see that $\mathrm{curv}(h') = 0$ if and only if $h'$ is locally constant, i.e. $h \in \mathrm{Im}(j)$. \\

Notice that until this point, we have not used the fact that $M$ is $(k-1)$-connected. This assumption however implies that $\pi_0(q)$ is an isomorphism, as we can deduce from the following diagram:
\begin{center}
    \begin{tikzcd}
\pi_0\left( G^{k}(M) \right) \arrow[d, "\pi_0(q)"] & {\pi_k(M,x_0)} \arrow[d, "\cong"] \arrow[l, "\cong"'] \\
\pi_0\left( G^{k-1}(A(M)) \right)      & H_k(M) \arrow[l, "\cong"]
\end{tikzcd}
\end{center}
Here the vertical morphism on the right is the Hurewicz morphism and commutativity is implied by Theorem \ref{Theorem: Kan's Theorem but diffeological} together with Remark \ref{Remark: Diffeological and ordinary homotopy groups for G(M,x_0)}. It follows now from the 5-Lemma, that the morphism in middle $q_*$ is an isomorphism.
\end{proof}

\subsection{Constructing the map $\pi$}

Start with the presheaf of groups used to define the diffeological group $\hat{G}^k(M,x_0)$:
\begin{equation} \label{Equation: The presheaf G^k}
\begin{array}{rcl}
   \cat{G}^k(M,x_0) \colon \mathbf{Cart}^{\mathrm{op}}   & \rightarrow & \mathbf{Grp}  \\
     U & \mapsto & G^{k}\left( D(U,M) ,[x_0]  \right)
\end{array}
\end{equation}
which is denoted for the moment by $\cat{G}^k(M,x_0)$ \footnote{We point out the slight abuse of notation since $\cat{G}$ has been used before to denote the Dwyer--Kan simplicial loop groupoid. However, in this section, it always denotes the presheaf of groups as defined in \eqref{Equation: The presheaf G^k}.}. Since the diffeological group $\hat{G}^k(M,x_0)$ is defined by
\[
\hat{G}^k(M,x_0) := \left( C \left( \cat{G}^k(M,x_0) \right) \right)^{\#}
\]
it follows that there are natural isomorphisms
\[
\mathrm{Hom}_{\mathbf{DiffGrp}} \left( \hat{G}^k(M,x_0) , U(1) \right) \cong  \mathrm{Hom}_{\mathrm{PSh}(\mathbf{Cart}, \mathbf{Grp})} \left( \cat{G}^k(M,x_0) , U(1) \right)
\]
using the fact that sheafification and concretization are both left adjoints, see section \ref{Subsection: Diffeological Spaces}. The idea is to construct a morphism of presheaves
\begin{equation} \label{Equation: presheaf map into homology presheaf}
\cat{G}^{k}(M,x_0) \rightarrow \pi_{k}(\mathbb{M}_k,x_0)
\end{equation}
and show how this morphism then gives rise to the desired morphism we have denoted by $\pi$. Recall that by $\pi_{k}(\mathbb{M}_k,x_0)$ we denote the homotopy presheaf associated with the simplicial presheaf $\mathbb{M}_k$, see Definition \ref{Definition: Homotopy Sheaf}.

\begin{lemma} \label{Lemma: Surjection path space thinness}
    For $(X,x_0)$ a pointed diffeological space and any integer $k > 0$, the set theoretical identity map
    \[
    \mathrm{id} \colon G(X,x_0)_k \rightarrow G(X_{k+1},x_0)
    \]
    is smooth and induces a surjection on the diffeological homotopy groups:
    \[
    \pi_i^D(G(X,x_0)_k) \rightarrow \pi_i^D(G(X_{k+1},x_0))
    \]
\end{lemma}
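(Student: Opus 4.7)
The plan is to separate smoothness of the identity from surjectivity on homotopy groups. For smoothness, I would unwind definitions. A plot $p\colon U\to G(X,x_0)_k$ locally factors as $p|_V = f\circ q$, where $q\colon V\to W$ is smooth, $W$ is a Cartesian space of dimension at most $k$, and $f\colon W\to G(X,x_0)$ is a plot. After possibly shrinking $W$, the plot $f$ lifts through the quotient $\Omega(X,x_0)\to G(X,x_0)$ to a smooth map $\tilde f\colon W\to\Omega(X,x_0)$, equivalently to a smooth map $\tilde f\colon W\times I\to X$. Since $\dim(W\times I)\le k+1$, this is a plot of $X$ of dimension at most $k+1$, hence a plot of $X_{k+1}$, and composing with $\Omega(X_{k+1},x_0)\to G(X_{k+1},x_0)$ and with $q$ provides the required local factorization of $p$ into $G(X_{k+1},x_0)$.

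For the surjection on $\pi_i^D$, I would use Theorem \ref{Theorem: The Fundamental Theorem of Diffeological Homotopy} to identify $\pi_i^D(-)\cong\pi_i(S_e(-))$ and then study the simplicial map $\phi\colon S_e(G(X,x_0)_k)\to S_e(G(X_{k+1},x_0))$. The key observation is that, for $n\le k$, $\phi$ is a bijection on $n$-simplices: given a smooth $\sigma\colon\mathbb{A}^n\to G(X_{k+1},x_0)$, any local lift $U\times I\to X_{k+1}$ is a plot of $X_{k+1}$, hence locally factors through plots of $X$ of dimension at most $k+1$; by the sheaf axiom it is itself a plot of $X$. Since $\dim U = n\le k$, the induced local plot $U\to G(X,x_0)$ has dimension at most $k$ and is therefore a plot of $G(X,x_0)_k$. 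Bijectivity of $\phi$ on simplices up through dimension $k$ immediately yields that $\phi_*$ is an isomorphism on $\pi_i$ for $i<k$, and is surjective on $\pi_k$ since every $k$-cycle in the target already lifts tautologically to a $k$-simplex in the source.

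The remaining case $i\ge k+1$ is the principal obstacle: a representative $\alpha\colon\mathbb{A}^i\to G(X_{k+1},x_0)$ need not factor through $G(X,x_0)_k$, so one must modify $\alpha$ within its homotopy class. My approach is a subdivision-and-concatenation argument: using compactness of $I$ and paracompactness of $\mathbb{A}^i$, the local lifts $U\times I\to X_{k+1}$ admit, on any compact $K\subset U$, a finite subdivision $0=t_0<\cdots<t_m=1$ so that each slab $K\times[t_{j-1},t_j]$ factors through a single plot $W_j\to X$ of dimension at most $k+1$. Choosing smoothly varying connecting paths from $x_0$ to the intermediate endpoints $\alpha(-)(t_j)$ and inserting sitting instants via smooth cut-off functions, one rewrites $\alpha(u)$ as a concatenation of small loops; the relations $\sim_2$ and $\sim_3$ defining $G$ guarantee that this concatenation equals the original in $G(X_{k+1},x_0)$. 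The hard part will be carrying out this decomposition globally and smoothly in $u$ while ensuring each constituent loop is realized by a plot of $G(X,x_0)$ of dimension at most $k$, which requires absorbing one unit of dimension into the reparametrization of the loop coordinate; this is precisely where the step from $k+1$-skeletal on $X$ to $k$-skeletal on $G(X,x_0)$ is used.
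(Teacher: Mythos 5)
Your smoothness argument is the right one and follows essentially the same route as the paper: locally factor a plot of $G(X,x_0)_k$ through a plot of $G(X,x_0)$ of dimension $\leq k$, lift that through the subduction to $\Omega(X,x_0)$, observe the lift corresponds to a smooth map into $X$ of domain dimension $\leq k+1$, hence it is a plot of $X_{k+1}$. The paper inserts one technical step you glossed over: before counting dimensions, it precomposes the lift with a cut-off function $\psi$ in the loop parameter to replace $W\times I\to X$ by a map that extends to $W\times\mathbb{R}\to X$, so that the domain is an honest Cartesian space (this is legitimate because reparametrizing by $\psi$ does not change the class in $G$, by relation $\sim_2$). You should say a word to this effect, or invoke Whitney extension; as written, ``plot of $X$ of dimension $\leq k+1$'' is applied to a domain $W\times I$ that is not Cartesian.

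On surjectivity your argument actually records a cleaner fact than the paper's proof does. The paper asserts only the one containment $\cat{D}_{G(X,x_0)_k}\subset\cat{D}_{G(X_{k+1},x_0)}$ and then declares that ``more smooth homotopies in the target'' forces surjectivity on fundamental groups, which by itself does not follow (the smooth loops themselves could differ in general; a containment of diffeologies alone does not give surjectivity on $\pi_1$). You supply the missing ingredient: for $n\leq k$ the $n$-simplices of $S_e(G(X,x_0)_k)$ and $S_e(G(X_{k+1},x_0))$ coincide, both agreeing with $C^{\infty}(\mathbb{A}^n,G(X,x_0))$. With this identification of $k$-skeleta, isomorphisms on $\pi_i^D$ for $i<k$ and surjectivity on $\pi_k^D$ are immediate. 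This is correct and more careful than the paper.

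The difficulty you flag for $i\geq k+1$ is genuine. Your subdivision-and-concatenation sketch is not a proof, and I do not see how ``absorbing one unit of dimension into the reparametrization of the loop coordinate'' would drop a factorization from dimension $\leq k+1$ in $X$ to dimension $\leq k$ in $G(X,x_0)$ in a way that survives the homotopy class. But note that the paper's proof is no better on this point: it only says ``the induced map of fundamental groups is surjective,'' implicitly confining itself to $\pi_1$, and offers no argument for higher $i$. In the paper's application (the iterative construction following this lemma) only $\pi_1^D$ is used, so the range $i>k$ appears to outrun both arguments. Either the statement should be read as restricted to $i\leq k$, or the $i>k$ case needs a new idea that neither you nor the paper supplies.
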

\begin{proof}
    To check the smoothness let's have a look at the diffeologies of the corresponding spaces. Let $U \in \mathbf{Cart}$ be some cartesian space, then a parametrization $p : U \rightarrow G(X)_k$ is a plot of $G(X)_k$ if there exists a good open covering $\left\{ U_i \right\}_{i \in I}$ of $U$ and plots $p_i : \mathbb{R}^k \rightarrow G(X)$ together with smooth maps $\pi_i \colon U_i \rightarrow \mathbb{R}^k$ fitting into a commutative diagram
    \begin{center}
        \begin{tikzcd}
U \arrow[rr, "p"]              &                             & G(X,x_0)_k \arrow[d] \\
U_i \arrow[u, hook'] \arrow[r, "\pi_i"] & \mathbb{R}^k \arrow[r, "p_i"] & G(X,x_0)
\end{tikzcd}
    \end{center}
    That is the restriction of $p$ to $U_i$ is given by a plot of $G(X)$ which factors trough $\mathbb{R}^k$. On the other hand, for $i \in I$ fixed we can find an open covering $\{V_j\}_{j \in J_i}$ of $\mathbb{R}^k$ and plots $(p_i)_j : V_j \rightarrow \Omega(X,x_0)$ such that
    \begin{equation} \label{Diagram: The plot diagram of G(X)_k}
    \begin{tikzcd}
\mathbb{R}^k \arrow[r, "p_i"]               & G(X,x_0)                     \\
V_j \arrow[u, hook'] \arrow[r, "(p_i)_j"] & \Omega(X,x_0) \arrow[u, "q"']
\end{tikzcd}
    \end{equation}
By definition of the functional diffeology, the plots $(p_i)_j$ are given by smooth maps.
\[
p_{i,j}: V_j \times I \rightarrow X.
\]
Let $\psi \in C^{\infty}([0,1])$ be a smooth cut-off function. Define
\[
\tilde{p}_{i,j}(u,t) := p_{i,j}(u,\psi(t))
\]
which gives a smooth map $\tilde{p}_{i,j} \colon V_j \rightarrow \Omega(X,x_0)$ also making the diagram (\ref{Diagram: The plot diagram of G(X)_k}) commute. This follows from the fact that for any $u \in V_j$ the loops $p_{i,j}(u)(t) \sim_2 \tilde{p}_{i,j}(u)(t)$. The family $\tilde{p}_{i,j}$ has the advantage that it trivially extends to a family of smooth maps
\[
\tilde{p}_{i,j}: V_j \times \mathbb{R} \rightarrow X.
\]
By pulling back the covering $\left\{ V_j \right\}_{j \in J_i}$ via $\pi_i$ we get a covering $\left\{ U_{i,j}\right\}_{i \in I,j \in J_i}$ of $U$ such that locally $p : U \rightarrow G(X)_k$ admits a lift of the form
\[
\tilde{p}_{i,j} : U_{i,j} \times \mathbb{R} \xrightarrow{\pi_i \times \mathrm{id}} V_j \times \mathbb{R} \rightarrow X
\]
\\

Since $V_{j} \subset \mathbb{R}^k$ it follows that
\begin{equation} \label{Equation: Smooth plot X_k+1}
    \tilde{p}_{i,j} : U_{i,j} \times \mathbb{R} \xrightarrow{\pi_i \times \mathrm{id}} V_j \times \mathbb{R} \rightarrow X_{k+1}
\end{equation}
is smooth, i.e. $\tilde{p}_{i,j} \colon U_{i,j} \rightarrow \Omega(X_{k+1},x_0)$ define plots. \\

On the other hand, a parametrization $p : W \rightarrow G(X_{k+1})$ is a plot if and only if there is a good open covering $\left\{ W_i \right\}_i$ of $W$ and plots $p_i : W_i \rightarrow \Omega (X_{k+1},x_0)$ such that:
\begin{center}
    \begin{tikzcd}
W \arrow[r, "p"]                      & G(X_{k+1},x_0)                     \\
W_i \arrow[u, hook'] \arrow[r, "p_i"] & \Omega(X_{k+1},x_0) \arrow[u, "q"']
\end{tikzcd}
\end{center}

From observation (\ref{Equation: Smooth plot X_k+1}) it is now clear that any smooth plot $p \colon U \rightarrow G(X,x_0)_k$ is in particular also a smooth plot for $G(X_{k+1},x_0)$. This shows that
\[
\cat{D}_{G(X,x_0)_k} \subset \cat{D}_{G(X_{k+1},x_0)}
\]
and therefore, there are more smooth homotopies between paths in $G(X_{k+1})$ than in $G(X)_k$ since the target diffeology is finer. This shows that the induced map of fundamental groups is surjective.
\end{proof}

Recall now from Lemma \ref{Geeometric loop group factors through pi} that there is a natural group morphism
\begin{equation} \label{Eqzation: G(X) to thin homotopy group}
G(X,x_0) \xrightarrow{\theta_1} \pi^D_1(X_1,x_0),
\end{equation}
which now gives rise to the following iterative construction.
\begin{center}
    \begin{tikzcd}
{G^{k}\left(D(U,M), [x_0] \right)} \arrow[r, "\theta_1"] & {\pi^D_1\left( G^{k-1}\left(D(U,M) \right)_1, [x_0] \right)} \arrow[d] &                                                     \\
                                           & \vdots \arrow[d]                                                  &                                                     \\
                                           & {\pi_1^D \left( G^{k-1}\left(D(U,M)_k \right) ,[x_0] \right)} \arrow[r, "\cong"] & {\pi^D_{k} \left(D(U,M)_k ,[x_0] \right)}
\end{tikzcd}
\end{center}
where the chain of vertical arrows is given by the morphisms from Lemma \ref{Lemma: Surjection path space thinness}.
In particular, the chain of arrows gives a surjective group morphism for every $k > 0$ of the form
\begin{equation} \label{Equation: The Morphism xi}
G^{k}\left(D(U,M),[x_0] \right) \twoheadrightarrow \pi^D_{k}\left( D(U,M)_k,[x_0]\right) = \pi_k \left( \mathbb{M}_k,[x_0] \right)(U).
\end{equation}

This defines a morphism of presheaves of groups $\xi$. Also since $M$ is $(k-1)$-connected, so is $M_k$. This follows simply from the fact that for $i < k$ the homotopy groups $\pi_i^D(M) = \pi_i(M_k)$ agree. Likewise, using the contractibility of the cartesian spaces $U$ it follows that $D(U,M)$ and $D(U,M)_k$ are both $(k-1)$-connected. It therefore follows that
\[
\pi_k \left( \mathbb{M}_k \right)(U) \cong H_k \left( \mathbb{M}_k \right)(U)
\]
via the Hurewicz map for $k > 1$. In the case $k = 1$ we simply compose with the surjective map
\[
\pi_1(\mathbb{M}_1)(U) \rightarrow H_1 \left( \mathbb{M}_1 \right)(U).
\]
This defines the morphism
\[
\xi_U \colon G^{k}\left(D(U,M),[x_0] \right) \rightarrow H_k \left( \mathbb{M}_k \right)(U).
\]
for all $k > 0$, which is natural in $U \in \mathbf{Cart}$.

\begin{proposition} \label{Proposition: The commutative triangle result}
   Let $(M,x_0)$ be a pointed $(k-1)$-connected smooth manifold for $k > 0$. The morphism $\xi$ specified in (\ref{Equation: The Morphism xi}) together with the smooth identity map (\ref{Equation: The Morphism Id}) induce a morphism of abelian groups
\[
\pi^* \colon \mathrm{Hom}_{\mathrm{PSh}(\mathbf{Cart}, \mathbf{Ab})} \left( H_{k}(\mathbb{M}_k), U(1)   \right) \rightarrow \mathrm{Hom}_{\mathbf{DiffGrp}} \left(  \tilde{G}^{k}(M) , U(1) \right)
\]
In particular, this morphism fits into a commutative diagram
\begin{center}
\begin{tikzcd}[row sep=large, column sep=small]
 & \hat{H}^{k+1}(M; \mathbb{Z}) \arrow[dr,"\hat{\kappa}"] \arrow[dl, "\Phi"'] & \\
{\mathrm{Hom} \left( H_{k}(\mathbb{M}_k), U(1)   \right)} \arrow[rr,"\pi^*"] & & {\mathrm{Hom}\left(  \tilde{G}^k(M) , U(1)   \right)}
\end{tikzcd}
\end{center}
and is therefore an isomorphism.
\end{proposition}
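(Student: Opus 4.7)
The plan is to first construct the morphism $\pi^*$, then verify the commutativity of the triangle, and finally deduce the isomorphism claim from the fact that the two slanted arrows are already known to be isomorphisms.

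\emph{Construction of $\pi^*$.} The morphism $\xi$ of (\ref{Equation: The Morphism xi}) is a morphism of presheaves of groups from $\cat{G}^k(M,x_0)$ to the presheaf of abelian groups $\pi_k(\mathbb{M}_k,[x_0])$, hence factors through the presheaf abelianization $\cat{G}^k(M,x_0)^{\mathrm{ab}}$. Composing with the Hurewicz morphism and noting that the target $H_k(\mathbb{M}_k)$ is an abelian presheaf gives a morphism of abelian presheaves
\[
\bar{\xi} \colon \cat{G}^k(M,x_0)^{\mathrm{ab}} \longrightarrow H_k(\mathbb{M}_k).
\]
Pullback along $\bar{\xi}$ followed by the natural identification
\[
\mathrm{Hom}_{\mathrm{PSh}(\mathbf{Cart},\mathbf{Grp})}\bigl(\cat{G}^k(M,x_0),U(1)\bigr) \cong \mathrm{Hom}_{\mathbf{DiffGrp}}\bigl(\hat{G}^k(M),U(1)\bigr),
\]
(which uses that $U(1)$ is a diffeological abelian group and that sheafification and concretization are both left adjoints, as spelled out before Lemma \ref{Lemma: Presheaf quotient is concrete}), and precomposition with the smooth identity (\ref{Equation: The Morphism Id}) produces
\[
\pi^* \colon \mathrm{Hom}\bigl(H_k(\mathbb{M}_k),U(1)\bigr) \longrightarrow \mathrm{Hom}_{\mathbf{DiffGrp}}\bigl(\tilde{G}^k(M),U(1)\bigr).
\]

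\emph{Commutativity of the triangle.} Fix a differential character $\chi$ of degree $k+1$ with curvature form $\omega_\chi$. We must show that the smooth group homomorphism $\pi^*(h_\chi) \colon \tilde{G}^k(M) \to U(1)$ coincides with Gajer's higher holonomy $\hat\kappa(\chi)$. Both morphisms are determined by their values on the underlying points, i.e. on iterated equivalence classes of based smooth loops. Unwinding the construction of $\xi$ in (\ref{Equation: The Morphism xi}), an element $[\gamma] \in \tilde{G}^k(M)$ represented by a smooth map $\gamma \colon \mathbb{R}^k \to M$ (via the identification $\Omega^k(M) \cong D(\mathbb{R}^k,M)$) is sent by $\bar{\xi}$ evaluated at $\mathrm{pt}$ to the $k$-skeletal homology class of the boundary of the standard smooth $(k+1)$-simplex spanned by $\gamma$, viewed as a $k$-cycle in $M_k$. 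By the defining property of $h_\chi$ in Proposition \ref{Proposition: Holonomy morphism associated to Differential Character}, its value on this class equals $\chi$ evaluated on the corresponding $k$-cycle. On the other hand, $\hat\kappa(\chi)$ evaluated on $[\gamma]$ is, by construction (as recalled in the proof of Proposition \ref{Proposition: The (k-1)-connected Gajer theorem}), the holonomy of the circle $k$-bundle with connection associated to $\chi$ along the same $k$-cube, which equals $\chi$ on the boundary $k$-cycle up to the standard subdivision argument in $H_k(M_k)$. The key identification to carry out is therefore that both prescriptions give the same element of $U(1)$ when evaluated on the boundary of a $(k+1)$-parallelepiped in $M$; this is where Lemma \ref{Lemma: Curvature form gives differential character}, together with the thin-invariance under $k$-skeletal boundaries used in (\ref{Equation: Differential Character is thin invariant}) and the subdivision identity (\ref{Equation: The Subdivision equation}), reduces both sides to an integral of the same $(k+1)$-form, namely the curvature $\omega_\chi = A_{h_\chi}$.

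\emph{Conclusion.} With the triangle commuting, the claim that $\pi^*$ is an isomorphism is purely formal: $\Phi$ is an isomorphism by Theorem \ref{Corollary: The morphism Phi is an iso}, and $\hat\kappa$ is an isomorphism by Proposition \ref{Proposition: The (k-1)-connected Gajer theorem} under the hypotheses in force (namely $(M,x_0)$ connected and $(k-1)$-connected, and the assumption of Conjecture \ref{Hypothesis: Weak Contractibility of the total space}). Therefore $\pi^* = \hat\kappa \circ \Phi^{-1}$ is an isomorphism of abelian groups.

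The step I expect to be the main obstacle is the commutativity check, since the two maps $h_\chi$ and $\hat\kappa(\chi)$ are defined by genuinely different procedures: the former uses the presheaf-level universal coefficients identification with smooth holonomy morphisms on the homology sheaf of $\mathbb{M}_k$, while the latter uses the iterated geometric loop group and Gajer's parallel transport along the path fibration. The matching ultimately rests on the fact that both holonomy assignments are uniquely pinned down by the shared curvature form, which is exactly the content of Lemma \ref{Lemma: Curvature form gives differential character} on the $\mathbb{M}_k$-side and of the curvature axiom used in Proposition \ref{Proposition: The (k-1)-connected Gajer theorem} on the $\tilde{G}^k$-side.
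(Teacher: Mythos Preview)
Your construction of $\pi^*$ is essentially what the paper does, and your conclusion paragraph is fine. The problem lies in the commutativity argument, which has a genuine gap.

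You attempt to match $\pi^*(h_\chi)([\gamma])$ and $\hat\kappa(\chi)([\gamma])$ by reducing both to ``an integral of the same $(k+1)$-form'' over a filling of the $k$-cycle represented by $\gamma$. But a generic element $[\gamma]\in G^k(M,x_0)$ corresponds to a $k$-cycle that need \emph{not} bound any $(k+1)$-chain, neither in $M$ nor in $M_k$; the differential character $\chi$ is determined by its curvature only on boundaries, while on genuine cycles it carries the extra information encoded in the characteristic class map. So Lemma~\ref{Lemma: Curvature form gives differential character} and the subdivision identity cannot do the work you assign them. Relatedly, your description of $\bar\xi_{\mathrm{pt}}([\gamma])$ as ``the boundary of the standard $(k+1)$-simplex spanned by $\gamma$'' misreads the construction: $\xi$ sends $[\gamma]$ to its class in $\pi_k^D(M_k,x_0)$ via the iterated $\theta_1$, and then to $H_k(M_k)$ via Hurewicz; there is no $(k+1)$-simplex involved.

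The paper's route is different and avoids curvature entirely. It recalls that Gajer's $\hat\kappa$ factors through a \emph{geometric cycle} map $\kappa\colon G^k(M,x_0)\to \cat Z_k(M)/\!\sim$ sending an iterated loop, viewed as $f\colon S^k\to M$, to the stratifold class $[S^k\xrightarrow{f}M]$, so that $\hat\kappa(\chi)=\chi\circ\kappa$ with $\chi$ now interpreted on geometric cycles. The commutativity then reduces to a square relating $\kappa$ and $\xi_{\mathrm{pt}}$ through the comparison isomorphism $\psi_k\colon \cat Z_k(M)/\!\sim\to H_k(M_k)$, which sends a geometric cycle to the pushforward of its fundamental class. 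For $X=S^k$ this is precisely the Hurewicz image, so $\psi_k\circ\kappa=\xi_{\mathrm{pt}}$ on the nose, and since $\Phi(\chi)_{\mathrm{pt}}$ is just $\chi$ (precomposed with the quotient $Z_k(M)\to H_k(M_k)$), the triangle commutes. No curvature integral is needed; the argument works at the level of cycle classes.
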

\begin{remark}
The proof of this proposition relies on the construction of the map $\hat{\kappa}$ which we will sketch out in the following and the main reference is \cite{GajerHigher}. \\

Recall that the group of Cheeger--Simons differential characters $\hat{H}^{k+1}(M;\mathbb{Z})$ has as elements morphism of abelian groups
    \[
    \chi : Z_{k}(M;\mathbb{Z}) \rightarrow U(1)
    \]
    such that there exists a $(k+1)$-form $\omega_{\chi}\in \Omega^k(M)$ with the property that for every smooth chain $c \in C_{k+1}(M;\mathbb{Z})$ we have
    \[
    \chi(\partial c) = \mathrm{exp} \left( 2\pi i \int_c \omega_{\chi} \right).
    \]
    Here $Z_{\bullet}(M;\mathbb{Z})$ and $C_{\bullet}(M;\mathbb{Z})$ denote the free abelian groups of smooth singular chains and cycles in $M$. Under certain circumstances however, it is favorable to have a representation of differential characters based on \textit{geometric chains} which one should think of as representatives of singular homology classes of $M$ by manifolds. This however is not possible in general, so one needs to consider more general geometric objects that allow for singularities, such as \textit{stratifolds} introduced by Kreck \cite{Kreck}. Our main reference to how to use geometric chains as an alternative presentation of differential characters is \cite[Section 4]{Bär-Becker}. \\

    For $k \geq 0$ denote by $\cat{C}_k(M)$ the set of diffeomorphism classes of smooth maps $f \colon X \rightarrow M$ where $X$ denotes an oriented compact $k$-dimensional regular stratifold. An equivalence class is then denoted by $[X \xrightarrow{f} M]$. Disjoint union endows $\cat{C}_k(M)$ with the structure of an abelian semigroup \footnote{Recall that a semigroup is a set equipped with an associative binary operation.}. The boundary operator $\partial \colon \cat{C}_k(M) \rightarrow \cat{C}_{k-1}(M)$ is then given by restriction the geometric boundary, i.e $[X \rightarrow M] \mapsto [\partial X \hookrightarrow X \rightarrow M]$. Like for singular homology denote by $\cat{Z}_k(M) := \left\{ \zeta \in \cat{C}_{k}(M) \, | \, \partial \zeta = 0 \right\}$ the \textit{geometric cycles} and $\cat{B}_k(M) := \left\{ \zeta \in \cat{C}_{k}(M) \, | \, \exists \beta \in \cat{C}_{k+1}(M) : \partial \beta = \zeta \right\}$ the \textit{geometric boundaries}. Then define \textit{geometric homology} via
    \[
    \cat{H}_k(M) := \cat{Z}_k(M) / \cat{B}_k(M)
    \]
    which has the structure of an abelian group with inverse elements given by reversing the orientation of the stratifolds. In particular, there is a natural isomorphism to the ordinary singular homology of $M$
    \[
    \cat{H}_k(M) \xrightarrow{\cong} H_k(M).
    \]
\end{remark}

\begin{proof}[Proof of Proposition \ref{Proposition: The commutative triangle result}]\textit{(Sketch)}
Assume first $k > 1$. 
The content of the previous discussion was to construct a morphism of presheaves of groups on $\mathbf{Cart}$
\[
\xi \colon \cat{G}^k(M,x_0) \rightarrow H_k(\mathbb{M}_k)
\]
which then induces the corresponding pullback morphism
\begin{equation} \label{Equation: xi}
\xi^* \colon \mathrm{Hom}_{\mathrm{Psh}(\mathbf{Cart},\mathbf{Ab})}(H_k(\mathbb{M}_k),U(1)) \rightarrow \mathrm{Hom}_{\mathrm{Psh}(\mathbf{Cart},\mathbf{Grp})}(\cat{G}^k(M,x_0),U(1))
\end{equation}
Since $U(1)$ is a diffeological group together with the fact that $\hat{G}^k(M,x_0) := \left( C \left( \cat{G}^k(M,x_0) \right) \right)^{\#}$ gives
\begin{equation} \label{Equation: from cat G to G hat}
\mathrm{Hom}_{\mathrm{Psh}(\mathbf{Cart},\mathbf{Ab})}(\cat{G}^k(M,x_0),U(1)) \cong \mathrm{Hom}_{\mathbf{DiffGrp}}(\hat{G}^k(M,x_0),U(1))
\end{equation}
From \eqref{Equation: The Morphism Id} it follows that there is an induced morphism
\begin{equation} \label{Equation: The pullback via identity}
\mathrm{Hom}_{\mathbf{DiffGrp}}(\hat{G}^k(M,x_0),U(1)) \rightarrow \mathrm{Hom}_{\mathbf{DiffGrp}}(\tilde{G}^k(M,x_0),U(1))
\end{equation}
such that the morphism $\pi^*$ is then simply defined as the composition of \eqref{Equation: xi}, \eqref{Equation: from cat G to G hat} and \eqref{Equation: The pullback via identity}. \\

To check that for the morphism $\pi^*$ commutativity holds in the diagram it is essential to recall the construction of $\hat{\kappa}$ of \cite{GajerHigher}. Denote by $\cat{Z}_k(M)$ the geometric cycles in $M$. Gajer introduces a further equivalence relation $\sim$ on the geometric cycles such that the map
\[
\begin{array}{rcl}
     \Omega_{\mathrm{st}}^k(M,x_0) & \rightarrow & \cat{Z}_{k}(M)  \\
     f \colon S^k \rightarrow M & \mapsto & [f \colon S^k \rightarrow M]
\end{array}
\]
descends to a map
\begin{center}
\begin{tikzcd}
{\Omega_{\mathrm{st}}^k(M,x_0)} \arrow[d] \arrow[r] & \cat{Z}_k(M) \arrow[d] \\
{G^k(M,x_0)} \arrow[r, "\kappa"]                & \cat{Z}_k(M)/\!\sim
\end{tikzcd}
\end{center}
Notice that we have used here the fact that every element in the loop space $\Omega_{\mathrm{st}}^k(M,x_0)$ can be extended to give a smooth map $S^k \rightarrow M$. Moreover, Gajer shows that a Cheeger--Simons differential character can be represented by a pair $(\chi,\omega_{\chi})$ where $\chi \colon \cat{Z}_k(M)/\!\sim \rightarrow U(1)$ a morphism and $\omega_{\chi} \in \Omega^{k+1}(M)$ a differential form such that for all $[X \rightarrow M] \in \cat{C}_{k+1}(M)$
\[
\chi(\partial [X \xrightarrow{f} M] ) = \mathrm{exp} \left( 2\pi i \int_X f^*\omega_{\chi}  \right).
\]
To any such pair $(\chi,\omega_{\chi})$ via precomposition with $\kappa$ we get a group morphism
\[
G^k(M,x_0) \xrightarrow{\kappa} \cat{Z}_k(M)/\!\sim \xrightarrow{h} U(1).
\]
This assignment defines the morphism $\hat{\kappa}$. The commutativity of the diagram now follows from the fact that the isomorphism of homology groups
    \[
    \cat{H}_k(M) \xrightarrow{\cong} H_k(M)
    \]
is induced via the map\footnote{See \cite[p.9]{Bär-Becker}}
\[
\begin{array}{rcl}
     \psi_k \colon \cat{Z}_{k}(M) & \rightarrow & Z_k(M)/\partial C_{k+1}(M_k) = H_k(M_k) \\
     {[}X \xrightarrow{f} M{]} & \mapsto & {[}f_*(c){]}
\end{array}
\]
where here $c$ represents a smooth singular $k$-cycle representing the fundamental class of $X$ in $H_k(X)$. Note that for dimensional reasons it follows that $H_k(X) = H_k(X_k)$. Then denote by $[f_*(c)]$ the image of the class $[c]$ in $H_k(M_k)$ under the map $f$. In the case $X = S^k$ it therefore follows that this assignment agrees with the Hurewicz morphism. That is, commutativity holds in the diagram
\begin{center}
\begin{tikzcd}
{\Omega_{\mathrm{st}}^k(M,x_0)} \arrow[d] \arrow[r] & \cat{Z}_k(M) \arrow[d,"\psi_k"] \\
{G^k(M,x_0)} \arrow[r, "\xi"]                & H_k(M_k)
\end{tikzcd}
\end{center}
where the bottom map $\xi$ is to be considered as the component of the morphism of presheaves $\xi$ constructed above at the point $U = \mathrm{pt}$. This finishes the proof.
\end{proof}

\clearpage
\pagestyle{fancy}
\fancyhead[RO, LE]{\thepage}
\fancyhead[LO, RE]{\slshape\nouppercase{\leftmark}}
\fancyfoot{} 
\chapter{Outlook} \label{Chapter: Outlook}

So far we have only considered the case of higher circle bundles with connection. In the non-abelian setting, connections on non-abelian $H$-gerbes were introduced by Breen--Messing \cite{Breen-Messing} and Aschieri--Cantini--Jur\v{c}o \cite{Aschieri-Cantini-Jurco}, and by Schreiber--Waldorf for $\mathfrak{G}$-bundle gerbes, where $\mathfrak{G}$ is a Lie 2-group. Notice, that a non-abelian $H$-gerbe is equivalently an $\mathrm{Aut}(H)$-bundle gerbe where $\mathrm{Aut}(H)$ is the \textit{automorphism Lie 2-group} of $H$. The main difference between the two approaches to non-abelian differential cohomology, on one side given by Breen-Messing and Aschieri--Cantini--Jur\v{c}o and on the other side by Schreiber--Waldorf is, that the latter is based on the existence of higher dimensional parallel transport. More precisely, in a series of papers\footnote{ \cite{Schreiber-Waldorf-Smooth}, \cite{Schreiber-Waldorf-Parallel} and \cite{Schreiber-Waldorf-NonAb}} Schreiber--Waldorf show that $\mathfrak{G}$-bundle gerbes with \textit{fake-flat connections} are modeled by \textit{transport 2-functors} being a particular class of smooth 2-functors with domain the \textit{smooth path 2-groupoid} and target the delooping of $\mathfrak{G}$. This extra condition imposed on the connection data, called fake-flatness, is necessary for a consistent parallel transport to exist. Accordingly, they define the differential non-abelian cohomology of $M$ with values in $\mathfrak{G}$ as
\[
\hat{H}^2(M,\mathfrak{G}) := h_0 \mathrm{Trans}^2 \left(M, \mathfrak{G} \right)
\]
isomorphism classes of transport 2-functors on $M$ with $\mathbf{B}\mathfrak{G}$-structure. Here a transport $2$-functor $T$ on $M$ with $\mathbf{B}\mathfrak{G}$-structure is a smooth $2$-functor
\[
T \colon P_2(M) \rightarrow \mathbf{B} \mathfrak{G}
\]
from the path $2$-groupoid of the manifold $M$ into the delooping of $\mathfrak{G}$, such that there exists an open covering $\left\{ U_i\right\}_{i \in I}$ of $M$ trivializing\footnote{See \cite[Definition 2.1.1]{Schreiber-Waldorf-local} for a precise definition.} the functor $T$. Of particular interest is here the path 2-groupoid $P_2(M)$ whose construction we briefly recall now. \\

Given a smooth manifold $M$, denote by $P_{\mathrm{st}}M$ the set of stationary smooth paths in $M$. Recall that this is the set of smooth paths $\gamma : [0,1] \rightarrow M$ such that there exists $0 < \varepsilon < \frac{1}{2}$ with $\gamma(t) = \gamma(0)$ for all $t \in [0,\varepsilon)$ and $\gamma(t) = \gamma(1)$ for all $t \in (1-\varepsilon,1]$. Two such stationary paths $\gamma_1,\gamma_2 \colon x \rightarrow y$ are called \textit{thin homotopy equivalent} if there exists a stationary smooth homotopy $h \colon [0,1]^2 \rightarrow M$ between them, such that the differential of $h$ has at most rank 1. Thin homotopy equivalence defines an equivalence relation $\sim_1$ on the set of smooth stationary paths $P_{\mathrm{st}}M$ and its quotient is denoted $P^1M := P_{\mathrm{st}}M / \! \sim_1$. Recall also that the set of smooth paths $P_{\mathrm{st}}M$ carries a natural diffeology being a subspace of the function space $D([0,1],M)$ and we endow the quotient $P^1M$ with the quotient diffeology. The space $P^1M$ serves as the space of 1-morphisms of the smooth path 2-groupoid $P_2(M)$.   \\

Given two stationary paths $\gamma_1,\gamma_2 \colon x \rightarrow y$ in $M$ a \textit{bigon} $\Sigma \colon \gamma_1 \Rightarrow \gamma_2$ is given by a smooth map $\Sigma \colon [0,1]^2 \rightarrow$ which is stationary, i.e. there is $0 < \varepsilon < \frac{1}{2}$ such that
\begin{enumerate}
    \item $\Sigma(s,t) = x$ for $t \in [0,\varepsilon)$ and $\Sigma(s,t) = y$ for $t \in (1-\varepsilon,1]$.
    \item $\Sigma(s,t) = \gamma_1(t)$ for $s \in [0,\varepsilon)$ and $\Sigma(s,t) = \gamma_2(t)$ for $s \in (1-\varepsilon,1]$.
\end{enumerate}
The set of all such bigons is denoted $BX$ and is analogously endowed with the diffeology induced from the function space $D([0,1]^2,M)$. Two bigons $\Sigma \colon \gamma_1 \Rightarrow \gamma_2$ and $\Sigma' \colon \gamma'_1 \Rightarrow \gamma'_2$ are said to be \textit{thin homotopy equivalent} if there exists a smooth stationary homotopy $h \colon [0,1]^3 \rightarrow M$ between them, such that
\begin{enumerate}
    \item $\mathrm{rk} \left( dh|_{(r,s,t)} \right) \leq 2$ for all $(r,s,t) \in [0,1]^3$, and
    \item $\mathrm{rk} \left( dh|_{(r,i,t)} \right) \leq 1$ for $i = 0,1$ fixed.
\end{enumerate}
Thin homotopy equivalence of bigons defines an equivalence relation $\sim_2$ on the space of bigons and its quotient space, denoted $B^2M := BM / \! \sim_2$, is endowed with the quotient diffeology. The space $B^2M$ serves as the space of 2-morphisms of the smooth path 2-groupoid $P_2(M)$. \\

\begin{definition}[\cite{Schreiber-Waldorf-Smooth}]
    The \textbf{path 2-groupoid} $P_2(M)$ of a smooth manifold is the (diffeological) 2-category whose diffeological space of objects is given by $M$, the diffeological space of 1-morphisms by $P^1M$ and the diffeological space of 2-morphisms by $B^2M$.
\end{definition}

The natural question that now arises is if there exists a way to connect the approach to non-abelian cohomology given by transport functors to the approach presented in this thesis, i.e. to find a refinement $X$ of the enriched simplicial presheaf $\overline{M}$ such that given a Lie $2$-group $\mathfrak{G}$ we have
\[
\hat{H}^2(M,\mathfrak{G}) \overset{?}{\cong} H^1_{\infty}(X,\mathfrak{G}).
\]
To try to answer this question, we first have to be clear about what we mean by the right-hand expression. \\

For us, a 2-group means a crossed module $(G,H,\alpha,t)$, which always gives rise to a group object in the category of categories $\mathfrak{G}$. The objects of $\mathfrak{G}$ are given by $\mathrm{Ob}(\mathfrak{G}) = G$ and the morphisms by $\mathrm{Mor}(\mathfrak{G}) = G \ltimes H$. To see why $\mathfrak{G} \in \mathrm{Grp}(\mathbf{Cat})$ we refer to \cite[Appendix A.2]{Schreiber-Waldorf-Smooth}. Taking now the nerve of this category $\mathfrak{G}$ gives a simplicial group $N(\mathfrak{G})$.
\[
\begin{array}{rcccl}
\mathbf{CMod} & \rightarrow & \mathrm{Grp}\left( \mathbf{Cat} \right) & \xrightarrow{N} & \mathbf{sGrp} \\
(G,H,\alpha,t) & \mapsto & \mathfrak{G} & \mapsto & N(\mathfrak{G})
\end{array}
\]

In the case we are given a Lie crossed module $(G,H,\alpha,t)$ the above procedure gives accordingly a functor of presheaves on $\mathbf{Cart}$
\[
[\mathbf{Cart}^{\mathrm{op}},\mathbf{CMod}] \rightarrow [\mathbf{Cart}^{\mathrm{op}}, \mathbf{sGrp}].
\]
Hence, we identify a Lie crossed module $(G,H,\alpha,t)$ with a presheaf of simplicial groups on $\mathbf{Cart}$, which we denote by $N(\mathfrak{G})$. The non-abelian cohomology of a simplicial presheaf $X$ with values in the Lie crossed module $N(\mathfrak{G})$ is given by
\[
H^1_{\infty}(X,\mathfrak{G}) := \pi_0 \mathbb{R}\mathrm{Map} \left( X, \overline{W}N(\mathfrak{G}) \right),
\]
where the right-hand side denotes the derived mapping space for the local injective model structure $\mathrm{sPSh}(\mathbf{Cart})_{\mathrm{inj,loc}}$. \\

Now that the right-hand side has been defined, we wish to relate this non-abelian cohomology to the transport 2-functors introduced by Schreiber--Waldorf, upon choosing a suitable candidate for the simplicial presheaf $X$. To do so, notice that the adjunction $\cat{G} \dashv \overline{W}$ of Theorem \ref{Theorem: Classifying space adjunction} induces via pointwise application an adjunction of presheaves
\begin{center}
\begin{tikzcd}
\cat{G} \colon \mathrm{sPSh}(\mathbf{Cart}) \arrow[r, shift left=3] \arrow[r, "\perp", phantom] & {\mathrm{PSh}(\mathbf{Cart},\mathbf{sGpd}) \colon \overline{W}} \arrow[l, shift left=3]
\end{tikzcd}
\end{center}
By the right transfer of the local injective model structure along $\overline{W}$, the category of presheaves of simplicial groupoids $\mathrm{PSh}(\mathbf{Cart},\mathbf{sGpd})$ admits a model structure such that the adjunction $\cat{G} \dashv \overline{W}$ is a Quillen equivalence, see \cite[Theorem 9.50]{JardineBook}. Denote this model structure by $\mathrm{PSh}(\mathbf{Cart},\mathbf{sGpd})_{\mathrm{inj,loc}}$. Moreover, both functors $\overline{W}$ and $\cat{G}$ preserve weak equivalences. This is immediate in the case of $\overline{W}$ and follows from \cite[Lemma 3.4]{Luo-Bubenik-Kim} in the case of $\cat{G}$. As a direct consequence, there is a natural isomorphism
\begin{equation}\label{Equation: W-G Presheaf Adjunction}
\pi_0 \mathbb{R}\mathrm{Map} \left( X, \overline{W}N(\mathfrak{G}) \right) \cong \pi_0 \mathbb{R}\mathrm{Map} \left( \cat{G}(X), N(\mathfrak{G}) \right)
\end{equation}

for all simplicial presheaves $X$ and Lie 2-groups $\mathfrak{G}$. \\

To relate the derived mapping space on the right-hand side of (\ref{Equation: W-G Presheaf Adjunction}) to certain smooth 2-functors, consider $\mathbf{2Gpd}$ the category of 2-groupoids\footnote{By a 2-groupoid we always mean a strict 2-groupoid.} and notice that the classical fundamental groupoid and nerve adjunction $\pi \dashv N$ induces an adjoint pair of functors
\begin{center}
\begin{tikzcd}
\mathbf{sGpd} \arrow[r, "\Pi", shift right=-3] \arrow[r, "\perp", phantom] &  \mathbf{2Gpd}.  \arrow[l, "B", shift right=-3]
\end{tikzcd}
\end{center}
More precisely, given a 2-groupoid $H$ the simplicially enriched groupoid $B(H)$ has as set of objects $\mathrm{Ob}(B(H)) = \mathrm{Ob}(H)$ and as simplicial set of morphisms $\mathrm{Mor}(B(H)) = N(\mathrm{Mor}(H))$ given by the nerve of the groupoid of morphisms $\mathrm{Mor}(H)$ of $H$. \\

Conversely, given a simplicially enriched groupoid $\cat{H}$, the 2-groupoid $\Pi(\cat{H})$ is given by
\[
\mathrm{Ob} \left( \Pi(\cat{H}) \right) := \mathrm{Ob}(\cat{H}) \text{ and } \mathrm{Mor} \left( \Pi(\cat{H}) \right) :=  \pi \left(  \mathrm{Mor}(\cat{H})\right)
\]
where here $\pi \colon \mathbf{sSet} \rightarrow \mathbf{Gpd}$ denotes the fundamental groupoid functor. Unwinding the definition, the 2-groupoid $\Pi(\cat{H})$ has as objects the objects of $\cat{H}$, as 1-morphisms the morphisms of $\cat{H}$ and as 2-morphisms homotopy classes of paths in the space $\mathrm{Mor}(\cat{H})$. \\

Assume we are given a 2-group $\mathfrak{G}$ and let $\mathbf{B}\mathfrak{G}$ denote the delooping of $\mathfrak{G}$ as a strict 2-groupoid with a single object. We claim that $N(\mathfrak{G}) = B\left( \mathbf{B}\mathfrak{G} \right)$ where we consider $N(\mathfrak{G})$ as a simplicially enriched groupoid with a single object under the standard inclusion $\mathbf{sGrp} \hookrightarrow \mathbf{sGpd}$. Indeed, by definition $\mathrm{Ob}(\mathbf{B}\mathfrak{G}) = *$ and $\mathrm{Mor}(\mathbf{B}\mathfrak{G}) = \mathfrak{G}$ hence it follows that $\mathrm{Mor}(B(\mathbf{B}\mathfrak{G})) = N(\mathfrak{G})$.\\

Again a pointwise application of $\Pi \dashv B$ induces an adjunction of presheaves accordingly
\begin{center}
\begin{tikzcd}
{\Pi \colon \mathrm{PSh}(\mathbf{Cart},\mathbf{sGpd})} \arrow[r, shift left=3] \arrow[r, "\perp", phantom] & {\mathrm{PSh}(\mathbf{Cart},\mathbf{2Gpd}) \colon B} \arrow[l, shift left=3]
\end{tikzcd}
\end{center}
It follows from \cite[Theorem 9.57]{JardineBook} that the right transfer of the local injective model structure $\mathrm{PSh}(\mathbf{Cart},\mathbf{sGpd})_{\mathrm{inj,loc}}$ along the functor $B$ exists and that the adjoint pair defines a Quillen adjunction. In particular, this shows that $B$ preserves weak equivalences which also holds for $\Pi$ by \cite[Proposition 9.52]{JardineBook}. Therefore we can deduce that there are natural isomorphisms
\[
\pi_0 \mathbb{R}\mathrm{Map} \left( \cat{G}(X), N(\mathfrak{G}) \right) = \pi_0 \mathbb{R}\mathrm{Map} \left( \cat{G}(X), B\left( \mathbf{B}\mathfrak{G} \right) \right) \cong \left[ \Pi \cat{G}(X), \mathbf{B}\mathfrak{G} \right]
\]
where the right-hand side denotes the hom-set in the homotopy category of \newline $\mathrm{PSh}(\mathbf{Cart}, \mathbf{2Gpd})_{\mathrm{inj,loc}}$. \\

In conclusion, for an arbitrary simplicial presheaf $X$ and $\mathfrak{G}$ a Lie 2-group there is a chain of isomorphisms identifying the non-abelian cohomology of $X$ with values in $\mathfrak{G}$ with
\[
H^1_{\infty}(X,\mathfrak{G}) \cong \left[ \Pi \cat{G}(X), \mathbf{B}\mathfrak{G} \right]
\]
homotopy classes of smooth 2-functors taking values in $\mathbf{B}\mathfrak{G}$. This looks already much closer to the definition of non-abelian differential cohomology via transport 2-functors as before. \\

By construction, $\Pi \cat{G}(X)$ is a presheaf of 2-groupoids on the site $\mathbf{Cart}$ that can be understood as the fundamental 2-groupoid of the simplicial presheaf $X$. Thus we are left to find a suitable simplicial presheaf $X$ such that in turn the homotopy classes $\left[ \Pi \cat{G}(X), \mathbf{B}\mathfrak{G} \right]$ can be identified with isomorphism classes of transport 2-functors. We propose the conjecture that for $X = \mathbb{M}_{1,2}$ the simplicial presheaf given by
\begin{equation}
    \begin{array}{rcl}
      \mathbb{M}_{1,2} \colon \mathbf{Cart}^{\mathrm{op}} \times \Delta^{\mathrm{op}} & \rightarrow & \mathbf{Set}  \\
         (U, [n]) & \mapsto & C^{\infty}\left( \mathbb{A}^n, D\left( \mathbb{A}^n, D(U,M)_2 \right)_1 \right),
    \end{array}
\end{equation}
is such that its fundamental 2-groupoid $\Pi \cat{G}(\mathbb{M}_{1,2})$ incorporates an analog of the path 2-groupoid $P_2(M)$. This would then lead to the desired identification
\[
\hat{H}^2(M,\mathfrak{G}) \overset{?}{\cong} H^1_{\infty}\left( \mathbb{M}_{1,2} ,\mathfrak{G} \right).
\]

\clearpage
\pagestyle{fancy}
\fancyhead[RO, LE]{\thepage}
\fancyhead[LO, RE]{\slshape\nouppercase{\leftmark}}
\fancyfoot{}

\appendix

\chapter{Smooth Triangulations and the Subdivision Operator}

The goal of this appendix is twofold. First, to introduce the classical subdivision operator $S \colon C_{\bullet}(X) \rightarrow C_{\bullet}(X)$ defined on the chain complex of smooth singular chains taking values in a diffeological space $X$. The main reference here is \cite[Section 2.1]{Hatcher} for the classical chain complex of continuous singular chains. In the second part, we will then review smooth triangulations of manifolds and see how to associate a simplicial set with a simplicial complex upon choosing an ordering of the vertices. In particular, we show how the subdivision operator $S$ and smooth triangulations via barycentric subdivision are related to each other.

\section{The Subdivision Operator for Smooth Singular Homology}

The idea behind the construction of the subdivision operator $S$ for smooth chains taking values in diffeological spaces $X$ is analogous to the one for continuous chains. That is, one first defines a subdivision operator $ S \colon LC_{\bullet}(Y) \rightarrow \colon LC_{\bullet}(Y)$ where $Y$ is a convex subset in some cartesian space and where $LC_n(Y)$ denotes the free abelian group of linear chains in $Y$. Endowing the standard topological simplices and $Y \subset \mathbb{R}^k$ with the corresponding subset diffeologies, any linear chain $\sigma \colon |\Delta^n| \rightarrow Y$ is in particular also a smooth linear chain in $Y$. This then allows us to define the smooth subdivision operator for general smooth chains in some diffeological space $X$. \\

First notice that a linear map $\lambda \colon |\Delta^k| \rightarrow Y$ is completely determined by $[y_0,..., y_k]$ where $y_i$ is the image under $\lambda$ of the $i$th vertex in $|\Delta^k|$. As we will construct the subdivision using an iteration, it is convenient to introduce $LC_{-1}(Y) = \mathbb{Z}$ to be the free abelian group with generator given by the empty simplex. Then declare $\partial[y_0] = [\emptyset]$ for all $[y_0] \in LC_0(Y)$. \\

Define first the \textit{cone operator} for any choice $b \in Y$ as follows:
\begin{equation*}
    \begin{array}{rcl}
         b \colon LC_n(Y) & \rightarrow & LC_{n+1}(Y)  \\
        {[}y_0, ..., y_n {]}& \mapsto & {[}b,y_0, ..., y_n{]}.
    \end{array}
\end{equation*}
The cone operator in particular satisfies $\partial b + b \partial = \mathrm{id}$. Now define the subdivision operator $S$ inductively for $\lambda \colon |\Delta^k| \rightarrow Y$
\[
S(\lambda) := b_{\lambda}\left( S\left( \partial \lambda \right) \right),
\]
where $b_{\lambda}$ denotes the image under $\lambda$ of the barycenter in $|\Delta^k|$. Set $S([\emptyset]) = [\emptyset]$ so that we have $S([y_0]) = [y_0]$, i.e. $S$ is the identity on $0$-simplices. Given a linear $1$-simplex $\lambda = [y_0,y_1]$ we then have $S([y_0,y_1]) = [b_{\lambda},y_1] - [b_{\lambda},y_0]$. \\

Given a linear $2$-simplex $\lambda = [y_0,y_1,y_2]$ we then have
\begin{align*}
S([y_0,y_1,y_2]) &= b_{\lambda} \left( S\left( [y_0,y_1] + [y_1,y_2] - [y_0,y_2] \right)    \right)  \\
&= b_{\lambda} \left(   [b_2,y_1] - [b_2,y_0] + [b_0, y_2] - [b_0, y_1] - [b_1,y_2] + [b_1,y_0]     \right) \\
&= [b,b_2,y_1] - [b,b_2,y_0] + [b,b_0, y_2] - [b,b_0, y_1] - [b,b_1,y_2] + [b,b_1,y_0],
\end{align*}
which we think of as the following subdivision of the $2$-simplex.
\begin{center}
\begin{tikzpicture}[x=0.75pt,y=0.75pt,yscale=-1,xscale=1]

\draw    (152,136) -- (196,56) ;
\draw [shift={(196,56)}, rotate = 298.81] [color={rgb, 255:red, 0; green, 0; blue, 0 }  ][fill={rgb, 255:red, 0; green, 0; blue, 0 }  ][line width=0.75]      (0, 0) circle [x radius= 2.01, y radius= 2.01]   ;
\draw [shift={(175.73,92.85)}, rotate = 118.81] [color={rgb, 255:red, 0; green, 0; blue, 0 }  ][line width=0.75]    (6.56,-2.94) .. controls (4.17,-1.38) and (1.99,-0.4) .. (0,0) .. controls (1.99,0.4) and (4.17,1.38) .. (6.56,2.94)   ;
\draw [shift={(152,136)}, rotate = 298.81] [color={rgb, 255:red, 0; green, 0; blue, 0 }  ][fill={rgb, 255:red, 0; green, 0; blue, 0 }  ][line width=0.75]      (0, 0) circle [x radius= 2.01, y radius= 2.01]   ;
\draw    (196,160) -- (108,212) ;
\draw [shift={(108,212)}, rotate = 149.42] [color={rgb, 255:red, 0; green, 0; blue, 0 }  ][fill={rgb, 255:red, 0; green, 0; blue, 0 }  ][line width=0.75]      (0, 0) circle [x radius= 2.01, y radius= 2.01]   ;
\draw [shift={(148.9,187.83)}, rotate = 329.42] [color={rgb, 255:red, 0; green, 0; blue, 0 }  ][line width=0.75]    (6.56,-2.94) .. controls (4.17,-1.38) and (1.99,-0.4) .. (0,0) .. controls (1.99,0.4) and (4.17,1.38) .. (6.56,2.94)   ;
\draw [shift={(196,160)}, rotate = 149.42] [color={rgb, 255:red, 0; green, 0; blue, 0 }  ][fill={rgb, 255:red, 0; green, 0; blue, 0 }  ][line width=0.75]      (0, 0) circle [x radius= 2.01, y radius= 2.01]   ;
\draw    (152,136) -- (108,212) ;
\draw [shift={(108,212)}, rotate = 120.07] [color={rgb, 255:red, 0; green, 0; blue, 0 }  ][fill={rgb, 255:red, 0; green, 0; blue, 0 }  ][line width=0.75]      (0, 0) circle [x radius= 2.01, y radius= 2.01]   ;
\draw [shift={(128.2,177.12)}, rotate = 300.07] [color={rgb, 255:red, 0; green, 0; blue, 0 }  ][line width=0.75]    (6.56,-2.94) .. controls (4.17,-1.38) and (1.99,-0.4) .. (0,0) .. controls (1.99,0.4) and (4.17,1.38) .. (6.56,2.94)   ;
\draw [shift={(152,136)}, rotate = 120.07] [color={rgb, 255:red, 0; green, 0; blue, 0 }  ][fill={rgb, 255:red, 0; green, 0; blue, 0 }  ][line width=0.75]      (0, 0) circle [x radius= 2.01, y radius= 2.01]   ;
\draw    (196,212) -- (108,212) ;
\draw [shift={(108,212)}, rotate = 180] [color={rgb, 255:red, 0; green, 0; blue, 0 }  ][fill={rgb, 255:red, 0; green, 0; blue, 0 }  ][line width=0.75]      (0, 0) circle [x radius= 2.01, y radius= 2.01]   ;
\draw [shift={(148.4,212)}, rotate = 360] [color={rgb, 255:red, 0; green, 0; blue, 0 }  ][line width=0.75]    (6.56,-2.94) .. controls (4.17,-1.38) and (1.99,-0.4) .. (0,0) .. controls (1.99,0.4) and (4.17,1.38) .. (6.56,2.94)   ;
\draw [shift={(196,212)}, rotate = 180] [color={rgb, 255:red, 0; green, 0; blue, 0 }  ][fill={rgb, 255:red, 0; green, 0; blue, 0 }  ][line width=0.75]      (0, 0) circle [x radius= 2.01, y radius= 2.01]   ;
\draw    (196,212) -- (284,212) ;
\draw [shift={(284,212)}, rotate = 0] [color={rgb, 255:red, 0; green, 0; blue, 0 }  ][fill={rgb, 255:red, 0; green, 0; blue, 0 }  ][line width=0.75]      (0, 0) circle [x radius= 2.01, y radius= 2.01]   ;
\draw [shift={(243.6,212)}, rotate = 180] [color={rgb, 255:red, 0; green, 0; blue, 0 }  ][line width=0.75]    (6.56,-2.94) .. controls (4.17,-1.38) and (1.99,-0.4) .. (0,0) .. controls (1.99,0.4) and (4.17,1.38) .. (6.56,2.94)   ;
\draw [shift={(196,212)}, rotate = 0] [color={rgb, 255:red, 0; green, 0; blue, 0 }  ][fill={rgb, 255:red, 0; green, 0; blue, 0 }  ][line width=0.75]      (0, 0) circle [x radius= 2.01, y radius= 2.01]   ;
\draw    (240,136) -- (196,56) ;
\draw [shift={(196,56)}, rotate = 241.19] [color={rgb, 255:red, 0; green, 0; blue, 0 }  ][fill={rgb, 255:red, 0; green, 0; blue, 0 }  ][line width=0.75]      (0, 0) circle [x radius= 2.01, y radius= 2.01]   ;
\draw [shift={(216.27,92.85)}, rotate = 61.19] [color={rgb, 255:red, 0; green, 0; blue, 0 }  ][line width=0.75]    (6.56,-2.94) .. controls (4.17,-1.38) and (1.99,-0.4) .. (0,0) .. controls (1.99,0.4) and (4.17,1.38) .. (6.56,2.94)   ;
\draw [shift={(240,136)}, rotate = 241.19] [color={rgb, 255:red, 0; green, 0; blue, 0 }  ][fill={rgb, 255:red, 0; green, 0; blue, 0 }  ][line width=0.75]      (0, 0) circle [x radius= 2.01, y radius= 2.01]   ;
\draw    (240,136) -- (284,212) ;
\draw [shift={(284,212)}, rotate = 59.93] [color={rgb, 255:red, 0; green, 0; blue, 0 }  ][fill={rgb, 255:red, 0; green, 0; blue, 0 }  ][line width=0.75]      (0, 0) circle [x radius= 2.01, y radius= 2.01]   ;
\draw [shift={(263.8,177.12)}, rotate = 239.93] [color={rgb, 255:red, 0; green, 0; blue, 0 }  ][line width=0.75]    (6.56,-2.94) .. controls (4.17,-1.38) and (1.99,-0.4) .. (0,0) .. controls (1.99,0.4) and (4.17,1.38) .. (6.56,2.94)   ;
\draw [shift={(240,136)}, rotate = 59.93] [color={rgb, 255:red, 0; green, 0; blue, 0 }  ][fill={rgb, 255:red, 0; green, 0; blue, 0 }  ][line width=0.75]      (0, 0) circle [x radius= 2.01, y radius= 2.01]   ;
\draw    (196,160) -- (240,136) ;
\draw [shift={(240,136)}, rotate = 331.39] [color={rgb, 255:red, 0; green, 0; blue, 0 }  ][fill={rgb, 255:red, 0; green, 0; blue, 0 }  ][line width=0.75]      (0, 0) circle [x radius= 2.01, y radius= 2.01]   ;
\draw [shift={(221.16,146.28)}, rotate = 151.39] [color={rgb, 255:red, 0; green, 0; blue, 0 }  ][line width=0.75]    (6.56,-2.94) .. controls (4.17,-1.38) and (1.99,-0.4) .. (0,0) .. controls (1.99,0.4) and (4.17,1.38) .. (6.56,2.94)   ;
\draw [shift={(196,160)}, rotate = 331.39] [color={rgb, 255:red, 0; green, 0; blue, 0 }  ][fill={rgb, 255:red, 0; green, 0; blue, 0 }  ][line width=0.75]      (0, 0) circle [x radius= 2.01, y radius= 2.01]   ;
\draw    (196,160) -- (284,212) ;
\draw [shift={(284,212)}, rotate = 30.58] [color={rgb, 255:red, 0; green, 0; blue, 0 }  ][fill={rgb, 255:red, 0; green, 0; blue, 0 }  ][line width=0.75]      (0, 0) circle [x radius= 2.01, y radius= 2.01]   ;
\draw [shift={(243.1,187.83)}, rotate = 210.58] [color={rgb, 255:red, 0; green, 0; blue, 0 }  ][line width=0.75]    (6.56,-2.94) .. controls (4.17,-1.38) and (1.99,-0.4) .. (0,0) .. controls (1.99,0.4) and (4.17,1.38) .. (6.56,2.94)   ;
\draw [shift={(196,160)}, rotate = 30.58] [color={rgb, 255:red, 0; green, 0; blue, 0 }  ][fill={rgb, 255:red, 0; green, 0; blue, 0 }  ][line width=0.75]      (0, 0) circle [x radius= 2.01, y radius= 2.01]   ;
\draw    (196,160) -- (152,136) ;
\draw [shift={(152,136)}, rotate = 208.61] [color={rgb, 255:red, 0; green, 0; blue, 0 }  ][fill={rgb, 255:red, 0; green, 0; blue, 0 }  ][line width=0.75]      (0, 0) circle [x radius= 2.01, y radius= 2.01]   ;
\draw [shift={(170.84,146.28)}, rotate = 28.61] [color={rgb, 255:red, 0; green, 0; blue, 0 }  ][line width=0.75]    (6.56,-2.94) .. controls (4.17,-1.38) and (1.99,-0.4) .. (0,0) .. controls (1.99,0.4) and (4.17,1.38) .. (6.56,2.94)   ;
\draw [shift={(196,160)}, rotate = 208.61] [color={rgb, 255:red, 0; green, 0; blue, 0 }  ][fill={rgb, 255:red, 0; green, 0; blue, 0 }  ][line width=0.75]      (0, 0) circle [x radius= 2.01, y radius= 2.01]   ;
\draw    (196,160) -- (196,56) ;
\draw [shift={(196,56)}, rotate = 270] [color={rgb, 255:red, 0; green, 0; blue, 0 }  ][fill={rgb, 255:red, 0; green, 0; blue, 0 }  ][line width=0.75]      (0, 0) circle [x radius= 2.01, y radius= 2.01]   ;
\draw [shift={(196,104.4)}, rotate = 90] [color={rgb, 255:red, 0; green, 0; blue, 0 }  ][line width=0.75]    (6.56,-2.94) .. controls (4.17,-1.38) and (1.99,-0.4) .. (0,0) .. controls (1.99,0.4) and (4.17,1.38) .. (6.56,2.94)   ;
\draw [shift={(196,160)}, rotate = 270] [color={rgb, 255:red, 0; green, 0; blue, 0 }  ][fill={rgb, 255:red, 0; green, 0; blue, 0 }  ][line width=0.75]      (0, 0) circle [x radius= 2.01, y radius= 2.01]   ;
\draw    (196,160) -- (196,212) ;
\draw [shift={(196,212)}, rotate = 90] [color={rgb, 255:red, 0; green, 0; blue, 0 }  ][fill={rgb, 255:red, 0; green, 0; blue, 0 }  ][line width=0.75]      (0, 0) circle [x radius= 2.01, y radius= 2.01]   ;
\draw [shift={(196,189.6)}, rotate = 270] [color={rgb, 255:red, 0; green, 0; blue, 0 }  ][line width=0.75]    (6.56,-2.94) .. controls (4.17,-1.38) and (1.99,-0.4) .. (0,0) .. controls (1.99,0.4) and (4.17,1.38) .. (6.56,2.94)   ;
\draw [shift={(196,160)}, rotate = 90] [color={rgb, 255:red, 0; green, 0; blue, 0 }  ][fill={rgb, 255:red, 0; green, 0; blue, 0 }  ][line width=0.75]      (0, 0) circle [x radius= 2.01, y radius= 2.01]   ;

\draw (85,204.4) node [anchor=north west][inner sep=0.75pt]    {$v_{0}$};
\draw (293,202.4) node [anchor=north west][inner sep=0.75pt]    {$v_{1}$};
\draw (189,34.4) node [anchor=north west][inner sep=0.75pt]    {$v_{2}$};
\draw (242,118.4) node [anchor=north west][inner sep=0.75pt]    {$b_{0}$};
\draw (130,116.4) node [anchor=north west][inner sep=0.75pt]    {$b_{1}$};
\draw (189,216.4) node [anchor=north west][inner sep=0.75pt]    {$b_{2}$};
\draw (199,134.4) node [anchor=north west][inner sep=0.75pt]    {$b$};

\end{tikzpicture}
\end{center}
Notice that the signs in the sum correspond to the orientation of the specific simplex and are exactly such that taking the boundary $\partial S(\lambda) = S(\partial \lambda)$, in other words, $S$ defines a chain map. Now define the subdivision operator on smooth singular chains taking values in $X$ as follows. For some smooth map $f \colon Y \rightarrow X$ where $Y \subset \mathbb{R}^n$ is some convex subset endowed with the subset diffeology, denote by
\begin{equation*}
    \begin{array}{rcl}
         f_{\#} \colon LC_{k}(Y) & \rightarrow & C_k(X)  \\
         \lambda & \mapsto & f \circ \lambda
    \end{array}
\end{equation*}
the induced chain map from smooth linear chains in $Y$ to smooth singular chains in $X$. Given now some smooth simplex $\sigma \colon |\Delta^k| \rightarrow X$ define
\begin{equation*}
\begin{array}{rcl}
     S \colon C_{k}(X) & \rightarrow & C_{k}(X) \\
    \sigma  & \mapsto & \sigma_{\#}S(|\Delta^k|).
\end{array}
\end{equation*}
That is, $S(\sigma)$ is a signed sum of $\sigma$ restricted to the subsimplices in $|\Delta^{k}|$ specified by the barycentric subdivision. \\

We now wish to define a chain homotopy $T \colon C_k(X) \rightarrow C_{k+1}(X)$ between $S$ and the identity. As before, we first define $T$ inductively as an operator acting on linear chains. That is, $T = 0$ for $k = -1$ and  then further $T(\lambda) = b_{\lambda}(\lambda - T(\partial \lambda))$. That is, $T([y_0]) = y_0 \left( [y_0] \right) = [y_0,y_0]$ and $T([y_0,y_1]) = [b,y_0,y_1] -[b,y_1,y_1] + [b,y_0,y_0]$. After verifying that the chain homotopy formula $\partial T + T \partial = \mathrm{id} - S$ is satisfied for linear chains, extend the chain homotopy to smooth chains as before via
\begin{equation*}
    \begin{array}{rcl}
        T \colon C_k(X) & \rightarrow & C_{k+1}(X) \\
         \sigma \colon |\Delta^k| \rightarrow X & \mapsto & \sigma_{\#} T(|\Delta^k|).
    \end{array}
\end{equation*}

\begin{lemma} \label{Lemma: Subdivision Operator}
Given a diffeological space $X$ the subdivision operator \newline $S \colon C_{\bullet}(X) \rightarrow \colon C_{\bullet}(X)$ is chain homotopic to the identity with chain homotopy provided by $T \colon C_{\bullet}(X) \rightarrow \colon C_{\bullet + 1}(X)$. Moreover, for any $r \geq 1$ the iterated barycentric subdivision operator $S^r$ is chain homotopic to the identity with chain homotopy provided by $D_r = \sum_{0 \leq i < r} TS^i$.
\end{lemma}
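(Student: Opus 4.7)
My plan is to mimic the classical argument for continuous singular chains (as in Hatcher) almost verbatim, the only subtlety being that we must check that naturality under pushforward works properly in the diffeological setting. Since both $S$ and $T$ were defined in two stages — first on linear chains in a convex subset $Y \subset \mathbb{R}^n$, then transferred to arbitrary smooth chains by $\sigma \mapsto \sigma_{\#}(-)$ applied to $|\Delta^k|$ viewed as a convex subset — the natural strategy is to prove the chain homotopy identity
\[
\partial T + T\partial = \mathrm{id} - S
\]
on linear chains first, and then transport it through $\sigma_{\#}$.

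For the linear statement I would induct on dimension $k$. The base case $k = 0$ is immediate: $T([y_0]) = [y_0,y_0]$ has boundary zero, $T \partial [y_0] = T([\emptyset]) = 0$, and $S([y_0]) = [y_0]$. For the induction step I would use the identity $\partial b_\lambda + b_\lambda \partial = \mathrm{id}$ satisfied by the cone operator, together with the recursive definitions $S(\lambda) = b_\lambda(S(\partial \lambda))$ and $T(\lambda) = b_\lambda(\lambda - T(\partial \lambda))$. Explicitly,
\[
\partial T(\lambda) = \partial b_\lambda(\lambda - T\partial \lambda) = (\lambda - T\partial\lambda) - b_\lambda \partial(\lambda - T\partial\lambda),
\]
and by the inductive hypothesis applied to $\partial\lambda$ (which is of strictly lower dimension) together with $\partial^2 = 0$ we get $\partial T\partial\lambda = \partial\lambda - S\partial\lambda$. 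Substituting back and using $b_\lambda(S\partial \lambda) = S(\lambda)$ yields $\partial T(\lambda) = \lambda - T\partial \lambda - S(\lambda)$, which is precisely the desired identity. In particular this also shows $\partial S = S \partial$, so $S$ is a chain map.

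To pass from linear chains in $|\Delta^k|$ to smooth singular chains in $X$, I would observe that $\sigma_{\#}$ commutes with the boundary operator and, by construction, with $S$ and $T$, since both are defined by applying $\sigma_{\#}$ to the universal linear chains $S(|\Delta^k|)$ and $T(|\Delta^k|)$. Applying $\sigma_{\#}$ to the linear identity on $|\Delta^k|$ therefore gives $\partial T(\sigma) + T \partial(\sigma) = \sigma - S(\sigma)$, which extends to all of $C_\bullet(X)$ by linearity. The only point worth double-checking is that smoothness is preserved: the restrictions of $\sigma$ to the subsimplices of the barycentric subdivision and to the prisms $T(|\Delta^k|)$ are compositions of $\sigma$ with linear maps $|\Delta^k| \to |\Delta^k|$ (or $|\Delta^{k+1}| \to |\Delta^k|$), hence are smooth by the compatibility axiom for diffeologies.

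For the iterated statement, since $S$ is now a chain map, for each $0 \le i < r$ the operator $TS^i$ satisfies
\[
\partial(TS^i) + (TS^i)\partial = (\partial T)S^i + T(\partial S^i) = (\partial T + T\partial)S^i = (\mathrm{id} - S)S^i = S^i - S^{i+1}.
\]
Summing over $i = 0, \ldots, r-1$ telescopes to $\mathrm{id} - S^r$, so $D_r = \sum_{0 \le i < r} TS^i$ provides the required chain homotopy. There is no real obstacle here — the only place one needs to be slightly careful is the smoothness of the restricted simplices in step two, but this follows immediately from the diffeological identification of $|\Delta^k|$ with a convex subset of $\mathbb{R}^k$ established in Lemma~\ref{Lemma: Subset diffeology of compact simplex}.
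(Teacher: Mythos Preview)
Your proposal is correct and follows exactly the classical Hatcher argument that the paper simply cites, with the added care of noting that smoothness of the restricted simplices follows from compositions with linear maps via Lemma~\ref{Lemma: Subset diffeology of compact simplex}. The paper's own proof is a one-line reference to \cite[pp.~121--124]{Hatcher}, so you have in fact written out more detail than the paper provides.
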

\begin{proof}
    The proof is exactly like in the continuous case, see \cite[pp.~121-124]{Hatcher}.
 \end{proof}

\begin{corollary} \label{Corollary: Subdivision Operator}
    Let $M$ be a smooth manifold, $U \in \mathbf{Cart}$ some Cartesian space and $\eta \colon |\Delta^{k+1}| \rightarrow D(U,M)$ a smooth $(k+1)$-simplex. Its boundary $\partial(\eta) \in Z_{k}(D(U,M))$ is a smooth $k$-cycle in $D(U,M)$ and likewise $S(\partial(\eta)) = \partial S(\eta) \in Z_k(D(U,M))$ defines a smooth $k$-cycle in $D(U,M)$. In particular, the two homology classes
    \[
    [\partial \eta]_k = [\partial S(\eta)]_k
    \]
    agree as elements in $H^{\Delta}_k(D(U,M)_k)$. Moreover, the same equality also holds considering the iterated subdivision operator $S^r$, i.e.
     \[
    [\partial \eta]_k = [\partial S^r(\eta)]_k.
    \]
\end{corollary}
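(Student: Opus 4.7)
The plan is to deduce this directly from the chain homotopy provided by Lemma~\ref{Lemma: Subdivision Operator}, with the only real content being a check that the homotopy $T$ actually takes values in chains of the $k$-skeletal diffeology $D(U,M)_k$.

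Concretely, apply the chain homotopy identity $\partial T + T\partial = \mathrm{id} - S$ to the cycle $\partial\eta \in Z_k(D(U,M))$. Since $\partial(\partial\eta)=0$ and $S$ is a chain map (so $S(\partial\eta)=\partial S(\eta)$), this gives the identity
\[
\partial\eta - \partial S(\eta) \;=\; \partial \bigl( T(\partial\eta) \bigr)
\]
in $C_k(D(U,M))$. So it suffices to show that the $(k{+}1)$-chain $T(\partial\eta)$ lies in the subgroup $C_{k+1}(D(U,M)_k)$ of smooth singular chains valued in the $k$-skeletal diffeology.

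For this, recall that $T$ is defined on a smooth $k$-simplex $\sigma\colon|\Delta^k|\to D(U,M)$ by $T(\sigma) = \sigma_\#\,T(|\Delta^k|)$, where $T(|\Delta^k|) \in LC_{k+1}(|\Delta^k|)$ is a signed sum of linear $(k{+}1)$-simplices $\mu\colon|\Delta^{k+1}|\to|\Delta^k|$. Thus every simplex appearing in $T(\partial\eta)$ is of the form $\sigma\circ\mu\colon|\Delta^{k+1}|\to D(U,M)$, i.e.\ it factors through the $k$-dimensional space $|\Delta^k|$. The key step is then to verify that such a composition is smooth as a map into $D(U,M)_k$: using the smooth extension of $\sigma$ to an open neighbourhood of $|\Delta^k|\subset\mathbb{R}^k$ (as in Remark~\ref{Remark: chains with the subset diffeology}) together with the obvious smooth extension of the linear map $\mu$ to $\mathbb{R}^{k+1}\to\mathbb{R}^k$, the composition $\sigma\circ\mu$ extends to a smooth map on a neighbourhood of $|\Delta^{k+1}|$ which factors through a plot of $D(U,M)$ of dimension $\leq k$. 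This is precisely the condition for $\sigma\circ\mu$ to define a plot of $D(U,M)_k$, hence a smooth $(k{+}1)$-simplex in $D(U,M)_k$.

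Combining these observations, $T(\partial\eta)\in C_{k+1}(D(U,M)_k)$, so $\partial\eta$ and $\partial S(\eta)$ differ by a boundary in $D(U,M)_k$ and therefore define the same class in $H^\Delta_k(D(U,M)_k)$. The iterated statement follows by the same argument applied to the chain homotopy $D_r=\sum_{0\leq i<r}TS^i$ from Lemma~\ref{Lemma: Subdivision Operator}: each iterated subdivision $S^i(\partial\eta)$ is again a sum of $k$-simplices each of which factors through linear maps $|\Delta^k|\to|\Delta^k|$, so applying $T$ afterwards still produces $(k{+}1)$-simplices factoring through $|\Delta^k|$ and hence lying in $C_{k+1}(D(U,M)_k)$. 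The only subtlety worth flagging is this last skeletal check — the algebraic part is a one-line application of the chain homotopy identity.
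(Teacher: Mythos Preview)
Your proof is correct and follows essentially the same route as the paper: apply the chain homotopy identity to $\partial\eta$ to produce $\partial T(\partial\eta) = \partial\eta - \partial S(\eta)$, and then verify that $T(\partial\eta)$ lands in $C_{k+1}(D(U,M)_k)$. The only cosmetic difference is that the paper packages the skeletal check by invoking Lemma~\ref{Lemma: Compact k-skeletal k-chains} (which is the right reference here, since $D(U,M)$ is not a manifold and Remark~\ref{Remark: chains with the subset diffeology} alone does not quite give the extension you need), whereas you unpack the same argument by hand via the factorization $\sigma\circ\mu$ through $|\Delta^k|$.
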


\begin{proof}
    This is a direct consequence of Lemma \ref{Lemma: Subdivision Operator}. Indeed, we wish to show that for any $\eta$ there exists a $k$-thin smooth $(k+1)$-chain $\gamma \in C_{k+1}(D(U,M)_k)$ such that
    \begin{equation} \label{Equation: The thin chain}
    \partial \gamma = \partial \eta - \partial S(\eta).
    \end{equation}
    Of course, the chain homotopy $T$ constructed in the previous Lemma allows for the construction of such a $k$-thin chain for any $\eta$. Indeed, $T$ being a chain homotopy between $S$ and the identity  implies that for any $\eta \in C_{k+1}(D(U,M))$ one has
    \[
    \partial T(\eta) + T(\partial \eta) = \eta - S(\eta).
    \]
    Applying the boundary operator then gives
    \[
    \partial T\left( \partial \eta \right) = \partial \eta - \partial S(\eta).
    \]
    Notice that here $T \colon C_{k}(D(U,M)) \rightarrow C_{k+1}(D(U,M))$ since $T$ acts on the boundary of $\eta$. By definition of the chain homotopy $T$ together with Lemma \ref{Lemma: Compact k-skeletal k-chains} one confirms that $T$ actually takes values in $C_{k+1}(D(U,M)_k)$, i.e.
    \[
    T \colon \colon C_{k}(D(U,M)) \rightarrow C_{k+1}(D(U,M)_k).
    \]
    This in particular implies that $\gamma:= T(\partial \eta)$ can be constructed for any $\eta$ and indeed satisfies (\ref{Equation: The thin chain}). The same argument applies also in the case of the iterated subdivision operator.
\end{proof}

Given a smooth singular $(k+1)$-simplex $\eta \colon |\Delta^{k+1}| \rightarrow X$ taking values in some diffeological space $X$ and $\sigma \in S_{k+2}$ a permutation of the vertices $\{v_0, ..., v_{k+1}\}$  of $|\Delta^{k+1}|$, define the homomorphism
\begin{equation*}
    \begin{array}{rcl}
        \sigma_* \colon C_{k+1}(X) & \rightarrow & C_{k+1}(X) \\
         \eta & \mapsto & \left( \eta \circ \sigma \right)
    \end{array}
\end{equation*}
where here the expression $\eta \circ \sigma$ denotes the composition of the linear map $\sigma \colon |\Delta^{k+1}| \rightarrow |\Delta^{k+1}|$ permuting the vertices, i.e. $\sigma(v_i) = v_{\sigma(i)}$, with the smooth simplex $\eta$. \\

\begin{corollary} \label{Corollary: Change of Orientation}
    Let $M$ be a smooth manifold, $U \in \mathbf{Cart}$ some Cartesian space and $\eta \colon |\Delta^{k+1}| \rightarrow M$ a smooth $(k+1)$-simplex in $M$ and $\sigma \in S_{k+2}$ a permutation, then the boundary $\partial(\eta) \in Z_{k}(D(U,M))$ is a smooth $k$-cycle in $D(U,M)$ and likewise $\partial(\sigma_*(\eta)) \in Z_{k}(D(U,M))$. In particular, the homology classes
    \[
    \left[ \partial(\eta) \right]_k =  \mathrm{sgn}(\sigma)  \left[ \partial\left( \sigma_*(\eta)  \right) \right]_k =   \mathrm{sgn}(\sigma) \left[ \partial\left( \eta \circ \sigma \right) \right]_k
    \]
    agree as elements in $H_k(D(U,M)_k)$.
\end{corollary}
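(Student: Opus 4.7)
The strategy is to reduce to the case of adjacent transpositions and then construct an explicit null-homology using the degenerate part of the smooth singular complex. Since $S_{k+2}$ is generated by the adjacent transpositions $\tau_i = (i,i+1)$ for $0 \leq i \leq k$, and since the claimed identity is multiplicative in $\sigma$ (apply it first to $\eta$ with permutation $\sigma_1$, then to $\eta \circ \sigma_1$ with $\sigma_2$, and multiply signs), it suffices to prove the result for $\sigma = \tau_i$.

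Direct computation with the face maps $\delta_j \colon |\Delta^k| \hookrightarrow |\Delta^{k+1}|$ gives $\tau_i \circ \delta_i = \delta_{i+1}$, $\tau_i \circ \delta_{i+1} = \delta_i$, and $\tau_i \circ \delta_j = \delta_j \circ \tau'_j$ for $j \notin \{i, i+1\}$, where $\tau'_j$ is again an adjacent transposition of $|\Delta^k|$. The contributions with $j \in \{i, i+1\}$ cancel pairwise in the sum $\partial \eta + \partial(\eta \circ \tau_i)$, leaving
\[
\partial\eta + \partial(\eta \circ \tau_i) \;=\; \sum_{j \neq i, i+1} (-1)^j \bigl( d_j\eta + d_j\eta \circ \tau'_j \bigr).
\]
It is therefore enough to show that for every smooth $k$-simplex $\xi \colon |\Delta^k| \to M$ and every adjacent transposition $\tau'$ of $|\Delta^k|$, the $k$-cycle $\xi + \xi \circ \tau'$ is null-homologous in $C_\bullet(D(U,M)_k)$.

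For $\tau' = (m, m+1)$, I would define the linear map $\phi_m \colon |\Delta^{k+1}| \to |\Delta^k|$ on vertices by $\phi_m(v'_j) = v_j$ for $j \leq m+1$, $\phi_m(v'_{m+2}) = v_m$, and $\phi_m(v'_j) = v_{j-1}$ for $j \geq m+3$. A direct face computation gives $d_m \phi_m = \tau'$ and $d_{m+2}\phi_m = \mathrm{id}_{|\Delta^k|}$, while every other face of $\phi_m$ is a linear endomorphism of $|\Delta^k|$ that identifies two of its vertices, i.e. a degenerate $k$-simplex. Consequently the chain $(-1)^m \xi \circ \phi_m$ is $k$-thin — because $\phi_m$ lands in $|\Delta^k|$, so the composition factors through the $k$-dimensional plot $\xi$ — and its boundary equals $\xi + \xi \circ \tau'$ plus a sum of \emph{degenerate} smooth $k$-simplices in $M$.

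The final step is to absorb these degenerate correction terms. The point is that the homology of $S_e(D(U,M)_k)$ coincides with the homology of its normalized chain subcomplex, so every degenerate smooth $k$-simplex represents zero in $H_k(D(U,M)_k)$ and is in fact the boundary of a degenerate smooth $(k+1)$-simplex, which is automatically $k$-thin (it factors through a linear collapse to a simplex of dimension $\leq k-1$). Assembling the chains $\pm\, d_j\eta \circ \phi_{m(\tau'_j)}$ for $j \notin \{i,i+1\}$ with the appropriate signs and adding the degenerate correction chains produces a $W \in C_{k+1}(D(U,M)_k)$ with $\partial W$ representing $[\partial \eta + \partial(\eta \circ \tau_i)]_k$, which yields the claimed identity. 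The main technical obstacle lies in this last bookkeeping: verifying that the degenerate-simplex corrections can be realized within $C_{k+1}(D(U,M)_k)$, which amounts to invoking the Dold--Kan normalization theorem through the equivalence of normalized and unnormalized smooth singular chain complexes implicit in Theorem \ref{Theorem: The Fundamental Theorem of Diffeological Homotopy}.
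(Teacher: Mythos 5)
Your argument is correct, but it takes a genuinely different route from the paper. The paper's proof is very short: it cites the Barr--Kock identity $S(\eta) = \mathrm{sgn}(\sigma)\,S(\sigma_*\eta)$ for the barycentric subdivision operator (Barr for low dimensions, Kock in general), and then immediately combines this with the preceding Corollary \ref{Corollary: Subdivision Operator} (which says $[\partial\eta]_k = [\partial S(\eta)]_k$) to read off the claimed identity. In contrast, you reduce to adjacent transpositions $\tau_i$, observe the pairwise cancellation in $\partial\eta + \partial(\eta\circ\tau_i)$ for the face indices $i,i+1$, and then build an explicit $k$-thin ``prism'' $\xi\circ\phi_m$ whose normalized boundary is $(-1)^m(\xi + \xi\circ\tau')$ because all its other faces are degenerate. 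Your approach is more elementary and self-contained (no reliance on the Barr--Kock sign identity), at the cost of having to track degenerate faces by hand; the paper's proof is slicker but outsources the combinatorics of the sign to a cited lemma about barycentric subdivision.

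Two remarks to tighten the write-up. First, you should state explicitly that the crucial property of $\phi_m$ is that it \emph{lands in $|\Delta^k|\subset\mathbb{R}^k$}, so by the same extension argument as in Lemma \ref{Lemma: Compact k-skeletal k-chains} the composite $\xi\circ\phi_m$ lifts to a $(k+1)$-simplex of $D(U,M)_k$ -- this is the point on which thinness actually hinges. Second, the sentence ``every degenerate smooth $k$-simplex $\ldots$ is in fact the boundary of a degenerate smooth $(k+1)$-simplex'' is not literally correct: a degenerate $k$-simplex need not even be a cycle, so it need not be anyone's boundary. What you actually need, and what is true, is that the degenerate chains form an acyclic subcomplex of the smooth singular chain complex; or, even more directly, that the homology $H_k(D(U,M)_k)$ is computed from the normalized chain complex $N\mathbb{Z}S(-)$, as the paper itself sets up in Remark \ref{Remark: chains with the subset diffeology}. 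Since the normalized boundary of $\xi\circ\phi_m$ visibly equals $(-1)^m(\xi + \xi\circ\tau')$, your cycle $\partial\eta + \partial(\eta\circ\tau_i)$ is a normalized boundary of a $k$-thin $(k+1)$-chain and therefore vanishes in $H_k(D(U,M)_k)$, with no need for explicit degenerate correction chains. With that phrasing, the argument goes through cleanly.
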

\begin{proof}
The key idea behind this result is that the barycentric subdivision of a smooth $(k+1)$-simplex $\eta$ satisfies
\begin{equation} \label{Equation: Subdivision and permutations}
S(\eta) = \mathrm{sgn}(\sigma) S \left( \sigma_*(\eta) \right).
\end{equation}

The observation of this fact for simplices of low dimension goes back to the work of Barr \cite[p.~7]{Barr} and was shown to hold in general by Kock \cite[p.~261]{Kock}. Identity (\ref{Equation: Subdivision and permutations}) together with Corollary \ref{Corollary: Change of Orientation} yields
\begin{equation*}
\mathrm{sgn}(\sigma)[ \partial(\sigma_*(\eta))]_k = \mathrm{sgn}(\sigma)[ \partial S(\sigma_*(\eta))]_k = [\partial S(\eta)]_k = [\partial \eta]_k.
\end{equation*}

\end{proof}

\section{Smooth Triangulations} \label{Appendix: Smooth Triangulations}

\begin{definition}[\cite{Munkres}, Definition 7.1]
The convex hull of linearly independent points $\left\{v_0,...,v_m \right\}$ in $\mathbb{R}^n$ we call a \textbf{linear} $m$\textbf{-simplex}. A \textbf{simplicial complex} $K$ in $\mathbb{R}^n$ is a collection of linear simplices in $\mathbb{R}^n$ such that
\begin{enumerate}
    \item if $\sigma \in K$ and $\tau$ is a face of $\sigma$, then also $\tau \in K$,
    \item if $\sigma,\tau \in K$ and $\sigma \cap \tau \neq \emptyset$ then $\sigma \cap \tau$ is a face of both $\tau$ and $\sigma$,
    \item for each point $x \in |K|:= \bigcup_{\sigma \in K} \sigma$ there exists a neighbourhood in $\mathbb{R}^n$ of $x$ intersecting only finitely many simplices of $K$.
\end{enumerate}
The union of all the simplices in $K$ denoted by $|K|$ is called the \textbf{polyhedron of} $K$.
\end{definition}
\begin{remark}
   A simplicial complex $K$ together with a partial order $\leq$ on the set of vertices $V(K)$, such that when restricted to each simplex in $K$ it becomes a total ordering, we call an \textbf{ordered simplicial complex}. To each ordered simplicial complex $(K,\leq)$ we can associate a simplicial set
   \[
   S_{\leq}(K)_n := \left\{ (v_0, ..., v_n) \in V(K)^{n+1} \, | \, v_0 \leq \cdots \leq v_n,  \left\{v_0, ..., v_n \right\} \in K \right\},
   \]
   where here $\left\{v_0, ..., v_n \right\}$ denotes the convex hull spanned by the vertices $v_i$. Notice that we are allowed to repeat vertices, i.e. $(v_0,v_0)$ is indeed a $1$-simplex since $v_0 \leq v_0$ and  $\left\{v_0,v_0 \right\} = v_0 \in K$. Elements of this form provide precisely the degenerate simplices in $S_{\leq}(K)$. The face and degeneracy maps are then defined by:
   \begin{align*}
       &d_i \left( v_0, ..., v_n \right) = (v_0, ..., v_{i-1}, v_{i+1}, ..., v_n) \\
       &s_i \left( v_0 , ..., v_n \right) = (v_0, ..., v_i, v_i, ..., v_n).
   \end{align*}
\end{remark}

\begin{lemma}[\cite{Curtis}, 1.29] \label{Lemma: Geometric Realization Simplicial Complex}
    Let $(K,\leq)$ be an ordered simplicial complex. Then the geometric realization of its associated simplicial set $S_{\leq}(K)$ is homeomorphic to the geometric realization of K,
    \[
    |S_{\leq}(K)| \cong |K|.
    \]
\end{lemma}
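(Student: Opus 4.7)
The plan is to build an explicit natural homeomorphism $\Phi \colon |S_{\leq}(K)| \to |K|$ by exploiting the ordering on vertices to identify non-degenerate simplices of $S_{\leq}(K)$ with simplices of $K$, and then to verify the topological identification cell by cell.

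First, I would construct $\Phi$. Given an $n$-simplex $\sigma = (v_0, \ldots, v_n) \in S_{\leq}(K)_n$, define an affine map $\bar{\sigma} \colon |\Delta^n| \to |K|$ by sending the $i$-th vertex of $|\Delta^n|$ to $v_i$ and extending by convex combination; since $\{v_0, \ldots, v_n\} \in K$ (possibly with repeats) the image lands in a simplex of $K$. These maps are compatible with the cosimplicial structure on $|\Delta^{\bullet}|$ and the simplicial structure on $S_{\leq}(K)$: the face identifications $\bar\sigma \circ d^i = \overline{d_i \sigma}$ hold because the vertex ordering was used to define both sides, while $\bar\sigma \circ s^i = \overline{s_i \sigma}$ holds because $(v_0, \ldots, v_i, v_i, \ldots, v_n)$ gives an affine map that collapses the $(i,i{+}1)$-edge of $|\Delta^{n+1}|$. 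By the universal property of geometric realization they assemble into a continuous $\Phi \colon |S_{\leq}(K)| \to |K|$.

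Second, I would check that $\Phi$ is a continuous bijection. The non-degenerate $n$-simplices of $S_{\leq}(K)$ are the strictly increasing tuples $(v_0 < \cdots < v_n)$, and by the partial order $\leq$ restricted to each simplex being total, these are in canonical bijection with the (geometric) $n$-simplices of $K$. On the open interior of each non-degenerate cell, $\bar\sigma$ restricts to a homeomorphism onto the corresponding open linear simplex of $|K|$. Since $|S_{\leq}(K)| = \coprod \text{int}|\Delta^n| \times NS_{\leq}(K)_n / \sim$ decomposes as a disjoint union of open cells indexed by non-degenerate simplices, and $|K|$ decomposes as a disjoint union of open simplices indexed by the simplices of $K$, the bijection of indexing sets plus the cellwise homeomorphisms give bijectivity of $\Phi$.

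Third, I would upgrade bijectivity to a homeomorphism by comparing CW structures. Both spaces carry a CW structure with characteristic maps $\bar\sigma \colon |\Delta^n| \to |K|$ (respectively the quotient map $|\Delta^n| \times \{\sigma\} \to |S_{\leq}(K)|$) indexed by the simplices of $K$, and $\Phi$ intertwines these characteristic maps. Since a continuous bijection of CW complexes sending characteristic maps to characteristic maps is a homeomorphism (both topologies are the weak topology with respect to the cell filtration), we conclude $|S_{\leq}(K)| \cong |K|$.

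The main obstacle I expect is the topological part in the infinite case: for a finite simplicial complex the result is immediate from compactness, but when $K$ is infinite one must justify that the weak topology on $|S_{\leq}(K)|$ induced from $\mathbf{sSet}$-realization agrees with the coherent topology on $|K|$ coming from Definition of a simplicial complex (together with the local finiteness condition in part (3) of the definition). This is essentially an unraveling of definitions, but is the step that requires care and is the one where an explicit reference like \cite{Curtis} is most useful.
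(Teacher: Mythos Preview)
Your proof is correct and follows the standard approach; the paper itself provides no proof for this lemma but simply cites \cite{Curtis}, 1.29 as a reference. The argument you outline---affine realization maps on each simplex, bijection of non-degenerate simplices with simplices of $K$, and comparison of CW topologies---is exactly the classical one and there is nothing to add.
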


\begin{definition}[\cite{Munkres}, Definition 8.3] \label{Definition: Smooth Triangulation}
    Let $K$ be a simplicial complex and $M$ a smooth manifold. A map $f \colon |K| \rightarrow M$ is said to be \textbf{piecewise differentiable} if the restriction of $f$ to each simplex $\sigma \in K$ is smooth. We call $f$ a \textbf{smooth triangulation of} $M$ if $f$ is a piecewise differentiable homeomorphism and the restriction of $f$ to each simplex is an immersion.
\end{definition}

In the case where $M$ is an oriented manifold and we are given a smooth triangulation $(K,f)$ of $M$ the orientation of $M$ determines a partial order $\leq$ on the set of vertices of $K$. This order turns $(K,\leq)$ into an ordered simplicial complex, and accordingly the polyhedron of $K$ is homeomorphic to the geometric realization of $S_{\leq}(K)$, i.e. $|K| \cong |S_{\leq}(K)|$. This is made precise in the next definition.

\begin{definition} \label{Definition: Oriented smooth triangulation}
Let $M$ be an $n$-dimensional, connected, compact, and oriented manifold with boundary $\partial M$. The orientation is specified by a choice of generator $[M,\partial M] \in H_n(M,\partial M; \mathbb{Z})$ also called a \textbf{fundamental class}. An \textbf{oriented smooth triangulation} of $M$ is a smooth triangulation $(K,f)$ of $M$ together with a partial order $\leq$ on $V(K)$ such that the the chain $\sum_{ \sigma \in NS_{\leq}(K)_n} \sigma$  of non-degenerate $n$-simplices of $S_{\leq}(K)$ is closed, and its class $[K,\partial K]$
\[
\begin{array}{rcl}
  f_* \colon H_n(K,\partial K;\mathbb{Z})  & \xrightarrow{\cong} & H_n(M,\partial M; \mathbb{Z}) \\
    {[}K,\partial K{]} & \mapsto & {[}M,\partial M{]}
\end{array}
\]
is mapped to the fundamental class under the pushforward by $f$.
\end{definition}

\begin{example} \label{Example: Subdivision}
    Consider $M = |\Delta^k|$ the smooth compact $k$-simplex as a manifold with boundary and corners embedded in $\mathbb{R}^k$. Performing barycentric subdivision of $|\Delta^k|$ defines a simplicial complex $K$ together with a homeomorphism $f \colon |K| \rightarrow |\Delta^k|$ which exhibits the pair $(K,f)$ as a smooth triangulation of $|\Delta^k|$. Choosing a partial order $\leq$ on the set of vertices in $K$ compatible with the standard orientation of $|\Delta^k|$ gives an oriented simplicial complex $(K,\leq)$ and in particular an oriented triangulation. The following diagram shows such a partial order in the case $k = 2$.
\begin{center}
   \begin{tikzpicture}[x=0.75pt,y=0.75pt,yscale=-1,xscale=1]

\draw    (196,56) -- (152,136) ;
\draw [shift={(152,136)}, rotate = 118.81] [color={rgb, 255:red, 0; green, 0; blue, 0 }  ][fill={rgb, 255:red, 0; green, 0; blue, 0 }  ][line width=0.75]      (0, 0) circle [x radius= 2.01, y radius= 2.01]   ;
\draw [shift={(172.27,99.15)}, rotate = 298.81] [color={rgb, 255:red, 0; green, 0; blue, 0 }  ][line width=0.75]    (6.56,-2.94) .. controls (4.17,-1.38) and (1.99,-0.4) .. (0,0) .. controls (1.99,0.4) and (4.17,1.38) .. (6.56,2.94)   ;
\draw [shift={(196,56)}, rotate = 118.81] [color={rgb, 255:red, 0; green, 0; blue, 0 }  ][fill={rgb, 255:red, 0; green, 0; blue, 0 }  ][line width=0.75]      (0, 0) circle [x radius= 2.01, y radius= 2.01]   ;
\draw    (196,160) -- (108,212) ;
\draw [shift={(108,212)}, rotate = 149.42] [color={rgb, 255:red, 0; green, 0; blue, 0 }  ][fill={rgb, 255:red, 0; green, 0; blue, 0 }  ][line width=0.75]      (0, 0) circle [x radius= 2.01, y radius= 2.01]   ;
\draw [shift={(148.9,187.83)}, rotate = 329.42] [color={rgb, 255:red, 0; green, 0; blue, 0 }  ][line width=0.75]    (6.56,-2.94) .. controls (4.17,-1.38) and (1.99,-0.4) .. (0,0) .. controls (1.99,0.4) and (4.17,1.38) .. (6.56,2.94)   ;
\draw [shift={(196,160)}, rotate = 149.42] [color={rgb, 255:red, 0; green, 0; blue, 0 }  ][fill={rgb, 255:red, 0; green, 0; blue, 0 }  ][line width=0.75]      (0, 0) circle [x radius= 2.01, y radius= 2.01]   ;
\draw    (152,136) -- (108,212) ;
\draw [shift={(108,212)}, rotate = 120.07] [color={rgb, 255:red, 0; green, 0; blue, 0 }  ][fill={rgb, 255:red, 0; green, 0; blue, 0 }  ][line width=0.75]      (0, 0) circle [x radius= 2.01, y radius= 2.01]   ;
\draw [shift={(128.2,177.12)}, rotate = 300.07] [color={rgb, 255:red, 0; green, 0; blue, 0 }  ][line width=0.75]    (6.56,-2.94) .. controls (4.17,-1.38) and (1.99,-0.4) .. (0,0) .. controls (1.99,0.4) and (4.17,1.38) .. (6.56,2.94)   ;
\draw [shift={(152,136)}, rotate = 120.07] [color={rgb, 255:red, 0; green, 0; blue, 0 }  ][fill={rgb, 255:red, 0; green, 0; blue, 0 }  ][line width=0.75]      (0, 0) circle [x radius= 2.01, y radius= 2.01]   ;
\draw    (108,212) -- (196,212) ;
\draw [shift={(196,212)}, rotate = 0] [color={rgb, 255:red, 0; green, 0; blue, 0 }  ][fill={rgb, 255:red, 0; green, 0; blue, 0 }  ][line width=0.75]      (0, 0) circle [x radius= 2.01, y radius= 2.01]   ;
\draw [shift={(155.6,212)}, rotate = 180] [color={rgb, 255:red, 0; green, 0; blue, 0 }  ][line width=0.75]    (6.56,-2.94) .. controls (4.17,-1.38) and (1.99,-0.4) .. (0,0) .. controls (1.99,0.4) and (4.17,1.38) .. (6.56,2.94)   ;
\draw [shift={(108,212)}, rotate = 0] [color={rgb, 255:red, 0; green, 0; blue, 0 }  ][fill={rgb, 255:red, 0; green, 0; blue, 0 }  ][line width=0.75]      (0, 0) circle [x radius= 2.01, y radius= 2.01]   ;
\draw    (196,212) -- (284,212) ;
\draw [shift={(284,212)}, rotate = 0] [color={rgb, 255:red, 0; green, 0; blue, 0 }  ][fill={rgb, 255:red, 0; green, 0; blue, 0 }  ][line width=0.75]      (0, 0) circle [x radius= 2.01, y radius= 2.01]   ;
\draw [shift={(243.6,212)}, rotate = 180] [color={rgb, 255:red, 0; green, 0; blue, 0 }  ][line width=0.75]    (6.56,-2.94) .. controls (4.17,-1.38) and (1.99,-0.4) .. (0,0) .. controls (1.99,0.4) and (4.17,1.38) .. (6.56,2.94)   ;
\draw [shift={(196,212)}, rotate = 0] [color={rgb, 255:red, 0; green, 0; blue, 0 }  ][fill={rgb, 255:red, 0; green, 0; blue, 0 }  ][line width=0.75]      (0, 0) circle [x radius= 2.01, y radius= 2.01]   ;
\draw    (240,136) -- (196,56) ;
\draw [shift={(196,56)}, rotate = 241.19] [color={rgb, 255:red, 0; green, 0; blue, 0 }  ][fill={rgb, 255:red, 0; green, 0; blue, 0 }  ][line width=0.75]      (0, 0) circle [x radius= 2.01, y radius= 2.01]   ;
\draw [shift={(216.27,92.85)}, rotate = 61.19] [color={rgb, 255:red, 0; green, 0; blue, 0 }  ][line width=0.75]    (6.56,-2.94) .. controls (4.17,-1.38) and (1.99,-0.4) .. (0,0) .. controls (1.99,0.4) and (4.17,1.38) .. (6.56,2.94)   ;
\draw [shift={(240,136)}, rotate = 241.19] [color={rgb, 255:red, 0; green, 0; blue, 0 }  ][fill={rgb, 255:red, 0; green, 0; blue, 0 }  ][line width=0.75]      (0, 0) circle [x radius= 2.01, y radius= 2.01]   ;
\draw    (284,212) -- (240,136) ;
\draw [shift={(240,136)}, rotate = 239.93] [color={rgb, 255:red, 0; green, 0; blue, 0 }  ][fill={rgb, 255:red, 0; green, 0; blue, 0 }  ][line width=0.75]      (0, 0) circle [x radius= 2.01, y radius= 2.01]   ;
\draw [shift={(260.2,170.88)}, rotate = 59.93] [color={rgb, 255:red, 0; green, 0; blue, 0 }  ][line width=0.75]    (6.56,-2.94) .. controls (4.17,-1.38) and (1.99,-0.4) .. (0,0) .. controls (1.99,0.4) and (4.17,1.38) .. (6.56,2.94)   ;
\draw [shift={(284,212)}, rotate = 239.93] [color={rgb, 255:red, 0; green, 0; blue, 0 }  ][fill={rgb, 255:red, 0; green, 0; blue, 0 }  ][line width=0.75]      (0, 0) circle [x radius= 2.01, y radius= 2.01]   ;
\draw    (196,160) -- (240,136) ;
\draw [shift={(240,136)}, rotate = 331.39] [color={rgb, 255:red, 0; green, 0; blue, 0 }  ][fill={rgb, 255:red, 0; green, 0; blue, 0 }  ][line width=0.75]      (0, 0) circle [x radius= 2.01, y radius= 2.01]   ;
\draw [shift={(221.16,146.28)}, rotate = 151.39] [color={rgb, 255:red, 0; green, 0; blue, 0 }  ][line width=0.75]    (6.56,-2.94) .. controls (4.17,-1.38) and (1.99,-0.4) .. (0,0) .. controls (1.99,0.4) and (4.17,1.38) .. (6.56,2.94)   ;
\draw [shift={(196,160)}, rotate = 331.39] [color={rgb, 255:red, 0; green, 0; blue, 0 }  ][fill={rgb, 255:red, 0; green, 0; blue, 0 }  ][line width=0.75]      (0, 0) circle [x radius= 2.01, y radius= 2.01]   ;
\draw    (196,160) -- (284,212) ;
\draw [shift={(284,212)}, rotate = 30.58] [color={rgb, 255:red, 0; green, 0; blue, 0 }  ][fill={rgb, 255:red, 0; green, 0; blue, 0 }  ][line width=0.75]      (0, 0) circle [x radius= 2.01, y radius= 2.01]   ;
\draw [shift={(243.1,187.83)}, rotate = 210.58] [color={rgb, 255:red, 0; green, 0; blue, 0 }  ][line width=0.75]    (6.56,-2.94) .. controls (4.17,-1.38) and (1.99,-0.4) .. (0,0) .. controls (1.99,0.4) and (4.17,1.38) .. (6.56,2.94)   ;
\draw [shift={(196,160)}, rotate = 30.58] [color={rgb, 255:red, 0; green, 0; blue, 0 }  ][fill={rgb, 255:red, 0; green, 0; blue, 0 }  ][line width=0.75]      (0, 0) circle [x radius= 2.01, y radius= 2.01]   ;
\draw    (196,160) -- (152,136) ;
\draw [shift={(152,136)}, rotate = 208.61] [color={rgb, 255:red, 0; green, 0; blue, 0 }  ][fill={rgb, 255:red, 0; green, 0; blue, 0 }  ][line width=0.75]      (0, 0) circle [x radius= 2.01, y radius= 2.01]   ;
\draw [shift={(170.84,146.28)}, rotate = 28.61] [color={rgb, 255:red, 0; green, 0; blue, 0 }  ][line width=0.75]    (6.56,-2.94) .. controls (4.17,-1.38) and (1.99,-0.4) .. (0,0) .. controls (1.99,0.4) and (4.17,1.38) .. (6.56,2.94)   ;
\draw [shift={(196,160)}, rotate = 208.61] [color={rgb, 255:red, 0; green, 0; blue, 0 }  ][fill={rgb, 255:red, 0; green, 0; blue, 0 }  ][line width=0.75]      (0, 0) circle [x radius= 2.01, y radius= 2.01]   ;
\draw    (196,160) -- (196,56) ;
\draw [shift={(196,56)}, rotate = 270] [color={rgb, 255:red, 0; green, 0; blue, 0 }  ][fill={rgb, 255:red, 0; green, 0; blue, 0 }  ][line width=0.75]      (0, 0) circle [x radius= 2.01, y radius= 2.01]   ;
\draw [shift={(196,104.4)}, rotate = 90] [color={rgb, 255:red, 0; green, 0; blue, 0 }  ][line width=0.75]    (6.56,-2.94) .. controls (4.17,-1.38) and (1.99,-0.4) .. (0,0) .. controls (1.99,0.4) and (4.17,1.38) .. (6.56,2.94)   ;
\draw [shift={(196,160)}, rotate = 270] [color={rgb, 255:red, 0; green, 0; blue, 0 }  ][fill={rgb, 255:red, 0; green, 0; blue, 0 }  ][line width=0.75]      (0, 0) circle [x radius= 2.01, y radius= 2.01]   ;
\draw    (196,160) -- (196,212) ;
\draw [shift={(196,212)}, rotate = 90] [color={rgb, 255:red, 0; green, 0; blue, 0 }  ][fill={rgb, 255:red, 0; green, 0; blue, 0 }  ][line width=0.75]      (0, 0) circle [x radius= 2.01, y radius= 2.01]   ;
\draw [shift={(196,189.6)}, rotate = 270] [color={rgb, 255:red, 0; green, 0; blue, 0 }  ][line width=0.75]    (6.56,-2.94) .. controls (4.17,-1.38) and (1.99,-0.4) .. (0,0) .. controls (1.99,0.4) and (4.17,1.38) .. (6.56,2.94)   ;
\draw [shift={(196,160)}, rotate = 90] [color={rgb, 255:red, 0; green, 0; blue, 0 }  ][fill={rgb, 255:red, 0; green, 0; blue, 0 }  ][line width=0.75]      (0, 0) circle [x radius= 2.01, y radius= 2.01]   ;

\draw (85,204.4) node [anchor=north west][inner sep=0.75pt]    {$v_{0}$};
\draw (293,202.4) node [anchor=north west][inner sep=0.75pt]    {$v_{1}$};
\draw (189,34.4) node [anchor=north west][inner sep=0.75pt]    {$v_{2}$};
\draw (242,118.4) node [anchor=north west][inner sep=0.75pt]    {$b_{0}$};
\draw (130,116.4) node [anchor=north west][inner sep=0.75pt]    {$b_{1}$};
\draw (189,216.4) node [anchor=north west][inner sep=0.75pt]    {$b_{2}$};
\draw (199,134.4) node [anchor=north west][inner sep=0.75pt]    {$b$};

\end{tikzpicture}
\end{center}
Observe that for general $k$ the ordering on the vertices can be taken to be the one induced from the barycentric subdivision operator $S(|\Delta^k|)$ considering the signs.
\end{example}

We conclude this appendix with the generalized Faà di Bruno's formula used in Chapter \ref{Chapter: Skeletal Diffeologies and Differential Characters}.

\begin{proposition}[\cite{Encinas-Masque}, Theorem 1] \label{Proposition: Faa di Bruno}
    Let $U \subset \mathbb{R}^n$, $V \subset \mathbb{R}^m$ be open subsets, and $U \xrightarrow{f} V \xrightarrow{g} \mathbb{R}$ be two smooth maps. Let $\beta = (\beta_1, ..., \beta_n) \in \mathbb{N}^n$ be a multi-index of order $|\beta| = \beta_1 + \cdots + \beta_n$. Let $x_0 \in U$ and write $y_0 = f(x_0)$. Then the following identity holds:
    \begin{equation}
                 \left.\frac{\partial^{|\beta|} \left( g \circ f \right)}{\partial x^{\beta}} \right\vert_{x_0} = \sum_{ |\sigma| = 1}^{|\beta|} \left.\frac{\partial^{|\sigma|} g }{\partial y^{\sigma} } \right\vert_{ y_0 } \sum_{E_{\sigma}} \prod_{i = 1}^{m}  \prod_{A_{\beta}} \frac{1}{e_{i\beta^i}!} \left( \left.\frac{\partial^{|\beta^i|} f_i }{\partial x^{\beta^i}} \right\vert_{x_0} \right)^{e_{i\beta^i}},
    \end{equation}
    where
    \begin{equation*}
        E_{\sigma} := \left\{  (e_{1\beta^1}, ..., e_{m\beta^m}) \, \bigg| \, e_{i \beta^i} \in \mathbb{N}, 1 \leq |\beta^i| \leq |\beta|, \sum_{|\beta^i| = 1}^{|\beta|} e_{i\beta^i} = \sigma_i, 1 \leq i \leq m  \right\}
    \end{equation*}
    and
    \begin{equation*}
        A_{\beta} = \left\{ (\beta^1, ..., \beta^m) \, \bigg| \, 1 \leq |\beta^i| \leq |\beta|, 1 \leq i \leq m, \sum_{i= 1}^m \sum_{|\beta^i| = 1}^{|\beta|} e_{i \beta^i} \beta^i  = \beta \right\}.
    \end{equation*}
\end{proposition}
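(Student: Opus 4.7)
The plan is to prove Proposition \ref{Proposition: Faa di Bruno} by induction on the total order $|\beta|$ of the multi-index. The base case $|\beta|=1$, i.e.\ $\beta = e_k$ for some coordinate $k$, is just the ordinary chain rule
\[
\frac{\partial (g\circ f)}{\partial x_k}(x_0) \;=\; \sum_{j=1}^m \frac{\partial g}{\partial y_j}(y_0)\,\frac{\partial f_j}{\partial x_k}(x_0),
\]
and one checks by inspection that the right-hand side of the general formula collapses to this expression: the outer sum forces $|\sigma|=1$, so $\sigma=e_j$; the constraint set $A_\beta$ then forces the single multi-index $\beta^j = e_k$ with $e_{je_k}=1$ and all other exponents zero; and the multinomial factor $1/e_{i\beta^i}!$ becomes unity, reproducing the chain rule summand by summand.

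For the inductive step, I would assume the formula holds for all multi-indices of order at most $N$, let $\beta$ have order $N+1$, and write $\beta = \beta' + e_k$ with $\beta'_k \ge 0$. Applying the induction hypothesis to $\beta'$ and then differentiating both sides once more with respect to $x_k$ gives $\partial^{|\beta|}(g\circ f)/\partial x^\beta$ at $x_0$ on the left. On the right, the operator $\partial/\partial x_k$ acts in two ways: via the chain rule on the outer factor $\partial^{|\sigma|} g/\partial y^\sigma$, which introduces a further sum over $j=1,\dots,m$ and increments $\sigma_j$ by one while producing a new inner factor $\partial f_j/\partial x_k$; and via Leibniz's rule on the inner product $\prod_{i,\beta^i} (\partial^{|\beta^i|} f_i/\partial x^{\beta^i})^{e_{i\beta^i}}$, which for each active pair $(i,\beta^i)$ either lowers $e_{i\beta^i}$ by one and inserts a factor $\partial^{|\beta^i+e_k|} f_i/\partial x^{\beta^i+e_k}$ (possibly merging with an existing factor of that type) or simply raises its multi-index.

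The main obstacle — and the bulk of the work — is the combinatorial bookkeeping showing that the two contributions above reassemble into precisely the sums over the redefined sets $E_{\tilde\sigma}$ and $A_\beta$ with the correct multinomial denominators $\prod 1/e_{i\beta^i}!$. The factorials must absorb both the Leibniz multiplicities (an $e_{i\beta^i}$ appearing when $\partial/\partial x_k$ selects one of the $e_{i\beta^i}$ identical factors) and the overcounting that occurs when the newly created factor happens to coincide with a pre-existing $\beta^i$. Tracking the constraints $\sum_{|\beta^i|=1}^{|\beta|} e_{i\beta^i} = \sigma_i$ and $\sum_i\sum_{\beta^i} e_{i\beta^i} \beta^i = \beta$ through this step is straightforward in principle but delicate, and it is here that most of the argument lives.

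A cleaner alternative I would pursue in parallel is to derive the identity from the equality of formal Taylor expansions: compare the coefficient of $\prod_i (x_i - x_{0,i})^{\beta_i}/\beta_i!$ in the Taylor series of $g\circ f$ around $x_0$ with the result of substituting the Taylor expansion of $f$ around $x_0$ into that of $g$ around $y_0$. The multi-index $\sigma$ records which term $(y-y_0)^\sigma/\sigma!$ is used from $g$; the multi-indices $\beta^i$ together with their multiplicities $e_{i\beta^i}$ specify which monomials from the expansion of $f_i$ are multiplied together to yield $\prod(x-x_0)^\beta$; and the factors $1/e_{i\beta^i}!$ arise naturally as the multinomial coefficient counting the number of orderings that produce the same unordered selection. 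This reduces the proof to a direct monomial-coefficient comparison and sidesteps most of the differentiation bookkeeping of the inductive approach.
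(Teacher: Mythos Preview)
The paper does not prove this proposition at all: it is simply quoted from \cite{Encinas-Masque} as an imported result and used as a black box in the proof of Lemma~\ref{Lemma: The pointwise form is well defined}. So there is no ``paper's own proof'' to compare against.

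Your two proposed approaches are both standard and correct in outline. The Taylor-expansion route you sketch second is essentially how the cited reference \cite{Encinas-Masque} proceeds, and it is indeed cleaner than the inductive approach for exactly the reason you identify: the factorials $1/e_{i\beta^i}!$ emerge naturally as multinomial overcounting corrections rather than having to be tracked through a Leibniz differentiation. Your inductive approach would also work, but as you correctly note, the bookkeeping in the inductive step is delicate and is where most of the effort would go. Either route would constitute a valid proof; since the thesis only needs the statement (specifically, the structural fact that the right-hand side is a sum of products of partial derivatives of $f$ and $g$ with no other $j$-dependence, as exploited in equation~\eqref{Equation: The Chain rule equation}), citing the result as the author does is entirely appropriate.
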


\chapter{Simplicial Loop Groups}

This appendix aims to introduce the simplicial loop group $\mathbb{G}(K,x_0)$ provided some connected, pointed simplicial set $(K,x_0)$. Further, we show how the simplicial loop group relates to the simplicial classifying space $\overline{W}G$ and how we can use these simplicial techniques to study diffeological principal bundles via the extended smooth singular complex. The main references are Goerss--Jardine \cite{GoerssJardine} for the simplicial techniques and Kihara \cite{Kihara22} for the study of diffeological bundles in terms of their smooth singular complexes.

\section{Principal Fibrations}

The extended smooth singular complex $S_e$ allows us to translate the smooth homotopy theory of diffeological spaces to the homotopy theory of simplicial sets. It is therefore natural to ask how diffeological bundles are related to their simplicial counterparts which we call \textit{principal fibrations}. More precisely, given a smooth map $\pi \colon P \rightarrow X$ of diffeological spaces and considering its associated simplicial map $S_e(\pi) \colon S_e(P) \rightarrow S_e(X)$ are we able to relate the condition of $\pi$ being a diffeological principal bundle to the condition of $S_e(\pi)$ being a principal fibration? \\

Luckily we can give an affirmative answer from the following recent characterization theorem for diffeological principal bundles.

\begin{theorem}[\cite{Kihara22}] \label{Theorem: Diffeological bundles are principal fibrations}
    Given a diffeological group $G$ together with a smooth map $\pi \colon P \rightarrow X$ where $P$ is endowed with a fiber-preserving right $G$-action, then $\pi \colon P \to X$ is a diffeological principal $G$-bundle if and only if
    \[
    S_e(\pi) \colon S_e(P) \rightarrow S_e(X)
    \]
    is a principal $S_e(G)$-fibration.
\end{theorem}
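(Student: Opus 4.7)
The theorem splits into two implications, which I will treat separately; the forward direction is essentially formal, while the reverse direction requires reconstructing smooth structure from simplicial data.

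For the forward direction, assume $\pi \colon P \to X$ is a diffeological principal $G$-bundle. Since $S_e$ is right adjoint to the extended realization functor $|-|_e$ (Definition \ref{Definition: Extended Smooth Singular Complex}), it preserves all small limits, in particular fiber products and binary products. Hence the shear diffeomorphism $P \times G \xrightarrow{\cong} P \times_X P$ is sent to an isomorphism $S_e(P) \times S_e(G) \xrightarrow{\cong} S_e(P) \times_{S_e(X)} S_e(P)$. The substantive content is therefore to verify that $S_e(\pi)$ is a Kan fibration. Given a horn-filling problem $\Lambda^n_k \hookrightarrow \Delta^n$ against $S_e(\pi)$, the bottom edge corresponds to a plot $\sigma \colon \mathbb{A}^n \to X$. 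Pulling $\pi$ back along $\sigma$ yields a diffeological principal $G$-bundle over $\mathbb{A}^n$, which, since $\mathbb{A}^n$ is a Cartesian space, is globally trivial by the characterization of diffeological bundles in terms of global plots. The lifting problem thus reduces to one against the projection $\mathbb{A}^n \times G \to \mathbb{A}^n$; applying $S_e$ and using preservation of products, this becomes $\mathrm{pr}_1 \colon S_e(\mathbb{A}^n) \times S_e(G) \to S_e(\mathbb{A}^n)$, which is a Kan fibration because $S_e(G)$ is a Kan complex—being the extended smooth singular complex of a diffeological group, via the analogue of \cite[Proposition 4.30]{Christensen-Wu} at the $S_e$ level.

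For the backward direction, assume $S_e(\pi)$ is a principal $S_e(G)$-fibration. The diffeological shear map is automatically smooth from the action, and is a bijection on underlying sets since its image under $S_e$ is an isomorphism already in simplicial degree zero. To promote it to a diffeomorphism I would check plot-wise smoothness of its inverse: a plot of $P \times_X P$ from a Cartesian space $U \cong \mathbb{R}^n$ is exactly an $n$-simplex of $S_e(P \times_X P) \cong S_e(P) \times_{S_e(X)} S_e(P)$, and the simplicial inverse shear sends such a simplex to one of $S_e(P) \times S_e(G)$, which is precisely a plot of $P \times G$. Since every diffeological space, viewed as a concrete sheaf on $\mathbf{Cart}$ (Proposition \ref{Proposition: Diffeological Spaces are Concrete Sheaves on Cart}), is determined by its plots from Cartesian spaces, this yields smoothness of the inverse shear globally. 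For the subduction condition on $\pi$, I would unwind the Kan fibration property on vertex-lifting horns, combined with the surjectivity built into the principal fibration hypothesis, to produce a local lift of any plot $p \colon U \to X$; refining by a good open cover of $U$ by Cartesian opens then supplies the local sections required for subductivity.

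The principal obstacle is to translate combinatorial lifting information, formulated one horn at a time against affine simplices $\mathbb{A}^n$, into honest local trivializations over arbitrary Cartesian subdomains of $X$. Two structural facts already present in the text underpin the resolution: first, each $\mathbb{A}^n$ is diffeomorphic to $\mathbb{R}^n$, so that $n$-simplices of $S_e(X)$ are precisely plots $\mathbb{R}^n \to X$; and second, the sheaf property of a diffeology ensures that the system of such plots, with their restrictions under smooth maps of Cartesian spaces, both determines the diffeology and permits gluing of local lifts into global smooth structure. The technical heart of the argument is to verify that the horn lifts constructed simplicially can be smoothly interpolated—using cut-off functions to reconcile parametrizations on boundaries of horns with those on full simplices—into genuine local smooth sections; once this is in place, the principal bundle axioms follow from the shear isomorphism already established.
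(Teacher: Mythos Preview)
The paper does not prove this theorem; it is stated as a citation to \cite[Theorem 1.3]{Kihara22}, so there is no in-text proof to compare your proposal against.

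For what it is worth, your forward direction is correct and standard. In the backward direction your shear argument is fine, but the subduction argument and the anticipated ``principal obstacle'' are both overcomplicated: horn-lifting is not needed here at all. Condition (3) of Definition \ref{Definition: principal G-fibration} says $S_e(P)/S_e(G) \to S_e(X)$ is an isomorphism, so in each degree $n$ every smooth $\mathbb{A}^n \to X$ lifts to $P$; since any Cartesian space is diffeomorphic to some $\mathbb{A}^n$, every plot of $X$ lifts globally to $P$, which gives subduction directly. No cut-off interpolation is required, because the $n$-simplices of $S_e$ already \emph{are} the smooth plots from $\mathbb{R}^n$.
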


Let us start first by introducing the term \textit{principal} $G$\textit{-fibration} given some simplicial group $G$.
\newpage

\begin{definition} \label{Definition: principal G-fibration}
    Let $G$ be a simplicial group. A \textbf{principal} $G$\textbf{-fibration} $f \colon  E \rightarrow B$ is a Kan fibration between simplicial $G$-spaces such that
    \begin{enumerate}[label={(\arabic*)}]
        \item the simplicial base $B$ carries the trivial $G$ action,
        \item $E$ is a level-wise free $G$-space, i.e. for all $n \in \mathbb{N}$ the set $E_n$ is a free $G_n$-set.
        \item the induced map $E/G \rightarrow B$ is an isomorphism.
    \end{enumerate}
\end{definition}

\begin{remark}
    For the notion of \textit{Kan fibration} see Definition \ref{Definition: Kan fibration}. Also, notice that the third condition implies that a principal $G$-fibration $f \colon E \rightarrow B$ is isomorphic to a quotient map $q \colon X \rightarrow X/G$ for some level-wise free $G$-space $X$.
\end{remark}

Given some fixed simplicial group $G$ and two principal $G$-fibrations $f_1 \colon E_1 \to B$ and $f_2 \colon E_2 \to B$ we say that $f_1$ is isomorphic to $f_2$ if there exists an isomorphism of simplicial $G$-spaces $g \colon E_1 \xrightarrow{\cong} E_2$ such that the following diagram commutes.
\begin{center}
    \begin{tikzcd}
E_1 \arrow[rd, "f_1"'] \arrow[rr, "g"] &   & E_2 \arrow[ld, "f_2"] \\
                                       & B &
\end{tikzcd}
\end{center}
The category with objects given by principal $G$-fibrations over $B$ and morphisms given by isomorphisms of principal $G$-fibrations is denoted by $\mathbf{PF}_G(B)$. The set of isomorphism classes of principal $G$-fibrations over $B$ is denoted by $\pi_0\mathbf{PF}_G(B)$.  \\

To introduce classifying spaces, recall the notion of a \textit{universal principal fibration} for a fixed simplicial group $G$.

\begin{definition} \label{Definition: Universal Principal Fibration}
    Let $G$ be a simplicial group and $\pi \colon E \rightarrow L$ a principal $G$-fibration. We call $\pi$ a \textbf{universal principal} $G$\textbf{-fibration} if $L$ is a Kan complex and if the natural map
    \[
    [K,L] \rightarrow \pi_0\mathbf{PF}_G(K), \hspace{3mm} [f : K \rightarrow L] \mapsto [f^*E \rightarrow K]
    \]
    is bijective.
\end{definition}

\begin{lemma}[\cite{Kihara22}, Lemma 6.1] \label{Lemma: Characterization of universal fibrations}
    Given a simplicial group $H$ and a principal $H$-fibration $\pi : E \rightarrow L$ the following are equivalent:
    \begin{enumerate}
        \item $\pi : E \rightarrow L$ is universal.
        \item $L$ is a Kan complex and $E$ is weakly contractible.
        \item $E$ is a contractible Kan complex.
    \end{enumerate}
\end{lemma}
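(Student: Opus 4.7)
The plan is to prove the three equivalences via $(3) \Leftrightarrow (2)$ and $(2) \Leftrightarrow (1)$, with the bulk of the work in the second biconditional. The equivalence $(2) \Leftrightarrow (3)$ is essentially formal. For $(3) \Rightarrow (2)$, contractibility clearly implies weak contractibility, and $L \cong E/H$ inherits the Kan condition: since $H$ is a simplicial group (hence Kan) acting freely, any horn $\Lambda^k[n] \to L$ lifts to a horn in $E$ (using freeness of the action on simplices), which admits a filler in $E$; projecting this filler gives a filler in $L$. For $(2) \Rightarrow (3)$, the Kan condition on $L$ combined with the right lifting property of the Kan fibration $\pi$ against horn inclusions shows that $E$ is itself Kan (fill the projected horn in $L$, then lift along $\pi$). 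A weakly contractible Kan complex is contractible, so $E$ is contractible.

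For $(3) \Rightarrow (1)$, the plan is to show that pullback along $\pi$ induces a bijection $[K, L] \to \pi_0 \mathbf{PF}_H(K)$ for every simplicial set $K$. Given a principal $H$-fibration $q \colon P \to K$, one constructs an $H$-equivariant map $\tilde{f} \colon P \to E$ inductively over the skeleta of $K$. At each stage the extension problem amounts to filling a horn in $E$ subject to $H$-equivariance, which is solvable because the free $H$-action on $P$ reduces the problem to choosing a single orbit representative and the contractibility of $E$ (together with its being Kan) guarantees the required fillers. The map $\tilde{f}$ descends to $f \colon K \to L$ with $f^*E \cong P$, giving surjectivity. Injectivity is proved by the same inductive technique applied to $P \times \Delta[1]$: two maps $K \to L$ inducing isomorphic pullbacks lift to equivariant maps into $E$ that admit an equivariant homotopy, which descends to a homotopy in $L$. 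For the converse $(1) \Rightarrow (3)$, the standard classifying fibration $WH \to \overline{W}H$ is itself a universal principal $H$-fibration with $WH$ contractible Kan and $\overline{W}H$ Kan, so both $\pi \colon E \to L$ and the standard one represent the functor $\pi_0 \mathbf{PF}_H(-)$; applying Yoneda in the homotopy category of Kan complexes yields a weak equivalence $L \simeq \overline{W}H$, and the induced equivalence of pullback fibrations forces $E \simeq WH$, so $E$ is weakly contractible. Combined with $(2) \Rightarrow (3)$, $E$ is a contractible Kan complex.

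The main obstacle will be the inductive construction of the $H$-equivariant classifying map $\tilde{f} \colon P \to E$ in $(3) \Rightarrow (1)$. At each skeletal stage the lifting problem must be solved in a manner compatible with the $H$-action on both sides, and while the freeness of the action on $P$ reduces this to a choice of fillers per orbit, one must check that the fillers provided by contractibility of $E$ can be assembled into a globally coherent $H$-equivariant map. This coherence is essentially where the combination of $E$ being contractible (for existence of fillers) and Kan (for fibrancy) is indispensable, and setting up the simplicial obstruction argument so that both conditions are used in the right place is the delicate part.
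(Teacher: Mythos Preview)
The paper does not supply its own proof of this lemma; it is merely cited from \cite{Kihara22}. Your proposal is correct and follows the classical line of argument (as in May's \emph{Simplicial Objects in Algebraic Topology} or Goerss--Jardine, Chapter~V). Two small remarks: for $(3)\Rightarrow(2)$, the cleanest way to lift a horn $\Lambda^k[n]\to L$ to $E$ is to note that the inclusion of any vertex $\ast\hookrightarrow\Lambda^k[n]$ is an acyclic cofibration, so lifting a single vertex and then applying the right lifting property of $\pi$ gives the horn lift---your appeal to freeness of the action works but is less direct. For $(1)\Rightarrow(3)$, your comparison with $WH\to\overline{W}H$ is fine, but note that this presupposes the contractibility of $WH$ (and the universality of that fibration) as an independent input; this is indeed established directly in the literature without invoking the present lemma, so there is no circularity, but it is worth flagging the dependency.
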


Commonly universal principal $G$-fibrations are denoted by $EG \rightarrow BG$, however in the case of simplicial sets, there is a standard construction for a universal principal fibration called the $W$-construction. \\

\begin{construction} \label{Construction: W-construction}
    Fix a simplicial group $G$ and define the total space $WG$ to be a simplicial set with
    \[
    WG_n := G_n \times G_{n-1} \times \cdots \times G_0,
    \]
    where the face and degeneracy maps are given by
\begin{align*}
d_i(g_n, g_{n-1}, \cdots ,g_0) &=
\begin{cases}
\left(d_ig_n, d_{i-1}g_{n-1}, ... , (d_0g_{n-i})g_{n-i-1}, g_{n-i-2}, ... , g_0 \right) &\text{ for } i < n \\
\left( d_n g_n, d_{n-1}g_{n-1}, ..., d_1g_1\right) &\text{ for } i = n
\end{cases} \\
s_i(g_n, g_{n-1}, ..., g_0) &= (s_ig_n, s_{i-1}g_{n-1}, ..., s_0g_{n-i}, e, g_{n-i-1}, ..., g_0).
\end{align*}
The space $WG$ carries a natural $G$-action via multiplication in the first variable, i.e. level-wise given by
\begin{align*}
    G_n \times WG_n &\longrightarrow WG_n \\
    \left( h ,(g_n,g_{n-1}, ..., g_0) \right) &\mapsto (hg_n, g_{n-1}, ..., g_0).
\end{align*}
Denote by $\overline{W}G:= WG/G$ the quotient space of $WG$ by the $G$-action and write
\[
q \colon WG \to \overline{W}G
\]
for the quotient map. The space $\overline{W}G$ is the \textit{standard simplicial classifying complex} for the simplicial group $G$. Unwinding the effect of the quotient, it is presented as a simplicial set by:

\begin{align*}
        &(\overline{W}G)_0 = * \\
        &(\overline{W}G)_n = G_{n-1} \times G_{n-2} \times \cdots \times G_0
    \end{align*}
    with face and degeneracy maps given by:
    \begin{align*}
    d_i(g_{n-1}, ..., g_0) &=
    \begin{cases}
    (g_{n-2}, ..., g_0) \hspace{10mm} &\text{ if } i = 0 \\
    \left( d_{i-1}g_{n-1}, ..., d_1g_{n-i +1}, g_{n-i-1} (d_0g_{n-i}), g_{n-i-2}, ..., g_0 \right) \hspace{3mm} &\text{ if } 0 < i < n \\
    \left( d_{n-1}g_{n-1}, ..., d_1g_1 \right) &\text{ if } i = n
    \end{cases} \\
    s_i(g_{n-1}, ..., g_0) &=
    \begin{cases}
    (1,g_{n-1}, ..., g_0) \hspace{10mm} &\text{ if } i = 0 \\
    \left( s_{i-1}g_{n-1}, ..., s_0g_{n-i}, 1 ,g_{n-i-1},, g_{n-i-2}, ..., g_0 \right) \hspace{3mm} &\text{ if } 1 \leq i \leq n.
    \end{cases}
\end{align*}
\end{construction}

The $W$-construction determines a functor from the category of simplicial groups to simplicial sets

\[
\overline{W} \colon \mathbf{sGrp} \rightarrow \mathbf{sSet}.
\]
To introduce its left adjoint, the \textit{simplicial loop groupoid} functor $\cat{G}$, a generalized version of the $W$-construction is needed, which associates to every \textit{simplicial groupoid} $\cat{G}$ a classifying space $\overline{W}\cat{G}$. This extends the previously defined $W$-construction along the subcategory inclusion of simplicial groups into simplicial groupoids.

\begin{definition}
    A \textbf{simplicial groupoid} is a simplicial object in the category of groupoids whose simplicial set of objects is discrete. That is, a simplicial groupoid is given by the following data: \\

        For every $n \geq 0$ a small groupoid $\cat{G}_n$ together with functors $\theta^* : \cat{G}_m \rightarrow \cat{G}_n$ for every map of simplices $[n] \rightarrow [m]$ satisfying the simplicial identities. Moreover, we have that $\mathrm{ob}(\cat{G}_n) = \mathrm{ob}(\cat{G}_0)$, i.e. the set of objects is constant in the simplicial direction and the functors $\theta^*$ induce the identity on the set of objects. \\

        Given $x,y$ two objects in the simplicial groupoid $\cat{G}$ denote by $\cat{G}(x,y)$ the simplicial set of morphisms between $x$ and $y$ whose set of n-simplices is given by
        \[
        \cat{G}(x,y)_n = \cat{G}_n(x,y)
        \]
\end{definition}
\begin{remark}
    Notice that alternatively, we could define a simplicial groupoid as a groupoid $\cat{G}$ internal to simplicial sets with a discrete simplicial set of objects, i.e. $\cat{G}_1 \rightrightarrows \cat{G}_0$ a simplicial set of morphisms $\cat{G}_1$ and a constant simplicial set of objects $\cat{G}_0$ together with the simplicial source and target maps.
\end{remark}

Given an ordinary groupoid $H$ write $\pi_0H$ for the set of \textbf{path components} of $H$. More precisely, we define:
\[
\pi_0 H = \mathrm{Ob}(H)/ \sim
\]
where we declare two objects $x$ and $y$ to be equivalent iff there is an morphism $x \rightarrow y$ in $H$. \\

In the case of a simplicial groupoid, we have seen that $\cat{G}_n$ all have the same object set, hence we conclude that the simplicial structure functors $\theta^*$ induce isomorphisms
\[
\pi_0 \cat{G}_n \cong \pi_0 \cat{G}_m
\]
hence we define the set of path components of the simplicial groupoid $\cat{G}$ to be:
\[
\pi_0 \cat{G} := \pi_0 \cat{G}_0
\]

As already mentioned, the classifying space functor assigning to a simplicial groupoid $\cat{G}$ the simplicial set $\overline{W}\cat{G}$ admits a left adjoint provided by the Dwyer--Kan loop groupoid to be introduced shortly. However there is more, that is, the classifying space functor $\overline{W}$ will be part of a Quillen equivalence of model categories. For this reason, we briefly recall the Dwyer--Kan model structure of simplicial groupoids.

\begin{definition}
    A map of simplicial groupoids $f : \cat{G} \rightarrow \cat{H}$ is said to be a \textbf{weak equivalence of simplicial groupoids} if
    \begin{itemize}
        \item $f$ induces an isomorphism on the path components: $\pi_0 \cat{G} \cong \pi_0 \cat{H}$, and
        \item each induced map $f : \cat{G}(x,x) \rightarrow \cat{H}(f(x),f(x))$ is a weak equivalence of simplicial sets.
    \end{itemize}
\end{definition}

The following example introduces an important class of weak equivalences of simplicial groupoids.

\begin{example} \label{Example: Retraction of simplicial groupoid}
    Let $\cat{G}$ be a simplicial groupoid. Let $[x] \in \pi_0 \cat{G}$ be a path component and consider $x \in [x]$ a representative. Then the morphism
    \[
    i : \coprod_{[x] \in \pi_0 \cat{G}} \cat{G}(x,x) \rightarrow \cat{G}
    \]
    is by definition a weak equivalence of simplicial groupoids. In particular, the left-hand side is a deformation retract of $\cat{G}$. Indeed, for each $y \in [x]$ and each $[x] \in \pi_0 \cat{G}$ pick morphisms
    \[
    \omega_y : y \rightarrow x \in \cat{G}_0(y,x)
    \]
    such that $\omega_x = \mathrm{id}_x$ for all the fixed choices of representatives $x \in [x]$. Then we define the map
    \[
    r : \cat{G} \rightarrow \coprod_{[x] \in \pi_0 \cat{G}} \cat{G}(x,x),
    \]
    which is given by conjugation by the paths $\omega_y$. More precisely, given $y,z \in [x]$ the map $r$ restricts to
    \begin{align*}
    r : \cat{G}(y,z) \rightarrow \cat{G}(x,x) \\
    \alpha \mapsto \omega_z \alpha \omega^{-1}_y,
    \end{align*}
    which gives the desired retraction.
\end{example}

Further, a map $f \colon \cat{G} \rightarrow \cat{H}$ of simplicial groupoids is said to be a \textbf{fibration of simplicial groupoids} if:
\begin{enumerate}[label={(\arabic*)}]
    \item The morphism $f$ satisfies the \textit{path lifting property}. That is, for every object $x \in \cat{G}$ and morphism $\varphi \colon f(x) \rightarrow y$ of the simplicial groupoid $\cat{H}$, there is a morphism $\hat{\varphi} \colon x \rightarrow z$ in $\cat{G}$ such that $f(\hat{\varphi}) = \varphi$.
    \item For every object $x$ of $\cat{G}$ the induced map of simplicial sets $f \colon \cat{G}(x,x) \rightarrow \cat{H}(f(x),f(x))$ is a Kan fibration.
\end{enumerate}
Having specified weak equivalences and fibrations of simplicial groupoids, we simply define the \textbf{cofibrations of simplicial groupoids} to be morphisms $i \colon \cat{G} \rightarrow \cat{H}$ which satisfy the left lifting property with respect to the trivial fibrations.
\begin{theorem}[\cite{GoerssJardine}, Theorem 7.6] \label{Theorem: Model Structure on simplicial Groupoids}
    The category of simplicial groupoids $sGpd$ together with the definitions of weak equivalence, fibrations, and cofibrations satisfy the axioms for a closed model category.
\end{theorem}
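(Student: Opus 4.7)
The plan is to prove this in three phases: first establish completeness/cocompleteness and the closure/2-of-3 properties, then produce the factorizations via the small object argument, and finally verify the remaining lifting property. The category $\mathbf{sGpd}$ of simplicial groupoids, as groupoid objects in simplicial sets with a discrete object set, inherits all small limits and colimits from the underlying categories (limits pointwise, colimits via a standard groupoid-enrichment argument), so axiom M1 is immediate. For the retract axiom, weak equivalences are closed under retracts because isomorphisms of sets and weak equivalences of simplicial sets both are, and fibrations/cofibrations will be defined to satisfy lifting properties, which are automatically stable under retracts. The 2-of-3 property for weak equivalences I would reduce to the known 2-of-3 property of Kan--Quillen weak equivalences: if $f\colon\cat{G}\to\cat{H}$ induces the required isomorphism on $\pi_0$ and on each endomorphism hom-space, then for $g,h$ composable one may use the deformation retraction of Example~\ref{Example: Retraction of simplicial groupoid} to replace each simplicial groupoid by the disjoint union $\coprod_{[x]\in\pi_0\cat{G}} \cat{G}(x,x)$ of simplicial groups, so that the claim follows componentwise from Theorem~\ref{Theorem: model structure simplicial groups}.

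For the factorization axioms I would exhibit explicit sets of generating cofibrations and generating trivial cofibrations and apply Quillen's small object argument. The generating cofibrations are the morphisms of simplicial groupoids obtained by freely adjoining a simplicial set boundary inclusion $\partial\Delta[n]\hookrightarrow \Delta[n]$ between two named objects $x,y$, together with the inclusion $\emptyset \hookrightarrow \{*\}$ that freely adds an object; the generating trivial cofibrations are built analogously from the horn inclusions $\Lambda^n_k \hookrightarrow \Delta[n]$, together with the inclusion $\{*\} \hookrightarrow I$ of one object into the contractible groupoid $I$ with two isomorphic objects (this last generator is what encodes the path lifting property). Smallness of the sources is routine because simplicial groupoids with a finite object set are finitely presentable in each simplicial degree, so the small object argument produces the required functorial factorizations $(\mathrm{cof},\mathrm{triv.fib})$ and $(\mathrm{triv.cof},\mathrm{fib})$.

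The remaining task, and the principal obstacle, is the lifting axiom M4: namely that every trivial cofibration has the left lifting property against every fibration, equivalently that a cofibration is a weak equivalence if and only if it is built (up to retract) from the generating trivial cofibrations. One inclusion is essentially formal: a direct check shows that each generating trivial cofibration is a weak equivalence (the horn case reduces to the Kan--Quillen case after restricting to a single endomorphism simplicial set, and $\{*\}\hookrightarrow I$ is clearly a weak equivalence), and weak equivalences are closed under pushouts along cofibrations of simplicial groupoids as well as under transfinite composition, using the same componentwise reduction together with the fact that the free simplicial group functor is left Quillen (Theorem~\ref{Theorem: model structure simplicial groups}). The converse direction is the delicate step. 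I would prove it by the standard small object retract trick: given a trivial cofibration $i\colon \cat{G}\to\cat{H}$, factor it as $i = p\circ j$ with $j$ in the saturation of the generating trivial cofibrations and $p$ a fibration; then $p$ is a trivial fibration by 2-of-3 already verified, so it admits a section, exhibiting $i$ as a retract of $j$. Packaging these ingredients gives the lifting axioms and completes the verification.
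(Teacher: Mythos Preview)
The paper does not prove this theorem; it simply records it with a citation to Goerss--Jardine and moves on. So your proposal is not competing against any argument in the thesis, and the relevant question is whether it stands on its own.

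The overall architecture is reasonable and is close to how one would set up a cofibrantly generated structure, but there is a genuine gap at the crucial step. You assert that weak equivalences are closed under pushouts along cofibrations of simplicial groupoids, justifying this by ``the same componentwise reduction together with the fact that the free simplicial group functor is left Quillen.'' This is precisely the step that carries all the content, and the justification does not work as stated. Pushouts in $\mathbf{sGpd}$ are computed as free products with amalgamation at the level of morphism simplicial sets, and the effect on the endomorphism simplicial group $\cat{G}(x,x)$ of attaching a horn between two possibly distinct objects is not captured by any single pushout of simplicial groups. The deformation retraction of Example~\ref{Example: Retraction of simplicial groupoid} depends on non-canonical choices of paths and is not natural, so it does not commute with colimits; you cannot simply pass to $\coprod_{[x]}\cat{G}(x,x)$ and invoke Theorem~\ref{Theorem: model structure simplicial groups}. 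In the Goerss--Jardine proof this is exactly where the real work happens: one analyses the pushout explicitly (using the description of free products of simplicial groups and a filtration argument) to see that attaching a horn or an interval induces weak equivalences on all hom-simplicial-sets. Without that analysis, or an alternative such as a transfer argument along the $\cat{G}\dashv\overline{W}$ adjunction together with a verification of the acyclicity condition, your step (b) is unproven and the retract trick in the final paragraph cannot be executed.

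A secondary issue: you implicitly assume that $I$-injectives coincide with trivial fibrations in the sense defined (fibration plus weak equivalence), but your generating cofibrations only probe endomorphism hom-sets and object-surjectivity, whereas the definition of fibration involves path lifting and the weak equivalence condition involves all $\cat{G}(x,x)\to\cat{H}(f(x),f(x))$. Reconciling these requires an argument that surjectivity on objects together with trivial Kan fibrations on endomorphism spaces forces the full fibration and weak equivalence conditions; this is true but not automatic.
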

\begin{remark}
    Notice that in this model structure, all simplicial groupoids are fibrant since for $\cat{G}$ a simplicial groupoid, the space of morphisms $\cat{G}(x,x)$ for any object $x \in \cat{G}$ has the structure of a simplicial group, which is indeed a Kan complex.
\end{remark}
\newpage

\begin{theorem}[\cite{GoerssJardine}, Theorem 7.8] \label{Theorem: Classifying space adjunction} $ $
    \begin{enumerate}[label={(\arabic*)}]
        \item The functor $\cat{G} : \mathbf{sSet} \rightarrow \mathbf{sGpd}$ preserves cofibrations and weak equivalences.
        \item The functor $\overline{W} : \mathbf{sGpd} \rightarrow \mathbf{sSet}$ preserves fibrations and weak equivalences.
        \item The functors $\cat{G}$ and $\overline{W}$ form an adjoint pair, that is for all $X \in \mathbf{sSet}$ and $\cat{H} \in \mathbf{sGpd}$ we have natural isomorphisms:
        \[
        \mathrm{Hom}_{\mathbf{sGpd}}(\cat{G}(X),\cat{H}) \cong \mathrm{Hom}_{\mathbf{sSet}}(X, \overline{W}\cat{H})
        \]
        \item A map $K \rightarrow \overline{W}\cat{H}$ is a weak equivalence if and only if its adjoint $\cat{G}(K) \rightarrow \cat{H}$ is a weak equivalence.
    \end{enumerate}
\end{theorem}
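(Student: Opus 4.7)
The plan is to first establish the adjunction in part (3), from which preservation of cofibrations by $\cat{G}$ (and dually, trivial fibrations by $\overline{W}$) becomes formal, and then use explicit models to check the homotopical properties. The adjunction itself can be proved by exhibiting $\cat{G}$ as the left Kan extension along the Yoneda embedding of the cosimplicial object $[n] \mapsto \cat{G}(\Delta[n])$; a direct computation shows that for each $n \geq 0$ there is a natural bijection $\mathrm{Hom}_{\mathbf{sSet}}(\Delta[n], \overline{W}\cat{H}) \cong \mathrm{Hom}_{\mathbf{sGpd}}(\cat{G}(\Delta[n]), \cat{H})$, which by cocontinuity of $\cat{G}$ and the fact that every simplicial set is a colimit of its simplices extends to the claimed adjunction. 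First I would verify this on generating cofibrations $\partial\Delta[n] \hookrightarrow \Delta[n]$: their images under $\cat{G}$ are free inclusions of simplicial groupoids, hence cofibrations by the definition via the left lifting property.

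Next I would tackle the counit side of part (4). The key computation is that for any simplicial groupoid $\cat{H}$, the counit $\varepsilon \colon \cat{G}(\overline{W}\cat{H}) \to \cat{H}$ is a weak equivalence. Using the retraction argument of Example \ref{Example: Retraction of simplicial groupoid}, I would reduce to the case where $\cat{H}$ is a simplicial group $G$ concentrated at a single object; then I would exhibit $WG \to \overline{W}G$ as a principal $G$-fibration with contractible total space (contractibility follows from the explicit simplicial contraction on $WG$ given by the extra degeneracy in the first slot), so that $\overline{W}G$ is a Kan complex with $\pi_n(\overline{W}G) \cong \pi_{n-1}(G)$, and then identify $\cat{G}(\overline{W}G) \to G$ as a weak equivalence by comparing both sides via this long exact sequence.

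For the unit side, I would then show that $\eta \colon K \to \overline{W}\cat{G}(K)$ is a weak equivalence for every simplicial set $K$. The strategy is to factor $\eta$ using $W\cat{G}(K)$: one shows directly from the combinatorial description of $\cat{G}$ and $W$ that there is a natural principal $\cat{G}(K)$-bundle $W\cat{G}(K) \to \overline{W}\cat{G}(K)$ whose total space is contractible and whose base $\overline{W}\cat{G}(K)$ is a Kan complex, identifying the map $\eta$ as the classifying map of the universal $\cat{G}(K)$-torsor. Once $\varepsilon$ and $\eta$ are both weak equivalences, part (4) is the standard Quillen-equivalence criterion, and preservation of weak equivalences by $\cat{G}$ and $\overline{W}$ (parts (1) and (2)) follows formally: given $f \colon K \to L$ a weak equivalence, the composite $\overline{W}\cat{G}(K) \to \overline{W}\cat{G}(L)$ is weakly equivalent to $f$ via $\eta$, and then a 2-out-of-3 argument combined with the fact that all simplicial groupoids are fibrant yields the statement.

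The main obstacle will be verifying that the counit $\cat{G}(\overline{W}G) \to G$ is a weak equivalence; this requires careful combinatorial bookkeeping in the free simplicial groupoid $\cat{G}(\overline{W}G)$ in order to identify its homotopy groups with those of $G$, and it is here that the contractibility of $WG$ and its interpretation as a universal principal $G$-bundle play a pivotal role. A second delicate point will be checking that $\overline{W}$ preserves fibrations: one cannot reduce this to a lifting diagram as easily as with cofibrations, and the argument typically goes through a minimal-fibration or twisted-cartesian-product description of $\overline{W}$ applied to a fibration $p \colon \cat{G} \to \cat{H}$, showing that $\overline{W}p$ inherits the horn-filling property fiberwise from the fact that each $\cat{G}(x,x) \to \cat{H}(p(x),p(x))$ is a Kan fibration of simplicial groups.
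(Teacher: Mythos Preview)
The paper does not give its own proof of this theorem; it is stated with a citation to \cite[Theorem 7.8]{GoerssJardine} and used as a black box. There is therefore no in-paper argument to compare your proposal against.

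That said, your outline follows the standard route in the literature (essentially the Goerss--Jardine argument): establish the adjunction $\cat{G} \dashv \overline{W}$ via cocontinuity on representables, check that $\cat{G}$ sends generating cofibrations to free inclusions, use the contractibility of $WG$ to identify $\overline{W}G$ as a delooping and thereby prove that the unit and counit are weak equivalences, and then deduce the remaining claims by 2-out-of-3. One minor point of language: the phrase ``from which preservation of cofibrations by $\cat{G}$ becomes formal'' is misleading, since being a left adjoint does not by itself imply preservation of cofibrations; what is formal is only that checking on generating cofibrations suffices (because $\cat{G}$ is cocontinuous and $\mathbf{sSet}$ is cofibrantly generated), which is exactly what you then do. Your identification of the two genuinely nontrivial steps---that the counit $\cat{G}(\overline{W}G) \to G$ is a weak equivalence, and that $\overline{W}$ preserves fibrations via a twisted-cartesian-product description---is accurate, and these are precisely the places where Goerss--Jardine do the real work.
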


The construction of the simplicial loop groupoid $\cat{G}X$ associated to a simplicial set $X$ goes back to the work of Dwyer and Kan \cite{Dwyer-Kan}. It is constructed as a free simplicial groupoid on a set of generators. Therefore recall the notion of \textit{free groupoids generated by a graph} following \cite[Section 8.2]{Brown}.

\begin{definition}
    A graph $\Gamma$ consists of a set $\mathrm{Ob}(\Gamma)$ of objects or vertices and for each pair $(x,y)$ of objects a set $\Gamma(x,y)$ of morphisms or edges from $x$ to $y$. Since we are only concerned with directed graphs we suppose that for various objects $x,y$ the sets $\Gamma(x,y)$ are disjoint. In particular, the two sets $\Gamma(x,y)$ and $\Gamma(y,x)$ are disjoint for $x \neq y$. A subgraph $\Gamma \subset \Delta$ is called \textbf{wide} if we have that $\mathrm{Ob}(\Gamma) = \mathrm{Ob}(\Delta)$.
\end{definition}

\begin{remark}
    Note that any groupoid $\cat{G}$ defines a graph whose set of objects is $\mathrm{Ob}(\cat{G})$ and whose set of morphisms is $\cat{G}(x,y)$. Therefore we say that a \textbf{graph in a groupoid} $\cat{G}$ is a graph $\Gamma$ which is a subgraph of $\cat{G}$. The \textbf{subgroupoid generated by} $\Gamma$ is the smallest subgroupoid of $\cat{G}$ containing the graph $\Gamma$. The subgroupoid generated by $\Gamma$ then has as objects the set $\mathrm{Ob}(\Gamma)$ and as morphisms all the identity morphisms together with all well-defined compositions in $\cat{G}$
    \[
    a_n \cdots a_1
    \]
    such that $a_i \in \Gamma$ or $a_i^{-1} \in \Gamma$.
\end{remark}

\begin{definition}
    Let $\Gamma$ be a graph in a groupoid $\cat{G}$. We say that $\cat{G}$ is \textbf{free} on $\Gamma$ if $\mathrm{Ob}(\Gamma) = \mathrm{Ob}(\cat{G})$ and the following universal property is satisfied: For any groupoid $\cat{H}$ and any graph morphism $\varphi : \Gamma \rightarrow \cat{H}$ there exists a unique extension to a morphism of groupoids making the following diagram commutative.
    \begin{center}
        \begin{tikzcd}
\Gamma \arrow[d, hook] \arrow[rd, "\varphi"] &         \\
\cat{G} \arrow[r, "\exists !", dashed]       & \cat{H}
\end{tikzcd}
    \end{center}
\end{definition}
\newpage

\begin{proposition}[\cite{Brown}, 8.2.1]
    Let $\Gamma$ be a graph in a groupoid $\cat{G}$. The following are equivalent:
    \begin{enumerate}
        \item $\cat{G}$ is free on $\Gamma$.
        \item $\Gamma$ generates $\cat{G}$ and the non-identity elements of $\cat{G}$ can be written uniquely as products
        \[
        a_n^{\varepsilon_n} \cdots a_1^{\varepsilon_1},
        \]
        such that $a_i \in \Gamma$, $\varepsilon_i = \pm 1$ and for no $i$ it is true that both $a_i = a_{i + 1}$ and $\varepsilon_i = -\varepsilon_{i+1}$, i.e. the above product is reduced.
    \end{enumerate}
\end{proposition}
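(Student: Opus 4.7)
The plan is to establish the equivalence by proving the two implications separately, with $(2) \Rightarrow (1)$ being the routine direction and $(1) \Rightarrow (2)$ requiring the construction of an explicit model of a free groupoid on $\Gamma$. Throughout, I will use crucially that $\mathrm{Ob}(\Gamma) = \mathrm{Ob}(\cat{G})$, since this rules out having to adjoin new objects.

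For $(2) \Rightarrow (1)$, I would first show that any graph morphism $\varphi \colon \Gamma \rightarrow \cat{H}$ extends uniquely to a morphism of groupoids $\tilde{\varphi} \colon \cat{G} \rightarrow \cat{H}$. On objects set $\tilde{\varphi} = \varphi$; on identities set $\tilde{\varphi}(1_x) = 1_{\varphi(x)}$; and on a non-identity element $g \in \cat{G}$, use the unique reduced expression $g = a_n^{\varepsilon_n} \cdots a_1^{\varepsilon_1}$ to define $\tilde{\varphi}(g) = \varphi(a_n)^{\varepsilon_n} \cdots \varphi(a_1)^{\varepsilon_1}$. Functoriality reduces to checking that $\tilde{\varphi}(gh) = \tilde{\varphi}(g)\tilde{\varphi}(h)$ whenever $g,h$ are composable, which follows because reducing the concatenation $a_n^{\varepsilon_n}\cdots a_1^{\varepsilon_1} b_m^{\delta_m}\cdots b_1^{\delta_1}$ only cancels adjacent pairs $a^{\pm 1} a^{\mp 1}$, and each such pair is sent under $\varphi$ to a pair that also cancels (to an identity) in $\cat{H}$. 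Uniqueness of the extension is immediate since $\Gamma$ generates $\cat{G}$ and any functor is determined on generators.

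For $(1) \Rightarrow (2)$, I would first verify that $\Gamma$ generates $\cat{G}$ by a standard universal-property argument: let $\cat{G}' \subseteq \cat{G}$ be the subgroupoid generated by $\Gamma$; the inclusion $\Gamma \hookrightarrow \cat{G}'$ extends by freeness to $\tilde{\iota} \colon \cat{G} \rightarrow \cat{G}'$, and composing with $\cat{G}' \hookrightarrow \cat{G}$ gives an endomorphism of $\cat{G}$ extending $\Gamma \hookrightarrow \cat{G}$, which must equal $\mathrm{id}_{\cat{G}}$ by the uniqueness part of freeness. For the uniqueness of reduced expressions, I would construct an explicit candidate $\cat{F}(\Gamma)$ for the free groupoid on $\Gamma$: its objects are $\mathrm{Ob}(\Gamma)$, and its morphisms from $x$ to $y$ are the identity $1_x$ (when $x=y$) together with reduced words $a_n^{\varepsilon_n}\cdots a_1^{\varepsilon_1}$ whose source is $x$ and target is $y$. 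Composition is by concatenation followed by iterated cancellation of any newly adjacent $a^{\varepsilon} a^{-\varepsilon}$ pair. One checks that $\cat{F}(\Gamma)$ satisfies the universal property tautologically, so by uniqueness of universal objects the natural comparison $\cat{F}(\Gamma) \rightarrow \cat{G}$ (induced by $\Gamma \hookrightarrow \cat{G}$ via freeness of $\cat{G}$) is an isomorphism; this transports the unique-reduced-word property from $\cat{F}(\Gamma)$ to $\cat{G}$.

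The main technical obstacle is showing that composition in $\cat{F}(\Gamma)$ is well-defined and, in particular, associative, because \emph{a priori} the result of reducing a concatenation could depend on the order in which one performs cancellations. I would handle this either by the standard diamond-lemma argument (any two maximal reductions of the same word produce the same reduced word, because at every step at most one local cancellation is forced), or alternatively by the more slick approach of letting each generator $a \in \Gamma$ with source $x$ and target $y$ act as a bijection on the set of reduced words based at $x$ (prepending $a$ and cancelling if necessary), and using that these bijections compose associatively by construction in the symmetric groupoid of bijections between sets of reduced words. Once associativity is in hand, the remaining verifications (that identities behave correctly, that $(a_n^{\varepsilon_n}\cdots a_1^{\varepsilon_1})^{-1} = a_1^{-\varepsilon_1} \cdots a_n^{-\varepsilon_n}$ gives inverses, and that the universal property holds) are straightforward.
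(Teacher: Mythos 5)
The paper itself gives no proof here --- the proposition is quoted with a citation to Brown's textbook. Your proposal reproduces the standard argument one would find there: $(2)\Rightarrow(1)$ by defining the extension termwise on reduced words and checking functoriality via the observation that each cancelled pair $a^{\varepsilon}a^{-\varepsilon}$ in $\cat{G}$ is sent to a cancelling pair in $\cat{H}$; $(1)\Rightarrow(2)$ by the universal-property trick for generation, plus building a concrete model $\cat{F}(\Gamma)$ (reduced words, concatenate and cancel) and transporting the uniqueness-of-reduced-form property along the canonical comparison isomorphism. You also correctly isolate the one genuinely technical step, namely that composition in $\cat{F}(\Gamma)$ is well defined and associative, and you name the two standard ways to handle it (confluence/diamond lemma, or the van der Waerden action of generators on reduced-word sets). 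So the plan is sound. One small slip: the comparison map $\cat{F}(\Gamma)\rightarrow\cat{G}$ is furnished by freeness of $\cat{F}(\Gamma)$ (applied to the graph map $\Gamma\hookrightarrow\cat{G}$), not by freeness of $\cat{G}$, which instead produces the map $\cat{G}\rightarrow\cat{F}(\Gamma)$ in the other direction; the two are then shown to be mutually inverse via the uniqueness clause of freeness. Relatedly, saying that $\cat{F}(\Gamma)$ satisfies the universal property ``tautologically'' understates it slightly: that verification is exactly your $(2)\Rightarrow(1)$ argument specialised to $\cat{F}(\Gamma)$, and it is worth noting the two steps share the same core computation.
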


Now that we have spelled out what it means for a groupoid to be freely generated we can define the \textit{Dwyer--Kan loop groupoid} of a simplicial set.

\begin{definition}\label{Definition: Dwyer-Kan loop groupoid}
    Let $K$ be a simplicial set. Then the \textbf{Dwyer--Kan loop groupoid of} $K$ is the simplicial groupoid $\cat{G}K$ defined as follows: For $n \geq 0$ we define
    \[
    (\cat{G}K)_n:= F \left( K_{n+1} - s_0(K_n) \rightrightarrows K_0 \right)
    \]
    as the free groupoid on the graph given by the quiver
    \begin{center}
        \begin{tikzcd}
K_{n+1} - s_0(K_n) \arrow[r, "t"', shift right] \arrow[r, "s", shift left] & K_0,
\end{tikzcd}
    \end{center}
    where the source and target maps are defined as follows:
    \begin{align*}
        s = d_1 \circ d_2 \circ \cdots \circ d_{n+1} \\
        t = d_0 \circ d_2 \circ \cdots \circ d_{n+1}.
    \end{align*}
    The simplicial structure maps are now defined via generators, i.e. for $0 \leq i \leq n$
    \begin{align*}
        s_i : (\cat{G}K)_n &\longrightarrow (\cat{G}K)_{n+1} \\
         \sigma \in K_{n+1} - s^K_0(K_n) &\longmapsto s^K_{i+1}(\sigma),
    \end{align*}
    using the simplicial identity $s^K_{i+1}s^K_0 = s^K_0s^K_i $, and
    \begin{align*}
        d_i : (\cat{G}K)_n &\longrightarrow (\cat{G}K)_{n-1} \\
        \sigma \in K_{n+1} - s_0(K_n) &\longmapsto d^K_{i+1}(\sigma) \in K_n \hspace{4mm} \text{for } 1 < i \leq n \\
        \sigma \in K_{n+1} - s_0(K_n) &\longmapsto d_1^K(\sigma)  \cdot \left( d^K_{0}(\sigma) \right)^{-1}  \in K_n \hspace{4mm} \text{for } i = 0.
    \end{align*}
\end{definition}
\begin{remark}
    Notice that the graph $\Gamma_{n+1}$ given by the quiver above has as objects $\mathrm{Ob}(\Gamma_{n+1}) = K_0$ the vertices of $K$ and for all $x,y \in K_0$ the set of morphisms is given by the pullback
    \begin{center}
        \begin{tikzcd}
{\Gamma_{n+1}(x,y)} \arrow[d] \arrow[r] & K_{n+1} - s_0(K_n) \arrow[d, "{(s,t)}"] \\
* \arrow[r, "{(x,y)}"]              & K_0 \times K_0
\end{tikzcd}
    \end{center}
    We then have that the simplicial loop groupoid is degree-wise given by the free groupoid on this graph
    \[
     (\cat{G}K)_n:= F \left( \Gamma_{n+1} \right).
    \]
\end{remark}

Observe that the presentation of the simplicial groupoid $\cat{G}(K)$ differs slightly from the original definition in \cite{Dwyer-Kan} which further contains a typo in the definition of the zeroth face map $d_0 \colon \cat{G}(K)_n \rightarrow \cat{G}(K)_{n-1}$ as observed and corrected by Ehlers \cite{Ehlers}. It is then immediate that Definition \ref{Definition: Dwyer-Kan loop groupoid} is equivalent to the definition provided by \cite{Ehlers}. \\

In the next step, we wish to relate the simplicial loop groupoid $\cat{G}(K)$ to the simplicial loop group $G(K)$ introduced by Kan for connected simplicial sets $K$ in \cite{Kan}. Thus consider $(K,x_0)$ a connected simplicial set with a base point. Then the loop groupoid $\cat{G}K$ is connected and by example \ref{Example: Retraction of simplicial groupoid} we have that the inclusion
\[
i : \cat{G}K(x_0,x_0) \rightarrow \cat{G}K
\]
is a retraction. The retraction however depends on a certain choice of paths, which we specify below.

\begin{definition}\label{Definition: Dwyer-Kan simplicial loop group}
    Let $(K,x_0)$ be a pointed, connected simplicial set. The simplicial group
    \[
    \mathbb{G}(K,x_0) := \cat{G}K(x_0,x_0)
    \]
    is called the \textbf{Dwyer--Kan simplicial loop group}. Its abelianization is denoted by $\mathbb{A}(K,x_0)$. Usually, the dependence of the basepoint is omitted to simplify notation.
\end{definition}

The choices of paths to obtain the retraction can be formalized as follows:

\begin{definition}[\cite{Kan}] \label{definition maximal tree}
Let $(K,x_0)$ be as above. A \textbf{maximal tree} $T \subset K$ is a connected simplicial subset that contains all the vertices in $K$ but no closed loops. To make this definition precise we first define an $n$\textbf{-loop of length} $k$ to be a sequence
\[
(\sigma_1, ..., \sigma_{2k})
\]
of $(n+1)$-simplices in $K$ such that:
\begin{align*}
    &d_{n+1}(\sigma_{2j-1}) = d_{n+1}(\sigma_{2j}) \hspace{4mm} &\text{ for all } 1 \leq j \leq k \\
    &d_0 \left( \cdots d_n(\sigma_{2j})  \cdots \right) = d_0 \left( \cdots d_n(\sigma_{2j +1})  \cdots \right) \hspace{4mm} &\text{ for all } 1 \leq j < k \\
    &d_0 \left( \cdots d_n(\sigma_{1})  \cdots \right) = d_0 \left( \cdots d_n(\sigma_{2k}) \cdots \right) = x_0
\end{align*}
By a reduced loop we mean an $n$-loop of length $k$ such that
\[
\sigma_s \neq \sigma_{s+1} \hspace{4mm} \text{ for all } 1 \leq s < 2k.
\]
A subcomplex $T \subset K$ containing the vertex $x_0$ is said to be a \textbf{tree} if contains no loops of length $>0$. A \textbf{maximal tree} is a tree $T$ which contains all the vertices, i.e. $T_0 = K_0$.
\end{definition}

\begin{lemma}[\cite{Kan}, Lemma 9.1]
    A connected simplicial set with basepoint $(K,x_0)$ contains a maximal tree.
\end{lemma}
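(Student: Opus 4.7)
The plan is to use Zorn's lemma on the poset of trees containing the basepoint, and then argue that maximality forces the tree to hit every vertex via the connectedness hypothesis.

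First I would set up the poset. Let $\mathcal{T}$ be the collection of simplicial subsets $T \subset K$ with $x_0 \in T_0$ that are trees in the sense of Definition \ref{definition maximal tree} (no reduced $n$-loops of length $>0$ for any $n \geq 0$), partially ordered by inclusion. This poset is non-empty, since the smallest subcomplex containing $x_0$, namely the degeneracies of $x_0$, contains no non-degenerate simplices of positive dimension and hence no reduced loops. To apply Zorn's lemma, I would verify that the union $\bigcup_{\alpha} T_{\alpha}$ of a chain in $\mathcal{T}$ is again a tree: any reduced $n$-loop $(\sigma_1, \ldots, \sigma_{2k})$ in the union involves only finitely many simplices, hence lies entirely in some single $T_{\alpha}$ of the chain by directedness, contradicting $T_\alpha \in \mathcal{T}$. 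Hence the hypotheses of Zorn's lemma are satisfied and there exists a maximal element $T \in \mathcal{T}$.

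The key step is then to show $T_0 = K_0$. Suppose for contradiction that there exists a vertex $v \in K_0 \setminus T_0$. Since $K$ is connected, there is a zigzag of $1$-simplices in $K$ joining $v$ to $x_0$; traversing it, I obtain a $1$-simplex $\tau \in K_1$ one of whose vertices $d_1\tau$ or $d_0\tau$ lies in $T_0$ while the other does not. Let $T' \subset K$ be the smallest simplicial subset containing $T \cup \{\tau\}$; concretely, $T'_n$ consists of $T_n$ together with the degeneracies of $\tau$ and the new vertex. The plan is to show $T' \in \mathcal{T}$, contradicting maximality of $T$.

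The main obstacle, and where care is needed, is checking that no reduced $n$-loop is created by adjoining $\tau$. For $0$-loops (which are closed chains of $1$-simplices), any reduced loop involving $\tau$ would have to traverse $\tau$ and then return to the side of $\tau$ lying in $T_0$; but the ``new vertex" of $\tau$ appears in no other $1$-simplex of $T'$, so the loop would be forced to use $\tau$ twice consecutively, violating the reducedness condition $\sigma_s \neq \sigma_{s+1}$. For $n$-loops with $n \geq 1$, the new $(n{+}1)$-simplices of $T'$ are all degeneracies of $\tau$, and the loop conditions $d_{n+1}\sigma_{2j-1}=d_{n+1}\sigma_{2j}$ together with the $d_0\cdots d_n$ compatibility force any such loop to project down, via degeneracy identities, to a $0$-loop in $T \cup \{\tau\}$; by the previous case this must be trivial, so the loop itself is not reduced. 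Thus $T'$ is a tree strictly larger than $T$, contradicting maximality. Therefore $T_0 = K_0$, and $T$ is the desired maximal tree.
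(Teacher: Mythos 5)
Your overall strategy — Zorn's lemma on a poset of trees through $x_0$, followed by adjoining a ``transitional'' $1$-simplex to a Zorn-maximal tree that misses a vertex — is the standard route to this kind of statement, and the chain-condition verification (finiteness of a loop plus directedness) is correct. The thesis offers no proof of its own, citing Kan directly, so there is nothing internal to compare against.

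That said, there are two real gaps. First, although the displayed formal definition in Definition \ref{definition maximal tree} omits connectedness, the informal gloss preceding it and the later use in Construction \ref{Construction: Retract given by maximal Tree} (which needs a \emph{unique path in $T$} from any vertex to $x_0$) make clear that a tree must be connected. Without that requirement the lemma is vacuous: the subcomplex of all iterated degeneracies $s_0^m(v)$, $v \in K_0$, already contains every vertex and has no reduced loops, so the Zorn argument is doing nothing. You should take $\mathcal{T}$ to consist of \emph{connected} trees; the chain union and the enlargement $T'$ are each easily seen to remain connected, since $\tau$ has one endpoint in $T$. Second, the $n \geq 1$ case is where the proof actually breaks as written. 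Applying $d_1 d_2 \cdots d_n$ to each $\sigma_i$ (extracting the edge from vertex $0$ to vertex $n{+}1$) does produce a $0$-loop, and the loop conditions descend because they concern the bottom face $d_{n+1}\sigma_i$ and the top vertex. But this projection need not preserve reducedness: $\sigma_s \neq \sigma_{s+1}$ does not force $d_1\cdots d_n\sigma_s \neq d_1\cdots d_n\sigma_{s+1}$, so showing the projected $0$-loop is not reduced does not show the original $n$-loop is not reduced. You need a separate argument here, e.g. a direct case analysis of how the degeneracies of $\tau$ and of the new vertex $w$ can occur in an $n$-loop of $T'$, or an induction on loop length using the evident retraction $T' \to T$ collapsing $w$ to its partner vertex. (A smaller quibble on the $0$-loop case: $s_0(w)$ is a second $1$-simplex of $T'$ through $w$, so a loop arriving at $w$ via $\tau$ need not reuse $\tau$ consecutively — it might instead be forced to repeat $s_0(w)$. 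The conclusion survives, but ``the new vertex appears in no other $1$-simplex'' is not literally true.)
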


\begin{example}
    Let us look at the definition of loops to get a better understanding. Let first $n=0$, then a $0$-loop of length $k$ is given by a sequence
    \[
    (\sigma_1, ..., \sigma_{2k})
    \]
    of $1$-simplices in $K$ such that
    \begin{align*}
         &d_{1}(\sigma_{2j-1}) = d_{1}(\sigma_{2j}) \hspace{4mm} \text{ for all } 1 \leq j \leq k \\
    &d_0 (\sigma_{2j}) = d_0(\sigma_{2j +1}) \hspace{4mm} \text{ for all } 1 \leq j < k \\
    &d_0(\sigma_{1})= d_0(\sigma_{2k}) = x_0,
    \end{align*}
    \begin{center}
        \begin{tikzpicture}[x=0.75pt,y=0.75pt,yscale=-1,xscale=1]

\draw    (100,110) -- (148.04,100.39) ;
\draw [shift={(150,100)}, rotate = 168.69] [color={rgb, 255:red, 0; green, 0; blue, 0 }  ][line width=0.75]    (10.93,-3.29) .. controls (6.95,-1.4) and (3.31,-0.3) .. (0,0) .. controls (3.31,0.3) and (6.95,1.4) .. (10.93,3.29)   ;
\draw    (200,110) -- (151.96,100.39) ;
\draw [shift={(150,100)}, rotate = 11.31] [color={rgb, 255:red, 0; green, 0; blue, 0 }  ][line width=0.75]    (10.93,-3.29) .. controls (6.95,-1.4) and (3.31,-0.3) .. (0,0) .. controls (3.31,0.3) and (6.95,1.4) .. (10.93,3.29)   ;
\draw    (100,110) -- (66.63,133.84) ;
\draw [shift={(65,135)}, rotate = 324.46] [color={rgb, 255:red, 0; green, 0; blue, 0 }  ][line width=0.75]    (10.93,-3.29) .. controls (6.95,-1.4) and (3.31,-0.3) .. (0,0) .. controls (3.31,0.3) and (6.95,1.4) .. (10.93,3.29)   ;
\draw    (50,185) -- (64.43,136.92) ;
\draw [shift={(65,135)}, rotate = 106.7] [color={rgb, 255:red, 0; green, 0; blue, 0 }  ][line width=0.75]    (10.93,-3.29) .. controls (6.95,-1.4) and (3.31,-0.3) .. (0,0) .. controls (3.31,0.3) and (6.95,1.4) .. (10.93,3.29)   ;
\draw    (200,110) -- (228.7,143.48) ;
\draw [shift={(230,145)}, rotate = 229.4] [color={rgb, 255:red, 0; green, 0; blue, 0 }  ][line width=0.75]    (10.93,-3.29) .. controls (6.95,-1.4) and (3.31,-0.3) .. (0,0) .. controls (3.31,0.3) and (6.95,1.4) .. (10.93,3.29)   ;
\draw    (245,195) -- (230.57,146.92) ;
\draw [shift={(230,145)}, rotate = 73.3] [color={rgb, 255:red, 0; green, 0; blue, 0 }  ][line width=0.75]    (10.93,-3.29) .. controls (6.95,-1.4) and (3.31,-0.3) .. (0,0) .. controls (3.31,0.3) and (6.95,1.4) .. (10.93,3.29)   ;

\draw (111,85.4) node [anchor=north west][inner sep=0.75pt]    {$\sigma _{1}$};
\draw (176,82.4) node [anchor=north west][inner sep=0.75pt]    {$\sigma _{2k}$};
\draw (66,97.4) node [anchor=north west][inner sep=0.75pt]    {$\sigma _{2}$};
\draw (36,147.4) node [anchor=north west][inner sep=0.75pt]    {$\sigma _{3}$};
\draw (220,110.4) node [anchor=north west][inner sep=0.75pt]    {$\sigma _{2k-1}$};
\draw (246,160.4) node [anchor=north west][inner sep=0.75pt]    {$\sigma _{2k-2}$};
\draw (141,77.4) node [anchor=north west][inner sep=0.75pt]    {$x_{0}$};

\end{tikzpicture}
    \end{center}
    which looks just like a loop in the simplicial set $K$ based at $x_0$. In particular, we notice that for any $y \in K$ there exists a unique sequence of simplices in the tree $T$ connecting $y$ to $x_0$. Indeed, if there were more, then we could create a loop of length $>0$ inside the tree $T$ which gives a contradiction. 
\end{example}

\begin{construction}\label{Construction: Retract given by maximal Tree}
    Let $(K,x_0)$ be a connected and pointed simplicial set. Choose $T$ a maximal tree in $K$. We now want to define the retraction:
    \[
    r_T : \cat{G}K \rightarrow \mathbb{G}(K,x_0)
    \]
    from the simplicial loop groupoid to the loop group. Let $y$ be any vertex in $K$. Then the unique sequence of $1$-simplices $(\sigma_1, ... , \sigma_k)$ in $T$ connecting $y$ and $x_0$ represents a morphism in the free groupoid $(\cat{G}K)_0$ from $y$ to $x_0$. We denote this morphism by
    \[
    \omega_y : y \rightarrow x_0
    \]
    Then we define the morphism $r_T$ of groupoids first on the objects as the constant map onto $x_0$:
    \[
    r_T : \mathrm{Ob}(\cat{G}K) = K_0 \rightarrow \left\{ x_0 \right\} , \hspace{3mm} y \mapsto x_0
    \]
    and on the morphisms by conjugation
    \begin{align*}
        r_T : (\cat{G}K)(y,z) \rightarrow \mathbb{G}(K,x_0) \\
        \alpha : y \rightarrow z \mapsto \omega_z \alpha \omega^{-1}_y.
    \end{align*}
\end{construction}

Considering the simplicial loop group as a retract of the free simplicial groupoid $\cat{G}K$, we can find generators for the free simplicial group $\mathbb{G}(K,x_0)$ using the following lemma.

\begin{lemma}[\cite{Brown}, 8.2.3] \label{Lemma: Automorphism Group Free Groupoid}
For $\cat{G} = F(\Gamma)$ a connected, free groupoid each automorphism group $\cat{G}(x,x)$ of $\cat{G}$ is free. A choice of tree groupoid $\cat{T}$ in $\cat{G}$ such that $\cat{T} \cap \Gamma$ generates $\cat{T}$ allows to specify a presentation of $\cat{G}(x,x)$ via generators.
\end{lemma}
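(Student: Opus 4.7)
The plan is to exploit the retraction $r_{\cat{T}} \colon \cat{G} \to \cat{G}(x,x)$ of Construction \ref{Construction: Retract given by maximal Tree}, adapted to the present groupoid setting, in order to read off an explicit free presentation. Given a tree groupoid $\cat{T} \subset \cat{G}$ with $\cat{T} \cap \Gamma$ generating $\cat{T}$, for every object $y \in \mathrm{Ob}(\cat{G}) = \mathrm{Ob}(\cat{T})$ there is a \emph{unique} morphism $\tau_y \in \cat{T}(y,x)$, as follows from the definition of a tree groupoid (no nontrivial loops) combined with the hypothesis that $\cat{T} \cap \Gamma$ generates $\cat{T}$; normalize $\tau_x = \mathrm{id}_x$. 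Then $r_{\cat{T}}$ is defined on morphisms by the conjugation rule $r_{\cat{T}}(\alpha) = \tau_z \cdot \alpha \cdot \tau_y^{-1}$ for $\alpha \in \cat{G}(y,z)$.

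First I would split $\Gamma = (\Gamma \cap \cat{T}) \sqcup \Gamma'$ and construct a candidate set of free generators for $\cat{G}(x,x)$ by setting $\tilde{a} := \tau_{t(a)} \cdot a \cdot \tau_{s(a)}^{-1}$ for each $a \in \Gamma'$. Generation is the easier half: by definition of the free groupoid $F(\Gamma)$, every nonidentity morphism in $\cat{G}(x,x)$ is a reduced product $a_n^{\varepsilon_n} \cdots a_1^{\varepsilon_1}$ of edges in $\Gamma$ and their inverses, and inserting the identity $\tau_{\cdot}^{-1} \tau_{\cdot}$ at each joint yields a telescoping product of the conjugates $\tau_{t(a_i)} a_i^{\varepsilon_i} \tau_{s(a_i)}^{-1}$. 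For $a_i \in \Gamma \cap \cat{T}$ these conjugates are identities by uniqueness of paths in $\cat{T}$, and for $a_i \in \Gamma'$ they are precisely $\tilde{a}_i^{\pm 1}$. Hence $\{\tilde{a}\}_{a \in \Gamma'}$ generate $\cat{G}(x,x)$.

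The main obstacle is to verify freeness, namely that any nonempty reduced word $w = \tilde{a}_n^{\varepsilon_n} \cdots \tilde{a}_1^{\varepsilon_1}$ (with $\tilde{a}_i = \tilde{a}_{i+1}$ and $\varepsilon_i = -\varepsilon_{i+1}$ forbidden) is nontrivial in $\cat{G}(x,x)$. Expanding each $\tilde{a}_i^{\varepsilon_i}$ and concatenating inserts chains of edges from $\Gamma \cap \cat{T}$ at the joints. The delicate point is to show that after fully expressing each $\tau_y$ as a reduced product of edges from $\Gamma \cap \cat{T}$ and carrying out all possible cancellations in $F(\Gamma)$, the letter $a_i^{\varepsilon_i}$ still survives in the final expression. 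This follows because each $a_i^{\varepsilon_i}$ lies in $\Gamma'$, so it cannot cancel with any adjacent letter coming from $\Gamma \cap \cat{T}$; the only cancellations that could destroy it would have to come from the neighbouring $a_{i-1}$ or $a_{i+1}$, but this is ruled out precisely by the reducedness hypothesis together with the uniqueness of the tree paths $\tau_y$. By the characterization of freeness in Proposition 8.2.1 (\cite{Brown}) the nonvanishing of the reduced word in $F(\Gamma)$ then gives the desired freeness of $\cat{G}(x,x)$ on the set $\{\tilde{a} \mid a \in \Gamma'\}$.

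Once freeness for one basepoint is established, the conclusion for every vertex group follows because the groupoid $\cat{G}$ is connected, so conjugation by any morphism $y \to x$ provides an isomorphism $\cat{G}(x,x) \cong \cat{G}(y,y)$ of groups, and freeness is a group-theoretic invariant. This also records the explicit presentation announced in the lemma: the generators are indexed by $\Gamma \setminus \cat{T}$, and no relations are imposed.
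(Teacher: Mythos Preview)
Your proposal is correct and follows the same architecture as the paper's proof: pick the tree groupoid $\cat{T}$, use the unique tree paths $\tau_y$ to build the retraction $r_{\cat{T}}$, and identify the free generators of $\cat{G}(x,x)$ as the conjugates $\tilde{a}$ of the edges $a \in \Gamma' = \Gamma \setminus \cat{T}$. The only substantive difference lies in how freeness is established. The paper observes that the subgroupoid $\cat{H}$ generated by $\Gamma'$ makes $\cat{G}$ a free product $\cat{T} * \cat{H}$, and then deduces that $\cat{G}(x,x)$ is free on $r[\Gamma']$ as a formal consequence of this decomposition (this is the content of \cite[8.2.3]{Brown}). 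You instead give a direct word-level argument: expand each $\tilde{a}_i^{\varepsilon_i}$, reduce the tree segments at the joints to the unique tree path between the relevant endpoints, and note that the letters $a_i^{\varepsilon_i} \in \Gamma'$ can never cancel against adjacent tree letters, nor against each other by the reducedness hypothesis. Both arguments are equivalent in content; the free-product formulation packages the combinatorics more conceptually, while your reduced-word argument is self-contained and makes transparent exactly why $\Gamma \setminus \cat{T}$ indexes the generators with no relations.
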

\begin{proof}(Sketch)
    Let $\Gamma$ be a graph freely generating $\cat{G}$ and let $x \in \cat{G}$ be an object. Similarly as one can show that any connected pointed simplicial set contains a maximal tree, we can choose $\cat{T}$ a tree groupoid that is wide in $\cat{G}$ and such that $\cat{T} \cap \Gamma$ generates $\cat{T}$. For every object $y$ of $\cat{G}$ there is now a unique morphism $\tau_y \in \cat{T}(x,y)$. Like in Example \ref{Example: Retraction of simplicial groupoid}, this defines via conjugation a retraction $r \colon \cat{G} \rightarrow \cat{G}(x,x)$. Denote now by $\Lambda$ the wide subgraph of $\Gamma$ whose edges are those which are not contained in $\cat{T}$. Denote by $\cat{H}$ the free subgroupoid of $\cat{G}$ generated by $\Lambda$. Then it follows that $\cat{G}$ is given by the free product $\cat{T} * \cat{H}$ and therefore $\cat{G}(x,x)$ is the free group on $r[\Lambda]$. Here $r[\Lambda]$ denotes the set given by the union of the sets $r\left( \Lambda(y,z)\right)$ for all objects $y,z$ in $\Lambda$.
\end{proof}

\begin{corollary} \label{Corollary: Dwyer-Kan group as a free group}
Given a pointed and connected simplicial set $(K,x_0)$ together with a choice of a maximal tree $T$ the simplicial group $\mathbb{G}(K,x_0)$ is degreewise generated as follows. For $n \geq 0$ the group $\mathbb{G}(K,x_0)_n$ is given by
    \begin{itemize}
        \item one generator $\sigma$ for every $\sigma \in K_{n+1}$
        \item one relation $s_0(\tau) = e_n$ for every $\tau \in K_n$
        \item one relation $\rho = e_n$ for every $(n+1)$-simplex $\rho \in T_{n+1}$.
    \end{itemize}
    or equivalently
    \[
    \mathbb{G}(K,x_0)_n = F \left( K_{n+1} - \left( s_0(K_n) \cup T_{n+1} \right) \right)
    \]
    with face and degeneracy maps the same ones as we have seen in the loop groupoid construction. In particular, $\mathbb{G}(K,x_0)$ agrees with Kan's simplicial loop group \normalfont{\cite[p.293]{Kan}}.
\end{corollary}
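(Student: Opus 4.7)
The plan is to apply Lemma \ref{Lemma: Automorphism Group Free Groupoid} degreewise to the simplicial loop groupoid, using the maximal tree $T \subset K$ to select an appropriate tree groupoid inside $\cat{G}K_n$ at each level $n \geq 0$. Recall from Definition \ref{Definition: Dwyer-Kan loop groupoid} that $\cat{G}K_n$ is the free groupoid on the graph $\Gamma_{n+1}$ with vertex set $K_0$ and edge set $K_{n+1} - s_0(K_n)$. First I would verify that $\cat{G}K_n$ is connected: the connectedness of $K$ through $K_1$ combined with iterated degeneracies $s_1^n \alpha \in K_{n+1} - s_0(K_n)$ for $\alpha \in K_1$ non-degenerate provides, via the source and target maps $s = d_1 d_2 \cdots d_{n+1}$ and $t = d_0 d_2 \cdots d_{n+1}$, edges in $\Gamma_{n+1}$ connecting any pair of vertices reachable in $K$.

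The decisive step is to construct a wide tree groupoid $\cat{T}_n \subset \cat{G}K_n$ generated by the subgraph $\Gamma_{n+1}^T := T_{n+1} - s_0(T_n)$ of $\Gamma_{n+1}$ together with the full vertex set $K_0$. Wideness of $\cat{T}_n$ follows from $T_0 = K_0$ and the iterated degeneracy argument above applied to 1-simplices in $T$. Showing that $\cat{T}_n$ is genuinely a tree groupoid, i.e. that $\Gamma_{n+1}^T$ admits no reduced cycle in the free-groupoid sense, amounts—via the explicit formulas for $s$ and $t$—to the statement that $T$ contains no reduced $n$-loop of positive length in the sense of Definition \ref{definition maximal tree}. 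This combinatorial-to-groupoid translation is the technical heart of the argument and the main obstacle: given a hypothetical reduced cycle in $\cat{T}_n$ expressed as an alternating product of edges from $\Gamma_{n+1}^T$ and their inverses, one must reconstruct from it a genuine reduced $n$-loop in $T$ witnessing a failure of the maximal tree property.

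Once $\cat{T}_n$ is established as a wide tree groupoid in the connected free groupoid $\cat{G}K_n$, Lemma \ref{Lemma: Automorphism Group Free Groupoid} directly identifies $\mathbb{G}(K,x_0)_n = \cat{G}K_n(x_0,x_0)$ as the free group on the image $r_T[\Lambda_n]$ under the retraction from Construction \ref{Construction: Retract given by maximal Tree}, where $\Lambda_n := \Gamma_{n+1} - \Gamma_{n+1}^T$. Unwinding set differences gives
\[
\Lambda_n = \bigl(K_{n+1} - s_0(K_n)\bigr) - \bigl(T_{n+1} - s_0(T_n)\bigr) = K_{n+1} - \bigl(s_0(K_n) \cup T_{n+1}\bigr),
\]
which is precisely the free-group description in the final line of the statement. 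The equivalent presentation with $K_{n+1}$ as the full set of generators and relations $s_0(\tau) = e_n$ for $\tau \in K_n$ and $\rho = e_n$ for $\rho \in T_{n+1}$ is then a formal rewriting. The face and degeneracy operators are inherited from those of $\cat{G}K$ specified in Definition \ref{Definition: Dwyer-Kan loop groupoid}, transported through $r_T$.

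The last assertion that this presentation matches Kan's simplicial loop group \cite{Kan} is a direct bookkeeping comparison. Kan's generators (non-degenerate, non-tree simplices) and relations coincide with those produced above on the nose; the face and degeneracy maps agree after accounting for the convention for the zeroth face map, following the correction of Ehlers \cite{Ehlers}. No further homotopical input is required for this identification.
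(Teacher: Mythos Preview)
Your proposal is correct and follows essentially the same approach as the paper: a levelwise application of Lemma~\ref{Lemma: Automorphism Group Free Groupoid} to $\cat{G}K_n$, taking the tree groupoid to be the free groupoid on the maximal tree $T$ and using the retraction of Construction~\ref{Construction: Retract given by maximal Tree}. You are in fact more explicit than the paper about two points the paper leaves implicit: the connectedness of $\cat{G}K_n$ via iterated degeneracies, and the verification that $\cat{T}_n$ is a tree groupoid via the $n$-loop condition of Definition~\ref{definition maximal tree}, which is indeed the purpose that definition serves.
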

\begin{proof}
    The presentation of $\mathbb{G}(K,x_0)$ via generators and relations follows from a level-wise application of Lemma \ref{Lemma: Automorphism Group Free Groupoid} to the free groupoids $\cat{G}(K)_n$ for all $[n] \in \Delta$. Observe that the \textit{tree groupoid} used in the proof of said Lemma is in this case simply the free groupoid on the maximal tree $T$. Under this identification, the level-wise retractions agree precisely with the retraction of Construction \ref{Construction: Retract given by maximal Tree}. In particular, it follows that level-wise $ \mathbb{G}(K,x_0)_n$ is freely generated by $K_{n+1} - \left( s_0(K_n) \cup T_{n+1} \right)$. This presentation of $\mathbb{G}(K,x_0)$ now agrees with the definition given in \cite{Kan}.
\end{proof}

\appto{\bibsetup}{\raggedright}
\printbibliography

\end{document}